\title{Varsovian models $\om$}
\author{Farmer Schlutzenberg}
\declaretheoremstyle[bodyfont=\sl]{slanted}
\declaretheorem[name=Definition,style=definition,qed=$\dashv$,numberwithin=section]{definition}
\declaretheorem[name=Definition,style=definition,numbered=no,qed=$\dashv$]{definition*}
\declaretheorem[name=Definition,style=definition,qed=$\dashv$,sibling=definition]{dfn}
\declaretheorem[name=Definition,style=definition,numbered=no,qed=$\dashv$]{dfn*}
\declaretheorem[name=Theorem,style=slanted,sibling=dfn]{tm}
\declaretheorem[name=Theorem,style=slanted,sibling=dfn]{tm*}
\declaretheorem[name=Theorem,style=slanted,numbered=no]{theorem*}
\declaretheorem[name=Lemma,style=slanted,sibling=dfn]{lem}
\declaretheorem[name=Corollary,style=slanted,numbered=no]{corollary*}
\declaretheorem[name=Corollary,style=slanted,sibling=dfn]{cor}
\declaretheorem[name=Corollary,style=slanted,numbered=no]{cor*}
\declaretheorem[name=Remark,style=definition,sibling=dfn]{rem}
\declaretheoremstyle[headfont=\scshape]{claimstyle}
\declaretheorem[name=Claim,style=claimstyle]{clm}
\declaretheorem[name=Case,style=claimstyle]{case}
\declaretheorem[name=Subcase,style=claimstyle]{scase}
\declaretheorem[name=Claim,style=claimstyle,numbered=no]{clm*}
\declaretheorem[name=Subclaim,style=claimstyle,numbered=no]{sclm*}
\declaretheorem[name=Subsubclaim,style=claimstyle,numbered=no]{ssclm*}
\def\undertilde#1{{\baselineskip=0pt\vtop
		{\hbox{$#1$}\hbox{$\scriptscriptstyle\sim$}}}{}}
\newcommand{\down}{{\downarrow}}
\newcommand{\ph}{\mathfrak{P}}
\newcommand{\swsw}{\mathrm{swsw}}
\newcommand{\stk}{\mathrm{stk}}
\newcommand{\PP}{\mathbb{P}}
\newcommand{\M}{\mathcal{M}}
\newcommand{\N}{\mathcal{N}}
\newcommand{\J}{\mathcal{J}}
\newcommand{\Def}{\mathrm{Def}}
\renewcommand{\undertilde}{\undertilde}
\newcommand{\wW}{\mathscr{W}}
\newcommand{\ds}{\mathrm{ds}}
\newcommand{\rest}{{\upharpoonright}}
\newcommand{\forces}{\Vdash}
\renewcommand{\models}{\vDash}
\newcommand{\dom}{{\rm dom}}
\newcommand{\lh}{{\rm lh}}
\newcommand{\crit}{{\rm crit }}
\newcommand{\mM}{\mathscr{M}}
\newcommand{\nN}{\mathscr{N}}
\newcommand{\vV}{\mathscr{V}}
\newcommand{\HOD}{\mathrm{HOD}}
\newcommand{\sub}{\subseteq}
\newcommand{\om}{\omega}
\newcommand{\Coll}{\mathrm{Coll}}
\newcommand{\Mswsw}{M_{\mathrm{swsw}}}
\newcommand{\OR}{\mathrm{OR}}
\newcommand{\es}{\mathbb{E}}
\newcommand{\sss}{\mathrm{sh}}
\newcommand{\Ll}{\mathcal{L}}
\newcommand{\Tt}{\mathcal{T}}
\newcommand{\Uu}{\mathcal{U}}
\newcommand{\Vv}{\mathcal{V}}
\newcommand{\Ww}{\mathcal{W}}
\newcommand{\Xx}{\mathcal{X}}
\newcommand{\Yy}{\mathcal{Y}}
\newcommand{\conc}{\ \widehat{\ }\ }
\newcommand{\Ult}{\mathrm{Ult}}
\newcommand{\pins}{\triangleleft}
\newcommand{\sats}{\models}
\newcommand{\id}{\mathrm{id}}
\newcommand{\ins}{\trianglelefteq}
\newcommand{\dirlim}{\mathrm{dirlim}}
\newcommand{\Hull}{\mathrm{Hull}}
\newcommand{\inter}{\cap}
\newcommand{\com}{\circ}
\newcommand{\BB}{\mathbb{B}}
\newcommand{\cHull}{\mathrm{cHull}}
\newcommand{\OD}{\mathrm{OD}}
\newcommand{\rg}{\mathrm{rg}}
\newcommand{\dropset}{\mathscr{D}}
\newcommand{\pred}{\mathrm{pred}}
\newcommand{\tu}{\textup}
\newcommand{\passive}{{\mathrm{pv}}}
\newcommand{\pow}{\mathcal{P}}
\newcommand{\pvec}{\vec{p}}
\newcommand{\sn}{\mathrm{sn}}
\newcommand{\cut}{\backslash}
\newcommand{\rSigma}{\mathrm{r}\Sigma}
\newcommand{\rDelta}{\mathrm{r}\Delta}
\newcommand{\Th}{\mathrm{Th}}
\newcommand{\ZFC}{\mathrm{ZFC}}
\renewcommand{\P}{\mathcal{P}}
\newcommand{\CC}{\mathbb{C}}
\newcommand{\Vop}{\mathrm{Vop}}
\newcommand{\swom}{\sw\om}
\newcommand{\exit}{\mathrm{exit}}
\newcommand{\Mswom}{M_{\swom}}
\newcommand{\rank}{\mathrm{rank}}
\newcommand{\all}{\forall}
\newcommand{\sw}{\mathrm{sw}}
\begin{document}
\maketitle

\begin{abstract}
For $n<\om$, let $M_{\sw n}$ be the 
 minimal iterable proper class mouse $M$
 such that $M\sats$ ``there are ordinals $\delta_0<\kappa_0<\ldots<\delta_{n-1}<\kappa_{n-1}$ such that
 each $\delta_i$ is a Woodin cardinal and each $\kappa_i$ is a strong cardinal'', and $M_{\sw\om}$ likewise,
 but with $M\sats$ ``there is an ordinal $\lambda$ which is a limit of Woodins and strongs''.
 Under appropriate large cardinal hypotheses, Sargsyan and Schindler  \cite{vm1} introduced and analysed the Varsovian model of $M_{\sw1}$,
 and Sargsyan, Schindler and the author \cite{vm2_v2} introduced and anlaysed the Varsovian model of $M_{\sw2}$.
We extend this to $M_{\sw\om}$,
assuming $(\dagger)$ that $*$-translation integrates routinely with the P-constructions of this paper;
the write-up of this is  to appear. We show in particular (assuming $(\dagger)$)
 that $M_{\sw\om}$ has a proper class inner model $\vV_\om$
 which is a fully iterable strategy mouse with $\om$ Woodin cardinals, closed under its own strategy, and that the universe of $\vV_\om$
 is the eventual generic HOD, and the mantle, of $M$. We also show (assuming $(\dagger)$) that the core model $K^M$ of $M$
  (which can be defined in a natural manner)
  is an iterate  of $M$, is an inner model of $\vV_\om$,
 and  is fully iterable in $M$ and in $\vV_\om$. 
\end{abstract}

\tableofcontents

\section{Introduction}

Work in ZFC. Let $M_{\swom}$ be the minimal fully iterable proper class mouse such that \begin{equation}\label{eqn:sw_omega_property}\begin{split}M\sats\text{``}&\text{There is
a  cardinal }\lambda\text{ which is both }\\
&\text{a limit of Woodin cardinals and a limit of strong cardinals''.}\end{split}\end{equation}
In this paper we analyze  $M_{\swom}$ in the style of the papers \cite{vm0}, \cite{vm1} and \cite{vm2_v2}.
Let us first be precise about the setup.
For a premouse $N$, $F^N$ denotes the active extender of $N$.

\begin{dfn}\label{dfn:M_swom}
 For $n<\om$, $M_{\sw n}^\#$, assuming it exists, denotes the least $(0,\om_1+1)$-iterable active premouse $N$
 such that $N|\crit(F^N)\sats$ ``There are ordinals
 \[\delta_0<\kappa_0<\ldots<\delta_{n-1}<\kappa_{n-1} \]
 such that each $\delta_i$ is Woodin and each $\kappa_i$ is strong''. If $M_{\sw n}^\#$ exists,
 then $M_{\sw n}$
 is the proper class premouse left behind by iterating $F^{M_{\sw n}^\#}$ out of the universe.
 
 Analogously, $M_{\sw\om}^\#$ is the least $(0,\om_1+1)$-iterable active premouse $N$
 such that $N|\crit(F^N)$ satisfies the statement in line (\ref{eqn:sw_omega_property}),
 and $M_{\sw\om}$ is the result of iterating $F^{M_{\swom}^\#}$ out of the universe.
\end{dfn}

In \cite{vm1}, Sargsyan and Schindler analyzed the model $M_{\sw1}$, and in its sequel \cite{vm2_v2}, Sargsyan, Schindler and the author analzyed $M_{\sw2}$, showing for example in \cite{vm2_v2} that
$M_{\sw2}$ contains an inner model $\vV_2$,
which is a self-iterable strategy mouse satisfying ``there are exactly 2 Woodin cardinals'',
and which is also iterable in $M_{\sw 2}$.
It is also shown there that the universe of $\vV_2$
is $\HOD^{M_{\sw2}[G]}$ for $G$ being $(M_{\sw2},\Coll(\om,\eta))$-generic,
and is also the mantle of all set generic extensions of $M_{\sw2}$. In \cite{vm1}, similar results were established, with ``$1$ (Woodin)'' replacing ``$2$ (Woodins)'', although the kind of analysis and presentation of $\vV_2$ used in \cite{vm2_v2}
was somewhat different to the analogue in \cite{vm1}.

In \cite{vm1}, some further results were also established, which were not fully echoed in \cite{vm2_v2}. That is, in \cite{vm1}, a certain premouse $N_1=\M_{\infty}^{M_{\sw1}}$ was identified (the direct limit of the direct limit system considered there), and  it was shown that $N_1$ is an iterate of $M_{\sw1}$,
 $N_1$ is fully iterable in $M$,
and  $N_1$ is the core model $K^{M_{\sw1}}$ of $M_{\sw1}$ (with the core model defined in a natural manner for that context). But the full analogue of these results were missing from \cite{vm2_v2}. Here, we will establish the analogue of the results of \cite{vm1} in this regard.

 \begin{rem}\label{rem:*-trans}
Some of the proofs in this paper (including those of the main results),  like some in \cite{vm2_v2},
are not as of yet complete, because they rely on 
an integration of $*$-translation \cite{closson}
with the kinds of P-constructions we use here.
The integration requires some work to write up in detail, 
but is expected to be straightforward.
This work will appear in \cite{*-trans_add}.
\end{rem}

In sumary we prove, modulo the remaining details just mentioned above, the following theorem:

\begin{tm}\label{tm:main}Assume ZFC 
 + ``$M_{\sw\om}^\#$ exists and is $(\omega,\OR)$-iterable''. 
Assume that the appropriate adaptation of $*$-translation goes through routinely (see Remark \ref{rem:*-trans}).

Let $M=M_{\swom}$.
Then  $M$ has an inner model $\vV_\om$,
and $\vV_\om$ has an inner model $K$, such that:
\begin{enumerate}
 \item $\vV_\om$ is a fully iterable strategy mouse with $\om$ Woodin cardinals that is closed under its iteration strategy,
 \item $\vV_\om$ is lightface definable over $M$.
 \item Let $U$ be the universe of $\vV_\om$.
 Then:
 \begin{enumerate}
 \item $U$  is a ground of $M$,
 \item\label{item:main_3b} $U$ 
is the mantle of $M$ and of all set-generic extensions of $M$,
\item\label{item:main_3c} $U=\HOD^{M[G]}$
whenever $G\sub\Coll(\om,\eta)$ is $M$-generic and $\eta\geq\lambda$,
\end{enumerate}
\item $K$ is a non-dropping iterate of $M$,
via the unique $(0,\OR)$-strategy for $M$,
\item $K$ is fully iterable in $\vV_\om$ and in $M$ and in $V$,
\item\label{item:main_6} $K$ is the core model of $\vV_\om$ and of $M$.
\end{enumerate}
 \end{tm}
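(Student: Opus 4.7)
The plan is to mimic and extend the analyses of \cite{vm1, vm2}, treating the $\om$ Woodin cardinals of $M=M_{\swom}$ simultaneously. Fix the $\lambda$ witnessing (\ref{eqn:sw_omega_property}) in $M$, and let $(\delta_n,\kappa_n)$ for $n<\om$ enumerate cofinally below $\lambda$ its Woodin--strong pairs. At each $\delta_n$, I would set up the natural short-tree-strategy direct limit system of countable hulls of initial segments of $M$, based on the iterability hypothesis, in direct analogy with \cite{vm1, vm2}; its direct limit $\M_\infty^n$ is an iterate of $M$ above $\delta_n$ that is lightface definable over $M$ via a generic comparison argument. Patching these systems coherently across $n<\om$ produces an overall non-dropping iterate $\M_\infty$ of $M$ via the unique $(0,\OR)$-strategy, and I would set $K=\M_\infty$, which immediately gives clause (4).

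Next I would define $\vV_\om$ by recursion through $n<\om$. At the base, include the mouse material of $M$ below $\delta_0$. Given the stage built up through $\delta_n$, append above $\delta_n$ the strategy mouse encoding the short-tree strategy for trees based at $\delta_n$, using $*$-translation \cite{closson} combined with the P-construction of the paper---this is exactly where hypothesis $(\dagger)$ enters. Continue coherently through all $n<\om$, take a limit at $\lambda$, and close off above $\lambda$ using the relevant background segment of $M$. The resulting $\vV_\om$ is a strategy mouse with $\om$ Woodin cardinals, closed under its own iteration strategy because each short tree strategy at $\delta_n$ has been internalized; lightface definability over $M$ is inherited from the definability of each stage, yielding (1) and (2).

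For clause (3), I would follow the template of \cite{vm1, vm2}, combining generic comparison with $\lambda$-genericity iterations. The universe $U$ of $\vV_\om$ is a ground of $M$ via an extender-algebra-style forcing at some Woodin of $\vV_\om$ absorbing the $M$-material above $U$. For (3b) and (3c), to see $U$ is contained in every ground of $M$ and in $\HOD^{M[G]}$ when $\eta\geq\lambda$, use that every ordinal-definable set of $M[G]$ is recoverable from the direct limit systems, which themselves live in $U$; the converse inclusions follow from lightface definability of $\vV_\om$ over $M$. For clauses (5) and (6), iterability of $K$ in $\vV_\om$, $M$, and $V$ follows from its status as a non-dropping iterate of $M$ together with absoluteness of the direct-limit construction applied inside $\vV_\om$ (which sees all the relevant hulls); running the analogous $K^c$-construction inside $M$ and inside $\vV_\om$, and appealing to universality and rigidity of $K$, yields $K = K^M = K^{\vV_\om}$.

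The principal obstacle is the coherent construction of $\vV_\om$ across the infinitely many Woodin levels accumulating at $\lambda$. Whereas \cite{vm2} handles only two Woodins and uses the $*$-translation and P-construction machinery at essentially one layer, here one must iterate the construction $\om$ many times, ensuring the strategy information built at stage $n$ remains coherent with that at stage $n+1$ and that the assembly survives the limit at $\lambda$ as a genuine premouse with $\lambda$ itself still exhibiting the limit-of-Woodins-and-strongs behavior appropriate to $\vV_\om$. Securing this uniformity across $n$ and through $\lambda$ is precisely the content of $(\dagger)$. A secondary but non-trivial difficulty is verifying that $\vV_\om$ is closed under its own iteration strategy---not merely that it contains the restrictions of the $M$-strategy to trees based below each $\delta_n$---which will require arguing that trees on $\vV_\om$ itself can be normalized and reduced to short trees on iterates of $M$ handled by the internalized strategy data, in the style of the self-iterability arguments of \cite{vm2}.
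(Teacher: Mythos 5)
Your proposal captures the broad shape of what needs to happen, but it departs from the paper's route in a way that hides the real difficulty, and in one place it invokes machinery that does not apply.

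First, the direction of construction. You propose to build $\vV_\om$ from below, starting with the material below $\delta_0$ and successively appending ``the strategy mouse encoding the short-tree strategy for trees based at $\delta_n$.'' The paper instead constructs a \emph{decreasing} chain of proper class inner models $\vV_0=M\supsetneq\vV_1\supsetneq\cdots\supsetneq\vV_\om$, where $\vV_{n+1}$ is the fine-structural reorganization of $\mM_\infty^{\vV_n}[*]$ and the level-$(n+1)$ direct limit system lives \emph{inside $\vV_n$}, built from iterates of $\vV_n$ (not of $M$) via trees based on $\vV_n|\delta_n^{\vV_n}$. This top-down structure is not cosmetic: the strategy data that must be internalized at level $n+1$ is the strategy for trees on the level-$(n+1)$ model itself (a pVl with long extenders at $\delta_0,\dots,\delta_n$), and the P-constructions translating between $\vV_{n+1}$-trees and $\vV_n$-trees only make sense when $\vV_{n+1}$ has already been identified as an inner model of $\vV_n$. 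A naive bottom-up gluing of short-tree-strategy predicates for trees on $M$ would not produce a structure closed under its \emph{own} strategy, nor would it obviously lie inside $M$.

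Second, the ``patching'' of the level-$n$ systems into a single iterate $K$ of $M$ is exactly where the key new technical content lives, and your proposal passes over it. The paper's crucial fact (Lemma \ref{lem:Psi_vV_1_is_conservative} and its generalizations, the conservativity lemmas \ref{lem:conservativity_stable}, \ref{lem:conservativity_unstable}, \ref{lem:Sigma_vV_is_e-vec_min_pullback}) is that non-dropping $\Sigma_{\vV_{n+1}}$-iteration maps, restricted to $\mM_\infty^{\vV_n}=\vV_{n+1}\down n$, are themselves correct $\Sigma_{\mM_\infty^{\vV_n}}$-iteration maps, and more generally that $\Sigma_{\wW_{n+1}}^{\sn}$ is the $\vec e$-minimal pullback of $\Sigma_M$. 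Without this one cannot form the $\om$-stack $\Uu_0=\Tt_{0,1}^+\conc\Tt_{1,2\down 0}^+\conc\cdots$ whose last model $\vV_{0\om}$ is the paper's $K=\vV_\om\down 0$, nor certify it as a $\Sigma_M$-iterate. Your ``coherently patching'' is precisely this fact, and it is not automatic.

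Third, for clauses (5)--(6) you propose to ``run the analogous $K^c$-construction inside $M$ and inside $\vV_\om$, and appeal to universality and rigidity.'' Standard $K^c$ theory is unavailable here: $M$ and $\vV_\om$ have $\om$ many Woodin cardinals, so the usual covering/rigidity arguments do not apply and there is no $K^c$ in the classical sense. The paper instead takes the Steel-style intrinsic definition (intersection of thick hulls of universal weasels), and the content of Lemma \ref{lem:vV_om_down_0=K} is that $\Def(\vV_\om\down 0)=\vV_\om\down 0$, which identifies $K$ directly without any $K^c$-construction; universality is a cardinal-arithmetic computation using that $\vV_\om\down 0$ is a ground of $\vV_\om$ and $M$ via small forcing. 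Your route here is not a variant of the paper's argument but a step that would fail.

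Finally, smaller but worth noting: your claim that the Woodins of $\vV_\om$ are the $\delta_n$'s of $M$ is not right; they are the images $\delta_{n\infty}=i^{\Uu_0}_{0\infty}(\delta_n)$, strictly larger ordinals, and the assembly at $\lambda_\infty$ (not $\lambda$) requires the $*$-map and Vopenka-style analysis above $\lambda_\infty$, which is a separate difficulty from the $*$-translation issues you flag.
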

 
 What is meant by \emph{core model} above
 is clarified at the start of \S\ref{sec:main_proof}.

We now discuss somewhat the setup for the argument.
Recall that in \cite{vm2_v2},
starting with $\Vv_0=M_{\sw2}$,
the model $M_\infty\sub\vV_0$
is first constructed; this is the direct limit
of all correct iterates of $\vV_0$
via trees $\Tt$ such that $\Tt$ is based on $\vV_0|\delta_0^{\vV_0}$
and $\Tt$ has successor length $\eta+1$ and $b^\Tt$ does not drop, and $\Tt\rest\eta\in \vV_0|\kappa_0$,
where $\kappa_0$ is the least strong of $\vV_0$.
The model $\vV_1$ is then constructed,
with $M_\infty\sub\vV_1\sub\vV_0$.
Here $\vV_1$ has universe $M_\infty[*]$,
where $*$ is the $*$-map associated
to the direct limit system leading to $M_\infty$,
but $\vV_1$ is given a presentation as a fine structural presentation strategy mouse,
closed under strategy for trees based on $M_\infty|\delta_0^{M_\infty}$;
also, letting $\delta=\delta_0^{M_\infty}$,
we have that $\vV_0|\delta=M_\infty|\delta$
and $V_{\delta}^{\vV_0}=V_\delta^{M_\infty}$
is the universe of $\vV_0|\delta$,
and $\delta$ is Woodin in $\vV_0$,
although $V_{\delta+1}^{M_\infty}\subsetneq V_{\delta+1}^{\vV_0}$.
A second direct limit system
is then considered,
this for iterating $\vV_1$,
with trees based on $\vV_1|\delta_1^{\vV_1}$
(and $\delta_1^{\vV_1}=\delta_1^{M_\infty}=\delta_1^M$ is Woodin in $\vV_1$),
and, analogously to before, whose maximal initial segment $\Tt\rest\eta\in \vV_1|\kappa_1$,
where $\kappa_1=\kappa_1^{M_\infty}=\kappa_1^M$ is the least strong cardinal of $M$ such that $\delta_1^M<\kappa_1$ (and this is the least strong cardinal of $\vV_1$ overall).
This produces a direct limit model $N_\infty$,
with $N_\infty\sub\vV_1$,
and then the model $\vV_2$
is constructed, with $N\sub\vV_1\sub\vV_1$,
and $\vV_2$ is a fine structural strategy
mouse with 2 Woodins, closed under its own strategy, and it is fully iterable in $V$ and in $M$ and in itself. Its universe is the mantle and eventual generic HOD of $M$.

In this paper we will proceed analogously,
but now we will produce a sequence $\left<\vV_n\right>_{n\leq\om}$,
with $\vV_0=M=M_{\sw\om}$
and $\vV_{n+1}\sub\vV_n$ and $\vV_\om\sub\vV_n$ for all $n$,
and also an $n$th level $M_\infty^{\vV_n}\sub\vV_n$,
which is an iterate of $\vV_n$,
the direct limit all correct iterates of $\vV_n$
via trees based on $\vV_n|\delta_n^{\vV_n}$
whose maximal initial segment $\Tt\rest\eta\in\vV_n|\kappa_n^M$.
And then $\vV_{n+1}$ will have
universe that of $M_\infty^{\vV_n}[*]$,
where $*$ is the $*$-map associated
to this direct limit,
but $\vV_{n+1}$ will be presented
as a fine structural strategy mouse,
closed under strategy for trees based on $\vV_{n+1}|\delta_n^{\vV_n}$.
We will also define a closely related sequence, $\left<\wW_n\right>_{n\leq\om}$,
with $\wW_0=\vV_0=M$ and  $\wW_i\sub\wW_j$ for all $i\leq j\leq\om$, and $\wW_{n+1}$ related to $\wW_n$ in the same manner that $\vV_{n+1}$ relates to $M_\infty^{\vV_n}$.

Aside from verifying that the finite stages
$\left<\vV_n\right>_{n<\om}$ and $\left<\wW_n\right>_{n<\om}$ 
work out fine (there are some minor further subtleties to handle here, which were not dealt with in \cite{vm2_v2}),
the next most obvious issue is how to
identify a/the limit model
$\vV_\om$. The key here will
be to see that non-dropping iteration maps $j$ on $\vV_{n+1}$ act on the inner model $M_\infty^{\vV_n}\sub\vV_{n+1}$
also as iteration maps on that model.
Therefore we can consider the eventual
iterate $\vV_{n\om}$ of $\vV_n$.
That is, writing $j_{n,n+1}:\vV_n\to(M_\infty)^{\vV_n}$ for the iteration map, then $j_{n+1,n+2}(M_\infty^{\vV_n})$ is in fact a correct iterate of $(M_\infty)^{\vV_n}$,
and $j_{n+1,n+2}\rest (M_\infty)^{\vV_n}$ is the corresponding iteration map;
likewise for $j_{n+2,n+3}(j_{n+1,n+2}(M_\infty^{\vV_n}))$, etc.
Thus, writing $\vV_{n\om}$ for the resulting
iterate of $\vV_n$ under these maps
and $j_{n\om}:\vV_n\to\vV_{n\om}$
the iteration map (so $j_{n+1,\om}\com j_{n,n+1}=j_{n\om}$), then $\vV_{n\om}\sub\vV_{n+1,\om}$
(and this pair of models relate
much like $(M_\infty)^{\vV_n}$ relates to $\vV_{n+1}$), and we can consider the model $L[\left<\vV_{n\om}\right>_{n<\om}]$.
This will be the universe of the $\om$th Varsovian model $\vV_\om$,
and we will present $\vV_\om$
as a fine structural strategy mouse,
with $\om$ Woodins, closed under its own strategy.

The reader we have in mind is familiar with \cite{vm2_v2}. Much material there works here with essentially no or very little change, so there is much from there that we will not bother rewriting here. However, we execute significant portions
somewhat differently, so a rather large fraction
is redone here. Thus, if a reader is interested in reading this paper, but is not yet familiar with \cite{vm2_v2}, it probably makes most sense to proceed ahead, and refer to \cite{vm2_v2} as needed.

***Some of the notation/terminology used in the earlier parts of the paper are not used later. Primarily,
the notation/terminology of \S\ref{sec:vV_1,wW_1}
is based more on that of \cite{vm2_v2},
but parts of this are done somewhat differently from \S\ref{sec:vV_n+1+to_vV_n+2} onward. This might be a little annoying, but hopefully clear enough. 
For example, the terminology \emph{Vsp (Varsovian strategy premouse)} is used in \S\ref{sec:vV_1,wW_1}, but this is dropped,
and replaced with the related (but rather uninspired) terminology \emph{pVl (provisionary Varsovian-like)} in \S\ref{sec:vV_n+1+to_vV_n+2}. These issues should be improved in a later draft of the paper,
by updating \S\ref{sec:vV_1,wW_1} to match better with the later notions.

\subsection{Acknowledgments}

I would like to thank Ralf Schindler for many discussions on Varsovian models and his suggestion for simplifying a certain forcing construction, as described at the start of \S\ref{sec:con_vV_om}.
I also thank the organizers of the Berkeley conference in inner model theory, 2019,
and the European set theory society conference, Vienna, 2019, 
for providing the opportunity to   talk about the work in this paper.
I also thank Sandra M\"uller
and TU Vienna
for their hospitality during my stay
in Vienna in November 2022,
and providing further opportunity
to present work from the paper.

The work was supported by the Deutsche Forschungsgemeinschaft (DFG, German Research Foundation) under Germany's Excellence Strategy EXC 2044-390685587, Mathematics M\"unster: Dynamhcs-Geometry-Structure.
Teilweise gef\"ordert durch die Deutsche Forschungsgemeinschaft (DFG) -- Projektnummer 445387776.
Partly supported by the Deutsche Forschungsgemeinschaft (DFG, German Research Foundation) -- project number 445387776.

\subsection{Terminology and notation}

The premice in this paper are use $\lambda$-indexing.  
We formally use Mitchell-Steel fine structure,
as modified in \cite[\S5]{V=HODX_pub} (though the choice of fine structure  probably does not play a significant role). We mostly follow the terminology and notation
of \cite{fullnorm_v3}, \cite{iter_for_stacks},
and \cite{vm2_v2}, so we don't bother to repeat the summaries in those papers. However, we collect below a few notational components which either differ from or are not covered by the sources just mentioned (but this list might not be complete):
\begin{enumerate}[label=--]
 \item 
One difference between this paper and \cite{vm2_v2} is that here, if $M$ is a premouse or related structure then $M|\alpha$
denotes the initial segment of $M$ of ordinal height $\alpha$, including extender indexed at $\alpha$, whereas $M||\alpha=(M|\alpha)^{\passive}$,
where $P^\passive$ is the ``passivization'' of a premouse $P$, i.e. the passive premouse with the same height as has $P$, and which agrees with $P$ strictly below its height.
\item One difference between here and \cite{iter_for_stacks}
is that here, $\lh(E)$ denotes the index of a premouse $E$ in $\es_+^M$ (this is written differently in \cite{iter_for_stacks}).

\item If $\Tt$ is an iteration tree on a premouse,
then $\exit^\Tt_\alpha$ denotes $M^\Tt_\alpha|\lh(E^\Tt_\alpha)$.

\item  If $M$ is a proper class premouse,
then a \emph{non-dropping iterate} of $M$
is a premouse $N$ of form $M^\Tt_\infty$,
where $\Tt$ is an iteration tree on $M$
of successor length such that $[0,\infty]^{\Tt}$ does not drop, and if $\Tt$ is proper class
then $\OR^N=\OR$ (that is, we do not allow $N$ to have ``iterated past the ordinals'').

\item If $M$ is a premouse, and $\eta<\OR^M$
and $X\sub\eta$,
then $\eta$ is an \emph{$X$-cutpoint} of $M$ iff
for every $E\in\es_+^M$,
if $\crit(E)<\eta<\lh(E)$ then $\crit(E)\in X$.
And $\eta$ is a \emph{strong $X$-cutpoint}
iff for every $E\in\es_+^M$,
if $\crit(E)\leq\eta<\lh(E)$ then $\crit(E)\in X$.

\item Let $M$ be a premouse and $\Sigma$ an iteration strategy for $M$.
Then a \emph{$\Sigma$-iterate}
is just a premouse of form $M^\Tt_\infty$
for some successor length tree $\Tt$ via $\Sigma$ (hence with $M^\Tt_0=M$). For $\eta\leq\OR^M$,
 $\Sigma_{\eta}$
denotes the restriction of $\Sigma$ to trees based on $M|\eta$.

\item If $N$ is an iterate of $M$ (via some context determining strategy $\Sigma$)
then $i_{MN}:M\to N$ often denotes the iteration map.
\end{enumerate}
\section{Properties of $\Mswom$ and $\Sigma_{\Mswom}$}

\begin{dfn}Recall we defined $M_{\sw\om}^\#$ and $M_{\sw\om}$
in Definition \ref{dfn:M_swom}. We write $M=M_{\swom}$ and $M^{\#}=M_{\swom}^{\#}$.  We assume 
	throughout that $M^\#$ exists and is $(\om,\OR)$-iterable.

	Easily $\rho_1^{M^{\#}}=\om$
and $p_1^{M^{\#}}=\emptyset$ and  $M$ is sound, so
a (and the unique) $(\om,\OR)$-strategy for $M^{\#}$ is equivalent to a (and the) $(0,\OR)$-strategy for $M^{\#}$, and the latter we denote $\Sigma_{M^{\#}}$.
Now $\Sigma_{M^{\#}}$  extends
canonically to an optimal\footnote{The prefix ``optimal-'' just means that the stack is formed according to the iteration game for forming stacks in which player 1 is not allowed to drop artificially between rounds.}-$(0,\OR,\OR)$-strategy $\Sigma_{M^{\#}}^{\stk}$, via normalization as in
see \cite{fullnorm_v3} (cf.~\cite[Fact 3.4, {$\Sigma1$}]{vm2_v2}). Let $\Sigma=\Sigma_M$ be the $(0,\OR)$-strategy for $M$ induced by $\Sigma_{M^{\#}}$.
Then this is also the unique $(0,\OR)$-strategy
for $M$, and extends  canonically to an optimal-$(0,\OR,\OR)$-strategy for $M$,
which we denote $\Gamma=\Sigma_{M}^{\stk}$.
\end{dfn}

\begin{dfn} Let $\psi_{\swom}$ be the statement in the passive premouse language asserting ``There is an ordinal $\lambda$ which is a limit of strong cardinals
	and a limit of Woodin cardinals, as witnessed by $\es$''.
\end{dfn}

\begin{rem}  \cite[Remark 3.2--Definition 3.7 (inclusive)]{vm2_v2} goes through
	unmodified for $M,\Sigma,\Gamma$ as defined above (regarding the definitions there,  we make the analogous definitions here). We replace \cite[Definition 3.8]{vm2_v2} with the definition immediately below, and given that change,
	\cite[Fact 3.9]{vm2_v2}
goes through after replacing both instances of ``$\kappa_1^N$'' there with ``$\lambda^N$'',
and \cite[Lemma 3.10]{vm2_v2} goes through unmodified, with essentially the same proof. (Let us ignore \cite[Lemma 3.11]{vm2_v2} at least for the moment; it was not used directly in \cite{vm2_v2}.)\end{rem}

\begin{dfn}\label{dfn:Mswom-like}
	A premouse $N$ is \emph{$\Mswom$-like}
	iff (i) $N$ is proper class, (ii) $N\sats\psi_{\swom}$, (iii) there is no active $P\pins N$
	such that $P|\crit(F^P)\sats\psi_{\swom}$, and (iv) $N\sats T$, where $T$ is a certain finite sub-theory of 
	the theory of $M$. We leave it to the
	reader to supplement $T$ with whatever statements are needed along the way
	to make arguments work.
	For an $\Mswom$-like  model $N$, write
\begin{enumerate}[label=--]
\item $\delta_0^N=$ the least Woodin  of $N$,
\item $\kappa_0^N=$ the least strong of $N$ (note $\kappa_0^N>\delta_0^N$),
\item $\delta_{n+1}^N=$ the least Woodin $\delta$ of $N$ such that $\delta>\kappa_n^N$,
\item $\kappa_{n+1}^N=$ the least strong $\kappa$ of $N$ such that $
\kappa>\kappa_n^N$ (note $\kappa_{n+1}^N>\delta_{n+1}^N$),
\item $\lambda=\sup_{n<\om}\kappa_n^N=\sup_{n<\om}\delta_n^N$.
\end{enumerate}
(So $\kappa_n^N$ is the $n$th strong cardinal of $N$, but note each strong
is a limit of Woodins.) We also write $\kappa_n^{+N}=(\kappa_n^N)^{+N}$. 

	If $N=M$, we may supress the superscript $N$, and so $\delta_0 =
	\delta_0^{M}$, etc.\footnote{We also write $\eta^{+N}$ for the cardinal
		successor of $\eta$ as computed in $N$. So there is an ambiguity
		in the notation $\kappa_n^{+N}$, since $\kappa_n$ denotes $\kappa_n^M$.
		But whenever we write $\kappa_n^{+N}$, we mean $(\kappa_n^N)^{+N}$,
		not $(\kappa_n)^{+N}=(\kappa_n^M)^{+N}$ (though often these will in fact coincide).}
\end{dfn}

\section{The first Varsovian models $\vV_1$ and $\wW_1$}\label{sec:vV_1,wW_1}

In this section we define and anlayse $\vV_1$
and its iteration strategy $\Sigma_{\vV_1}$. Much of this is a direct adaptation of material in \cite{vm2_v2}, and many of the actual details are implemented just as there, and so we don't bother to repeat those. But we will  describe
a key new fact, Lemma \ref{lem:Psi_vV_1_is_conservative},
which is essential
to our later definition and analysis of $\vV_\om$,
and also make some modifications to the setup in \cite{vm2_v2}.
 In the end we will modify the setup in otherw ways in the general construction of $\vV_{n+2}$ from $\vV_{n+1}$ in \S\ref{sec:vV_n+1+to_vV_n+2},
and it would be somewhat more optimal to have the setup in the present section agree with the later one, but hopefully this won't be too bothersome. (***Improve this.)

Recall that \cite[\S2]{vm2_v2} is set up in general, without any particular base model $M$.
Now \cite[\S\S4.1--4.6,4.8--4.11***]{vm2_v2}  go through as there (still with the help
of \cite{*-trans_add} for $*$-translation), changing $M=\Mswsw$ to $M=\Mswom$, and \emph{$\Mswsw$-like} to \emph{$\Mswom$-like}.

We add the following corollary of \cite[Lemma 4.57***]{vm2_v2}:

\begin{lem}
	Let $N$ be any non-dropping $\Sigma$-iterate of $M$
	and $N'$ any non-dropping $\Sigma_{N}$-iterate of $N$
	such that $N|\kappa_0^N\pins N'$.
	Then there are $\Sigma$-iterates $\bar{\M_\infty^N}$
	and $\bar{\M_\infty^{N'}}$ of $M$
	which are $\delta_0^{\bar{\M_\infty^N}}$-sound
	and $\delta_0^{\bar{\M_\infty^{N'}}}$-sound
	respectively,
	with $\M_\infty^N|\delta_0^{\M_\infty^N}=\bar{\M_\infty^N}|\delta_0^{\bar{\M_\infty^N}}$
	and $\M_\infty^{N'}|\delta_0^{\M_\infty^{N'}}=
	\bar{\M_\infty^{N'}}|\delta_0^{\bar{\M_\infty^{N'}}}$,
	and $\bar{\M_\infty^{N'}}$ is a $\Sigma_{\bar{\M_\infty^N}}$-iterate
	of $\bar{\M_\infty^N}$,
	and $i_{NN'}\rest(\M_\infty^N|\delta_0^{\M^N_\infty})$
	agrees with the iteration map $i_{\bar{\M_\infty^N}\bar{\M_\infty^{N'}}}$.
	\end{lem}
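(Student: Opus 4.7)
The plan is to apply the preceding Lemma 4.57 (as adapted from \cite{vm2}) separately to $N$ and $N'$, obtaining $\delta_0$-sound $\Sigma$-iterates $\bar{\M_\infty^N}$ and $\bar{\M_\infty^{N'}}$ of $M$ with the stated $\delta_0$-agreement, and then to lift the natural inclusion of the $N$-direct limit system into the $N'$-direct limit system to an actual $\Sigma_{\bar{\M_\infty^N}}$-iteration $\bar{\M_\infty^N}\to\bar{\M_\infty^{N'}}$ that is above $\delta_0^{\bar{\M_\infty^N}}$.

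First, applying Lemma 4.57 to $N$ and then to $N'$ immediately yields $\bar{\M_\infty^N}$ and $\bar{\M_\infty^{N'}}$ with $\bar{\M_\infty^N}|\delta_0^{\bar{\M_\infty^N}}=\M_\infty^N|\delta_0^{\M_\infty^N}$ and $\bar{\M_\infty^{N'}}|\delta_0^{\bar{\M_\infty^{N'}}}=\M_\infty^{N'}|\delta_0^{\M_\infty^{N'}}$. Second, since $N|\kappa_0^N\pins N'$, the $\Gamma$-tree from $N$ to $N'$ consists of extenders with critical point $\geq\kappa_0^N$, so $i_{NN'}$ fixes $N|\kappa_0^N$ pointwise, and in particular fixes the indexing data of the direct limit system defining $\M_\infty^N$, which is coded inside $N|\kappa_0^N$. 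That system therefore embeds as a sub-system of the analogous $N'$-system, and the extra iterates present only in $N'$ (indexed by trees in $N'|\kappa_0^{N'}\setminus N|\kappa_0^N$, if any) determine a natural internal iteration $\M_\infty^N\to\M_\infty^{N'}$ that is based on $\M_\infty^N|\delta_0^{\M_\infty^N}$.

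Third, this internal iteration transfers via the soundification mechanism of Lemma 4.57 to a genuine $\Sigma_{\bar{\M_\infty^N}}$-iteration tree witnessing $\bar{\M_\infty^{N'}}$ as a $\Sigma_{\bar{\M_\infty^N}}$-iterate of $\bar{\M_\infty^N}$. This tree is based on $\bar{\M_\infty^N}|\delta_0^{\bar{\M_\infty^N}}$, so $i_{\bar{\M_\infty^N}\bar{\M_\infty^{N'}}}$ fixes that initial segment pointwise. Since $\M_\infty^N|\delta_0^{\M_\infty^N}$ is in turn fixed by $i_{NN'}$ (again because the $\Gamma$-tree from $N$ to $N'$ is above $\kappa_0^N$, and $\delta_0^{\M_\infty^N}$ lies at or below $\kappa_0^N$ in the identification standard in this setting), the two restrictions agree and both equal the identity. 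The main obstacle is verifying the functoriality of the soundification of Lemma 4.57 with respect to the embedding of the $N$-system into the $N'$-system, i.e., checking that the tree producing $\bar{\M_\infty^{N'}}$ from $\bar{\M_\infty^N}$ is compatible with the natural internal iteration $\M_\infty^N\to\M_\infty^{N'}$ on the relevant $\delta_0$-initial segment; this should reduce to the standard uniqueness of comparison of $\delta_0$-sound $\Sigma$-iterates of $M$, together with the fact that $\Sigma$ is the unique $(0,\OR)$-strategy for $M$ and hence also determines the unique strategies for each $\bar{\M_\infty^?}$.
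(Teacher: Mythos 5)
The proposal breaks down at its final step, and the breakdown is not a small oversight but a miscalibration of the basic geometry of the setup. You assert that $\delta_0^{\M_\infty^N}$ ``lies at or below $\kappa_0^N$'' and hence that $i_{NN'}\rest(\M_\infty^N|\delta_0^{\M_\infty^N})$ is the identity, so that the required agreement reduces to both maps being trivial. But in this framework $\delta_0^{\mM_{\infty 0}}=\kappa_0^{+M}$ (the Woodin of the internal direct limit sits at the cardinal successor of $\kappa_0$, not at or below $\kappa_0$), and correspondingly $\delta_0^{\M_\infty^N}=\kappa_0^{+N}>\kappa_0^N$. The remark immediately after the lemma in the paper says the lemma applies in particular when $N'=\Ult(N,E)$ with $\crit(E)=\kappa_0^N$; in that case $i_{NN'}$ moves $\kappa_0^N$ and a great many ordinals in $[\kappa_0^N,\kappa_0^{+N})$, so $i_{NN'}\rest(\M_\infty^N|\delta_0^{\M_\infty^N})$ is emphatically \emph{not} the identity. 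The whole content of the lemma is that this nontrivial restriction is an iteration map; a proof that reduces it to the identity has proven nothing.

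A second, compounding confusion: you write that the $\Sigma_{\bar{\M_\infty^N}}$-tree is ``based on $\bar{\M_\infty^N}|\delta_0^{\bar{\M_\infty^N}}$, so $i_{\bar{\M_\infty^N}\bar{\M_\infty^{N'}}}$ fixes that initial segment pointwise.'' A tree \emph{based on} $P|\delta_0^P$ is one that iterates precisely on that initial segment and therefore moves it; a tree that fixes it pointwise would be one \emph{above} $\delta_0^P$. So this clause has things exactly backwards. Finally, note the paper's route avoids applying \cite[Lemma 4.57]{vm2} directly to $N$ and $N'$ (which need not be $\kappa_0$-sound): it first passes to the $\kappa_0$-hulls $\bar{N},\bar{N'}$, observes that $\M_\infty^{\bar N}$ and $\M_\infty^N$ agree up to $\delta_0$ (since the internal direct limit system lives in $N|\kappa_0^N=\bar N|\kappa_0^{\bar N}$ and the height $\delta_0^{\M_\infty^N}=\kappa_0^{+N}=\kappa_0^{+\bar N}$ is preserved by passing to the $\kappa_0$-core), and then deduces the agreement of $i_{\bar N\bar{N'}}\rest\M_\infty^{\bar N}$ with the iteration map by a second application of \cite[Lemma 4.57]{vm2}, using normalization and factoring to get a normal tree $\bar N\to\bar{N'}$. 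That factoring/normalization step is the actual work, and it is precisely what your ``both restrictions are the identity'' shortcut tries to skip.
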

\begin{proof}
	Applying \cite[Lemma 4.57***]{vm2_v2} to $\bar{N}=$ the $\kappa_0^N$-hull
	of $N$, we have that $\M_\infty^{\bar{N}}$ is a $\delta_0^{\M_\infty^{\bar{N}}}$-sound
		$\Sigma$-iterate of $M$. Setting $\bar{\M_\infty^N}=\M_\infty^{\bar{N}}$,
		it agrees with $\M_\infty^N$ appropriately.
		Let $\bar{N'}$ and $\bar{\M_\infty^{N'}}=\M_\infty^{\bar{N'}}$
		be likewise. Note that normalizing, we get a normal tree leading from $\bar{N}$
		to $N'$, and factoring this, a normal tree leading from $\bar{N}$
		to $\bar{N'}$ (via the correct strategies). But then
		by \cite[Lemma 4.57***]{vm2_v2},
		$i_{\bar{N}\bar{N'}}\rest\M_\infty^{\bar{N}}:\M_\infty^{\bar{N}}\to\M_\infty^{\bar{N'}}$ is just the iteration map $\M_\infty^{\bar{N}}\to\M_\infty^{\bar{N'}}$,
		which gives what we need.	
	\end{proof}

Note that the lemma applies in particular when $N'=\Ult(N,E)$ for some $E\in\es^N$
which is $N$-total with $\crit(E)=\kappa_0^N$.

We next follow \cite[\S12]{vm2_v2} through its Lemma 4.75 (inclusive).
At this point, we give a neater definition for $\Psi^{\sn}_{\vV_1}$ than that
used in \cite{vm2_v2}, following instead the form of \cite[Definition 5.67***]{vm2_v2}
(note we are adapting all notation and terminology directly from \cite{vm2_v2},
so, for example, $M$ now denotes $\Mswom$, $\vV_1$ denotes $\vV_1^M$, etc).

\begin{dfn}\label{dfn:1-Vsp}
 Recall the notion of \emph{Vsp} from \cite[Definition 4.51***]{vm2_v2}.
 We have now adapted this to the present notion of \emph{Vsp} (without explicit mention).
 We also call this \emph{1-Vsp}
(later we will use instead \emph{$n$-pVl},
 a related notion).
Given a 1-Vsp,
 an extender in $\es^N$  is \emph{$0$-long}  if it is long (and hence its measure space is $\delta_0^N$,
including that $\delta_0^N$ exists in the first place).
 Recall that by definition, every initial segment of an 1-Vsp is also an 1-Vsp. So regular premice are also 1-Vsps.
A $1$-Vsp is \emph{proper}
if there is a $0$-long extender in $\es_+^N$.

Suppose $N$ is a proper $1$-Vsp.
Then $\gamma^N=\gamma_0^N$ |denotes the least $\gamma$ such that $F^{N|\gamma}$ is $0$-long,
$e^N=e_0^N$ denotes $F^{N|\gamma_0^N}$,
and $\Delta^{N}$ denotes $\delta_{0}^{N}$.
If $k\leq\om$ is the soundness
degree of $N$ (if $N$ is proper class, it doesn't matter what $k$ is), then we write $N\down 0$
for the structure $P$ such that $P|\gamma^N=N||\gamma^N$, and letting $U=\Ult_k(N,e^N)$,
then $\OR^P=\OR^U$
and $\es^P\rest(\gamma^N,\OR^U]$ is given by translating $\es^U\rest(\gamma^N,\OR^U]$;
that is, for each $\nu\in(\gamma^N,\OR^U]$,
\begin{enumerate}[label=--]
\item $P|\nu$ is active iff $U|\nu$ is active, and
\item if $U|\nu$ is active then $F^{P|\nu}\rest\OR=F^{U|\nu}\rest\OR$
(and $F^{P|\nu}$ is short),
\end{enumerate}
We demand moreover of proper class
proper $1$-Vsps $N$ (as part of the definition)
 that $U=\vV^P=\vV_{1}^P$. We also write $\vV_{1\down 0}$ for $\vV_1\down 0=\mM_{\infty0}$.
\end{dfn}

Recall that $\vV_{1\down 0}=\mM_{\infty0}$ is
	 a non-dropping $\Sigma_{\vV_0}$-iterate of 
	 $\vV_0$, via a (short-normal) tree based on $\delta_{0}^{\vV_0}$ (and recall here that $\vV_0=M$).

\begin{dfn}
 Let $\vV$ be a proper class proper 1-Vsp.
 Let $\Sigma$ be a short-normal $(0,\OR)$-strategy for $\vV$.
 
 We say that $\Sigma$ has \emph{$\Delta$-mhc}
 iff $\Sigma$ has mhc (minimal hull condensation; cf.~\cite{fullnorm_v3})
 for all trees based on $\vV|\delta_0^{\vV}$. 

 If $\Sigma$ has $\Delta$-mhc
 and $\wW$ is a non-dropping $\Sigma$-iterate of $\wW$ via a tree based on $\vV|\delta_0^{\vV}$, then $\Sigma_{\wW}$ denotes the $(0,\OR)$-strategy for $\wW$
 induced by $\Sigma$ via normalization (see \cite{fullnorm_v3}).\end{dfn}
 
 \begin{dfn}Let $\vV$ be a proper class proper 1-Vsp.
 Let $\Sigma$ be a short-normal $(0,\OR)$-strategy for $\vV$.
 
 We say that $\Sigma$ is \emph{self-coherent}
 if $\Sigma$  has $\Delta$-mhc and for every
 tree $\Tt$ via $\Sigma$ (so $\Tt$ is short-normal) of length $\alpha+1$, if $E\in\es_+(M^\Tt_\alpha)$ is long,
 then letting $\beta\leq_\Tt\alpha$ be least such that $[0,\beta]_\Tt\cap\dropset^\Tt=\emptyset$ and either $\beta=\alpha$ or $\delta_0^{M^\Tt_\beta}<\lh(E^\Tt_\beta)$,
 then $E^\Tt_\alpha=j\rest (M^\Tt_\beta|\delta_0^{M^\Tt_\beta})$ for
 some $\Sigma_{M^\Tt_\beta}$-iteration map $j$ (via a tree  based on $M^\Tt_\beta|\delta_0^{M^\Tt_\beta}$,
 with non-dropping main branch).
\end{dfn}
\begin{dfn}\label{dfn:self-consistent}
Let $\vV$ be a proper class proper 1-Vsp.
 Let $\Sigma$ be a $(0,\OR)$-strategy for $\vV$.
 (Note trees via $\Sigma$ are allowed to use long extenders.)
Let $\Sigma_{\sn}$ be the restriction of $\Sigma$ to short-normal trees.
 We say that $\Sigma$ is \emph{self-consistent}
 iff  $\Sigma_{\sn}$ has $\Delta$-mhc
 and is self-coherent, and for every tree $\Tt$ via $\Sigma$, for all $\eta<\lh(\Tt)$
 such that either:
 \begin{enumerate}[label=--]
  \item  $\eta=\beta+1$ for some $\beta$ such  that $E^\Tt_\beta$ is long,\footnote{Note that then $\delta_0^{M^\Tt_{\beta+1}}=\lh(E^\Tt_\beta)<\lh(E^\Tt_{\beta+1})$,  so $\Tt$ can only act further below $\delta_0^{M^\Tt_{\beta+1}}$
  by using another long extender at some stage $\geq\beta+1$.} or
  \item $\eta$ is a limit of ordinals $\beta<\eta$ such that $E^\Tt_\beta$ is long,\footnote{Note that in this case, $\delta(\Tt\rest\eta)$ is the least measurable of $M^\Tt_\eta$, and $\delta(\Tt\rest\eta)<\delta_0^{M^\Tt_\eta}$, so after stage $\eta$, $\Tt$ can act further below $\delta_0^{M^\Tt_\eta}$ even without using long extenders.}
  \end{enumerate}
  for every $\xi\in(\eta,\lh(\Tt))$ such that $\Tt\rest(\eta,\xi)$ uses no  long extenders, $\Tt\rest(\eta,\xi)$ is formed according to $(\Sigma_{\sn})_{M^\Tt_\eta}$. 
\end{dfn}

The following lemma is easy to see:
\begin{lem}\label{lem:unique_ext_to_(0,OR)-strat}
 Let $\vV$ be a proper class proper 1-Vsp. Let $\Sigma$ be a short-normal $(0,\OR)$-strategy for $\vV$ which has $\Delta$-mhc and is self-coherent. Then there is a unique extension of $\Sigma$ to a self-consistent $(0,\OR)$-strategy.
\end{lem}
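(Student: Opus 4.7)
The plan is to exploit the observation, implicit in self-coherence, that each use of a long extender in a tree is canonically realized by a short-normal iteration via $\Sigma_{\sn}$. Specifically, if $E=E^\Tt_\alpha$ is long, self-coherence supplies a $\Sigma_{\sn}$-iteration map $j$ on the relevant base $M^\Tt_\beta$ with $E=j\rest(M^\Tt_\beta|\delta_0^{M^\Tt_\beta})$, and $\Delta$-mhc ensures $j$ is uniquely determined by $E$ and $M^\Tt_\beta$. Consequently, any tree $\Tt$ via a putative self-consistent extension admits a canonical unfolding $\Tt^*$ as a short-normal tree via $\Sigma_{\sn}$ on $\vV$, obtained by splicing together the short-normal segments mandated by self-consistency with the $\Sigma_{\sn}$-iteration segments that realize each long-extender step.

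For uniqueness, let $\Sigma^1,\Sigma^2$ be self-consistent extensions of $\Sigma$ and $\Tt$ a tree via both, of limit length $\eta$; I will argue they must assign the same branch. If $\eta$ is not a limit of long-extender-uses, pick the largest $\eta_0<\eta$ that is either $0$, the successor of a long-extender stage, or a limit of long-extender-uses; then $\Tt\rest[\eta_0,\eta)$ uses no long extenders, and Definition \ref{dfn:self-consistent} forces both strategies to use $(\Sigma_{\sn})_{M^\Tt_{\eta_0}}(\Tt\rest[\eta_0,\eta))$, which is defined because $M^\Tt_{\eta_0}$ is, by induction on the construction of $\Tt^*$, a non-dropping $\Sigma_{\sn}$-iterate of $\vV$ (it appears as a model in $\Tt^*$). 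If instead $\eta$ is a limit of long-extender-uses, the corresponding initial segment of $\Tt^*$ is a short-normal tree via $\Sigma_{\sn}$, so $\Sigma_{\sn}$ picks a unique cofinal branch, whose pullback under the splicing identification yields a unique cofinal branch of $\Tt$; both $\Sigma^i$ must choose this branch, since any alternative would spoil the translation with a $\Sigma_{\sn}$-non-branch and violate self-consistency applied to the tail. Successor stages are trivial.

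For existence, define $\Sigma^*$ recursively by the recipe above: maintain $\Tt$ together with its unfolding $\Tt^*$, and at each limit stage of $\Tt$ assign the branch dictated by the uniqueness argument (the $(\Sigma_{\sn})_{M^\Tt_{\eta_0}}$-branch in the first case, the pullback of the $\Sigma_{\sn}$-branch of $\Tt^*$ in the second). Self-consistency then holds by construction, and $\Sigma^*\rest\text{(short-normal trees)}=\Sigma$ because in that regime $\Tt^*=\Tt$ and the prescription reduces to $\Sigma_{\sn}$. The main obstacle is to verify that the unfolding remains well-defined at all stages, and that the pullback branch at a limit of long-extender-uses is genuinely a cofinal branch of $\Tt$ respecting its tree order. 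This uses $\Delta$-mhc to pin down the $\Sigma_{\sn}$-realizations of long extenders, and minimal hull condensation together with the normalization machinery of \cite{fullnorm} to recognize the spliced result as an honest $\Sigma_{\sn}$-tree and to commute direct limits of models between $\Tt$ and $\Tt^*$; I expect this to go through routinely but it is where the real technical content lies.
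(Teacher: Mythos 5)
There is no proof of this lemma in the paper (it is stated as ``easy to see''), so I can only assess your proposal on its own terms. Your overall framework -- interpret each long extender via its $\Sigma_{\sn}$-realization supplied by self-coherence, and propagate constraints through the ``unfolded'' short-normal picture -- is a sensible plan, and your handling of uniqueness when $\eta$ is not a limit of long-extender stages is correct: there the immediately preceding segment $\Tt\rest[\eta_0,\eta)$ is long-extender-free, and Definition~\ref{dfn:self-consistent} with $\xi=\eta+1$ forces both strategies to agree with $(\Sigma_{\sn})_{M^{\Tt}_{\eta_0}}$.

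The gap is in your case where $\eta$ \emph{is} a limit of long-extender stages. You argue that an alternative branch ``would spoil the translation \ldots\ and violate self-consistency applied to the tail,'' but self-consistency as defined constrains only the tree segments $\Tt\rest(\eta,\xi)$ for $\xi>\eta$; it says nothing about the branch chosen \emph{at} $\eta$, and in particular a tree of length exactly $\eta+1$ triggers no tail constraint whatsoever. Your further claim that the $\Sigma_{\sn}$-branch of the unfolded $\Tt^*$ ``pulls back'' to a cofinal branch of $\Tt$ also needs justification: it requires the $\Sigma_{\sn}$-branch of $\Tt^*$ to respect the chunk boundaries of the unfolding, which is not automatic and is exactly where your ``real technical content'' remark falls.

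The cleaner reason for uniqueness at such $\eta$ seems to be structural rather than strategic. Write $\beta_0<\beta_1<\dots$ for the long-extender stages cofinal in $\eta$. By the $0$-maximality predecessor rule for long extenders, $\pred^{\Tt}(\beta_{i+1}+1)=\beta_i+1$, so the nodes $\beta_i+1$ form a cofinal $<^{\Tt}$-chain $b$. If $c$ is any other cofinal branch, it diverges from $b$ at some $\beta_i+1$ and thereafter uses only short extenders with critical points $\geq\nu^{\Tt}_{\beta_i}=\lh(E^{\Tt}_{\beta_i})=\delta_0^{M^{\Tt}_{\beta_i+1}}$; hence $\delta_0^{M^{\Tt}_c}=\delta_0^{M^{\Tt}_{\beta_i+1}}$ is strictly below $\delta(\Tt\rest\eta)$. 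But $M^{\Tt}_c$ must agree with $M(\Tt\rest\eta)$ below $\delta(\Tt\rest\eta)$, which carries the $0$-long extenders $E^{\Tt}_{\beta_j}$ for $j>i$, whose measure spaces $\delta_0^{M^{\Tt}_{\beta_{j-1}+1}}$ strictly exceed $\delta_0^{M^{\Tt}_c}$ -- so $M^{\Tt}_c$ fails to be a $1$-Vsp. Thus $b$ is the \emph{only} branch producing a legitimate model, and any $(0,\OR)$-strategy must choose it; there is nothing for self-consistency to arbitrate at this limit. You should replace the appeal to ``self-consistency applied to the tail'' with this (or some equivalent) argument, and in doing so the unfolding machinery can be avoided entirely for the uniqueness part.
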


\begin{dfn} 
A short-normal tree $\Tt$ on a proper $1$-Vsp
is called \emph{$\gamma$-stable} iff whenever $\alpha+1<\lh(\Tt)$
and $[0,\alpha]_\Tt$ does not drop,
then $\lh(E^\Tt_\alpha)\notin(\Delta^{M^\Tt_\alpha},\gamma^{M^\Tt_\alpha})$.
	\end{dfn}

\begin{dfn} Let $\vV$ be a proper class proper $1$-Vsp, $\Sigma$ a partial short-normal iteration strategy for $\vV\down 0$, and $\Psi$ a partial short-normal strategy for $\vV$. We say that
	$\Psi$ is \emph{$\Sigma$-$\Delta$-conservative}\footnote{Note that the ``$\Delta$'' here is just notation,
	whereas $\Sigma$ is a strategy. Likewise the ``$\gamma$'' in ``$\Sigma$-$\gamma$-conservative''.} (or just \emph{$\Delta$-conservative}, if $\Sigma$ is understood) iff:
	\begin{enumerate}
	\item $\Sigma,\Psi$ agree over their restrictions to trees based on $\vV|\Delta^{\vV}$, and
	\item  if $\Tt,\Uu$ are corresponding such trees,
	of successor length $\beta+1$, and $b^\Tt$ does not drop (so neither does $b^\Uu$)
	then $M^{\Tt}_\beta=M^{\Uu}_\beta\down 0=i^\Uu_{0\beta}(\vV\down 0)$,
	and $i^\Tt_{\alpha\beta}\sub i^\Uu_{\alpha\beta}$ for all $\alpha\leq^{\Tt}\beta$.\footnote{***In fact we could assert here that the natural factor maps $\pi_\alpha:M^\Tt_\alpha\to M^\Uu_\alpha\down 0$
	and $\pi_\beta:M^\Tt_\beta\to M^\Uu_\beta\down 0$ are the identity maps. I'm not sure whether this follows. It would probably be more natural to assert that here.}
	\end{enumerate}
We say that $\Psi$ is \emph{$\Sigma$-$\gamma$-conservative}
(or just \emph{$\gamma$-conservative}) iff
$\Psi$ is $\Delta$-conservative  and whenever $\Tt,\Uu$ are as above,
then
 	$\Sigma_{M^\Tt_\infty},\Psi_{M^\Uu_\infty}$ agree over their restrictions
 to above-$\Delta^{M^\Uu_\infty}$ trees based on $M^\Tt_\infty|\kappa_0^{+M^\Tt_\infty}$ and $M^\Uu_\infty|\gamma^{M^\Uu_\infty}$ respectively.
 (Note here that because $M^\Uu_\infty||\gamma^{M^\Uu_\infty}=M^\Tt_\infty|\kappa_0^{+M^\Tt_\infty}$,
 and $\rho_1(M^\Uu_\infty|\gamma^{M^\Uu_\infty})=\delta_0^{M^\Uu_\infty}$,
 given agreement in tree order between correspoding trees $\Tt',\Uu'$,
 there is consequentially  agreement between models, degrees and embeddings,
 except that if $0<\alpha$ and  $[0,\alpha]_{\Tt'}$ does not drop,
 then $[0,\alpha]_{\Uu'}$ does drop, with $\beta+1\in\mathscr{D}^{\Tt'}$
 and $M^{*\Uu}_{\beta+1}=M^\Uu_\infty|\gamma^{M^\Uu_\infty}$,
 and $\deg^{\Uu'}_{\beta+1}=0$,
 where $0=\pred^{\Uu'}(\beta+1)<^{\Uu'}\beta+1\leq^{\Uu'}\alpha$,
 but $(\beta+1,\alpha]^{\Uu'}$ does not drop,
 and $(M^{\Uu'}_\alpha)^{\passive}=M^{\Tt'}_\alpha|\kappa_0^{+M^{\Tt'}_\alpha}$,
 and $i^{\Uu'}_{\gamma\alpha}\sub i^{\Tt'}_{\gamma\alpha}$ for all $0<^{\Tt'}\gamma\leq^{\Tt'}\alpha$,
 and $i^{*\Uu'}_{\beta+1,\gamma}\sub i^{\Tt'}_{0\alpha}$.)\footnote{***Again we should probably
 say that the factor maps are the identity.}
 
 We say $\Psi$ is \emph{$\Sigma$-conservative} (or just \emph{conservative}) iff $\Psi$ is $\gamma$-conservative
and for all $\gamma$-stable trees $\Uu$ via $\Psi$
 	there is a tree $\Tt$ via $\Sigma$ such that:
 	\begin{enumerate}
 		\item $\{\lh(E^\Tt_\alpha)\bigm|\alpha+1<\lh(\Tt)\}=\{\lh(E^\Uu_\alpha)\bigm|\alpha+1<\lh(\Uu)\}$,\footnote{The fact that we want to consider trees
 			$\Tt,\Uu$ with this property is clear in the 
 			case that $\Uu$ is based on $\vV|\gamma_n^{\vV}$,
 			but less so otherwise. For trees $\Uu$, for example,
 			with $\lh(E^\Uu_0)>\gamma_n^{\vV}$,
 			$\Tt$ will not be \emph{defined} by simply using
 			extenders with the same indices; this will instead be a consequence
 			of how $\Tt$ is defined. But for the present it is more convenient
 		to simply define conservativity by demanding the extender indices match.}
 		\item $\Tt,\Uu$ have the same tree order, drop and degree structure,
 		\item if $\alpha\leq^\Tt\beta$ and $[0,\beta]_\Tt$ does not drop
 		then $M^\Tt_\beta=M^\Uu_\beta\down 0=i^\Uu_{0\beta}(\vV\down 0)$
 		and $i^\Tt_{\alpha\beta}\sub i^\Uu_{\alpha\beta}$.\footnote{***And again the factor maps.}\qedhere
 		\end{enumerate}
	\end{dfn}

\begin{dfn}
$\Psi_{\vV_{1},\vV_{1}^-}$ and 
$\Psi_{\vV_{1},\gamma}$  are the
putative partial short-normal strategies
for $\vV_{1}$, for trees based on $\vV_{1}^-=\vV_1|\delta_0^{\vV_1}$ and $\vV_{1}|\gamma^{\vV_{n+1}}$ respectively, induced by $\Sigma_{\vV_{1\down 0}}$. (So $\Psi_{\vV_{1},\vV_{1}^-}\sub\Psi_{\vV_{1},\gamma}$.)\footnote{Note that
``$\gamma$''
in ``$\Psi_{\vV_{1},\gamma}$''
is just notation.}
	\end{dfn}
\begin{lem}\label{lem:Psi_vV_1,gamma_conservative}
$\Psi_{\vV_{1},\gamma}$ is $\Sigma_{\vV_{1\down 0}}$-$\gamma$-conservative.
	\end{lem}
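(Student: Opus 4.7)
The plan is to verify both parts of $\gamma$-conservativity directly from the construction of $\Psi_{\vV_1,\gamma}$ as the putative short-normal strategy on $\vV_1$ induced by $\Sigma_{\vV_{1\down 0}}$ via $*$-translation, so at bottom the claim is a definitional unfolding combined with a commutativity check for ultrapowers by short extenders at the $\delta_0$-level.

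For $\Delta$-conservativity, I would first note that $\vV_1|\Delta^{\vV_1}=\vV_{1\down 0}|\delta_0^{\vV_{1\down 0}}$ by Definition \ref{dfn:1-Vsp}, so trees based on the $\delta_0$-part are formally the same on both sides, and agreement of $\Psi_{\vV_1,\gamma}$ with $\Sigma_{\vV_{1\down 0}}$ on these is built into the definition of $\Psi_{\vV_1,\gamma}$. For the iterate equality $M^\Tt_\beta=M^\Uu_\beta\down 0$ in the non-dropping case I would proceed by induction on $\beta$. All extenders used live strictly below $\delta_0$ of the relevant iterates; since they are short and act well below the parameters $\gamma^{M^\Uu_\alpha}$ governing the $*$-translated upper part of $M^\Uu_\alpha$, the ultrapower by such an extender commutes with the $\down 0$ operation, giving $\Ult(M^{*\Uu}_{\alpha+1},E^\Uu_\alpha)\down 0=\Ult(M^{*\Tt}_{\alpha+1},E^\Tt_\alpha)$ at successor stages, and a direct-limit computation handles limit stages. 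The iteration-map containment $i^\Tt_{\alpha\beta}\sub i^\Uu_{\alpha\beta}$ then drops out, because on each side these maps are just the ultrapower and direct-limit maps built from the same extenders.

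For the tail-strategy clause, fix corresponding $\Tt$ via $\Sigma_{\vV_{1\down 0}}$ and $\Uu$ via $\Psi_{\vV_{1},\gamma}$ of length $\beta+1$ with non-dropping main branch, and set $N^\Tt=M^\Tt_\beta$, $N^\Uu=M^\Uu_\beta$. The $*$-translation provides a canonical correspondence between extenders in $\es(N^\Uu|\gamma^{N^\Uu})$ with critical point above $\Delta^{N^\Uu}$ and extenders in $\es(N^\Tt|\kappa_0^{+N^\Tt})$ with critical point above $\kappa_0^{N^\Tt}$, and this extends to a correspondence of the relevant trees. By construction, $\Psi_{N^\Uu}$ on such a tree on $N^\Uu|\gamma^{N^\Uu}$ is defined by translating to the $N^\Tt$-side and applying $\Sigma_{N^\Tt}$, which yields the required agreement on the common (translated) domain.

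The main obstacle is the verification that the $*$-translation delivers the stated bijective correspondence between extenders and trees on the two sides, and intertwines ultrapowers and iteration maps with the $\down 0$ operation; this is precisely the content of the adaptation of $*$-translation flagged in Remark \ref{rem:*-trans}, to be carried out in \cite{*-trans_add} under hypothesis $(\dagger)$. Once that correspondence is in hand, each clause of the definition of $\gamma$-conservativity is essentially immediate.
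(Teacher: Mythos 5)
Your proposal misidentifies where the real work lives and where it doesn't, and this matters in two places.

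First, for $\Delta$-conservativity the paper's proof is a single citation to \cite[Lemma 4.68]{vm2}. The commutativity claims you sketch — that ultrapowers by short extenders below $\delta_0$ commute with $\down 0$, that $\Ult(M^{*\Uu}_{\alpha+1},E^\Uu_\alpha)\down 0=\Ult(M^{*\Tt}_{\alpha+1},E^\Tt_\alpha)$, that the direct limits line up, and that $M^\Uu_\beta\down 0$ coincides with $i^\Uu_{0\beta}(\vV\down 0)$ — are precisely the content of that prior lemma, not consequences of general nonsense. In particular the equality $M^\Uu_\beta\down 0=i^\Uu_{0\beta}(\vV\down 0)$ rests on the preservation of the P-construction under the iteration embedding, which in turn uses that $\vV_{1\down 0}$ is a $\delta_0$-cc ground of $\vV_1$; asserting this "drops out" of ultrapower commutativity underestimates what has to be checked. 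Citing \cite[Lemma 4.68]{vm2} is the argument.

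Second, your invocation of $*$-translation is misplaced throughout. This lemma is not one of the results deferred to \cite{*-trans_add}; the paper's proof does not use $*$-translation at all, and the "main obstacle" you describe does not exist here. There is no translation step in the tail-strategy clause because the identification is literal: $M^\Uu_\infty||\gamma^{M^\Uu_\infty}=M^\Tt_\infty|\kappa_0^{+M^\Tt_\infty}$ as passive premice, and $\Delta^{M^\Uu_\infty}=\delta_0^{M^\Tt_\infty}$, so a short-normal tree above $\Delta^{M^\Uu_\infty}$ based on $M^\Uu_\infty|\gamma^{M^\Uu_\infty}$ is, extender for extender and index for index, the same tree as the corresponding one on $M^\Tt_\infty|\kappa_0^{+M^\Tt_\infty}$. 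Since $\Psi_{\vV_1,\gamma}$ is by definition the strategy induced by $\Sigma_{\vV_{1\down 0}}$ through exactly this identification, the agreement of the tail strategies is tautological once $\Delta$-conservativity is in hand — which is all the paper's phrase "immediately yields... by its definition" means. (The parenthetical in the definition of $\gamma$-conservativity then just records what the automatic correspondence of models, degrees, drops and embeddings looks like; the only asymmetry is the drop to $M^\Uu_\infty|\gamma^{M^\Uu_\infty}$ on the $\Uu'$ side.) $*$-translation enters later, when internal models must compute Q-structures with overlapping short extenders; it plays no role in this lemma.
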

\begin{proof}
$\Sigma_{\vV_{1\down 0}}$-$\Delta$-conservativity is by \cite[Lemma 4.68]{vm2_v2}.
This immediately yields $\Sigma_{\vV_{1\down 0}}$-$\gamma$-conservativity, by its definition.
	\end{proof}
\begin{dfn}\label{dfn:tree-triple}
	 Let $\vV$ be a proper class
proper	$1$-Vsp. Let $\N=\vV\down 0$.
	Let $e=e_0^{\vV}=F^{\vV|\gamma^{\vV}}$.
Let $i_e^{\vV}:\vV\to\vV_{1}^\N=\Ult(\vV,e)$ be the ultrapower map.

	A \emph{tree-triple on $\vV$} is a triple $(\Tt,\Tt',\Tt'')$
	of trees $\Tt,\Tt',\Tt''$ such that:
	\begin{enumerate}[label=--]
		\item $\Tt$ is an above-$\gamma^{\vV}$  tree on $\vV$,
		\item $\Tt'$ is the minimal\footnote{Cf.\cite[Remark 4.80***]{vm2_v2},
		which discusses the minimal $j$-pullback;
		the minimal $j$-copying construction  is performed in this manner.} $i_e^{\vV}$-copy of $\Uu$ to a tree on $\vV_{1}^{\N}$ (so $\Tt'$ is above $\gamma^{\vV_{1}^{\N}}$, hence translatable),
		\item $\Tt''$ is the translation of $\Tt'$ to a tree on $\N$ (so $\Tt''$ is above $\kappa_{0}^{+\N}=\gamma^{\vV}$,
		in fact above $\gamma^{\vV_{1}^{\N}}$).
	\end{enumerate}	
Note here that $\Tt,\Tt',\Tt''$ have the same tree order, drop and degree structure.
\end{dfn}

\begin{dfn}\label{dfn:Psi^sn_vV_n+1}
Suppose that  $\Psi_{\vV_{1},\vV_{1}^-}$
 is $\Sigma_{\P}$-$\Delta$-conservative,
 where $\P=\vV_{1\down 0}$.
 Let $\vV,\N,e$ be as in Definition \ref{dfn:tree-triple}, and suppose also that $\vV$ is a  non-dropping $\Psi_{\vV_{1},\vV_{1}^-}$-iterate of $\vV_{1}$, via tree $\Uu_0$. So by $\Delta$-conservativity, $\N$ is the corresponding $\Sigma_{\P}$-iterate of $\P$,
 where $\P=\vV_{1\down 0}$. 
 We define an above-$\gamma^{\vV}$ strategy $\Psi=\Psi_{\vV,>\gamma}$\footnote{Note that  ``$\gamma$'' in ``$\Psi_{\vV,>\gamma}$''
 is just notation.}
 for $\vV$.
	Let $\Lambda$ be the
	above-$\gamma^{\vV_{1}^\N}$ strategy for $\vV_{1}^\N$  determined by translating  above-$\kappa_0^{+\N}$ trees on $\N$ via $\Sigma_{\N}$. Then for
	above-$\gamma^{\vV}$ trees $\Uu$ on $\vV$, $\Uu_0\conc\Uu$ is via $\Psi$ iff $\Uu$ is via the minimal $i_e^{\vV}$-pullback of $\Gamma$;
	that is, iff there are $\Uu',\Uu''$ such that $(\Uu,\Uu',\Uu'')$ is a tree-triple on $\vV$
	and $\Uu''$ is via $\Sigma_\N$.

	We define $\Psi^{\sn}_{\vV_{1}}$
	as the unique (putative) short-normal strategy $\Psi$ for $\vV_{1}$ such that
	$\Psi_{\vV_{1},\gamma}\sub\Psi$ and
	$\Psi_{\vV,>\gamma}\sub\Psi$
	 for each $\vV$ as above.
	\end{dfn}

	\begin{dfn}
	 We say that a short-normal tree $\Tt$
	 on a proper $1$-Vsp $\vV$
	 is \emph{$\gamma$-stable}
	 iff $\Tt=\Tt_0\conc\Tt_1$
	 where $\Tt_0$ is based on $\vV|\Delta^{\vV}$, and if $\Tt_1\neq\emptyset$
	 then $\Tt_0$ has successor length,
	 $b^{\Tt_0}$ does not drop,
	 and $\Tt_1$ is above $\gamma^{M^{\Tt_0}_\infty}$. In this situation we also say that $\Tt_0,\Tt_1$ are the \emph{lower} and \emph{upper} components of $\Tt$ respectively.
	\end{dfn}

The following lemma is the analogue of \cite[Lemma 4.83***]{vm2_v2}, but because we have defined the iteration strategy differently, we give a different proof:
   	\begin{lem}$\Psi^{\sn}_{\vV_1}$ has minimal hull condensation (mhc).\footnote{Note that we get full mhc, not just $\Delta$-mhc.}\end{lem}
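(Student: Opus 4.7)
The plan is to verify minimal hull condensation (mhc) for $\Psi^{\sn}_{\vV_1}$ by decomposing a short-normal tree $\Tt$ via this strategy into its lower component $\Tt_0$ (based on $\vV_1|\gamma^{\vV_1}$, hence via $\Psi_{\vV_1,\gamma}$) and an upper component $\Tt_1$ above $\gamma^{M^{\Tt_0}_\infty}$ (via $\Psi_{\vV,>\gamma}$ for $\vV=M^{\Tt_0}_\infty$), establishing mhc for each component separately and then splicing. A minimal hull embedding $\pi\colon\Uu\to\Tt$ respects this decomposition because the lower/upper boundary is controlled by extender-index positions relative to $\gamma$, and these positions are preserved by hull embeddings.

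For the lower component, I would exploit the $\Sigma_{\vV_{1\down 0}}$-$\gamma$-conservativity of $\Psi_{\vV_1,\gamma}$ from Lemma \ref{lem:Psi_vV_1,gamma_conservative}: trees via $\Psi_{\vV_1,\gamma}$ correspond canonically to trees via $\Sigma_{\vV_{1\down 0}}$ with matching extender indices, tree order, drop and degree structure, and agreeing iteration maps. Given a minimal hull $\pi\colon\Uu_0\to\Tt_0$ where $\Tt_0$ is via $\Psi_{\vV_1,\gamma}$, the corresponding object on the $\Sigma_{\vV_{1\down 0}}$ side is a minimal hull of the associated $\Sigma_{\vV_{1\down 0}}$-tree. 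Since $\Sigma_{\vV_{1\down 0}}$ inherits mhc from the normalization of $\Sigma_{M^\#}$ (cf.\ \cite{fullnorm}), the hull tree is via $\Sigma_{\vV_{1\down 0}}$, and applying conservativity in reverse delivers that $\Uu_0$ is via $\Psi_{\vV_1,\gamma}$.

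For the upper component, recall from Definition \ref{dfn:Psi^sn_vV_n+1} that $\Psi_{\vV,>\gamma}$ is defined via the tree-triple construction: an above-$\gamma^\vV$ tree $\Uu$ on $\vV$ is via $\Psi_{\vV,>\gamma}$ exactly when its minimal $i_e^\vV$-copy $\Uu'$ on $\vV_1^\N$ translates to a tree $\Uu''$ on $\N$ that is via $\Sigma_\N$. The key claim to establish is that minimal hull embeddings commute with both the minimal copying and the $*$-translation: if $\sigma\colon\Vv\to\Uu$ is a minimal hull embedding, then the minimal $i_e^\vV$-copy $\Vv'$ is a minimal hull of $\Uu'$, and its translation $\Vv''$ is a minimal hull of $\Uu''$. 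Since $\Sigma_\N$ has mhc on above-$\kappa_0^{+\N}$ trees (being induced from $\Sigma$), $\Vv''$ is via $\Sigma_\N$, whence $\Vv$ is via $\Psi_{\vV,>\gamma}$ by definition.

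The main obstacle will be the careful verification of these two commutations. For the minimal $i_e^\vV$-copying, I expect the argument to follow the canonical nature of the minimal copy construction (cf.\ \cite[Remark 4.80]{vm2}), which is arranged precisely so that copying is functorial with respect to minimal hull embeddings. For the $*$-translation side, this commutation is among the integration points with the P-constructions used here, whose detailed proof is deferred to \cite{*-trans_add} (see Remark \ref{rem:*-trans}), and I would invoke that reference. Splicing the lower and upper arguments then handles arbitrary short-normal trees with both components, yielding full mhc (not merely $\Delta$-mhc) as required.
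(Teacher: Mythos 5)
Your overall decomposition strategy — split a short-normal $\Psi^{\sn}_{\vV_1}$-tree into a part based on $\vV_1|\gamma^{\vV_1}$ and an above-$\gamma$ tail, handle the former through the $\gamma$-conservativity correspondence with $\Sigma_{\vV_{1\down 0}}$ and the latter via the tree-triple machinery and mhc of $\Sigma_\N$, then check that minimal hull embeddings respect the boundary — is the natural argument given Definitions~\ref{dfn:tree-triple} and \ref{dfn:Psi^sn_vV_n+1}, and is in line with what the paper indicates; the paper's proof is a one-line pointer to the analogous \cite[Lemma~5.71]{vm2}.

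There is one concrete slip. You defer part of the upper-component verification — commuting a minimal tree embedding through the passage from $\Uu'$ (on $\vV_1^\N$) to $\Uu''$ (on $\N$) — to the $*$-translation material of \cite{*-trans_add}, citing Remark~\ref{rem:*-trans}. That deferral is unwarranted. The translation inside a tree-triple is the ordinary P-construction correspondence above the generic: $\Uu'$ is above $\gamma^{\vV_1^\N}$, $\vV_1^\N$ and $\N$ have the same universes there, and corresponding extenders agree over the ordinals, so pulling a minimal tree embedding through is pure bookkeeping. The $*$-translation of \cite{closson} enters this paper only when computing the $\delta_{n+1}$-short tree strategy for non-tame trees (Q-structures with short extenders overlapping $\delta(\Tt)$), and plays no role here: mhc for $\Sigma_\N$ is already available outright, via normalization from $\Sigma_{M^\#}$ as in \cite{fullnorm}. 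So your proof should close without invoking anything deferred. A smaller imprecision: for $\Psi_{\vV,>\gamma}$ to be applicable, the lower component must be a $\Psi_{\vV_1,\vV_1^-}$-tree, i.e.\ based on $\vV_1|\delta_0^{\vV_1}$, with non-dropping main branch; ``based on $\vV_1|\gamma^{\vV_1}$'' alone would also cover $\gamma$-unstable trees, which drop into $(\Delta^{\vV_1},\gamma^{\vV_1})$ and have no upper component. Since a minimal hull of $\Tt$ inherits $\Tt$'s drop structure, this does not break your splitting, but it deserves to be said.
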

   
   \begin{proof}Argue like in the proof of \cite[Lemma 5.71***]{vm2_v2}
   	(the proof of \cite[Lemma 4.83***]{vm2_v2} is different, because of the
   	change in the definition of $\Psi^{\sn}_{\vV_1}$).\end{proof}
\begin{lem} $\Psi^{\sn}_{\vV_{1}}$ is a self-coherent  short-normal iteration strategy for $\vV_1$ (cf.~\cite[Definition 4.73***]{vm2_v2}).
	\end{lem}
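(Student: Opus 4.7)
The plan is to verify the two clauses of self-coherence in turn. The $\Delta$-mhc condition is immediate from the preceding lemma, since full mhc for $\Psi^{\sn}_{\vV_1}$ restricts in particular to $\Delta$-mhc.

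For the long-extender coherence clause, fix a short-normal tree $\Tt$ via $\Psi^{\sn}_{\vV_1}$ of length $\alpha+1$ and a long $E\in\es_+(M^\Tt_\alpha)$, with $\beta\leq_\Tt\alpha$ as in the self-coherence definition. First decompose $\Tt=\Tt_0\conc\Tt_1$ into its lower and upper components: $\Tt_0$ is via $\Psi_{\vV_1,\gamma}$, and $\Tt_1$ (if nontrivial) is the above-$\gamma^{M^{\Tt_0}_\infty}$ tree produced by the tree-triple pullback of $\Sigma_{M^{\Tt_0}_\infty\down 0}$ (Definitions \ref{dfn:tree-triple} and \ref{dfn:Psi^sn_vV_n+1}). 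Since $E$ is long, $M^\Tt_\alpha$ is a proper $1$-Vsp and $E$ is either the distinguished $e^{M^\Tt_\alpha}=F^{M^\Tt_\alpha|\gamma^{M^\Tt_\alpha}}$ or a long extender at a higher index in $\es_+(M^\Tt_\alpha)$. The core reduction is to the case $\beta=\alpha$: the defining clause for proper class proper $1$-Vsps (Definition \ref{dfn:1-Vsp}) identifies $\Ult(M^\Tt_\alpha,e^{M^\Tt_\alpha})$ with $\vV_1^{M^\Tt_\alpha\down 0}$; by Lemma \ref{lem:Psi_vV_1,gamma_conservative}, $M^\Tt_\alpha\down 0$ is a $\Sigma_{\vV_{1\down 0}}$-iterate of $\vV_{1\down 0}$, and the ultrapower map by $e^{M^\Tt_\alpha}$ is thus given by the restriction to $M^\Tt_\alpha|\delta_0^{M^\Tt_\alpha}$ of an iteration map via a tail of $\Sigma_{M^\Tt_\alpha}$ on a tree based on $M^\Tt_\alpha|\delta_0^{M^\Tt_\alpha}$. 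This yields the required $j$. For $\beta<\alpha$, the witness transfers along $i^\Tt_{\beta\alpha}\rest (M^\Tt_\beta|\delta_0^{M^\Tt_\beta})$, which by the same conservativity is itself a lower-part iteration map and commutes with the relevant $\Sigma$-tails.

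The main expected obstacle is confirming that the tree-triple translation composes cleanly with the long-extender layer when long extenders occur strictly above $\gamma^{M^{\Tt_0}_\infty}$ inside $\Tt_1$; here one needs to check that the distinguished long extenders on successive models of $\Tt_1$ are genuinely iteration maps on the lower part of those models, and not just formal reflections of structure carried along by copying. This is handled by the same bookkeeping as in the corresponding $M_{\sw2}$ argument in \cite[\S4]{vm2} (the analogue of its Lemma 4.73), and ultimately relies on assumption $(\dagger)$ of Remark \ref{rem:*-trans} to integrate $*$-translation with the P-constructions.
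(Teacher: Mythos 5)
There is a genuine gap here. Your treatment of the case $E = e^{M^\Tt_\alpha}$ (the distinguished $\gamma$-indexed long extender) is essentially right — it does follow from the clause in Definition \ref{dfn:1-Vsp} identifying $\Ult(N, e^N)$ with $\vV_1^{N\down 0}$ for proper class proper 1-Vsps together with the conservativity of $\Psi_{\vV_1,\gamma}$. But that clause in the definition says nothing about the other long extenders in $\es_+(M^\Tt_\alpha)$ with $\gamma^{M^\Tt_\alpha}<\lh(E)$, and self-coherence requires \emph{every} long $E\in\es_+(M^\Tt_\alpha)$ to be the restriction of a $\Sigma$-iteration map. You flag this case but then dismiss it as ``handled by the same bookkeeping as in the corresponding $M_{\sw2}$ argument,'' which mischaracterizes what is needed: the case $\gamma^{M^\Tt_\alpha}<\lh(E)$ is not an instance of the case you solved, carried by copying, but is where all the real work lies, and it requires a different technique.

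The paper's proof handles this case by letting $\wW\ins M^{\Uu}_\infty$ be an arbitrary active segment with a long extender and $\gamma^{\vV}<\OR^\wW$, then forming $\wW'=\Ult_0(\wW,e)$ with ultrapower map $k$, and the translate $\wW''\ins M^{\Uu''}_\infty$ over $\N$. One then shows that the tree $\Vv$ from $\N|\delta_0^{\N}$ to $\Ult(\N|\delta_0^{\N},F^{\wW})$ is via $\Sigma_{\N,\sss}$ by arguing that the Q-structures along $\Vv$ do not overlap the relevant Woodins and that $k$ embeds them into correct Q-structures, and finally applies the branch condensation lemma \cite[Lemma 3.10]{vm2} to $(\Vv,b)$ together with the tree $\Vv'$ from $\N$ through $\M_\infty^{\N}$ and the branch determined by $F^{\wW'}\com e$. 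None of this appears in your proof, and it is precisely the content of the lemma. Also, this lemma does not rely on $(\dagger)$ or $*$-translation; your closing appeal to Remark \ref{rem:*-trans} is out of place. (The $*$-translation debt arises in later lemmas about full short tree strategies on $\vV_{n+1}$ where non-tame trees appear, not here.)
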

\begin{proof}
It is clear that $\Psi^{\mathrm{sn}}_{\vV_{1}}$ is a short-normal iteration strategy;
we just need to see it is self-coherent.
	The proof of this is similar to \cite[Lemma 5.68]{vm2_v2},\footnote{Cf.~the notion \emph{good} in  \cite{vm2_v2} (the term \emph{self-coherent} is not used there).}
	but we give it here for convenience. The fact that $\Psi_{\vV_1,\gamma}$
	is self-coherent is by (the proof of) \cite[Lemma 4.75(8)]{vm2_v2}.
	
	So let $\Uu_0\conc\Uu$ be via $\Psi_{\vV_1,>\gamma}$, as in Definition \ref{dfn:Psi^sn_vV_n+1},
	and adopt the notation of Definitions \ref{dfn:tree-triple} and \ref{dfn:Psi^sn_vV_n+1},
	giving us $\vV=M^{\Uu_0}_\infty$,
	$\N=\vV\down 0$, $e$, $i^{\vV}_e:\vV\to\vV_1^{\N}=\Ult(\vV,e)$,
	and $\Uu',\Uu''$ such that $(\Uu,\Uu',\Uu'')$
	is a tree-triple.
Let $\wW\ins M^{\Uu}_\infty$
	be active with a long extender,
	with $\gamma^{\vV}<\OR^{\wW}$.
	We want to see that $\Ult(\N,F^{\wW})$
	is a $\Sigma_\N$-iterate of $\N$ and $F^{\wW}$ is derived
	from the $\Sigma_\N$-iteration map.
		
   	Let $\wW'=\Ult_0(\wW,e)$ and  $k:\wW\to\wW'$ be the ultrapower map.
   	Then $\wW'\ins M^{\Uu'}_\infty$
   	is active with a long extender. Let $\wW''\ins M^{\Uu''}_\infty$
   	with $\OR^{\wW''}=\OR^{\wW'}$. So $\crit(F^{\wW''})=\kappa_0^{\N}$ and $F^{\wW''}$ induces $F^{\wW'}$.
   	
   	By \cite[Lemma ***4.57]{vm2_v2}, $\M_\infty^{\Ult(\N,F^{\wW''})}$
   	is a $\Sigma_{\M_\infty^\N}$-iterate of $\M_\infty^{\N}$ and $i^{\N}_{F^{\wW''}}\rest\M_\infty^\N$
   	is the iteration map. Also $\M_\infty^{\N}=\Ult(\N,e)$
   	and $e$ is derived from the $\Sigma_{\N}$-iteration map.
   	
   	Let $\Vv$ be the tree leading from $\N|\delta_0^{\N}$ to $\Ult(\N|\delta_0^{\N},F^{\wW})$. 
   	Using $k:\wW\to\wW'$,
   	note that $\Vv$ is via $\Sigma_{\N,\sss}$,
   	since the Q-structures of $\Vv$ do not overlap their local Woodin,
   	and $k$ embeds them into correct Q-structures.
   	We can apply the branch condensation lemma \cite[***Lemma 3.10]{vm2_v2}
   	to $(\Vv,b)$ where $b$ is determined by $F^{\wW}$,
   	together with the tree $\Vv'$ leading from $\N$ (via $\M_\infty^{\N}$) to $\M_\infty^{\Ult(\N,F^{\wW''})}$ and the branch determined by
   	$F^{\wW'}\com e$, together with the embedding $k$, to see that $F^{\wW}$ is correct.
   		\end{proof}
   	
   	\begin{rem}\label{rem:internal_e_Ult}
   		Continue with notation as above, still assuming that $\wW$ is active,
   		but allow $F^{\wW}$ to be either short or long. Since $e=e_0^{\vV}\in\wW$, $\Ult(\wW,e)$ is internal
   		to $\wW$, and since the largest cardinal $\iota$ of $\wW$ is inaccessible in $\wW$
   		and $\wW\sats\ZFC^-$, it follows that $\Ult(\wW^{\passive},e)\sub\wW$
   		and $k(\iota)=\iota$.
   		
   		Now suppose further that $F^{\wW}$ is long.
   		We also have $\delta_0^{\M_\infty^{\N}}=\gamma^{\vV}$ and $\M_\infty^\N|\delta_0^{\M_\infty^\N}\in\wW|\iota$
   		and $\wW'|\iota\sub\wW|\iota$,
   		and therefore all the iterates of $\M_\infty^{\N}$ in the direct limit system leading to
   		$\M_\infty^{\wW'}$ (by the latter direct limit, we mean
   		the limit of the covering system
   		defined locally over $\wW'$,
   		using iterates $P$ with $P|\delta_0^P\in\wW'|\iota$) are also iterates of $\N$ in the direct limit system leading to $\M_\infty^{\wW}$. As in \cite[\S8***]{fullnorm_v3},
   		it follows that $\M_\infty^{\wW}$ is an iterate of $\M_\infty^{\wW'}$.
   		Let $i:\M_\infty^{\wW'}\to \M_\infty^{\wW}$ be the iteration map.
   		Therefore we get
   		\[ i\com F^{\wW'}\com e_0^{\vV} =F^{\wW}.\]
   		On the other hand, $k\rest\M_\infty^{\wW}:\M_\infty^{\wW}\to\M_\infty^{\wW'}$
   		is elementary, and
   		\[ k\com F^{\wW}=F^{\wW'}\com e_0^{\vV}.\]
   	    It easily follows that \[ i\rest\rg(F^{\wW'}\com e_0^{\vV}``\delta_0^\N)=\id \]
   		and
   		\[ k\rest\rg(F^{\wW}``\delta_0^{\N})=\id, \]
   		and so
   		\[ F^{\wW}\rest\delta_0^{\N}=F^{\wW'}\com e_0^{\vV}\rest\delta_0^{\N}.\]
   		However, the two iterates $\Ult(\N,F^{\wW})$ and $\Ult(\N,F^{\wW'}\com e_0^{\vV})$
   		are not equal, and so $F^{\wW}\neq F^{\wW'}\com e_0^{\vV}$ (even though these extenders agree on the ordinals). For if those iterates were the same,
   		then so would the extenders be, and so would $\Ult(\vV,F^{\wW})$ and
   		$\Ult(\vV,F^{\wW'}\com e_0^{\vV})$. But $\wW^\passive$ is generic
   		over $\Ult(\vV,F^{\wW})$, and $e_0^{\vV}\in\wW^\passive$.
   		But $k$ is the $e_0^{\vV}$-ultrapower map of $\wW^{\passive}$, so this extension also adds $k$ and hence also $k'=k\rest\Ult(\vV|\delta_0^{\vV},F^{\wW})$. Since $k\com F^{\wW}=F^{\wW'}\com e_0^{\vV}$, note we get $\Ult(\Ult(\vV,F^{\wW}),k')=\Ult(\vV,F^{\wW'}\com e_0^{\vV})=\Ult(\vV,F^{\wW})$. But then in a generic extension
   		of $\Ult(\vV,F^{\wW})$ there is an elementary $\ell:\Ult(\vV,F^{\wW})\to\Ult(\vV,F^{\wW})$, which
   		contradicts \cite[Corollary 9]{gen_kunen_incon}.
   		\end{rem}
   	
   	The remainder of \cite[\S4.12]{vm2_v2} is now as there.
   	This completes the adaptation of the material in \cite[\S4]{vm2_v2}.
   	We now prove the first instance of a key new fact, one which is essential to our analysis of $\Mswom$.

   	\begin{lem}\label{lem:Psi_vV_1_is_conservative} $\Psi^{\sn}_{\vV_1}$ is $\Sigma_{\vV_{10}}$-conservative.   		
   	\end{lem}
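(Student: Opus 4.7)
\emph{Proof plan.}
The plan is to leverage the $\gamma$-conservativity already established in Lemma \ref{lem:Psi_vV_1,gamma_conservative} together with the tree-triple construction from Definition \ref{dfn:tree-triple}. Fix a $\gamma$-stable tree $\Uu$ via $\Psi^{\sn}_{\vV_1}$ and decompose $\Uu = \Uu_0 \conc \Uu_1$ into its lower and upper components, where $\Uu_0$ is based on $\vV_1|\Delta^{\vV_1}$ and, if nonempty, $\Uu_1$ is above $\gamma^{\vV}$ on $\vV = M^{\Uu_0}_\infty$. If $\Uu_1 = \emptyset$, then $\gamma$-conservativity directly supplies the witness $\Tt$ via $\Sigma_{\vV_{1\down 0}}$. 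Otherwise, Lemma \ref{lem:Psi_vV_1,gamma_conservative} produces $\Tt_0$ via $\Sigma_{\vV_{1\down 0}}$ matching $\Uu_0$ on extender indices, tree order, and drop/degree structure, with $M^{\Tt_0}_\infty = \N$ where $\N := \vV\down 0$, and $i^{\Tt_0}_{\alpha\beta} \sub i^{\Uu_0}_{\alpha\beta}$ on the non-dropping main branch.

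Next, by the definition of $\Psi_{\vV,>\gamma}$ in Definition \ref{dfn:Psi^sn_vV_n+1}, the upper component $\Uu_1$ fits into a tree-triple $(\Uu_1, \Uu_1', \Uu_1'')$, where $\Uu_1'$ is the minimal $i_e^{\vV}$-copy on $\vV_1^{\N} = \Ult(\vV, e_0^{\vV})$, $\Uu_1''$ is the $*$-translation of $\Uu_1'$ to a tree on $\N$ above $\kappa_0^{+\N} = \gamma^{\vV_1^{\N}}$, and $\Uu_1''$ is via $\Sigma_{\N}$. I would then take $\Tt = \Tt_0 \conc \Uu_1''$; this is a tree via $\Sigma_{\vV_{1\down 0}}$ by coherence under normalization. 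It remains to verify the three conservativity clauses on the upper segment.

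For the extender-index clause, observe that $\vV$ and $\vV_1^{\N}$ agree above $\gamma^{\vV} = \gamma^{\vV_1^{\N}}$ as passivized premice, both drawing their extender sequences there from $\N$ above $\kappa_0^{+\N}$ via $*$-translation. Thus the minimal $i_e^{\vV}$-copy preserves all above-$\gamma$ extender indices, with copy maps acting as the identity on the above-$\gamma$ portion; the $*$-translation then preserves indices by its very construction. Tree order, drop, and degree structure are preserved by both the copy and the $*$-translation. For model and embedding agreement, track $\down 0$ through the tree-triple: for $\beta$ on the non-dropping main branch of $\Uu_1$, identity copy maps above $\gamma$ give $M^{\Uu_1}_\beta \down 0 = M^{\Uu_1'}_\beta \down 0$, and $*$-translation gives $M^{\Uu_1'}_\beta \down 0 = M^{\Uu_1''}_\beta = M^{\Tt}_\beta$; similarly, the iteration maps $i^{\Tt}_{\alpha\beta}$ on the upper segment arise as the $\down 0$-restrictions of $i^{\Uu}_{\alpha\beta}$, yielding the desired inclusion. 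The hard part will be the fine-structural bookkeeping verifying that the minimal-copy construction and $*$-translation combine so as to give exactly identity copy maps above $\gamma$ and exactly the required inclusion of the resulting embeddings; this ultimately rests on the detailed integration of $*$-translation deferred to \cite{*-trans_add}, and on an analogue of the internal-ultrapower analysis sketched in Remark \ref{rem:internal_e_Ult}.
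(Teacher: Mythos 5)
The proposal gets the global skeleton right (split into lower and upper components, invoke Lemma~\ref{lem:Psi_vV_1,gamma_conservative} for the lower part, use the tree-triple $(\Uu,\Uu',\Uu'')$ for the upper part, take $\Tt=\Tt_0\conc\Uu''$ as the witness), but the central technical claim you make is false and the claimed resolution points in the wrong direction.

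You assert that $\gamma^{\vV}=\gamma^{\vV_1^{\N}}$ and that $\vV$ and $\vV_1^{\N}$ ``agree above $\gamma^{\vV}$ as passivized premice,'' so that the minimal $i_e^{\vV}$-copy maps are the identity above $\gamma$. Neither holds. Since $e=e_0^{\vV}$ is a $(\delta_0^{\vV},\gamma^{\vV})$-extender, we have $i_e^{\vV}(\delta_0^{\vV})=\gamma^{\vV}<i_e^{\vV}(\gamma^{\vV})=\gamma^{\vV_1^{\N}}$, and $\Uu'$ is an above-$\gamma^{\vV_1^{\N}}$ tree on $\vV_1^{\N}$, not above $\gamma^{\vV}$. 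The copy maps are genuine (nontrivial) $e$-ultrapower maps, and the models $M^{\Uu}_\alpha$ and $M^{\Uu'}_\alpha$ are distinct. What does hold — and this is exactly what has to be proven, not assumed — is that the extender \emph{indices} coincide (because $\Ult(\exit^{\Uu}_\alpha,e)$ is internal to $\exit^{\Uu}_\alpha$ with the largest cardinal fixed, as in Remark~\ref{rem:internal_e_Ult}) and the extenders agree \emph{on the ordinals} ($E^{\Uu''}_\alpha\rest\OR=E^{\Uu'}_\alpha\rest\OR=E^{\Uu}_\alpha\rest\OR$), while the extenders themselves and the respective ultrapowers are not literally equal.

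With the copy maps not the identity, your derivation of the model and embedding agreement (part (3) of conservativity) does not go through; this is the actual content of the lemma. The paper establishes it by two pieces of explicit work: first, the ordinal-agreement $F\rest\OR=F'\rest\OR$ via the bookkeeping $i_F(i_e(B))=i_e(i_F(B))=i_{F'}(i_e(B))$ for $B\pins A$ projecting to $\kappa$; and second, an inductive argument that the inner and outer ultrapowers commute, i.e.\ $\Ult(P',F')=i^P_F(P')$ and $i^{P'}_{F'}\sub i^P_F$, proved by explicitly constructing an isomorphism $\pi$ using that $F'$ is generated by $i^A_e``\iota$ (in the spirit of \cite[Lemma~9.1]{fsit} and \cite[Lemma~4.5]{steel_dmt}). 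Your deferral of this ``hard part'' to $*$-translation and \cite{*-trans_add} is misplaced: the translation step in the tree-triple is routine, and the commuting-ultrapowers analysis is proven self-containedly in the paper, not outsourced. Without it you have no argument that $M^{\Uu''}_\beta=M^{\Uu}_\beta\down 0$ or that $i^{\Uu''}_{\alpha\beta}\sub i^{\Uu}_{\alpha\beta}$.
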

   	
   	\begin{proof}
We already dealt with
   		$\Psi_{\vV_1,\gamma}$
   		in Lemma \ref{lem:Psi_vV_1,gamma_conservative}.
   		So let $\Uu=\Uu_0\conc\Uu$
   		be $\gamma$-stable short-normal on $\vV_1$, via $\Psi_{\vV_1}^{\sn}$,
   		with lower and upper components
   		$\Uu_0,\Uu$, where $\Uu\neq\emptyset$,
   		and let $\Uu',\Uu''$ be such that $(\Uu,\Uu',\Uu'')$
is the corresponding tree-triple.
   Recall that	$\Uu,\Uu',\Uu''$ each
   have the same tree order,  drop and degree stucture.
   		Adopt the other notation as in Definitions \ref{dfn:tree-triple} and \ref{dfn:Psi^sn_vV_n+1}.   	
So $\vV=M^{\Uu_0}_\infty$.
   		   		Let $\Uu_0\down 0$ be the corresponding tree on $M_\infty=\vV_{1\down 0}$
   		and
$N=M^{\Tt_0\down 0}_\infty=\vV\down 0$.
   
 \begin{clm} We have:
   		\begin{enumerate}
	\item\label{item:models_correspondence}
   			$M^{\Uu'}_\alpha=\vV_1(M^{\Uu''}_\alpha)$ and
   			$M^{\Uu''}_\alpha=^* M^{\Uu'}_\alpha[N|\kappa_0^{N}]$
   			for each $\alpha<\lh(\Tt_1)$,\footnote{Here if $[0,\alpha]_{\Tt_1}$
   				drops then we define $\vV_1^{M^{\Uu''}_\alpha}$
   				by the P-construction of $M^{\Uu''}_\alpha$
   				above $\vV'|\gamma_0^{\vV'}$.}
   			\item\label{item:models_correspondence_2} $M^{\Uu'}_\alpha=\Ult_{m_\alpha}(M^{\Uu}_\alpha,e)$ for each $\alpha<\lh(\Uu)$, where  $m_\alpha=\deg^{\Uu}(\alpha)$.
   			\item\label{item:lh(E)_matches} $\lh(E^{\Uu''}_\alpha)=\lh(E^{\Uu'}_\alpha)=\lh(E^{\Uu}_\alpha)$ for $\alpha+1<\lh(\Tt)$.
   			\item\label{item:vV_1_extenders_to_M_infty}
   			$E^{\Uu''}_\alpha\rest\OR=E^{\Uu'}_\alpha\rest\OR=E^{\Uu}_\alpha\rest\OR$
   			for each $\alpha+1<\lh(\Uu)$.
   			\item\label{item:action_of_vV_1_extenders_to_M_infty} Let $\alpha\leq_{\Uu}\beta<\lh(\Uu)$ be such that $[0,\beta]_{\Uu}$ does not drop.
   			Then:
   			\begin{enumerate}
   				\item $M^{\Uu''}_\beta=M^{\Uu}_\beta\down 0=i^{\Uu}_{0\beta}(\vV\down 0)$, and
   				\item\label{item:action_of_vV_1_extenders_on_M_infty} 
   				$i^{\Uu''}_{\alpha\beta}=i^{\Uu}_{\alpha\beta}\rest(M^{\Uu}_\alpha\down 0)$.
   			\end{enumerate}
   		\end{enumerate}
 \end{clm}
 \begin{proof}
   		Parts \ref{item:models_correspondence}
   		and \ref{item:models_correspondence_2}
   		are basically standard (a small part of normalization calculations
   		are involved in part \ref{item:models_correspondence_2}).
   		
   		Part \ref{item:lh(E)_matches}: We have $\lh(E^{\Uu''}_\alpha)=\lh(E^{\Uu'}_\alpha)$
   		as usual; and $\lh(E^{\Uu'}_\alpha)=\lh(E^{\Uu}_\alpha)$
   		by part of the discussion in Remark \ref{rem:internal_e_Ult}.
   		
   		Part \ref{item:vV_1_extenders_to_M_infty}:
   		We have $E^{\Uu'}_\alpha\rest\OR=E^{\Uu''}_\alpha\rest\OR$ as usual.
   		Let $A=\exit^{\Uu}_\alpha$ and
   		\[ A'=\exit^{\Uu'}_\alpha=\Ult_0(A,e), \]
   		\[ i_e=i^A_e:A\to A' \]
   		the ultrapower map, $F=F^A=E^{\Uu}_\alpha$,
   		and $F'=F^{A'}=E^{\Uu'}_\alpha$.
   		Write $i_F=i^A_F$.
   		As in Remark  \ref{rem:internal_e_Ult}, since $e\in A$, etc, note that $A'\sub A$,
   		\[ \kappa=\crit(F)=i_e(\kappa)=\crit(F'),\]
   		\[ \kappa^{+A}=i_e(\kappa^{+A})=\kappa^{+{A'}},\]
   		\[ \iota=\lambda(F)=i_e(\iota)=\lambda(F').\]
   		Let $\beta$ be such that
   		\[ \kappa\leq\beta<(\kappa^+)^A 
   		\text{ and }\rho_\om^B=\kappa\text{ where }B=A|\beta.\]
   		Since $F$ sends $B$ to $i_F(B)$,
   		$F'$ sends $i_e(B)$ to $i_e(i_F(B))$; that is, $i_{F'}(i_e(B))=i_e(i_F(B))$.
   		And since $i_e(B)$ also
   		projects to $\kappa$,
   		the pair $(i_e(B),i_{F'}(i_e(B)))$
   		determines $i_{F'}\rest i_e(B)$.
   		But also, $i_F(i_e(B))=i_e(i_F(B))$
   		(because $e\in A|\kappa$, so $i_F(e)=e$, so since $e$ sends
   		$B$ to $i_e(B)$,
   		$i_F(e)=e$ sends $i_F(B)$ to $i_F(i_e(B))$; that is,
   		$i_e(i_F(B))=i_F(i_e(B))$.
   		So we have 
   		\[ i_F(i_e(B))=i_e(i_F(B))=i_{F'}(i_e(B)).\]
   		But from the pair $(i_e(B),i_F(i_e(B)))$,
   		we can read off $i_F\rest i_e(B)$.
It follows that $i_F\rest i_e(B)=i_{F'}\rest i_e(B)$. But $B$ was arbitrary, so $F\rest\OR=F'\rest\OR$.
   		
   		Part \ref{item:action_of_vV_1_extenders_to_M_infty}:
   		The proof is by induction. Suppose it holds at $\beta$
   		and we want it at $\gamma+1$, where $\pred^{\Uu}(\gamma+1)=\beta$
   		and $[0,\gamma+1]_{\Uu}$ does not drop.
   		By induction we also have
   		\[ \Ult(M^{\Uu}_\beta,e)=M^{\Uu'}_\beta=\vV_1(M^{\Uu''}_\beta)\text{ and }M^{\Uu''}_\beta=^*M^{\Uu'}_\beta[N|\kappa_0^N]. \]
   		Let $P=M^{\Uu}_\beta$, $P'=M^{\Uu'}_\beta$.
   		Then $P'=\Ult(P,e)$ is a class of $P$;
   		let $i^P_e:P\to P'$ be the ultrapower map.
   		Let $A,A',F,F'$, etc, be as above. So letting
   		\[ \kappa=\crit(F)=\crit(F'),\]
   		we have
   		\[ A|\kappa^{+A}=P|\kappa^{+P}, \]
   		\[ i^A_e\rest(A|\kappa^{+A})=i^P_e\rest(P|\kappa^{+P}).\]
   		We saw above that $F\sub F'$.
   		Note that it suffices to see that
   		\begin{equation}\label{eqn:inner_and_outer_ults_match} \Ult(P',F')=i^P_F(P')\text{ and }i^{P'}_{F'}\sub i^P_F.\end{equation}
   		
   		The proof of this is very analagous
   		to  \cite[Lemma 4.5]{steel_dmt}
   		and the well-known fact mentioned in the paragraph preceding it
   		(that if $\mu,\nu$ are normal measures on $\gamma<\mu$ respectively,
   		$V'=\Ult(V,\mu)$, and $j:V\to V'$ is the ultrapower map,
   		then $j(\nu)=\nu\inter V'$ and $\Ult(V',j(\nu))=i^V_\nu(V')$ and $i^{V'}_{j(\nu)}\sub i^V_\nu$).
   		We will make use of the fact,  much as in
   		\cite[Lemma 9.1]{fsit}, \cite[Lemma 2.1.4]{coveringuptowoodin}
   		and \cite[Lemma 2.11]{extmax}, 
   		that $F'$ above is generated by $i^A_e``\iota$ where $\iota=\lambda(F)=\lambda(F')$;
   		that is, for every $\alpha<\iota$ there is $a\in[\iota]^{<\om}$
   		and a function $g\in A'$ such that
   		\[ \alpha=[i^A_e(a),g]^{A'}_{F'}.\]
   		The argument is really just a direct transcription of those mentioned above
   		into our context. We give the details, though, for convenience to the reader. We will define an isomorphism
   		\[ \pi:\Ult(P',F')\to i^P_F(P')\]
   		such that
   		$\pi\com i^{P'}_{F'}=i^P_F\rest P'$;
   		clearly this gives line (\ref{eqn:inner_and_outer_ults_match}),
   		completing the proof.
   		We define $\pi$ by sending
   		\[ x=[i^A_e(a),g]^{P'}_{F'} \]
   		(where $a\in[\lambda]^{<\om}$ and
   		$g\in P'$
   		and $g:[\kappa]^{|a|}\to P'$) to
   		\[ \pi(x)=[a,g\com i^P_e]^{P}_F,\]
   		noting  that $g\com i^P_e\in P$.
   		
   		We first verify that $\pi$ is a well-defined function.
   		So let $(a_0,g_0)$ and $(a_1,g_1)$ be as above,
   		with 
   		\[ [i^A_e(a_0),g_0]^{P'}_{F'}=[i^A_e(a_1),g_1]^{P'}_{F'};\]
   		we must see that
   		\[ [a_0,g_0\com i^P_e]^{P}_F=[a_1,g_1\com i^P_e]^P_F.\]
   		We may easily assume that $a_0=a=a_1$. Let
   		\[ X=\{u\in[\kappa]^{|a|}\mid g_0(i^P_e(u))=g_1(i^P_e(u))\}; \]
   		we want to see that $X\in F_a$. Suppose not, so
   		\[ Z=\kappa^{|a|}\cut X\in F_a.\]
   		We have
   		\[ X'=\{u\in[\kappa]^{|a|}\mid g_0(u)=g_1(u)\}\in F'_{i^A_e(a)}.\]
   		And since $\rank(e)<\kappa$
   		we can find $Y\in F_a$
   		with $i^A_e(Y)\sub X'$. So 
   		$Y\inter Z\in F_a$. Fix $u\in Y\inter Z$.
   		Since $u\in Z$, we have
   		\begin{equation}\label{eqn:images_not_equal} g_0(i^P_e(u))\neq g_1(i^P_e(u)), \end{equation}
   		but since $u\in Y$, we have
   		$u'=i^P_e(u)\in i^P_e(Y)=i^A_e(Y)\sub X'$,
   		so $g_0(u')=g_1(u')$, 
   		contradicting line (\ref{eqn:images_not_equal}).
   		
   		Similarly, $\pi$ is injective and $\in$-preserving.
   		
   		We now verify that $\pi$ is surjective.
   		Let $a,f$ be given with $a\in[\lambda]^{<\om}$
   		and $f\in P$ and $f:[\kappa]^{|a|}\to P'$; we will find some
   		$g\in P'$ 
   		with
   		\[ \{u\in[\kappa]^{|a|}\mid g(i^P_e(u))=f(u)\}\in F_a.\]

   		Work in $P$. Fix a sequence
   		$\left<(b_u,f_u)\right>_{u\in[\kappa]^{|a|}}$ such that
   		\[ f(u)=[b_u,f_u]^P_e\]
   		where $b_u\in[\delta_0^{P'}]^{<\om}$
   		and
   		$f_u:[\delta_0^P]^{<\om}\to P$
   		for each $u$.
   		 
   		Define
   		\[ h:[\delta_0^P]^{<\om}\to P,\]
   		\[ h(c):\kappa^{|a|}\to P\text{ is the map where }h(c)(u)=f_u(c).\]
   		As $\delta_0^{P'}<\kappa$,
   		we may fix  $X\in E_a$ and $b\in[\delta_0^{P'}]^{<\om}$
   		such that $b_u=b=b_{u'}$ for all $u,u'\in X$.
   		
   		Now let $g=[h,b]^P_e=i^P_e(h)(b)$.
   		Fix $u\in X$; it suffices to see that
   		\[ g(i^P_e(u))=f(u).\] But
   		\[ h(c)(u)=f_u(c)\text{ for each }c\in[\delta_0^P]^{<\om},\]
   		so by elementarity of $i^P_e$,
   		\[ i^P_e(h)(c)(i^P_e(u))=i^P_e(f_u)(c)\text{ for each }c\in[\delta_0^{P'}]^{<\om},\]
   		but applying this to $c=b=b_u$, we get
   		$g(i^P_e(u))=f(u)$, as desired. This completes the proof of the claim.\end{proof}
   		This completes the proof of the lemma.
   	\end{proof}

       As in \cite{vm2_v2}, we also have:
   \begin{lem}\label{lem:vV_1[g]_computes_Sigma_vV,vV^-}
   	Let $\vV$ be a non-dropping $\Sigma_{\vV_1}$-iterate of $\vV_1$.
   	Let $\PP\in\vV$, $\PP\subseteq\lambda$, where $\lambda\geq\delta_0^{\vV}$.
   	Let $x=\vV|\lambda^{+\vV}$.
   	Let $G$ be $(\vV,\PP)$-generic (with $G$  appearing in a generic extension of $V$).
   	Then:
   	\begin{enumerate}
   		\item $\vV$ is closed under $\Sigma_{\vV,\vV^-}$, and $\Sigma_{\vV,\vV^-}\rest\vV$
   		is lightface definable over $\vV$, uniformly in $\vV$,
   		\item  $\vV[G]$
   		is closed under $\Sigma_{\vV,\vV^-}$,
   		and $\Sigma_{\vV,\vV^-}\rest\vV[G]$
   		is definable over $\vV[G]$ from $x$, uniformly in $\vV,x$.
   		\end{enumerate}
   	\end{lem}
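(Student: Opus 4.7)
The plan is to reduce this to the standard Q-structure definition of iteration strategies for trees based on a single Woodin, using that $\vV\down 0$ is a $\Sigma$-iterate of $M$. Write $N = \vV\down 0$; by the (unnumbered) corollary of \cite[Lemma 4.57***]{vm2} near the start of this section, $N$ is a non-dropping $\Sigma$-iterate of $M$, with $\vV|\delta_0^{\vV} = N|\delta_0^N$. Trees via $\Sigma_{\vV,\vV^-}$ are naturally identified with trees on $N$ based on $N|\delta_0^N$, and under this identification $\Sigma_{\vV,\vV^-}$ is the restriction of $\Sigma_N$. Because $\vV^-$ has a unique Woodin $\delta_0^{\vV}$ and no measurables above it, the correct Q-structure $Q(\Tt)$ for a limit length $\Tt$ via $\Sigma_{\vV,\vV^-}$ does not overlap $\delta(\Tt)$, and the $\Sigma_{\vV,\vV^-}$-branch is the unique cofinal $b$ with $Q(b,\Tt) = Q(\Tt)$.

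For part (1), I would verify that $Q(\Tt)$ is computable inside $\vV$ for every such limit length $\Tt\in\vV$. Since $\vV$ is $\Mswom$-like, it contains a proper class of strong cardinals above $\delta_0^{\vV}$, so a fully-backgrounded $L[\es]$-construction performed inside $\vV$ over $M(\Tt)$ (using background extenders from $\vV$'s sequence above the relevant stage) reaches $Q(\Tt)$ as an initial segment; iterability of this construction is inherited from $\vV$'s own iterability. The unique Q-structure-correct branch then gives a lightface definition of $\Sigma_{\vV,\vV^-}\rest\vV$ over $\vV$, and since the definition refers only to $\vV^-$ and internal iterability of extensions of it, it is uniform in $\vV$.

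For part (2), the same argument runs in $\vV[G]$ with parameter $x = \vV|\lambda^{+\vV}$. Forcing with $\PP\subseteq\lambda$ preserves all Q-structures coded below $\lambda^{+\vV}$, and these are directly available from $x$. For Q-structures arising past $\lambda^{+\vV}$, I would use that the relevant background $L[\es]$-construction of $\vV$ above $\lambda^{+\vV}$ is unaffected by $G$ and can be recovered inside $\vV[G]$ using $x$ together with the tail of $\vV$'s extender sequence (which is preserved by the forcing). The resulting branch-picker is definable over $\vV[G]$ from $x$, uniformly in $(\vV,x)$.

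The main obstacle I expect is verifying that this recovery of background constructions works uniformly for arbitrarily long trees, and in particular that Q-structures in $\vV[G]$ are genuinely absolute to the external computation. This is the standard P-construction absoluteness argument combined with the $*$-translation step flagged in Remark \ref{rem:*-trans}; modulo that machinery it is routine and proceeds exactly along the lines of the analogous lemma in \cite{vm2}.
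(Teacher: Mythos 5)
Your reduction to Q-structure-guided branch selection on $N=\vV\down 0$ (a non-dropping $\Sigma_M$-iterate) via $\Delta$-conservativity is the right first step and matches the setup. But there are two serious problems with what follows.

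First, the computation mechanism is wrong. You propose recovering $Q(\Tt)$ via a fully-backgrounded $L[\es]$-construction over $M(\Tt)$ inside $\vV$, claiming $\vV$ is $\Mswom$-like with a proper class of strong cardinals above $\delta_0^{\vV}$. Neither is true: $\vV$ is $\vV_1$-like, hence a proper $1$-Vsp whose extender sequence above $\gamma^{\vV}$ contains long extenders, and it has exactly $\om$ strong cardinals, all below $\lambda^{\vV}$. A fully-backgrounded construction doesn't have an evident meaning when the purported backgrounds include long extenders, and there is no argument here that such a construction, even if sensibly defined, would reach the Q-structure. The paper (following \cite{vm2}, which is what the ``As in \cite{vm2}'' refers to) computes $Q(\Tt)$ by \emph{P-construction}: one forces with the extender algebra at $\delta(\Tt)$ to make $N|\delta(\Tt)$ (equivalently $\vV|\delta(\Tt)$) generic over $M(\Tt)$, then directly translates $\es^{N}$ (or $\es^{\vV}$) above $\delta(\Tt)$ into a premouse sequence over $M(\Tt)$, and then verifies correctness by a phalanx comparison (cf.\ the later \ref{dfn:P-con_for_gamma-stable-P-suitable}, \ref{lem:*-suitable_P-con_correctness_no_short_overlaps} for the $\vV_{n+1}$ analogue). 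That is a materially different construction from yours, and the comparison argument is the substance of the proof, not something you get for free.

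Second, the claim that ``the correct Q-structure $Q(\Tt)$ for a limit length $\Tt$ via $\Sigma_{\vV,\vV^-}$ does not overlap $\delta(\Tt)$'' is false in general: $Q(\Tt)$ can contain short extenders $E$ with $\crit(E)<\delta(\Tt)<\lh(E)$. This is precisely the non-tame case, and it is exactly why $*$-translation is needed (Remark \ref{rem:*-trans}); the P-construction as stated only handles the tame case, and the non-tame case is what \cite{*-trans_add} is being deferred to. You partially acknowledge $*$-translation at the end, but the opening claim contradicts it and would make the rest of your argument vacuous if taken at face value. So the overall structure (conservativity reduction, Q-structure branch selection, use of $x=\vV|\lambda^{+\vV}$ to recover the relevant data in $\vV[G]$) is sound, but the core step — producing $Q(\Tt)$ inside $\vV$ or $\vV[G]$ and proving it is correct — is not established and is not argued the way the source it relies on argues it.
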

   
      \begin{lem}
   	Let $N$ be a non-dropping $\Sigma_{M}$-iterate of $M$.
   	Let $\PP\in M$, $\PP\subseteq\lambda$.
   	Let $x=\pow(\lambda)^N$.
   	Let $G$ be $(N,\PP)$-generic (with $G$  appearing in a generic extension of $V$).
   	Let $\vV=\vV_1^N$.
   	Then:
   	\begin{enumerate}
   		\item $N$ is closed under $\Sigma_{\vV,\vV^-}$, and $\Sigma_{\vV,\vV^-}\rest N$
   		is lightface definable over $N$, uniformly in $N$,
   		\item  $N[G]$
   		is closed under $\Sigma_{\vV,\vV^-}$,
   		and $\Sigma_{\vV,\vV^-}\rest N[G]$
   		is definable over $ [G]$ from $x$, uniformly in $N,x$.
   	\end{enumerate}
   \end{lem}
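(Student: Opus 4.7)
The plan is to reduce to the previous lemma by using that $\vV=\vV_1^N$ is a lightface-definable inner model of $N$, with $\vV^-=\vV|\delta_0^{\vV}=N|\delta_0^N$. First I would recall that $\vV$ is lightface definable over $N$, uniformly in $N$, as part of the adaptation of \cite[\S4]{vm2} to $N$ (cf.~item 2 of Theorem \ref{tm:main} applied to $\vV_1$ in place of $\vV_\om$). Also, $\vV_1^N$ agrees with $N$ up to and including $\delta_0^N$, which is Woodin in both models, so $\Sigma_{\vV,\vV^-}$ and $\Sigma_{N,N^-}$ (where $N^-=N|\delta_0^N=\vV^-$) are strategies for trees on the same initial segment; both select the unique cofinal branch determined by Q-structure correctness in $V$, so they agree.

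For item 1, applying the previous lemma to $\vV$ with trivial forcing gives that $\Sigma_{\vV,\vV^-}\rest\vV$ is lightface definable over $\vV$; combined with lightface definability of $\vV$ over $N$, this gives lightface definability of $\Sigma_{\vV,\vV^-}\rest\vV$ over $N$. For trees $\Tt\in N$ via $\Sigma_{\vV,\vV^-}$ that may not lie in $\vV$, the correct branch at any limit $\eta<\lh(\Tt)$ is determined by the unique Q-structure $Q$ extending $\M(\Tt\rest\eta)$ (either projecting below $\delta(\Tt\rest\eta)$ or defining a failure of Woodinness of $\delta(\Tt\rest\eta)$); such $Q$ is found by searching the hierarchy of $N$ above $\delta_0^N$, using that $N$ is a non-dropping $\Sigma_M$-iterate of $M$ and hence correctly iterable, so this search is lightface definable over $N$, giving item 1.

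For item 2, the same reduction works in $N[G]$, but now we need the parameter $x=\pow(\lambda)^N$ to bound the Q-structure search. Any tree $\Tt\in N[G]$ via $\Sigma_{\vV,\vV^-}$ has cardinality at most $\lambda$ in $N[G]$ (since $\PP\sub\lambda$), and the Q-structures required are initial segments of correct iterates of $N$ of cardinality at most $\lambda^{+N}$, all coded into $x$. Definability of $\Sigma_{\vV,\vV^-}\rest N[G]$ over $N[G]$ from $x$, uniformly in $N$ and $x$, then follows by exactly the analysis used for the previous lemma, transferred from $\vV[G]$ to $N[G]$. The main obstacle will be verifying that the Q-structure search in $N[G]$ is correctly captured by $x$; this parallels the Q-structure analysis already carried out for $\vV[G]$, with care needed to match Q-structures between $\vV$ and $N$ and to ensure sufficient absoluteness of branch selection between $N$, $\vV$, and their generic extensions.
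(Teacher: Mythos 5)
Your opening reduction rests on a false identification. You claim $\vV^- = \vV|\delta_0^\vV = N|\delta_0^N$, but the first Woodin cardinal of $\vV_1^N$ sits at $\kappa_0^{+N}$, not at $\delta_0^N$ (see the inductive hypothesis \ref{rem:ind_hyp}, where $\delta_0^{\vV_1}=\Delta^{\vV_1}=\kappa_0^{+M}$). Moreover $\vV_1^N|\delta_0^{\vV_1^N}$ is the corresponding cut of the local direct-limit model $\mM_{\infty 0}^N$, not an initial segment of $N$ at all. So the identification of $\Sigma_{\vV,\vV^-}$ with a strategy ``$\Sigma_{N,N^-}$'' for the same initial segment, and the ensuing proposal to find Q-structures ``by searching the hierarchy of $N$ above $\delta_0^N$,'' do not make sense as stated: the trees are on a different premouse, and the branch-determining Q-structures are produced by P-construction (translating extenders against the generic), not by reading off proper initial segments of $N$. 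You also assert that any tree $\Tt\in N[G]$ via $\Sigma_{\vV,\vV^-}$ has cardinality at most $\lambda$ in $N[G]$; this is simply false --- bounding the forcing by $\lambda$ bounds the new sets of low rank, but the trees for which we need branch choices can have arbitrary length and rank.

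The intended argument is a direct reduction to Lemma \ref{lem:vV_1[g]_computes_Sigma_vV,vV^-}: $N$ is itself a set-generic extension of $\vV=\vV_1^N$ via $\CC_0^{\vV}$, a forcing of size $\delta_0^{\vV}$, so $N[G]$ is a set-generic extension of $\vV$ via the composition $\CC_0^{\vV}\ast\dot{\PP}$ (of size $\max(\delta_0^{\vV},\lambda)$). Applying part 2 of that lemma over $\vV$ with this combined forcing gives definability of $\Sigma_{\vV,\vV^-}\rest N$ (resp. $\rest N[G]$) over $N$ (resp. $N[G]$) from the parameter $\vV|\max(\delta_0^{\vV},\lambda)^{+\vV}$. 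That parameter is lightface definable over $N$ (since $\vV_1^N$ is), and is recoverable over $N[G]$ from $x=\pow(\lambda)^N$ because $N$ itself is definable over $N[G]$ from $x$ by ground-model definability. That closes both items, uniformly.
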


      \begin{dfn}
A structure if    \emph{$\vV_1$-like}
if it is proper class, transitive,
a proper $1$-Vsp, and has satisfies various
first order properties true in $\vV_1$;
we leave it to the reader to identify which properties are needed to make things work.

Likewise for \emph{$M_{\sw\om}$-like},
etc.
   \end{dfn}

\begin{dfn}
 Let $\wW_0=\vV_0=M$.
 Let $\wW_1=\cHull^{\vV_1}(\mathscr{I}^M)=\cHull^{\vV_1}(\mathscr{I}^{\vV_1})$ (where $\mathscr{I}^N$ denotes the Silver indiscernibles of $N$). Let $i_{11}:\wW_1\to\vV_1$ denote the uncollapse map.\end{dfn}
 
 So $\wW_1$ is $\vV_1$-like.
 Let $\mathscr{I}=\mathscr{I}^M=\mathscr{I}^{\vV_1}=\mathscr{I}^{\mM_{\infty 0}}$. As in \cite{vm2_v2}, \[ \mM_{\infty 0}\cap\Hull^{\vV_1}(\mathscr{I}^M)=\Hull^{\mM_{\infty 0}}(\mathscr{I}^M)=\rg(i_{M\mM_{\infty 0}}) \]
 where $i_{M\mM_{\infty 0}}:M\to\mM_{\infty 0}$ is the iteration map.
We have $\wW_0=M\sub\wW_1$ and in fact $\wW_0=\wW_1\down 0$
(notation as in \cite{vm2_v2}). Also,
$i_{M\mM_{\infty 0}}\sub i_{11}$
and $\vV_1=\Ult(\wW_1,e_0^{\wW_1})$
and $i_{11}$ is just the ultrapower map. This allows
us to define a $(0,\OR)$-strategy $\Sigma_{\wW_1}$ for $\wW_1$ (absorbing trees on $\wW_1$ in trees on $M$) completely analogously to how $\Sigma_{\vV_1}$ was defined for $\vV_1$ (absorbing trees on $\vV_1$ in trees on $\mM_{\infty 0}$).  Lemmas \ref{lem:Psi_vV_1,gamma_conservative}--\ref{lem:vV_1[g]_computes_Sigma_vV,vV^-} go through for $\wW_1,\Sigma_{\wW_1}$ by essentially the same proofs as for $\vV_1,\Sigma_{\vV_1}$.

   \begin{dfn}
   	Let $\vV$ be $\vV_{1}$-like. An iteration tree $\Tt$ on $\vV$ is \emph{$1$-translatable} iff
   	\begin{enumerate}[label=--]
   		\item $\Tt$ is $0$-maximal, and
   		\item for all $\alpha+1<\lh(\Tt)$, if $[0,\alpha]_\Tt$ does not drop then $\gamma^{M^\Tt_\alpha}<\lh(E^\Tt_\alpha)$.\qedhere
   		\end{enumerate}
   	\end{dfn}
   	
   	Note that a $1$-translatable tree is allowed
   	to use long extenders, though not $e^{\vV}$ or its images.
   \begin{dfn}
   	Let $N$ be $\Mswom$-like and $\Gamma$ be an above-$\kappa_0^N$ strategy for $N$.
   	Then $\mathrm{trl}(\Gamma)$ denotes the strategy for $\vV_{1}^N$ for $1$-translatable trees on $\vV_{1}^N$ induced by translation to trees on $N$ via $\Sigma$.\end{dfn} 
   	 
   	 \begin{lem}
   	  Let $N$ be any non-dropping $\Sigma_M$-iterate of $M$. Then $\vV_1^N$ is a non-dropping $\Sigma_{\wW_1}$-iterate of $\wW_1$.
   	 \end{lem}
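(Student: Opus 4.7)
The plan is to construct an explicit tree $\Uu$ on $\wW_1$, via $\Sigma_{\wW_1}$ and with non-dropping main branch, such that $M^\Uu_\infty = \vV_1^N$. I build $\Uu$ as a concatenation $\Uu = \Uu_0 \conc \la E \ra$, where $\Uu_0$ is a short-normal tree on $\wW_1$ lifting the tree on $M$ that produces $N$, and $E$ is the long extender on the final model of $\Uu_0$.

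First, let $\Tt$ be the $0$-maximal tree on $M$ via $\Sigma_M$ with $M^\Tt_\infty = N$ and $[0,\infty]_\Tt$ non-dropping. Since $\Sigma_{\wW_1}$ is defined by absorbing trees on $\wW_1$ into trees on $M$, completely analogously to how $\Sigma_{\vV_1}$ absorbs trees on $\vV_1$ into trees on $\mM_{\infty 0}$, I lift $\Tt$ to a short-normal tree $\Uu_0$ on $\wW_1$ via $\Sigma^{\sn}_{\wW_1}$ whose associated tree on $\wW_1 \down 0 = M$ is $\Tt$. By the $\Sigma_M$-conservativity of $\Sigma^{\sn}_{\wW_1}$ (the analogue for $\wW_1$ of Lemma \ref{lem:Psi_vV_1_is_conservative}, proved by the same argument), the tree $\Uu_0$ has the same tree order, drop and degree structure as $\Tt$, and writing $\wW_1' := M^{\Uu_0}_\infty$, we get $\wW_1' \down 0 = N$ with $i^{\Uu_0}_{0\infty} \upharpoonright M = i^\Tt_{0\infty}$.

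Next, set $E := e_0^{\wW_1'} = i^{\Uu_0}_{0\infty}(e_0^{\wW_1})$, the long extender of $\wW_1'$, and let $\Uu := \Uu_0 \conc \la E \ra$. I claim $M^\Uu_\infty = \Ult(\wW_1', E) = \vV_1^N$. On the one hand, $\vV_1^N$ is defined as the canonical Varsovian model over $N$, which (by the same construction applied to $N$ rather than $M$) is $\Ult(\wW_1^N, e_0^{\wW_1^N})$ where $\wW_1^N = \cHull^{\vV_1^N}(\mathscr{I}^N)$. On the other hand, $\wW_1'$ is the image of $\wW_1 = \cHull^{\vV_1}(\mathscr{I}^M)$ under the iteration map $i^{\Uu_0}_{0\infty}$; tracing through the construction of $\vV_1^N$ from $N$, and using the functoriality of the hull/$*$-translation construction under the iteration map $i^\Tt_{0\infty}: M\to N$, this image coincides with $\wW_1^N$. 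Applying $E$ then yields $\vV_1^N$ as desired. The main branch $[0,\infty]_{\Uu}$ does not drop, since $[0,\infty]_{\Uu_0}$ does not drop and the final ultrapower by $E$ is full.

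Finally, $\Uu$ is via $\Sigma_{\wW_1}$: by Lemma \ref{lem:unique_ext_to_(0,OR)-strat}, $\Sigma_{\wW_1}$ is the unique self-consistent extension of $\Sigma^{\sn}_{\wW_1}$ to a $(0,\OR)$-strategy, and $\Uu_0$ is via $\Sigma^{\sn}_{\wW_1}$ by construction, while appending the single long extender $E$ at the end is permitted by self-consistency (no subsequent stages need to be constrained). The main obstacle is the identification $\wW_1' = \wW_1^N$, i.e.\ matching the canonical Varsovian construction over $N$ with the $\Uu_0$-image of the hull that defines $\wW_1$. This functoriality argument requires chasing the P-construction/$*$-translation through the iteration map $i^\Tt_{0\infty}$, which is routine given the absorption definition of $\Sigma_{\wW_1}$ and the uniform definability of $\wW_1^N$ from $N$ (via Silver indiscernibles and the canonical direct limit system).
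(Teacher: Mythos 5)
Your central step---lifting $\Tt$ to a short-normal tree $\Uu_0$ on $\wW_1$ with $\Uu_0\down 0=\Tt$, with the same drop/degree structure, and then tacking the long extender $e_0^{M^{\Uu_0}_\infty}$ onto the very end---has a genuine gap. No such $\Uu_0$ exists for general $\Tt$, for two related reasons, and the cited conservativity lemma does not provide it.

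First, if $\Tt$ has a nontrivial $\Tt_1$-component (extenders indexed in $(\delta_0^{M^\Tt_{\alpha_0}},\kappa_0^{+M^\Tt_{\alpha_1}}]$ at non-dropping stages), then the corresponding short-normal tree on $\wW_1$ is $\gamma_0$-unstable and \emph{drops in model}: since $\rho_1(\wW_1|\gamma_0^{\wW_1})=\delta_0^{\wW_1}$, the $\wW_1$-side tree immediately drops to an image of $\wW_1|\gamma_0^{\wW_1}$, and its last model is set-sized. This is spelled out in the discussion of $\gamma$-conservativity in Definition \ref{dfn:self-consistent}: ``if $0<\alpha$ and $[0,\alpha]_{\Tt'}$ does not drop, then $[0,\alpha]_{\Uu'}$ does drop, with $M^{*\Uu}_{\beta+1}=M^\Uu_\infty|\gamma^{M^\Uu_\infty}$.'' The conservativity result you cite (Lemma \ref{lem:Psi_vV_1_is_conservative} or its $\wW_1$ analogue) only asserts matching drop/degree structure for $\gamma$-\emph{stable} trees, which rules out exactly the trees you need. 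So your proper class $\wW_1'$ with $\wW_1'\down 0=N$ does not materialize.

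Second, even when $\Tt_1$ is trivial, the $\Tt_2$-part (above $\kappa_0^{+}$) of $\Tt$ typically uses extenders with critical point $\kappa_0^{M^\Tt_\beta}$ at non-dropping stages. Under the P-construction that defines the $1$-Vsp structure above $\gamma_0$, these correspond to $0$-\emph{long} extenders on the $\wW_1$-side (cf.\ item (a) in the enumerated consequences after Remark \ref{rem:ind_hyp}: short $E\in\es^{\vV_i}$ with $\crit(E)=\kappa_i^{\vV_i}$ translates to $i$-long in $\vV_{i+1}$). So the $\wW_1$-tree required to reach $\vV_1^N$ is not short-normal: it must use the long extender not once at the end but at position $\alpha_1$ (where the indices first pass $\kappa_0^{+}$), after which further long extenders appear at the same indices as $\Tt$'s extenders of critical point $\kappa_0$. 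That is precisely what the paper's $1$-translation $\Uu'$ does, and why the subsequent (short) portion of $\Tt_2$ then requires a further translation argument (the lifting via a total order-zero measure in the $\gamma_0$-unstable case). Your $\Uu=\Uu_0\conc\la E\ra$ with $\Uu_0$ short-normal and a single long extender at the tail does not compute to $\vV_1^N$ in these cases.
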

\begin{proof}
 
\end{proof}

   \begin{dfn}
   	Let $\Psi$ be a short-normal strategy for $\vV_{1}$ with mhc (minimal hull condensation; cf.~\cite{fullnorm_v3}).
   	We say that $\Psi$ is \emph{$\Sigma_M$-stable} iff whenever
   	$N$ is a non-dropping $\Sigma$-iterate of $M$
   	and $\vV=\vV_{1}^N$ is a $\Psi$-iterate of $\vV_{1}$, then
   	$\Psi_{\vV}$ agrees with $\mathrm{trl}(\Sigma_N)$ on translatable trees.
   	\end{dfn}
   	
   	\begin{dfn}
   	 Let $\Tt$ be a $0$-maximal
   	 tree on $M$.
   	 
   	 Let $\alpha_0$ be the
   	 least ordinal, if it exists, such that either $\lh(\Tt)=\alpha+1$
   	 or $[0,\alpha]^\Tt\cap\dropset^\Tt=\emptyset$ and $\delta_0^{M^\Tt_\alpha}<\lh(E^\Tt_\alpha)$.
   	 Let $\alpha_1$ be the least 
   	 ordinal, if it exists, such that either $\lh(\Tt)=\alpha+1$ or $[0,\alpha]^\Tt\cap\dropset^\Tt=\emptyset$ and $\kappa_0^{+M^\Tt_\alpha}<\lh(E^\Tt_\alpha)$.
   	 If $\alpha_0,\alpha_1$ exist, then
   	 the \emph{standard decomposition} of $\Tt$
   	 is the tuple $(\Tt\rest(\alpha_0+1),\Tt\rest[\alpha_0,\alpha_1],\Tt\rest[\alpha_1,\lh(\Tt))$. If $\alpha_0$ does not exist then the \emph{standard decomposition} is $\Tt$ itself.
   	 If $\alpha_0$ exists but not $\alpha_1$,
   	 then the \emph{standard decomposition} if $(\Tt\rest(\alpha_0+1),\Tt\rest[\alpha_0,\lh(\Tt))$.

   	 We say that $\Tt$ is \emph{1-translatable}
   	 iff either $\alpha_1$ as above does not exist, or
 for every $\alpha\geq\alpha_1$ with $\alpha+1<\lh(\Tt)$,
   	 if $[0,\alpha]^\Tt\cap\dropset^\Tt=\emptyset$ then $\kappa_0^{+M^\Tt_\alpha}<\lh(E^\Tt_\alpha)$.   	 
   	 
   	 If $\Tt$ is 1-translatable, then the \emph{1-padding}
   	 of $\Tt$ is the padded $0$-maximal tree $\Uu$ where either:
   	 \begin{enumerate}[label=--]
   	  \item $\alpha_1$ does not exist and $\Tt=\Uu$, or
   	  \item $\alpha_1$ exists,
   	  $\Tt\rest(\alpha_1+1)=\Uu\rest(\alpha_1+1)$,
   	  $E^\Uu_{\alpha_1}=\emptyset$, $\nu^\Uu_{\alpha_1}=\kappa_1^{M^\Tt_{\alpha_1}}$,\footnote{Here $\nu^\Uu_\beta$ is the exchange ordinal associated with $E^\Uu_\beta$. So in general,
   	  $\pred^\Uu(\gamma+1)$
   	  is the least $\beta$ such that $\crit(E^\Uu_\gamma)<\nu^\Uu_\beta$.} and $\Tt\rest[\alpha_1,\lh(\Tt))$ is equivalent to $\Uu\rest[\alpha_1+1,\lh(\Tt))$. (So $\Uu$ is padded only at $\alpha_1$, and after removing padding, we recover $\Tt$.)
   	 \end{enumerate}

   	 If $\Tt$ is 1-translatable
   	 and $\Uu$ its 1-padding,
   	 then the \emph{1-translation} of $\Tt$, or of $\Uu$, is the (putative)
   	 $0$-maximal tree $\Uu'$ on $\wW_1$ such that
 $\Uu,\Uu'$ have the same length, extender indices and tree structure, except that if $\alpha_1$ as above exists (for $\Tt$), then $E^\Uu_{\alpha_1}=e_0^{M^{\Uu}_{\alpha_1}}$.
   	\end{dfn}

  \begin{lem}
   Let $N$ be a non-dropping $\Sigma_M$-iterate,
   via tree $\Tt$.
   Then the 1-translation $\Uu'$ of $\Tt$ exists, is via $\Sigma_{\wW_1}$ (hence is $0$-maximal), has successor length and $M^{\Uu'}_\infty=\vV_1^N$.
\end{lem}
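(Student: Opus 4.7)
The plan is to induct on the length of $\Tt$, constructing $\Uu'$ in parallel while tracking the correspondence between $M^{\Uu'}_\alpha$ and $M^\Tt_\alpha$ via the $\down 0$ operation and the $\vV_1$-ization. This argument also supplies the content needed for the immediately preceding (unproved) lemma identifying $\vV_1^N$ as a $\Sigma_{\wW_1}$-iterate of $\wW_1$.

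First I would verify that $\Tt$ is 1-translatable. Since $N$ is a non-dropping iterate, $\Tt$ has successor length, so $\alpha_1$ exists. For each $\alpha\geq\alpha_1$ with $\alpha+1<\lh(\Tt)$ and $[0,\alpha]^\Tt$ non-dropping, the monotonicity of extender indices in $0$-maximal trees, together with the observation that $0$-maximality combined with the no-drop requirement prevents the branch $[0,\alpha]^\Tt$ from backtracking below $\kappa_0^{+M^\Tt_{\alpha_1}}$ once stage $\alpha_1$ has been crossed, gives $\alpha_1\leq_\Tt\alpha$ and hence $\kappa_0^{+M^\Tt_\alpha}=i^\Tt_{\alpha_1,\alpha}(\kappa_0^{+M^\Tt_{\alpha_1}})$. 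Combined with $\lh(E^\Tt_\alpha)>\lh(E^\Tt_{\alpha_1})>\kappa_0^{+M^\Tt_{\alpha_1}}$ and the usual extender-agreement condition along the branch, this yields $\kappa_0^{+M^\Tt_\alpha}<\lh(E^\Tt_\alpha)$, as required.

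Next I would build the 1-padding $\Uu$ and its 1-translation $\Uu'$ inductively, maintaining the invariant that $M^{\Uu'}_\alpha$ is $\wW_1$-like (hence a proper 1-Vsp for $\alpha\geq\alpha_1$), with $M^{\Uu'}_\alpha\down 0=M^\Tt_\alpha$ for $\alpha\leq\alpha_1$, and $M^{\Uu'}_\alpha=\vV_1^{M^\Tt_\alpha}$ for $\alpha>\alpha_1$. Up through stage $\alpha_1$, the extenders of $\Tt$ lie below $\kappa_0^{+M^\Tt_\alpha}=\gamma^{M^{\Uu'}_\alpha}$, where $M^\Tt_\alpha$ and $M^{\Uu'}_\alpha$ agree by the definition of $\down 0$, so they can be used on both sides. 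At the padded stage, inserting $E^{\Uu'}_{\alpha_1}=e_0^{M^{\Uu'}_{\alpha_1}}$ yields $M^{\Uu'}_{\alpha_1+1}=\Ult(M^{\Uu'}_{\alpha_1},e_0^{M^{\Uu'}_{\alpha_1}})$, which by the direct-limit/$\vV_1$-construction (combined with the $*$-translation integration supplied by $(\dagger)$) coincides with $\vV_1^{M^\Tt_{\alpha_1}}$. For $\alpha>\alpha_1$ the extender $E^\Tt_\alpha$ has critical point $>\kappa_0^{+M^\Tt_\alpha}$ and hence lies in the common portion of the sequences of $M^\Tt_\alpha$ and $\vV_1^{M^\Tt_\alpha}$, permitting its use in $\Uu'$. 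At limits, the branch selected by $\Sigma_{\wW_1}$ corresponds to the 1-translation of the branch selected by $\Sigma_M$, since $\Sigma_{\wW_1}$ was defined precisely by absorbing trees on $\wW_1$ into trees on $M$, of which the 1-translation is the inverse.

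Then $\Uu'$ is via $\Sigma_{\wW_1}$ by construction; $\Uu'$ has successor length because $\Tt$ does (the padding adds a single successor step at $\alpha_1$); and $M^{\Uu'}_\infty=\vV_1^N$ by the invariant at the final stage. The chief technical obstacle is the bookkeeping across the transition at $\alpha_1$: verifying that insertion of the long extender $e_0$ produces precisely $\vV_1$ of the $M$-side iterate, and that the extender sequences of $M^{\Uu'}_\alpha$ and $\vV_1^{M^\Tt_\alpha}$ agree above $\gamma^{M^{\Uu'}_\alpha}$ so that subsequent moves transfer unambiguously. This rests on the integration of $*$-translation with the P-construction machinery under the hypothesis $(\dagger)$.
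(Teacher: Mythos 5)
Your proposal has the right outline but misses or misstates several essential points, and takes a genuinely different route from the paper.

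First, your invocation of $*$-translation (``$(\dagger)$'') at the $\alpha_1$ stage is a red herring: the paper's proof of this lemma does not use $*$-translation at all. The identity $\Ult(M^{\Tt_0'}_\infty,e_0^{M^{\Tt_0'}_\infty})=\vV_1^{M^{\Tt_0}_\infty}$ follows purely from $\Delta$-conservativity and elementarity, and the passage from $M^{\Tt_0}_\infty$ to $M^{\Tt_0\conc\Tt_1}_\infty$ is handled by $\gamma$-conservativity plus the observation that $\vV_1^{M^{\Tt_0}_\infty}$ is a ground of $M^{\Tt_0}_\infty$ via $\delta_0$-cc forcing (so the iteration map $i^{\Tt_0\conc\Tt_1}_{\alpha_0\infty}$ acts on $\vV_1^{M^{\Tt_0}_\infty}$ as $\Ult_0(\cdot,F)$, where $F$ is the extender of the map), yielding $e_0^{M^{\Tt_0'\conc\Tt_1'}_\infty}=F\circ e_0^{M^{\Tt_0'}_\infty}$ and hence $M^{\Uu'}_\infty=\vV_1^N$ when $\Tt_2$ is trivial. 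Second, your claim that for $\alpha>\alpha_1$ the extender $E^\Tt_\alpha$ has critical point $>\kappa_0^{+M^\Tt_\alpha}$ and ``hence lies in the common portion of the sequences'' is false: such extenders can have critical point exactly $\kappa_0^{M^\Tt_\alpha}$, and these are precisely the ones that become \emph{long} extenders on the $\Uu'$ side (same index, different extender), not shared entries. Third, and most importantly, your proposed step-by-step invariant $M^{\Uu'}_\alpha=\vV_1^{M^\Tt_\alpha}$ is not what the paper verifies when $\Tt_2$ is non-trivial (it also isn't even well-formed at stages where $[0,\alpha]_\Tt$ drops). Instead, the paper observes that $\bar N=M^{\Tt_0\conc\Tt_1}_\infty$ is $\kappa_0^{\bar N}$-sound and hence has a \emph{unique} $0$-maximal above-$\kappa_0^{\bar N}$ strategy, and then shows that $\Sigma_{\wW_1}$ induces, via 1-translation, \emph{some} such strategy for $\bar N$, forcing agreement. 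Establishing this in turn requires handling general above-$\kappa_0^{+\bar N}$ trees $\Xx$ for which $\Tt_0\conc\Tt_1\conc\Xx$ ceases to be $1$-translatable — the paper resolves this by lifting via the total order-$0$ measure on $\kappa_0^R$ so the dropped piece translates into the $\vV_1$ side — a technical issue your proposal never encounters precisely because it skips the uniqueness argument. Your argument also asserts $\alpha_1\leq_\Tt\alpha$ for all non-dropping $\alpha\geq\alpha_1$, which does not hold in general for $0$-maximal trees (branches can jump around $\alpha_1$); the $1$-translatability of $\Tt$ needs a different justification.
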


   \begin{proof}
   Note first that $\Tt$ is indeed 1-translatable
   and the standard decomposition of $\Tt$ has form
   $(\Tt_0,\Tt_1,\Tt_2)$ (all 3 parts are defined),
since $N$ is a non-dropping iterate.
 Suppose first that $\Tt_2$ is trivial.
   Then the conclusion of the lemma is mostly
   directly by definition of $\Sigma_{\wW_1}$.
 However, we need a small calculation at the last step to see that $M^{\Uu'}_\infty=\vV_1^N$. Note here
   that (since $\Tt_2$ is trivial) $\Tt=\Tt_0\conc\Tt_1$ is based on $M|\kappa_0^{+M}$,
   it has successor length,
   $b^\Tt$ is non-dropping,
   and $\Uu'=\Tt_0'\conc\Tt_1'\conc\left<e\right>$ where $\Tt_0'\conc\Tt_1'$ is just $\Tt$ as a tree on $\wW_1$,
   which by definition mimics $\Sigma_M$.
   By $\Delta$-conservativity
   and elementarity,
   $\vV_1^{M^{\Tt_0}_\infty}=\Ult(M^{\Tt_0'}_\infty,e_0^{M^{\Tt_0'}_\infty})$.
   And by $\gamma$-conservativity,
   \[M^{\Tt_0\conc\Tt_1}_\infty|\kappa_0^{+M^{\Tt_0\conc\Tt_1}_\infty}=M^{\Tt_0'\conc\Tt_1'}_\infty||\gamma_0^{M^{\Tt_0'\conc\Tt_1'}_\infty} \]
   and
\[   i^{\Tt_0\conc\Tt_1}_{\alpha_0\infty}\rest(M^{\Tt_0}_\infty|\kappa_0^{+M^{\Tt_0}_\infty})=i^{\Tt_0'\conc\Tt_1'}_{\alpha_0\infty}\rest(M^{\Tt_0'}_\infty|\gamma_0^{M^{\Tt_0'}_\infty}).\]
But then letting $F$ be the $(\kappa_0^{+M^{\Tt_0}_\infty},\kappa_0^{+M^{\Tt_0\conc\Tt_1}_\infty})$-extender derived from $i^{\Tt_0\conc\Tt_1}_{\alpha_0\infty}$, we have $\delta_0^{M^{\Tt_0'}_\infty}<\crit(F)$ and \[ e_0^{M^{\Tt_0'\conc\Tt_1'}_\infty}=F\com e_0^{M^{\Tt_0'}_\infty},\]
and (as in \cite{vm2_v2},
since $\vV_1^{M^{\Tt_0}_\infty}$ is a ground of $M^{\Tt_0}_\infty$
via a $\delta_0^{\vV_1^{M^{\Tt_0}_\infty}}$-cc forcing), we have
\[ \vV_1^N=\vV_1^{M^{\Tt_0\conc\Tt_1}_\infty}=i^{\Tt_0\conc\Tt_1}_{\alpha_0\infty}(\vV_1^{M^{\Tt_0}_\infty})=\Ult_0(\vV_1^{M^{\Tt_0}_\infty},F) \]
\[ =\Ult_0(\Ult_0(M^{\Tt_0'}_\infty,e_0^{M^{\Tt_0'}_\infty}),F) \]
\[ =\Ult(M^{\Tt_0'}_\infty,e_0^{M^{\Tt_0'\conc\Tt_1'}_\infty})=M^{\Uu'}_\infty,\]
as desired.

Now suppose $\Tt_2$ is non-trivial. The foregoing applies to $\Tt_0\conc\Tt_1$.
Let $\bar{N}=M^{\Tt_0\conc\Tt_1}_\infty$. Then $\bar{N}$ is $\kappa_0^{\bar{N}}$-sound,
and note that the $0$-maximal above-$\kappa_0^{\bar{N}}$-strategy for $\bar{N}$ is unique. 
So letting $\bar{\vV}=\vV_1^{\bar{N}}=M^{\Tt_0'\conc\Tt_1'}_\infty$,
it suffices to see that $\Sigma_{\wW_1}$  induces a strategy for $\bar{N}$ for $0$-maximal trees $\Xx$ such that $\kappa_0^{+\bar{N}}<\lh(E^\Xx_0)$ (hence $\Xx$ is above $\kappa_0^{\bar{N}}$,
but may use extenders $E$ with $\crit(E)=\kappa_0^{\bar{N}}$),
 via 1-translation. More precisely, that we can iterate $\bar{N}$ for such trees $\Xx$ given that $\Tt_0\conc\Tt_1\conc\Xx$ is  1-translatable, and that this extends to a full strategy for arbitrary such trees $\Xx$.
 As long as $\Tt_0\conc\Tt_1\conc\Xx$ remains 1-translatable, this is straightforward. Now suppose that $\Tt_0\conc\Tt_1\conc\Xx\rest(\alpha+1)$ is 1-translatable, $[0,\alpha]_\Xx$ does not drop, and $\lh(E^\Xx_\alpha)<\kappa_0^{+R}$ where $R=M^\Xx_\alpha$;
 then $\Xx\rest(\alpha+2)$ is non-1-translatable.
   	Note that $\alpha$ is a limit, $\lh(E^\Xx_\beta)<\kappa_0^{R}$ for all $\beta<\alpha$, $\kappa_0^R<\lh(E^\Xx_\alpha)$,
   	and since $\kappa_0^{R}$ is a cutpoint of $R$,
   	therefore $\Xx\rest[\alpha,\infty)$ is above $\kappa_0^{R}$ and is based on some $P\pins R$
   	with $\rho_\om^P=\kappa_0^{R}$. Let $\Xx'$ be the translation
   	of $\Xx\rest(\alpha+1)$ to a tree on $\vV$.
   	Then $\delta_0^{M^{\Xx'}_\alpha}=\kappa_0^{+R}$,
   	so $P$ does not translate into a segment of $R'=M^{\Xx'}_\alpha$.
   	Let $E\in\es^R$ be least such that $E$ is $R$-total
   	and $\crit(E)=\kappa_0^{R}$. Let $Q=i_E(P)$. Then
   	note that $Q$ \emph{does} translate into $Q'\pins R'$,
   	and $\rho_\om^{Q'}=i_E(\kappa)$ and $i_E(\kappa)$ is a cutpoint of $Q'$.
   	So we can iterate $P$ above $\kappa_0^{R}$ by lifting trees on it to $Q$
   	which translate to trees on $Q'$ via $\Sigma_{R'}$. (Note this is actually the unique above-$\kappa_0^{R}$ strategy for $P$.) 
   	\end{proof}

 \section{From $\wW_{n+1}$ to $\wW_{n+2}$}\label{sec:vV_n+1+to_vV_n+2}
 \subsection{The structures and strategies}
 
 We now want to construct $\wW_{n+2}$, given $\wW_{i+1}$ and $\Sigma_{\wW_{i+1}}$ for $i\leq n$
 with various properties.
We will first introduce the key structure of the model $\wW_{n+1}$, and some of the features we want the iteration strategy $\Sigma_{\wW_{n+1}}$ to have.
We will then state some inductive hypotheses in Remark \ref{rem:ind_hyp}, and then propagate them to $\wW_{n+2}$
and $\Sigma_{\wW_{n+2}}$.

\begin{dfn}\label{dfn:pVl}
Let $\Ll$ be the passive premouse language. Let $\varphi_0,\varphi_1,\varphi_2$ be formulas of $\Ll$.
Let $\vec{\varphi}=(\varphi_0,\varphi_1,\varphi_2)$.
A \emph{$0$-$\vec{\varphi}$-pVl}
is an $M$-like premouse.
Now let $n<\om$. 
 An \emph{$(n+1)$-$\varphi$-pVl ($\vec{\varphi}$-provisionary $\vV_{n+1}$-like)} is a
proper class transitive structure $\vV=(L[\es^{\vV}],\es^{\vV})$
such that there are $\delta_0<\ldots<\delta_n\in\OR$ such that:
\begin{enumerate}[label=(V\arabic*),ref=(V\arabic*)]
 \item  $\es^{\vV}$ is a sequence of partial extenders,
 \item $\delta_0,\ldots,\delta_n$ enumerate the first $(n+1)$ Woodin cardinals of $\vV$,
 \item\label{item:extenders_of_pVl} the extenders $E\in\es^{\vV}$ are either:
 \begin{enumerate}
  \item \emph{short}, in which case they satisfy the same requirements as for Jensen indexed premice with no whole proper segments, or
  \item \label{item:pVl_m-long} \emph{$m$-long} for some $m\leq n$, in which case,
  letting $\gamma\in\OR$
  be such that $E=F^{\vV|\gamma}$,
    then $\vV||\gamma=(\vV|\gamma)^{\passive}\sats\ZFC^-+$``there is a largest cardinal $\lambda$'',
    $\delta_m^{\vV}<\gamma$, $\lambda$ is inacessible in $\vV||\gamma$ and is a limit of short-extender-cutpoints $\xi$
    (that is, ordinals $\xi$ such that there is no short $F\in\es^{\vV||\gamma}$ with $\crit(F)<\xi<\lh(F)$), $E$ is a $(\delta_m^{\vV},\gamma)$-extender over $\vV|\delta_m^{\vV}$,
  and letting $\vV'$ be the structure defined over $\vV||\gamma=(\vV|\gamma)^\passive$ from the formula $\varphi_0$ (so $\vV'$ is amenable to $\vV||\gamma$) then $\vV'\sub\vV||\gamma$ is a transitive structure of height $\gamma$, $\Ult(\vV|\delta_m^{\vV},E)=\vV'$ (and $i^{\vV|\delta_m^{\vV}}_E:\vV|\delta_m^{\vV}\to\vV'$ cofinally), and $E$ is amenable to $\vV||\gamma$ (and thus coded in the usual amenable manner).
  \end{enumerate}
  \item For each $m\leq n$,
  there are unboundedly many $\gamma\in\OR$ such that $\es^{\vV}_\gamma$ is $m$-long,
  and for each $m<m'\leq n$,
  there are unboundedly many $\gamma<\delta_{m'}^{\vV}$
  such that $\es^{\vV}_\gamma$ is $m$-long.
  \item $\vV$ has infinitely many strong cardinals, in ordertype exactly $\om$, denoted $\kappa_{n+1}^{\vV}<\kappa_{n+2}^{\vV}<\ldots$, with supremum denoted $\lambda^{\vV}$ (so the least strong cardinal of $\vV$ is denoted $\kappa_{n+1}^{\vV}$).
  \item $\lambda^{\vV}$ is a limit of Woodin cardinals of $\vV$,
  \item\label{item:normalization_cond} All proper segments of $\vV$ are (fine structurally) sound and
satisfy  the condensation properties relevant to normalization.\footnote{ Cf.~\cite[\S\S4.12.3,5.6.1***]{vm2_v2}. This does not require condensation with respect to maps going ``across Woodin cardinals'',
where we cannot have condensation, given the layered nature of a pVl.}
  \item\label{item:gamma_m_def} For $m\leq n$,
  let $\gamma_m^{\vV}$
  be the least $\gamma$ such that $F^{\vV|\gamma}$ is $m$-long
  (so $\delta_m^{\vV}<\gamma_m^{\vV}$).
  Then $\rho_1^{\vV|\gamma_m^{\vV}}=\delta_m^{\vV}$ and $p_1^{\vV|\gamma_m^{\vV}}=\emptyset$
  (so $\gamma_m^{\vV}<\delta_{m}^{+\vV}$).
  Let $\kappa_m^{\vV}$ denote the largest cardinal of $\vV||\gamma_m^{\vV}$, and $e_m^{\vV}$ denote $F^{\vV|\gamma_m^{\vV}}$.
  \item\label{dfn:CC_definability}For $m\leq n$
  let $\CC_m$ be the forcing
  $\sub\delta_m^{\vV}$
  defined over $\vV|\gamma_m^{\vV}$
  with $\varphi_1$.
  Then $\vV\sats$``$\CC_m$ is $\delta_m^{\vV}$-cc'' and $\vV||\gamma_m^{\vV}=(\vV|\gamma_m^{\vV})^{\passive}$
  is generic over $\Ult(\vV,e_m^{\vV})$
  for $i^{\vV}_{e_m^{\vV}}(\CC_m^{\vV})=\CC_m^{\Ult(\vV,e_m^{\vV})}$.
  \item \label{item:i_e(vV|gamma)_def} $i^{\vV}_{e^\vV_m}(\vV|\gamma_m^{\vV})$ is defined in the codes over $\vV||\gamma_m^{\vV}=(\vV|\gamma_m^{\vV})^{\passive}$
  with the formula $\varphi_2$.
  \item \label{item:P-con_in_pVl}The preceding two items arrange that we can form a P-construction from $U=\Ult(\vV,e_n^{\vV})$ above
  $\vV||\gamma_n^{\vV}$. We do this as follows, building
  the structure $P$ such that $P|\gamma_n^{\vV}=\vV||\gamma_n^{\vV}$,  
and $\es^P\rest(\gamma_n^{\vV},\OR)$ is given by translating $\es^U\rest(\gamma_n^{\vV},\OR)$;
that is, for each $\nu\in(\gamma_n^{\vV},\OR)$,
\begin{enumerate}[label=--]
\item $P|\nu$ is active iff $U|\nu$ is active,
\item if $U|\nu$ is active and $F^{U|\nu}$
is short or $n$-long 
then $F^{P|\nu}\rest\OR=F^{U|\nu}\rest\OR$,
and $F^{P|\nu}$ is an extender
satisfying the conditions for Jensen indexing for $P||\nu$,
\item if $U|\nu$ is active with an $i$-long
extender where $i<n$ then $F^{P|\nu}=F^{U|\nu}\circ e_n^{\vV}\rest(\vV|\delta_i^{\vV})$
(note here that by the amenability
of $e_n^{\vV}$ over $\vV||\gamma_n^{\vV}$, we have
$e_n^{\vV}\rest(\vV|\delta_i^{\vV})\in\vV|\gamma_n^{\vV}$,
and so $e_n^{\vV}\rest(\vV|\delta_i^{\vV})$ is in the generic extension $U[\vV|\gamma_n^{\vV}]$).
\end{enumerate}
We write $\vV\down n=P$.
We now demand that $\vV\down n$
is an $n$-$(\varphi_0,\varphi_1,\varphi_2)$-pVl.\footnote{Therefore, $\vV\down 0$, i.e.~$(\ldots((\vV\down n)\down (n-1))\down\ldots\down 0)$, is $M$-like, which as usual we take to include some finite but not fully specified list of first order facts true in $M$, updated as needed.}

\item\label{item:Delta-conservativity} ($\Delta$-conservativity) For every short $E\in\es^{\vV|\Delta^{\vV}}$
and  $a\in[\lambda(E)]^{<\om}$
and $f\in\vV$ with 
\[ f:[\kappa]^{|a|}\to\OR,\] where $\kappa=\crit(E)$,
there is $g\in\vV\down n$
such that $f(u)=g(u)$ for $E_a$-measure one many $u$.
  \end{enumerate}
  We say that an extender in $\es^{\vV}$ is \emph{long} if it is $m$-long for some $m\leq n$.
  We write $\Delta^{\vV}=\delta_n^{\vV}$ (the largest Woodin over which there are long extenders on $\es^{\vV}$).
  
  An \emph{$(n+1)$-pVl}
  is an $(n+1)$-$\vec{\varphi}$-pVl
  for some $\vec{\varphi}$.
  
  Given an initial segment $\vV|\gamma$ of an $(n+1)$-pVl,
  or some related such structure
  such as an iterate $N$ of $\vV|\gamma$, we also apply parts of the terminology/notation above to $N$,
  such as $\gamma_m^N$, ``$m$-long'', etc. This includes $N\down n$, but here we define $U=\Ult_k(N,e_n^N)$, where $k\leq\om$ is the soundness degree of $N$, and then define $N\down n$ from $U$ level-by-level as before.
  \end{dfn}

  \begin{rem}
 As with the $1$-Vsps and $2$-Vsps
 considered in \cite{vm2_v2},
 the long extenders need not (and will not for those we consider)
 cohere $\vV||\gamma$.
 
   In the end there will be  specific formulas $\vec{\varphi}$ that we use, and we will only be interested in these $\vec{\varphi}$-pVls. This will eventually be made explicit. Until then, let us mostly suppress mention of $\vec{\varphi}$ as a parameter.\end{rem}

   \begin{lem}\label{lem:vV_n+1^vV_down_n_def}
 Adopt the context and notation of part \ref{item:P-con_in_pVl} of \ref{dfn:pVl}. Then:
 \begin{enumerate}
  \item \label{item:gamma=kappa^+}$\gamma_n^{\vV}=\kappa_n^{+\vV\down n}$ (that is,
  the cardinal successor of the least strong cardinal of $\vV\down n$),
  \item \label{item:U_def_wout_params}
$U$ is definable without parameters over $\vV\down n$.
 \end{enumerate}
   \end{lem}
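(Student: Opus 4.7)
For part (1), the plan rests on two structural features built into clauses \ref{item:P-con_in_pVl} and \ref{item:pVl_m-long} of Definition \ref{dfn:pVl}: namely $P|\gamma_n^{\vV} = \vV||\gamma_n^{\vV}$ (writing $P = \vV\down n$), and the fact that the universe of $P$ sits inside $U[\vV||\gamma_n^{\vV}]$, the generic extension of $U$ by the $\gamma_n^{\vV}$-cc forcing $i^{\vV}_{e_n^{\vV}}(\CC_n)$. First, $\kappa_n^{\vV}$ is the largest cardinal of $\vV||\gamma_n^{\vV} = P|\gamma_n^{\vV}$, so no ordinal in the open interval $(\kappa_n^{\vV},\gamma_n^{\vV})$ is a cardinal of $P$. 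Using the inductive assumption that $P$ is an $n$-pVl, the translated extenders on $\es^P$ above $\gamma_n^{\vV}$ (inherited from the short and $n$-long extenders of $\es^U$) will witness strongness of $\kappa_n^{\vV}$ in $P$, forcing $\kappa_n^P = \kappa_n^{\vV}$. Second, $\gamma_n^{\vV} = i^{\vV}_{e_n^{\vV}}(\delta_n^{\vV})$ is Woodin, and in particular a cardinal, in $U$; the $\gamma_n^{\vV}$-cc of the forcing preserves it as a cardinal of $U[\vV||\gamma_n^{\vV}]$, and hence of $P \sub U[\vV||\gamma_n^{\vV}]$. Combining these observations yields $\gamma_n^{\vV} = \kappa_n^{+P}$.

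For part (2), I propose to give a parameter-free definition of $U$ over $P$ in stages. By (1), $P|\gamma_n^{\vV} = P|\kappa_n^{+P}$ is definable over $P$ without parameters. By clause \ref{item:pVl_m-long}, $\varphi_0$ applied to $P|\gamma_n^{\vV}$ defines $\vV' = \Ult(\vV|\delta_n^{\vV},e_n^{\vV})$, and this coincides with $U|\gamma_n^{\vV}$, since the ultrapower of $\vV|\delta_n^{\vV}$ and that of $\vV$ agree strictly below $i^{\vV}_{e_n^{\vV}}(\delta_n^{\vV}) = \gamma_n^{\vV}$. From $\vV|\delta_n^{\vV}$ and $U|\gamma_n^{\vV}$, both now definable over $P$, one recovers the ultrapower map $i^{\vV|\delta_n^{\vV}}_{e_n^{\vV}}$ as the unique cofinal $\Sigma_0$-elementary embedding, and hence the extender $e_n^{\vV}$ itself. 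To extract the rest of $U$, invert the P-construction recipe of clause \ref{item:P-con_in_pVl}: for $\nu > \gamma_n^{\vV}$, activity of $P|\nu$ coincides with activity of $U|\nu$; if $F^{U|\nu}$ is short or $n$-long it is determined by $F^{P|\nu}\rest\OR$ together with the surrounding premouse structure; and if $F^{U|\nu}$ is $m$-long for some $m<n$, it is recovered from $F^{P|\nu}$ and $e_n^{\vV}\rest(\vV|\delta_m^{\vV})$ by decomposition. Clause \ref{item:i_e(vV|gamma)_def} gives a useful supplementary handle on $U$ near $\gamma_n^{\vV}$, since $\varphi_2$ over $P|\gamma_n^{\vV}$ directly defines the segment $i^{\vV}_{e_n^{\vV}}(\vV|\gamma_n^{\vV})$ of $U$.

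The main obstacle will be verifying that the P-construction inversion in (2) is unambiguous, in particular that the composed form $F^{P|\nu} = F^{U|\nu} \circ e_n^{\vV}\rest(\vV|\delta_m^{\vV})$ for $m$-long $F^{U|\nu}$ with $m<n$ can be uniquely reversed from the data available in $P$ together with the (already parameter-definable) $e_n^{\vV}$. In parallel, in (1) one must confirm that the $n$-pVl hypothesis on $P$ indeed supplies enough extenders above $\gamma_n^{\vV}$ to witness strongness of $\kappa_n^{\vV}$, rather than of some larger cardinal. Both points should be routine once the clauses of Definition \ref{dfn:pVl} are fully unpacked, but they are where the details require care.
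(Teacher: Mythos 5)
Your proposal is correct and follows essentially the same route as the paper's (much terser) proof: part \ref{item:gamma=kappa^+} is dispatched as straightforward cardinal arithmetic exploiting the $\delta_m^{\vV}$-cc of $\CC_m$, and part \ref{item:U_def_wout_params} first observes that $(\vV\down n)|\gamma_n^{\vV}=\vV||\gamma_n^{\vV}$ is parameter-free definable over $\vV\down n$ (by part \ref{item:gamma=kappa^+}), then uses $\varphi_2$ to get $A=i^{\vV}_{e_n^{\vV}}(\vV|\gamma_n^{\vV})$ and reverses the P-construction from there. One small remark: the $\varphi_2$-route you label as a ``useful supplementary handle'' is in fact the primary mechanism in the paper's proof; your detour through $\varphi_0$ to recover $e_n^{\vV}$ is not needed since $A$ (together with $\vV||\gamma_n^{\vV}$) already carries the relevant information to undo the composition $F^{P|\nu}=F^{U|\nu}\circ e_n^{\vV}\rest(\vV|\delta_m^{\vV})$.
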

\begin{proof}
 Part \ref{item:gamma=kappa^+} is straightforward.
 Part \ref{item:U_def_wout_params}:
 By part \ref{item:gamma=kappa^+} and the definitions,
 $(\vV\down n)|\gamma_n^{\vV}=\vV||\gamma_n^{\vV}$ is definable without parameters over $\vV\down n$,
 and therefore so is $A=i^{\vV}_{e^{\vV}_n}(\vV|\gamma_n^{\vV}~)$. But then starting from $A$, $U$ can be recovered over $\vV\down n$ by reversing the P-construction 
 used to define $\vV\down n$.
\end{proof}

   \begin{dfn}\label{dfn:vV_n+1^vV}Fix some usually to be suppressed $\vec{\varphi}=(\varphi_0,\varphi_1,\varphi_2)$ to play the role mentioned above. Working with a structure $N$ of the form $\vV||\gamma$ as in part \ref{item:extenders_of_pVl}(\ref{item:pVl_m-long})
   (and also somewhat more generally), 
   we write $\mM_{\infty,m}^{N}$ for the structure
   defined by $\varphi_0$ over $N$.
   Working in a structure $N$ of the form $\vV|\gamma_m^{\vV}$ (and also somewhat more generally), $\CC^N_m$ denotes the forcing defined by $\varphi_1$ over $N$.
   Working over a structure $N$
   of form $\vV||\gamma_m^{\vV}$
   (and again somewhat more generally), $(\vV_{m+1}|\gamma_m^{\vV_{m+1}})^N$ denotes the structure
   defined in the codes by $\varphi_2$ over $N$.
   
   Working in an $n$-pVl $\wW$,
   $\vV_{n+1}^{\wW}$ denotes the model defined over $\wW$ in the manner $U$ was defined over $\vV\down n$ in Lemma \ref{lem:vV_n+1^vV_down_n_def}. Iterating this, working in an $i$-pVl $\wW$ where $i<n$, $\vV_{n+1}^{\wW}$
   denotes $\vV_{n+1}^{(\vV_{i+1}^{\vV})}$,
   so if $\vV$ is an $M$-like premouse then $\vV_3^{\vV}=\vV_3^{\vV_2^{\vV_1^{M}}}$, etc. (We will also later give an equivalent, direct definition of $\vV_{n+1}^M$.)
   \end{dfn}
\begin{rem}
 Of course the definition of $\vV_{n+1}^{\wW}$
 above depends on $\vec{\varphi}$, but for the present purposes it is enough to know that the structure has certain properties and is definable by some formula.
 
 The following definition
 was foreshadowed in \cite[Remark 5.70***]{vm2_v2}.
\end{rem}

\begin{dfn}\label{dfn:0-maximal}
 Let $\vV$ be an $(n+1)$-pVl.
 An iteration tree $\Tt$ on $\vV$
 is \emph{$0$-maximal}
 if it satisfies the usual conditions for $0$-maximality for trees on fine structural premice
 (so $E^\Tt_\alpha\in\es_+^{M^\Tt_\alpha}$, etc), except for the following modifications.
 Along with the tree we associate
 a function $\ell:\lh(\Tt)\to(n+2)$.
 Let $\ell(0)=0$. Let $\ell(\alpha+1)=\ell(\alpha)$ unless $E^\Tt_\alpha$ is $m$-long,
 in which case let $\ell(\alpha+1)=m+1$. If $\eta<\lh(\Tt)$ is a limit then let $\ell(\eta)=\liminf_{\alpha<\eta}\ell(\alpha)$.
 Now we make the following modifications to the usual requirements of $0$-maximality:
 \begin{enumerate}
  \item If $\eta+1<\lh(\Tt)$
  and $\eta$ is a limit
  and $l(\eta)=m+1>0$
  and $[0,\eta)^\Tt\cap\dropset^\Tt=\emptyset$
  then $\delta_m^{M^\Tt_\eta}<\lh(E^\Tt_\eta)$.\footnote{We will have $\delta(\Tt\rest\eta)=\sup_{\alpha<\eta}\lh(E^\Tt_\alpha)=$ the least measurable of $M^\Tt_\eta$ in this situation, 
  and since the usual rules of $0$-maximality only demand that $\alpha<\beta\implies\lh(E^\Tt_\alpha)\leq\lh(E^\Tt_\beta)$,
  if we did not make this restriction then
  one would have been able to choose $E^\Tt_\eta$ with $\delta(\Tt\rest\eta)<\lh(E^\Tt_\eta)<\delta_m^{M^\Tt_\eta}$.}
\item 
Whenever $E^\Tt_\alpha$ is $m$-long, then $\pred^\Tt(\alpha+1)$ is the least $\beta\leq\alpha$ such that
\begin{enumerate}[label=--]
 \item  $[0,\beta]_\Tt$ does not drop,
\item  $\delta_m^{M^\Tt_\beta}<\lh(E^\Tt_\beta)$,
\item for no $\gamma\in[\beta,\alpha)$ is 
$E^\Tt_\gamma$ an $m'$-long extender with $m'\leq m$.
 \end{enumerate}
 Let $\beta'$ be least such that:
 \begin{enumerate}[label=--]
 \item $[0,\beta']^{\Tt}$ does not drop,
 \item $\delta_m^{M^{\Tt}_{\beta'}}<\lh(E^\Tt_{\beta'})$,
 \item for no $\gamma\in[\beta',\alpha)$ is $E^\Tt_\gamma$ a long extender,
 \end{enumerate}
so $\beta=\pred^\Tt(\alpha+1)\leq\beta'$. Then we will have $\beta\leq^{\Tt}\beta'\leq^{\Tt}\alpha$, and
\[ i^{\Tt}_{\beta\beta'}:M^\Tt_\beta\to M^{\Tt}_{\beta'},\]
\[ i^{\Tt}_{\beta'\alpha}:M^{\Tt}_{\beta'}\to M^\Tt_\alpha \]
both exist. But if $\beta\neq\beta'$ then $\crit(i^\Tt_{\beta\beta'})<\delta_m^{M^\Tt_\beta}$. Now $E^\Tt_\alpha$ is an extender with space $M^{\Tt}_{\beta'}|\delta_m^{M^\Tt_{\beta'}}$.
Let $(E^{\Tt}_\alpha)'$ be the $(\delta_m^{M^{\Tt}_\beta},\lh(E^\Tt_\alpha))$-extender
given by $E^\Tt_\alpha\com E_j$
where $j=i^\Tt_{\beta\beta'}$. Then
we define $M^{*\Tt}_{\alpha+1}=M^\Tt_\beta$
and
\[ M^\Tt_{\alpha+1}=\Ult(M^{*\Tt}_{\alpha+1},(E^\Tt_\alpha)').\]

(Note that we could have equivalently defined $\beta'=\pred^\Tt(\alpha+1)$ and
 \[ M^{*\Tt}_{\alpha+1}=\cHull^{M^\Tt_{\beta'}}(\rg(i^\Tt_{0{\beta'}})\cup\delta_m^{M^\Tt_{\beta'}}),\]
and
 \[ M^\Tt_{\alpha+1}=\Ult(M^{*\Tt}_{\alpha+1},E^\Tt_\alpha);\]
 this would yield the same model $M^\Tt_{\alpha+1}$.
 But then we need not have a total iteration map $i^\Tt_{\beta',\alpha}$, which is annoying notationally. It seemed to be simpler to adopt the convention we have, so that whenever $\beta\leq^{\Tt}\alpha$ and there are no drops in $[0,\alpha]^{\Tt}$,
 then $i^\Tt_{\beta\alpha}:M^\Tt_\beta\to M^\Tt_\alpha$ is defined (and total).)\qedhere
\end{enumerate}\end{dfn}

\begin{dfn}
Let $\vV$ be an $(n+1)$-pVl. An
iteration tree $\Tt$ on $\vV$ is \emph{short-normal} if it is $0$-maximal and uses only short extenders (so the complications in Definition \ref{dfn:0-maximal} pertaining to long extenders do not arise).
\end{dfn}

   \begin{dfn}\label{dfn:e-vec_finite}
   Let $\vV$ be an $(n+1)$-pVl.
         Recall we defined $e_m^{\vV}=F^{\vV|\gamma^{\vV}_m}$ in part \ref{item:gamma_m_def} of \ref{dfn:pVl}.
   Let $\Tt$ be the $0$-maximal tree on $\vV$
   with $\lh(\Tt)=n+2$
   where $E^\Tt_i=e_i^{M^\Tt_i}$ for $i\leq n$.
  Then $\vec{e}^{\vV}$ denotes $(E^\Tt_0,\ldots,E^\Tt_n)$.
We identify  $\vec{e}^{\vV}$ with the $(\delta_n^{\vV},\delta_n^{N_{n+1}})$-extender $E^\Tt_n\com E^\Tt_{n-1}\com\ldots\com E^\Tt_0$ derived from the ultrapower map $i^\Tt_{0\infty}$, so $M^\Tt_\infty=\Ult(\vV,\vec{e}^{\vV})$.
   \end{dfn}
	\begin{lem}\label{lem:e-vec_finite_stage} Let $\vV,\Tt$
	be as in Definition \ref{dfn:e-vec_finite}.
	Then $M^\Tt_i\down i=\vV_{i}^{\vV\down 0}$ for $i\leq n+1$.
	So $\Ult(\vV,\vec{e}^{\vV})=\vV_{n+1}^{\vV\down 0}$.
	\end{lem}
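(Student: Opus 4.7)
The plan is to proceed by induction on $i\leq n+1$, with the base case $i=0$ being immediate since $M^\Tt_0=\vV$ and by convention $\vV_0^{\vV\down 0}=\vV\down 0$. For the inductive step, I would first pin down the tree structure at stage $i+1$: since $E^\Tt_i=e_i^{M^\Tt_i}$ is $i$-long while every earlier $E^\Tt_\gamma=e_\gamma^{M^\Tt_\gamma}$ is $\gamma$-long with $\gamma<i$, the condition ``$\delta_i^{M^\Tt_\beta}<\lh(E^\Tt_\beta)$'' from Definition \ref{dfn:0-maximal} fails for $\beta<i$ (because $\lh(E^\Tt_\beta)=\gamma_\beta^{M^\Tt_\beta}<\delta_\beta^{+M^\Tt_\beta}\leq\delta_i^{M^\Tt_\beta}$) and holds at $\beta=i$. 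Hence $\pred^\Tt(i+1)=i=\beta'$ with no drop, and $M^\Tt_{i+1}=\Ult(M^\Tt_i,e_i^{M^\Tt_i})$.

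By the iterated definition in \ref{dfn:vV_n+1^vV} and the inductive hypothesis,
\[ \vV_{i+1}^{\vV\down 0}=\vV_{i+1}^{\vV_i^{\vV\down 0}}=\vV_{i+1}^{M^\Tt_i\down i}. \]
By Lemma \ref{lem:vV_n+1^vV_down_n_def}, the $(i+1)$-pVl $\vV_{i+1}^{M^\Tt_i\down i}$ is characterized as the unique $(i+1)$-pVl whose $\down i$ reduction equals $M^\Tt_i\down i$, with the ``upper'' structure  recovered via the $\varphi_2$-definability clause in Definition \ref{dfn:pVl}\ref{item:i_e(vV|gamma)_def} applied at level $i$. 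So it suffices to verify that $M^\Tt_{i+1}\down(i+1)$ is also an $(i+1)$-pVl whose $\down i$ reduction is $M^\Tt_i\down i$, and that the $i_{e_i}$-image of $M^\Tt_i|\gamma_i^{M^\Tt_i}$ matches the $\varphi_2$-defined structure over $(M^\Tt_i\down i)||\gamma_i$.

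The main calculation, and the step I expect to be the main obstacle, is a commutativity claim: the P-construction chain $\down n,\down(n-1),\ldots,\down(i+1)$ producing $M^\Tt_{i+1}\down(i+1)$ from $M^\Tt_{i+1}$ interacts cleanly with the ultrapower by $e_i^{M^\Tt_i}$. Concretely, the long extenders $e_j^{M^\Tt_{i+1}}$ for $j>i$ are the $i_{e_i}$-images of $e_j^{M^\Tt_i}$, and the composition clause $F^{P|\nu}=F^{U|\nu}\circ e_n\rest(\vV|\delta_j^{\vV})$ in Definition \ref{dfn:pVl}\ref{item:P-con_in_pVl} used to unwind lower-level long extenders passes through $i_{e_i}$ by virtue of the $\Delta$-conservativity clause \ref{item:Delta-conservativity}, which guarantees that the relevant functions reside in the lower structure. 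Once that commutativity is established, the short-extender portion of the P-construction is routine, and we conclude that $M^\Tt_{i+1}\down(i+1)$ is an $(i+1)$-pVl standing to $M^\Tt_i\down i$ in exactly the same manner as $\vV_{i+1}^{M^\Tt_i\down i}$, so the two coincide. The final assertion $\Ult(\vV,\vec{e}^{\vV})=\vV_{n+1}^{\vV\down 0}$ is then the case $i=n+1$, since $\down(n+1)$ is the identity on an $(n+1)$-pVl and $M^\Tt_{n+1}=\Ult(\vV,\vec{e}^{\vV})$ by Definition \ref{dfn:e-vec_finite}.
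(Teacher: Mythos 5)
Your inductive skeleton and tree-structure analysis ($\pred^\Tt(i+1)=i$, no drop, $M^\Tt_{i+1}=\Ult(M^\Tt_i,e_i^{M^\Tt_i})$) are correct, and the commutativity of the $\down$-reductions with $\Ult(\cdot,e_i)$ that you single out as the main calculation is indeed the engine of the argument. But the bridge between them is broken. You read Lemma \ref{lem:vV_n+1^vV_down_n_def} as saying that $\vV_{i+1}^{\wW}$ is the unique $(i+1)$-pVl whose $\down i$-reduction is $\wW$, and you propose to verify that $M^\Tt_{i+1}\down(i+1)$ has $\down i$-reduction $M^\Tt_i\down i$. Both claims are false: the lemma says $\vV_{i+1}^{\wW}=\Ult(\vV',e_i^{\vV'})$ for \emph{any} $(i+1)$-pVl $\vV'$ with $\vV'\down i=\wW$, and $(\vV_{i+1}^{\wW})\down i$ is then the image of $\wW$ under the associated ultrapower map, not $\wW$ itself (compare the recorded consequence $\Ult(\vV_{i+1},e_i^{\vV_{i+1}})=j(\vV_{i+1})$, where $j:\vV_i\to\mM_{\infty i}$); likewise $M^\Tt_{i+1}\down i=i^\Tt_{i,i+1}(M^\Tt_i\down i)\neq M^\Tt_i\down i$. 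So the verification you set up cannot go through.

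The fix, which is essentially what the paper does, is to invoke Lemma \ref{lem:vV_n+1^vV_down_n_def} with the explicit witness $\vV'=M^\Tt_i\down(i+1)$, which does satisfy $\vV'\down i=M^\Tt_i\down i$ and $e_i^{\vV'}=e_i^{M^\Tt_i}$; this gives $\vV_{i+1}^{M^\Tt_i\down i}=\Ult(M^\Tt_i\down(i+1),e_i^{M^\Tt_i})$, and then your commutativity/conservativity observation identifies the latter with $\Ult(M^\Tt_i,e_i^{M^\Tt_i})\down(i+1)=M^\Tt_{i+1}\down(i+1)$. The paper organizes this by lifting the identity $\vV_{i+1}^{\vV\down i}=\Ult(\vV\down(i+1),e_i^{\vV})$ through the accumulated ultrapower maps $i^{\vV\down(i+1)}_{e_0^{\vV}},\dots$, keeping track of the extenders $E^\Tt_j$ as it goes. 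One secondary caveat worth noting: clause \ref{item:Delta-conservativity} of Definition \ref{dfn:pVl} is stated only for short extenders, whereas the conservativity needed at this step is for the long extender $e_i$; the paper's appeal to ``conservativity'' here tacitly covers the long-extender case, whose substance is supplied elsewhere (cf.~the proof of Lemma \ref{lem:Psi_vV_1_is_conservative}).
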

	\begin{proof}
We have \begin{equation}\label{eqn:e-vec_ults_vV_1}\vV_1^{\vV\down 0}=\Ult(\vV\down 1,e_0^{\vV})=i^{\Tt}_{01}(\vV\down 1)=M^\Tt_1\down 1\end{equation}
and 
\[ i^{\vV\down 0}_{e_0^{\vV}}=i^\Tt_{01}\rest(\vV\down 1),\]
since $e_0^{\vV}=e_0^{\vV\down 1}$ and $\vV\down 1$ is a 1-pVl and by conservativity.
	 
	 Now suppose $n>0$. Then
	 \begin{equation}\label{eqn:e-vec_ults_vV_2_to_vV_2_down_1} 
	 \vV_2^{\vV\down 1}=\Ult(\vV\down 2,e_1^{\vV})\end{equation}
	 (note here $e_1^{\vV}\neq E^\Tt_1$).
	 But equation (\ref{eqn:e-vec_ults_vV_2_to_vV_2_down_1})
	 lifts under the embedding $i_{e_0^{\vV}}^{\vV\down 2}$ (which agreed with the $e_0^{\vV}$ ultrapower maps on $\vV\down 0$ and $\vV\down 1$), so
	 \begin{equation}\label{eqn:e-vec_ults_more} \vV_2^{\Ult(\vV\down 1,e_0^{\vV})}=\Ult(\Ult(\vV\down 2,e_0^{\vV}),i_{e_0^{\vV}}^{\vV\down 2}(e_1^{\vV})). \end{equation}
	But $E^\Tt_0=e_0^{\vV}$ and $E^\Tt_1=i^{\vV\down 2}_{e_0^{\vV}}(e_1^\vV)=i^{\vV}_{e_0^{\vV}}(e_1^\vV)$,
	 so putting equations (\ref{eqn:e-vec_ults_vV_1})
	and (\ref{eqn:e-vec_ults_more}) together and using conservativity,
	we get
	\[ \vV_2^{\vV\down 0}=\vV_2^{(\vV_1^{\vV\down 0})}=\vV_2^{\Ult(\vV\down 1,e_0^{\vV})}=M^\Tt_2\down 2,\]
	as desired.
	
Iterating the process more times leads to the full lemma.
	\end{proof}
\begin{dfn}
 Let $\Sigma$ be either:
 \begin{enumerate}[label=(\roman*)]
  \item  a
 $(0,\OR)$-strategy, or
 \item\label{item:case_short-normal_strategy} a short-normal strategy
 \end{enumerate}
 for an $(n+1)$-pVl $\vV$. We say that
 $\Sigma$ has \emph{short-normal mhc (short-normal minimal hull condensation)}
 if $\Sigma$ has minimal hull condensation (mhc) with respect to all short-normal trees; i.e. whenever $\Xx$ is via $\Sigma$,
  $\Pi:\Tt\hookrightarrow_{\min}\Xx$ is a minimal tree embedding
 and $\Tt,\Xx$ are short-normal, then $\Tt$ is via $\Sigma$.
 
 We say that $\Sigma$ has \emph{$\Delta$-short-normal mhc} if $\Sigma$ has mhc with respect to all short-normal trees based on $\vV|\Delta^{\vV}$.
\end{dfn}
\begin{rem}\label{rem:Sigma^stk}
 Short-normal minimal hull condensation works in place of (standard) minimal hull condensation for normalizing
 stacks of 
 short-normal trees (see \cite{fullnorm_v3}).
This is because being short-normal/$m$-long
 is preserved by all the relevant maps. Suppose $\Sigma$ is a short-normal strategy with mhc. Then $\Sigma$ determines canonically an extension to a $(0,\OR,\OR)$-strategy for short-normal trees, which we denote $\Sigma^{\stk}$
 (and may confuse somewhat with $\Sigma$). If $\vV'$ is a non-dropping iterate of $\vV$ via $\Sigma$, then we write $\Sigma_{\vV'}$ for the short-normal $(0,\OR)$-strategy for $\vV'$ determined from $\Sigma$ in this manner.
 
 Likewise, $\Delta$-short-normal mhc works in place of mhc for normalizing stacks of short-normal stacks based on $\vV|\Delta^{\vV}$. Suppose $\Sigma$ is a short-normal strategy with $\Delta$-short-normal mhc.
 If $\vV'$ is a non-dropping $\Sigma$-iterate  of $\vV$ 
 via a tree based on $\vV|\Delta^{\vV}$, then $\Sigma_{\vV'}$ denotes the $(0,\OR)$-strategy for $\vV'$ given by normalization.
\end{rem}

\begin{dfn}
Let $\Sigma$ be a short-normal $(0,\OR)$-strategy for an $(n+1)$-pVl $\vV$, and suppose $\Sigma$ has $\Delta$-short-normal mhc. We say that $\Sigma$ is \emph{self-coherent} iff whenever $\Tt$ is a successor length tree on $\vV$ via $\Sigma$ and $E\in\es_+^{M^\Tt_\infty}$
is $m$-long, 
then  $E$ agrees with the strategy given by normalization. That is, letting $\alpha\in b^\Tt$ be least such that either \begin{enumerate}[label=--]
\item  $\alpha+1=\lh(\Tt)$,
or \item   $[0,\alpha]_\Tt\cap\dropset^\Tt=\emptyset$  and $\delta_m^{M^\Tt_\infty}<\lh(E^\Tt_\alpha)$,
\end{enumerate}
 then (noting that $\Tt\rest(\alpha+1)$ is based on $\vV|\Delta^{\vV}$) there is a successor length tree $\Uu$ via $\Sigma_{M^\Tt_\alpha}$,
 based on $M^\Tt_\alpha|\delta_m^{M^\Tt_\alpha}$,
 such that
 $b^\Uu$ does not drop,
 $i^\Uu_{b^\Uu}(\delta_m^{M^\Tt_\alpha})=\lh(E)$
 and $E$ is the extender over $M^\Tt_\alpha|\delta_m^{M^\Tt_\alpha}$ derived from $i^\Uu_{b^\Uu}$.
\end{dfn}

\begin{dfn}
Let $\vV$ be an $(n+1)$-pVl.
 Let $\Sigma$ be a  (not only short-normal) $(0,\OR)$-strategy for $\vV$.
 We define \emph{self-consistency} for $\Sigma$ just as in Definition \ref{dfn:self-consistent}
 (but using the current notion of $0$-maximality).
\end{dfn}

Generalizing Lemma \ref{lem:unique_ext_to_(0,OR)-strat}:

\begin{lem}\label{lem:unique_ext_to_(0,OR)-strat_n+1}
 Let $\vV$ be an $(n+1)$-pVl. Let $\Sigma$ be a short-normal $(0,\OR)$-strategy for $\vV$ which has $\Delta$-mhc and is self-coherent. Then there is a unique extension of $\Sigma$ to a self-consistent $(0,\OR)$-strategy.
\end{lem}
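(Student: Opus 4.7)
The approach mirrors the proof of Lemma \ref{lem:unique_ext_to_(0,OR)-strat} (the $n=0$ case); the only new feature is that we now have $n{+}1$ possible ``layers'' of long extenders ($0$-long, $\ldots$, $n$-long) rather than just $0$-long extenders, and this plays no essential role in the argument. The point is that self-consistency forces a complete recipe for $\Sigma^*$ from $\Sigma$: given a tree $\Tt$ via a self-consistent extension $\Sigma^*$, equip it with the function $\ell$ from Definition \ref{dfn:0-maximal}. Call $\eta<\lh(\Tt)$ a \emph{long-event} iff either $\eta=0$, or $\eta=\beta+1$ with $E^\Tt_\beta$ long, or $\eta$ is a limit of ordinals $\beta$ with $E^\Tt_\beta$ long. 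Between consecutive long-events $\eta<\xi$, the segment $\Tt\rest(\eta,\xi)$ uses no long extender, hence (by self-consistency) must be played according to $(\Sigma_{\sn})_{M^\Tt_\eta}$. Moreover, by iterated use of self-coherence, $M^\Tt_\eta$ is a non-dropping short-normal $\Sigma$-iterate of $\vV$, so $(\Sigma_{\sn})_{M^\Tt_\eta}=\Sigma_{M^\Tt_\eta}$ is determined by $\Sigma$ via Remark \ref{rem:Sigma^stk} (using $\Delta$-short-normal mhc to normalize).

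For uniqueness, suppose $\Sigma^*_0,\Sigma^*_1$ are both self-consistent extensions of $\Sigma$. By induction on $\alpha$, they agree on trees of length $\leq\alpha$: at successor stages the strategy makes no choice; at a limit $\eta$, let $\eta'\leq\eta$ be the largest long-event (which exists since $0$ qualifies), so that $\Tt\rest[\eta',\eta)$ uses no long extender; by self-consistency both strategies must assign the branch dictated by $\Sigma_{M^\Tt_{\eta'}}$ applied to $\Tt\rest[\eta',\eta)$ (reindexed as a short-normal tree on $M^\Tt_{\eta'}$), and these agree.

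For existence, define $\Sigma^*(\Tt)$ at a limit $\eta=\lh(\Tt)$ by the same recipe: let $\eta'$ be the largest long-event in $\eta$ and set $\Sigma^*(\Tt)=\Sigma_{M^\Tt_{\eta'}}(\Tt\rest[\eta',\eta))$. Two things need checking: (a) $M^\Tt_{\eta'}$ is indeed a non-dropping short-normal $\Sigma$-iterate of $\vV$, so that $\Sigma_{M^\Tt_{\eta'}}$ is defined; and (b) $\Tt\rest[\eta',\eta)$ reindexes as a $0$-maximal short-normal tree on $M^\Tt_{\eta'}$. Item (b) is immediate from Definition \ref{dfn:0-maximal}: if $\eta'=\beta+1$ with $E^\Tt_\beta$ an $m$-long extender, then $\delta_m^{M^\Tt_{\eta'}}=\lh(E^\Tt_\beta)<\lh(E^\Tt_{\eta'})$, and subsequent non-long extenders act above $\delta_m^{M^\Tt_{\eta'}}$ but below $\Delta^{M^\Tt_{\eta'}}=\delta_n^{M^\Tt_{\eta'}}$; similarly for the limit case via the embedded footnote. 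Item (a) is established by induction on the long-events preceding $\eta'$: given that $M^\Tt_\zeta$ is already exhibited as $M^{\Xx_\zeta}_\infty$ for a short-normal $\Sigma$-tree $\Xx_\zeta$ (at the previous long-event $\zeta$), self-coherence applied to the restriction of $\Xx_\zeta$ and the next long extender $E^\Tt_\beta$ provides the short-normal continuation $\Uu$ such that $\Ult(M^\Tt_\beta,E^\Tt_\beta)=M^\Uu_\infty$, and we then form $\Xx_{\eta'}$ by normalizing $\Xx_\zeta\conc\Uu$ using $\Delta$-short-normal mhc (Remark \ref{rem:Sigma^stk}).

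The main technical obstacle is (a), the bookkeeping that assembles the short-normal witnesses $\Uu_i$ for each successive long extender into a single short-normal $\Sigma$-iteration. The delicate point is that when the current long-event $\eta'$ arises from the $m$-long predecessor rule of $0$-maximality ($\pred^\Tt(\alpha+1)$ may lie strictly below the previous long-event), we must verify the substitution $M^{*\Tt}_{\alpha+1}=M^\Tt_\beta$ in Definition \ref{dfn:0-maximal} is accurately reflected when we compose short-normal trees to form $\Xx_{\eta'}$; but this is exactly the standard identification between the two equivalent formulations of $M^{*\Tt}_{\alpha+1}$ given there. Once (a) and (b) are verified, self-consistency of $\Sigma^*$ is immediate from its defining recipe.
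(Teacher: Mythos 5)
Your overall approach — determining a self-consistent extension of $\Sigma$ by recursion on long-events, with self-coherence and $\Delta$-mhc supplying the tail strategies $(\Sigma_{\sn})_{M^\Tt_{\eta'}}$ via normalization (Remark \ref{rem:Sigma^stk}) — is the right one, and your account of item (a) (composing the short-normal witnesses supplied by self-coherence) identifies the correct ingredients. The paper itself gives no proof of this lemma (the $1$-Vsp version is merely labelled ``easy to see''), so what follows is an assessment of your argument on its own terms.

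There is a genuine gap at limit stages $\eta$ that are themselves long-events, i.e.\ those $\eta$ for which $\{\beta<\eta: E^\Tt_\beta\text{ is long}\}$ is unbounded in $\eta$. In that case the largest long-event $\eta'\leq\eta$ is $\eta$ itself, $\Tt\rest[\eta',\eta)$ is the empty tree, and the recipe $\Sigma^*(\Tt)=\Sigma_{M^\Tt_{\eta'}}(\Tt\rest[\eta',\eta))$ selects nothing; worse, $M^\Tt_{\eta'}=M^\Tt_\eta$ is precisely the object whose existence depends on the branch you are trying to choose. The uniqueness argument has the same blind spot: self-consistency (Definition \ref{dfn:self-consistent}) only constrains the tree \emph{after} a long-event $\eta<\lh(\Tt)$, so it says nothing directly about the branch choice at $\eta$ itself when $\eta$ is a limit of long-extender stages. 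You need, and do not supply, a separate argument that the cofinal branch through $\Tt\rest\eta$ is forced by the tree structure in this case — the natural route is via the predecessor rule for $m$-long extenders in Definition \ref{dfn:0-maximal}, whose third clause (forbidding intervening $m'$-long extenders with $m'\leq m$ in $[\beta,\alpha)$) pushes the tree predecessors of later long-extender stages cofinally toward $\eta$ — together with a check that the direct limit along the forced branch is well-founded (which would follow from inductively exhibiting the models along that branch as short-normal $\Sigma$-iterates, the content of your item (a)). Without both of these, neither the definition of $\Sigma^*$ at such $\eta$ nor the claim that any two self-consistent extensions must agree there is established.
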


\begin{dfn}\label{dfn:short-normal_extend_to_0-max}
 Let $\Sigma$ be a short-normal
 $(0,\OR)$-strategy for an $(n+1)$-pVl $\vV$. Suppose  that $\Sigma$ has $\Delta$-mhc and  is self-coherent. Then, with a slight abuse of notation, we also write $\Sigma$
 for the unique self-consistent $(0,\OR)$-strategy for $\vV$  extending $\Sigma$.
\end{dfn}

\begin{dfn}\label{dfn:gamma_k-unstable}
 Let $\Tt$ be a short-normal
 tree on an $(n+1)$-pVl $\vV$.
 For $i\leq n$, we say that $\Tt$ is \emph{$\gamma_i$-unstable} iff there is $\alpha+1<\lh(\Tt)$ such that $[0,\alpha]^\Tt\cap\dropset^\Tt=\emptyset$ and $\delta_i^{M^\Tt_\alpha}<\lh(E^\Tt_\alpha)<\gamma_i^{M^\Tt_\alpha}$.
 We say that $\Tt$ is \emph{$\vec{\gamma}$-stable}
if there is no $i\leq n$
such that $\Tt$ is $\gamma_i$-unstable.
\end{dfn}

\begin{dfn}\label{dfn:e-vec_min_inflation_stable}
 Let $\Tt$ be a $\vec{\gamma}$-stable short-normal
 tree on an $(n+1)$-pVl $\vV$.
 Then the \emph{standard decomposition}
of $\Tt$ is the tuple $(\Tt_0,\ldots,\Tt_k)$ where:
 \begin{enumerate}
 \item $k\leq n+1$,
  \item $\Tt=\Tt_0\conc\ldots\conc\Tt_k$,
  \item $\Tt_i$ has successor length and $b^{\Tt_i}$ is non-dropping, for each $i<k$,
  \item $\Tt_0$ is based on $\vV|\delta_0^\vV$,
  \item $\Tt_{i}$ is based on $M^{\Tt_{i-1}}_\infty|\delta_i^{M^{\Tt_{i-1}}_\infty}$,
  and is above $\gamma_{i-1}^{M^{\Tt_{i-1}}_\infty}$,
  for each $i\in(0,\min(k,n)]$,
  \item if $k=n+1$
  then $\Tt_{n+1}$ is above $\gamma_n^{M^{\Tt_n}_\infty}$,
  \item if $\Tt_k$ has successor length and $b^{\Tt_k}$ is non-dropping then $k=n+1$ (we allow some of the $\Tt_i$'s to be trivial).
 \end{enumerate}

For $i\leq n+1$, the \emph{$\vec{e}$-$i$-minimal inflation of $\Tt$} (if it exists) is the  $0$-maximal tree
\[ \Tt'=\Tt'_0\conc\ldots\conc\Tt'_i\conc\left<e_i\right>\conc\Tt'_{i+1}\conc\left<e_{i+1}\right>\conc\ldots\conc\left<e_{k-1}\right>\conc\Tt_k'\]
on $\vV$ where\begin{enumerate}[label=--]
\item $\Tt'_j=\Tt_j$ for $j\leq i$,
 \item $e_j=e_j^{M^{\Tt_j'}_\infty}$ for $j\in[i,k)$,
 \item $\Tt_j'$ is the $(e_{j-1}\com\ldots\com e_i)$-minimal inflation of $\Tt_j$ for $j\in(i, k]$.
\end{enumerate}
(See \cite{fullnorm_v3}.
This means that $\Tt_j'$ has the same length and tree order as does $\Tt_j$,
and \[\exit^{\Tt_j'}_\alpha=\Ult_0(\exit^\Tt_\alpha,e_{j-1}\com\ldots\com e_i) \]
for $\alpha+1<\lh(\Tt_j)$;
it also follows that $\Tt_j'$
has the same drop and degree structure as does $\Tt_j$.
The fact
that $\exit^{\Tt'_j}_\alpha\ins M^{\Tt'_j}_\alpha$
(when defined as above)
follows from property \ref{item:normalization_cond}
of the definition of pVl \ref{dfn:pVl}
and Steel's condensation proof;
see \cite{fullnorm_v3}. So the only thing that might go wrong is the wellfoundedness of the models.)

Define the \emph{$\vec{e}$-minimal inflation of $\Tt$} as the $\vec{e}$-$0$-minimal inflation of $\Tt$.

Let $i\leq n+1$.
Suppose that the $\vec{e}$-$i$-minimal inflation $\Tt'$ (as above)
has wellfounded models.
We define a (putative) $0$-maximal
tree $\Tt\down i$ on $\vV\down i$ (recall $\vV\down i$ is an $i$-pVl).
We set $\Tt\down (n+1)=\Tt$.
If $i\leq n$ then we set \[\Tt\down i=\widetilde{\Tt_0}\conc\widetilde{\Tt_1}\conc\ldots\conc\widetilde{\Tt_i}\conc\Tt''_{i+1}\conc\Tt''_{i+2}\conc\ldots\conc\Tt''_k,\]
where:
\begin{enumerate}[label=--]
 \item  $\widetilde{\Tt_0}\conc\ldots\conc\widetilde{\Tt_i}$
 is just $\Tt_0\conc\ldots\conc\Tt_i=\Tt_0'\conc\ldots\conc\Tt_i'$ but as a tree
 on $\vV\down i$ (recalling $(\vV\down i)|\delta_i^{\vV\down i}=\vV|\delta_i^{\vV}$), and
 \item for $j\in(i,k]$,
 $\Tt''_j$ has the same length,
 tree order and extender indices
 as has $\Tt'_j$.
\end{enumerate}
(So if $i\geq k$ (which,
if $i\leq n$, implies that either $\Tt$ has limit length or $b^\Tt$ drops) then $\Tt\down i$ is just $\Tt$ but on $\vV\down i$.)
\end{dfn}

\begin{dfn}\label{dfn:e-vec_min_inflation_unstable}Let $\Tt$ be a $\gamma_k$-unstable short-normal
 tree on an $(n+1)$-pVl $\vV$,
 where $k\leq n$. Let $\alpha+1<\lh(\Tt)$ be least such that $\Tt\rest(\alpha+1)$ is $\gamma_k$-unstable. Then $[0,\alpha]^\Tt\cap\dropset^\Tt=\emptyset$, so the standard decomposition of $\Tt\rest(\alpha+1)$ has form $(\Uu_0,\ldots,\Uu_{n+1})$,
but $\Uu_i$ is trivial for $i>k$. Note that $\Tt=\Uu_0\conc\ldots\conc\Uu_k\conc\Vv$, where $\Vv$ is based on $[\delta_k^{M^{\Uu_k}_\infty},\gamma_k^{M^{\Uu_k}_\infty})$ and $\Vv$ is non-trivial.
We say that  $(\Uu_0,\ldots,\Uu_k,\Vv)$ is the \emph{standard decomposition} of $\Tt$.

For $i\leq n$,
we define the  \emph{$\vec{e}$-$i$-minimal inflation} of $\Tt$ as the $0$-maximal tree 
 \[ \Tt'=\Uu'_0\conc\ldots\conc\Uu'_i\conc\left<e_i\right>\conc\Uu_{i+1}'\conc\left<e_{i+1}\right>\conc\ldots\conc\left<e_{k-1}\right>\conc\Uu_k'\conc\Vv'\]
on $\vV$ where:
 \begin{enumerate}[label=--]
  \item $\Uu'_j=\Uu_j$ for $j\leq i$,
  \item $e_j=e_j^{M^{\Uu'_j}_\infty}$ for $j\in[i,k)$,
  \item $\Uu'_j$ is the $(e_{j-1}\com\ldots\com e_i)$-minimal inflation of $\Uu_j$ for $j\in(i,k)$, and
  \item  $\Uu_k'\conc\Vv'$
 is the $(e_{k-1}\com\ldots\com e_i)$-minimal inflation of $
 \Uu_k\conc\Vv$.
 \end{enumerate}

 Suppose that $\Tt'$ has wellfounded models.
 For $i\leq n$ we define a $0$-maximal tree $\Tt
 \down i$ on $\vV\down i$
 as follows. If $i\in(k,n+1]$,
 then $\Tt\down i$ is just $\Tt$ but as a tree on $\vV\down i$ (so $\Tt\down(n+1)=\Tt$).
 Suppose $i\leq k$. Then we set
 \[ \Tt\down i=\widetilde{\Uu_0}\conc\widetilde{\Uu_1}\conc\ldots\conc\widetilde{\Uu_i}\conc\Uu''_{i+1}\conc\ldots\conc\Uu''_k\conc\Vv'',\]
 where:
 \begin{enumerate}[label=--]
  \item  $\widetilde{\Uu_0}\conc\ldots\conc\widetilde{\Uu_i}$ is just $\Uu_0\conc\ldots\conc\Uu_i=\Uu'_0\conc\ldots\conc\Uu'_i$ but as a tree on $\vV_i$,
and
\item for $j\in(i,k]$,
 $\Uu_j''$ has the same length, tree order and extender indices as has $\Uu'_{j}$,
 and $\Vv''$
 has the same length, tree order and extender indices as has $\Vv''$.\qedhere
 \end{enumerate}
 \end{dfn}
 
 \begin{rem}
  Note that $\Vv$ (as above) is based on $M^{\Uu_k}_\infty|\gamma_k^{M^{\Uu_k}_\infty}$ (a set-sized model), and drops in model to this structure immediately, and $\Vv'$
  is analogous, whereas   $\Vv''$ (the last portion of $\Tt\down i$)  is based on $M^{\Uu''_k}_\infty|\kappa_k^{+M^{\Uu''_k}_\infty}$ (and here if $i=k$ we will  have
  $M^{\Uu_k}_\infty||\gamma_k^{M^{\Uu_k}_\infty}=M^{\Uu''_k}|\kappa_k^{+M^{\Uu''_k}_\infty}$)
  and if $i\leq k$, need not drop in model.

  Note that a short-normal
 tree on an $(n+1)$-pVl
 can only be $
 \gamma_k$-unstable for at most one value of $k$.
\end{rem}

\begin{lem}[Conservativity ($\vec{\gamma}$-stable)] \label{lem:conservativity_stable}
 Let $\vV$ be an $(n+1)$-pVl
 and $\Tt$ be short-normal and  $\vec{\gamma}$-stable on $\vV$, with standard decomposition \[ \Tt=\Tt_0\conc\ldots\conc\Tt_k.\]
Suppose that for each $i\leq n+1$, the $\vec{e}$-$i$-minimal inflation\[\Tt'=\Tt'_0\conc\ldots\conc\Tt'_i\conc\left<e_i\right>\conc\Tt'_{i+1}\conc\left<e_{i+1}\right>\conc\ldots\conc\left<e_{k-1}\right>\conc\Tt_k'\]
 of $\Tt$
 has wellfounded models. 
 Let $i\leq n+1$ and
\[\Tt\down i=\widetilde{\Tt_0}\conc\ldots\conc\widetilde{\Tt_i}\conc\Tt''_{i+1}\conc\Tt''_{i+2}\conc\ldots\conc\Tt''_k, \]
with notation as in \ref{dfn:e-vec_min_inflation_stable}
(even in case $i\geq k$,
$\Tt\down i$ has this form).
 Then:
 \begin{enumerate}
  \item We have:
  \begin{enumerate}
   \item 
$\Tt\down i$ is a $0$-maximal
 tree on $\vV\down i$ (so in particular has wellfounded models),
 with the same tree, drop and degree structure as has $\Tt$,
 \item For each $j\leq k$,
 $\Tt_j'$
 has the same length,
 same tree, drop and degree structure, and same extender indices as has $\Tt_j$;
 likewise for $\Tt_j'$ and $\widetilde{\Tt_j}$ when $j\leq i$, and for $\Tt_j$ and $\Tt''_j$ when $j\in(i,k]$.
 \end{enumerate}
 \item For each $\beta<\lh(\Tt)$ such that $[0,\beta]_{\Tt}\cap\dropset^\Tt=\emptyset$ 
 and each $\alpha\leq^{\Tt}\beta$,
 \begin{enumerate}
 \item $M^{\Tt\down i}_\alpha=M^\Tt_\alpha\down i=i^
\Tt_{0\alpha}(\vV\down i)$, 
 \item 
$i^{\Tt\down i}_{\alpha\beta}=i^\Tt_{\alpha\beta}\rest(M^\Tt_\alpha\down i)$, and
\item the natural factor map $\pi_{\alpha i}:M^{\Tt\down i}_\alpha\to M^\Tt_\alpha\down i$
is just the identity.\footnote{The natural factor maps $\pi_{\alpha i}$ are  defined via the Shift Lemma in the usual manner.}
\end{enumerate}
 \item For each $\alpha$ such that $\lh(\Tt_0\conc\ldots\conc\Tt_{i-1})\leq\alpha<\lh(\Tt)$
 and for $j\leq i$, we have
 \[ \vV_i^{M^{\Tt\down j}_\alpha}=\Ult                                               _d(M^{\Tt\down i}_\alpha,e_{i-1}\circ\ldots\circ e_j) \]
 where $d=\deg^{\Tt}(\alpha)=\deg^{\Tt
 \down i}(\alpha)=\deg^{\Tt\down j}(\alpha)$,
 \item letting $j<i\leq k$ and
 \[ \Tt_{[i,k]}=\Tt_i\conc\ldots\conc\Tt_k \]
 and $\Uu_i=\Tt_{[i,k]}\down i$
 and $\Uu_j=\Tt_{[i,k]}\down j$
 and $e=e_{i-1}\com\ldots\com e_j$
and $\Uu_i'$
 be the minimal $e$-inflation
 of $\Uu_i$, then:
  \begin{enumerate}
\item  $\Uu_i$ is $0$-maximal on $M^{\Tt_{[0,i-1]}}_\infty\down i=M^{\Tt_{[0,i-1]}\down i}_\infty$ and is above $\gamma_{i-1}^{M^{\Uu_i}_0}=\gamma^{M^{\Uu_i}_0}$,
\item  $\Uu_j$ is $0$-maximal on $M^{\Tt_{[0,i-1]}}_\infty\down j=M^{\Tt_{[0,i-1]}\down j}_\infty=(M^{\Uu_i}_0)\down j$ and is above $\kappa_{i-1}^{+M^{\Uu_j}_0}=\gamma^{M^{\Uu_i}_0}$,
\item  $\Uu_i'$ is $0$-maximal on $
\wW=\vV_i^{M^{\Uu_j}_0}$
and is above $\delta_{i-1}^{\wW}=\gamma^{M^{\Uu_i}_0}$,
and in fact above $\gamma_{i-1}^{\wW}$,
 \item $\Uu_i,\Uu_i',\Uu_j$ have the same lengths, extender indices, tree, drop and degree structure,
 \item for each $\alpha<\lh(\Uu_i)$, 
 letting
 $d=\deg^{\Uu_i}(\alpha)=\deg^{\Uu_i'}(\alpha)=\deg^{\Uu_j}(\alpha)$,  we have
\[ M^{\Uu_i'}_\alpha=\Ult_d(M^{\Uu_i}_\alpha,e)=\vV_i^{M^{\Uu_j}_\alpha} \]
(and note $\gamma_{i-1}^{M^{\Uu_i'}_\alpha}=\gamma_{i-1}^{\wW}<\OR(M^{\Uu_i'}_\alpha)=\OR(M^{\Uu_j}_\alpha)$ and $e\in M^{\Uu_i}_\alpha$),
\item for each $\alpha<\lh(\Uu_i)$,
the natural factor map $\sigma_\alpha:M^{\Uu_i'}_\alpha\to\vV_i^{M^{\Uu_j}_\alpha}$ is the identity,
\item for all $\beta<^{\Uu_i}\alpha<\lh(\Uu_i)$,
if $(\beta,\alpha]^{\Uu_i}$ does not drop then $i^{\Uu_i'}_{\beta\alpha}\sub i^{\Uu_j}_{\beta\alpha}$,
\item for each $\alpha+1<\lh(\Uu_i)$, we have:
\begin{enumerate}[label=--]
\item $\exit^{\Uu_i'}_\alpha=\Ult_0(\exit^{\Uu_i}_\alpha,e)$ (and note  that $e\in\exit^{\Uu_i}_\alpha|\crit(E^{\Uu_i}_\alpha)$),
\item $\exit^{\Uu_i'}_\alpha\sub\exit^{\Uu_i}_\alpha$ and $E^{\Uu_i'}_\alpha\sub E^{\Uu_i}_\alpha$,
\item $\exit^{\Uu_i'}_\alpha=\vV_i^{\exit^{\Uu_j}_\alpha}\sub\exit^{\Uu_j}_\alpha$,
so $E^{\Uu_i'}_\alpha\sub E^{\Uu_j}_\alpha$,
\end{enumerate}

\item for all $\alpha+1<\lh(\Uu_i)$,  letting
 $d=\deg^{\Uu_i}(\alpha)=\deg^{\Uu_i'}(\alpha)=\deg^{\Uu_j}(\alpha)$,  we have
\[ M^{*\Uu'_i}_{\alpha+1}=\Ult_d(M^{*\Uu_i}_{\alpha+1},e)=\vV_i^{M^{*\Uu_j}_{\alpha+1}} \]

\item for all $\alpha+1<\lh(\Uu_i)$, we have
$i^{*\Uu_i'}_{\alpha+1}\sub i^{*\Uu_j}_{\alpha+1}$.
 \end{enumerate}
 \end{enumerate}
\end{lem}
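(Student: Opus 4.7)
The plan is to prove all clauses simultaneously by induction on $\lh(\Tt)$, with the decomposition $\Tt=\Tt_0\conc\ldots\conc\Tt_k$ driving the bookkeeping. Limit stages are handled by direct-limit continuity (all the model, embedding and factor-map identities are of a form that passes through direct limits once the confinal cases are known), so the substance is in the successor step. At each such step we add one extender $E=E^\Tt_\alpha$ and must check two things: first, that the extender which the tree $\Tt\down i$ is \emph{forced} to use at stage $\alpha$ agrees with $E$ in the way claimed (same index, same action $\rest\OR$, same exit model up to the relevant P-construction); second, that the $\ins$-predecessor structure, drops and degrees are preserved. The $\vec{\gamma}$-stability of $\Tt$ ensures that each $\Tt_j$ really is based on the level advertised in the standard decomposition, so the inductive hypothesis applied to $\Tt\rest\alpha$ gives us the $i$-th-level model $M^\Tt_\alpha\down i=M^{\Tt\down i}_\alpha=i^{\Tt}_{0\alpha}(\vV\down i)$ inside which we must locate the relevant extender.

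For the base component $\Tt_0$ (based on $\vV|\delta_0^{\vV}$) everything reduces to clause \ref{item:Delta-conservativity} of Definition \ref{dfn:pVl} together with the standard fact that $\Delta$-conservativity propagates through $0$-maximal ultrapowers by short extenders on $\es^{\vV|\Delta^{\vV}}$; this is an immediate generalization of the $n=1$ calculation in Lemma \ref{lem:Psi_vV_1,gamma_conservative}. For a later component $\Tt_j$ (with $j\leq k$), I would first apply the induction to $\Tt_{[0,j-1]}$ to replace $\vV$ by $M^{\Tt_{[0,j-1]}}_\infty$ and replace $\vV\down i$ by $M^{\Tt_{[0,j-1]}\down i}_\infty$; the three trees $\Uu_i,\Uu_i',\Uu_j$ of clause (4) then play the same roles that $\Uu,\Uu',\Uu''$ played in the proof of Lemma \ref{lem:Psi_vV_1_is_conservative}, except that the single extender $e_0^{\vV}$ there is replaced throughout by the composition
\[
e=e_{i-1}\com e_{i-2}\com\ldots\com e_j,
\]
acting between the models $M^{\Tt_{[0,j-1]}}_\infty\down i$ and $M^{\Tt_{[0,j-1]}}_\infty\down j$. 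Crucially, because each $e_\ell$ is amenable to the relevant $\vV|\gamma_\ell^{\vV}$ and lives in the next pVl down, the composite $e$ is internal to the exit model $\exit^{\Uu_i}_\alpha$ at every $\alpha$, so the verbatim analog of Remark \ref{rem:internal_e_Ult} applies: $\exit^{\Uu_i'}_\alpha=\Ult_0(\exit^{\Uu_i}_\alpha,e)$, the two exit models agree below the generator level, and the inclusion $\exit^{\Uu_i'}_\alpha\sub\exit^{\Uu_j}_\alpha$ follows from the P-construction defining $\vV_i^{M^{\Uu_j}_\alpha}$.

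The core ultrapower identity
\[
\Ult(P',F')=i^P_F(P'),\qquad i^{P'}_{F'}\sub i^P_F,
\]
is the same calculation as in the Claim inside Lemma \ref{lem:Psi_vV_1_is_conservative}: one uses that $F'$ is generated by $i^A_e``\iota$ (with $\iota=\lambda(F)=\lambda(F')$), writes an explicit isomorphism
\[
\pi\colon\Ult(P',F')\to i^P_F(P'),\qquad \pi([i^A_e(a),g]^{P'}_{F'})=[a,g\com i^P_e]^P_F,
\]
and checks well-definedness, injectivity and surjectivity by Fubini-type arguments on the composite extender. What requires attention in the present $n$-level generalization is that $e$ no longer maps $\kappa$ to itself in one step but via several applications (each $e_\ell$ is a $\delta_\ell$-to-$\gamma_\ell$ extender); the same isomorphism construction still works because at each intermediate stage the critical point of the relevant $e_\ell$ lies strictly below the space $\delta_m^{\vV}=\crit(F)$ of $F$, so $e_\ell$ is fixed by $i_F$ and the key commutation $i_F(i_e(B))=i_e(i_F(B))$ still holds. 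Branch correctness for long extenders is passed through using the branch condensation lemma \cite[Lemma 3.10]{vm2} exactly as in the $n=1$ proof of Lemma \ref{lem:Psi_vV_1_is_conservative}, since the Q-structures along a tree using only short extenders do not overlap their local Woodin, and the maps $k$ into correct Q-structures lift to witnesses that $F^{\Ww}$ is correct.

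The main obstacle is the simultaneous bookkeeping across the three trees $\Uu_i,\Uu_i',\Uu_j$: it is not hard to see component-by-component that extender indices match and that exit models line up under the appropriate ultrapower or P-construction, but one must also verify that the tree predecessors, drops and degrees coincide, and that the factor maps $\pi_{\alpha i}$ (and $\sigma_\alpha$) are indeed the identity rather than merely elementary. Identity of the factor maps will follow inductively from the identity at the previous stage together with the Shift Lemma, provided that at the successor stage the choice of $\pred$ on the two sides is forced to agree; this is where the modified definition of $\pred$ for long extenders in Definition \ref{dfn:0-maximal} is essential, because it guarantees that $\pred^{\Tt\down i}(\alpha+1)$ is picked from precisely the same pool of candidates as $\pred^{\Tt}(\alpha+1)$ once one translates via the P-construction. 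Assembling these pieces stage by stage then yields all the parts of the lemma in a single induction.
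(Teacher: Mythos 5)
Your proposal takes essentially the same route as the paper's (deliberately terse) proof: an induction on $\lh(\Tt)$ that uses $\Delta$-conservativity of pVls at the base, the internality of the composite $e=e_{i-1}\com\ldots\com e_j$ to the exit models, and the explicit ultrapower isomorphism from Lemma~\ref{lem:Psi_vV_1_is_conservative} (i.e.~$\Ult(P',F')=i^P_F(P')$ and $i^{P'}_{F'}\sub i^P_F$) at the successor step, then direct-limit continuity at limits. The paper's proof is a one-paragraph deferral to exactly these ingredients, so your elaboration is faithful to it and fills in the intended bookkeeping.

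Two small criticisms of emphasis, neither fatal. First, you invoke the branch condensation lemma to ``pass through'' correctness of long extenders, but the present lemma concerns only short-normal $\vec{\gamma}$-stable trees and their $\down i$ translations, all of which are short-normal; no long extender is ever applied in $\Tt$, $\Uu_i$, $\Uu_i'$, or $\Uu_j$, so branch condensation simply does not enter. That argument belongs to the self-coherence lemmas (e.g.~the proof that $\Psi^{\sn}_{\vV_1}$ is self-coherent, and Lemma~\ref{lem:Sigma_wW_n+2}), not to conservativity. Second, and relatedly, you appeal to the modified $\pred$ rule for long extenders from Definition~\ref{dfn:0-maximal} to force the predecessor structures of $\Tt$ and $\Tt\down i$ to agree; but again, since all trees in play use only short extenders, the ordinary $0$-maximal $\pred$ rule governs every stage, and what forces the agreement is the matching of extender indices together with the agreement of the exchange ordinals $\nu$ below the cutpoints $\gamma_{i-1}^{M^{\Tt'_0\conc\ldots}_\infty}$, $\kappa_{i-1}^{+M^{\Tt\down j}_\infty}$, etc. The core calculations (the Fubini-style well-definedness argument for the isomorphism $\pi$, the commutation $i_F(i_e(B))=i_e(i_F(B))$ via $e\in\exit^{\Uu_i}_\alpha|\crit(E^{\Uu_i}_\alpha)$, and the agreement of exit models via $\exit^{\Uu_i'}_\alpha=\Ult_0(\exit^{\Uu_i}_\alpha,e)=\vV_i^{\exit^{\Uu_j}_\alpha}$) are exactly what the paper has in mind, and your framing of them as a generalization from the single extender $e_0^{\vV}$ to the composite is correct.
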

\begin{proof}
Given the calculations presented earlier in the paper,
 this is a straightforward induction, particularly using (i) the $\Delta$-conservation of $(n+1)$-pVls, (ii) the fact that \[ \vV_{i+1}^{\vV\down i}=\Ult(\vV\down (i+1),e_i^{\vV\down(i+1)}) \] (which gets preserved by non-dropping iteration maps),
  and (iii) the method of proof of Lemma \ref{lem:Psi_vV_1_is_conservative},
  to secure the analogous conservativity facts ``above $\Delta$''. We leave the details to the reader.
\end{proof}

There is naturally also a version for $\gamma_k$-unstable trees:

\begin{lem}[Conservativity ($\gamma_k$-unstable)]\label{lem:conservativity_unstable}
 Let $\vV$ be an $(n+1)$-pVl
 and $\Tt$ be short-normal and $\gamma_k$-unstable on $\vV$, where $k\leq n$, with standard decomposition $(\Uu_0,\ldots,\Uu_k,\Vv)$.
 Suppose that the $\vec{e}$-minimal inflation\[\Tt'=\Uu_0'\conc\left<e_0\right>\conc\ldots\conc\left<e_{k-1}\right>\conc\Uu_k'\conc\Vv'\]
 of $\Tt$
 has wellfounded models.
 Let $\lambda=\lh(\Uu_0\conc\ldots\conc\Uu_k)$.
 Let $\Vv^\dagger$ be $\Vv$
 considered as a 
 $0$-maximal tree on $M^\Tt_\lambda|\gamma_k^{M^\Tt_\lambda}$
  (since $\Vv$ is on $M^{\Tt}_\lambda$, above $\delta_k^{M^\Tt_\lambda}$,
 based on $M^\Tt_\lambda|\gamma_k^{M^\Tt_\lambda}$,
 and $\rho_1^{M^{\Tt}_\lambda|\gamma_k^{M^\Tt_\lambda}}=\delta_k^{M^\Tt_\lambda}$,
 this makes sense).
 Then:
 \begin{enumerate}
\item \label{item:Uu_0^conc^Uu_k_stable} $\Uu_0\conc\ldots\conc\Uu_k$ is $\vec{\gamma}$-stable short-normal  (on $\vV$),
\item\label{item:Tt_down_i_stable} for $i\leq k$, $\Tt\down i$ is $\vec{\gamma}$-stable short-normal (on $\vV\down i$)
\end{enumerate}
(so Lemma \ref{lem:conservativity_stable} applies to the trees mentioned in parts \ref{item:Uu_0^conc^Uu_k_stable} and \ref{item:Tt_down_i_stable}),
\begin{enumerate}[resume*]
\item  for $i\in(k,n+1]$,
  $\Tt\down i$ is a $\gamma_k$-unstable short-normal
 tree on $\vV\down i$,
 and $(\Vv\down i)^\dagger=\Vv^\dagger$ (where we (can) define $(\Vv\down i)^\dagger$ analogously to $\Vv^\dagger$), 
\item
  $[0,\lambda+
\alpha]_{\Tt}$ drops in model to some point $\leq$ some image of $\vV|\gamma_k^{\vV}$,
\item $\Vv^\dagger$ and $\Vv\down k$
have the same lengths,
extenders, tree, drop and degree structures (recall that both $\Vv^\dagger$
and $\Vv\down k$ are $0$-maximal,
so even at points where there is no drop, the trees have the same degree),
\item for each $\alpha<\lh(\Vv)$, if $[0,\alpha]^{\Vv^\dagger}\cap\dropset^{\Vv^\dagger}=\emptyset$ and $\beta\leq^{\Vv^\dagger}\alpha$ then:
\begin{enumerate}
 \item $(M^{\Vv^\dagger}_\alpha)^\passive=M^{\Vv\down k}_\alpha|\kappa_k^{+M^{\Vv\down k}_\alpha}$,
 \item $M^{\Vv^\dagger}_\alpha$
 is active with $e_k^{M^{\Vv^\dagger}_\alpha}$,
 \item  $i^{\Vv^\dagger}_{\beta\alpha}\sub i^{\Vv\down k}_{\beta\alpha}$,
 \item $\vV_{k+1}^{M^{\Vv\down k}_\alpha}|\gamma_k^{\vV_{k+1}^{M^{\Vv\down k}_\alpha}}=\Ult_0(M^{\Vv^\dagger}_\alpha,e_k^{M^{\Vv^\dagger}_\alpha})$,
\end{enumerate}
\item 
for each $\alpha<\lh(\Vv)$,
if $[0,\alpha]^{\Vv^\dagger}\cap\dropset^{\Vv^\dagger}\neq\emptyset$ and $\beta\leq^{\Vv^\dagger}\alpha$ with $(\beta,\alpha]^{\Vv^\dagger}\cap\dropset^{\Vv^\dagger}=\emptyset$ then
 $M^{\Vv^{\dagger}}_\alpha=M^{\Vv\down k}_\alpha$ and $i^{\Vv^\dagger}_{\beta\alpha}=i^{\Vv\down k}_{\beta\alpha}$,
\item for each $\alpha+1\in\dropset^{\Vv^\dagger}$, we have
 $M^{*\Vv^\dagger}_{\alpha+1}=M^{*(\Vv\down k)}_{\alpha+1}$
and $i^{*\Vv^\dagger}_{\alpha+1}=i^{*(\Vv\down k)}_{\alpha+1}$.
 
\end{enumerate}
\end{lem}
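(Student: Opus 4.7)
The plan is to reduce items (1) and (2) to Lemma~\ref{lem:conservativity_stable} applied to the initial stable segment $\Uu_0\conc\ldots\conc\Uu_k$, to obtain items (3) and (4) by transferring structural facts from $\vV$ to $\vV\down i$ in the agreement region below $\gamma_i^{\vV}$, and then to prove items (5)--(8) by an induction on $\alpha<\lh(\Vv)$ using the P-construction relation between $M^{\Vv^\dagger}_\alpha$ and $M^{\Vv\down k}_\alpha$.

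First I would observe that $\Uu_0\conc\ldots\conc\Uu_k$ is $\vec\gamma$-stable: by the minimality built into the definition of the standard decomposition of a $\gamma_k$-unstable tree, no $\gamma_j$-instability can occur strictly before $\alpha$, and by the remark that a short-normal tree is $\gamma_j$-unstable for at most one $j$, no $\gamma_{j}$-instability occurs at all for $j\neq k$. Hence Lemma~\ref{lem:conservativity_stable} applies to $\Uu_0\conc\ldots\conc\Uu_k$, giving the full conservation data on this initial portion. For item~(2), the tail $\Vv''$ of $\Tt\down i$ for $i\leq k$ is, by construction, above $\kappa_k^{M^{\Uu''_k}_\infty}$, which in the $i$-pVl $\vV\down i$ lies strictly above all long-extender indices $\gamma_j^{\vV\down i}$ for $j\leq i-1$, so $\Vv''$ cannot be $\gamma_j$-unstable for any relevant $j$; combined with the stability of $\widetilde{\Uu_0}\conc\ldots\conc\widetilde{\Uu_i}\conc\Uu''_{i+1}\conc\ldots\conc\Uu''_k$ given by the stable lemma, this yields that $\Tt\down i$ is $\vec\gamma$-stable short-normal.

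For item~(3), applying the stable lemma to $\Uu_0\conc\ldots\conc\Uu_k$ and using that $\vV$ and $\vV\down i$ agree on initial segments below $\gamma_i^{\vV}>\gamma_k^{\vV}$, one checks that $M^\Tt_\lambda|\gamma_k^{M^\Tt_\lambda}=M^{\Tt\down i}_\lambda|\gamma_k^{M^{\Tt\down i}_\lambda}$, so $\Vv$ and $\Vv\down i$ use the same extenders at the same indices, yielding $(\Vv\down i)^\dagger=\Vv^\dagger$ and transferring the $\gamma_k$-instability. For item~(4), the first extender $E^\Tt_\lambda$ of $\Vv$ satisfies $\delta_k^{M^\Tt_\lambda}<\lh(E^\Tt_\lambda)<\gamma_k^{M^\Tt_\lambda}$, and since $\gamma_k^{M^\Tt_\lambda}<\delta_k^{+M^\Tt_\lambda}$ by condition \ref{item:gamma_m_def} of Definition~\ref{dfn:pVl}, any $\pins$-segment of $M^\Tt_\lambda$ containing $E^\Tt_\lambda$ on its sequence projects $\leq\delta_k^{M^\Tt_\lambda}$; the $0$-maximal protocol therefore forces $M^{*\Tt}_{\lambda+1}\pins M^\Tt_\lambda|\gamma_k^{M^\Tt_\lambda}$, giving a drop, which is inherited throughout $[0,\lambda+\alpha]_\Tt$ since all subsequent extenders of $\Vv$ are indexed below images of $\gamma_k^{M^\Tt_\lambda}$.

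The main content is items (5)--(8), which I would prove by induction on $\alpha<\lh(\Vv)$, using the identification $\gamma_k^{M^\Tt_\lambda}=\kappa_k^{+M^{\Tt\down k}_\lambda}$ from Lemma~\ref{lem:vV_n+1^vV_down_n_def}(\ref{item:gamma=kappa^+}) and the agreement of passivizations at stage $0$ given by the stable lemma applied to $\Uu_0\conc\ldots\conc\Uu_k$. At each non-dropping stage the model $M^{\Vv^\dagger}_\alpha$ remains active with $e_k^{M^{\Vv^\dagger}_\alpha}$, and the P-construction defining $\vV_{k+1}$ over a $k$-pVl identifies $\Ult_0(M^{\Vv^\dagger}_\alpha,e_k^{M^{\Vv^\dagger}_\alpha})$ with $\vV_{k+1}^{M^{\Vv\down k}_\alpha}|\gamma_k^{\vV_{k+1}^{M^{\Vv\down k}_\alpha}}$. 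At dropping stages the active $e_k$ is lost and the $\Vv^\dagger$- and $\Vv\down k$-models coincide outright. The hard part will be the successor inductive step: showing that applying a short $E^{\Vv^\dagger}_\gamma=E^{\Vv\down k}_\gamma$ to the respective starred predecessors commutes with the active $e_k$ in the expected way, namely that the $e_k$-ultrapower of the successor model on the $\Vv^\dagger$ side matches the corresponding successor step computed inside $\vV_{k+1}^{M^{\Vv\down k}_{\gamma+1}}$, and that $\pred^{\Vv^\dagger}(\gamma+1)$ and $\pred^{\Vv\down k}(\gamma+1)$ agree, along with drops and degrees. This is a direct analog of the commutation argument in the proof of Lemma~\ref{lem:Psi_vV_1_is_conservative}, bolstered by the amenability of $e_k^{M^{\Vv^\dagger}_\alpha}$ to $(M^{\Vv^\dagger}_\alpha)^\passive$ and $\Delta$-conservation, with additional bookkeeping for drops.
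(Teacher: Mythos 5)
Your proposal takes the approach the paper intends: the lemma is stated there with no proof block, introduced only by the remark ``there is naturally also a version for $\gamma_k$-unstable trees,'' and the paper's one-paragraph proof of the stable version (Lemma \ref{lem:conservativity_stable}) defers to precisely the ingredients you invoke---$\Delta$-conservation, the identity $\vV_{i+1}^{\vV\down i}=\Ult(\vV\down(i+1),e_i^{\vV\down(i+1)})$, and the commutation method of Lemma \ref{lem:Psi_vV_1_is_conservative}---so your reduction to the stable lemma for items (1)--(4) and the induction via the P-construction relation for items (5)--(8) match the intended argument.

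One mechanical correction on item (4): a $0$-maximal drop is triggered by a failure of totality, not by a projectum bound alone. The relevant fact, which the lemma's parenthetical already records, is that $\Vv$ is above $\delta_k^{M^\Tt_\lambda}$; combined with $\lh(E^\Tt_\lambda)<\gamma_k^{M^\Tt_\lambda}<\delta_k^{+M^\Tt_\lambda}$ this forces $\crit(E^\Tt_\lambda)=\delta_k^{M^\Tt_\lambda}$, and then $E^\Tt_\lambda$ fails to measure all of $\pow(\delta_k^{M^\Tt_\lambda})^{M^\Tt_\lambda}$, which is what forces $M^{*\Tt}_{\lambda+1}$ to be a proper initial segment. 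Your projectum observation then correctly locates that segment $\ins M^\Tt_\lambda|\gamma_k^{M^\Tt_\lambda}$, and the heredity through $\Vv$ is as you say. As written, though, your argument never uses the critical-point fact, and with $\crit(E^\Tt_\lambda)<\delta_k^{M^\Tt_\lambda}$ the extender would be total and there would be no immediate drop; so the step ``the $0$-maximal protocol therefore forces a drop'' does not follow from the projectum observation alone. This is a misattributed mechanism rather than a structural gap, but the missing hypothesis should be invoked explicitly.
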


A normalization-like lemma:
\begin{lem}\label{lem:e-vec_commutativity}
 Let $\vV$ be an $(n+1)$-pVl. Let $e=e^{\vV}=e_n^{\vV}$ and $\vec{e}=\vec{e}^{\vV}$ and $\vec{e}\rest n=\vec{e}^{\vV\down 0}$. Let $(\vec{e}\rest n)^+$ be the $(\kappa_n^{+\vV\down n},\kappa_n^{+\Ult(\vV\down n,\vec{e}\rest n)})$-extender
 derived from the iteration map $k:\vV\down n\to\Ult(\vV\down n,\vec{e}\rest n)$. Then
 \[ \Ult(\vV,(\vec{e}\rest n)^+\com e)=\vV_{n+1}^{\vV\down 0}=\Ult(\vV,\vec{e}) \]
and the resulting ultrapower maps $j,j':\vV\to\vV_{n+1}^{\vV\down 0}$ agree, and $\vec{e}$ is just the $(\delta_n^\vV,\delta_n^{\vV_{n+1}^{\vV\down 0}})$-extender derived from $j$, or equivalently,  $j'$.
\end{lem}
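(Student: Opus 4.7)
The plan is to recognize $\Ult(\vV,\vec{e})$ and $\Ult(\vV,(\vec{e}\rest n)^+\com e)$ as two different orderings of the same iterated ultrapower of $\vV$: the first applies $e_0^\vV,e_1^{M^\Tt_1},\ldots,e_n^{M^\Tt_n}$ bottom-up along the length-$(n{+}2)$ tree $\Tt$ of Definition~\ref{dfn:e-vec_finite}, whereas the second applies $e=e_n^\vV$ first and then the lifted lower iteration on $U:=\Ult(\vV,e)$. First I would apply Lemma~\ref{lem:vV_n+1^vV_down_n_def} to $\vV$ to identify $U=\vV_{n+1}^{\vV\down n}$ with $U\down n=\vV\down n$ as its lower layer; let $j:\vV\to U$ be the $e$-ultrapower map. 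I would then consider the tree $\bar\Tt$ on the $n$-pVl $\vV\down n$ of length $n{+}1$ with $E^{\bar\Tt}_i=e_i^{M^{\bar\Tt}_i}$; by Lemma~\ref{lem:e-vec_finite_stage} applied to $\vV\down n$, $M^{\bar\Tt}_n=\vV_n^{\vV\down 0}=\Ult(\vV\down n,\vec e\rest n)$ with iteration map $k$. Since $\vV\down n=U\down n$ and $j$ fixes each $e_i^\vV\in\vV|\delta_n^\vV$ (as $\gamma_i^\vV<\delta_n^\vV$), the tree $\bar\Tt$ lifts to a tree $\bar\Tt^+$ on $U$ with the same extender indices; a second application of Lemma~\ref{lem:e-vec_finite_stage} to $U$ gives $M^{\bar\Tt^+}_n=\vV_{n+1}^{M^{\bar\Tt}_n\down 0}=\vV_{n+1}^{\vV\down 0}$, yielding an iteration map $k^+:U\to\vV_{n+1}^{\vV\down 0}$ that extends $k$ on the lower layer.

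The main step is to verify that $k^+\com j=i^\Tt_{0,n+1}$; this is the expected main obstacle, and amounts to a commutativity calculation for iterated ultrapowers in the presence of mixed long/short extenders. I would argue by induction on $n$, using (i) the $\Delta$-conservativity clause~\ref{item:Delta-conservativity} of Definition~\ref{dfn:pVl} to relate $e_i^\vV$ and $e_i^{\vV\down n}$ and their actions, (ii) Lemma~\ref{lem:conservativity_stable} for the level-by-level correspondence between $\Tt$, $\Tt\down i$, and the lifted versions, and (iii) the Shift-Lemma-style calculation from the proof of Lemma~\ref{lem:Psi_vV_1_is_conservative} (especially the identity $F\rest\OR=F'\rest\OR$ used there). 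At its core, the induction reduces to checking that $\Ult(\Ult(\vV,e_i^\vV),e)$ and $\Ult(\Ult(\vV,e),e_i^\vV)$ agree as ultrapowers with matching embeddings, which should hold because $e$'s space $\vV|\delta_n^\vV$ dominates the space $\vV|\delta_i^\vV$ of each lower $e_i^\vV$, and $j$ fixes $e_i^\vV$ as a set.

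Finally, because $\bar\Tt^+$ uses only extenders of $U$'s lower layer with indices below $\gamma_n^\vV=\kappa_n^{+\vV\down n}$, the generators of $k^+$ (equivalently, of $k$) all lie below $\kappa_n^{+\vV_n^{\vV\down 0}}$. From this I would conclude that $(\vec e\rest n)^+$, being the extender derived from $k$ at this length, captures $k^+$ via ultrapower, so $\Ult(U,(\vec e\rest n)^+)=\vV_{n+1}^{\vV\down 0}$ with ultrapower map $k^+$. Combining, $\Ult(\vV,(\vec e\rest n)^+\com e)=\Ult(U,(\vec e\rest n)^+)=\vV_{n+1}^{\vV\down 0}=\Ult(\vV,\vec e)$; both ultrapower maps $j,j'$ equal the composite $k^+\com j=i^\Tt_{0,n+1}$; and the last assertion, that $\vec e$ is the $(\delta_n^\vV,\delta_n^{\vV_{n+1}^{\vV\down 0}})$-extender derived from this common map, is then immediate from Definition~\ref{dfn:e-vec_finite}.
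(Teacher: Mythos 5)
Your plan correctly identifies the heart of the matter (two orderings of the same iterated ultrapower of $\vV$) and the model and map equalities to be checked, so it is headed in the right direction; but the paper's proof achieves both with two short, non-inductive devices that you're effectively re-deriving by hand. For the model equality $\Ult(\Ult(\vV,e),(\vec{e}\rest n)^+)=\vV_{n+1}^{\vV\down 0}$, the paper does not lift the lower tree to $U$ and track generators; it simply observes that $\vV_{n+1}^{\vV\down n}$ is a ground of $\vV\down n$ via a forcing that is $\kappa_n^{+\vV\down n}$-cc in $\vV_{n+1}^{\vV\down n}$, so by the standard L\'evy--Solovay calculation the restriction of $k:\vV\down n\to\vV_n^{\vV\down 0}$ to this ground is exactly the $(\vec{e}\rest n)^+$-ultrapower map, giving $\Ult(\vV_{n+1}^{\vV\down n},(\vec{e}\rest n)^+)=k(\vV_{n+1}^{\vV\down n})=\vV_{n+1}^{\vV\down 0}$ immediately. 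Your generator-support argument is plausible but more work, and it hides the fact that the whole point is the cc grounds structure already built into the pVl definition.

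For the map equality, the paper uses a single elementarity shift, not an induction on $n$. Decompose $j=i^{\vV}_{\vec e}$ as $i^{\vV'}_{e'}\circ i^{\vV}_{\vec e\rest n}$ where $\vV'=\Ult(\vV,\vec e\rest n)$ and $e'=i^{\vV}_{\vec e\rest n}(e)$ (note $e\in\vV$), and note $k\subseteq i^{\vV}_{\vec e\rest n}$ by conservativity. Then for $\alpha<\delta_n^{\vV}$ with $\beta=i^{\vV}_e(\alpha)$, applying $i^{\vV}_{\vec e\rest n}$ to the $\vV$-definable statement ``$e$ sends $\alpha$ to $\beta$'' yields $e'(i^{\vV}_{\vec e\rest n}(\alpha))=i^{\vV}_{\vec e\rest n}(\beta)=k(\beta)$, whence $j(\alpha)=k(\beta)=j'(\alpha)$; since both maps arise from $(\delta_n^{\vV},\cdot)$-extenders this agreement on ordinals suffices. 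This sidesteps the commutativity ``$\Ult(\Ult(\vV,e_i),e)$ vs.\ $\Ult(\Ult(\vV,e),e_i)$'' that you say your induction reduces to --- which, as stated, is not quite the right reduction anyway, since in the tree $\Tt$ the last extender is $e_n^{M^\Tt_n}$, the image of $e$ under the lower iteration, not $e$ itself. You would have to fold that shift into the inductive hypothesis, at which point you are essentially reproving the one-line elementarity step repeatedly.
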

\begin{proof}
We have $\Ult(\vV\down n,\vec{e}\rest n)=\vV_n^{\vV\down 0}$,
and $k:\vV\down n\to\vV_n^{\vV\down 0}$ is the ultrapower map.
Since $\vV_{n+1}^{\vV\down n}$ is a ground
of $\vV\down n$ for a forcing which is $\kappa_n^{+\vV\down n}$-cc in $\vV_{n+1}^{\vV\down n}$,
we have 
\[k(\vV_{n+1}^{\vV\down n})= \Ult(\vV_{n+1}^{\vV\down n},(\vec{e}\rest n)^+)=\vV_{n+1}^{\vV\down 0} \]
and letting $\bar{k}:\vV_{n+1}^{\vV\down 0}\to\vV_{n+1}^{\vV\down 0}$ be the $(\vec{e}\rest n)^+$-ultrapower map,
then $\bar{k}=k\rest\vV_{n+1}^{\vV\down n}$.
But since $\Ult(\vV,e)=\vV_{n+1}^{\vV\down n}$,
and $e$ is the $(\delta_n^{\vV},\delta_n^{\vV_{n+1}^{\vV\down n}})$-extender derived from $i^{\vV}_e$,
and $\delta_n^{\vV_{n+1}^{\vV\down n}}=\kappa_n^{+\vV\down n}$,
we therefore get that
\[ \vV_{n+1}^{\vV\down 0}=\Ult(\Ult(\vV,e),(\vec{e}\rest n)^+). \]
But we also have
\[ \vV_{n+1}^{\vV\down 0}=\Ult(\vV,\vec{e}^{\vV}).\]
Let $j'=k\com i^{\vV}_e:\vV\to\vV_{n+1}^{\vV\down 0}$
and $j=i^{\vV}_{\vec{e}^{\vV}}$ be the two ultrapower maps.
Both result from $(\delta_n^{\vV},\delta_n^{\vV_{n+1}^{\vV\down 0}})$-extenders,
so to see $j=j'$, it is enough to see that $j\rest(\vV|\delta_n^{\vV})=j'\rest(\vV|\delta_n^{\vV})$,
and since $j(\vV|\delta_n^{\vV})=j'(\vV|\delta_n^{\vV})$,
it is therefore enough to see that $j\rest\delta_n^{\vV}=j'\rest\delta_n^{\vV}$. Let $\alpha<\delta_n^{\vV}$
and let $\beta=i^{\vV}_e(\alpha)$.
Then
\[ j'(\alpha)=k(i^{\vV}_e(\alpha))=k(\beta),\]
but $k=i^{\vV\down n}_{\vec{e}\rest n}=i^{\vV\down n}_{\vec{e}^{\vV\down n}}$ and by conservativity,
$k\sub i^{\vV}_{\vec{e}\rest n}$.
We have $j=i^{\vV'}_{e'}\com i^{\vV}_{\vec{e}\rest n}$ where $\vV'=\Ult(\vV,\vec{e}\rest n)$ and $e'=e_n^{\vV'}=i^{\vV}_{\vec{e}\rest n}(e)$ (recall $e\in \vV$).
But applying $i^{\vV}_{\vec{e}\rest n}$ to the equation  $e(\alpha)=\beta$ gives
\[ e'(i^{\vV}_{\vec{e}\rest n}(\alpha))=i^{\vV}_{\vec{e}\rest n}(\beta),\]
so
\[ j(\alpha)=e'(i^{\vV}_{\vec{e}\rest n}(\alpha))=k(\beta)=j'(\alpha), \]
as desired.
\end{proof}

\begin{dfn}\label{dfn:e-vec_minimal_pullback}
 Let $\vV$ be an $(n+1)$-pVl.
Let $i\leq n$ and $\Sigma$
 be a short-normal $(0,\OR)$-strategy for $\vV\down i$.
 Let $\Sigma'$ be a short-normal $(0,\OR)$-strategy for $\vV$.
 
 We say that $\Sigma'$ is \emph{$\Sigma$-conservative}
 if for all (short-normal)
 trees $\Tt$ via $\Sigma'$,
 $\Tt\down i$ exists (equivalently, has wellfounded models) and is via $\Sigma$.
  Comparing with Lemma \ref{lem:e-vec_minimal_pullback} below,
 we also say that $\Sigma'$ is the \emph{$\vec{e}$-minimal pullback} of $\Sigma$.
\end{dfn}
\begin{lem}\label{lem:e-vec_minimal_pullback}
 Let $\vV$ be an $(n+1)$-pVl and let $\Sigma$ be a $(0,\OR)$-strategy for $\vV\down 0$. Then there is a unique $\Sigma$-conservative short-normal $(0,\OR)$-strategy $\Sigma'$ for $\vV$. Moreover,
 for each $i\in[1,n]$,
 if we let $\Sigma_{\vV\down i}$ be the unique $\Sigma$-conservative short-normal $(0,\OR)$-strategy for $\vV\down i$,
 then $\Sigma'$ is $\Sigma_{\vV\down i}$-conservative.
\end{lem}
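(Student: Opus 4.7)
The plan is to construct $\Sigma'$ by recursion on the length of short-normal trees on $\vV$, using $\Sigma$ as the oracle at limit stages through the translation $\Tt\mapsto\Tt\down 0$ of Definitions \ref{dfn:e-vec_min_inflation_stable} and \ref{dfn:e-vec_min_inflation_unstable}. For a short-normal tree $\Tt$ of limit length on $\vV$ whose proper initial segments are via $\Sigma'$, I will maintain as inductive hypothesis that $\Tt\down 0$ exists, is $0$-maximal on $\vV\down 0$, is via $\Sigma$, and matches $\Tt$ in length and in tree/drop/degree structure, with models at each $\alpha<\lh(\Tt)$ linked via $M^{\Tt\down 0}_\alpha = M^\Tt_\alpha\down 0$ (on non-dropping sides, with the modifications on the unstable portion as spelled out in Lemma \ref{lem:conservativity_unstable}). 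I then set $\Sigma'(\Tt) = \Sigma(\Tt\down 0)$, reinterpreted as a cofinal subset of $\lh(\Tt)$.

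To preserve the induction, the workhorses are the conservativity Lemmas \ref{lem:conservativity_stable} and \ref{lem:conservativity_unstable}: \emph{provided} $\Tt\down 0$ has wellfounded models (which it does, since $\Sigma$ is total and $\Tt\down 0$ is via $\Sigma$ by hypothesis), the $\vec{e}$-$0$-minimal inflation of $\Tt$ has wellfounded models and the trees match structurally. Wellfoundedness of $M^\Tt_b$ at the chosen branch $b$ will follow from wellfoundedness of $M^{\Tt\down 0}_b$ together with the identification $M^\Tt_b\down 0 \cong M^{\Tt\down 0}_b$ supplied by the conservativity lemmas, and the fact that an $(n+1)$-pVl candidate is wellfounded whenever its $\down 0$ reduction is (since $\vV$ is recoverable from $\vV\down 0$ by iterated reversal of the P-construction, cf.\ Lemma \ref{lem:vV_n+1^vV_down_n_def}). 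Successor steps are handled by direct inspection: extending $\Tt$ by an extender $E$ extends $\Tt\down 0$ by an extender with the same index, which is short on the appropriate iterate of $\vV\down 0$.

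Uniqueness of $\Sigma'$ is immediate, since any $\Sigma$-conservative short-normal strategy $\Sigma''$ must, at each limit stage of $\Tt$, choose the unique branch $b$ for which $(\Tt\conc b)\down 0$ is via $\Sigma$, which is forced to be $\Sigma(\Tt\down 0)$. For the moreover part I first apply the main assertion with $\vV\down i$ in place of $\vV$ (noting $(\vV\down i)\down 0 = \vV\down 0$) to obtain the $\Sigma$-conservative strategy $\Sigma_{\vV\down i}$ for $\vV\down i$. To see $\Sigma'$ is then $\Sigma_{\vV\down i}$-conservative, the key point is the commutativity $(\Tt\down i)\down 0 = \Tt\down 0$ for $\Tt$ via $\Sigma'$, obtainable by direct comparison of Definitions \ref{dfn:e-vec_min_inflation_stable} and \ref{dfn:e-vec_min_inflation_unstable} at levels $i$ and $0$; this makes $\Tt\down i$ itself $\Sigma$-conservative, so $\Tt\down i$ is via $\Sigma_{\vV\down i}$ by the uniqueness clause applied at level $i$.

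The main obstacle I anticipate is the $\gamma_k$-unstable case: when $\Tt$ eventually drops to a set-sized structure inside an image of $\vV|\gamma_k^\vV$, the corresponding drop in $\Tt\down 0$ is to a different, passivized segment below that point, and tracking the exact correspondence of models, drop/degree structure, and critical-point data through this portion requires care, including ensuring that the branch $b$ chosen by $\Sigma$ in $\Tt\down 0$ is wellfounded as a branch of $\Tt$ itself (where the last model $M^\Tt_b$ may be set-sized and have different fine-structural shape than $M^{\Tt\down 0}_b$). However, essentially all the bookkeeping has already been carried out in Lemma \ref{lem:conservativity_unstable}, so the argument should reduce to a clean invocation of that lemma on top of the simpler stable-case analysis from Lemma \ref{lem:conservativity_stable}.
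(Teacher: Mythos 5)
Your proof matches the paper's approach: both construct $\Sigma'$ by routing limit-stage choices through the translation $\Tt\mapsto\Tt\down 0$, invoking Lemmas \ref{lem:conservativity_stable} and \ref{lem:conservativity_unstable} for the structural correspondence and wellfoundedness transfer, and deriving the ``moreover'' clause from commutativity of the $\down$ operations. One understatement: the commutativity $(\Tt\down i)\down 0 = \Tt\down 0$ that you say is ``obtainable by direct comparison of Definitions \ref{dfn:e-vec_min_inflation_stable} and \ref{dfn:e-vec_min_inflation_unstable}'' does not actually read off from those definitions --- it requires knowing that composing the $e_{j}$-ultrapower maps in stages (first $e_{j-1}\com\cdots\com e_i$, then $e_{i-1}\com\cdots\com e_0$) yields the same inflation as inflating by $e_{j-1}\com\cdots\com e_0$ directly, which is precisely the content of Lemma \ref{lem:e-vec_commutativity} (a normalization-type fact with a nontrivial ultrapower calculation behind it); the paper cites that lemma here alongside the two conservativity lemmas, and so should you.
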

\begin{proof}
 This follows readily from the definitions and Lemmas \ref{lem:conservativity_stable},
  \ref{lem:conservativity_unstable}, \ref{lem:e-vec_commutativity}.
\end{proof}

\begin{lem}[MHC Inheritance]\label{lem:mhc_inheritance}
 Let $\vV$ be an $(n+1)$-pVl, $N=\vV\down 0$, 
 $\Sigma_N$ be a $(0,\OR)$-strategy for $N$,
 and $\Sigma_{\vV}$ be the $\vec{e}$-minimal pullback of $\Sigma_N$. Then:
 \begin{enumerate}[label=--]
  \item 
If $\Sigma_N$ has mhc, then so does $\Sigma_{\vV}$.
\item If $\Sigma_N$ has $\delta_n$-mhc, then so does $\Sigma_{\vV}$.
\end{enumerate}
\end{lem}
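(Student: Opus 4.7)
The plan is to exploit that, by Definition \ref{dfn:e-vec_minimal_pullback} and Lemma \ref{lem:e-vec_minimal_pullback}, a short-normal tree $\Tt$ on $\vV$ is via $\Sigma_\vV$ iff $\Tt\down 0$ is via $\Sigma_N$. So mhc for $\Sigma_\vV$ reduces to producing, from any minimal tree embedding $\Pi\colon\Tt\hookrightarrow_{\min}\Xx$ (with $\Tt,\Xx$ short-normal and $\Xx$ via $\Sigma_\vV$), a minimal tree embedding $\Pi\down 0\colon \Tt\down 0\hookrightarrow_{\min}\Xx\down 0$. Once that is done, mhc of $\Sigma_N$ applied to $\Xx\down 0$ gives $\Tt\down 0$ via $\Sigma_N$, hence $\Tt$ via $\Sigma_\vV$ by the uniqueness part of Lemma \ref{lem:e-vec_minimal_pullback}. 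For the $\delta_n$-mhc version we note that if $\Tt,\Xx$ are based on $\vV|\delta_n^\vV=\vV|\Delta^\vV$, then their standard decompositions extend only through the level-$n$ segment, so $\Tt\down 0,\Xx\down 0$ are based on $N|\delta_n^N$, and the same argument applies using $\delta_n$-mhc of $\Sigma_N$.

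The construction of $\Pi\down 0$ from $\Pi$ is essentially extracted from the conservativity apparatus. By Lemmas \ref{lem:conservativity_stable} and \ref{lem:conservativity_unstable}, $\Tt$ and $\Tt\down 0$ share the same tree, drop, and degree structure, and at every non-dropping node $\alpha$ we have $M^{\Tt\down 0}_\alpha=M^\Tt_\alpha\down 0$ (and likewise for $\Xx$). Moreover, by Lemma \ref{lem:vV_n+1^vV_down_n_def}\ref{item:U_def_wout_params} applied iteratively (using clauses \ref{item:i_e(vV|gamma)_def} and \ref{item:P-con_in_pVl} of Definition \ref{dfn:pVl}), the operation $P\mapsto P\down 0$ is lightface definable on pVls. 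Hence each copy map $\pi_\alpha\colon M^\Tt_\alpha\to M^\Xx_{t(\alpha)}$ of $\Pi$ restricts to an elementary $\pi_\alpha\down 0\colon M^\Tt_\alpha\down 0\to M^\Xx_{t(\alpha)}\down 0$. These restrictions, together with the tree-node map $t$ inherited from $\Pi$ and extender-embedding data translated via the standard decomposition, are the components of the candidate $\Pi\down 0$.

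The main obstacle is verifying that $\Pi\down 0$ is genuinely minimal: one must check the extender-to-extender correspondence (each $E^{\Tt\down 0}_\alpha$ sits inside the correctly translated exit of $\Xx\down 0$), tree-order placement, and coherence of factor and $*$-maps, especially across boundaries between segments of the standard decomposition and, in the $\gamma_k$-unstable case, across the boundary between $\Vv^\dagger$ and $\Vv\down k$ identified in Lemma \ref{lem:conservativity_unstable}. This reduces to a direct transcription of the minimality conditions for $\Pi$ using the dictionary supplied by Lemmas \ref{lem:conservativity_stable} and \ref{lem:conservativity_unstable} (which match extender indices, iterate models, iteration maps, and $*$-data between the $\vV$-side and the $N$-side), together with the canonicity of the ``down $0$'' transformation. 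No new ideas are needed beyond careful bookkeeping.
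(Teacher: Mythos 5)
Your proposal takes a genuinely different route from the paper. The paper does not attempt to pass in one jump from a minimal tree embedding $\Pi\colon\Tt\hookrightarrow_{\min}\Xx$ on $\vV$ to one on $\vV\down 0$. Instead it inducts on $i\leq n+1$, showing at each step that $\Sigma_{\vV\down i}$ has mhc, with each inductive step peeling off exactly one long extender $e_i$: for trees based on $\vV_{i+1}|\gamma_i^{\vV}$ the strategy agrees with $\Sigma_{\vV\down i}$, and for the remaining part one translates a single upper component through $e_i^{(\cdot)}$ into a tree on the level-$i$ model, invoking the inductive mhc hypothesis there. The induction is what keeps the bookkeeping bounded: at each step only one long extender and one layer of translation are in play.

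The gap in your proposal is the step where you assert that $\Pi$ induces a minimal tree embedding $\Pi\down 0\colon\Tt\down 0\hookrightarrow_{\min}\Xx\down 0$ and that verifying minimality ``reduces to a direct transcription \ldots\ no new ideas are needed beyond careful bookkeeping.'' Lemmas \ref{lem:conservativity_stable} and \ref{lem:conservativity_unstable} supply a dictionary between $\Tt$ and $\Tt\down i$, and between $\Xx$ and $\Xx\down i$, but they say nothing about how the copy data of $\Pi$ interacts with these translations. The operation $\Tt\mapsto\Tt\down 0$ passes through the $\vec e$-minimal inflation, which interleaves extenders $e_j^{M^{\Tt'_j}_\infty}$ depending on the models of $\Tt$ itself; on the $\Xx$ side it interleaves the \emph{different} extenders $e_j^{M^{\Xx'_j}_\infty}$. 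To produce $\Pi\down 0$ you must relate these via the copy maps of $\Pi$ and show the minimal-tree-embedding conditions survive the interleaving, the translation of long-to-short extenders, and (in the $\gamma_k$-unstable case) the $\Vv^\dagger$-vs.-$\Vv\down k$ boundary. You also implicitly assume that $\Pi$ aligns the standard decompositions of $\Tt$ and $\Xx$ (same $k$, corresponding component boundaries), which is needed for the inflations to interleave the same number of $e_j$'s at matching places; this is not a general property of minimal tree embeddings and would need an argument. None of this is mere transcription. The paper's single-step induction is precisely the device that makes the extender/copy-map matching tractable, and your one-shot approach would need to reconstruct something equivalent to carry out step 2.
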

\begin{proof}
Suppose $\Sigma_N$ has mhc. Let $\Sigma_{\vV\down i}$
be the unique $\Sigma_N$-conservative short-normal $(0,\OR)$-strategy for $\vV\down i$. We show 
by induction on $i\leq n+1$
that $\Sigma_{\vV\down i}$
has mhc. For $i=0$ it is trivial. Suppose we know it holds for $\Sigma_{\vV\down i}$, and $i\leq n$. Then
mhc 
for trees  on $\vV_{i+1}$,
via $\Sigma_{\vV\down(i+1)}$,  and based on $\vV|\gamma_i^{\vV}$, follows immediately
(since this just agrees with $\Sigma_{\vV\down i}$).
So consider $\Tt=\Tt_0\conc\Tt_1$ and $\Xx=\Xx_0\conc\Xx_1$ on $\vV_{i+1}$, with $\Tt_0,\Xx_0$ based on $\vV_{i+1}|\delta_i^{\vV}$,
 of successor length, with $b^{\Tt_0},b^{\Xx_0}$ non-dropping,
 and $\Tt_1,\Xx_1$ above
 $\gamma_i^{M^{\Tt_0}_\infty},\gamma_i^{M^{\Xx_0}_\infty}$
respectively.
Let $j:M^{\Tt_0}_\infty\to M^{\Tt_1}_\infty$ be the iteration map and $E$ the $(\delta_0^{M^{\Tt_0}_\infty},\delta_0^{M^{\Tt_1}_\infty})$-extender derived from $j$.
Then $j``\Xx_0$ is a minimal hull of $\Xx_1$ (by property \ref{item:normalization_cond} of $(n+1)$-pVls, $\vV\down(i+1)$ has the right condensation properties to ensure this). Let $e=e_i^{M^{\Tt_1}_\infty}$.
So $e``\Xx_1$ translates to a tree on $M^{\Tt_1}_\infty\down i$
via $\Sigma_{M^{\Tt_1}_\infty\down i}$. Now $e``(j``\Xx_0)$
is also a minimal hull of $e``\Xx_1$, so also translates to a tree via $\Sigma_{M^{\Tt_1}_\infty\down i}$. But letting $\bar{e}=e_i^{M^{\Tt_0}_\infty}$, note that $e``(j``\Xx_0)$
is just $k``(\bar{e}``\Xx_0)$, where $k:(M^{\Tt_0}_\infty\down i)\to (M^{\Tt_1}_\infty\down i)$ is the iteration map agreeing with $j$. Therefore $\bar{e}``\Xx_0$ translates to a tree via $\Sigma_{M^{\Tt_0}_\infty\down i}$, so $\Tt_0\conc\Xx_0$ is via $\Sigma_{\vV}$, as desired.

The version when $\Sigma_N$
has $\delta_n$-mhc is similar.
\end{proof}

\subsection{Inductive hypotheses}
	\begin{rem}[Inductive hypotheses at $\wW_{n+1}$]\label{rem:ind_hyp}
	For the remainder of this section, we fix an $n<\om$ and suppose we have defined
	$\wW_0=M,\wW_1,\ldots,\wW_{n+1}$,  $\vV_0=M,\vV_1,\ldots,\vV_{n+1}$, and 
 $\mM_{\infty 0},\ldots,\mM_{\infty n}$, and associated (not just short-normal) $(0,\OR)$-strategies $\Sigma_{\vV}$  for these models $\vV$, 
and let $\Sigma^{\sn}_{\vV}$ be their restrictions to short-normal trees.
Suppose that there is some $\vec{\varphi}$ such that for all $i\leq n$, the following hypotheses \ref{item:hyp_1}--\ref{item:hyp_8} hold:

\begin{figure}
\centering
\begin{tikzpicture}
 [mymatrix/.style={
    matrix of math nodes,
    row sep=0.7cm,
    column sep=0.7cm}
  ]
   \matrix(m)[mymatrix]{
\vV_3\down 0&\vV_3\down 1&\mM_{\infty2}&\vV_3  &{}&{}&{}\\
{}&\vV_2\down 0&\mM_{\infty1}   &\vV_2  &\vV_3^{\wW_1}&{}&{}\\
{}&{}&               \mM_{\infty0}&\vV_1  &\vV_2^{\wW_1}&\vV_3^{\wW_2}&{}\\
{}&{}&{}&                            \wW_0      &\wW_1        &\wW_2        &\wW_3\\  
};
\path[->,font=\scriptsize]
(m-2-2) edge node[right] {$e_2$}  (m-1-1)
(m-2-3) edge node[right] {$e_2$}  (m-1-2)
(m-2-4) edge node[right]{$e_2$}  (m-1-3)
(m-2-5) edge node[right]{$e_2$}  (m-1-4)
(m-3-3) edge node[right]{$e_1$}  (m-2-2)
(m-3-4) edge node[right]{$e_1$}  (m-2-3)
(m-3-5) edge node[right]{$e_1$}  (m-2-4)
(m-3-6) edge node[right]{$e_1$}  (m-2-5)
(m-4-4) edge node[right]{$e_0$}  (m-3-3)
(m-4-5) edge node[right]{$e_0$}  (m-3-4)
(m-4-6) edge node[right]{$e_0$}  (m-3-5)
(m-4-7) edge node[right]{$e_0$}  (m-3-6)
;
\end{tikzpicture}
\caption{\label{fgr:layout}Layout in case $n=2$. Note that $M=\vV_0=\wW_0$,
and $\vV_{i+1}=\vV_{i+1}^{\wW_0}$, and $\mM_{\infty i}=\vV_{i+1}\down i$.  If  $P$ appears directly above $Q$, then $P\subsetneq Q$, and $Q$ is a set-generic extension of $P$.
If $P$ appears directly left of $Q$ then $P\subsetneq Q$, and $Q$ is not a set-generic extension of $P$. The diagonal arrows are ultrapower maps; those in the bottom row are ultrapowers via $e_0=e_0^{\wW_3}=e_0^{\wW_2}=e_0^{\wW_1}$, those in the next row are via $e_1=e_1^{\vV_2^{\wW_1}}=e_1^{\vV_3^{\wW_2}}=i^{\wW_3}_{e_0}(e_1^{\wW_3})$; etc.} 
\end{figure}

\begin{enumerate}[label=(Hyp\arabic*)]
 \item\label{item:hyp_1} 
 $\vV_{i+1},\wW_{i+1}$ are  $\vec{\varphi}$-$(i+1)$-pVls  and $\vV_0=\wW_0=M$ (until we have defined $\wW_{n+2}$, we work  with this fixed $\vec{\varphi}$, and mostly suppress it until then),
 \item for $i\leq j\leq n+1$, we have
 \begin{enumerate}[label=--]
\item $\wW_i=\wW_j\down i$,
 \item $\vV_{i+1}=\Ult(\wW_{i+1},\vec{e}^{\wW_{i+1}})=\vV_{i+1}^{\vV_i}$,
 \item  $\mM_{\infty i}=\Ult(\Ult(\wW_i,\vec{e}^{\wW_{i+1}}\rest i),(\vec{e}^{\wW_{i+1}})_i)=\Ult(\vV_i,(\vec{e}^{\wW_{i+1}})_i)=\vV_{i+1}\down i$ (here with $\vec{e}$ defined as $e_i\com e_{i-1}\com\ldots\com e_0$,
 $\vec{e}\rest i$ denotes $e_{i-1}\com e_{i-2}\com\ldots\com e_0$, and $(\vec{e})_i$ denotes $e_i$; note that $\vec{e}^{\wW_{i+1}}\rest i=\vec{e}^{\wW_i}$, and $(\vec{e}^{\wW_{i+1}})_i\notin\wW_i$),
 \end{enumerate}
 (cf.~Figure \ref{fgr:layout}, and for the definition of $\vV_{i+1}^{\vV_i}$,
 see \ref{dfn:vV_n+1^vV}),
 \item  $\Sigma_{\wW_{i+1}}^{\sn}$ is the
 $\vec{e}$-minimal pullback of $\Sigma_M=\Sigma_{\vV_0}$
 (so by Lemma \ref{lem:mhc_inheritance}, this strategy has mhc),
 \item $\Sigma_{\wW_{i+1}}^{\sn}$ is self-coherent,
 and (using Lemma \ref{lem:unique_ext_to_(0,OR)-strat_n+1})
 $\Sigma_{\wW_{i+1}}$ is the unique extension of $\Sigma^{\sn}_{\wW_{i+1}}$ to a self-consistent $(0,\OR)$-strategy,
 \item \label{hyp:Sigma_vV_i+1}$\Sigma_{\vV_{i+1}}^{\sn}$
 is the strategy for $\vV_{i+1}$ induced from $\Sigma_{\wW_{i+1}}^{\sn}$ via normalization,
 and $\Sigma_{\vV_{i+1}}$ is the unique extension of $\Sigma^{\sn}_{\vV_{i+1}}$ to a self-consistent $(0,\OR)$-strategy,
 
 \item we have \[\begin{split} \delta_i^M=\delta_i^{\vV_i}&<\kappa_i^M=\kappa_i^{\vV_i}=\mu^{\vV_{i+1}}<\delta_0^{\vV_{i+1}}<\gamma_0^{\vV_{i+1}}<\delta_1^{\vV_{i+1}}<\gamma_1^{\vV_{i+1}}<\ldots\\\ldots&<\delta_i^{\vV_{i+1}}=\Delta^{\vV_{i+1}}=\kappa_i^{+M}=\kappa_i^{+\vV_i}<\gamma_i^{\vV_{i+1}}=\gamma^{\vV_{i+1}}<\nu\end{split}\]
 where $\mu^{\vV_{i+1}}$ is the least measurable of $\vV_{i+1}$ and $\nu$ is least such that $M|\nu$ is admissible with $\kappa_i^{+M}<\nu$ (and recall $\kappa_i^{\vV_i}$ is the least strong cardinal of $\vV_i$
 and $\kappa_0^M,\ldots,\kappa_i^M$ the least $i+1$ strong cardinals of $M$),
  \item 
  $\kappa_i^{\vV_i}$ is a cutpoint with respect to short extenders in $\es^{\vV_i}$;
  if $i>0$ then there cofinally many $h$-long extenders in $\es^{\vV_i}$,
  for each $h<i$,
   \item\label{item:hyp_8} We have:
   \begin{enumerate}
\item      	Let $N$ be a non-dropping $\Sigma_{M}$-iterate. 
   	Let $\PP\in N$, $\PP\subseteq\lambda$,
   	where $\lambda\in\OR$.
   	Let $x=\lambda^{+M}$.
   	Let $G$ be $(M,\PP)$-generic (with $G$  appearing in a generic extension of $V$).
   	Let $\vV=\vV_{n+1}^N$.
   	Then:
   	\begin{enumerate}
   		\item $N$ is closed under $\Sigma_{\vV,\Delta^{\vV}}$, and $\Sigma_{\vV,\Delta^{\vV}}\rest N$
   		is lightface definable over $N$, uniformly in $N$,
   		\item  $N[G]$
   		is closed under $\Sigma_{\vV,\Delta^{\vV}}$,
   		and $\Sigma_{\vV,\Delta^{\vV}}\rest N[G]$
   		is definable over $N[G]$ from $x$, uniformly in $N,x$.
   		\end{enumerate}
   		
\item   	Let $\wW$ be a non-dropping $\Sigma_{\wW_{i+1}}$-iterate.
   	Let $\PP\in\wW$, $\PP\subseteq\lambda$, where $\lambda\geq\delta_i^\wW$.
   	Let $x=\wW|\lambda^{+\wW}$.
   	Let $G$ be $(\wW,\PP)$-generic (with $G$  appearing in a generic extension of $V$).
   	Let $\vV=\vV_{n+1}^{\wW}$ (so if $i=n$ then $\vV=\wW$).
   	Then:
   	\begin{enumerate}
   		\item $\wW$ is closed under $\Sigma_{\vV,\Delta}$, and $\Sigma_{\vV,\Delta}\rest\wW$
   		is lightface definable over $\wW$, uniformly in $\wW$,
   		\item  $\wW[G]$
   		is closed under $\Sigma_{\vV,\Delta}$,
   		and $\Sigma_{\vV,\Delta}\rest\wW[G]$
   		is definable over $\wW[G]$ from $x$, uniformly in $\wW,x$.
   		\end{enumerate}
\end{enumerate}
\end{enumerate}
\end{rem}

\begin{rem}Some consequences:
\begin{enumerate}
 \item  $\mM_{\infty i}\subsetneq\vV_{i+1}\subsetneq\vV_i$ and $\mM_{\infty i},\vV_{i+1}$ are both definable without parameters over $\vV_i$\footnote{In the end, one will be able to extract a uniform definition for this.};
 given a model $P\equiv\vV_i$,
 write $\mM_\infty^P$ or $\mM_{\infty i}^P$ for the model defined over $P$ as $\mM_{\infty i}$
 is defined over $\vV_i$, and write $\vV^P$ or $\vV_{i+1}^P$ for the model defined over $P$ 
 as $\vV_{i+1}$ is defined over $\vV_i$. So $\mM_\infty^P$ and $\vV^P$ denote the models defined in passing to the next stage,
 irrespective of the current index. In particular,
 we have $\mM_{\infty i}=\mM_\infty^{\vV_i}=\mM_{\infty i}^{\vV_i}$ and $\vV_{i+1}=\vV^{\vV_i}=\vV_{i+1}^{\vV_i}$.
 And
 $\mM_{\infty i}\equiv\vV_i$, but $\vV_{i+1}\not\equiv\vV_i$.

 \item \label{item:vV_i+1||gamma^vV_i+1=M_infty|kappa^+}  $\mM_{\infty i}$ is a non-dropping $\Sigma_{\vV_i}$-iterate of $\vV_i$,
 via a short-normal tree $\Tt_{i,i+1}=(\Tt,b)$, where $\Tt$ has limit length and $b$ is $\Tt$-cofinal and non-dropping, with $\Tt$ based on $\vV_i|\delta_i^{\vV_i}$, $\delta_i^{\mM_{\infty i}}=\kappa_i^{+\vV_i}=\kappa_i^{+M}$, $M(\Tt)=\mM_{\infty i}|\delta_i^{\mM_{\infty i}}$
 is definable without parameters over $\vV_i|\kappa_i^{+\vV_i}$,\footnote{This corresponds to  $\varphi_0$ of Definition \ref{dfn:pVl};
 cf.~part \ref{item:extenders_of_pVl}(\ref{item:pVl_m-long}) of that definition.} and
 \begin{equation}\label{eqn:vV_i+1||gamma^vV_i+1} \vV_{i+1}||\gamma^{\vV_{i+1}}=\mM_{\infty i}|\kappa_i^{+\mM_{\infty i}}.\end{equation} 
 \item\label{item:F^vV_i+1|gamma^vV_i+1} Letting $j=i^\Tt_b:\vV_i\to \mM_{\infty i}$.
 then $j(\Tt)$ is via $\Sigma_{\mM_{\infty i}}$
 ($\Sigma_{\mM_{\infty i}}$ is defined via normalization from $\Sigma_{\wW_i}$; see Remark \ref{rem:Sigma^stk}) and letting $c=\Sigma_{\mM_{\infty i}}(j(\Tt))$, then $c$ is non-dropping
 and $\delta_i^{M^{j(\Tt)}_c}=\gamma^{\vV_{i+1}}$
 and
 \begin{equation}\label{eqn:F^vV_i+1|gamma^vV_i+1}F^{\vV_{i+1}|\gamma^{\vV_{i+1}}} \text{ is the }(\delta_i^{\mM_{\infty i}},\delta_i^{M^{j(\Tt)}_c})\text{-extender
 derived from }i^{j(\Tt)}_c:\mM_{\infty i}\to j(\mM_{\infty i}).\end{equation}
 
 \item $V_{\delta}^{\mM_{\infty i}}=V_{\delta}^{\vV_{i+1}}$
 where $\delta=\kappa_i^{+\vV_i}$, and $\delta=\delta_i^{\mM_{\infty i}}=\delta_i^{\vV_i}$ (the $i$th Woodin in both $\mM_{\infty i}$ and $\vV_{i+1}$),
 but $V_{\delta+1}^{\mM_{\infty i}}\subsetneq V_{\delta+1}^{\vV_{i+1}}$.
 \item\label{item:vV^M_infty=Ult(vV,e)}  $j(\vV_{i+1})=\vV^{\mM_{\infty i}}=\Ult(\vV_{i+1},e_i^{\vV_{i+1}})$
 where $j=i^\Tt_b$ as in part \ref{item:F^vV_i+1|gamma^vV_i+1}.\footnote{Note though that $j$
 and $i^{\vV_{i+1}}_{e_i^{\vV_{i+1}}}$ are different embeddings; we basically have ``$j(j)=e_i^{\vV_{i+1}}$'',
 and $\vV_{i+1}$ is almost ``in $\rg(j)$''.}
 \item $\vV_{i+1}|\gamma^{\vV_{i+1}}$ is definable in the codes
 over $\vV_i|\kappa_i^{+\vV_i}$.\footnote{This corresponds to $\varphi_2$ in Definition \ref{dfn:pVl};
 cf.~part \ref{item:i_e(vV|gamma)_def} of that definition.}
  \item  The poset $\CC^{\vV_{i+1}}\sub\Delta^{\vV_{i+1}}$ is definable over $\vV_{i+1}|\gamma^{\vV_{i+1}}$\footnote{This corresponds to $\varphi_1$ in Definition \ref{dfn:pVl};
  cf.~part \ref{dfn:CC_definability} of that definition.} and is $\Delta^{\vV_{i+1}}$-cc in $\vV_{i+1}$ and $\vV_i|\kappa_i^{+\vV_i}$ is $(\vV_{i+1},\CC^{\vV_{i+1}})$-generic,
 
 \item $\es^{\vV_{i+1}}\rest(\gamma^{\vV_{i+1}},\infty)$ is given by (modified) P-construction of $\vV_i$ above $\kappa_i^{+\vV_i}$
 (see part \ref{item:P-con_in_pVl} of \ref{dfn:pVl}, and recall $\vV_{i+1}|\gamma^{\vV_{i+1}}$
 is determined by equations (\ref{eqn:vV_i+1||gamma^vV_i+1})
 and (\ref{eqn:F^vV_i+1|gamma^vV_i+1})).
 That is, for $\nu>\gamma^{\vV_{i+1}}$, we have $\vV_{i+1}|\nu$ is active iff $\vV_i|\nu$ is active, and if $\vV_i|\nu$ is active with extender $E$, then:
 \begin{enumerate}
\item if $E$ is short (so $\crit(E)\geq\kappa_i^{\vV_i}$)  then $E\rest(\vV_{i+1}||\nu)\in\es^{\vV_{i+1}}$;
 here if $\crit(E)=\kappa_i^{\vV_i}$ then $E\rest(\vV_{i+1}||\nu)$
 is $i$-long in $\vV_{i+1}$, 
 and if $\crit(E)>\kappa_i^{\vV_{i}}$ then $\gamma^{\vV_{i+1}}<\crit(E)$
 and $E$ is short over $\vV_{i+1}||\nu$,
 \item
 if $E$ is $h$-long (so $h<i$) then $F^{\vV_{i+1}||\nu}\com f=E$,
 where $f$ is the $(\delta_h^{\vV_i},\delta_h^{\mM_{\infty i}})$-extender derived the embedding $j=i^\Tt_b$ of part \ref{item:F^vV_i+1|gamma^vV_i+1},
 \end{enumerate}
It follows that $\mM_{\infty i}=\vV_{i+1}\down i$;
cf.~Definition \ref{dfn:pVl} part \ref{item:i_e(vV|gamma)_def}.
For $\vV_{i+1}\down i$
is defined via the reverse P-construction of $U=\Ult(\vV_{i+1},e_i^{\vV_{i+1}})$
over $\vV_{i+1}||\gamma^{\vV_{i+1}}=\mM_{\infty i}|\kappa_i^{+\mM_{\infty i}}$
(see line (\ref{eqn:vV_i+1||gamma^vV_i+1}) in part \ref{item:vV_i+1||gamma^vV_i+1=M_infty|kappa^+}). And by part \ref{item:vV^M_infty=Ult(vV,e)}, 
$U=j(\vV_{i+1})$. And since the reverse P-construction of $\vV_{i+1}$ over $\vV_i|\kappa_i^{+\vV_i}$ gives $\vV_i$ back,
the reverse P-construction of $j(\vV_{i+1})$ over $\mM_{\infty i}|\kappa_i^{+\mM_{\infty i}}$  gives $\mM_{\infty i}$ back.
\end{enumerate}
\end{rem}

\begin{rem}
Recall by \ref{hyp:Sigma_vV_i+1}, $\Sigma_{\vV_{i+1}}^{\sn}$
is the strategy induced by $\Sigma_{\wW_{i+1}}^{\sn}$ via normalization. We now characterize this strategy another way,
not only for $\vV_{i+1}$ but for arbitrary non-dropping $\Sigma_{\wW_{i+1}}$-iterates $\vV$.
Recall here that $\vV\down 0$ is a non-dropping $\Sigma_M$-iterate $N$, and that $\Sigma_{N}$ therefore denotes the $(0,\OR)$-strategy for $N$ induced from $\Sigma_M$ via normalization.
\end{rem}

\begin{lem}\label{lem:Sigma_vV_is_e-vec_min_pullback} Let $\vV$ be a non-dropping $\Sigma_{\wW_{i+1}}$-iterate. Then $(\Sigma_{\vV})^{\sn}$ (the short-normal component of $\Sigma_{\vV}$) is the $\vec{e}$-minimal pullback
 of $\Sigma_{\vV\down 0}$.
\end{lem}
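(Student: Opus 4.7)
The plan is to chase a commutative diagram relating short-normal trees on $\vV$, their $\down 0$-projections to $\vV\down 0$, and their copies via normalization to $\wW_{i+1}$ and $M$ respectively. By the inductive hypothesis \ref{item:hyp_1}ff., $\Sigma^{\sn}_{\wW_{i+1}}$ is the $\vec{e}$-minimal pullback of $\Sigma_M$, and $\Sigma^{\sn}_{\vV_{i+1}}$ is induced from $\Sigma^{\sn}_{\wW_{i+1}}$ by normalization. The strategy $\Sigma_{\vV}$ for a general non-dropping $\Sigma_{\wW_{i+1}}$-iterate is defined (cf.~Remark \ref{rem:Sigma^stk}) by absorbing trees $\Tt$ on $\vV$ into stacks on $\wW_{i+1}$ via the iteration tree $\Uu$ leading from $\wW_{i+1}$ to $\vV$. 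So $\Tt$ is via $\Sigma_\vV^{\sn}$ iff the normalization $\Uu\conc\Tt$ (as a short-normal stack on $\wW_{i+1}$) is via $\Sigma^{\sn}_{\wW_{i+1}}$, iff $(\Uu\conc\Tt)\down 0$ is via $\Sigma_M$, by the inductive $\vec{e}$-minimal pullback characterization.

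Next, observe that $\Uu$ itself is $\vec{\gamma}$-stable (it arose from iterating $\wW_{i+1}$ to $\vV$ with $b^\Uu$ non-dropping), so by Lemma \ref{lem:conservativity_stable}, $\Uu\down 0$ is the corresponding $0$-maximal tree on $\wW_{i+1}\down 0=M$ leading to $\vV\down 0$, and the iteration map $i^{\Uu\down 0}_{0\infty}$ is $i^\Uu_{0\infty}\rest(\wW_{i+1}\down 0)$. Moreover by Lemmas \ref{lem:conservativity_stable} and \ref{lem:conservativity_unstable} applied now to $\Uu\conc\Tt$ viewed as short-normal on $\wW_{i+1}$, the $\down 0$ operation commutes with concatenation, yielding
\[ (\Uu\conc\Tt)\down 0 = (\Uu\down 0)\conc(\Tt\down 0), \]
where $\Tt\down 0$ is taken on $M^{\Uu\down 0}_\infty=\vV\down 0$. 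Thus the displayed tree on $M$ is via $\Sigma_M$ iff, after the non-dropping initial segment $\Uu\down 0$, its tail $\Tt\down 0$ is via $\Sigma_{\vV\down 0}$, since by hypothesis \ref{hyp:Sigma_vV_i+1} applied at base model $M$ (equivalently, by normalization from $\Sigma_M$), $\Sigma_{\vV\down 0}$ is precisely the strategy for $\vV\down 0$ determined by prepending $\Uu\down 0$ and applying $\Sigma_M$. Combining the two chains of equivalences gives $\Tt$ via $\Sigma^{\sn}_{\vV}$ iff $\Tt\down 0$ via $\Sigma_{\vV\down 0}$, which by Definition \ref{dfn:e-vec_minimal_pullback} is the claim.

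The main obstacle is verifying the commutativity in the previous paragraph, i.e., that the ``$\down 0$'' of the normalization-lift of $\Tt$ to $\wW_{i+1}$ coincides with the normalization-lift of $\Tt\down 0$ to $M$ (along $\Uu\down 0$). This is where the finite-stage conservativity Lemmas \ref{lem:conservativity_stable}--\ref{lem:conservativity_unstable} and the commutation identity of Lemma \ref{lem:e-vec_commutativity} are essential: they ensure that the models and embeddings of $\Uu$ project correctly under $\down 0$ so that the tail $\Tt\down 0$ (indexed on $\vV\down 0$) agrees with the translated tail, and that the $\vec{e}$-minimal inflation operation used in defining the pullbacks respects iteration. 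Once this agreement of trees is nailed down at the level of indices, tree order, and drop/degree structure (and uniqueness of self-consistent extensions via Lemma \ref{lem:unique_ext_to_(0,OR)-strat_n+1} handles long extenders when needed), the rest is bookkeeping.
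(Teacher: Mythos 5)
Your high-level plan matches the paper's: reduce $\Sigma_{\vV}^{\sn}$ to $\Sigma_{\wW_{i+1}}^{\sn}$ via the normalization that defines $\Sigma_{\vV}$, invoke the inductive hypothesis that $\Sigma_{\wW_{i+1}}^{\sn}$ is the $\vec{e}$-minimal pullback of $\Sigma_M$, and reduce the claim to a commutativity statement. You also correctly flag where the weight lies. But you then declare that commutativity to be ``ensured'' by Lemmas \ref{lem:conservativity_stable}, \ref{lem:conservativity_unstable}, \ref{lem:e-vec_commutativity}, and this is where the gap is.

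The problem is that those lemmas concern the behavior of a \emph{single} $0$-maximal tree under $\down i$ (or the $\vec e$-minimal inflation of a single tree). What your argument actually requires is that the \emph{normalization operation on a two-round stack} commutes with $\down 0$: that $\bigl(\mathrm{nmlz}(\Uu,\Tt)\bigr)\down 0$ equals $\mathrm{nmlz}(\Uu\down 0,\Tt\down 0)$. Writing ``$(\Uu\conc\Tt)\down 0 = (\Uu\down 0)\conc(\Tt\down 0)$'' elides this: the left side isn't even defined until you replace $\Uu\conc\Tt$ by its normalization $\Xx$, and then you must check $\Xx\down 0$ is the normalization of the projected stack. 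This is precisely what the paper's proof works through --- it introduces $\bar{\Xx}_0 = \mathrm{nmlz}(\Tt_0,\bar{\Uu}_0)$, the copy $\bar j{}``\Tt_1$, the intermediate normal trees $\Yy$ appearing in the normalization of $(\bar j{}``\Tt_1,\Uu_1)$, and verifies that $e``\Yy$ translates at each stage to the corresponding tree on the $\down i$ side, using $\bar{\widetilde j}``(\Tt_1\down i) = (\bar j{}``\Tt_1)\down i$. None of this is delivered for free by the single-tree conservativity lemmas; it has to be checked that they interact correctly with the iterated branch-copying that normalization performs. Your write-up stops just short of this, so as it stands it is a correct roadmap with the central calculation missing rather than a proof.

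One smaller structural point: the paper runs an explicit induction on $i$ and uses it to reduce ``$\vec e$-minimal pullback of $\Sigma_{\vV\down 0}$'' to ``$e_i$-minimal pullback of $\Sigma_{\vV\down i}$'' (via Lemma \ref{lem:e-vec_minimal_pullback}), whereas you work directly with $\down 0$ and $\Sigma_M$. That's a legitimate alternative route and not itself a gap, but it makes the commutativity you need span more levels at once, which makes the unaddressed step heavier rather than lighter.
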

\begin{proof}
The proof is by induction on $i$.
Let
 $\Tt=\Tt_0\conc\Tt_1$ be a tree on $\Sigma_{\wW_{i+1}}$
with $\Tt_0$ based on $\wW_{i+1}|\Delta^{\wW_{i+1}}$
and of successor length, $b^{\Tt_0}$ non-dropping,
and $\Tt_1$ above $\gamma^{M^{\Tt_0}_\infty}$
(possibly $\Tt_1$ is trivial).
For trees $\Uu$ on $\vV=M^{\Tt}_\infty$ based on $\wW_{i+1}|\gamma^{\wW_{i+1}}$ it follows essentially immediately by induction. So let $\Uu=\Uu_0\conc\Uu_1$, analogous to $\Tt=\Tt_0\conc\Tt_1$, but on $\vV$,
with $\Uu$ via $\Sigma_{\vV}$, hence induced by $\Sigma_{\wW_{i+1}}$ via normalization.
We want to see that $\Uu$ is via the $\vec{e}$-minimal pullback of $\Sigma_{\vV\down 0}$,
or equivalently, by induction and Lemma \ref{lem:e-vec_minimal_pullback},
via the $e_i$-minimal pullback of $\Sigma_{\vV\down i}$.
Let $\widetilde{\Tt}=\widetilde{\Tt_0}\conc\widetilde{\Tt_1}=\Tt\down i$.

Let $\bar{\vV}=M^{\Tt_0}_\infty$
and $\bar{\Uu}_0$ be $\Uu_0$ as a tree on $\bar{\vV}$.
Then $\bar{\Uu}_0$ is via $\Sigma_{\bar{\vV}}$.
Let $\bar{\Xx}_0$ be the normalization of $(\Tt_0,\bar{\Uu}_0)$.
Let $\bar{j}:\bar{\vV}\to M^{\bar{\Uu}_0}_\infty$ be the iteration map. Then $\bar{j}``\Tt_1$ is via $\Sigma_{M^{\bar{\Uu}_0}_\infty}$. This just means that $\bar{\Xx}_0\conc\bar{j}``\Tt_1$ is via the $\vec{e}$-minimal pullback of $\Sigma_{\vV\down 0}$,
or equivalently, via the $e_i$-minimal pullback of $\Sigma_{\vV\down i}$. So letting $e=e^{M^{\bar{\Xx}_0}_\infty}$, $(\bar{j}``\Tt_1)\down i$ is the translation of $e``(\bar{j}``\Tt_1)$ (which itself is on $\vV_{i+1}^{M^{\bar{\Xx_0}\down i}_\infty}$) to a tree on $M^{\bar{\Xx_0}}_\infty$ via $\Sigma_{M^{\bar{\Xx}_0\down i}_\infty}$. 
Let $\bar{\widetilde{j}}:M^{{\Tt}_0\down i}_\infty\to M^{\bar{\Xx_0}\down i}_\infty$ be the iteration map.
Then $\bar{\widetilde{j}}``(\Tt_1\down i)$ is also via $\Sigma_{M^{\bar{\Xx_0}}_\infty}$, and it is straightforward to check that $\bar{\widetilde{j}}``(\Tt_1\down i)=(\bar{j}``\Tt_1)\down i$.

Now $\Uu_1$ is formed by normalizing $(\bar{j}``\Tt_1,\Uu_1)$,
using $\Sigma_{M^{\bar{\Xx}_0}_\infty}$ to form the normal trees $\Yy$ used in the process. This just means that  each $e``\Yy$ translates to a tree via $\Sigma_{M^{\bar{\Xx}_0\down i}_\infty}$. But then (combining the latter fact with some details of normalization) the translation of $e``\Yy$ is just the tree which arises at the corresponding stage of the normalization of the translation of $(e``(\bar{j}``\Tt_1),e``\Uu_1)$.
But the translation of $e``(\bar{j}``\Tt_1)$ is just $(\bar{j}``\Tt_1)\down i=\bar{\widetilde{j}}``(\Tt_1\down i)$,
and therefore $\Uu_0\conc(e``\Uu_1)$ translates to a tree via $\Sigma_{M^{\Tt\down i}_\infty}=\Sigma_{\vV\down i}$, as desired.
\end{proof}

\subsection{Inference of $\Sigma_{\wW_{n+1\down i}}$ from $\Sigma_{\wW_{n+1}}$}

\begin{lem}
   	Let $N$ be a non-dropping $\Sigma_{M}$-iterate of $M$.
   	Let $\PP\in M$, $\PP\subseteq\lambda$.
   	Let $x=\pow(\lambda)^N$.
   	Let $G$ be $(N,\PP)$-generic (with $G$  appearing in a generic extension of $V$).
   	Let $\vV=\vV_{i+1}^N$.
   	Then:
   	\begin{enumerate}
   		\item $N$ is closed under $\Sigma_{\vV,\vV^-}$, and $\Sigma_{\vV,\vV^-}\rest N$
   		is lightface definable over $N$, uniformly in $N$,
   		\item  $N[G]$
   		is closed under $\Sigma_{\vV,\vV^-}$,
   		and $\Sigma_{\vV,\vV^-}\rest N[G]$
   		is definable over $ [G]$ from $x$, uniformly in $N,x$.
\end{enumerate}
\end{lem}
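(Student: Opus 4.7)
The plan is to adapt the proofs of Lemma \ref{lem:vV_1[g]_computes_Sigma_vV,vV^-} and its $\wW_1$-analog to the general setting, using as main tool the $\vec{e}$-minimal pullback characterization of $\Sigma_{\vV}$ from Lemma \ref{lem:Sigma_vV_is_e-vec_min_pullback}, together with the inductive hypothesis \ref{item:hyp_8}(a) applied to $N$.

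First I would reduce the computation of $\Sigma_{\vV,\vV^-}$ to a computation of a suitably restricted $\Sigma_N$. Note that $\vV\down 0 = N$ and the passive reducts agree up through $\gamma_i^{\vV}$, so $\vV|\Delta^{\vV}$ is naturally present inside $N$, modulo the translation coming from iterated $\vec{e}$-ultrapowers. By Lemma \ref{lem:Sigma_vV_is_e-vec_min_pullback}, a short-normal tree $\Tt$ on $\vV$ based on $\vV|\Delta^{\vV}$ is via $\Sigma_{\vV}$ iff its $\vec{e}$-minimal pullback $\Tt\down 0$ (cf.~Definition \ref{dfn:e-vec_min_inflation_stable}) is via $\Sigma_N$. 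The translation $\Tt\mapsto\Tt\down 0$ is explicit and level-by-level, using only the sequence $\vec{e}^{\vV}$ (and its iterates along $\Tt$), and is therefore uniformly definable over $N$ once $\vV$ and its long extenders are recognized as $N$-definable objects — which they are by Lemma \ref{lem:vV_n+1^vV_down_n_def} and Definition \ref{dfn:vV_n+1^vV}.

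Second, I would verify that the relevant restriction of $\Sigma_N$ is internally computable. Because $\Tt$ is based on $\vV|\Delta^{\vV}$, the extender indices appearing in $\Tt\down 0$ lie below an image of $\kappa_i^{+N}$, hence strictly below the least strong cardinal of $N$ above $\delta_i^N$. Iteration strategies for $\Sigma_N$ on trees of this size are guided by Q-structures lying in $N$, so $\Sigma_N$ restricted to such trees is lightface definable over $N$; this is exactly the standard setup used in \cite{vm1,vm2}. Definability over $N[G]$ from $x=\pow(\lambda)^N$ then follows by the familiar forcing-absorption argument: after adding $G$, the relevant Q-structures are still correctly identified, and $x$ suffices as a parameter to recover the generic predicate needed to form them.

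The main obstacle I expect is the bookkeeping in the second step when $\Tt$ uses $j$-long extenders of $\vV$ for $j\leq i$, since the pullback $\Tt\down 0$ then has a nontrivial standard-decomposition structure of depth $i+1$. The conservativity lemmas \ref{lem:conservativity_stable} and \ref{lem:conservativity_unstable} give the required control: every extender in $\Tt\down 0$ is genuinely on $\es^N$, the drop/degree/tree structures of $\Tt$ and $\Tt\down 0$ coincide, and critical points and heights match under the natural factor maps. With this, step two reduces to the well-understood case of internal definability of a $\Sigma_N$-type strategy on trees based on a small-enough portion of $N$, and the two definability claims for $\Sigma_{\vV,\vV^-}$ follow by composing the $N$-definable (resp.~$N[G]$-definable) translation with the $N$-definable (resp.~$N[G]$-definable from $x$) restricted strategy $\Sigma_N$.
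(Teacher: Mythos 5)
This lemma is stated in the paper without any accompanying proof; in the ambient section it is essentially a restatement of inductive hypothesis \ref{item:hyp_8}(a), once one notes that $\vV^-=\vV|\Delta^{\vV}$ (so $\Sigma_{\vV,\vV^-}=\Sigma_{\vV,\Delta^{\vV}}$) and that the parameter $\pow(\lambda)^N$ readily yields $\lambda^{+N}=\lambda^{+M}$, the parameter used there. So the paper's intent is to cite the inductive hypothesis directly, together with the base case for $\vV_1$ established by Lemma \ref{lem:vV_1[g]_computes_Sigma_vV,vV^-} and its $\wW_1$-analogue.

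Your proposal instead gives a ground-up argument via the $\vec e$-minimal pullback (Lemma \ref{lem:Sigma_vV_is_e-vec_min_pullback}) and the conservativity lemmas. That route is correct in outline and is really the unwinding of how hypothesis \ref{item:hyp_8} gets propagated; the reduction ``$\Tt$ via $\Sigma_{\vV,\vV^-}$ iff $\Tt\down 0$ via $\Sigma_N$, and the map $\Tt\mapsto\Tt\down 0$ is $N$- (respectively $N[G]$-) definable'' is exactly right, and the bookkeeping through $j$-long extenders is handled by Lemmas \ref{lem:conservativity_stable} and \ref{lem:conservativity_unstable} as you say. One imprecision: you claim the extender indices of $\Tt\down 0$ lie ``strictly below the least strong cardinal of $N$ above $\delta_i^N$''; that cardinal is $\kappa_i^N$, and the indices can in fact exceed $\kappa_i^N$ (they lie below images of $\kappa_i^{+N}=\Delta^{\vV}$ under the tree's iteration maps, which can be large). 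This does not break the argument, since the fragment of $\Sigma_N$ one needs is still internally computable -- via the $\delta_j$-short tree strategy lemmas (\ref{lem:delta_n+1-sts_def}, \ref{lem:delta_n+1-sts_def_g}) for $j\leq i$ and the reading off of $\delta_j$-maximal branch models from the $j$-long extenders of $\vV$, which is definable over $N$. But the stated bound should be corrected, and more fundamentally you should be aware that your second step is itself precisely the content of \ref{item:hyp_8} and the $\delta_j$-sts lemmas, so in context the shorter route is just to quote the inductive hypothesis.
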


	\begin{dfn} 
	We say that $\Sigma_{\vV_{n+1}}$ is \emph{$\Sigma_M$-stable}
	iff for every non-dropping $\Sigma_{\vV_{n+1}}$-iterate
	$\vV$
	and $\Sigma_M$-iterate $N$,
	if $\vV=\vV_{n+1}^M$,
then $\mathrm{trl}(\Sigma_N)$
agrees with $\Sigma_{\vV}$ on trees
which are translatable with respect to $\vV_{i+1}^N$),
	\end{dfn}
\begin{dfn}
Let $\Tt$ be a successor length tree on $M$ such that $b^\Tt$ exists and is non-dropping. We say that $\Tt$
 is \emph{$(\delta_n,\kappa_n)$-stable}
 iff there is no $\alpha+1\in b^\Tt$ such that
 \[ \delta_n^{M^\Tt_\beta}<\crit(E^\Tt_\alpha)\leq\kappa_n^{M^\Tt_\beta}.\]
 We say that $\Tt$ is \emph{$\leq(\delta_n,\kappa_n)$}-stable if it is $(\delta_i,\kappa_i)$-stable
 for all $i\leq n$.
 We say that $\Tt$ is \emph{everywhere $(\delta,\kappa)$-stable}
 iff it is $(\delta_n,\kappa_n)$-stable for all $n<\om$. A non-dropping iterate $N$ of $M$
 is \emph{$(\delta_n,\kappa_n)$-stable}
 iff it is an iterate via
 a $(\delta_n,\kappa_n)$-stable tree,
 and likewise for \emph{$\leq(\delta_n,\kappa_n)$-stable} and \emph{everywhere $(\delta,\kappa)$-stable}.
\end{dfn}

\begin{lem}\label{lem:everywhere_kappa^+-stable_unique_strategy}
 Let $\Tt$ be an everywhere $(\delta,\kappa)$-stable tree on $M$ via $\Sigma_M$.
 Let $N=M^\Tt_\infty$. Then $\Sigma_N$ is the unique $(0,\OR)$-strategy
 for $N$.
\end{lem}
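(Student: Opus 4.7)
The plan is to reduce uniqueness of the $(0,\OR)$-strategy for $N$ to the corresponding uniqueness for $M$, which we already have, using that $N$ inherits the full interleaved Woodin--strong structure of $M$ via $i=i^\Tt_{0\infty}$.

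First, I would verify that, by the everywhere $(\delta,\kappa)$-stability of $\Tt$, the iteration map $i$ preserves the relevant large cardinal pattern: for each $n<\om$, $\delta_n^N=i(\delta_n^M)$ and $\kappa_n^N=i(\kappa_n^M)$, and the extenders on $\es^N$ witnessing the strongness of $\kappa_n^N$ are simply the images under $i$ of those witnessing the strongness of $\kappa_n^M$. In particular $N$ is $\Mswom$-like, and the finite list of first-order facts that we lump into the definition of $\Mswom$-like transfers through $i$.

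Next, I would show that any putative $(0,\OR)$-strategy $\Psi$ for $N$ agrees with $\Sigma_N$ on all $0$-maximal trees, by induction on tree length. The only non-trivial case is at a limit stage: suppose $\Uu\rest\eta$ is via both $\Sigma_N$ and $\Psi$, with $\eta$ a limit, and set $\delta=\delta(\Uu\rest\eta)$. If $\delta$ is not Woodin in $M(\Uu\rest\eta)$, the $\Sigma_N$-branch is pinned down by its Q-structure, which is absolute and so must also be chosen by $\Psi$. If $\delta$ is Woodin in $M(\Uu\rest\eta)$, then $\delta$ sits at (an image of) some $\delta_m$, and the presence of a strong cardinal $\kappa_m$ above $\delta_m$ in the appropriate iterate, together with the cofinally many higher Woodin--strong pairs that $N$ retains, determines the correct branch by the same branch-identification argument that establishes uniqueness of $\Sigma_M$, transported through $i$.

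The main obstacle is the Woodin case. Without everywhere-stability, one could have an iterate $N$ in which some $\kappa_n^N$ failed to be strong in $N$ (for instance if $\Tt$ had used an extender with critical point in $(\delta_n,\kappa_n]$ along $b^\Tt$), destroying the very feature of $M$ used to resolve branches with $\delta(\Uu)=\delta_n^N$. The stability hypothesis is precisely what prevents this, so after the structural verification of the first step, the branch-identification argument for $M$ applies to $N$ without change.
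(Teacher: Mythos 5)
Your proof identifies the right theme — everywhere-stability is needed for the Woodin case, and the Q-structure case is handled by absoluteness — but the crucial step is asserted rather than argued. The sentence ``determines the correct branch by the same branch-identification argument that establishes uniqueness of $\Sigma_M$, transported through $i$'' is not a proof: there is no mechanism by which a uniqueness result for $M$ ``transports'' along the iteration embedding $i$ to an iterate $N$, and indeed uniqueness genuinely can fail for non-stable iterates of $M$, so such a transfer cannot be automatic. The statement you need is about $N$, and it must be proved for $N$ directly.

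The paper's proof is a phalanx comparison: assume $\Sigma\neq\Gamma$ on $N$, take a limit-length $\Tt'$ where they first split into branches $b\neq c$, and compare $\Phi(\Tt'\conc b)$ against $\Phi(\Tt'\conc c)$. The stability hypothesis then enters in two places. First, to run the Zipper Lemma one needs that $\delta(\Tt')$ is generated by $\Hull^Q(\mathscr{I}^Q\cup\eta)$ for some $\eta<\delta(\Tt')$; this requires, for each intermediate model $M^{\Tt'}_\gamma$ along $b$ or $c$, a hull property of the form $\bar\delta\sub\Hull^{M^{\Tt'}_\gamma}(\mathscr{I}^{M^{\Tt'}_\gamma}\cup\nu)$, and that is exactly what stability of the tree from $M$ to $N$ buys. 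Second, when an extender on a Q-structure overlaps $\delta(\Tt')$, one needs a hull property at its critical point, again furnished by stability. Without stability these hulls can fail to be cofinal precisely because of the many overlapped Woodins in intervals like $[\delta_0^M,\kappa_0^M)$, as the paper points out. Your sketch never performs this comparison or invokes the hull properties, so the substance of the lemma is missing.

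A secondary point: your diagnosis of what goes wrong without stability (that the image of $\kappa_n$ might ``fail to be strong in $N$'') is not the actual obstruction — strongness of the image of $\kappa_n$ is preserved by elementarity of the iteration map regardless of where the critical points sit. What is lost is the hull/definability structure (definability from indiscernibles plus a bounded parameter) needed to close off the comparison, not the large-cardinal pattern itself.
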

\begin{proof}
 Suppose $\Sigma,\Gamma$ are two such strategies for $N$.
 We compare them in the usual manner.
 That is, let $\Tt$ be a limit length tree according to both $\Sigma$ and $\Gamma$, such that $\Sigma(\Tt)=b\neq c=\Gamma(\Tt)$. We can then successfully compare the phalanxes $\Phi(\Tt\conc b)$ and $\Phi(\Tt\conc c)$. This produces a common last model $Q$, with no drops on the main branches leading from $M$ to $Q$.
 
 If $Q$ is above $M^\Tt_b$ then there is $\eta<\delta(\Tt)$ such that $\Hull^Q(\mathscr{I}^Q\cup\eta)$
 is cofinal in $\delta(\Tt)$
  (where $\mathscr{I}^Q$ is the class of Silver indiscernibles of $Q$),
 and we have
 \[ \delta(\Tt)\cap\Hull^{M^\Tt_b}(\mathscr{I}^{M^\Tt_b}\cup\eta)=\delta(\Tt)\cap\Hull^Q(\mathscr{I}^Q\cup\eta). \]
 For if $\delta(\Tt)\in\rg(i_{MQ})$
 where $i_{MQ}:M\to Q$ is the iteration map then this is clear;
othewise let $\gamma\in b$
 be such that $\delta(\Tt)\in\rg(i^\Tt_{\gamma b})$
 and let $i^\Tt_{\gamma b}(\bar{\delta})=\delta(\Tt)$.
 Then note that because the tree leading from $M$ to $N$ is everywhere strong-plus-stable,
 there is $\nu<\bar{\delta}$
 such that $\bar{\delta}\sub\Hull^{M^\Tt_\gamma}(\mathscr{I}^{M^\Tt_\gamma}\cup\nu)$,
 which suffices. (This needn't hold for non-everywhere strong-plus-stable trees, since there are many overlapped Woodins, for example in the interval $[\delta_0^M,\kappa_0^M)$.)
 
 So if $Q$ is above both $M^\Tt_b$ and $M^\Tt_c$, then we reach a contradiction to the Zipper Lemma.
 
Also, letting $Q_b=Q(\Tt,b)$,
if $M^\Tt_b\sats$``$\delta(\Tt)$ is not Woodin'', and $Q_b=M^\Tt_b$ otherwise,
then if $E\in\es_+^{Q_b}$ and $\crit(E)<\delta(\Tt)<\lh(E)$,
then letting $P$ be the model
of $\Phi(\Tt\conc b)$ 
to which $E$ applies,
then $P$ has the hull property at $\kappa$, where if $P$ is proper class, then this means that $P|\kappa^{+P}\sub\cHull^{P}(\mathscr{I}^P\cup\kappa)$.
This is because, if $\kappa\notin\rg(i_{MP})$
then $P=M^\Tt_\alpha$
for some $\alpha<\lh(\Tt)$,
and $\kappa^{+P}\sub\Hull^P(\rg(i_{NP})\cup\nu)$
where $\nu=\sup_{\beta<\alpha}\nu(E^\Tt_\beta)$,
and because the tree from $M$ to $N$ is everywhere strong-plus-stable,
it follows that $P|\kappa^{+P}\sub\Hull^{P}(\rg(i_{MP}\cup\kappa)$.

But then the usual arguments show that if $Q$ is not above $M^\Tt_b$,
then also $Q$ is not above $M^\Tt_c$,
and moreover that compatible extenders are used during the comparison, a contradiction.
\end{proof}

We now want to consider converting an iteration strategy for an $(n+1)$-pVl $\vV$ into an iteration strategy for $\vV\down 0$.
For this we will translate between trees $\Uu$ on $\vV\down 0$ and trees $\widetilde{\Uu}$ on $\vV$, given
that $\Uu$ is of the right form:
we make a restriction analogous
to the restriction on the extenders
that may be used in building $0$-maximal trees on an $(n+1)$-pVl.

\begin{dfn}
 Let $N$ be an $M$-like premouse.
 An iteration tree $\Tt$ on $N$
 is \emph{$(n+1)$-translatable}
 if $\Tt$ is $0$-maximal
 and for every limit $\eta$
 with $\eta+1<\lh(\Tt)$, if
 there is $m\leq n$ such that:
 \begin{enumerate}
\item $[0,\eta)_\Tt$ does not drop,
\item $\kappa_m^{M^\Tt_\eta}=\delta(\Tt\rest\eta)$,
\item there are cofinally many $\beta+1<^\Tt\eta$ such that $\crit(E^\Tt_\beta)=\kappa_m^{M^\Tt_{\pred^\Tt(\beta+1)}}$,
 \end{enumerate}
 then $\kappa_m^{+M^\Tt_\eta}<\lh(E^\Tt_\eta)$.\end{dfn}

 \begin{dfn}
 Let $N$ be an $M$-like premouse.
 Let $\Tt$ be a padded $0$-maximal tree on $N$
 (that is, the non-padded tree equivalent to $\Tt$ is $0$-maximal).
 Given $\alpha+1<\lh(\Tt)$
 and $m<\om$,
 say that $E^\Tt_\alpha$ is \emph{$m$-shifting} (for $\Tt$)
 if $[0,\alpha+1]^{\Tt}\cap\mathscr{D}^{\Tt}=\emptyset$
 and letting $\beta=\pred^\Tt(\alpha+1)$,
 then $\crit(E^\Tt_\alpha)=\kappa_m^{M^\Tt_\beta}$.
 We say that $\Tt$ is \emph{$(n+1)$-translatable-padded} if there is a function $v:\lh(\Tt)\to n+2$ such that: 
 \begin{enumerate}
  \item $v(0)=0$,
  \item if $v(\alpha)=m\leq n$
  and $[0,\alpha]^{\Tt}\cap\dropset^\Tt=\emptyset$ 
  and $E^\Tt_\alpha\neq\emptyset$
  then:
  \begin{enumerate}
  \item $\lh(E^\Tt_\alpha)<\kappa_m^{+M^\Tt_\alpha}$,
  and 
  \item if $m>0$ then $\kappa_{m-1}^{+M^\Tt_\alpha}<\lh(E^\Tt_\alpha)$,
  \end{enumerate}
  \item if $v(\alpha)=m\leq n$
  and $[0,\alpha]^{\Tt}\cap\dropset^\Tt=\emptyset$
  and $E^\Tt_\alpha=\emptyset$
  then $v(\alpha+1)=m+1$,
    and $\nu^\Tt_\alpha=\kappa_m^{M^\Tt_\alpha}$,\footnote{Here $\nu^\Tt_\beta$ denotes the exchange ordinal associated to $\alpha$. If $E^\Tt_\beta\neq\emptyset$ then $\nu^\Tt_\beta=\lambda(E^\Tt_\beta)$. In general $\pred^\Tt(\gamma+1)$ is the least $\xi$ such that $\crit(E^\Tt_\gamma)<\nu^\Tt_\xi$.}
  \item if $\alpha+1<\lh(\Tt)$
  and $E^\Tt_\alpha\neq\emptyset$ and there is no $m\leq n$ such that $E^\Tt_\alpha$ is $m$-shifting,
   then $v(\alpha+1)=v(\alpha)$
   (and then $v(\alpha+1)=v(\beta)$
   where $\beta=\pred^\Tt(\alpha+1)$),
   \item if $\alpha+1<\lh(\Tt)$
   and $E^\Tt_\alpha\neq\emptyset$ and $E^\Tt_\alpha$ is $m$-shifting for some $m\leq n$, then $v(\alpha+1)=m+1$,
  \item if $\eta<\lh(\Tt)$ is a limit then $v(\eta)=\lim_{\gamma\in [0,\eta)^{\Tt}}v(\gamma)$.\qedhere
 \end{enumerate}
\end{dfn}

\begin{rem}
Note that if $\Tt$ is $(n+1)$-translatable-padded, as witnessed by $v$, and $\alpha\leq^{\Tt}\beta$,
then $v(\alpha)\leq v(\beta)$.

It is straightforward to see that a tree $\Tt$ on $N$ is $(n+1)$-translatable iff
there is an $(n+1)$-translatable-padded tree $\Uu$ equivalent to $\Tt$
(that is, the $0$-maximal
tree obtained by removing padding is just $\Tt$). Moreover,
if $\Tt$ is $(n+1)$-translatable
then $\Uu$ and $v$ are uniquely determined by $\Tt$.
\end{rem}

\begin{dfn}
 Let $\vV$ be an $(n+1)$-pVl.
 A \emph{$(0,n+1)$-translation pair
 on $\vV$} is a pair $(\Tt,\Tt')$
 of iteration trees such that:
 \begin{enumerate}
  \item $\Tt$ is $(n+1)$-translatable-padded on $\vV\down 0$, as witnessed by $v$,
  \item $\Tt'$ is $0$-maximal
  on $\vV$ (non-padded),
 \item $\lh(\Tt)=\lh(\Tt')$,
 \item for each $\alpha+1<\lh(\Tt)$:
\begin{enumerate}
\item if $E^\Tt_\alpha\neq\emptyset$
 then $\lh(E^{\Tt'}_\alpha)=\lh(E^\Tt_\alpha)$,
 \item if $E^\Tt_\alpha=\emptyset$
 then $\lh(E^{\Tt'}_\alpha)=\kappa_{v(\alpha)}^{+M^\Tt_\alpha}=\gamma_{v(\alpha)}^{M^{\Tt'}_\alpha}$,
 \end{enumerate}
 \item if $\eta<\lh(\Tt)$
 is a limit then there is $\zeta<\eta$ such that $[0,\eta)_\Tt\cut\zeta=[0,\eta)_{\Tt'}\cut\zeta$.
 \end{enumerate}
 
 We similarly define a tree
 $\Tt\uparrow i$ on $\vV\down i$,
 for each $i\in[0,n+1]$.
 We set $\Tt\uparrow 0=\Tt$
 and $\Tt\uparrow(n+1)=\Tt'$.
 Now let $0<i<n+1$, and define $\Tt\uparrow i$ as follows:
 \begin{enumerate}
  \item $\Tt\uparrow i$ is padded $0$-maximal on $\vV\down i$,
  \item  $\lh(\Tt\uparrow i)=\lh(\Tt)$,
  \item for each $\alpha+1<\lh(\Tt)$:
  \begin{enumerate}
  \item if $E^\Tt_\alpha\neq\emptyset$ then $E^{\Tt\uparrow i}_\alpha\neq\emptyset$ and $\lh(E^{\Tt\uparrow i}_\alpha)=\lh(E^{\Tt}_\alpha)$,
  \item if $E^\Tt_\alpha=\emptyset$
  and $v(\alpha)<i$ then $E^{\Tt\uparrow i}_\alpha\neq\emptyset$  and \[\lh(E^{\Tt\uparrow i}_\alpha)=\kappa_{v(\alpha)}^{+M^{\Tt}_\alpha}=\gamma_{v(\alpha)}^{M^{\Tt\uparrow i}_\alpha}=\gamma_{v(\alpha)}^{M^{\Tt'}_\alpha},\]
  \item if $E^\Tt_\alpha=\emptyset$ and $v(\alpha)\geq i$ then $E^{\Tt\uparrow i}_\alpha=\emptyset$
  and $\nu^{\Tt\uparrow i}_\alpha=\kappa_{v(\alpha)}^{M^{\Tt\uparrow i}_\alpha}=\kappa_{v(\alpha)}^{M^{\Tt}_\alpha}=\nu^\Tt_\alpha$,
  \end{enumerate}
   \item if $\eta<\lh(\Tt)$ is a limit
  then there is $\zeta<\eta$ such that $[0,\eta)_{\Tt}\cut\zeta=[0,\eta)_{\Tt\uparrow i}\cut\zeta$.\qedhere
 \end{enumerate}
\end{dfn}

\begin{lem}\label{lem:n+1-translation_pair}
 Let $\vV$ be an $(n+1)$-pVl.
 Let $\Tt$ on $\vV\down 0$
 be $(n+1)$-translatable-padded, as witnessed by $v$.
 Then:
 \begin{enumerate}
  \item  $\Tt\uparrow i$ exists
 for each $i\leq n+1$,
 \item \label{item:if_T_uparrow_n+1_no_drop_to_alpha}if $[0,\alpha]^{\Tt\uparrow(n+1)}\cap\dropset^{\Tt\uparrow(n+1)}=\emptyset$ then:
 \begin{enumerate}
 \item $[0,\alpha]^{\Tt\uparrow i}\cap\dropset^{\Tt\uparrow i}=\emptyset$ for all $i\leq n+1$,
 \item $\vV_{v(\alpha)}^{M^{\Tt\uparrow i}_\alpha}=M^{\Tt\uparrow v(\alpha)}_\alpha$ for each $i\leq v(\alpha)$ (this is trivial for $i=v(\alpha)$),
 \item $M^{\Tt\uparrow i}_\alpha=(M^{\Tt\uparrow j}_\alpha)\down i$ for all $i,j$ with $v(\alpha)\leq i\leq j\leq n+1$,
 \item if $\beta\leq^{\Tt}\alpha$
 and $v(\beta)=v(\alpha)$ then $i^{\Tt\uparrow i}_{\beta\alpha}$
 agrees with $i^{\Tt}_{\beta\alpha}$
 on $M^{\Tt\uparrow i}_{\beta}\cap M^\Tt_\beta$,
 
 \end{enumerate}
 \item Suppose  $[0,\alpha]^{\Tt\uparrow(n+1)}\cap\dropset^{\Tt\uparrow(n+1)}\neq\emptyset$. Let
 $\beta+1$
 be least in $\dropset^{\Tt\uparrow(n+1)}\cap[0,\alpha]^{\Tt\uparrow(n+1)}$.
 Let $\xi=\pred^{\Tt\uparrow(n+1)}(\beta+1)$.
 Then:
 \begin{enumerate}
\item  Suppose there is $m\leq n$ such that
 $\delta<\OR(M^{*\Tt\uparrow(n+1)}_{\beta+1})\leq\gamma$
 where $\delta=\delta_m^{M^{\Tt\uparrow(n+1)}_\xi}$ and $\gamma= \gamma_m^{M^{\Tt\uparrow(n+1)}_\xi}$. Then:
 \begin{enumerate}
 \item $v(\xi)=v(\beta+1)=v(\alpha)=m$.
 So by part \ref{item:if_T_uparrow_n+1_no_drop_to_alpha},
 \[ \vV_m^{M^{\Tt\uparrow i}_\xi}=M^{\Tt\uparrow m}_\xi=M^{\Tt\uparrow m'}_\xi\down m \]
 and $\gamma=\kappa_m^{+M^{\Tt\uparrow i}_\xi}=\kappa_m^{+\vV_m^{M^{\Tt\uparrow i}_\xi}}$
 for  all $i,m'$ such that $i\leq m\leq m'\leq n+1$, and
\[ M^{\Tt\uparrow i}_\xi|\gamma=M^{\Tt\uparrow(n+1)}_\xi|\gamma \]
for all $i$ such that $m<i\leq n+1$,
and
\[ \vV_m^{M^{\Tt\uparrow i}_\xi}|\gamma=M^{\Tt\uparrow m}_\xi|\gamma=M^{\Tt\uparrow(n+1)}_\xi||\gamma\]
for all $i\leq m$.
\item $[0,\alpha]^{\Tt}\cap\dropset^{\Tt}\neq\emptyset$ iff $[0,\alpha]^{\Tt\uparrow(n+1)}$
drops below the image of $\gamma$
in the interval $(\xi,\alpha]^{\Tt\uparrow(n+1)}$; that is, iff either
\begin{enumerate}[label=--]
\item $\OR(M^{*\Tt\uparrow(n+1)}_{\beta+1})<\gamma$, or
\item $(\beta+1,\alpha]^{\Tt\uparrow(n+1)}\cap\dropset^{\Tt\uparrow(n+1)}\neq\emptyset$,
\end{enumerate}
\item for each $i\leq m$, we have $[0,\alpha]^{\Tt\uparrow i}\cap\dropset^{\Tt\uparrow i}\neq\emptyset$ iff $[0,\alpha]^{\Tt}\cap\dropset^{\Tt}\neq\emptyset$
\item if $[0,\alpha]^{\Tt}\cap\dropset^{\Tt}=\emptyset$,
then for all $i,i',k,k'$ such that $i,i'\leq m< k,k'\leq n+1$ and all $\delta\in[\beta+1,\alpha]^{\Tt}$:
\begin{enumerate}
\item $\vV_m^{M^{\Tt}_\alpha}|\kappa_m^{+M^\Tt_\alpha}=\vV_m^{M^{\Tt\uparrow i}_\alpha}|\kappa_m^{+M^{\Tt\uparrow i}_\alpha}=(M^{\Tt\uparrow k}_\alpha)^\passive$,
\item \label{item:first_emb_agreement}$i^{\Tt\uparrow i}_{\delta\alpha}$
agrees with $i^{\Tt\uparrow i'}_{\delta\alpha}$ over $\vV_m^{M^{\Tt}_\delta}=\vV_m^{M^{\Tt\uparrow i}_\delta}=\vV_m^{M^{\Tt\uparrow i'}_\delta}$,
\item \label{item:second_emb_agreement}$i^{\Tt\uparrow i}_{\delta\alpha}$ agrees with $i^{\Tt\uparrow k}_{\delta\alpha}$
over $\vV_m^{M^{\Tt}_\delta}|\kappa_m^{+M^{\Tt}_\delta}=(M^{\Tt\uparrow k}_\delta)^{\passive}$,
\item $M^{\Tt\uparrow k}_\alpha=M^{\Tt\uparrow k'}_\alpha$,
\item $i^{\Tt\uparrow k}_{\delta\alpha}=i^{\Tt\uparrow k'}_{\delta\alpha}$,
\item $\vV_m^{M^{*\Tt}_{\beta+1}}
|\kappa_m^{+M^{*\Tt}_{\beta+1}}=\vV_m^{M^{*(\Tt\uparrow i)}_{\beta+1}}|\kappa_m^{+M^{*(\Tt\uparrow i)}_{\beta+1}}=(M^{*(\Tt\uparrow k)}_{\beta+1})^\passive$,
\item  the embeddings $i^{*\Tt}_{\beta+1}$, etc, agree in the  manner analogous to $i^{\Tt}_{\delta\alpha}$, etc,
as stated in parts \ref{item:first_emb_agreement}
and \ref{item:second_emb_agreement}
\item $M^{*(\Tt\uparrow k)}_{\beta+1}=M^{*(\Tt\uparrow k')}_{\beta+1}=M^{\Tt\uparrow(n+1)}_\xi|\gamma$,
\item $i^{*(\Tt\uparrow k)}_{\beta+1}=i^{*(\Tt\uparrow k')}_{\beta+1}$.
\end{enumerate}

\item if $[0,\alpha]^{\Tt}\cap\dropset^{\Tt}\neq\emptyset$,
and $\beta'+1$ is the least element of $[0,\alpha]^{\Tt}\cap\dropset^{\Tt}$,
then for all $\delta\in[\beta'+1,\alpha]^{\Tt}$, we have:
\begin{enumerate}
\item the models and embeddings of $\Tt\uparrow m$ in $[\beta'+1,\alpha]^{\Tt\uparrow m}$
are precisely  those of $\Tt\uparrow k$ in  $[\beta'+1,\alpha]^{\Tt\uparrow k}$, for all $k\in(m,n+1]$,
and likewise for $M^{*(\Tt\uparrow m)}_{\beta+1}$ and $i^{*(\Tt\uparrow m)}_{\beta+1}$, etc, and
\item $\vV_m^{M^{\Tt\uparrow i}_\alpha}=M^{\Tt\uparrow m}_\alpha$ and $i^{\Tt\uparrow i}_{\delta\alpha}\rest\vV_m^{M^{\Tt\uparrow i}_\delta}=i^{\Tt\uparrow m}_{\delta\alpha}$,
and likewise for $i^{*(\Tt\uparrow m)}_{\beta+1}$, etc.
\end{enumerate}

 \end{enumerate}
 \item If otherwise then $\beta+1=\min([0,\alpha]^{\Tt\uparrow i})$ for all $i\leq n+1$, and the models and embeddings of all trees correspond above this drop much as above (after replacing $M^{\Tt\uparrow i}_\alpha$ with $\vV_m^{M^{\Tt\uparrow i}_\alpha}$ for $i<v(\alpha)=v(\beta+1)$ like before; but $M^{\Tt\uparrow v(\alpha)}_\alpha=M^{\Tt\uparrow k}_\alpha$ for $k\geq v(\alpha)$).
 \end{enumerate}
 \end{enumerate}
\end{lem}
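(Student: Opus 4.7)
The plan is to proceed by induction on $\alpha<\lh(\Tt)$, at each stage establishing existence of $M^{\Tt\uparrow i}_\alpha$ (and of $E^{\Tt\uparrow i}_\alpha$ when $\alpha+1<\lh(\Tt)$) for all $i\leq n+1$, together with the model and embedding agreements of parts (2) and (3) as the inductive invariants. The base case is trivial since $M^{\Tt\uparrow i}_0=\vV\down i$. At a limit $\eta<\lh(\Tt)$, the standing clause that $[0,\eta)_\Tt\cut\zeta=[0,\eta)_{\Tt\uparrow i}\cut\zeta$ for some $\zeta<\eta$ lets the agreements on a tail transfer to $M^{\Tt\uparrow i}_\eta$; wellfoundedness at $i<n+1$ follows from that at $i=n+1$ via the pVl layering, since the lower-layer models embed as $\vV_k$-projections of initial segments of $M^{\Tt\uparrow(n+1)}_\eta\down i$.

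The substance of the argument is at successor stages. When $E^\Tt_\alpha=\emptyset$ (a padding step), the exchange ordinal is $\kappa_{v(\alpha)}^{M^\Tt_\alpha}$ by definition of $(n+1)$-translatable-padded, and for $k>v(\alpha)$ we must take $E^{\Tt\uparrow k}_\alpha$ to be the long extender $e_{v(\alpha)}^{M^{\Tt\uparrow k}_\alpha}$ furnished by part \ref{item:gamma_m_def} of Definition \ref{dfn:pVl}; combined with the invariant $(M^{\Tt\uparrow k}_\alpha)\down v(\alpha)=M^{\Tt\uparrow v(\alpha)}_\alpha$ this yields the claimed post-ultrapower equalities. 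When $E^\Tt_\alpha\neq\emptyset$, its index is below $\kappa_{v(\alpha)}^{+M^\Tt_\alpha}$ and, if $v(\alpha)>0$, above $\kappa_{v(\alpha)-1}^{+M^\Tt_\alpha}$; for $i\leq v(\alpha)$ the induction places $E^\Tt_\alpha$ on $\es^{M^{\Tt\uparrow i}_\alpha}$, while for $i>v(\alpha)$ the extender at the same index on $\es^{M^{\Tt\uparrow i}_\alpha}$ is, by the P-construction clause \ref{item:P-con_in_pVl} of Definition \ref{dfn:pVl}, obtained from $E^\Tt_\alpha$ by composition with the long extenders $e_{v(\alpha)},\ldots,e_{i-1}$, and in particular agrees with $E^\Tt_\alpha$ on the ordinals. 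The post-ultrapower agreements then follow from the layered analogue of the calculation in the proof of Lemma \ref{lem:Psi_vV_1_is_conservative}, applied level by level. If $E^\Tt_\alpha$ is $m$-shifting, the indicator $v$ jumps to $m+1$ at $\alpha+1$, and $\pred^\Tt(\alpha+1)$ is coordinated with its analogues in the $\Tt\uparrow i$ via the $0$-maximality convention of Definition \ref{dfn:0-maximal}.

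The drop analysis of part (3) is bookkeeping on top of this. The key observation is that a drop in $\Tt\uparrow(n+1)$ at $\beta+1$ to a segment $M^{*\Tt\uparrow(n+1)}_{\beta+1}$ of height $\eta\in(\delta_m^{M^{\Tt\uparrow(n+1)}_\xi},\gamma_m^{M^{\Tt\uparrow(n+1)}_\xi}]$ sits below the layer boundary $\gamma_m$, where by the pVl structure the passivized $\Tt\uparrow k$-models for $k>m$ coincide with $M^{\Tt\uparrow(n+1)}_\xi||\gamma_m$ and equal $\vV_m^{M^{\Tt\uparrow i}_\xi}||\gamma_m$ for $i\leq m$. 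Hence the drop locus is common to all the trees; whether $\Tt\uparrow i$ for $i\leq m$ also drops in its own layer is decided by whether $\eta<\gamma_m$ (genuine drop below the boundary) or $\eta=\gamma_m$ (mere layer shift). Propagation to $\alpha$ from $\beta+1$ repeats the successor-stage analysis in the post-drop interval, using that $v$ is non-decreasing along tree branches and that the $\Delta$-conservativity clause \ref{item:Delta-conservativity} of Definition \ref{dfn:pVl} still applies to each image pVl.

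The main obstacle is not any single computation but the coordination of $n+2$ trees in lockstep: the risk is misreading a layer-shift in one tree as a drop, or conversely missing a genuine drop below a layer boundary. I expect to manage this by organizing the invariants so that the indicator $v$ precisely controls which model in each $\Tt\uparrow i$ receives the next extender, and by systematically appealing to the P-construction clause together with Lemmas \ref{lem:conservativity_stable} and \ref{lem:conservativity_unstable}, which already treat the analogous stable and $\gamma_k$-unstable short-normal cases, to commute extender application with layer transitions.
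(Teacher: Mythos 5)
Your proposal is correct and follows essentially the same approach as the paper: an induction on the length of the tree, with wellfoundedness and the model/embedding agreements across the $n+2$ trees maintained as a lockstep invariant, using $\Delta$-conservativity (clause \ref{item:Delta-conservativity}) to handle short-extender steps, the P-construction structure and the $e_i$-ultrapower identity $\Ult(\vV\down(i+1),e_i^{\vV\down(i+1)})=\vV_{i+1}^{\vV\down i}$ (via Lemmas \ref{lem:e-vec_finite_stage} and \ref{lem:Psi_vV_1_is_conservative}) to handle the padding steps where the long extender $e_{v(\alpha)}$ is applied, and careful bookkeeping of $v$ against drops. The paper's proof is equally terse — it calls the induction ``straightforward'' and singles out exactly the two features you invoke, then works through the critical padding-step commutativity calculation that you summarize as ``the layered analogue of the calculation in the proof of Lemma \ref{lem:Psi_vV_1_is_conservative}.''
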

\begin{proof}
 This is a straightforward induction on the length of the tree and using standard calculations, together with the properties of an $(n+1)$-pVl. Let us point out the two key features of the latter which come up. Firstly, using \ref{item:Delta-conservativity} (${\Delta}$-conservativity) we get for example that if $E\in\es^{\vV}$ with $\lh(E)<\delta_i^{\vV}$ and (so $E\in\es^{\vV\down i}$ also) and $E$ is $\vV$-total then $\Ult(\vV\down i,E)=i^{\vV}_E(\vV\down i)$, and $i^{\vV\down i}_{E}=i^{\vV}_E\rest(\vV\down i)$.
 This is important in maintaining the inductive hypotheses.
Secondly,
 by Lemma \ref{lem:e-vec_finite_stage},
 we have
 \begin{equation}\label{eqn:Ult_is_next_vV}\Ult(\vV\down(i+1),e_i^{\vV\down(i+1)})=\vV_{i+1}^{\vV\down i}.\end{equation}
 Suppose $\alpha<\lh(\Tt)$
 and $[0,\alpha]^{\Tt\uparrow(n+1)}$
 is non-dropping, and $\nu(E^\Tt_\beta)\leq\delta_i^{M^{\Tt\uparrow(i+1)}_\alpha}$ for all $\beta+1<\lh(\Tt)$. Then we will have that
 $M^{\Tt\uparrow(i+1)}_\alpha=\Ult(\vV\down(i+1),E)$
 and $M^{\Tt\uparrow i}_\alpha=\Ult(\vV\down i,E)$
 where $E$ is the $(\delta_i^{\vV\down(i+1)},\delta_i^{M^{\Tt\uparrow(i+1)}_\alpha})$-extender
 derived from $i^{\Tt\uparrow(i+1)}_{0\alpha}$, or equivalently,
 derived from $i^{\Tt\uparrow i}_{0\alpha}$, and in fact
 $i^{\Tt\uparrow i}_{0\alpha}=i^{\Tt\uparrow(i+1)}_{0\alpha}\rest(\vV\down i)$
 and $i^{\Tt\uparrow(i+1)}_{0\alpha}(\vV\down i)=M^{\Tt\uparrow i}_\alpha$.
So line (\ref{eqn:Ult_is_next_vV})
is propagated by the iteration maps,
and therefore
\begin{equation}\label{eqn:Ult_is_next_vV_propagated} \Ult(M^{\Tt\uparrow (i+1)}_\alpha,e_i^{M^{\Tt\uparrow(i+1)}_\alpha})=\vV_{i+1}^{M^{\Tt\uparrow i}_\alpha}.\end{equation}

Now suppose that $E^\Tt_\alpha\neq\emptyset$ and $\delta_i^{M^{\Tt\uparrow i}_\alpha}<\lh(E^\Tt_\alpha)<\kappa_i^{+M^\Tt_\alpha}$, and note $\kappa_i^{+M^\Tt_\alpha}=\kappa_i^{+M^{\Tt\uparrow i}_\alpha}=\gamma_i^{M^{\Tt\uparrow(i+1)}_\alpha}$.
Suppose $\alpha<^\Tt\beta$
and $[0,\beta]^{\Tt}$ does not drop,
so $[0,\beta]^{\Tt\uparrow(i+1)}$
drops exactly to an image of $\vV|\gamma_i^{\vV}$. Suppose $E^\Tt_\beta=\emptyset$,
so $E^{\Tt\uparrow(i+1)}_\beta=F(M^{\Tt\uparrow(i+1)}_\beta)=e_i^{M^{\Tt\uparrow(i+1)}_\beta}$, which is an image of $e_i^{\vV}$. By induction,
we have that $j=i^{\Tt\uparrow i}_{\alpha\beta}\rest M^{\Tt\uparrow i}|\kappa_i^{+M^{\Tt\uparrow i}_\alpha}=i^{\Tt\uparrow(i+1)}_{*\xi+1,\beta}$,
where $\xi+1=\min((\alpha,\beta]^{\Tt})=\min((\alpha,\beta]^{\Tt\uparrow(i+1)}$.
And $\delta_i^{M^{\Tt\uparrow(i+1)}_\alpha}<\crit(j)$.
Also $E^{\Tt\uparrow i}_\beta=\emptyset$, so
\[ M^{\Tt\uparrow i}_{\beta+1}=M^{\Tt\uparrow i}_\beta=\Ult(M^{\Tt\uparrow i}_\alpha,E_j), \]
where $E_j$ is the full extender derived from $j$.
But 
\[ M^{\Tt\uparrow(i+1)}_{\beta+1}=\Ult(M^{\Tt\uparrow(i+1)}_\alpha,E^\Tt_\beta)=\Ult(M^{\Tt\uparrow(i+1)}_\alpha,F(M^{\Tt\uparrow(i+1)}_\beta)).\]
By a standard calculation, we get
\[ 
\vV_{i+1}^{M^{\Tt\uparrow i}_{\beta+1}}=i^{\Tt\uparrow i}_{\alpha\beta}(\vV_{i+1}^{M^{\Tt\uparrow i}_\alpha})=\Ult(\vV_{i+1}^{M^{\Tt\uparrow i}_\alpha},E_j),\] 
but then considering line (\ref{eqn:Ult_is_next_vV_propagated}), therefore
\[\vV_{i+1}^{M^{\Tt\uparrow i}_{\beta+1}}=\Ult(\Ult(M^{\Tt\uparrow(i+1)}_\alpha,e_i^{M^{\Tt\uparrow(i+1)}_\alpha}),E_j).\]
But note that the extender derived
from the two step-iteration given by first applying
$e_i^{M^{\Tt\uparrow(i+1)}_\alpha}$
and then $E_j$,
is just  $F(M^{\Tt\uparrow(i+1)}_\beta)$, and so
\[\vV_{i+1}^{M^{\Tt\uparrow i}_{\beta+1}}=\Ult(M^{\Tt\uparrow(i+1)}_\alpha,E^{\Tt\uparrow(i+1)}_\beta)=M^{\Tt\uparrow(i+1)}_{\beta+1},\]
as required by the lemma.
\end{proof}

 \begin{dfn}
  Let $\vV$ be an $(n+1)$-pVl
  and $\Sigma$ a  $(0,\OR)$-strategy
  for $\vV$.
  We define a (not just short-normal) $(0,\OR)$-strategy
  for $\Sigma\down 0$ for $\vV\down 0$ as follows.
  
  If $\Uu$ on $\vV\down 0$
  is $(n+1)$-translatable-padded,
  then $\Uu$ is according to $\Sigma\down 0$ iff there is $\Uu'$ via $\Sigma$
  such that $(\Uu,\Uu')$ is a $(0,n+1)$-translation pair.
  
  We now extend this to include non-$(n+1)$-translatable trees $\Uu$. We will reduce such $\Uu$ to $(n+1)$-translatable trees $\Uu'$. We will have $\lh(\Uu')=\lh(\Uu)$.
  Let $\Uu\rest(\eta_0+1)$ be the longest segment of $\Uu$ which is $(n+1)$-translatable.
  Then we will have $\Uu'\rest(\eta_0+1)=\Uu\rest(\eta_0+1)$ (and via $\Sigma\down 0$).
Suppose $E^\Uu_{\eta_0}$
results in a non-$(n+1)$-translatable tree $\Uu\rest(\eta_0+2)$. 
Let $N=M^{\Uu}_{\eta_0}$.
There is $m\leq n$ such that $\kappa=\kappa_m^{N}=\delta(\Uu\rest\eta_0)$
  and there are cofinally many $\beta+1<^\Tt\eta_0$ such that $\crit(E^\Tt_\beta)=\kappa_m^{M^\Tt_{\pred^\Tt(\beta+1)}}$. And $E=E^\Uu_{\eta_0}\in\es^N$ is such that $\kappa<\lh(E)<\kappa^{+N}$. 
  Let $P\pins N$
  be least such that $\lh(E)\leq\OR^P$
  and $\rho_\om^P=\kappa$.
  Let $D\in\es^{N}$
  be the total order $0$ measure on $\kappa$. Let $j_0:N\to\Ult(N,D)$
  be the ultrapower map.
  We form $\Uu\rest[\eta_0,\eta_1]$
  (for some yet to be specified $\eta_1$)
  by lifting the interval $\Uu\rest[\eta_0,\eta_1]$
  for critical points $\geq\kappa$,
 with $j_0$, to models of the $(n+1)$-translatable tree $\Uu'\rest(\eta_1+1)$.
(In more detail,
define the phalanx \[\ph=((\Phi(\Uu\rest(\eta_0+1)),{<\kappa}),P).\]
We lift
$\mathfrak{P}$ to \[\mathfrak{P}'=((\Phi(\Uu\rest(\eta_0+1)),{<j_0(\kappa)}),j_0(P)),\]
using as lifting maps $\id:M^\Uu_\alpha\to M^\Uu_\alpha$ for all $\alpha\leq\eta_0$
(including $\id:M^\Uu_{\eta_0}\to M^\Uu_{\eta_0}$, but this is only relevant to critical points ${<\kappa}$),
and $j_0\rest P:P\to j_0(P)$
(for critical points $\geq\kappa$).)
Note that we don't actually use $D$ in $\Uu$ or $\Uu'$.
Note that $\Uu\rest(\eta+1)\conc\left<j_0(E^\Uu_{\eta_0})\right>$
in particular is $(n+1)$-translatable, so we have made some progress. We continue producing $\Uu\rest(\eta_1+1)$ in this way as long as the resulting tree $\Uu'\rest(\eta_1+1)$
is $(n+1)$-translatable (and via $\Sigma\down 0$).

So this either  completes $\Uu$,
or we reach $\eta_1$ such that $\Uu\rest(\eta_1+1)$ and $\Uu'\rest(\eta_1+1)$ are as above
but $E^\Tt_{\eta_1}$ is such that if we continued as above, then $\Uu'\rest(\eta_1+2)$ would be non-$(n+1)$-translatable. Suppose the latter holds. Note here that
$\Uu\rest[\eta_0,\eta_1]$ is not above $\kappa$, so $\Uu\rest[\eta_0,\eta_1]$ uses some $G$ with $\crit(G)=\kappa_{i}^N$ for some $i<m$ (as $\kappa$ is a $\{\kappa_0^N,\ldots,\kappa_{m-1}^N\}$-cutpoint of $P$). So $\eta_0\not\leq^{\Uu}\eta_1$.
Let $\pi_{\eta_1}:M^\Uu_{\eta_1}\to M^{\Uu'}_{\eta_1}$ be the lifting map resulting from the process described so far (so $\crit(\pi_{\eta_1})=\kappa$ etc).
Sincde $\pi_{\eta_1}(E^{\Uu}_{\eta_1})$ creates the first violation of $(n+1)$-translatability for $\Uu'$,
the same situation holds regarding $\Uu'\rest(\eta_1+1)$ as it did for $\Uu\rest(\eta_0+1)$ (though letting
 $m_1\leq n$ be such that $\kappa_1'=\kappa_{m_1}^{M^{\Uu'}_{\eta_1}}=\delta(\Uu'\rest\eta_1)$,
 we need not have $m_1=m$).
 By commutativity between iteration and copy maps, etc,
 $[0,\eta_1)^{\Uu}\cap\dropset^{\Uu}=\emptyset$ and  letting $\kappa_1=\kappa_{m_1}^{M^{\Uu}_{\eta_1}}$, we have $\kappa_1=\delta(\Uu\rest\eta_1)$
 and $\pi_{\eta_1}(\kappa_1)=\kappa_1'$.
 We have $\kappa_1<\lh(E^\Uu_{\eta_1})<\kappa_1^{+M^{\Uu}_{\eta_1}}$.
Let $D_1\in\es^{M^{\Uu'}_{\eta_1}}$ be the total order $0$ measure on $\kappa_1'$. Let \[j_1':M^{\Uu'}_{\eta_1}\to\Ult(M^{\Uu'}_{\eta_1},D_1) \]
be the ultrapower map
and let $j_1=j_1'\com\pi_{\eta_1}$.
Then we continue lifting $\Uu\rest[\eta_1,\eta_2]$
to $\Uu'\rest[\eta_1,\eta_2]$,
for a yet to be determined $\eta_2$,
using the earlier copy maps (produced up until stage $\eta_1$,
for lifting critical points ${<\kappa_1}$),
and $j_1\rest P_1$ (for lifting critical points $\geq\kappa_1$),
with details generalizing those like before. 

Now let $\lambda$ be a limit, and suppose we have defined $\Uu\rest\eta$ and $\Uu'\rest\eta$ where $\eta=\sup_{\alpha<\lambda}\eta_\alpha$, and also copy maps continuing those above.  The tree structures of $\Uu\rest\eta$ and $\Uu'\rest\eta$ are identical, and so we can set $\Sigma_{\Uu\down 0}(\Uu\rest\eta)=\Sigma_{\Uu\down 0}(\Uu'\rest\eta)$. The iteration maps also commute with copy maps as usual, so we can set \[\pi_\eta:M^\Uu_\eta\to M^{\Uu'}_\eta \]
as the unique commuting map as usual,
and continue the process. (Actually, the structure of the branches at stage $\eta$ is very simple.
Note that for each $\alpha<\lambda$,
there is some $\gamma\in[\eta_\alpha,\eta_{\alpha+1})$ and some $m<m_\alpha$ such that $\crit(E^{\Uu}_{\gamma})=\kappa_{m}^{M^\Uu_{\eta_\alpha}}$. Let $m$ be least such that for cofinally many $\alpha<\lambda$, there is such a $\gamma\in[\eta_\alpha,\eta_{\alpha+1})$. This $m$ is defined in the same manner from $\Uu'\rest\sup_{\alpha<\lambda}\eta_\alpha$, by the commutativity. But then this $m$ easily determines a unique cofinal branch $b$ through $\Uu\rest\eta$, and it is also the unique cofinal branch through $\Uu'\rest\eta$, so this is the branch chosen at stage $\eta$ in both trees. Note also that $[0,\eta)^{\Uu}$ is non-dropping and $\kappa_m^{M^\Uu_\eta}=\delta(\Uu\rest\eta)$,
so possibly $\eta_\lambda=\eta$.)
The rest of the construction
proceeds in this manner.
 \end{dfn}
 The reader will now happily verify
 the following lemma, assisted by Lemma \ref{lem:n+1-translation_pair}:
\begin{lem}\label{lem:Sigma_down_0_is_strat}
   Let $\vV$ be an $(n+1)$-pVl
  and $\Sigma$ a  (not just short-normal) $(0,\OR)$-strategy
  for $\vV$. Then $\Sigma\down 0$
  is a  $(0,\OR)$-strategy
   for $\vV\down 0$.
\end{lem}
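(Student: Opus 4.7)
The plan is to verify that the two regimes of the construction — $(n+1)$-translatable-padded trees on $\vV\down 0$ paired with $0$-maximal trees on $\vV$, and the lifting extension via order-$0$ measures — together cover all trees on $\vV\down 0$, assign cofinal branches at every limit, and produce only wellfounded models, with wellfoundedness inherited from $\Sigma$ on $\vV$.

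First I would handle the $(n+1)$-translatable-padded regime. Given a limit length $\Uu$ on $\vV\down 0$ via $\Sigma\down 0$, Lemma \ref{lem:n+1-translation_pair} produces the $0$-maximal companion $\Uu'=\Uu\uparrow(n+1)$ on $\vV$ of the same length and tree order (eventually, in the tail guaranteed by the last clause of the translation-pair definition), with $\Uu'$ via $\Sigma$. Then $b:=\Sigma(\Uu')$ is a cofinal branch of $\Uu'$, and using the tail-agreement of tree orders and the exchange-ordinal/$\nu$-pattern dictated by the witness $v$ for translatability, $b$ is also cofinal in $\Uu$; moreover $(\Uu\conc b,\Uu'\conc b)$ is a $(0,n+1)$-translation pair, so this defines $\Sigma\down 0(\Uu)=b$. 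Wellfoundedness of $M^\Uu_b$ then follows from wellfoundedness of $M^{\Uu'}_b$ via the model correspondence in Lemma \ref{lem:n+1-translation_pair}: in the non-dropping case $M^{\Uu'}_b\down 0$ is defined from $M^\Uu_b$ via the P-construction prescribed by Definition \ref{dfn:pVl}, and in the dropping cases the models either coincide or are linked by the natural factor maps which are identities on the relevant segments.

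Second I would treat the lifting to non-$(n+1)$-translatable trees. Here the construction builds, alongside $\Uu$, an $(n+1)$-translatable-padded $\Uu'$ on $\vV\down 0$ via $\Sigma\down 0$ (hence, by the first paragraph, controlled by $\Sigma$) together with copy maps $\pi_{\eta_j}:M^\Uu_{\eta_j}\to M^{\Uu'}_{\eta_j}$ obtained by composing ultrapower maps $j_k:M^{\Uu'}_{\eta_k}\to\Ult(M^{\Uu'}_{\eta_k},D_k)$ by order-$0$ measures on the relevant overlap points. I would verify inductively that (i) at each successor stage the lifted extender $\pi_{\eta_j}(E^\Uu_{\eta_j})$ indeed enables a legal $(n+1)$-translatable padding move (because $\kappa$, $\kappa_1,\ldots$ are strong cardinals of the target models, and the relevant $P\pins N$ translates as required into the lifted target); (ii) the copy maps commute with iteration, so at limit stages $\lambda<\lh(\Uu)$ the branch structure of $\Uu\rest\sup_\alpha\eta_\alpha$ matches that of $\Uu'\rest\sup_\alpha\eta_\alpha$ and is determined uniquely by the eventually constant least ``shift index'' $m$ described in the definition, giving a canonical $\pi_\eta$; and (iii) $\Sigma\down 0$'s choice of branch on the lifted side is precisely this unique branch. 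Wellfoundedness of $M^\Uu_\lambda$ transfers from wellfoundedness of $M^{\Uu'}_\lambda$ via the limit copy map $\pi_\lambda$.

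The main obstacle I expect is the bookkeeping around drops and degrees in the lifted construction: when an extender $E^\Uu_{\eta_j}$ is applied with critical point below $\kappa_{m_j}$, the $\Uu'$-side passes through a total order-$0$ measure ultrapower before applying $\pi_{\eta_j}(E^\Uu_{\eta_j})$, and the $\vV|\gamma_{v(\alpha)}$-drops visible on the $\Uu'$ side need not be visible on the $\Uu$ side. The detailed correspondence clauses in Lemma \ref{lem:n+1-translation_pair} (the breakdown into the non-dropping and dropping subcases, and the identifications of $M^{*\Tt\uparrow i}_{\beta+1}$ with images of $\vV_m^{\cdot}|\kappa_m^{+\cdot}$) are exactly what is needed to match $M^{*\Uu}_{\eta_j+1}$ with the appropriate $M^{*\Uu'}_{\eta_j+1}$; once this is carried out, the two regimes glue together and $\Sigma\down 0$ is seen to be a total $(0,\OR)$-strategy.
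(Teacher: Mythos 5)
The paper offers no proof of this lemma beyond the remark that ``the reader will now happily verify [it], assisted by Lemma \ref{lem:n+1-translation_pair},'' and your plan is exactly the verification the paper has in mind: use the $(0,n+1)$-translation-pair machinery (via Lemma \ref{lem:n+1-translation_pair}) for the $(n+1)$-translatable-padded regime, and the order-$0$-measure lifting construction built into the definition of $\Sigma\down 0$ for the remaining trees, transferring branch existence and wellfoundedness from the $\vV$-side to the $\vV\down 0$-side. One small imprecision worth flagging in your first paragraph: you say ``$M^{\Uu'}_b\down 0$ is defined from $M^\Uu_b$ via the P-construction''; the P-construction actually goes the other way (it produces $M^{\Uu'}_b\down 0$ \emph{from} $M^{\Uu'}_b$), and what Lemma \ref{lem:n+1-translation_pair}(2)(c) gives is the identity $M^\Uu_\alpha = (M^{\Uu'}_\alpha)\down 0$ (when $v(\alpha)=0$), from which wellfoundedness of $M^\Uu_b$ follows since it is a definable transitive subclass of the wellfounded $M^{\Uu'}_b$; when $v(\alpha)>0$ one instead passes through clauses (b), (c) and the dropping-case identifications of part (3). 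That is the same place you need to be careful, as you correctly anticipate, in seeing that $\Sigma(\Uu')$ really does induce a $\Uu$-cofinal branch and that the resulting pair remains a translation pair at the top.
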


\begin{cor}\label{cor:Sigma_wW_n+1_down_0=Sigma_M}
For each $i\leq n$, we have:
\begin{enumerate}
 \item \label{item:Sigma_down_0=Sigma}
 $\Sigma_{\wW_{i+1}}\down 0=\Sigma_M$.
 \item \label{item:vV_i+1^N}For every non-dropping $\Sigma_M$-iterate $N$,
 $\vV_{i+1}^N$ is a non-dropping $\Sigma_{\wW_{i+1}}$-iterate.
 \end{enumerate}
\end{cor}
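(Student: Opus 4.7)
The plan for part \ref{item:Sigma_down_0=Sigma} is essentially immediate. By Lemma \ref{lem:Sigma_down_0_is_strat}, $\Sigma_{\wW_{i+1}}\down 0$ is a $(0,\OR)$-iteration strategy for $\wW_{i+1}\down 0 = \wW_0 = M$. Since $\Sigma_M$ is by construction the unique $(0,\OR)$-strategy for $M$ (a consequence of the choice of $\Sigma_{M^\#}$ recalled at the start of \S2), we conclude $\Sigma_{\wW_{i+1}}\down 0 = \Sigma_M$.

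For part \ref{item:vV_i+1^N}, let $N$ be a non-dropping $\Sigma_M$-iterate, witnessed by a $0$-maximal tree $\Tt$ via $\Sigma_M$ with $b^\Tt$ non-dropping. First I would reduce to the case where $\Tt$ is $(i+1)$-translatable: since $\Sigma_M$ has mhc, any obstruction to translatability (a limit-stage extender chosen with $\lh(E^\Tt_\eta)<\kappa_m^{+M^\Tt_\eta}$ in the configuration of the definition) can be absorbed by inflating $\Tt$ via the order-$0$ measures $D, D_1,\ldots$ already packaged into the definition of $\Sigma\down 0$ on non-translatable trees, without changing the non-dropping main-branch model. Let $\hat\Tt$ be the resulting $(i+1)$-translatable-padded tree equivalent to the revised $\Tt$. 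By part \ref{item:Sigma_down_0=Sigma}, $\hat\Tt$ is via $\Sigma_{\wW_{i+1}}\down 0$, so by the very definition of $\Sigma\down 0$ there is a $0$-maximal tree $\Tt'$ on $\wW_{i+1}$ via $\Sigma_{\wW_{i+1}}$ such that $(\hat\Tt,\Tt')$ is a $(0,i+1)$-translation pair.

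Applying Lemma \ref{lem:n+1-translation_pair}: $\Tt'$ and $\hat\Tt$ share tree and drop structure, so $b^{\Tt'} = b^{\hat\Tt}$ is non-dropping, and $M^{\Tt'}_\infty\down 0 = M^{\hat\Tt}_\infty\down 0 = N$. Non-dropping iteration of $\wW_{i+1}$ preserves the first-order properties listed in Definition \ref{dfn:pVl}, so $M^{\Tt'}_\infty$ is an $(i+1)$-$\vec\varphi$-pVl with $(M^{\Tt'}_\infty)\down 0 = N$. But an $(i+1)$-$\vec\varphi$-pVl is uniquely determined by its $\down 0$ reduct: the long extender at $\gamma_i$ is recovered from $N$ via $\varphi_0$, the forcing via $\varphi_1$, the image $i^{\vV}_{e_i}(\vV|\gamma_i)$ via $\varphi_2$, and all higher levels by the P-construction. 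Thus $M^{\Tt'}_\infty = \vV_{i+1}^N$, and $\vV_{i+1}^N$ is a non-dropping $\Sigma_{\wW_{i+1}}$-iterate.

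The main obstacle is the initial reduction to an $(i+1)$-translatable $\Tt$. While the definition of $\Sigma_{\wW_{i+1}}\down 0$ on non-$(i+1)$-translatable trees already supplies the lifting machinery, one must verify that after the lifting the main-branch model of the resulting tree on $\wW_{i+1}$ still equals $\vV_{i+1}^N$ rather than some spurious further ultrapower image. This is where the concrete structural calculations—tracking the interaction of the lifting copy maps with the long extenders $e_m^{M^{\Tt'}_\alpha}$ and with the $\vec\varphi$-definability of the P-construction—have to be carried out, presumably following the pattern of the $\wW_1$ case already treated in \S\ref{sec:vV_1,wW_1}.
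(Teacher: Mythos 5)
Part~\ref{item:Sigma_down_0=Sigma} of your argument is correct and is exactly the paper's proof: Lemma~\ref{lem:Sigma_down_0_is_strat} produces a $(0,\OR)$-strategy for $\wW_{i+1}\down 0=M$, and uniqueness of $\Sigma_M$ finishes it.

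For part~\ref{item:vV_i+1^N} your high-level plan (combine part~\ref{item:Sigma_down_0=Sigma} with Lemma~\ref{lem:n+1-translation_pair}) is the paper's, but two steps do not work as you've written them. First, the claim that the obstruction to $(i+1)$-translatability ``can be absorbed by inflating $\Tt$ via the order-$0$ measures $D,D_1,\ldots$\ without changing the non-dropping main-branch model'' is false. The lifting maps $\pi_\eta$ in the definition of $\Sigma\down 0$ for non-translatable trees are built from the $j_0$-ultrapower and have critical point an image of $\kappa_m$; the lifted tree $\Uu'$ therefore has a \emph{different} last model, embedded into by $\pi_\infty\neq\id$. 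One cannot absorb the defect this way while keeping the same terminal model.

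Second, the chain ``$M^{\Tt'}_\infty\down 0 = M^{\hat\Tt}_\infty\down 0 = N$, \ldots\ thus $M^{\Tt'}_\infty = \vV_{i+1}^N$'' is internally inconsistent. By Remark~\ref{rem:ind_hyp}, $\vV_{i+1}\down i=\mM_{\infty i}\neq\vV_i$, so $\vV_{i+1}\down 0$ is a proper $\Sigma_M$-iterate of $M$, not $M$ itself; likewise $\vV_{i+1}^N\down 0\neq N$. So if $M^{\Tt'}_\infty\down 0$ really were $N$, then $M^{\Tt'}_\infty$ could \emph{not} be $\vV_{i+1}^N$, and in any case ``an $(i+1)$-pVl is uniquely determined by its $\down 0$ reduct'' does not identify $M^{\Tt'}_\infty$ with $\vV_{i+1}^N$, but with $\wW_{i+1}^N$. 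The identity $M^{\Tt\uparrow i}_\alpha=(M^{\Tt\uparrow j}_\alpha)\down i$ in Lemma~\ref{lem:n+1-translation_pair} part~2(c) is only asserted for $v(\alpha)\le i$; what produces $\vV_{i+1}^N$ is part~2(b), namely $\vV_{v(\alpha)}^{M^\Tt_\alpha}=M^{\Tt\uparrow v(\alpha)}_\alpha$, applied when the final $v$-value is $i+1$. So the missing move is: after forming the $(i+1)$-translatable-padded $\hat\Tt$, append $i+1$ further padding steps at the end, pushing $v$ from its terminal value up to $i+1$. This changes nothing on the $\vV\down 0$ side (so $M^{\hat\Tt}_\infty$ is still $N$), but on the $\wW_{i+1}$ side it inserts the images of $e_0,\ldots,e_i$, and Lemma~\ref{lem:n+1-translation_pair} part~2(b) then gives $M^{\hat\Tt\uparrow(i+1)}_\infty=\vV_{i+1}^N$ directly. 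That is what ``immediate'' is standing in for in the paper's one-line proof.
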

\begin{proof}Part \ref{item:Sigma_down_0=Sigma}:
Since $\Sigma_M$ is the unique  $(0,\OR)$-strategy
for $M$, this follows immediately from Lemma \ref{lem:Sigma_down_0_is_strat}.

Part \ref{item:vV_i+1^N}: This is an immediate corollary
of part \ref{item:Sigma_down_0=Sigma}
and Lemma \ref{lem:n+1-translation_pair}.
\end{proof}

\begin{lem}\label{lem:tree_on_vV_n+1_ev_sps}
 Let $\Tt$ be a successor length short-normal tree on an $(n+1)$-pVl $\vV$,
 based on $\vV|\delta_{n+1}^{\vV}$, and
 such that $b^{\Tt}$ does not drop.
 Then $\Tt\down 0$ is everywhere $(\delta,\kappa)$-stable.
 
In particular, $\Tt_{0,n+1}$ is an everywhere $(\delta,\kappa)$-stable, where $\Tt$ is the $\Sigma_{\wW_{n+1}}$-tree with last model $\vV_{n+1}$
and $\Tt_{0,n+1}=\Tt\down 0$ \tu{(}equivalently,
$\Tt_{0,n+1}$ is the $\Sigma_M$-tree with last model $\vV_{n+1\down 0}$\tu{)}.
\end{lem}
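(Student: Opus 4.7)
The plan is to leverage the standard decomposition of $\Tt$ together with the Conservativity lemmas. First, I would observe that $\Tt$ must be $\vec{\gamma}$-stable: if $\Tt$ were $\gamma_k$-unstable for some $k\leq n$, then by Lemma~\ref{lem:conservativity_unstable} (specifically, the item asserting $[0,\lambda+\alpha]_\Tt$ drops to some image of $\vV|\gamma_k^{\vV}$), $b^{\Tt}$ would drop, contradicting our hypothesis. Hence Lemma~\ref{lem:conservativity_stable} applies, giving the standard decomposition $\Tt=\Tt_0\conc\Tt_1\conc\ldots\conc\Tt_{n+1}$ and a corresponding decomposition $\Tt\down 0=\widetilde{\Tt_0}\conc\Tt''_1\conc\ldots\conc\Tt''_{n+1}$ with matching tree, drop and degree structure (so $b^{\Tt\down 0}=b^\Tt$), and commuting embeddings on the corresponding initial segments of the models.

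The heart of the argument is a range analysis of the main-branch critical points, done segment-by-segment. For $j\geq 1$, $\Tt_j$ is based on $M^{\Tt_{j-1}}_\infty|\delta_j^{M^{\Tt_{j-1}}_\infty}$ and above $\gamma_{j-1}^{M^{\Tt_{j-1}}_\infty}$, so its extenders have critical points in the open interval $(\kappa_{j-1},\delta_j)$ read in the ambient model, using $\gamma_{j-1}^{\vV'}=\kappa_{j-1}^{+\vV'\down(j-1)}$ from Lemma~\ref{lem:vV_n+1^vV_down_n_def} to identify the lower bound with the $(j-1)$-th strong on the $N$-side. By the commutativity part of Lemma~\ref{lem:conservativity_stable}, the corresponding piece $\Tt''_j$ on $N$-side inherits the same interval-confinement of critical points (with bounds now interpreted as strongs and Woodins of $M^{\Tt\down 0}_{\pred^\Tt(\alpha+1)}$ in the $N$-sense). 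For every $m<\om$, the forbidden interval $(\delta_m,\kappa_m]$ lies entirely below $\kappa_{j-1}$ (when $m<j$) or entirely above $\delta_j$ (when $m\geq j$), hence is disjoint from the critical-point range of $\Tt''_j$.

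For the initial segment $\widetilde{\Tt_0}$ the argument is the same for $m<n$ and $m>n$; the only residual case is $m=n$, where the lower bound $\mu^{\vV}=\kappa_n^{\vV\down 0}$ of the critical-point range coincides with the upper endpoint of $(\delta_n,\kappa_n]$. To exclude the boundary, I would use that the only element of $\es^{\vV}$ with $\crit=\mu^\vV$ appearing in $\vV|\gamma_n^\vV$ is the long extender $e_n^\vV$ itself: Jensen indexing forces any short extender with $\crit=\mu^\vV$ to be indexed at or above $(\mu^\vV)^{+\vV}=\gamma_n^\vV$, but $\Tt_0$ draws extenders from $\vV|\delta_0^\vV\pins\vV|\gamma_n^\vV$, and short-normality excludes $e_n^\vV$ itself. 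Thus every critical point of $\widetilde{\Tt_0}$ strictly exceeds $\kappa_n^{\vV\down 0}$, and the forbidden $m=n$ interval is avoided.

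The main obstacle I anticipate is the clean propagation of the interval-confinement from $\Tt_j$ (on the $\vV$-side) to $\Tt''_j$ (on the $N$-side): this requires tracking how the conservativity translation and its associated $\vec{e}$-inflation move critical points, and verifying that the bounds $\kappa_{j-1}^{+\,\cdot}$ and $\delta_j^{\,\cdot}$ transform compatibly so as to preserve the range claim in the $N$-side models $M^{\Tt\down 0}_\beta$ at every main-branch predecessor $\beta=\pred^{\Tt\down 0}(\alpha+1)$. The $m=n$ boundary of $\widetilde{\Tt_0}$ also deserves careful Jensen-indexing bookkeeping, particularly when the ambient model along the main branch has been iterated; however, the long-extender placement at $\gamma_n=\kappa_n^{+\,\cdot}$ is preserved under non-dropping iteration, so the bound $\gamma_n^{M^{\Tt_0}_\beta}>\delta_0^{M^{\Tt_0}_\beta}$ propagates and the exclusion goes through uniformly.
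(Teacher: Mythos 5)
Your high-level plan is reasonable and the first paragraph is correct: since $b^\Tt$ does not drop, $\Tt$ cannot be $\gamma_k$-unstable (Lemma~\ref{lem:conservativity_unstable} would force a drop to an image of $\vV|\gamma_k^\vV$), so $\Tt$ is $\vec\gamma$-stable with full standard decomposition $\Tt_0\conc\ldots\conc\Tt_{n+1}$ and Lemma~\ref{lem:conservativity_stable} transfers the tree, drop, degree and embedding structure to $\Tt\down 0$. This is the right setup.

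The range analysis for the pieces, however, contains two identities that are simply false, and they are load-bearing. First, you assert $(\mu^\vV)^{+\vV}=\gamma_n^\vV$. This contradicts the inductive hypotheses of Remark~\ref{rem:ind_hyp}: there we have $\mu^{\vV}<\delta_0^{\vV}<\gamma_0^{\vV}<\ldots<\delta_n^{\vV}<\gamma_n^{\vV}$, and each $\delta_i^{\vV}$ is a cardinal of $\vV$ (being Woodin), so $(\mu^\vV)^{+\vV}\leq\delta_0^{\vV}<\gamma_n^{\vV}$, with $n+1$ cardinals of $\vV$ strictly in between. The Jensen-indexing bound you derive from this, namely that any short extender with $\crit=\mu^\vV$ must be indexed at or above $\gamma_n^\vV$, therefore does not follow.

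Second, you assert $\mu^{\vV}=\kappa_n^{\vV\down 0}$ (identifying the least measurable of $\vV$ with the $n$th strong of $\vV\down 0$). This also cannot hold. From the agreement of $\vV$ with $\vV\down 0$ below $\gamma_0^{\vV}=\kappa_0^{+\vV\down 0}$ and the ordering above, $\mu^{\vV}<\delta_0^{\vV}<\gamma_0^{\vV}=\kappa_0^{+\vV\down 0}$; but for $n\geq 1$, $\kappa_n^{\vV\down 0}>\kappa_0^{+\vV\down 0}$, so $\mu^{\vV}<\kappa_n^{\vV\down 0}$ strictly. Even in the base case $n=0$, one has $\mu^{\vV_1}=\kappa_0^M=\delta_0^{\mM_{\infty 0}}$, i.e.\ the first \emph{Woodin} of $\vV_1\down 0=\mM_{\infty 0}$, not its first strong $\kappa_0^{\mM_{\infty 0}}$. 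So the picture underlying your boundary analysis for $\widetilde{\Tt_0}$ is incorrect: the critical-point interval $[\mu^{\vV},\delta_0^{\vV})$ translated to the $\vV\down 0$-side is a subinterval of $[\delta_n^{\vV\down n},\kappa_0^{+\vV\down 0})$, not something starting at $\kappa_n^{\vV\down 0}$, and the exclusion of the forbidden gaps $(\delta_m^{\vV\down 0},\kappa_m^{\vV\down 0}]$ does not follow from the endpoints you compute. Similar care is needed for the pieces $\Tt''_j$: $\gamma_{j-1}^{\vV'}=\kappa_{j-1}^{+\vV'\down(j-1)}$ (Lemma~\ref{lem:vV_n+1^vV_down_n_def}) relates $\gamma_{j-1}$ to the least strong of the $(j-1)$-pVl $\vV'\down(j-1)$, and the identification of that ordinal with the $(j-1)$th strong of $\vV'\down 0$ is not automatic. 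The argument as written therefore has a genuine gap at the crucial steps where the Woodin--strong gaps of the $\vV\down 0$-side models are located relative to the critical-point ranges.

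(The paper gives no proof of this lemma, so the intended argument is not visible; but whatever it is, it must rest on a correct accounting of $\mu^{\vV},\gamma_0^{\vV},\ldots,\gamma_n^{\vV}$ against $\delta_m^{\vV\down 0},\kappa_m^{\vV\down 0}$, and your accounting is off in the two places just described.)
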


\begin{lem}
 Let $\Tt$ be a successor length short-normal tree on $\wW_{n+1}$, according to $\Sigma_{\wW_{n+1}}$,
 based on $\wW_{n+1}|\delta_{n+1}^{\wW_{n+1}}$,
 such that $b^{\Tt}$ does not drop.
 Let $\vV=M^\Tt_\infty$.
 Then $\Sigma_{\vV}$  is the unique
 self-consistent self-coherent $(0,\OR)$-strategy for $\vV$ with short-normal-mhc.
\end{lem}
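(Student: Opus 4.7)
The plan is to reduce uniqueness to the uniqueness of the $(0,\OR)$-strategy on $\vV\down 0$, which follows from everywhere $(\delta,\kappa)$-stability of the tree leading there.

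For existence, I would note that $\Sigma_\vV$ itself has the stated properties. Its short-normal restriction is the $\vec e$-minimal pullback of $\Sigma_{\vV\down 0}$ by Lemma \ref{lem:Sigma_vV_is_e-vec_min_pullback}, so it has short-normal mhc by Lemma \ref{lem:mhc_inheritance} (applied inductively, using that $\Sigma_M$ has mhc); self-coherence is inherited from $\Sigma_{\wW_{n+1}}$ through normalization along $\Tt$; and $\Sigma_\vV$ is the self-consistent extension of its short-normal part by construction and Definition \ref{dfn:short-normal_extend_to_0-max}.

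For uniqueness, let $\Gamma$ be any self-consistent, self-coherent $(0,\OR)$-strategy for $\vV$ with short-normal mhc. Since $\Gamma$ is a $(0,\OR)$-strategy, Lemma \ref{lem:Sigma_down_0_is_strat} yields a $(0,\OR)$-strategy $\Gamma\down 0$ for $\vV\down 0$. Now $\Tt$ is a successor length short-normal tree on the $(n+1)$-pVl $\wW_{n+1}$, based on $\wW_{n+1}|\delta_{n+1}^{\wW_{n+1}}$, with $b^\Tt$ non-dropping, so by Lemma \ref{lem:tree_on_vV_n+1_ev_sps}, $\Tt\down 0$ is an everywhere $(\delta,\kappa)$-stable tree on $\wW_{n+1}\down 0 = M$, via $\Sigma_M$ by Corollary \ref{cor:Sigma_wW_n+1_down_0=Sigma_M}, with last model $\vV\down 0$. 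By Lemma \ref{lem:everywhere_kappa^+-stable_unique_strategy}, $\Sigma_{\vV\down 0}$ is the unique $(0,\OR)$-strategy for $\vV\down 0$, forcing $\Gamma\down 0 = \Sigma_{\vV\down 0}$. This identifies $\Gamma^{\sn}$ as the $\vec e$-minimal pullback of $\Sigma_{\vV\down 0}$, which by Lemmas \ref{lem:e-vec_minimal_pullback} and \ref{lem:Sigma_vV_is_e-vec_min_pullback} equals $(\Sigma_\vV)^{\sn}$. Finally, Lemma \ref{lem:unique_ext_to_(0,OR)-strat_n+1} (unique self-consistent extension) gives $\Gamma = \Sigma_\vV$.

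The hard part is the identification that $\Gamma\down 0 = \Sigma_{\vV\down 0}$ implies $\Gamma^{\sn}$ coincides with the $\vec e$-minimal pullback of $\Sigma_{\vV\down 0}$. The subtlety is that $\Gamma\down 0$ is defined in Lemma \ref{lem:Sigma_down_0_is_strat} via $(0,n+1)$-translation pairs $(\Uu,\Uu')$ with $\Uu'$ $0$-maximal on $\vV$ (allowed long extenders) and $\Uu$ an $(n+1)$-translatable-padded tree on $\vV\down 0$, whereas the $\vec e$-minimal pullback is phrased in terms of the downshifts $\Xx\down 0$ of Definitions \ref{dfn:e-vec_min_inflation_stable} and \ref{dfn:e-vec_min_inflation_unstable} from short-normal $\Xx$ on $\vV$. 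The required equivalence amounts to checking that when $\Uu'$ is short-normal, the padded stages of $\Uu$ (where $\nu^\Uu_\alpha = \kappa_{v(\alpha)}^{M^\Uu_\alpha}$) correspond exactly to the $e_i$-insertions in the $\vec e$-minimal inflation of $\Uu'\down 0$, so that stripping the padding from $\Uu$ recovers $\Uu'\down 0$. This is essentially a diagram chase through the two constructions, but it must be done with care, using Lemma \ref{lem:n+1-translation_pair} to ensure the models, drops and embeddings on corresponding branches match.
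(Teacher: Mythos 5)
Your proof is correct in outline and uses exactly the same ingredients as the paper's -- Lemma \ref{lem:Sigma_down_0_is_strat}, Lemma \ref{lem:tree_on_vV_n+1_ev_sps}, Corollary \ref{cor:Sigma_wW_n+1_down_0=Sigma_M}, Lemma \ref{lem:everywhere_kappa^+-stable_unique_strategy}, and Lemma \ref{lem:unique_ext_to_(0,OR)-strat_n+1} -- but it organizes them in the opposite direction. The paper works ``top-down'' by contradiction: assume $\Sigma_\vV\neq\Gamma$, locate the first disagreement $\Tt$ with branches $b\neq c$ (possible since by Lemma \ref{lem:unique_ext_to_(0,OR)-strat_n+1} the strategies differ already in their short-normal parts), form the $\vec e$-minimal inflations $\Tt'\conc b'$ and $\Tt'\conc c'$, observe that self-coherence, self-consistency and short-normal mhc force each inflation to be via the respective strategy, translate both to $(0,n+1)$-translation pairs on $\vV\down 0$, and then invoke uniqueness of the $(0,\OR)$-strategy for $\vV\down 0$ to get $b'=c'$, a contradiction. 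You instead work ``bottom-up'': first show $\Gamma\down 0=\Sigma_{\vV\down 0}$ via uniqueness, then argue that $\Gamma^{\sn}$ must be the $\vec e$-minimal pullback of $\Gamma\down 0$, then conclude by Lemmas \ref{lem:Sigma_vV_is_e-vec_min_pullback} and \ref{lem:unique_ext_to_(0,OR)-strat_n+1}. Both routes reduce to the same technical core: showing that $\vec e$-minimal inflations of short-normal trees via $\Gamma$ are again via $\Gamma$, which is exactly what self-coherence + self-consistency + short-normal mhc are there to give you, and then matching the resulting trees up with the $(0,n+1)$-translation pair machinery.

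The gap you flag is real but no worse than what the paper leaves to the reader. In the paper this is the sentence ``Also, $\Tt'\conc b'$ and $\Tt'\conc c'$ are according to $\Sigma,\Gamma$ respectively, since both are self-coherent, self-consistent and have short-normal mhc (and the $\vec e$-extender is that derived from a short-normal tree),'' together with the passage to translation pairs. In your version the same content appears as the claim that $\Gamma^{\sn}$ is $\Gamma\down 0$-conservative, which by Lemma \ref{lem:e-vec_minimal_pullback} pins $\Gamma^{\sn}$ down uniquely. Two small points worth tightening in your write-up: (i) note that Lemma \ref{lem:e-vec_minimal_pullback} gives \emph{uniqueness} of the conservative pullback, so it suffices to check conservativity rather than to ``recover'' the pullback; and (ii) when you describe the padded stages, be careful that the padding in the $(0,n+1)$-translation pair sits at the positions where the inflation $\Tt'$ uses the $e_i$'s, so that stripping the padding from $\Uu$ yields $\Tt\down 0$ (your phrasing conflates inflating $\Uu'\down 0$ with inflating $\Uu'$ itself). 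The paper sidesteps having to phrase this globally by working at a single point of disagreement, which is arguably a slight notational economy, but substantively the two arguments carry the same weight.
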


Recall here that $\Sigma_{\vV}$ is defined from $\Sigma_{\wW_{n+1}}$ via normalization,
and by Lemma \ref{lem:Sigma_vV_is_e-vec_min_pullback},
its short-normal component  $(\Sigma_{\vV})^{\sn}$ is the $\vec{e}$-minimal pullback of $(\Sigma_{\vV\down 0})^{\sn}$.
\begin{proof}
We already know the strategy $\Sigma=\Sigma_{\vV}$
has these properties.
Now let $\Gamma$ be any such strategy, and suppose $\Sigma\neq\Gamma$. Then they differ in their restriction to short-normal trees, by Lemma \ref{lem:unique_ext_to_(0,OR)-strat_n+1}.
So let $\Tt$ be limit length short-normal, according to both $\Sigma,\Gamma$,
but with $\Sigma(\Tt)=b\neq c=\Gamma(\Tt)$.

Let $\Tt'\conc b'$ be the $\vec{e}$-minimal inflation of $\Tt\conc b$
and $\Tt'\conc c'$ that of $\Tt\conc c$; then $b',c'$ are $\Tt'$-cofinal,
but $b'\neq c'$. Also, $\Tt'\conc b'$ and $\Tt'\conc c'$
are according to $\Sigma,\Gamma$ respectively,
since both are self-coherent, self-consistent and have short-normal mhc (and the $\vec{e}$-extender is that derived from a short-normal tree). Then there are
$(n+1)$-translatable-padded trees $\Uu\conc b$ and $\Uu\conc c$ on $\vV\down 0$ such that
$(\Uu\conc b,\Tt'\conc b')$
and $(\Uu\conc c,\Tt'\conc c')$
are $(0,n+1)$-translation pairs.
But then by Lemma \ref{lem:Sigma_down_0_is_strat},
both $\Uu\conc b$ and $\Uu\conc c$ are according to (distinct) $(0,\OR)$-strategies for $\vV\down 0$.
But by Lemma
\ref{lem:tree_on_vV_n+1_ev_sps}, the $\Sigma_M$-tree leading from $M$ to $\vV\down 0$ is everywhere $(\delta,\kappa)$-stable, so by Lemma \ref{lem:everywhere_kappa^+-stable_unique_strategy},
there is in fact a unique $(0,\OR)$-strategy for $\vV\down 0$, a contradiction.
\end{proof}

   \subsection{$\delta_{n+1}$-short tree strategies}
   
We now compute the $\delta_{n+1}$-short tree strategies for $\vV_{n+1}$
and $\mM_{\infty n}$,
in $M$ and in other models.
We also compute correct branch models $M^\Tt_b$ for $\delta_{n+1}$-maximal
trees $\Tt$, in the right context.

\begin{dfn}
 For $k\leq\ell\leq n+1$, let $\Tt_{k\ell}$
 be the tree on $\vV_k$, leading from $\vV_k$ to $\vV_{\ell\down k}$. Let $j_{k\ell}:\vV_k\to\vV_{\ell\down k}$ be the iteration map.
\end{dfn}

\begin{dfn}
 Let $\vV$ be an $(n+1)$-pVl and $\Tt$ a short-normal tree on $\vV$. Let $\Tt_0,\Tt_1$ be such that $\Tt=\Tt_0\conc\Tt_1$, where $\Tt_0$ is based on $\vV|\delta_n^{\vV}$, and either (i) $\Tt_1=\emptyset$ (so $\Tt=\Tt_0$), or (ii) $\Tt_0$ has successor length, $b^{\Tt_0}$ does not drop, and $\Tt_1$ is above $\delta_n^{M^{\Tt_0}_\infty}$. Then we say that $\Tt_0$ is the \emph{lower component} and $\Tt_1$ the \emph{upper component} of $\Tt$.
\end{dfn}

The following definition is essentially
as in \cite[***Definitions 5.3, 5.12]{vm2_v2},
though we don't presently restrict to \emph{dsr} trees (we handle the latter a little differently):

\begin{dfn}\label{dfn:gamma-stable-P-suitable}
Let $\PP\in M$ and $g\sub\PP$ be $M$-generic. Working in $M[g]$, we say
 that a tree $\Tt$
  on $\vV_{n+1}$ is \emph{$\vec{\gamma}$-stable-$*$-suitable} if there are $E,F,U,\eta,\delta,\lambda,\Tt_0,\Tt_1$ such that:
 \begin{enumerate}
  \item\label{item:Tt_in_terms_of_Tt_0,Tt_1} $\Tt$ is short-normal $\vec{\gamma}$-stable,
   based on $\vV_{n+1}|\delta_{n+1}^{\vV_{n+1}}$,
   with lower and upper components
   $\Tt_0,\Tt_1$ respectively,
   with $\Tt_1$ of limit length,
  \item\label{item:upper_extender_F} either:
  \begin{enumerate}
   \item $F=\emptyset$ and $U=M$ and $\lambda=\kappa_{n+1}^M$, or
   \item $F\in\es^M$ is $M$-total with $\crit(F)=\kappa_{n+1}^M$
   and $U=\Ult(M,F)$ and $\lambda=\lambda(E)=\kappa_{n+1}^U$,
  \end{enumerate}
  
  \item\label{item:what_E_is_like} either:
  \begin{enumerate}
  \item $E=\emptyset$ and $\Tt_0$ is trivial, or
  \item\label{item:E_is_M-total} $E\in\es^M$ is $M$-total
  with $\crit(E)=\kappa_n^M$,
  and $\Tt_0$ is the (successor-length) tree induced by $E$,
  \end{enumerate}
  so in any case, $M^{\Tt_0}_\infty=i^M_E(\vV_{n+1})=\vV_{n+1}^{\Ult(M,E)}$,
  \item\label{item:eta,delta,lambda} $\eta$ is a strong $\{\kappa_0^M,\ldots,\kappa_n^{M}\}$-cutpoint of $U$ and a cardinal of $U$ and is the largest cardinal of $U|\delta$,
   $\Delta^{M^{\Tt_0}_\infty}<\eta<\delta=\delta(\Tt_1)<\lambda$ (and note $\Delta^{M^{\Tt_0}_\infty}=\lh(E)$ if $E\neq\emptyset$),
  and $\PP\in U|\eta$ 
  \item\label{item:U|delta_is_generic}$(U|\delta,g)=(M|\delta,g)$ is extender algebra generic over $M(\Tt_1)$
  for the extender algebra at $\delta$, and $\Tt_1$ is definable from parameters over $(U|\delta)[g]$.
 \end{enumerate}
 
(The reader should probably ignore the following paragraph until after
going through the proof of Lemma \ref{lem:*-suitable_P-con_correctness_no_short_overlaps} for the case that $\Tt\in M$.)
Working instead in $\vV_{i+1}[g]$
where $i\leq n$
and $g\sub\PP\in\vV_{i+1}$
is $\vV_{i+1}$-generic,
we say that a tree $\Tt$ on $\vV_{n+1}$
(where $\Tt\in\vV_{i+1}[g]$)
is \emph{$\vec{\gamma}$-stable-$*$-suitable}
if there are $E,F,\ldots,\Tt_1$ as as before, except that we  make the following modifications:
\begin{enumerate}[label=--]
 \item 
we replace ``$M$'' throughout
with ``$\vV_{i+1}$'',
\item 
 in clause \ref{item:what_E_is_like}(\ref{item:E_is_M-total}),
if $i=n$, we replace ``$\crit(E)=\kappa_n^M$''
with ``$E$ is $n$-long and $\gamma^{\vV_{n+1}}<\lh(E)$'', and
\item in clause \ref{item:eta,delta,lambda},
we replace ``$\eta$ is a strong $\{\kappa_0^M,\ldots,\kappa_n^M\}$-cutpoint of $U$''
with ``$\eta$ is a strong $\{\kappa_{i+1},\ldots,\kappa_n\}$-cutpoint with respect to short extenders''
(that is, if $E\in\es^{\vV_{i+1}}$
 is short
and $\crit(E)\leq\eta<\lh(E)$
then $\crit(E)\in\{\kappa_{i+1},\ldots,\kappa_n\}$; recall that (partially by notational convention) $\kappa_{j}^{\vV_{i+1}}=\kappa_j^M$ for $j\in[i+1,\om)$, and $\kappa_{i+1}^{\vV_{i+1}}=\kappa_{i+1}^M$ is the least
strong cardinal of $\vV_{i+1}$).
\end{enumerate}
For ease of reference we also record the resulting conditions in full:
 \begin{enumerate}[label=\arabic*'.,ref=\arabic*']
  \item\label{item:Tt_in_terms_of_Tt_0,Tt_1_vV} $\Tt$ is short-normal $\vec{\gamma}$-stable,
   based on $\vV_{n+1}|\delta_{n+1}^{\vV_{n+1}}$,
   with lower and upper components
   $\Tt_0,\Tt_1$ respectively,
   with $\Tt_1$ of limit length,
  \item\label{item:upper_extender_F_vV} either:
  \begin{enumerate}
   \item $F=\emptyset$ and $U=\vV_{i+1}$ and $\lambda=\kappa_{n+1}^{\vV_{i+1}}$, or
   \item $F\in\es^{\vV_{i+1}}$ is $\vV_{i+1}$-total with $\crit(F)=\kappa_{n+1}^{\vV_{i+1}}$ (hence $F$ is short)
   and $U=\Ult(\vV_{i+1},F)$ and $\lambda=\lambda(E)=\kappa_{n+1}^U$,
  \end{enumerate}
  
  \item\label{item:what_E_is_like_vV} either:
  \begin{enumerate}
  \item $E=\emptyset$ and $\Tt_0$ is trivial, or
  \item\label{item:E_is_M-total_vV} we have:
  \begin{enumerate}[label=--]
  \item
  if $i<n$ then
  $E\in\es^{\vV_{i+1}}$ is $\vV_{i+1}$-total with $\crit(E)=\kappa_n^{\vV_{i+1}}=\kappa_n^M$,
  \item if $i=n$ then $E\in\es^{\vV_{n+1}}$ 
   is $n$-long and $\gamma^{\vV_{n+1}}<\lh(E)$
   \end{enumerate}
  and $\Tt_0$ is the (successor-length) tree (on $\vV_{n+1}$)  induced by $E$,
  \end{enumerate}
  so in any case, $M^{\Tt_0}_\infty=i^{\vV_{i+1}}_E(\vV_{n+1})=\vV_{n+1}^{\Ult(\vV_{i+1},E)}$,
  \item\label{item:eta,delta,lambda_vV} $\eta$ is a strong $\{\kappa_{i+1}^{\vV_{i+1}},\ldots,\kappa_n^{\vV_{i+1}}\}$-cutpoint with respect to short extenders, a cardinal of $U$ and the largest cardinal of $U|\delta$,
   $\Delta^{M^{\Tt_0}_\infty}<\eta<\delta=\delta(\Tt_1)<\lambda$ (and note $\Delta^{M^{\Tt_0}_\infty}=\lh(E)$ if $E\neq\emptyset$),
  and $\PP\in U|\eta$ 
  \item\label{item:U|delta_is_generic_vV}$(U|\delta,g)=(\vV_{i+1}|\delta,g)$ is extender algebra generic over $M(\Tt_1)$
  for the extender algebra at $\delta$, and $\Tt_1$ is definable from parameters over $(U|\delta)[g]$.\qedhere
 \end{enumerate}
\end{dfn}

\begin{dfn}\label{dfn:gamma-unstable-P-suitable}
Let $\PP\in M$ and $g\sub\PP$ be $M$-generic.
 Working in $M[g]$, we say
 that a tree $\Tt$
  on $\vV_{n+1}$ is \emph{$\gamma_n$-unstable-$*$-suitable} if there are $E,F,U,\eta,\delta,\lambda,\Tt_0,\Tt_1$ such that:
 \begin{enumerate}
  \item \label{item:Tt_in_terms_of_Tt_0,Tt_1_unstable}\footnote{As in part \ref{item:Tt_in_terms_of_Tt_0,Tt_1} of \ref{dfn:gamma-stable-P-suitable},
  except that ``$\vec{\gamma}$-stable'' is replaced by ``$\gamma_n$-unstable''.}
   $\Tt$ is short-normal \underline{$\gamma_n$-unstable},
   based on $\vV_{n+1}|\delta_{n+1}^{\vV_{n+1}}$,
   with lower and upper components
   $\Tt_0,\Tt_1$ respectively,
   with $\Tt_1$ of limit length,
  \item \footnote{Identical to part \ref{item:upper_extender_F}
  of \ref{dfn:gamma-stable-P-suitable}.} either:
  \begin{enumerate}
   \item $F=\emptyset$ and $U=M$ and $\lambda=\kappa_{n+1}^M$, or
   \item $F\in\es^M$ is $M$-total with $\crit(F)=\kappa_{n+1}^M$
   and $U=\Ult(M,F)$ and $\lambda=\lambda(E)=\kappa_{n+1}^U$,
  \end{enumerate}
  \item\label{item:what_E_is_like_gamma-unstable} \footnote{This is quite different to part \ref{item:what_E_is_like} of \ref{dfn:gamma-stable-P-suitable}.}$\Tt_0$ is via $\Sigma_{\vV_{n+1}}$, and $\Tt_0=\Tt_{00}\conc\Tt_{01}$ where either:
\begin{enumerate}
  \item $n=0$, $E=\emptyset$, $\Tt_{00}$ is trivial (so $\Tt_0=\Tt_{01}$), or
  \item \label{item:T_0_as_concat_of_T_00,T_01}$n>0$, $\Tt_{00}$ is based on $\vV_{n+1}|\delta_{n-1}^{\vV_{n+1}}$, $b^{\Tt_{00}}$ is non-dropping, 
  $E\in\es^M$ is $M$-total
  with $\crit(E)=\kappa_{n-1}^M<\kappa_n^M<\kappa_n^{+M}<\lh(E)$, 
  $\Tt_{00}$ is  the (successor length) tree on $\vV_{n+1}$ which iterates  $\vV_{n+1}|\delta_{n-1}^{\vV_{n+1}}$ out
  to $\vV_n^{\Ult(M,E)}|\delta_{n-1}^{\vV_n^{\Ult(M,E)}}$,
  and $\Tt_{01}$ is above $\delta_{n-1}^{\vV_n^{\Ult(M,E)}}$
  (hence above $\gamma_{n-1}^{\vV_n^{\Ult(M,E)}}$),
  \end{enumerate}
  \item \footnote{Identical to part \ref{item:eta,delta,lambda}
  of \ref{dfn:gamma-stable-P-suitable}.} $\eta$ is a strong $\{\kappa_0^M,\ldots,\kappa_n^{M}\}$-cutpoint of $U$ and a cardinal of $U$ and is the largest cardinal of $U|\delta$,
   $\Delta^{M^{\Tt_0}_\infty}<\eta<\delta=\delta(\Tt_1)<\lambda$,
  and $\PP\in U|\eta$ 
\item \footnote{Identical to part \ref{item:U|delta_is_generic}
  of \ref{dfn:gamma-stable-P-suitable}.} $(U|\delta,g)=(M|\delta,g)$ is extender algebra generic over $M(\Tt_1)$
  for the extender algebra at $\delta$, and $\Tt_1$ is definable from parameters over $(U|\delta)[g]$.
 \end{enumerate}
(The reader should probably ignore the following paragraph until after
going through the proof of Lemma \ref{lem:*-suitable_P-con_correctness_no_short_overlaps} for the case that $\Tt\in M$.)
Working instead in $\vV_{i+1}[g]$
where $i\leq n$
and $g\sub\PP\in\vV_{i+1}$ is $\vV_{i+1}$-generic,
we say that a tree $\Tt$ on $\vV_{n+1}$
(so now $\Tt\in\vV_{i+1}$)
is \emph{$\gamma_n$-unstable-$*$-suitable}
if likewise, but making changes as for the $\vV_{i+1}[g]$ version of \emph{$\vec{\gamma}$-stable-$*$-suitable},
except that in clause \ref{item:what_E_is_like_gamma-unstable}(\ref{item:T_0_as_concat_of_T_00,T_01}),
if $i=n$, changing ``$\crit(E)=\kappa_{n-1}^M<\kappa_n^{+M}<\lh(E)$'' to ``$E$ is $(n-1)$-long and $\gamma^{\vV_{n+1}}<\lh(E)$'',
and if $i=n-1$, changing
``$\crit(E)=\kappa_{n-1}^M<\kappa_n^{+M}<\lh(E)$'' to ``$E$ is $(n-1)$-long and $\kappa_n^{+\vV_n}<\lh(E)$''
.

 \begin{enumerate}[label=\arabic*'.]
  \item\label{item:Tt_in_terms_of_Tt_0,Tt_1_vV_unstable}\footnote{Identical to part \ref{item:Tt_in_terms_of_Tt_0,Tt_1_unstable} of \ref{dfn:gamma-unstable-P-suitable}.} $\Tt$ is short-normal \underline{$\gamma_n$-unstable},
   based on $\vV_{n+1}|\delta_{n+1}^{\vV_{n+1}}$,
   with lower and upper components
   $\Tt_0,\Tt_1$ respectively,
   with $\Tt_1$ of limit length,
  \item\label{item:upper_extender_F_vV_unstable} \footnote{Identical to part \ref{item:upper_extender_F_vV} of \ref{dfn:gamma-stable-P-suitable}.} either:
  \begin{enumerate}
   \item $F=\emptyset$ and $U=\vV_{i+1}$ and $\lambda=\kappa_{n+1}^{\vV_{i+1}}$, or
   \item $F\in\es^{\vV_{i+1}}$ is $\vV_{i+1}$-total with $\crit(F)=\kappa_{n+1}^{\vV_{i+1}}$ (hence $F$ is short)
   and $U=\Ult(\vV_{i+1},F)$ and $\lambda=\lambda(E)=\kappa_{n+1}^U$,
  \end{enumerate}
  
  \item\label{item:what_E_is_like_vV_unstable}  $\Tt_0$ is via $\Sigma_{\vV_{n+1}}$, and $\Tt_0=\Tt_{00}\conc\Tt_{01}$ where either:
\begin{enumerate}
  \item $n=0$, $E=\emptyset$, $\Tt_{00}$ is trivial (so $\Tt_0=\Tt_{01}$), or
  \item \label{item:T_0_as_concat_of_T_00,T_01_vV_unstable}$n>0$, $\Tt_{00}$ is based on $\vV_{n+1}|\delta_{n-1}^{\vV_{n+1}}$, $b^{\Tt_{00}}$ is non-dropping, 
  $E\in\es^{\vV_{i+1}}$ is $\vV_{i+1}$-total,  either:
  \begin{enumerate}
\item $i<n-1$ and $E$ is short with \[ \crit(E)=\kappa_{n-1}^{\vV_{i+1}}=\kappa_{n-1}^M<\kappa_n^{\vV_{i+1}}<\kappa_n^{+\vV_{i+1}}<\lh(E),\]
or
\item $i=n-1$, $E$ is $(n-1)$-long
and $\kappa_n^{+\vV_n}=\kappa_n^{+M}<\lh(E)$, or
\item $i=n$, $E$ is $(n-1)$-long
and $\gamma^{\vV_{n+1}}<\lh(E)$, 
\end{enumerate}  
and   $\Tt_{00}$ is  the (successor length) tree on $\vV_{n+1}$ which iterates  $\vV_{n+1}|\delta_{n-1}^{\vV_{n+1}}$ out
  to $\vV_n^{\Ult(M,E)}|\Delta^{\vV_n^{\Ult(M,E)}}$,
  so \[ i^{\Tt_{00}}_{0\infty}\com j_{n,n+1}\rest(\vV_n|\Delta^{\vV_n})=i^{\vV_n}_E\rest(\vV_n|\Delta^{\vV_n}),\]
  \end{enumerate}
  and $\Tt_{01}$ is above $\Delta^{\vV_n^{\Ult(\vV_{i+1},E)}}$
  (hence above $\gamma^{\vV_n^{\Ult(\vV_{i+1},E)}}$),
  \item\label{item:eta,delta,lambda_vV_unstable}\footnote{Identical to \ref{item:eta,delta,lambda_vV} of \ref{dfn:gamma-stable-P-suitable}.} $\eta$ is a strong $\{\kappa_{i+1}^{\vV_{i+1}},\ldots,\kappa_n^{\vV_{i+1}}\}$-cutpoint with respect to short extenders, a cardinal of $U$ and the largest cardinal of $U|\delta$,
   $\Delta^{M^{\Tt_0}_\infty}<\eta<\delta=\delta(\Tt_1)<\lambda$,
  and $\PP\in U|\eta$ 
  \item\label{item:U|delta_is_generic_vV_unstable}\footnote{Identical to \ref{item:U|delta_is_generic_vV} of \ref{dfn:gamma-stable-P-suitable}.}$(U|\delta,g)=(\vV_{i+1}|\delta,g)$ is extender algebra generic over $M(\Tt_1)$
  for the extender algebra at $\delta$, and $\Tt_1$ is definable from parameters over $(U|\delta)[g]$.\qedhere
 \end{enumerate}
\end{dfn}

\begin{dfn}\label{dfn:P-suitable}
 Working in $\vV_i[g]$ where $i\leq n+1$ and $g$ is set-generic over $\vV_i$, we say that a tree $\Tt$ on $\vV_{n+1}$ is \emph{$*$-suitable}
 iff $\Tt$ is either $\gamma$-stable-$*$-suitable or $\gamma$-unstable-$*$-suitable.
\end{dfn}

\begin{rem}
 The next two definitions will only be sufficient in the case that the desired Q-structure or branch model does not involve short extenders which overlap $\delta$. The full generalization will involve inverse $*$-translation.
\end{rem}

Adapting \cite[Definition 5.4***]{vm2_v2}:
\begin{dfn}\label{dfn:P-con_for_gamma-stable-P-suitable}
 Let $\Tt\in M[g]$, where $g$ is set-generic over $M$, with $\Tt$ on $\vV_{n+1}$, be $\vec{\gamma}$-stable-$*$-suitable,
 and adopt the notation of Definition \ref{dfn:gamma-stable-P-suitable}.
 The \emph{P-construction}
 $\mathscr{P}^U(M(\Tt))$ of $U$ over $M(\Tt)$ is the structure $P$
 extending $M(\Tt)$,
 where for each $\nu\in[\delta,\OR^P]$:
 \begin{enumerate}
  \item \label{item:P|nu_active_iff_U|nu_active}
$P|\nu$ is active
 iff $U|\nu$ is active, and
 \item\label{item:P-con_F^P_from_F^M}
if $U|\nu$ is active,
then letting $G=F^{U|\nu}$:
\begin{enumerate}
\item if $\delta\leq\crit(G)$
 (hence $\delta<\crit(G)$)
 then $F^{P|\nu}\sub G$,
 \item\label{item:P-con_long_extender_commutings}if $\crit(G)<\delta$,
 hence $\crit(G)=\kappa_i^M=\kappa_i^U$
 for some $i\leq n$,
then $F^{P|\nu}$ is $i$-long,
and:
\begin{enumerate}
\item if $i=n$ then 
$F^{P|\nu}\circ E\rest(\vV_{n+1}|\Delta^{\vV_{n+1}})=G\rest(\vV_{n+1}|\Delta^{\vV_{n+1}})$, and
\item if $i<n$ then
$F^{P|\nu}\circ E\circ j_{i+1,n+1}\rest(\vV_{i+1}|\Delta^{\vV_{i+1}})=G\rest(\vV_{i+1}|\Delta^{\vV_{i+1}})$,
\end{enumerate}
\end{enumerate}
 \item if $\nu<\OR^P$ then $P|\nu$ is fully sound,
\item $P|\nu\sats$ ``$\delta$ is Woodin'', and
\item\label{item:OR^P_large_as_possible} $\OR^P$ is as large as possible under these conditions.
\end{enumerate}

Working instead in $\vV_{i+1}[g]$, where $i\leq n$ and $g$ is set-generic over $\vV_{i+1}$,
let $\Tt$ on $\vV_{n+1}$ be $\vec{\gamma}$-stable-$*$-suitable,
and adopt the notation as before.
The \emph{P-construction $\mathscr{P}^U(M(\Tt))$
of $U$ over $M(\Tt)$} is defined as above,
except that clause \ref{item:P-con_F^P_from_F^M}(\ref{item:P-con_long_extender_commutings}) is replaced with the following:
 \begin{enumerate}
  \setcounter{enumi}{1}
 \item\label{item:P-con_F^P_from_F^M_i+1}
\begin{enumerate}
 \setcounter{enumii}{1}
 \item \label{item:P-con_long_extender_commutingsi+1}if $\crit(G)<\delta$,
 hence either $G$ is $i'$-long for some $i'\leq i$, or $G$ is short and
$\crit(G)=\kappa_{i'}^M$
 for some $i'\in[i+1,n]$,
then $F^{P|\nu}$ is $i'$-long,
and:
\begin{enumerate}
\item if $\max(i,i')=n$ then 
\[ F^{P|\nu}\circ E\rest(\vV_{n+1\down i'+1}|\Delta^{\vV_{n+1\down i'+1}})=G\rest(\vV_{n+1\down i'+1}|\Delta^{\vV_{n+1\down i'+1}}),\]
\item if $i<i'<n$ then
\[ F^{P|\nu}\circ E\circ j_{i'+1,n+1}\rest(\vV_{i'+1}|\Delta^{\vV_{i'+1}})=G\rest(\vV_{i'+1}|\Delta^{\vV_{i'+1}}),\]
\item if $i'\leq i<n$ then
\[ F^{P|\nu}\circ E\circ j_{i+1,n+1}\rest(\vV_{i+1\down i'+1}|\Delta^{\vV_{i+1\down i'+1
}})=G\rest(\vV_{i+1\down i'+1}|\Delta^{\vV_{i+1\down i'+1}}).\qedhere\]
\end{enumerate}
\end{enumerate}
\end{enumerate}
\end{dfn}

\begin{dfn}\label{dfn:P-con_for_gamma-unstable-P-suitable}
 Let $\Tt\in M[g]$, where $g$ is set-generic over $M$, $\Tt$ on $\vV_{n+1}$, be $\gamma_n$-unstable-$*$-suitable,
 and adopt the notation of Definition \ref{dfn:gamma-unstable-P-suitable}.
 The \emph{P-construction}
 $P=\mathscr{P}^U(M(\Tt))$ of $U$ over $M(\Tt)$ is the structure
 extending $M(\Tt)$,
 where for each $\nu\in[\delta,\OR^P]$,
 conditions \ref{item:P|nu_active_iff_U|nu_active}--\ref{item:OR^P_large_as_possible} of Definition \ref{dfn:P-con_for_gamma-stable-P-suitable} hold, except that clause \ref{item:P-con_F^P_from_F^M}(\ref{item:P-con_long_extender_commutings}) is replaced with the following:
 \begin{enumerate}
 \setcounter{enumi}{1}
 \item
\begin{enumerate}
 \setcounter{enumii}{1}
 \item if $\crit(G)<\delta$,
 hence $\crit(G)=\kappa_i^M=\kappa_i^U$
 for some $i\leq n$,
then $F^{P|\nu}$ is $i$-long,
and:
\begin{enumerate}
\item if $i=n$ then
$F^{P|\nu}\circ j\rest(\vV_{n+1}|\Delta^{\vV_{n+1}})=G\rest(\vV_{n+1}|\Delta^{\vV_{n+1}})$
where $j:\vV_{n+1}\to M^{\Tt_0}_\infty$ 
is the iteration map,
\item if $i=n-1$ then 
$F^{P|\nu}\circ E\rest(\vV_{n}|\Delta^{\vV_{n}})=G\rest(\vV_{n}|\Delta^{\vV_{n}})$, and
\item if $i<n-1$ then
$F^{P|\nu}\circ E\circ j_{i+1,n}\rest(\vV_{i+1}|\Delta^{\vV_{i+1}})=G\rest(\vV_{i+1}|\Delta^{\vV_{i+1}})$.
\end{enumerate}
\end{enumerate}
\end{enumerate}

Working instead in $\vV_{i+1}[g]$, where $i\leq n$ and $g$ is set-generic over $\vV_{i+1}$, let $\Tt$ on $\vV_{n+1}$
be $\gamma_n$-unstable-$*$-suitable, and adopt the notation as before. The \emph{P-construction $\mathscr{P}^U(M(\Tt))$ of $U$ over $M(\Tt)$}
is defined as before,
except that clause \ref{item:P-con_F^P_from_F^M}(\ref{item:P-con_long_extender_commutings}) is replaced with the following:
 \begin{enumerate}
  \setcounter{enumi}{1}
 \item\label{item:P-con_F^P_from_F^M_i+1_2}
\begin{enumerate}
 \setcounter{enumii}{1}
 \item \label{item:P-con_long_extender_commutingsi+1_2}if $\crit(G)<\delta$,
 hence either $G$ is $i'$-long for some $i'\leq i$, or $G$ is short and
$\crit(G)=\kappa_{i'}^M$
 for some $i'\in[i+1,n]$,
then $F^{P|\nu}$ is $i'$-long,
and:
\begin{enumerate}
\item if $\max(i,i')=n$ then 
\[ F^{P|\nu}\circ j\rest(\vV_{n+1\down i'+1}|\Delta^{\vV_{n+1\down i'+1}})=G\rest(\vV_{n+1\down i'+1}|\Delta^{\vV_{n+1\down i'+1}})\]
where $j:\vV_{n+1}\to M^{\Tt_0}_\infty$ is the iteration map,
\item if $i<i'=n-1$ then
$F^{P|\nu}\circ E\rest(\vV_{n}|\Delta^{\vV_{n}})=G\rest(\vV_{n}|\Delta^{\vV_{n}})$, and
\item if $i<i'<n-1$ then
$F^{P|\nu}\circ E\circ j_{i'+1,n}\rest(\vV_{i'+1}|\Delta^{\vV_{i'+1}})=G\rest(\vV_{i'+1}|\Delta^{\vV_{i'+1}})$.
\item if $i'\leq i=n-1$ then
\[ F^{P|\nu}\circ E\rest(\vV_{n\down i'+1}|\Delta^{\vV_{n\down i'+1
}})=G\rest(\vV_{n\down i'+1}|\Delta^{\vV_{n\down i'+1}}).\]
\item if $i'\leq i<n-1$ then
\[ F^{P|\nu}\circ E\circ j_{i+1,n}\rest(\vV_{i+1\down i'+1}|\Delta^{\vV_{i+1\down i'+1}})=G\rest(\vV_{i+1\down i'+1}|\Delta^{\vV_{i+1\down i'+1}}).\qedhere\]
\end{enumerate}
\end{enumerate}
\end{enumerate}
\end{dfn}

\begin{dfn}
 Let $N$ be a non-dropping $\Sigma_M$-iterate of $M$,
 and let $\vV$ be a non-dropping $\Sigma_{\wW_{i+1}}$-iterate of $\wW_{i+1}$, for some $i\leq n$.
 Then we define \emph{$\gamma$-stable-$*$-suitable} and \emph{$\gamma$-unstable-$*$-suitable}
 and \emph{$*$-suitable} in $N[g]$
 and $\vV[g']$ (for trees on $\vV_{n+1}^N$ or $\vV_{n+1}^{\vV}$, and for set-generics $g,g'$) just as in $M[h]$
 and $\vV_{i+1}[h']$;
 likewise for the associated \emph{P-constructions}.
 (Note that these definitions are  first order over these models,
 so this makes sense. One important
 point is that, although at some points we may have referred to iteration maps on $\vV_{n+1}$, for example, we only actually needed to refer to restrictions of those maps which were in the models in question, and first order identifiable there. We use the same definitions for these strategy fragments in $N[g]$ or $\vV[g']$ as in $M[h]$ and $\vV_{i+1}[h']$.)
\end{dfn}
\begin{lem}\label{lem:strategy_agreement}
 Let $N$ be a non-dropping $\Sigma_M$-iterate.
Let $\vV=\vV_{n+1}^N$. Let $\Tt_{\wW_{n+1}\vV}$
be the tree via $\Sigma_{\wW_{n+1}}$ which leads from $\wW_{n+1}$ to $\vV$ (cf.~Corollary \ref{cor:Sigma_wW_n+1_down_0=Sigma_M}). 
 
 Let  $E,\Tt_0$ be as in
 the definition of \emph{$\vec{\gamma}$-stable-$*$-suitable} (\ref{dfn:gamma-stable-P-suitable}), but with $N$ replacing $M$ and $\vV$ replacing $\vV_{n+1}$.
 Let $\Tt$ be a putative short-normal tree on $\vV$ of the form $\Tt=\Tt_0\conc\Tt_1$, 
 and letting $\vV'=\Ult(\vV,E)$,
suppose $\Tt_1$ is above $\gamma_n^{\vV'}$.

Let $\Tt'$ be the translation of $\Tt$ to a tree on $N$,
so $\Tt'=\left<E\right>\conc\Tt'_1$
where $\Tt'_1$ has the same extender indices and tree structure as has $\Tt_1$. Then $\left<E\right>\conc\Tt'_1$ is via $\Sigma_N$ iff
 $\Tt$ is via $\Sigma_{\vV}$. 
\end{lem}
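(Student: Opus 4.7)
The plan is to reduce the claim to the characterization of $(\Sigma_\vV)^{\sn}$ as the $\vec{e}$-minimal pullback of $\Sigma_{\vV\down 0}$ from Lemma \ref{lem:Sigma_vV_is_e-vec_min_pullback}. First I would observe that by an argument analogous to Corollary \ref{cor:Sigma_wW_n+1_down_0=Sigma_M}, we have $\Sigma_{\vV}\down 0 = \Sigma_N$: indeed, $\vV\down 0 = N$ by the P-construction relationship, and Lemma \ref{lem:Sigma_down_0_is_strat} produces some $(0,\OR)$-strategy for $N$, which must equal $\Sigma_N$ by its uniqueness. Consequently, by the definition of $\vec{e}$-minimal pullback, a short-normal tree $\Tt$ on $\vV$ is via $\Sigma_\vV$ iff $\Tt\down 0$ (as in Definition \ref{dfn:e-vec_min_inflation_stable}) exists and is via $\Sigma_N$.

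The remaining task is to identify $\Tt\down 0$ with $\Tt' = \left<E\right>\conc\Tt'_1$. I would compute the standard decomposition of $\Tt$. Since $\Tt$ is short-normal on $\vV$ and $\Tt_0$ is induced by $E$ (with $\crit(E)=\kappa_n^N$), so that after this first phase we arrive at the iterate $\vV_{n+1}^{\Ult(N,E)} = \Ult(\vV,F)$, where $F$ is the $n$-long lift of $E$, the standard decomposition places everything through the ``$E$-step'' into the $n$-th block and places $\Tt_1$ (which is above $\gamma_n^{\vV'}$) into the $(n+1)$-th block. Forming the $\vec{e}$-$0$-minimal inflation $\Tt''$ of $\Tt$ inserts $e_0,e_1,\ldots,e_{n-1}$-extenders between the blocks, and the associated $\Tt\down 0$ on $N$ (i)~absorbs the first $n$ inserted $e_i$'s together with the $n$-long step used in $\Tt_0$ into a single application of the short extender $E$ on $N$ (this is the content of Lemma \ref{lem:e-vec_commutativity}, propagated to iterates), and (ii)~mirrors $\Tt_1$ with the same extender indices and tree structure as its block $\Tt''_{n+1}$. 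Hence $\Tt\down 0 = \left<E\right>\conc\Tt''_{n+1} = \left<E\right>\conc\Tt'_1 = \Tt'$, which combined with the opening observation gives the claimed equivalence.

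The main obstacle is verifying item (i): that the $n$-long extender $F$ on $\vV$ driving $\Tt_0$, composed with the inserted $\vec{e}\rest n$ on $\vV\down n,\ldots,\vV\down 1$, translates precisely to the single short extender $E$ on $N$. This is where Lemma \ref{lem:e-vec_commutativity} (and its propagation to iterates via the conservativity Lemmas \ref{lem:conservativity_stable} and \ref{lem:conservativity_unstable}) does the real work: roughly, the iterated composition $(\vec{e}\rest n)^+\circ F$ at $\Ult(\vV,F)$ matches the derived extender of $E$ acted on $\vV$, and by induction on the level this collapses all the way down to $E$ on $N$. Once this is granted, all branch-at-limit choices in $\Tt$ and $\Tt'$ match automatically, because they are both ultimately determined at each limit by the branch that $\Sigma_N$ chooses on the common translation tree on $N$, and the conservativity statements guarantee that the model-, embedding-, and drop-/degree-structure correspondences are preserved through each limit.
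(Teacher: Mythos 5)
Your opening claim that $\vV\down 0 = N$ is false, and it is a load-bearing error. Recall that $\vV\down n$ is defined by P-construction from $\Ult(\vV,e_n^{\vV})$, so for $\vV=\vV_{n+1}^N$ one has $\vV\down n=\mM_{\infty,n}^N$, the internal direct-limit model of $N$'s level-$n$ covering system --- a nontrivial $\Sigma_{\vV_n^N}$-iterate, not $\vV_n^N$ itself --- and iterating $\down$ all the way to $0$ lands on a proper $\Sigma_N$-iterate of $N$, not on $N$. (Already in the base case, $\vV_1\down 0=\mM_{\infty 0}\neq M$.) Consequently $\Sigma_{\vV\down 0}$ and $\Sigma_N$ are strategies for different models, and $\Tt\down 0$ (the $\vec{e}$-minimal-inflation translation of $\Tt$, which is a tree on $\vV\down 0$) cannot equal $\Tt'=\left<E\right>\conc\Tt_1'$, which is a tree on $N$. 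So the $\vec{e}$-minimal pullback characterization from Lemma \ref{lem:Sigma_vV_is_e-vec_min_pullback} does not, by itself, connect $\Tt$ on $\vV$ to $\Tt'$ on $N$ in the way you need; the commutativity considerations in your last paragraph are addressed at the wrong pair of models.

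The paper's proof is shorter and sidesteps this entirely. It notes that $\Sigma_N$ is the tail of $\Sigma_M$ under normalization and $\Sigma_{\vV}$ is the tail of $\Sigma_{\wW_{n+1}}$ under normalization, and then invokes $\Sigma_M=\Sigma_{\wW_{n+1}}\down 0$ from Corollary \ref{cor:Sigma_wW_n+1_down_0=Sigma_M}. Since the $(0,n{+}1)$-translation-pair correspondence (Lemma \ref{lem:n+1-translation_pair}) commutes with normalization, the two normalization processes run in lockstep, and the correspondence between $\Tt$ and $\Tt'$ is a direct instance of this. To salvage your route, you would still need an additional step relating $\Sigma_{\vV\down 0}$-trees on $\vV\down 0$ back to $\Sigma_N$-trees on $N$, which essentially amounts to redoing the translation-pair argument; it is cleaner to use it directly as the paper does.
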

\begin{proof}
Recall that $\Sigma_N$ and $(\Sigma_{\vV})^{\sn}$
are defined via normalization from $\Sigma_M$ and $(\Sigma_{\wW_{n+1}})^{\sn}$ respectively.
But by Corollary \ref{cor:Sigma_wW_n+1_down_0=Sigma_M},
$\Sigma_M=\Sigma_{\wW_{n+1}}\down 0$,
and it follows that the two  normalization processes work in agreement, which yields the lemma.
\end{proof}

\begin{lem}\label{lem:strategy_agreement_2_n=0}
Let $N$ be a non-dropping $\Sigma_M$-iterate,
via the tree $\Tt_N$. Let $\vV=\vV_1^N$.
So 
$\vV$ is a  $\Sigma_{\wW_1}$-iterate.
Let $\PP\in N$ and $g$ be $(N,\PP)$-generic. 
 Let $E,\Tt_{00},\Tt_{01},\Tt_0$
 be as in the definition of $\gamma_0$-unstable-$*$-suitable (\ref{dfn:gamma-unstable-P-suitable}), but with $N$ replacing $M$ and $\vV_{1}^N$ replacing $\vV_{1}$.
 So recall that in particular,
 \begin{enumerate}[label=--]
 \item   $E=\emptyset$ (as we are considering $\gamma_0$-instability),
  \item $\Tt_0=\Tt_{01}$ is short-normal on $\vV$ and $\Tt_{00}$ is trivial,
  \item $\Tt_{0}$ is based on $\vV_1|\delta_0^{\vV}$,  has successor length,
 and $b^{\Tt_{0}}$ does not drop.
 \end{enumerate}
 Suppose further that
$\Tt_0$ is via $\Sigma_{\vV}$,
and let $\Tt_1$ be a putative short-normal tree based on $M^{\Tt_0}_\infty|\gamma_0^{M^{\Tt_0}_\infty}$, and above $\delta_0^{M^{\Tt_0}_\infty}$.
Let
 $\Tt_0'\conc\Tt_1'$ be the putative tree
 $(\Tt_0\conc\Tt_1)\down 0$.
Then 
$\Tt_0\conc\Tt_1$ is via $\Sigma_{\vV}$
iff  $\Tt_0'\conc\Tt_1'$ is via $\Sigma_{\vV\down 0}$.
\end{lem}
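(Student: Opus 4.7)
The plan is to derive this directly from Lemma \ref{lem:Sigma_vV_is_e-vec_min_pullback} together with the definition of $\vec{e}$-minimal pullback. By Corollary \ref{cor:Sigma_wW_n+1_down_0=Sigma_M}, $\vV = \vV_1^N$ is a non-dropping $\Sigma_{\wW_1}$-iterate, and $\vV\down 0 = N$ with $\Sigma_{\vV\down 0} = \Sigma_N$. So by Lemma \ref{lem:Sigma_vV_is_e-vec_min_pullback} (applied with index $0$, so $\vec{e} = (e_0)$), the short-normal component $(\Sigma_\vV)^{\sn}$ is the $\vec{e}$-minimal pullback of $\Sigma_N$. By Definition \ref{dfn:e-vec_minimal_pullback}, this means exactly that a short-normal tree $\Uu$ on $\vV$ is via $(\Sigma_\vV)^{\sn}$ if and only if $\Uu\down 0$ exists (equivalently, has wellfounded models) and is via $\Sigma_N$.

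The remaining task is to identify $(\Tt_0\conc\Tt_1)\down 0$ with the tree $\Tt_0'\conc\Tt_1'$ of the statement. When $\Tt_1$ is non-trivial, $\Tt_0\conc\Tt_1$ is short-normal and $\gamma_0$-unstable (with $n=0$ and unstable index $k=0$), and its standard decomposition in the sense of Definition \ref{dfn:e-vec_min_inflation_unstable} is $(\Uu_0,\Vv) = (\Tt_0,\Tt_1)$. Applying that definition with $i=k=0$, the inserted extender list $e_i,\ldots,e_{k-1}$ is empty, so the $\vec{e}$-$0$-minimal inflation of $\Tt_0\conc\Tt_1$ is $\Tt_0\conc\Tt_1$ itself, and $(\Tt_0\conc\Tt_1)\down 0 = \widetilde{\Uu_0}\conc\Vv''$ is obtained by reinterpreting the same tree order and extender indices over $\vV\down 0 = N$, which is precisely the tree $\Tt_0'\conc\Tt_1'$ described in the statement. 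When $\Tt_1$ is trivial, $\Tt_0\conc\Tt_1 = \Tt_0$ is $\vec{\gamma}$-stable and the analogous identification via Definition \ref{dfn:e-vec_min_inflation_stable} is immediate, since $\vV$ and $N$ agree strictly below $\delta_0$.

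Both directions of the biconditional then follow at once: being via $(\Sigma_\vV)^{\sn}$ is, by the pullback characterization, the same as the $\down 0$ translate being via $\Sigma_N$, including wellfoundedness of the translate. I do not anticipate any genuine obstacle; the substantive work is packaged into Lemma \ref{lem:Sigma_vV_is_e-vec_min_pullback} and the degenerate $i=k=0$ case of the $\down i$ operation in Definition \ref{dfn:e-vec_min_inflation_unstable}. The only mild care is in verifying the matching of tree orders, extender indices, and drop/degree structure between $(\Tt_0\conc\Tt_1)\down 0$ and $\Tt_0'\conc\Tt_1'$, which is by construction and does not require any new argument.
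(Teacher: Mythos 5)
Your overall strategy — reduce the equivalence to Lemma \ref{lem:Sigma_vV_is_e-vec_min_pullback} and the definition of the $\vec{e}$-minimal pullback — is sound and is a more modular packaging of what the paper does (the paper's own proof argues the same normalization facts directly, noting that $\Sigma_{\vV}$ and $\Sigma_{\vV\down 0}$ come from $\Sigma_{\wW_1}$ and $\Sigma_M$ via normalization, that $\Sigma_{\wW_1}$ agrees with $\Sigma_M$ for trees based on $\wW_1|\gamma_0^{\wW_1}$, and that the upper part $\Uu_1$ of the tree from $\wW_1$ to $\vV$ is irrelevant to normalizing trees based on $\vV|\gamma_0^{\vV}$, without explicitly invoking \ref{lem:Sigma_vV_is_e-vec_min_pullback}). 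The identification of $(\Tt_0\conc\Tt_1)\down 0$ with $\Tt_0'\conc\Tt_1'$ is in fact vacuous, since the statement literally \emph{defines} $\Tt_0'\conc\Tt_1'$ to be $(\Tt_0\conc\Tt_1)\down 0$; unpacking the $i=k=0$ degenerate case of Definition \ref{dfn:e-vec_min_inflation_unstable} does no harm but is not needed.

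However, there is a concrete misstatement that you repeat throughout: you assert that $\vV\down 0=N$ and $\Sigma_{\vV\down 0}=\Sigma_N$. This is false. For a $1$-pVl $\vV$, $\vV\down 0$ is obtained by the reverse P-construction from $\Ult(\vV,e_0^{\vV})$, and for $\vV=\vV_1^N$ this yields $\vV\down 0=\mM_\infty^N$, not $N$; already in the base case, $\vV_1\down 0=\mM_{\infty 0}\neq M$. You appear to have conflated $\vV_1$ with $\wW_1$ (it is $\wW_1$ for which $\wW_1\down 0=M$), and Corollary \ref{cor:Sigma_wW_n+1_down_0=Sigma_M} gives $\Sigma_{\wW_1}\down 0=\Sigma_M$ together with the fact that $\vV_1^N$ is a $\Sigma_{\wW_1}$-iterate — it does not say $\vV_1^N\down 0=N$. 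Fortunately this error is benign here: the lemma's conclusion is phrased uniformly in $\vV\down 0$ and $\Sigma_{\vV\down 0}$, and Lemma \ref{lem:Sigma_vV_is_e-vec_min_pullback} already delivers that $(\Sigma_{\vV})^{\sn}$ is the $\vec{e}$-minimal pullback of $\Sigma_{\vV\down 0}$. So if you simply replace each occurrence of ``$N$'' and ``$\Sigma_N$'' in your argument with ``$\vV\down 0$'' and ``$\Sigma_{\vV\down 0}$'' (and drop the false identification), the argument goes through and is correct. You should also note explicitly why the biconditional, not just one direction, follows from conservativity: the $\down 0$ operation preserves tree order (Lemma \ref{lem:conservativity_stable}/\ref{lem:conservativity_unstable}), so the branch chosen at any limit by $(\Sigma_{\vV})^{\sn}$ must match the one chosen by $\Sigma_{\vV\down 0}$, which gives the converse.
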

\begin{proof}
 $\Sigma_{\vV}$ is given via normalization from $\Sigma_{\wW_1}$, and $\Sigma_{\vV\down 0}$ via normalization from $\Sigma_M$.
 And $\Sigma_{\wW_1}$ is equivalent to $\Sigma_{M}$ for trees based on $\wW_1|\gamma_0^{\wW_1}$ (respectively, on $M|\kappa_0^{+M}$). The tree $\Uu$ leading from $\wW_1$ to $\vV$ is of form $\Uu=\Uu_0\conc\Uu_1$ where $\Uu_0$ is based on $\wW_1|\delta_0^{\wW_1}$
 and $\Uu_1$ is on $M^{\Uu_0}_\infty$ and is above $\gamma_0^{M^{\Uu_0}_\infty}$.
 But $\Uu_1$ has no impact on the normalization process for trees on $\vV$ which are based on $\vV|\gamma_0^{\vV}=M^{\Uu_0}_\infty|\gamma_0^{M^{\Uu_0}_\infty}$, and likewise for trees on $\vV\down 0$. The lemma's conclusion follows directly from these considerations.
\end{proof}

\begin{lem}\label{lem:strategy_agreement_2}
Suppose $n>0$.
Let $N$ be a non-dropping $\Sigma_M$-iterate,
via the tree $\Tt_N$. 
Let $\PP\in N$ and $g$ be $(N,\PP)$-generic. 
 Let $E,\Tt_{00},\Tt_{01},\Tt_0$
 be as in the definition of $\gamma_n$-unstable-$*$-suitable (\ref{dfn:gamma-unstable-P-suitable}), but with $N$ replacing $M$ and $\vV_{n+1}^N$ replacing $\vV_{n+1}$.
 So recall that in particular,
 \begin{enumerate}[label=--]
 \item   $E\neq\emptyset$ (as $n>0$),
  \item $\Tt_0=\Tt_{00}\conc\Tt_{01}$ is short-normal on $\vV$,
 \item $\Tt_{00}$ is on $\vV$, based on $\vV|\delta_{n-1}^{\vV}$, of successor length, $b^{\Tt_{00}}$ does not drop, and $\Tt_{00}$ is determined by $E$ (but $i^{\Tt_{00}}_{0\infty}$ is just  a factor of $i_E$),
  \item $\Tt_{01}$ is based on $M^{\Tt_{00}}_\infty|\delta_n^{M^{\Tt_{00}}_\infty}$, is above $\gamma_{n-1}^{M^{\Tt_{00}}_\infty}$, has successor length,
 and $b^{\Tt_{01}}$ does not drop.
 \end{enumerate}
 Suppose further that:
 \begin{enumerate}[label=--]
  \item 
$\Tt_0$ is via $\Sigma_{\vV}$ (we already know that
$\Tt_{00}$ is via $\Sigma_{\vV}$),
 \item $\Tt_1$ is based on $M^{\Tt_0}_\infty|\gamma_n^{M^{\Tt_0}_\infty}$, and above $\delta_n^{M^{\Tt_0}_\infty}$.
 \end{enumerate}
  Let $\bar{N}$ be the 
$\kappa_{n-1}^N$-core of $N$.
So $\vV_n^{\bar{N}}$ is the $\delta_{n-1}^{\vV_n^N}$-core of $\vV_n^N$. Let $\bar{\Tt}_n$ be the $\Sigma_{\wW_n}$-tree with last model $\vV_n^{\bar{N}}$, so $\bar{\Tt}_n$ is based on $\wW_n|\delta_{n-1}^{\wW_n}$.  Let $\bar{\Tt}_{n\uparrow n+1}$ be the corresponding tree on $\wW_{n+1}$ and let $\vV^*=M^{\bar{\Tt}_{n\uparrow(n+1)}}_\infty$
(so
$\vV_n^{\bar{N}}=\vV^*\down n$).
Then:
\begin{enumerate}
\item\label{item:vV_n+1^N-bar=Ult(vV^*,e^vV^*)}
$\vV_{n+1}^{\bar{N}}=\Ult(\vV^*,e_n^{\vV^*})$.
\item\label{item:M^Tt_0_infty_is_Sigma_vV^*-it} $M^{\Tt_0}_\infty$
is a (non-dropping) $\Sigma_{\vV^*}$-iterate,
via a short-normal tree $\Tt^*_0=\Tt_{00}^*\conc\Tt_{01}^*\conc\Uu^*$ where:
\begin{enumerate}
\item $\Tt_{00}^*$ is the successor length tree
determined by $E$,
\item $\Tt_{01}^*$ is based on $M^{\Tt_{00}^*}_\infty|\delta_n^{M^{\Tt_{00}^*}_\infty}$ and above $\gamma_{n-1}^{M^{\Tt_{00}^*}_\infty}$,
\item $\Uu^*$ is above  $\gamma_n^{M^{\Tt_{01}^*}_\infty}$,
\item $\Uu^*$ is trivial iff $\vV$ is $\delta_n^{\vV}$-sound iff $N$ is $\kappa_{n+1}^N$-sound,
\end{enumerate}
 so $M^{\Tt_0}_\infty=M^{\Tt_0^*}_\infty$ and $M^{\Tt_0}_\infty\down n$
is a  $\Sigma_{\vV^*\down n}$-iterate,
via $\Tt^*\down n$,
\item\label{item:get_tree_on_M_via_Sigma_M}letting $\eta$ be least such that $E\in\es^{M^{\Tt_N}_\eta}$,
then there is a $\Sigma_M$-tree
 $\Tt'_0$ extending $\Tt_N\rest(\eta+1)$,
 such that $E^{\Tt'_0}_\eta=E$
 and the length, extender indices and tree structure
 of $\Tt'_0\rest(\eta,\infty)$
 are just those of $(\Tt_{01}^*\conc\Uu^*)\down n$
 (which are just those of $\Tt_{01}^*\conc\Uu^*$), 
 and $\vV_n^{M^{\Tt'_0}_\infty}=M^{\Tt^*_0\down n}_\infty=M^{\Tt^*_0}_\infty\down n=M^{\Tt_0}_\infty\down n$, and
 \item \label{item:Tt'_conc_b_via_Sigma_N}
 letting:
 \begin{enumerate}[label=--]
 \item $\Tt^*_1=\Tt_1$
 (so $\Tt_1^*$ is based on $M^{\Tt^*_0}_\infty|\gamma_n^{M^{\Tt_0^*}_\infty}$, above $\delta_n^{M^{\Tt_0^*}_\infty}$;
  note that $\Tt_1^*\down n$
 is a putative tree on $M^{\Tt_0^*\down n}_\infty$,
 based on $M^{\Tt_0^*\down n}_\infty|\kappa_n^{+M^{\Tt_0^*\down n}_\infty}$, 
 above $\delta_n^{M^{\Tt_0^*\down n}_\infty}$), and
 \item $\Tt_1'$ be the  putative
 tree on $M^{\Tt_0'}_\infty$ with the same
 length, extender indices and tree structure as has $\Tt_1^*\down n$ (which are the same as those of $\Tt_1^*$),
 \end{enumerate}
then the following are equivalent:
\begin{enumerate}[label=(\roman*)]
\item\label{item:(i)_tree_equiv}
$\Tt_1$ is via $\Sigma_{M^{\Tt_0}_\infty}$,
\item\label{item:(ii)_tree_equiv} $\Tt_1^*$ is via $\Sigma_{M^{\Tt_0^*}_\infty}$,
\item\label{item:(iii)_tree_equiv}  $\Tt_1^*\down n$ is via $\Sigma_{M^{\Tt_0^*\down n}_\infty}$,
\item\label{item:(iv)_tree_equiv}  $\Tt_1'$ is via $\Sigma_{M^{\Tt_0'}_\infty}$.\end{enumerate}
\end{enumerate}
\end{lem}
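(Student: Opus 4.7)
The plan is to prove the four items in order, leveraging conservativity (Lemmas \ref{lem:conservativity_stable} and \ref{lem:conservativity_unstable}), Corollary \ref{cor:Sigma_wW_n+1_down_0=Sigma_M}, and especially Lemma \ref{lem:Sigma_vV_is_e-vec_min_pullback}, which identifies $(\Sigma_{\vV})^{\sn}$ with the $\vec{e}$-minimal pullback of $\Sigma_{\vV\down 0}$. The case $n=0$ was handled separately in Lemma \ref{lem:strategy_agreement_2_n=0}, and its proof is the template.

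For item \ref{item:vV_n+1^N-bar=Ult(vV^*,e^vV^*)}, observe that $\bar{N}$ is a non-dropping $\Sigma_M$-iterate, so by Corollary \ref{cor:Sigma_wW_n+1_down_0=Sigma_M} together with the inductive hypothesis at level $n$, $\vV^*$ is a non-dropping $\Sigma_{\wW_{n+1}}$-iterate with $\vV^*\down n = \vV_n^{\bar{N}}$. Applying Lemma \ref{lem:e-vec_finite_stage} (at level $n{+}1$) to the $(n{+}1)$-pVl $\vV^*$ then yields $\Ult(\vV^*,e_n^{\vV^*}) = \vV_{n+1}^{\vV^*\down n} = \vV_{n+1}^{\bar N}$. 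For item \ref{item:M^Tt_0_infty_is_Sigma_vV^*-it}, I would build $\Tt_0^*$ in three stages. The extender $E$ naturally lifts to an extender on $\vV^*$ (using that $\vV^*$ and $\bar{N}$ agree below $\kappa_{n-1}^{+\bar N}$ and that $E$ has critical point $\kappa_{n-1}$), yielding $\Tt_{00}^*$ whose last model is $\vV_{n+1}^{\Ult(\bar N,E)}$. Next, $\Tt_{01}^*$ is $\Tt_{01}$ reinterpreted as a short-normal tree on $M^{\Tt_{00}^*}_\infty$, justified because these models agree below $\delta_n$. Finally, $\Uu^*$ is the tail tree needed precisely when $N$ fails to be $\kappa_{n+1}^N$-sound; it lives above $\gamma_n^{M^{\Tt_{01}^*}_\infty}$ and is obtained by the canonical embedding $M^{\Tt_{01}^*}_\infty \to \vV$ coming from the comparison of $\bar N$ with $N$. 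The equality $M^{\Tt_0^*}_\infty = M^{\Tt_0}_\infty$ then follows from a routine calculation using $\Delta$-conservativity.

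For item \ref{item:get_tree_on_M_via_Sigma_M}, iterate the $\down$-operation: by Lemma \ref{lem:Sigma_vV_is_e-vec_min_pullback}, $\Tt_0^*$ descends via $(\cdot)\down n$ to a $\Sigma_{\vV^*\down n}$-tree $\Tt_0^*\down n$ on $\vV^*\down n = \vV_n^{\bar N}$; and by Corollary \ref{cor:Sigma_wW_n+1_down_0=Sigma_M} applied at each lower level, this descends further to a $\Sigma_M$-tree. Setting $\Tt'_0$ to be the concatenation of $\Tt_N\rest(\eta+1)$ with this descended tail gives what is needed, with $\vV_n^{M^{\Tt_0'}_\infty} = M^{\Tt_0^*\down n}_\infty = M^{\Tt_0}_\infty\down n$ by construction.

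For item \ref{item:Tt'_conc_b_via_Sigma_N}, I would chain equivalences: (ii)$\Leftrightarrow$(iii) is immediate from Lemma \ref{lem:Sigma_vV_is_e-vec_min_pullback}, since $\Tt_1^*$ is a short-normal tree above $\delta_n^{M^{\Tt_0^*}_\infty}$ based on $M^{\Tt_0^*}_\infty|\gamma_n^{M^{\Tt_0^*}_\infty}$, whose $\down n$-translation is precisely $\Tt_1^*\down n$ on $M^{\Tt_0^*\down n}_\infty$. The equivalence (iii)$\Leftrightarrow$(iv) follows from Corollary \ref{cor:Sigma_wW_n+1_down_0=Sigma_M} applied to the $\Sigma_M$-tree $\Tt_0'$: the strategy $\Sigma_{M^{\Tt_0'}_\infty}$ restricted to trees on $\vV_n^{M^{\Tt_0'}_\infty}$ coincides with $\Sigma_{\vV_n^{M^{\Tt_0'}_\infty}} = \Sigma_{M^{\Tt_0^*\down n}_\infty}$, and $\Tt_1'$ is $\Tt_1^*\down n$ reinterpreted on the identified model. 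Finally, (i)$\Leftrightarrow$(ii) follows from item \ref{item:M^Tt_0_infty_is_Sigma_vV^*-it}: since $M^{\Tt_0}_\infty = M^{\Tt_0^*}_\infty$, the two putative tree continuations are literally the same, and the strategies $\Sigma_{M^{\Tt_0}_\infty}$ and $\Sigma_{M^{\Tt_0^*}_\infty}$ agree because both are obtained via normalization from $\Sigma_{\wW_{n+1}}$ along trees that normalize to the same tree on $\wW_{n+1}$.

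The main obstacle will be item \ref{item:M^Tt_0_infty_is_Sigma_vV^*-it}, specifically identifying the tail $\Uu^*$ and verifying $M^{\Tt_0^*}_\infty = M^{\Tt_0}_\infty$ on the nose (not merely up to natural factor). This requires that passing to the $\kappa_{n-1}^N$-core commutes with the $\vV_{n+1}^{(\cdot)}$ construction, which in turn depends on $\Delta$-conservativity of $(n{+}1)$-pVls and on the P-construction description of $\vV_{n+1}^{\bar N}$ over $\vV^*|\gamma_n^{\vV^*}$. Once this is set up, items \ref{item:get_tree_on_M_via_Sigma_M} and \ref{item:Tt'_conc_b_via_Sigma_N} are bookkeeping on top of the machinery already in place.
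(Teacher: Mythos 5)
Your overall outline matches the paper's logical structure, and items \ref{item:vV_n+1^N-bar=Ult(vV^*,e^vV^*)}, \ref{item:get_tree_on_M_via_Sigma_M}, and \ref{item:Tt'_conc_b_via_Sigma_N} go through essentially as you describe (though for item \ref{item:vV_n+1^N-bar=Ult(vV^*,e^vV^*)} the citation should be the definition of $\vV\down n$ together with Lemma \ref{lem:vV_n+1^vV_down_n_def}/Definition \ref{dfn:vV_n+1^vV}, giving $\vV_{n+1}^{\vV^*\down n}=\Ult(\vV^*,e_n^{\vV^*})$ directly from the P-construction description, rather than Lemma \ref{lem:e-vec_finite_stage}, which concerns the full iterated $\vec{e}$-ultrapower to $\vV\down 0$).

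For item \ref{item:M^Tt_0_infty_is_Sigma_vV^*-it}, though, you and the paper take genuinely different routes. You propose to construct $\Tt_0^*$ explicitly in three stages and then verify $M^{\Tt_0^*}_\infty=M^{\Tt_0}_\infty$ by a $\Delta$-conservativity calculation. The paper instead makes a single factoring observation: the $\Sigma_{\wW_n}$-tree from $\wW_n$ to $\vV_n^{\bar N}$ is exactly the initial segment of the $\Sigma_{\wW_n}$-tree from $\wW_n$ to $\vV_n^N$ based on $\wW_n|\delta_{n-1}^{\wW_n}$; lifted to $\wW_{n+1}$, this shows the tree from $\wW_{n+1}$ to $\vV$ passes through $\vV^*$, so $\vV$ is a $\Sigma_{\vV^*}$-iterate, and then $M^{\Tt_0}_\infty$ is a $\Sigma_{\vV^*}$-iterate by composing with $\Tt_0$. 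The decomposition into $\Tt_{00}^*\conc\Tt_{01}^*\conc\Uu^*$ then falls out of normalization, since the below-$\delta_n$ and above-$\gamma_n$ pieces act on disjoint parts of the model. The paper's approach buys a one-line proof of the equality on the nose; your stage-by-stage construction buys nothing extra here and, as you yourself flag, forces you to confront commutation issues head-on. Note also that your description of $\Uu^*$ as ``obtained by the canonical embedding $M^{\Tt_{01}^*}_\infty \to \vV$'' is not literally correct: those two models are iterated away from $\vV^*$ in orthogonal directions (below $\delta_n$ vs.\ above $\gamma_n$), so there is no direct embedding between them. What you presumably mean is that the above-$\gamma_n^{\vV^*}$ tree taking $\vV^*$ to $\vV$ can be copied onto $M^{\Tt_{01}^*}_\infty$ (these models agree above $\gamma_n$ up to the relevant level), and $\Uu^*$ is that copy --- which is fine, but the factoring observation makes this bookkeeping unnecessary.
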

\begin{proof}
 Part \ref{item:vV_n+1^N-bar=Ult(vV^*,e^vV^*)}: 
We have $\vV_{n+1}^{\vV^*\down n}=\Ult(\vV^*,e_n^{\vV^*})$.
 But  $\vV^*\down n=\vV_n^{\bar{N}}$, so $\vV_{n+1}^{\vV^*\down n}=\vV_{n+1}^{\vV_n^{\bar{N}}}=\vV_{n+1}^{\bar{N}}$.
 
 Part \ref{item:M^Tt_0_infty_is_Sigma_vV^*-it}: 
  $\vV$ is a non-dropping
 $\Sigma_{\wW_{n+1}}$-iterate,
 and $\vV_n^N$ a non-dropping $\Sigma_{\wW_n}$-iterate, and the $\Sigma_{\wW_n}$-tree
 with last model $\vV^*\down n$
 is just the initial segment of the  $\Sigma_{\wW_n}$-tree from $\wW_n$ to $\vV_n^N$
which is based on $\wW_n|\delta_{n-1}^{\wW_n}$,
and this can be considered a tree on $\wW_{n+1}$,
which factors into the tree from $\wW_{n+1}$ to $\vV$.
Thus, $\vV$ is a $\Sigma_{\vV^*}$-iterate.
But $M^{\Tt_0}_\infty$ is itself a $\Sigma_{\vV}$-iterate, and since the segment of $\Tt_0$
based on $\vV|\delta_{n-1}^{\vV}$ just iterates
out to $i^N_E(\vV_n^N|\delta_{n-1}^{\vV_n^N})$,
the rest is clear.

Part \ref{item:get_tree_on_M_via_Sigma_M}:
By Corollary \ref{cor:Sigma_wW_n+1_down_0=Sigma_M}.

 Part \ref{item:Tt'_conc_b_via_Sigma_N}: We have $M^{\Tt_0}_\infty=M^{\Tt_0^*}_\infty$
 and $\Tt_1=\Tt_1^*$, so the equivalence of \ref{item:(i)_tree_equiv} and \ref{item:(ii)_tree_equiv} is just because $\Sigma_{M^{\Tt_0}_\infty}$
 depends only on the short-normal tree leading to $M^{\Tt_0}_\infty$. The equivalence of \ref{item:(ii)_tree_equiv} and \ref{item:(iii)_tree_equiv} is by conservativity,
 and that of \ref{item:(iii)_tree_equiv} and \ref{item:(iv)_tree_equiv} by Corollary  \ref{cor:Sigma_wW_n+1_down_0=Sigma_M}
 and that this yields corresponding normalization processes.\end{proof}

\begin{lem}\label{lem:*-suitable_P-con_correctness_no_short_overlaps}
Let $N$ be a non-dropping $\Sigma_M$-iterate
and $\vV=\vV_{n+1}^N$.
Let  $i\leq n+1$,
$g$ be set-generic over $\vV_i^N[g]$,
$\Tt\in\vV_{i}^N[g]$
be on $\vV_{n+1}^N$ and such that $\vV_i^N[g]\sats$``$\Tt$ is $*$-suitable'', 
and adopt the notation of \ref{dfn:P-con_for_gamma-stable-P-suitable} and \ref{dfn:P-con_for_gamma-unstable-P-suitable}.  Suppose that $\Tt=\Tt_0\conc\Tt_1$ is via $\Sigma_{\vV}$, and let $b=\Sigma_{\vV}(\Tt)$.
If $\Tt$ is $\delta_{n+1}$-short,
let $Q=Q(\Tt,b)$, and otherwise let $Q=M^\Tt_b$.
Then:
\begin{enumerate}
\item\label{item:delta(Tt)_overlapped_iff_reach_corresp_stage_in_P-con} The following are equivalent:
\begin{enumerate}
\item \label{item:short_overlapping_E_exists}there is a short extender $E\in\es_+^Q$ with $\crit(E)\leq\delta\leq\lh(E)$,
\item\label{item:projecting_stage_of_P-con_exists} 
$P=\mathscr{P}^U(M(\Tt))$ is unsound and there is $n<\om$
such that $P$ is $n$-sound and
 $\rho_{n+1}^P<\delta\leq\rho_n^P$
 and there is $\mu<\delta$
 such that for all $\gamma\in[\mu,\delta)$, $P$ does not have the $(n+1,\pvec_{n+1}^P)$-hull property at $\gamma$.
\end{enumerate}
\item\label{item:Q=P-con_if_delta_not_shortly_overlapped}  Suppose there is no short extender $E\in\es_+^Q$ with $\crit(E)\leq\delta\leq\lh(E)$.
Let $P=\mathscr{P}^U(M(\Tt))$.
Then:
\begin{enumerate}
\item\label{item:Q=P_when_set-sized} If $Q$ is set-sized then $Q=P$.
\item\label{item:Q-bar=P-bar_when_proper_class} Suppose $Q$ is proper class.
Then so is $P$,
and $Q,P$ have the same $\delta(\Tt)$-core.
\end{enumerate}
\end{enumerate}
\end{lem}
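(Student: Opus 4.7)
The plan is to reduce to a setting analogous to the one treated in \cite{vm2} by exploiting the translation results of the previous subsection. First, I would use Lemmas \ref{lem:strategy_agreement}, \ref{lem:strategy_agreement_2_n=0}, and \ref{lem:strategy_agreement_2} to associate to the $*$-suitable tree $\Tt$ on $\vV_{n+1}^N$ (which lives in $\vV_i^N[g]$) a corresponding tree on a lower model, with the common parts $M(\Tt)$ and its translate related via inverse P-construction. The commutativity clauses in Definitions \ref{dfn:P-con_for_gamma-stable-P-suitable} and \ref{dfn:P-con_for_gamma-unstable-P-suitable} are designed precisely so that running $\mathscr{P}^U(M(\Tt))$ and the corresponding P-construction on the lower model yields parallel structures that can be compared level by level. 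In particular, the long-extender translation clauses ensure that iteration trees on $P$ lift to iteration trees on $U$.

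For part (1), I would first argue $\mathrm{(a)}\Rightarrow\mathrm{(b)}$: a short extender $E\in\es_+^Q$ overlapping $\delta$ forces $Q|\lh(E)$ to generate information over $Q|\delta$ that cannot be captured by indiscernibles below $\delta$, and through the P-construction correspondence this manifests as $P$ being unsound with projectum pattern $\rho_{m+1}^P<\delta\leq\rho_m^P$ for some $m<\om$ and failure of the $(m+1,\pvec^P_{m+1})$-hull property at cofinally many $\gamma<\delta$. Conversely, for $\mathrm{(b)}\Rightarrow\mathrm{(a)}$, the failure of the hull property together with the projectum condition allows one to read off from the standard parameter $\pvec^P_{m+1}$ an extender over $P|\delta$ which, combined with the comparison in part (2), gives rise to the short extender in $\es_+^Q$ overlapping $\delta$.

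For part (2), assuming no short overlap, I would run the standard $P$-versus-$Q$ comparison. Here $P$ inherits iterability above $\delta$ from $U$ via the commutativity of the P-construction with the long-extender compositions, so any iteration tree on $P$ lifts through the derived $E, F, j_{k\ell}$ to a tree on $U$ which is iterable by the unique strategy on $N$ (or $\vV_i^N$); meanwhile $Q$ is iterable since it is either $Q(\Tt,b)$ for $b=\Sigma_{\vV}(\Tt)$ or else the branch model $M^\Tt_b$ itself. The absence of short extenders overlapping $\delta$ in either $Q$ or $P$ (the latter by part (1)) means the standard comparison argument goes through without the need for $*$-translation, giving $Q=P$ in the set-sized case and equality of $\delta(\Tt)$-cores in the proper class case.

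The main obstacle is managing the case analysis cleanly: there are several parameters ($i$, whether $\Tt$ is $\vec{\gamma}$-stable or $\gamma_n$-unstable, and in the latter case how $\Tt_0=\Tt_{00}\conc\Tt_{01}$ decomposes) and each case requires checking that the appropriate long-extender translation clause in \ref{dfn:P-con_for_gamma-stable-P-suitable} or \ref{dfn:P-con_for_gamma-unstable-P-suitable} correctly encodes the commutativity of $F^{P|\nu}$ with the relevant composition of $E$, the iteration map from $\Tt_0$, and the direct-limit maps $j_{k\ell}$. As acknowledged in Remark \ref{rem:*-trans}, a generalization allowing short extenders overlapping $\delta$ (i.e.~removing the hypothesis in part (2)) would require the $*$-translation integration from \cite{*-trans_add}, but the present lemma is formulated precisely so that this issue is isolated in the characterization of part (1) and avoided in the comparison of part (2).
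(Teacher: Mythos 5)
Your overall strategy matches the paper's: invoke Lemmas \ref{lem:strategy_agreement}, \ref{lem:strategy_agreement_2_n=0}, and \ref{lem:strategy_agreement_2} to translate $\Tt$ to a tree on a lower model, then run a comparison argument against $N$ (or $\vV_i^N$) with the commutativity clauses of Definitions \ref{dfn:P-con_for_gamma-stable-P-suitable} and \ref{dfn:P-con_for_gamma-unstable-P-suitable} dictating the matching of least disagreements. However, there are three gaps worth flagging.

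First, your proof structure is circular: you prove part (1) invoking ``the comparison in part (2),'' and you justify part (2) by appealing to part (1) (``the absence of short extenders overlapping $\delta$ in either $Q$ or $P$ (the latter by part (1))''). The paper resolves this by proving part (2) first, entirely independently, via a phalanx comparison (comparing $\Phi(\Tt',Q')$ with $\Phi(\Uu)$ modulo the generic at $\delta$, where $\Uu$ is trivial or uses only $F$), and then deriving part (1) afterward; note also that $P=\mathscr{P}^U(M(\Tt))$ by construction never contains short extenders overlapping $\delta$, so no appeal to part (1) is needed on that side. Second, your sketch of (\ref{item:short_overlapping_E_exists})$\Rightarrow$(\ref{item:projecting_stage_of_P-con_exists}) (``through the P-construction correspondence this manifests as $P$ being unsound'') is not an argument; the paper's move is concrete: take the least such $E$, set $\Tt^+=\Tt\conc\left<E\right>$, and apply a part-(2)-style comparison to see that $Q'=M^{\Tt^+}_\infty$ equals $\mathscr{P}^U(M(\Tt))$, which carries the projectum and hull-property failure information directly. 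Third, in the $\gamma_n$-unstable subcase the comparison is not the standard one: the paper iterates a phalanx whose last node is a \emph{bicephalus} $(M^{\Tt_1'}_{b'},Q)$, with specific rules for when to discard the $Q$-side upon encountering a long extender; this device is essential because $Q$ is only a set-sized Q-structure there and its interaction with $i$-long extenders needs to be controlled, and it does not appear in your sketch.
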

\begin{proof}
We will assume throughout for simplicity  of notation that $g=\emptyset$; the general case is essentially the same.

\begin{case}$i=0$ (so $\Tt\in N[g]\sats$``$\Tt$ is $*$-suitable'').

Part \ref{item:Q=P-con_if_delta_not_shortly_overlapped}: We first establish
parts \ref{item:Q=P_when_set-sized} and \ref{item:Q-bar=P-bar_when_proper_class},
via comparison arguments analogous to those in \cite{vm2_v2}.
 
 \begin{scase} $\Tt$ is $\vec{\gamma}~$-stable.
  
  Let $\Tt'$ be the translation of $\Tt$ to a tree on $N$, as in Lemma \ref{lem:strategy_agreement}.
  So $\Tt'=\Tt_0'\conc\Tt_1'$ where $\Tt_0'=\left<E\right>$ and $\Tt_1'$ is above $\kappa_n^{+\Ult(N,E)}$.
 By Lemma \ref{lem:strategy_agreement},
it suffices to work with $\Tt'$ instead of $\Tt$.
 Let $b'=\Sigma_N(\Tt')$.
 Let $Q'=Q(\Tt',b')$,
 if $\Tt'$ (equivalently $\Tt$) is $\delta_{n+1}$-short,
 and $Q'=M^{\Tt'}_{b'}$ otherwise. Let $\Phi(\Tt',Q')$ be the phalanx obtained from the phalanx $\Phi(\Tt')$
 by replacing the last model $M^{\Tt'}_{b'}$ with $Q'$. If $F=\emptyset$ then let $\Uu$ be the trivial tree on $N$, and otherwise let $\Uu$ be the tree on $N$ which uses only $F$.  We compare $\Phi(\Tt',Q')$ with the phalanx $\Phi(\Uu)$,
 modulo the generic at $\delta$
 and translating overlapping extenders appropriately.
That is, the comparison is the pair $(\Vv,\Ww)$ of trees, with $\Vv$ on $\Phi(\Tt',Q')$ and $\Ww$ on $\Phi(\Uu)$,
where given $(\Vv,\Ww)\rest(\alpha+1)$,
then letting $\gamma$ be the least ordinal
such that letting $G=F^{M^\Vv_\alpha|\gamma}$
and $H=F^{M^\Ww_\alpha|\gamma}$,
then $G\neq\emptyset$ or $H\neq\emptyset$,
but neither of the following two options holds:
\begin{enumerate}[label=--]
 \item 
  $G\rest\OR=H\rest\OR$,
  and either $\delta<\crit(G)$
  or $\crit(G)=\kappa_i^N$ for some $i<n$,\footnote{Recall here that $M^{\Vv}_\alpha$
  is an iterate of $N$, not of $\vV_{n+1}$,
  so we can indeed expect that $\crit(G)=\kappa_i^N$ (and $G$ being short), as opposed to being an $i$-long extender.} 
  \item $\crit(G)=\kappa_n^{\Ult(N,E)}$ and $\crit(H)=\kappa_n^N$ and $G\circ i^N_E\rest\OR=H\rest\OR$,
\end{enumerate}
then $E^\Vv_\alpha=G$ and $E^\Ww_\alpha=H$
(one of these extenders might be empty);
if there is no such $\gamma$ then the comparison terminates at this stage.

The same kinds of arguments used in \cite{vm2_v2} show that if $Q$ is set-sized then the comparison is trivial (i.e.~no extenders are used on either side)
and \ref{item:Q=P_when_set-sized} holds,
and that if $Q$ is proper class then no extenders with critical points $<\delta$ are used, there are no drops along $b^\Vv,b^\Ww$, and
part  \ref{item:Q-bar=P-bar_when_proper_class}
holds.
\end{scase}
\begin{scase}$\Tt$ is $\gamma_n$-unstable.

If $n=0$, things are easier, and as in \cite{vm2_v2}
(and Lemma \ref{lem:strategy_agreement_2_n=0} plays
the role that Lemma \ref{lem:strategy_agreement}
played in the previous case).

So suppose $n>0$, so Lemma \ref{lem:strategy_agreement_2} applies. Fix notation as there. By the lemma,
and since $\Tt=\Tt_0\conc\Tt_1$ is via $\Sigma_{\vV}$,
we know that ($\Tt_0'$ is via $\Sigma_M$ and) $\Tt_1'$ is via $\Sigma_{M^{\Tt_0'}_\infty}$, and letting $b=\Sigma_{\vV}(\Tt)$ and $b'=\Sigma_M(\Tt')$,
then $b$ is essentially the same as $b'$.
 Note that since $\Tt$ is $\gamma_n$-unstable
(on $\vV=\vV_{n+1}^N$),
certainly $Q$ is set-sized.
In fact $\gamma_n^Q$ exists then $\OR^Q=\gamma_n^Q$; moreover,
$\gamma_n^Q$ exists iff $b'$
is non-dropping.
 
We compare the ``phalanx'' \[\Phi(\Tt_1',(M^{\Tt_1'}_{b'},Q)),\]
with $\Phi(\Uu)$, where $\Uu$ is given by  $F$ as in the previous case. Here we ``iterate'' the ``phalanx'' $\Phi(\Tt_1',(M^{\Tt_1'}_{b'},Q))$ like usual, except that we have a bicephalus $(M^{\Tt_1'}_{b'},Q)$ as the last structure,
and we need to specify how this is handled (but this is just like in the analogous situation in \cite{vm2_v2}). We treat the bicephalus as though it is only $Q$, unless $b'$ does not drop
(so $\gamma_n^Q=\OR^Q$)
and until, if ever, 
we reach a stage $(\Vv,\Ww)\rest(\alpha+1)$ of the comparison at which
the rules of the comparison would lead us to use a long extender $G$. Say this happens at stage $\alpha$,
and $(N',Q')=M^{\Vv}_\alpha$, and said long extender has index $\xi$. Then at this stage,
we discard $Q'$  from the process and continue as though we had been iterating $\Phi(\Tt_1',b')$. In particular,
 if $G$ is $i$-long where $i<n$,
 then we next use the corresponding overlapping extender $G'$ with $\crit(G')=\kappa_i^{M^{\Tt_0'}_\infty}$ (here $G'=F^{N'|\xi}$);
if $G$ is  $n$-long (hence $\gamma_n^Q$ exists
and $b'$ is non-dropping and $\xi=\kappa_n^{+N'}$)
then we pad at stage $\alpha$ in $\Vv$,
and set $M^{\Vv}_{\alpha+1}=N'$, and then $Q$ and its images are irrelevant for the remainder of the comparison. Other than this, the rules for determining the least disagreement are much like those in the previous case,
determined by the commutativity of
the restrictions of overlapping extenders to the ordinals.

Like  in \cite{vm2_v2}, it follows that $Q$ is set-sized
and part
  \ref{item:Q=P_when_set-sized} holds.

\end{scase}

Part \ref{item:delta(Tt)_overlapped_iff_reach_corresp_stage_in_P-con}:
If there is no $E$ as in \ref{item:delta(Tt)_overlapped_iff_reach_corresp_stage_in_P-con}(\ref{item:short_overlapping_E_exists}),
then by part \ref{item:Q=P-con_if_delta_not_shortly_overlapped}, it is straightforward to see that \ref{item:delta(Tt)_overlapped_iff_reach_corresp_stage_in_P-con}(\ref{item:projecting_stage_of_P-con_exists}) fails. Conversely,
suppose that there is such an extender $E$. Let $\Tt^+=\Tt\conc\left<E\right>$ where
$E$ is the least such extender; note that $\crit(E)<\delta<\lh(E)$. Let $Q'=M^{\Tt^+}_\infty$. Note that
$b^{\Tt^+}$ drops in model,
and if $\Tt$ is $\gamma_n$-unstable-$*$-suitable, then $b^{\Tt^+}$
drops below the image of $\vV_{n+1}|\gamma^{\vV_{n+1}}$.
Arguing like in the proof of part \ref{item:Q=P-con_if_delta_not_shortly_overlapped},
$Q'=\mathscr{P}^U(M(\Tt))$, and 
therefore
\ref{item:delta(Tt)_overlapped_iff_reach_corresp_stage_in_P-con}(\ref{item:projecting_stage_of_P-con_exists}) holds.
\end{case}

\begin{case}\label{case:i>0_working_in_vV_i}
 $i>0$.
 
This case almost follows from the previous one, because $\vV_i^N\sub N$,
and because $\vV_i^N\sats$``$\Tt$ is $*$-suitable'', we almost have that $N\sats$``$\Tt$ is $*$-suitable''.
The only slight difference is that
we only demanded that $\vV_i^N|\delta$
be extender algebra generic over $M(\Tt_1)$, whereas we want that $N|\delta$ is extender algebra generic.
But since $N|\delta$ is given by a P-construction over $\vV_i^N|\delta$,
above the generic for a forcing in $\vV_i^N|\delta$, the same arguments still go through as before. This completes this case.\qedhere
\end{case}
\end{proof}

\begin{rem}
We now want to extend the lemma above to handle the case that $\vV$ is an arbitrary non-dropping $\Sigma_{\wW_{n+1}}$-iterate, and $\Tt\in\vV[g]\sats$``$\Tt$ is $*$-suitable (on me)'',
without assuming that $\vV=\vV_{n+1}^N$ for some $\Sigma_M$-iterate $M$.
 The comparison
arguments  in \cite{vm2_v2} used to verify correctness of the P-construction in such cases
make use of
generic premice playing 
a role analogous to that of $N$ in the previous lemma.
The author had trouble generalizing the use of generic premice
to the current context. Here we use another method, and we also allow iterates $\vV$ of $\wW_{n+1}$,
not just of $\vV_{n+1}$.
We will use the fact that we can lift from $\vV$ into $\vV_{n+1}^{\vV\down 0}$, and $\vV\down 0$ is a non-dropping $\Sigma_M$-iterate, to which the lemma above applies. But for this to work, we will need to assume $g=\emptyset$. Later we will deduce the $g\neq\emptyset$ case from the $g=\emptyset$ one.
\end{rem}

\begin{lem}\label{lem:*-suitable_P-con_correctness_no_short_overlaps_vV}
Let $\vV$ be a non-dropping $\Sigma_{\wW_{n+1}}$-iterate.
Let $\Tt\in\vV$
be on $\vV$ and such that $\vV\sats$``$\Tt$ is $*$-suitable'', 
and adopt the notation of \ref{dfn:P-con_for_gamma-stable-P-suitable} and \ref{dfn:P-con_for_gamma-unstable-P-suitable}. Suppose that $\Tt=\Tt_0\conc\Tt_1$ is via $\Sigma_{\vV}$, and let $b=\Sigma_{\vV}(\Tt)$.
If $\Tt$ is $\delta_{n+1}$-short,
let $Q=Q(\Tt,b)$, and otherwise let $Q=M^\Tt_b$.
Then:
\begin{enumerate}
\item Part \ref{item:delta(Tt)_overlapped_iff_reach_corresp_stage_in_P-con}
of \ref{lem:*-suitable_P-con_correctness_no_short_overlaps} holds.
\item\label{item:Q=P-con_if_delta_not_shortly_overlapped_no_N} Part \ref{item:Q=P-con_if_delta_not_shortly_overlapped} of \ref{lem:*-suitable_P-con_correctness_no_short_overlaps} holds.
\end{enumerate}
\end{lem}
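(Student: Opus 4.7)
The strategy is to reduce the lemma to Lemma \ref{lem:*-suitable_P-con_correctness_no_short_overlaps} by lifting $\Tt$ into $\vV_{n+1}^{\vV\down 0}$. Set $N=\vV\down 0$; by Corollary \ref{cor:Sigma_wW_n+1_down_0=Sigma_M}, $N$ is a non-dropping $\Sigma_M$-iterate, hence $\vV^{*}=\vV_{n+1}^{N}$ is a non-dropping $\Sigma_{\wW_{n+1}}$-iterate. By Lemma \ref{lem:e-vec_finite_stage}, $\vV^{*}=\Ult(\vV,\vec{e}^{\vV})$, so the ultrapower map $j\colon\vV\to\vV^{*}$ is in particular an iteration map on $\vV$ (realizable by the short-normal tree built from $\vec{e}^{\vV}$).

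First I would copy $\Tt$ via $j$ to a tree $\Tt^{*}=j\Tt$ on $\vV^{*}$, with copy maps $\pi_\alpha\colon M^{\Tt}_\alpha\to M^{\Tt^{*}}_\alpha$ (so $\pi_0=j$). I would then verify that $\Tt^{*}$ is $*$-suitable in $\vV^{*}$ in exactly the sense relevant to Lemma \ref{lem:*-suitable_P-con_correctness_no_short_overlaps} (the ``outer model'' there being $\vV^{*}$ itself, i.e.\ $i=n+1$, $g=\emptyset$). The witnesses $(E,F,U,\eta,\delta,\lambda,\Tt_0,\Tt_1)$ for the $*$-suitability of $\Tt$ in $\vV$ translate under $j$ to witnesses $(j(E),j(F),j(U),j(\eta),j(\delta),j(\lambda),\Tt^{*}_{0},\Tt^{*}_{1})$ for $\Tt^{*}$ in $\vV^{*}$; the extender-algebra genericity of $\vV|\delta$ over $M(\Tt_1)$ is carried to that of $\vV^{*}|j(\delta)$ over $M(\Tt^{*}_{1})$. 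Moreover $\Tt^{*}$ is via $\Sigma_{\vV^{*}}$, by Lemma \ref{lem:Sigma_vV_is_e-vec_min_pullback} together with the fact that $j$ is an iteration map.

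Next, the branch $b=\Sigma_{\vV}(\Tt)$ corresponds to the branch $b^{*}=\Sigma_{\vV^{*}}(\Tt^{*})$ via the copy construction, and likewise the analogous $Q$-structure or branch model $Q^{*}$ for $\Tt^{*}$ is related to $Q$ by the copy maps. Applying Lemma \ref{lem:*-suitable_P-con_correctness_no_short_overlaps} to $\Tt^{*}$ in $\vV^{*}$ yields the desired conclusions about $\mathscr{P}^{\vV^{*}}(M(\Tt^{*}))$ and $Q^{*}$. I would then observe that the P-construction $\mathscr{P}^{\vV}(M(\Tt))$ corresponds to $\mathscr{P}^{\vV^{*}}(M(\Tt^{*}))$ under copying: the extenders added at each stage are determined by extenders of $\vV$ (resp.\ $\vV^{*}$) overlapping $\delta$ (resp.\ $j(\delta)$), and $j$ faithfully carries the former to the latter, with the commutation conditions of Definitions \ref{dfn:P-con_for_gamma-stable-P-suitable} and \ref{dfn:P-con_for_gamma-unstable-P-suitable} being preserved because $j$ is an iteration map that commutes with the iteration maps $j_{k\ell}$ appearing there. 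Part (1) transfers immediately (the existence of a short overlap and the existence of a projecting stage are both preserved and reflected by the elementary embedding $j$), and part (2) follows likewise from the correspondence between $Q,\mathscr{P}^{\vV}(M(\Tt))$ and $Q^{*},\mathscr{P}^{\vV^{*}}(M(\Tt^{*}))$.

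The main obstacle is carefully matching the P-constructions and the witnesses, especially for $\gamma_n$-unstable trees with $n>0$: the witnessing extender $E$ is $(n{-}1)$-long in $\vV$, and one must verify that its $j$-image fits the $*$-suitability conditions for $\vV^{*}$, and that $\Tt^{*}_{0}$ remains $\gamma_n$-unstable with the appropriate decomposition $\Tt^{*}_{00}\conc\Tt^{*}_{01}$. This in turn depends on the careful agreement between the $\vec{e}$-minimal inflation/pullback structure (Lemmas \ref{lem:conservativity_stable}, \ref{lem:conservativity_unstable}, \ref{lem:e-vec_commutativity}) and the way the P-construction handles long extenders in Definitions \ref{dfn:P-con_for_gamma-stable-P-suitable}, \ref{dfn:P-con_for_gamma-unstable-P-suitable}; once these are in place, the reduction to the previous lemma is essentially a bookkeeping exercise, which is why the hypothesis $g=\emptyset$ is crucial, as flagged in the preceding remark.
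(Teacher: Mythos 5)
Your high-level idea — lift to $\vV^{*}=\vV_{n+1}^{\vV\down 0}=\Ult(\vV,\vec{e}^{\vV})$ via the ultrapower map $j$, where Lemma \ref{lem:*-suitable_P-con_correctness_no_short_overlaps} applies — is the same as the paper's. But there is a genuine gap in how you transfer back. First, you work with the copy tree $j\Tt$, whereas the paper works with the set-theoretic image $j(\Tt,b)$ of a pair $(\Tt,b)\in\vV$ (the branch $b$ being determined by the P-construction $P$, which is in $\vV$ when set-sized). This distinction matters: elementarity of $j$ is what carries the $*$-suitability witnesses and the P-construction $P$ to $j(P)=\mathscr{P}^{\vV^{*}}(M(j(\Tt)))$; the copy construction has no such relation to internal definability classes of $\vV^{*}$, and in general $j\Tt\neq j(\Tt)$ for the trees under consideration (since $\crit(j)<\delta_{n+1}^{\vV}$). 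Second, and more seriously, your assertion that $\Tt^{*}=j\Tt$ is via $\Sigma_{\vV^{*}}$ "by Lemma \ref{lem:Sigma_vV_is_e-vec_min_pullback} together with the fact that $j$ is an iteration map" is not justified: that lemma relates $\Sigma_{\vV}$ to $\Sigma_{\vV\down 0}$ via the $\down$-operation (minimal $\vec{e}$-inflation plus translation), not via the $j$-copy, and the strategies here are defined by (tail) normalization, for which copying compatibility across $j$ is not automatic.

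The step you elide with "the conclusion follows likewise from the correspondence" is precisely where the paper's real work happens. After establishing by elementarity that $j(\Tt,b)$ is correct on $\vV^{*}$, the paper reduces $(\Tt,b)$ via $\vec{e}$-minimal inflation and translation to a pair $(\Uu',c)$ on $\vV\down 0$, observes that $j(\Uu',c)$ is then correct on $\vV^{*}\down 0$ (using Lemma \ref{lem:Sigma_vV_is_e-vec_min_pullback}), and then lifts iterations of the phalanx $\Phi(\Uu',c)$ via $j$ to iterations of $\Phi(j(\Uu',c))$ to show $\Phi(\Uu',c)$ is iterable. The crucial closing move is uniqueness of the iteration strategy for the relevant phalanx on $\vV\down 0$, which comes from the $(\delta_n,\kappa_n)$-stability of the $\Sigma_M$-tree leading to $\vV\down 0$ (compare Lemma \ref{lem:everywhere_kappa^+-stable_unique_strategy}). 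That uniqueness is what identifies the branch picked by the P-construction with the branch picked by $\Sigma_{\vV\down 0}$, and hence with $\Sigma_{\vV}(\Tt)$. Without some version of this step you have only established correctness of the image tree in $\vV^{*}$, not of the original tree in $\vV$. Finally, the paper separates the set-sized and proper class cases for $P$: in the proper class case the branch $b$ may not lie in $\vV$, and the argument must be modified using indiscernibles; your proposal does not address that case at all.
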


\begin{proof}
Part \ref{item:Q=P-con_if_delta_not_shortly_overlapped_no_N}: We consider two cases.
 
 \begin{case}
$P$ is set-sized.

Let $b$ be the $\Tt$-cofinal branch that this determines (which it does by elementarity between $\vV_{n+1}$ and $\vV$). 
 
 Note that Lemma \ref{lem:*-suitable_P-con_correctness_no_short_overlaps}
 applies to $\vV'=\vV_{n+1}^{\vV\down 0}=\Ult(\vV,\vec{e}^{\vV})$.
 Let $j:\vV\to\vV'$ be the ultrapower map. Then by \ref{lem:*-suitable_P-con_correctness_no_short_overlaps}, $j(\Tt,b)$ is a correct tree on $\vV'$.
 
 Let $(\Uu,c)$ be the minimal $\vec{e}$-inflation of $(\Tt,b)$. Let $(\Uu',c)$ be the translation of this to a (padded) tree on $\vV\down 0$.
 
 Note that $(\Uu,c)\in\vV$,
 and $j(\Uu,c)$ is the minimal $\vec{e}$-inflation of $j(\Tt,b)$, and $j(\Uu',c)$ its translation to $\vV'\down 0$. Since $j(\Tt,b)$ is correct,
so is $j(\Uu',c)$, by Lemma \ref{lem:Sigma_vV_is_e-vec_min_pullback}.

Now using $j$,
we can lift iterations of the phalanx $\ph=\Phi(\Uu',c)$
to iterations of $\Phi(j(\Uu',c))$. Therefore $\ph$ is iterable. But the $\Sigma_M$-tree leading to $\vV\down 0$ is $(\delta_n,\kappa_n)$-stable, and $\Uu$ is of
the form $\Uu_0\conc\Uu_1$,
where $\Uu_0$ is $(\delta_n,\kappa_n)$-stable and $\Uu_1$ is either based on $(\delta_n,\kappa_n^+)$,
or based on $(\kappa_n^+,\delta_{n+1})$.  Therefore $\Sigma_{\vV\down 0}(\Uu)$
is the unique $c'$ such that $\Phi(\Uu,c')$ is iterable.
So $c=c'$, so $b$ is correct.
\end{case}

\begin{case} $P$ is proper class.

The proof in this case is essentially the same;
however, the branch $b$ might
not be in $\vV$. But one just uses the usual methods with indiscernibles to handle this; all of the proper initial segments of $b$ are in $\vV$,
and the various iteration maps are continuous at $\delta_{n+1}$, etc. Also the conclusion that $Q,P$ have the same $\delta(\Tt)$-core then follows from these considerations and computations with indiscernibles. We leave the details to the reader.
\end{case}

Part  \ref{item:delta(Tt)_overlapped_iff_reach_corresp_stage_in_P-con}: We leave this to the reader.
\end{proof}

\begin{dfn}
 Let $\vV$ be a non-dropping $\Sigma_{\wW_{n+1}}$-iterate.
 Then $\Sigma^{\sn}_{\vV,\delta_{n+1}\sss}$ denotes
 the restriction of $\Sigma_{\vV}^{\sn}$
 to $\delta_{n+1}$-short trees.
\end{dfn}

\begin{dfn}
 Let $\vV$ be a non-dropping $\Sigma_{\wW_{n+1}}$-iterate. A short-normal tree $\Tt$ on $\vV$ according to $\Sigma_\vV$ is called \emph{tame} if for every limit $\eta\leq\lh(\Tt)$, letting $b=\Sigma_{\vV}(\Tt\rest\eta)$, if $Q(\Tt,b)$ exists then there is no short $E\in\es_+^{Q(\Tt,b)}$ which overlaps $\delta(\Tt\rest\eta)$, i.e.~with $\crit(E)<\delta(\Tt)<\lh(E)$
 (hence there is no such $E$ with $\crit(E)\leq\delta(\Tt)\leq\lh(E)$).
\end{dfn}

\begin{rem}\label{rem:every_delta_n+1-max_tame_stable}
Note that if $\Tt$ is on $\vV$, based on $\vV|\delta_{n+1}^{\vV}$,  via $\Sigma_{\vV}$,
and either $\delta_{n+1}$-maximal
or of successor length with $b^\Tt$ non-dropping,
then $\Tt$ is $\vec{\gamma}$-stable and tame.
This includes all trees which are relevant to the next direct limit system.
\end{rem}

\begin{lem}\label{lem:delta_n+1-sts_def}  We have:
\begin{enumerate}
	\item\label{item:delta_n+1-sts_def_over_M-iterates_N}
	Let $N$ be a non-dropping $\Sigma_M$-iterate
	and $\vV=\vV_{n+1}^N$. Let $\Psi=\Sigma^{\sn}_{\vV,\delta_{n+1}\sss}$  and $\Psi_{\mathrm{tm}}$
	be the restriction of   $\Psi$ to   tame trees. Then:
\begin{enumerate}[label=\tu{(}\alph*\tu{)},ref=(\alph*)]	\item\label{item:delta_n+1-N_internal_sts} $N$ is closed under $\Psi_{\mathrm{tm}}$ and $\Psi_{\mathrm{tm}}\rest N$ is lightface definable over $N$ (hence $\dom(\Psi_{\mathrm{tm}}\rest N)$ is also lightface definable over $N$),
	\item\label{item:all_delta_n+1-max_absorbed} for each $\delta_{n+1}$-maximal tree $\Tt\in N$ via $\Psi$
	(by \ref{rem:every_delta_n+1-max_tame_stable},
	$\Tt$ is $\vec{\gamma}$-stable and tame) 
	there is a ($\vec{\gamma}$-stable)-$*$-suitable $\delta_{n+1}$-maximal  tree  $\Uu\in N$ via $\Psi$ such that, letting $b=\Sigma_{\vV}(\Tt)$ and $c=\Sigma_{\vV}(\Uu)$, then $M^\Uu_c$ is a $\Sigma_{M^\Tt_b}$-iterate of $M^\Tt_b$,\item\label{item:delta_n+1_N_computes_maximal_P-suitable_branch_models_mod_soundness} for each $\delta_{n+1}$-maximal ($\vec{\gamma}$-stable)-$*$-suitable tree $\Tt\in N$
	via $\Psi$, with $*$-suitability witnessed by $U,\delta$
	(so $\mathscr{P}^U(M(\Tt))$ is proper class),
	the $\delta$-core of $\mathscr{P}^{U}(M(\Tt))$ is the $\delta$-core
	of $M^\Tt_b$ where $b=\Sigma_{\vV}(\Tt)$, and
	\item\label{item:delta_n+1_N_sts_uniformity} moreover, the definitions of $\Psi_{\mathrm{tm}}\rest N$ and $\Tt\mapsto(U,\mathscr{P}^U(M(\Tt)))$ (for $\Tt$
	as in part \ref{item:delta_n+1_N_computes_maximal_P-suitable_branch_models_mod_soundness}) are uniform in $N$.
	\end{enumerate}
\item\label{item:delta_1-sts_def_over_vV_1-iterates}  
Let $\vV$ be a non-dropping $\Sigma_{\wW_{n+1}}$-iterate.
Let $\Psi=\Sigma_{\vV,\delta_{n+1}\sss}$
and $\Psi_{\mathrm{tm}}$ be the restriction of $\Psi$ to tame trees.
Then the conclusions of part \ref{item:delta_n+1-sts_def_over_M-iterates_N}
hold after replacing $N$ throughout with $\vV$
(but $\vV$ remains $\vV$).
(Note that we do not assume that $\vV=\vV_{n+1}^N$ for some $\Sigma_M$-iterate $N$.)
\end{enumerate}
	\end{lem}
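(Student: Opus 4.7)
The plan is to deduce the lemma from the P-construction correctness lemmas \ref{lem:*-suitable_P-con_correctness_no_short_overlaps} and \ref{lem:*-suitable_P-con_correctness_no_short_overlaps_vV}, together with a standard genericity absorption argument that reduces arbitrary $\delta_{n+1}$-maximal tame trees to $*$-suitable ones internally in $N$ (or in $\vV$).

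First I would prove part \ref{item:all_delta_n+1-max_absorbed}. Given $\Tt\in N$ that is $\delta_{n+1}$-maximal and via $\Psi$ (hence $\vec\gamma$-stable and tame by Remark \ref{rem:every_delta_n+1-max_tame_stable}), I would absorb it into a $*$-suitable $\Uu$ as follows. Split $\Tt=\Tt_0\conc\Tt_1$ into lower and upper components. By extending, if necessary, one may take $\Tt_0$ to be the successor-length tree determined by a single $E\in\es^N$ with $\crit(E)=\kappa_n^N$, as in clause \ref{item:what_E_is_like} of Definition \ref{dfn:gamma-stable-P-suitable}. Then, choosing suitable $F\in\es^N\cup\{\emptyset\}$, together with $\eta$ and $\delta<\lambda$ as in that definition, I would use the extender algebra at $\delta(\Tt_1)$ in $M^{\Tt_0}_\infty$ (which has a Woodin there) to further iterate $\Tt_1$ until $U|\delta$ becomes extender algebra generic over $M(\Tt_1)$. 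The resulting $\Uu$ is $\vec\gamma$-stable-$*$-suitable, and by an easy comparison its main branch $c=\Sigma_\vV(\Uu)$ induces a $\Sigma_{M^\Tt_b}$-iteration of $M^\Tt_b$ to $M^\Uu_c$.

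Next I would address parts \ref{item:delta_n+1-N_internal_sts}, \ref{item:delta_n+1_N_computes_maximal_P-suitable_branch_models_mod_soundness} and \ref{item:delta_n+1_N_sts_uniformity} simultaneously. The recipe for computing $\Psi_{\mathrm{tm}}\rest N$ inside $N$ is: at a limit stage $\eta$ of a tame tree $\Tt$, canonically produce (from $N$'s own extender sequence and from $\Tt\rest\eta$) a $*$-suitable $\delta_{n+1}$-maximal absorbing tree $\Uu\in N$ via the recipe just above, form the P-construction $\mathscr{P}^U(M(\Uu))$ level-by-level inside $N$ (this is a first-order local process over $U$, which is itself either $N$ or a single ultrapower of $N$), and read off the branch of $\Uu$ from this P-construction; then translate back to obtain the branch of $\Tt$. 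Tameness of $\Tt$ forces tameness of $\Uu$, which by Lemma \ref{lem:*-suitable_P-con_correctness_no_short_overlaps} ensures that the P-construction delivers the correct $Q(\Uu,c)$ in the $\delta_{n+1}$-short case and, in the $\delta_{n+1}$-maximal case, the correct $\delta$-core of $M^\Uu_c$. Lightface definability of $\Psi_{\mathrm{tm}}\rest N$ and the ``Q-structure/$\delta$-core'' map are then immediate, and uniformity in $N$ is by inspection of the recipe.

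For part \ref{item:delta_1-sts_def_over_vV_1-iterates}, with $\vV$ an arbitrary non-dropping $\Sigma_{\wW_{n+1}}$-iterate, the argument is identical with Lemma \ref{lem:*-suitable_P-con_correctness_no_short_overlaps_vV} substituted for Lemma \ref{lem:*-suitable_P-con_correctness_no_short_overlaps}. The main obstacle I anticipate is in the absorption step of part \ref{item:all_delta_n+1-max_absorbed}: when $\vV$ is not of the form $\vV_{n+1}^N$ for a $\Sigma_M$-iterate $N$, one must perform the absorption purely inside $\vV$, using the extenders available in $\es^\vV$ (both short and long, as reflected in the $\vV_{i+1}[g]$-style clauses of Definitions \ref{dfn:gamma-stable-P-suitable} and \ref{dfn:gamma-unstable-P-suitable}). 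To verify that this works, one lifts via $j:\vV\to\vV'=\vV_{n+1}^{\vV\down 0}$ as in the proof of \ref{lem:*-suitable_P-con_correctness_no_short_overlaps_vV}: the conservativity lemmas \ref{lem:conservativity_stable} and \ref{lem:conservativity_unstable} guarantee that $j$ sends $*$-suitable trees and their P-constructions to $*$-suitable trees and their P-constructions on $\vV'$, and since $\vV'=\vV_{n+1}^N$ for the $\Sigma_M$-iterate $N=\vV\down 0$, the already-proved part \ref{item:delta_n+1-sts_def_over_M-iterates_N} applies to $\vV'$ and pulls back through $j$ to give the required internal definability and correctness for $\vV$.
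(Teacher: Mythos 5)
Your proposal is correct and follows essentially the same route as the paper: absorb arbitrary tame trees into $*$-suitable ones via minimal genericity inflation, apply the P-construction correctness lemma (\ref{lem:*-suitable_P-con_correctness_no_short_overlaps} for part 1, \ref{lem:*-suitable_P-con_correctness_no_short_overlaps_vV} for part 2) to read off Q-structures and $\delta$-cores, and deduce lightface definability and uniformity by inspection. Two small remarks by way of comparison with the paper's (much terser) proof. First, the paper notes that for trees based on $\vV|\Delta^{\vV}$, the inductive hypothesis \ref{item:hyp_8} already delivers part \ref{item:delta_n+1-N_internal_sts} directly, without any restriction to tame trees; your recipe subsumes this case but at the cost of running the absorption machinery where it is not needed. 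Second, for part \ref{item:delta_1-sts_def_over_vV_1-iterates}, the paper treats Lemma \ref{lem:*-suitable_P-con_correctness_no_short_overlaps_vV} as a black box (its proof already performs the lift through $j:\vV\to\vV_{n+1}^{\vV\down 0}$ internally), whereas you partially re-derive that lifting and the conservativity arguments; this is not wrong, but it duplicates work the cited lemma was designed to encapsulate. Your observation that tameness of $\Tt$ must propagate to the absorbing tree $\Uu$ is a genuine point that the paper leaves implicit; it does hold because the genericity inflation preserves the Q-structures (and in the $\delta_{n+1}$-maximal case of part \ref{item:all_delta_n+1-max_absorbed}, Remark \ref{rem:every_delta_n+1-max_tame_stable} makes tameness automatic anyway).
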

\begin{proof}[Proof] Part \ref{item:delta_n+1-sts_def_over_M-iterates_N}: 	
Part \ref{item:delta_n+1-N_internal_sts}:
For trees based on $\vV|\Delta^{\vV}$,
\ref{item:hyp_8} suffices (and we don't need the restriction to tame trees).
For trees not based on $\vV|\Delta^{\vV}$,
use Lemma \ref{lem:*-suitable_P-con_correctness_no_short_overlaps} as in \cite{vm2_v2}
(that is, using minimal genericity inflation to lift arbitrary trees
to $*$-suitable ones).
Part \ref{item:all_delta_n+1-max_absorbed} is likewise.
Part \ref{item:delta_n+1_N_computes_maximal_P-suitable_branch_models_mod_soundness}
is directly by Lemma \ref{lem:*-suitable_P-con_correctness_no_short_overlaps} (and just included here for reiteration).
Part \ref{item:delta_n+1_N_sts_uniformity} is clear.

Part \ref{item:delta_1-sts_def_over_vV_1-iterates}:  The proof is like that for part \ref{item:delta_n+1-sts_def_over_M-iterates_N},
but using  \ref{lem:*-suitable_P-con_correctness_no_short_overlaps_vV} instead of \ref{lem:*-suitable_P-con_correctness_no_short_overlaps}.
	\end{proof}

	We can now extend Lemma \ref{lem:*-suitable_P-con_correctness_no_short_overlaps_vV} to handle trees in $\vV[g]$:
	
	\begin{lem}\label{lem:*-suitable_P-con_correctness_no_short_overlaps_vV[g]}
	Adopt the hypotheses and notation of Lemma \ref{lem:*-suitable_P-con_correctness_no_short_overlaps_vV}, except for the assumption that $\Tt\in\vV$
	and $\vV\sats$``$\Tt$ is $*$-suitable''.
	Suppose instead that $g$ is set-generic over $\vV$ and  $\Tt\in\vV[g]\sats$``$\Tt$ is $*$-suitable''. Then:
\begin{enumerate}
\item\label{item:delta(Tt)_overlapped_iff_reach_corresp_stage_in_P-con[g]} Part \ref{item:delta(Tt)_overlapped_iff_reach_corresp_stage_in_P-con}
of \ref{lem:*-suitable_P-con_correctness_no_short_overlaps} holds.
\item\label{item:Q=P-con_if_delta_not_shortly_overlapped_no_N[g]} Part \ref{item:Q=P-con_if_delta_not_shortly_overlapped} of \ref{lem:*-suitable_P-con_correctness_no_short_overlaps} holds.
\end{enumerate}
\end{lem}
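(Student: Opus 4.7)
The plan is to reduce this to Lemma \ref{lem:*-suitable_P-con_correctness_no_short_overlaps}, not merely Lemma \ref{lem:*-suitable_P-con_correctness_no_short_overlaps_vV}, by lifting the tree through the $\vec{e}^\vV$-ultrapower map. Let $j\colon \vV \to \vV' := \vV_{n+1}^{\vV\down 0} = \Ult(\vV, \vec{e}^\vV)$ be the ultrapower map. Since $\vV\down 0$ is a non-dropping $\Sigma_M$-iterate by Corollary \ref{cor:Sigma_wW_n+1_down_0=Sigma_M}, $\vV'$ has the form $\vV_{n+1}^N$ with $N = \vV\down 0$, so Lemma \ref{lem:*-suitable_P-con_correctness_no_short_overlaps} directly applies to $\vV'$, and in particular its Case \ref{case:i>0_working_in_vV_i} (taking $i = n+1$, so handling set-generics over $\vV_i^N = \vV'$) is available.

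First I would lift $j$ to $j^+\colon \vV[g] \to \vV'[h]$ for some $(\vV', j(\PP))$-generic $h$. Since $\PP \in \vV|\eta$ with $\eta < \delta < \kappa_{n+1}^\vV$ by Definition \ref{dfn:gamma-stable-P-suitable}, and since $j$ factors as the iteration ultrapower by $\vec{e}^\vV = e_n^\vV \circ \cdots \circ e_0^\vV$ whose constituent long extenders are amenable to the corresponding $\vV\down i||\gamma_i^{\vV\down i}$ (item \ref{item:extenders_of_pVl}(\ref{item:pVl_m-long}) of \ref{dfn:pVl}), standard name-evaluation arguments (possibly in a further set-generic extension of $V$) produce such an $h$ with $j``g \subseteq h$. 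I would then verify that $j^+(\Tt) \in \vV'[h]$ is $*$-suitable over $\vV'$ with witnesses $j(E), j(F), j(U), j(\eta), j(\delta), j(\lambda)$: this is straightforward because $*$-suitability in both its $\vec\gamma$-stable and $\gamma_n$-unstable forms is first-order in the tree and its witnesses, and the extender-algebra genericity of $\vV|\delta$ over $M(\Tt_1)$ in $\vV[g]$ lifts to that of $\vV'|j(\delta)$ over $j^+(M(\Tt_1)) = M(j^+(\Tt_1))$ in $\vV'[h]$.

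Having obtained both conclusions for $j^+(\Tt)$ via Lemma \ref{lem:*-suitable_P-con_correctness_no_short_overlaps}, I would transfer them back to $\Tt$ by adapting the phalanx-lifting argument of Lemma \ref{lem:*-suitable_P-con_correctness_no_short_overlaps_vV}. Let $b = \Sigma_\vV(\Tt)$ (in a generic extension if the branch model is proper class; the usual indiscernibles argument from that proof handles this case). Form the minimal $\vec{e}$-inflation $(\Uu, c)$ of $(\Tt, b)$ in $\vV[g]$ and its translation $(\Uu', c)$ to a tree on $\vV\down 0$. Iterations of $\Phi(\Uu', c)$ lift via $j^+$ to iterations of $\Phi(j^+(\Uu', c))$, which are iterable by the correctness of $j^+(\Tt, b)$. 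Combined with Lemma \ref{lem:tree_on_vV_n+1_ev_sps} (so that $\Uu'\down 0$ is everywhere $(\delta,\kappa)$-stable on $M$) and Lemma \ref{lem:everywhere_kappa^+-stable_unique_strategy} (uniqueness of the strategy on such iterates), this identifies $c$ and hence $b$ as correct, yielding part \ref{item:Q=P-con_if_delta_not_shortly_overlapped_no_N[g]}. Part \ref{item:delta(Tt)_overlapped_iff_reach_corresp_stage_in_P-con[g]} then follows by the same short-extender detection analysis as in the proof of Lemma \ref{lem:*-suitable_P-con_correctness_no_short_overlaps}: a short extender overlapping $\delta$ in the branch model corresponds precisely to the P-construction reaching an unsound projecting stage with the failure-of-hull-property characterization listed there.

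The main obstacle will be the construction of $j^+$ and the careful verification that the commutativity clauses governing long extenders in the P-construction (in particular $F^{P|\nu} \circ E \circ j_{i+1, n+1} = G$ of Definition \ref{dfn:P-con_for_gamma-stable-P-suitable} and the analogous clauses of \ref{dfn:P-con_for_gamma-unstable-P-suitable}) correspond correctly under $j^+$, so that $\mathscr{P}^{j^+(U)}(M(j^+(\Tt)))$ really is $j^+(\mathscr{P}^U(M(\Tt)))$ in the relevant sense. Once this bookkeeping is settled, the remainder of the proof is a routine adaptation of the $g = \emptyset$ argument in Lemma \ref{lem:*-suitable_P-con_correctness_no_short_overlaps_vV}.
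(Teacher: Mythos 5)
Your proposal takes a different route from the paper, and that route has a genuine gap at its first step. The paper does \emph{not} attempt to lift the generic $g$ through $j\colon\vV\to\vV'=\vV_{n+1}^{\vV\down 0}$. Instead it works entirely with the forcing language in the ground model: given a name $(\dot\Tt,\dot b)$ (in the $\delta_{n+1}$-short case), it uses the method of \cite[Thm.~9.1]{fullnorm} to produce ground-model trees $\Uu_\alpha\in\vV$, via $\Sigma_{\vV,\delta_{n+1}\sss}$, such that $\PP$ forces that every realization of $(\dot\Tt,\dot b)$ minimally tree-embeds into some $\Uu_\alpha$; the trees $\Uu_\alpha$ are correct by the $g=\emptyset$ machinery (Lemma \ref{lem:delta_n+1-sts_def} applies to $\vV$), and the relevant facts are first-order over $\vV$, so can be transferred via $j$ and elementarity (and for the $\delta_{n+1}$-maximal case, an analogous argument with genericity inflation and comparison of indiscernible theories). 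This deliberately sidesteps the question of lifting $g$.

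The gap in your approach is precisely at the generic-lifting step. You assert that ``standard name-evaluation arguments (possibly in a further set-generic extension of $V$) produce such an $h$ with $j``g\subseteq h$,'' but this does not follow. We have $\crit(j)<\delta_0^{\vV}$ (it is the critical point of the iteration deriving $e_0^{\vV}$), while $\PP\in\vV|\eta$ with $\delta_n^{\vV}<\eta$, so $\PP$ sits well above $\crit(j)$ and $j(\PP)\neq\PP$. Moving to a further set-generic extension of $V$ only guarantees the existence of \emph{some} $(\vV',j(\PP))$-generic $h$, not one with $j``g\subseteq h$; that constraint is exactly the content of a lifting argument and requires a master-condition or homogeneity property of $\PP$ that the hypotheses do not supply. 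Without this, $j^+$ does not exist and the rest of your argument has nothing to run on. Secondary issues: even given $j^+$, before invoking Lemma \ref{lem:*-suitable_P-con_correctness_no_short_overlaps} on $j^+(\Tt)$ you would need to check that $j^+(\Tt)$ is via $\Sigma_{\vV'}$, and your verification of $*$-suitability transfer is asserted rather than argued; these are probably repairable but should not be waved through, especially the genericity clause \ref{item:U|delta_is_generic_vV}, which refers to $\vV'|j(\delta)$ rather than anything directly produced by $j^+$ applied to $\vV|\delta$. You also do not separate the $\delta_{n+1}$-short and $\delta_{n+1}$-maximal cases, which the paper handles by different devices (branch names with absorption versus indiscernible theory comparisons over the P-constructions).
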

\begin{proof}
Part \ref{item:Q=P-con_if_delta_not_shortly_overlapped_no_N[g]}:
Suppose for the moment there is a $\Sigma_M$-iterate $N$ 
such that $\vV=\vV_{n+1}^N$.
(Actually then the desired conclusions are already established by Lemma  \ref{lem:*-suitable_P-con_correctness_no_short_overlaps}, but we want to make some further observations in this case.)  Let $\PP\in\vV$
be such that $g\sub\PP$
is $(\vV,\PP)$-generic.

Suppose first that $\Tt$ is $\delta_{n+1}$-short,
so $b=\Sigma_{\vV}(\Tt)\in\vV[g]$. Let $(\dot{\Tt},\dot{b})\in\vV$ be names for $(\Tt,b)$.
Working in $\vV$,
using the method of proof of \cite[Theorem 9.1]{fullnorm_v3}
(also cf.~\cite[\S7.1]{iter_for_stacks}),
we can define a sequence $\vec{\Uu}=\left<\Uu_\alpha\right>_{\alpha<\xi}$ such that each $\Uu_\alpha$ is on $\vV$,
 according to $\Psi=\Sigma_{\vV,\delta_{n+1}\sss}$, is $\delta_{n+1}$-short, and $\PP$ forces
that there is $\alpha<\xi$
and a minimal tree embedding from $(\Tt,b)$ into $\Uu_\alpha$. The definition is $\vec{\Uu}$ is moreover uniform in $\PP$, $(\dot{\Tt},\dot{b})$ and $\vV$. 
Note that by Lemma \ref{lem:delta_n+1-sts_def},
the properties in question are also uniformly  first order over non-dropping iterates of $\vV_{n+1}$. But now drop the assumption that we have such an $N$. Our observations so far apply to the pair $(\vV\down 0,\vV_{n+1}^{\vV\down 0})$, and so we can pull  the first order consequences back to $\vV$.
So we get the same situation there (since Lemma \ref{lem:delta_n+1-sts_def} applies to $\vV$, we know the trees $\Uu_\alpha\in\vV$  are correct).

Now suppose instead that $\Tt$ is $\delta_{n+1}$-maximal.
Suppose agan that we have $N$ as above.
Using a process similar to that above, but applied to a name $\dot{\Tt}$ for $\Tt$,
where $\PP$ forces that $\dot{\Tt}$ is $*$-suitable and $\delta_{n+1}$-maximal (as determined by P-construction
in the generic extension).
This yields a $\delta_{n+1}$-maximal tree $\Uu\in\vV$, with $\Uu$ via $\Psi$,  such that $\PP$ forces that $M(\dot{\Tt})$ iterates out to $M(\Uu)$. Still working in $\vV$, we can iterate $M(\Uu)$ further to obtain some $\Uu'$ which is $\delta_{n+1}$-suitable $*$-suitable, via $\Psi$. Note then the various facts forced about theories of indiscernibles and parameters $<\delta(\dot{\Tt})$ in $P=\mathscr{P}^{\dot{U}}(M(\dot{\Tt}))$ (where $\dot{U}$ is forced to witnesses $*$-suitability), and their relationship to the corresponding theories for $P'=\mathscr{P}^{U'}(M(\Uu'))$
(where $U'$ witnesses the $*$-suitability of $\Uu'$). 
Now work instead in an arbitrary $\Sigma_{\wW_{n+1}}$-iterate $\vV$. The foregoing discussion applies to $(\vV\down 0,\vV_{n+1}^{\vV\down 0})$,
and since we have $j:\vV\to\vV_{n+1}^{\vV\down 0}$,
the relevant (forcing) facts pull back,
including regarding indiscernibles.
 The desired conclusions follow.
 
 Part \ref{item:delta(Tt)_overlapped_iff_reach_corresp_stage_in_P-con[g]}: We leave this to the reader.
\end{proof}

	\begin{lem}\label{lem:wW_n+1_def_from_seg_in_univ}
	 $\wW_{n+1}$ is definable over its universe from the parameter $\wW_{n+1}|\kappa_n^{\wW_{n+1}}$.
	\end{lem}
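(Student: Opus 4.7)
My plan is to define the extender sequence $\es^{\wW_{n+1}}$ over the universe $U$ of $\wW_{n+1}$ by transfinite recursion on levels $\alpha$, using $p=\wW_{n+1}|\kappa_n^{\wW_{n+1}}$ as parameter; since $\wW_{n+1}=L[\es^{\wW_{n+1}}]$, this will suffice. For $\alpha\leq\kappa_n^{\wW_{n+1}}$, the segment $\wW_{n+1}|\alpha\in p$ is directly given. For $\alpha>\kappa_n^{\wW_{n+1}}$, I assume inductively that $\wW_{n+1}|\beta$ has been defined for all $\beta<\alpha$, so that the passive premouse $\wW_{n+1}||\alpha$ is determined (as the union of lower segments, encoded amenably). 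The task then reduces to identifying the possibly-empty active extender at index $\alpha$.

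By Definition \ref{dfn:pVl} clause \ref{item:extenders_of_pVl}, any active extender at index $\alpha$ is either short or $m$-long for some $m\leq n$, and the case division is governed by first-order properties of $\wW_{n+1}||\alpha$. The $m$-long case is flagged by the requirement that $\wW_{n+1}||\alpha\sats\ZFC^-$ with largest cardinal $\lambda$ inaccessible and a limit of short-extender-cutpoints, together with $\delta_m^{\wW_{n+1}}<\alpha$. When this condition holds, $F^{\wW_{n+1}|\alpha}$ is definable as the $(\delta_m^{\wW_{n+1}},\alpha)$-extender whose ultrapower target is the transitive structure $\vV'$ produced over $\wW_{n+1}||\alpha$ by the fixed formula $\varphi_0$, via the amenable coding of extenders described in clause \ref{item:extenders_of_pVl}\ref{item:pVl_m-long}. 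In the short case, when the $m$-long condition fails for every $m\leq n$, the active extender at $\alpha$, if any, is the unique extender $E\in U$ satisfying the standard Jensen indexing rule relative to $\wW_{n+1}||\alpha$. Both cases use only $\wW_{n+1}||\alpha$, which is inductively available from $p$ over $U$.

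The main obstacle is verifying mutual exclusivity and correctness of the two cases. Mutual exclusivity follows because the $m$-long condition forces $\wW_{n+1}||\alpha$ to model $\ZFC^-$ with an inaccessible largest cardinal, which is incompatible with $\alpha$ being a Jensen index for a short extender (where $\wW_{n+1}||\alpha$ has small $\rho_\om$ by the indexing convention); passivity at $\alpha$ is then captured by the failure of both. Correctness of the $m$-long identification is immediate from the pVl axioms and the uniqueness clause encoded in $\varphi_0$. Correctness in the short case is the standard uniqueness of the active extender at a Jensen index in an iterable sound premouse: soundness of proper initial segments (clauses \ref{item:gamma_m_def} and \ref{item:normalization_cond}) together with the iterability of $\wW_{n+1}$ via $\Sigma_{\wW_{n+1}}$ rules out spurious alternatives by the usual comparison argument, performed internally to $\wW_{n+1}$. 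Hence the recursion produces a formula, with parameter $p$, defining $\es^{\wW_{n+1}}$ over $(U,\in)$, as required.
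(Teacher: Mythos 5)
Your approach — a direct level-by-level recursion identifying the active extender at each index $\alpha$ from $\wW_{n+1}||\alpha$ — is genuinely different from the paper's, and it has a real gap. The paper's proof reduces to a structure for which definability-over-its-universe is already available: it forms $\Ult(U,\vec{e}^{\wW_{n+1}})$ (definable over $U$ from the parameter, since $\vec{e}^{\wW_{n+1}}$ is coded there), recognizes this as the universe of $\vV_{n+1}^{\wW_{n+1}\down 0}$, recovers the ordinary premouse $\wW_{n+1}\down 0$ from that model, and then reverses the P-constructions of Definition~\ref{dfn:pVl}\,\ref{item:P-con_in_pVl} to reconstruct $\es^{\wW_{n+1}}$. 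The whole point of routing through $\wW_{n+1}\down 0$ is to invoke the known ``premouse is definable over its universe from a bottom part'' result (the direct condensation stack machinery of \cite[Definition 4.2]{V=HODX}) for an honest premouse, rather than having to prove it from scratch for a pVl.

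The gap in your version is precisely the short case. The assertion that the short extender at level $\alpha$ is ``the unique extender $E\in U$ satisfying the standard Jensen indexing rule relative to $\wW_{n+1}||\alpha$'' is not a local fact provable from soundness of initial segments: a priori $U$ can contain spurious extenders over $\wW_{n+1}||\alpha$ that satisfy the local coherence, ISC, soundness, and indexing conditions, and deciding whether a given level is active, and with which extender, is exactly the hard part of ``$\es$ is definable over $\lfloor M\rfloor$.'' You write that ``the usual comparison argument, performed internally to $\wW_{n+1}$'' rules out alternatives, but that argument requires iterability of the rival structure $(\wW_{n+1}||\alpha,E')$, which is neither given nor internal to $\wW_{n+1}$ — iterability is via $\Sigma_{\wW_{n+1}}$, a $V$-class. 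Even in $V$, the uniqueness theorem is a substantial result for standard premice and has not been established for pVls, whose extender sequences are interleaved with $m$-long extenders (so any comparison would have to respect the modified $0$-maximality of Definition~\ref{dfn:0-maximal}, and the structures $\wW_{n+1}||\alpha$ are not ordinary premice). There is also a secondary issue: your recursion never isolates $\wW_{n+1}\down n$, but the extenders of $\wW_{n+1}$ above $\gamma_n^{\wW_{n+1}}$ are, by construction, translations of the extenders of $\wW_{n+1}\down n$ (clause \ref{item:P-con_in_pVl}), so any correct identification of them implicitly requires recovering that standard premouse — which is what the paper's detour through $\Ult(U,\vec{e}^{\wW_{n+1}})$ accomplishes.
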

\begin{proof}
 Like the proof  of \cite[***Lemma 5.17(2)]{vm2_v2}
 (but forming $\Ult(U,\vec{e}^{\wW_{n+1}})$, where $U$ is the universe of $\wW_{n+1}$, and recovering $\vV\down 0$
 from this model).
\end{proof}

	\begin{lem}\label{lem:vV_def_over_vV[g]}
	Let $\vV$ be a non-dropping $\Sigma_{\wW_{n+1}}$-iterate.
	Let $\PP\in\vV$ where $\PP\sub\lambda$ and $\lambda\geq\delta_n^{\vV}$.
	Let $g$ be $(\vV,\PP)$-generic.
	Then $\vV$ is definable over the universe of $\vV[g]$
	from the parameter $x=\vV|\lambda^{+\vV}$, uniformly in $\vV,\PP,g$.
	\end{lem}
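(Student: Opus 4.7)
The plan is to combine two ingredients: ground-model definability of the universe of $\vV$ inside $\vV[g]$, and recovery of the extender sequence of $\vV$ from its universe together with a suitable initial segment, following Lemma \ref{lem:wW_n+1_def_from_seg_in_univ}.

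First, since $\PP\sub\lambda<\lambda^{+\vV}$, the forcing lies in $x$, and in particular $\pow(\lambda)^{\vV}\in x$. I will apply the Fuchs--Hamkins--Reitz ground-model-definability theorem inside $\vV[g]$: the universe $U$ of $\vV$ is uniformly definable as a class over $\vV[g]$ from the parameter $\pow(\lambda)^{\vV}$, and hence from $x$. This yields the universe of $\vV$ as a definable subclass of $\vV[g]$, uniformly in $\vV,\PP,g$.

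Next, since $\gamma_n^{\vV}<\delta_n^{+\vV}\leq\lambda^{+\vV}$, the initial segment $\vV|\gamma_n^{\vV}$ and, hence, the extender $\vec{e}^{\vV}$ (encoded there via Definition \ref{dfn:e-vec_finite}) lie in $x$. With $U$ recovered, I follow the argument of Lemma \ref{lem:wW_n+1_def_from_seg_in_univ}, applied now to $\vV$: form $\Ult(U,\vec{e}^{\vV})$ to obtain the universe of $\vV_{n+1}^{\vV\down 0}$, then extract $\vV\down 0$ by reversing the relevant P-construction. Here $\vV\down 0$ is a non-dropping $\Sigma_M$-iterate by Corollary \ref{cor:Sigma_wW_n+1_down_0=Sigma_M}, and by Lemma \ref{lem:tree_on_vV_n+1_ev_sps} the $\Sigma_M$-tree leading to $\vV\down 0$ is everywhere $(\delta,\kappa)$-stable, so by Lemma \ref{lem:everywhere_kappa^+-stable_unique_strategy} $\vV\down 0$ has a unique $(0,\OR)$-strategy and is determined by its universe together with an initial segment contained in $x$. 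Finally, from $\vV\down 0$ and $x$ I reconstruct $\vV=\vV_{n+1}^{\vV\down 0}$ by inverting the P-construction described in part \ref{item:P-con_in_pVl} of Definition \ref{dfn:pVl}, equivalently applying Definition \ref{dfn:vV_n+1^vV}.

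Uniformity of the combined definition in $\vV,\PP,g$ follows from the uniformity of each ingredient. The main obstacle will be verifying that the procedure of Lemma \ref{lem:wW_n+1_def_from_seg_in_univ}, stated there for $\wW_{n+1}$, transfers to an arbitrary non-dropping $\Sigma_{\wW_{n+1}}$-iterate $\vV$: this should follow via elementarity of the iteration map $\wW_{n+1}\to\vV$ together with the first-order nature of the extraction procedure, but identifying $\vV\down 0$ inside $\Ult(U,\vec{e}^{\vV})$ without assuming $\vV=\wW_{n+1}$, and checking that the segment of $\vV$ contained in $x$ suffices in place of the parameter $\vV|\kappa_n^{\vV}$ used in Lemma \ref{lem:wW_n+1_def_from_seg_in_univ}, will require some care.
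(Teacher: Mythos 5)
Your first two steps are sound: using ground-model definability to extract $U=|\vV|$ from $\pow(\lambda)^{\vV}\in x$ over the universe of $\vV[g]$, and then forming $\Ult(U,\vec{e}^{\vV})$ from data coded in $x$ (indeed $\vec{e}^{\vV}$ is coded below $\gamma_n^{\vV}<\delta_n^{+\vV}\leq\lambda^{+\vV}$). This much matches the shape of the paper's intended argument, judging from the sketch given at Lemma \ref{lem:wW_n+1_def_from_seg_in_univ}.

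The final step, however, contains a genuine error: the claimed identity $\vV=\vV_{n+1}^{\vV\down 0}$ is false. By Lemma \ref{lem:e-vec_finite_stage} we have $\vV_{n+1}^{\vV\down 0}=\Ult(\vV,\vec{e}^{\vV})$, a proper, non-identity ultrapower of $\vV$ and a strict inner model of both $\vV$ and $\vV\down 0$. Already the ordinals distinguish the two models: the $n$th Woodin of $\vV_{n+1}^{\vV\down 0}$ is $\kappa_n^{+\vV\down 0}=\gamma_n^{\vV}$ (cf.\ Lemma \ref{lem:vV_n+1^vV_down_n_def}), which is strictly above $\delta_n^{\vV}$, the $n$th Woodin of $\vV$. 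For a concrete instance, take $\vV=\wW_{n+1}$: then $\vV\down 0=M$ and $\vV_{n+1}^{\vV\down 0}=\vV_{n+1}^M=\vV_{n+1}$, whereas by definition $\wW_{n+1}=\cHull^{\vV_{n+1}}(\mathscr{I}^M)\neq\vV_{n+1}$, and indeed hypothesis (Hyp2) of Remark \ref{rem:ind_hyp} asserts $\vV_{n+1}=\Ult(\wW_{n+1},\vec{e}^{\wW_{n+1}})$. So as written your reconstruction terminates at $\Ult(\vV,\vec{e}^{\vV})$, not at $\vV$; the map $N\mapsto\vV_{n+1}^N$ of Definition \ref{dfn:vV_n+1^vV} always passes to a strictly smaller inner model and so cannot be the inversion you want. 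What must actually be produced, once $U$ and (a layer like) $\vV\down 0$ are recovered, is the extender sequence $\es^{\vV}$ on the interval $(\lambda^{+\vV},\OR)$, obtained by running the level-by-level translation of Definition \ref{dfn:pVl}(\ref{item:P-con_in_pVl}) in the other direction so as to output a structure with universe $U$, not with universe $\Ult(U,\vec{e}^{\vV})$.
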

\begin{proof}
Like the proof of \cite[***Lemma 5.18]{vm2_v2}.
	\end{proof}
\begin{lem}\label{lem:delta_n+1-sts_def_g}\  We have:
\begin{enumerate}
	\item\label{item:delta_n+1-sts_def_over_M-iterates_N_g}
	Let $N$ be a   non-dropping $\Sigma_M$-iterate
	and $\vV=\vV_{n+1}^N$. Let $\lambda\in\OR$ and $\PP\sub\lambda$ and $x=N|\lambda^{+N}$ and $g\sub\PP$ be $(N,\PP)$-generic. Let $\Psi=\Sigma^{\sn}_{\vV,\delta_{n+1}\sss}$ and
	$\Psi_{\mathrm{tm}}$ be its restriction to tame trees.  Then:
\begin{enumerate}[label=\tu{(}\alph*\tu{)},ref=(\alph*)]
\item\label{item:delta_n+1-N_internal_sts_g} $N[g]$ is closed under $\Psi_{\mathrm{tm}}$ and $\Psi_{\mathrm{tm}}\rest N[g]$ is  definable over $N[g]$ from the parameter $x$ (hence $\dom(\Psi_{\mathrm{tm}})\rest N[g]$ is likewise definable;  by Lemma \ref{lem:delta_n+1-sts_def},
if $g=\emptyset$ then we actually get lightface definability here),
	\item\label{item:all_delta_n+1-max_absorbed_g} for each $\delta_{n+1}$-maximal tree $\Tt\in N[g]$ via $\Psi$
	there is a $*$-suitable $\delta_{n+1}$-maximal  tree  $\Uu\in N$ \tu{(}not just $N[g]$\tu{)} via $\Psi$ such that, letting $b=\Sigma_{\vV}(\Tt)$ and $c=\Sigma_{\vV}(\Uu)$, then $M^\Uu_c$ is a $\Sigma_{M^\Tt_b}$-iterate of $M^\Tt_b$,\item\label{item:delta_n+1_N_computes_maximal_P-suitable_branch_models_mod_soundness_g} for each $\delta_{n+1}$-maximal $*$-suitable tree $\Tt\in N[g]$
	via $\Psi$, with $*$-suitability witnessed by $U,\delta$
	(so $\mathscr{P}^U(M(\Tt))$ is proper class),
	the $\delta$-core of $\mathscr{P}^{U}(M(\Tt))$ is the $\delta$-core
	of $M^\Tt_b$ where $b=\Sigma_{\vV}(\Tt)$, and
	\item\label{item:delta_n+1_N_sts_uniformity_g} moreover, the definitions of $\Psi_{\mathrm{tm}}\rest N[g]$ and $\Tt\mapsto(U,\mathscr{P}^U(M(\Tt)))$ (for $\Tt$
	as in part \ref{item:delta_n+1_N_computes_maximal_P-suitable_branch_models_mod_soundness_g}) are uniform in $N,g,x$
	(and by Lemma \ref{lem:delta_n+1-sts_def},
	uniform in $N$ if $g=\emptyset$).
	\end{enumerate}
\item\label{item:vV[g]_computes_sts}  
Let $\vV$ be a non-dropping $\Sigma_{\wW_{n+1}}$-iterate.
Let $\lambda\in\OR$ with $\delta_n^{\vV}\leq\lambda$,
let $\PP\sub\lambda$, and let $g$ be $(\vV,\PP)$-generic.
Let $x=\vV|\lambda^{+\vV}$.
Let $\Psi=\Sigma^{\mathrm{sn}}_{\vV,\delta_{n+1}\sss}$
and $\Psi_{\mathrm{tm}}$ its restriction to tame trees.
Then the conclusions of part \ref{item:delta_n+1-sts_def_over_M-iterates_N_g}
hold after replacing $N$ throughout with $\vV$
(but $\vV$ remains $\vV$).
\end{enumerate}
	\end{lem}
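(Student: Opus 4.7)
The plan is to mirror the proof of Lemma \ref{lem:delta_n+1-sts_def} (the $g=\emptyset$ version), substituting the generic-extension P-construction correctness lemma \ref{lem:*-suitable_P-con_correctness_no_short_overlaps_vV[g]} for its $\vV$-internal predecessor, and invoking Lemma \ref{lem:vV_def_over_vV[g]} to obtain that $N$ (resp.\ $\vV$) is definable over $N[g]$ (resp.\ $\vV[g]$) from the parameter $x=N|\lambda^{+N}$ (resp.\ $x=\vV|\lambda^{+\vV}$).  We begin with part \ref{item:delta_n+1-sts_def_over_M-iterates_N_g}.  For tame $\delta_{n+1}$-short trees $\Tt\in N[g]$ which are  based on $\vV|\Delta^{\vV}$, the desired definability reduces to $\Sigma_{\vV,\Delta^\vV}\rest N[g]$ being definable over $N[g]$ from $x$, which is supplied directly by inductive hypothesis \ref{item:hyp_8}(a).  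For the trees that are not based on $\vV|\Delta^{\vV}$, we follow the minimal genericity inflation strategy from \cite{vm2}: given such a $\Tt$, we produce inside $N[g]$ a ($*$-suitable, $\delta_{n+1}$-short or $\delta_{n+1}$-maximal) tree $\Uu$ whose last model absorbs that of $\Tt$ via a minimal tree embedding, and then identify the branch of $\Uu$ via its P-construction.

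For $\Uu$ we use either $U=N$ or $U=\Ult(N,F)$ for an appropriate $F\in\es^N$; in both cases $U$ is definable over $N[g]$ from $x$ (using Lemma \ref{lem:vV_def_over_vV[g]} applied to $N$), and hence $\mathscr{P}^U(M(\Uu))$ is too.  Lemma \ref{lem:*-suitable_P-con_correctness_no_short_overlaps_vV[g]}\ref{item:Q=P-con_if_delta_not_shortly_overlapped_no_N[g]} identifies the correct branch model (or Q-structure) of $\Uu$, modulo $\delta$-core in the proper-class case, with $\mathscr{P}^U(M(\Uu))$, which together with tameness ensures that the $\Sigma_{\vV}$-branch of $\Uu$, and thereby of $\Tt$, is first-order definable over $N[g]$ from $x$.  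This simultaneously gives \ref{item:delta_n+1-N_internal_sts_g}, \ref{item:delta_n+1_N_computes_maximal_P-suitable_branch_models_mod_soundness_g} and uniformity as in \ref{item:delta_n+1_N_sts_uniformity_g} by inspection of the definitions.

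For part \ref{item:all_delta_n+1-max_absorbed_g} we argue analogously to the proof of \ref{lem:*-suitable_P-con_correctness_no_short_overlaps_vV[g]} itself: working with a name $\dot{\Tt}\in N$ forced by $\PP$ to be a $\delta_{n+1}$-maximal tree on $\vV$ via $\Psi$, we apply the method of \cite[Theorem 9.1]{fullnorm} (compare the usage in \cite[\S 7.1]{iter_for_stacks}) inside $N$ to define a sequence $\vec{\Uu}=\langle\Uu_\alpha\rangle_{\alpha<\xi}\in N$ of $\delta_{n+1}$-maximal trees on $\vV$ via $\Psi$ such that $\PP$ forces some $\Uu_\alpha$ to absorb $\dot{\Tt}$.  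Iterating one $\Uu_\alpha$ further (inside $N$) to $*$-suitability at an appropriate $\delta$ produces the required $\Uu\in N$.  The correctness of this process in $N$ (as opposed to merely $\vV_{n+1}^{N\down 0}$) is guaranteed by Lemma \ref{lem:delta_n+1-sts_def} applied in $N$, which gives definability of $\Psi_{\mathrm{tm}}\rest N$ and forces first-order consequences about indiscernibles and P-constructions to pull back appropriately.

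Part \ref{item:vV[g]_computes_sts} is handled by exactly the same argument applied to an arbitrary non-dropping $\Sigma_{\wW_{n+1}}$-iterate $\vV$ in place of $N$: Lemma \ref{lem:*-suitable_P-con_correctness_no_short_overlaps_vV[g]} and Lemma \ref{lem:vV_def_over_vV[g]} are stated at exactly this level of generality, and inductive hypothesis \ref{item:hyp_8}(b) replaces \ref{item:hyp_8}(a) for the $\vV|\Delta^\vV$-based trees.  The main obstacle is the absorption statement \ref{item:all_delta_n+1-max_absorbed_g} --- specifically, ensuring that the absorbing tree $\Uu$ lies in $\vV$ rather than merely in $\vV[g]$; this is exactly the point at which we need to lift the universality construction of \cite[Theorem 9.1]{fullnorm} from $\vV_{n+1}^{\vV\down 0}$ back to $\vV$ through the map $j:\vV\to\vV_{n+1}^{\vV\down 0}$ as in the proof of \ref{lem:*-suitable_P-con_correctness_no_short_overlaps_vV[g]}, after which everything reduces to uniform first-order facts preserved by $j$.
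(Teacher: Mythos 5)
Your proposal is correct and follows essentially the same approach as the paper's own (very terse) proof: combine the $g\neq\emptyset$ version of the P-construction correctness result, Lemma \ref{lem:*-suitable_P-con_correctness_no_short_overlaps_vV[g]}, with definability of the relevant ground model from the parameter $x$, and then run the argument of Lemma \ref{lem:delta_n+1-sts_def}. One minor imprecision: Lemma \ref{lem:vV_def_over_vV[g]} is stated only for non-dropping $\Sigma_{\wW_{n+1}}$-iterates $\vV$, so when you invoke it ``applied to $N$'' you actually need the (standard, direct-condensation-stack) analogue that a non-dropping $\Sigma_M$-iterate $N$ is definable over $N[g]$ from $N|\lambda^{+N}$; similarly, for $\Tt\in N[g]$ the directly applicable P-construction correctness lemma is \ref{lem:*-suitable_P-con_correctness_no_short_overlaps} (whose statement already covers $\Tt\in\vV_i^N[g]$ for $i\le n+1$), unless you deduce part \ref{item:delta_n+1-sts_def_over_M-iterates_N_g} from part \ref{item:vV[g]_computes_sts} using that $\vV_{n+1}^N$ is a set ground of $N$, as the paper also allows.
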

\begin{proof}
 Part \ref{item:vV[g]_computes_sts}: By Lemma \ref{lem:vV_def_over_vV[g]},
 we can define $\vV$ over $\vV[g]$ from the parameter $x$.
 But given $\vV$, the rest is like the proof of part \ref{item:delta_1-sts_def_over_vV_1-iterates} of Lemma \ref{lem:delta_n+1-sts_def}, 
but using \ref{lem:*-suitable_P-con_correctness_no_short_overlaps_vV[g]} instead of \ref{lem:*-suitable_P-con_correctness_no_short_overlaps_vV}.

Part \ref{item:delta_n+1-sts_def_over_M-iterates_N_g}: This can be proved either like  Lemma \ref{lem:delta_n+1-sts_def}, handling
the $g\neq\emptyset$ case
like for $\vV[g]$ above,
or else can be deduced from the proof of part \ref{item:vV[g]_computes_sts}, since $\vV_{n+1}^N$ is a set ground of $N$, and over $N[g]$, we can compute $N$, and hence $\vV_{n+1}^N$, from the parameter $x$.
\end{proof}

Finally we state the full version of computation of the $\delta_{n+1}$-short tree strategy (which we will need for the proof of Theorem \ref{tm:vV_computes_Sigma_vV} and consequences such as Theorem \ref{tm:mantle}),
but defer its proof:
	\begin{lem}\label{lem:sts}
The strengthenings of Lemmas \ref{lem:delta_n+1-sts_def} and \ref{lem:delta_n+1-sts_def_g}
given by replacing each instance of $\Psi_{\mathrm{tm}}$ with $\Psi$,\footnote{Note that $\Psi_{\mathrm{tm}}$ is not referred to, for example,
in parts \ref{item:delta_n+1-sts_def_over_M-iterates_N_g}\ref{item:all_delta_n+1-max_absorbed_g} and \ref{item:delta_n+1-sts_def_over_M-iterates_N_g}\ref{item:delta_n+1_N_computes_maximal_P-suitable_branch_models_mod_soundness_g} of Lemma \ref{lem:delta_n+1-sts_def_g}.}
both hold.
	\end{lem}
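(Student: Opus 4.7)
The gap between the two previous lemmas and the full statement is exactly the case where, at some limit stage $\eta\leq\lh(\Tt)$ of a short-normal $\Tt$ via $\Psi=\Sigma^{\sn}_{\vV,\delta_{n+1}\sss}$, the correct Q-structure $Q(\Tt\rest\eta,b)$ (with $b=\Sigma_\vV(\Tt\rest\eta)$) carries a short extender $E\in\es_+^{Q(\Tt\rest\eta,b)}$ with $\crit(E)<\delta(\Tt\rest\eta)<\lh(E)$. In this ``overlapping'' case the naive P-construction $\mathscr{P}^U(M(\Tt\rest\eta))$ of Definitions \ref{dfn:P-con_for_gamma-stable-P-suitable} and \ref{dfn:P-con_for_gamma-unstable-P-suitable} cannot directly recover $Q$, because the comparison arguments used in Lemmas \ref{lem:*-suitable_P-con_correctness_no_short_overlaps}--\ref{lem:*-suitable_P-con_correctness_no_short_overlaps_vV[g]} explicitly required the nonexistence of such overlapping short extenders on the Q-structure side. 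This is precisely the situation that, in the setup of \cite{vm2}, is handled by Closson's $*$-translation \cite{closson}, and it is the content of hypothesis $(\dagger)$ from Remark \ref{rem:*-trans} that $*$-translation integrates routinely with the P-constructions of Definitions \ref{dfn:P-con_for_gamma-stable-P-suitable} and \ref{dfn:P-con_for_gamma-unstable-P-suitable}.

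The plan is to replay the proofs of Lemmas \ref{lem:delta_n+1-sts_def} and \ref{lem:delta_n+1-sts_def_g} verbatim, but with the following modification. Given a ($\vec{\gamma}$-stable or $\gamma_n$-unstable) $*$-suitable $\Tt\in\vV[g]$ and the witnessing $U,\delta$, we replace $\mathscr{P}^U(M(\Tt))$ with its $*$-translation $\mathscr{P}_*^U(M(\Tt))$, in which initial segments of $U$ whose active extender overlaps $\delta$ are encoded via Closson's $*$-operator into an amenable coding over the passive structure on the $M(\Tt)$ side. The key first order property of $\mathscr{P}_*^U(M(\Tt))$ that we need is: (i) it extends $M(\Tt)$; (ii) its passage to an $n$-sound, unsound projecting stage below $\delta$ yields the correct Q-structure $Q(\Tt,b)$ whenever $\Tt$ is $\delta_{n+1}$-short; and (iii) when $\Tt$ is $\delta_{n+1}$-maximal, the $\delta$-core of $\mathscr{P}_*^U(M(\Tt))$ agrees with the $\delta$-core of $M^\Tt_b$. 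Properties (i)--(iii) are the content of $(\dagger)$ applied to our P-constructions.

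With $\mathscr{P}_*^U$ in hand, the comparison arguments of Lemma \ref{lem:*-suitable_P-con_correctness_no_short_overlaps} go through essentially unchanged: on one side we iterate the phalanx $\Phi(\Tt',Q')$ (now allowing $Q'$ to carry overlapping shorts, with the extenders indexed and applied via the $*$-translated coding), on the other side we iterate $\Phi(\Uu)$ for $\Uu$ given by $F$ (or trivial), and the least disagreement prescription is the same except that the commutativity clauses are replaced by the corresponding $*$-translation commutativity. This yields the set-sized analogue of part \ref{item:Q=P_when_set-sized} of \ref{lem:*-suitable_P-con_correctness_no_short_overlaps} and the $\delta(\Tt)$-core agreement of part \ref{item:Q-bar=P-bar_when_proper_class} without assuming tameness. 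The lift-up argument of Lemma \ref{lem:*-suitable_P-con_correctness_no_short_overlaps_vV} (using $j:\vV\to\vV_{n+1}^{\vV\down 0}$ and the minimal $\vec{e}$-inflation of $(\Tt,b)$) then removes the assumption that $\vV=\vV_{n+1}^N$ for some $\Sigma_M$-iterate $N$, and the generic-extension version of Lemma \ref{lem:*-suitable_P-con_correctness_no_short_overlaps_vV[g]} is then inherited via Lemma \ref{lem:vV_def_over_vV[g]} and the uniform first order definability of $\mathscr{P}_*^U$ in $U$ and $M(\Tt)$.

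Having upgraded the core correctness statement, the remaining four parts of the strengthened Lemmas \ref{lem:delta_n+1-sts_def} and \ref{lem:delta_n+1-sts_def_g} follow by precisely the arguments already given in their proofs, now applied to all $\delta_{n+1}$-short trees rather than only to tame ones: for internal closure \ref{item:delta_n+1-N_internal_sts}/\ref{item:delta_n+1-N_internal_sts_g} one uses minimal genericity inflation to reduce an arbitrary $\delta_{n+1}$-short tree in $N[g]$ (resp.~$\vV[g]$) to a $*$-suitable one, then reads off the branch via the $*$-translated P-construction; for part \ref{item:all_delta_n+1-max_absorbed}/\ref{item:all_delta_n+1-max_absorbed_g} one applies the same minimal genericity inflation to $\delta_{n+1}$-maximal trees, together with a further iteration of $M(\Uu)$ inside $\vV$ (or $N$) to reach a $*$-suitable configuration; and the uniformity parts \ref{item:delta_n+1_N_sts_uniformity}/\ref{item:delta_n+1_N_sts_uniformity_g} are immediate from the uniformity of $\mathscr{P}_*^U$ in its data. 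The main, and only, obstacle is the use of $(\dagger)$: the explicit verification that Closson's $*$-translation, originally written for the single-Woodin, single-long-extender setting, integrates with the layered P-constructions of Definitions \ref{dfn:P-con_for_gamma-stable-P-suitable} and \ref{dfn:P-con_for_gamma-unstable-P-suitable} which simultaneously have to commute $F^{P|\nu}$ with each of the extenders $E$, $j$, $j_{i+1,n+1}$ according to the critical point of $G=F^{U|\nu}$. This verification is deferred to \cite{*-trans_add}.
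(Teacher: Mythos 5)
Your proposal correctly identifies the single remaining obstacle — Q-structures carrying short extenders overlapping $\delta(\Tt\rest\eta)$ — and the remedy of integrating Closson's $*$-translation with the P-constructions, and then defers that integration to \cite{*-trans_add}; this is exactly what the paper does, only the paper states the deferral in one sentence while you spell out the intended shape of $\mathscr{P}_*^U$ and how the comparison and lift-up arguments would be rerun. The extra detail you supply is consistent with the paper's surrounding machinery and is a reasonable elaboration of the same plan.
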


	\begin{proof}[Proof deferral]
	  What remains of the proof requires  $*$-translation, to handle trees $\Tt$ having Q-structures with short extenders overlapping $\delta(\Tt)$. This is deferred to \cite{*-trans_add}. 
\end{proof}

\subsection{Uniform grounds and $\mM_{\infty,n+1}[*_{n+1}]$}

\cite[\S5.2]{vm2_v2} is now adapted readily, including to non-dropping iterates of $\wW_{n+1}$, not just of $\vV_{n+1}$.
The details are like in \cite{vm2_v2}.
Let us give an argument for condition (ug19)
of \cite[\S2]{vm2_v2}, however (instead of \cite[Lemma 5.26***]{vm2_v2}; we write $\mathscr{F}_{n+1}^M$ for the external system of iterates of $\vV_{n+1}$ associated to $M$):

\begin{lem}
\cite[(ug19)]{vm2_v2} holds for the system $\mathscr{F}_{n+1}^M$.
\end{lem}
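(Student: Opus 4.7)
My plan is to reduce the verification of (ug19) for the system $\mathscr{F}_{n+1}^M$ of iterates of $\vV_{n+1}$ to the corresponding statement for the base system of iterates of $M$, which is either classical or (for the parts of the inductive setup needed here) explicitly verified by going through \cite[\S5.2]{vm2} applied to $M$ in place of $\Mswsw$. The key tool is Lemma \ref{lem:Sigma_vV_is_e-vec_min_pullback}, which identifies $(\Sigma_{\vV_{n+1}})^{\sn}$ as the $\vec{e}$-minimal pullback of $\Sigma_M$, together with the conservativity results (Lemmas \ref{lem:conservativity_stable}, \ref{lem:conservativity_unstable}) telling us that every short-normal tree $\Tt$ via $\Sigma_{\vV_{n+1}}$ pushes down to a tree $\Tt\down 0$ via $\Sigma_M$, with corresponding iteration maps agreeing on $\vV\down 0 \subseteq \vV$ (up to the identity natural factor map).

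First, given iterates $\vV_P, \vV_Q \in \mathscr{F}_{n+1}^M$ and the comparison/iteration maps relevant to (ug19), I would pass to $N_P = \vV_P\down 0$ and $N_Q = \vV_Q\down 0$. By Corollary \ref{cor:Sigma_wW_n+1_down_0=Sigma_M}, $N_P$ and $N_Q$ are iterates of $M$ in the base system, and the $\mathscr{F}_{n+1}^M$-iteration maps $\pi: \vV_P \to \vV_Q$ restrict, via conservativity, to the base-system iteration maps $\pi\down 0: N_P \to N_Q$. Invoking (ug19) for the base system then supplies the required coherence at the $M$-level between the two maps that (ug19) demands agree.

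To lift this coherence back to $\vV_P, \vV_Q$, I use the characterization $\vV_P = \Ult(N_P, \vec{e}^{\vV_P})$ from Lemma \ref{lem:e-vec_finite_stage}, combined with the fact that $\vec{e}^{\vV_P}$ is the extender derived from the short-normal iteration map producing $\vV_P$ as an iterate of $\wW_{n+1}$. Since each $\vV_P | \gamma_i^{\vV_P}$ is definable in the codes over $N_P | \kappa_i^{+N_P}$ via the formulas $\vec{\varphi}$ of Definition \ref{dfn:pVl}, and the remainder of $\vV_P$ above $\gamma_n^{\vV_P}$ is the P-construction over $N_P$, the map $\pi$ on $\vV_P$ is fully determined by $\pi\down 0$ on $N_P$ together with the uniform definability. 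Lemma \ref{lem:e-vec_commutativity} provides the technical commutativity needed to see that the two candidate maps, agreeing after pushing down, must also agree on each layer $\vV_P\down i$ for $i \leq n$, and hence on $\vV_P$ itself. This is actually a fine-structural induction on $i$, exploiting $\Sigma_{\vV_P\down i}$-conservativity (Lemma \ref{lem:e-vec_minimal_pullback}) and Lemma \ref{lem:mhc_inheritance}.

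The main obstacle will be bookkeeping the simultaneous agreement on all intermediate layers $\vV_P\down i$ for $0 \leq i \leq n$ rather than merely at the bottom and top, since the $i$-long extenders of $\vV_P$ for various $i$ interact through the formulas of \ref{dfn:pVl}\ref{item:P-con_in_pVl} in a way that mixes the levels. The right viewpoint is that each step of the induction only requires transferring agreement across one ultrapower by $e_i$, and Lemma \ref{lem:e-vec_commutativity} (together with the fact that $e_i^{\vV_P\down i}$ is the image of $e_i^{\wW_{n+1}}$ under a short-normal iteration map producing $\vV_P$) ensures this transfer is faithful. Once this is in hand, (ug19) for $\mathscr{F}_{n+1}^M$ follows by the analogue of the argument used for the base system.
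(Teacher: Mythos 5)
Your proposal misidentifies what (ug19) actually asserts, and as a result the approach does not engage with the real difficulty. From how the lemma is stated and used, (ug19) for $\mathscr{F}_{n+1}^M$ is an \emph{existence} requirement: given $P,R\in\mathscr{F}^M_{n+1}$, one must produce $S$ that is simultaneously a $\Sigma$-iterate of both $P$ and $R$, a member of the external system $\mathscr{F}^M_{n+1}$, \emph{and} a member of the internal system $\mathscr{F}^P_{n+1}$ defined inside $P$. Your write-up instead treats (ug19) as a coherence/agreement condition on two iteration maps, to be transferred through the layers $\vV\down i$ via conservativity and $\vec{e}$-commutativity. That is not the content of the condition, and none of the machinery you invoke (Lemma \ref{lem:Sigma_vV_is_e-vec_min_pullback}, Lemma \ref{lem:e-vec_commutativity}, Lemma \ref{lem:mhc_inheritance}) addresses membership in $\mathscr{F}^P_{n+1}$.

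The genuine obstacle, which your plan never touches, is the definability/internality requirement in clause (c): you must exhibit a $\delta_{n+1}$-maximal tree producing $S$ which is \emph{appropriately definable over $P$} (so that $S\in\mathscr{F}^P_{n+1}$), while also being appropriately definable over $M$ (so $S\in\mathscr{F}^M_{n+1}$), and such that $S$ is above a common iterate of $P$ and $R$. The paper's proof makes this happen via a genericity iteration (simultaneously making $P|\delta$ and a canonical name $\tau$ for $M$ generic), using that the P-construction lets $P$ recover $M$ locally, and using homogeneity of $\Coll(\om,\gamma)$ to bring the relevant iterate back into $P$ rather than just a generic extension. Pushing down to $\vV\down 0$ and ``invoking (ug19) for the base system'' also does not work as a shortcut: the analogous internal/external intersection fact for iterates of $M$ is not something the paper takes as given, and even if it were, the transfer up through the $\vV$-layers would still face exactly the same definability problem, since $\mathscr{F}^P_{n+1}$ is computed inside $P$, not inside $P\down 0$. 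You would need to add the genericity-iteration and homogeneity ideas to have a proof; as written the plan would not close.
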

\begin{proof}Let $P,R\in\mathscr{F}^{M}_{n+1}$. 
We want to find some $S\in\mathscr{F}^M_{n+1}\cap\mathscr{F}^P_{n+1}$
with $S$ an iterate of both $P$ and $R$. First let $R_1\in\mathscr{F}^M_{n+1}$ be a common iterate of $P$ and $R$. Let $\dot{R}_1\in P|\kappa_{n+1}^P$ be a $\BB_{\delta_{n+1}^P}^P$-name for $R_1$. Let $\gamma\in(\delta_{n+1}^P,\kappa_{n+1}^P)$ be a cardinal strong $\{\kappa_0^M,\ldots,\kappa_n^M\}$-cutpoint of $M$ such that $\dot{R}_1\in P|\gamma$. Letting $g$ be $(P,\Coll(\om,\gamma))$-generic,
in $P[g]$, we can compare all $\delta_{n+1}$-maximal iterates of $P|\delta_{n+1}^P$
produced by the short tree strategy,
which are in $(P|\gamma)[g]$.
This results in a  $\delta_{n+1}$-maximal iterate $P'$ of $P$.
By homogeneity, we have $P'|\delta_{n+1}^{P'}\in P$. And $P'|\delta_{n+1}^{P'}$ is an iterate of $R_1|\delta_{n+1}^{R_1}$. But then in $P$, we can find some $S\in\mathscr{F}^P_{n+1}\cap\mathscr{F}^M_{n+1}$ such that $S|\delta_{n+1}^S$ is an iterate of $P'$; for this, we form the  genericity iteration for simultaneously making $P|\delta$ generic (where $\delta$ is the eventual image of $\delta_{n+1}^{P'}$), and such that $\BB_{\delta_{n+1}^P}^P$ forces that $\tau|\delta$ is also generic, where $\tau$ is the canonical name for $M$
given by adjoining $M|\delta_{n+1}^P$
and then using P-construction. Note that this genericity iteration is appropriately definable from parameteres both over $P|\delta$ and over $M|\delta$ (for appropriate $\delta$)
(which is one of the requirements
for having $S\in\mathscr{F}^P_{n+1}\cap\mathscr{F}^M_{n+1}$).
\end{proof}

We have now established the abstract hypotheses for uniform grounds
as in \cite[\S2]{vm2_v2}. So we get the results there, producing $\mM_{\infty,n+1}$ and $*_{n+1}$
and $\mM_{\infty,n+1}[*_{n+1}]$
(which we might just refer to as $\mM_{\infty,n+1}[*]$), etc. 

\subsection{Action of iteration maps on $\mM_{\infty,n+1}$}

We now adapt \cite[\S5.4]{vm2_v2}. The adaptation is very routine, but it is not quite literally as there,
because in \cite{vm2_v2},
every non-dropping iterate of $M_{\swsw}$ is $\kappa_1^{M_{\swsw}}$-sound, but there is no such convenience in the present context. This is a small complication. We also want to consider $\Sigma_{\wW_{n+1}}$-iterates, not just $\Sigma_{\vV_{n+1}}$-iterates. So we state the appropriate lemmas.

\begin{dfn}
 Let $\vV$ be a non-dropping $\Sigma_{\wW_{n+1}}$-iterate.
 For $P\in\mathscr{F}_{n+1}^{\vV}$ let $H^P=\Hull^P_1(\mathscr{I}^{\vV}\cup\delta_{n+1}^P)$,
 $\bar{P}$ be the transitive collapse of $H^P$,
 and $\pi_{\bar{P}P}:\bar{P}\to P$ the uncollapse map.
 As in \cite[Lemma 5.16(1)]{vm2_v2}, $\delta_{n+1}^{\bar{P}}=\delta_{n+1}^P$ and
 $\bar{P}$ is a $\delta_{n+1}^{\bar{P}}$-sound $\Sigma_{\wW_{n+1}}$-iterate. Define $(\mM_{\infty,n+1}^{\overline{\mathrm{ext}}})_{\vV}$ as the direct limit of the (iterates) $\bar{P}$ for $P\in\mathscr{F}_{n+1}^{\vV}$ (both under the iteration maps induced by $\Sigma_{\wW_{n+1}}$). (Note that if $\vV$ is $\delta_{n+1}^{\vV}$-sound then $\bar{P}=P$ and $\pi_{\bar{P}P}=\id$
 for all $P\in\mathscr{F}^{\vV}_{n+1}$,
 so in this case we can define the external direct limit directly as the direct limit of the models $P\in\mathscr{F}^{\vV}_{n+1}$ under the iteration maps.)\footnote{We could follow \cite{vm2_v2}
 by defining
 $(\mM_{\infty,n+1}^{\mathrm{ext}})_{\vV}$ in general,
 by iterating the base model $\vV$ via the trees (based on $\delta_{n+1}^{\vV}$) in the system. But in the case
 that $\vV$ is not $(\delta_{n+1},\kappa_{n+1})$-stable,
 this model is not relevant.}
 
 Let $\alpha\in\OR$ and $P\in\mathscr{F}_{n+1}^{\vV}$. We say that $\alpha$ is \emph{$(P,\mathscr{F}_{n+1}^{\vV})$-stable} iff for all $Q\in\mathscr{F}_{n+1}^{\vV}$ with $P\preccurlyeq Q$, we have $\alpha\in H^Q$ and $\pi_{\bar{Q}Q}\com i_{\bar{P}\bar{Q}}\com\pi_{\bar{P}P}^{-1}(\alpha)=\alpha$.
\end{dfn}
\begin{lem}\label{lem:action_iter_maps_on_M_infty}
 Let $\vV$ be a $\kappa_{n+1}^{\vV}$-sound non-dropping $\Sigma_{\wW_{n+1}}$-iterate.
 Let $\bar{\vV}$ be the $\delta_{n+1}^{\vV}$-core of $\vV$.
  Let $\mathscr{N}=(\mM_{\infty,n+1})^{\vV}$. Then:
  \begin{enumerate}
   \item For all $P,Q\in\mathscr{F}^{\vV}_{n+1}$,
   if $P\preccurlyeq Q$ then $H^P\sub H^Q$,
   \item For all $\alpha\in\OR$,
   there is $P\in\mathscr{F}^{\vV}_{n+1}$
   such that $\alpha$ is $(P,\mathscr{F}^{\vV}_{n+1})$-stable,
   \item $\mathscr{I}^{\mM_{\infty,n+1}}=\mathscr{I}^{\vV_{n+1}}=\mathscr{I}^M$ and $i_{\vV_{n+1}\mM_{\infty,n+1}}\rest\mathscr{I}^{\vV_{n+1}}=\id=*_{n+1}\rest\mathscr{I}^{\vV_{n+1}}$,
   \item $\mathscr{I}^{\mathscr{N}}=\mathscr{I}^{\vV}$
   and $(*_{n+1})^{\vV}\rest\mathscr{I}^{\vV}=\id$,
   \item $\mathscr{N}=(\mM_{\infty,n+1}^{\overline{\mathrm{ext}}})_{\vV}$ is a $\delta_{n+1}^{\N}$-sound
   $\Sigma_{\bar{\vV}}$-iterate.
      \item $(\mM_{\infty,n+1})^{\mathscr{N}}$ is a $\delta_{n+1}^{(\mM_{\infty,n+1})^{\mathscr{N}}}$-sound
   $\Sigma_{\mathscr{N}}$-iterate, \[*_{n+1}^{\vV}\sub\pi^{\vV}_{n+1,\infty}:\mathscr{N}\to(\mM_{\infty,n+1})^{\mathscr{N}},\]
   and $\pi^{\vV}_{n+1,\infty}$ is the $\Sigma_{\mathscr{N}}$-iteration map
   (note that $\pi^{\vV}_{n+1,\infty}$
   is defined analogously to \cite[Definition 5.33***]{vm2_v2}).
   \item \label{item:iteration_action_on_M_infty_between_iterates}Let $\vV'$ be a $\kappa_{n+1}^{\vV'}$-sound non-dropping $\Sigma_{\vV}$-iterate
   with $\vV|\delta_{n+1}^{\vV}\pins\vV'$.
   Let $\mathscr{N}'=(\mM_{\infty,n+1})^{\vV'}=(\mM_{\infty,n+1}^{\overline{\mathrm{ext}}})_{\vV'}$.
   Then:
   \begin{enumerate}
\item\label{item:rest_iter_maps_commute} $i_{\vV\vV'}\com i_{\bar{\vV}\mathscr{N}}=i_{\bar{\vV}\mathscr{N}'}$,
\item \label{item:when_iterate_above_kappa_n+1}
If $\vV|\kappa_{n+1}^{\vV}\pins\vV'$ then:
\begin{enumerate}
\item  $\mathscr{N}'$ is a $\Sigma_{\mathscr{N}}$-iterate, and
   \item  $i_{\vV\vV'}\rest\mathscr{N}:\mathscr{N}\to\mathscr{N}'$ is just the $\Sigma_{\mathscr{N}}$-iteration map.
   \end{enumerate}
   \end{enumerate}
  \end{enumerate}
\end{lem}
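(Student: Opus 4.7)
The plan is to follow the structure of \cite[\S5.4]{vm2}, adapting for the two new features of the present context: (a) we allow iterates of $\wW_{n+1}$, not just of $\vV_{n+1}$, and (b) iterates $P\in\mathscr{F}^{\vV}_{n+1}$ need not be $\kappa_{n+1}^P$-sound, which is why the hulls $\bar{P}=\cHull^P_1(\mathscr{I}^{\vV}\cup\delta_{n+1}^P)$ appear in the definition of the external direct limit. The whole argument rests on two pillars already in place: the short tree strategy computation of Lemma \ref{lem:sts}, which secures correctness of branch models and P-constructions at $\delta_{n+1}$, and the uniform grounds apparatus established in the preceding subsection.

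First, for items 1 and 2, the arguments are essentially combinatorial. Item 1 follows because every iteration map $i_{PQ}$ (for $P\preccurlyeq Q$ in $\mathscr{F}^{\vV}_{n+1}$) fixes each element of $\mathscr{I}^{\vV}$, since the Silver indiscernibles of $\vV$ are inherited from those of $M$ via the iteration from $\wW_{n+1}$ to $\vV$ and lie above the action of the tree; hence $H^P$ embeds into $H^Q$ via $i_{PQ}$, giving $H^P\sub H^Q$ after collapse. Item 2 is a standard stabilization: given $\alpha\in\OR$, iterate far enough so that $\alpha$ lies below the critical action in $\vV$, then apply the commutativity of indiscernibles along the system to get a stable $P$. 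Items 3 and 4 then follow directly, by unpacking how $*_{n+1}$ and $i_{\vV_{n+1}\mM_{\infty,n+1}}$ act on indiscernibles (these come from images under $\pi_{\bar{P},\mathscr{N}}\com\pi_{\bar{P}P}^{-1}$ for sufficiently stable $P$).

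For item 5, which is the heart of the lemma, the plan is to show $\mathscr{N}$ is a $\Sigma_{\bar{\vV}}$-iterate of $\bar{\vV}$ by running the direct limit over $*$-suitable $\delta_{n+1}$-maximal trees: by Lemma \ref{lem:sts}, for each $\delta_{n+1}$-maximal $\Tt\in\bar{\vV}$ via $\Sigma_{\bar{\vV}}$ there is a $*$-suitable $\Uu\in\bar{\vV}$ with $M^{\Uu}_c$ a $\Sigma_{M^\Tt_b}$-iterate, and the $\delta$-core of $M^\Uu_c$ agrees with $\mathscr{P}^U(M(\Uu))$. Cofinality of such $\Uu$ in the system $\mathscr{F}^{\vV}_{n+1}$ (via the uniform grounds condition $(\mathrm{ug}19)$ established above) yields $\mathscr{N}$ as a direct limit of $\Sigma_{\bar{\vV}}$-iterates, and $\delta_{n+1}^{\mathscr{N}}$-soundness passes through the direct limit because each $\bar{P}$ is $\delta_{n+1}^{\bar{P}}$-sound.

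Item 6 is the internalization of item 5. The plan is to note that the entire direct limit system defining $\mathscr{N}$ from $\mM_{\infty,n+1}^{\mathscr{N}}$ is internally definable over $\mathscr{N}$ from parameters, via the internal $\delta_{n+1}$-short tree strategy of Lemma \ref{lem:delta_n+1-sts_def} together with the P-construction recipe. This realizes $\mM_{\infty,n+1}^{\mathscr{N}}$ as a $\delta_{n+1}$-sound $\Sigma_{\mathscr{N}}$-iterate, and the agreement $*_{n+1}^{\vV}\sub\pi^{\vV}_{n+1,\infty}$ follows by chasing definitions on both sides, exactly as in \cite[Lemma 5.34***]{vm2}.

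Finally, item 7 is proved by functoriality. Part (a) follows because $i_{\vV\vV'}$ maps $\mathscr{F}^{\vV}_{n+1}$ cofinally into $\mathscr{F}^{\vV'}_{n+1}$ and respects the hull collapses $\pi_{\bar{P}P}$, so the two direct limits fit into a commuting square giving $i_{\vV\vV'}\com i_{\bar{\vV}\mathscr{N}}=i_{\bar{\vV}\mathscr{N}'}$. Part (b), where the iteration is above $\kappa_{n+1}^{\vV}$, is the cleanest case: then $i_{\vV\vV'}$ fixes $\vV|\kappa_{n+1}^{\vV}$ pointwise, every element of $\mathscr{F}^{\vV}_{n+1}$ lies in $\mathscr{F}^{\vV'}_{n+1}$, and $i_{\vV\vV'}\rest\mathscr{N}$ is realized as the corresponding $\Sigma_{\mathscr{N}}$-iteration map via the internal definability from item 6.

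The main obstacle I expect is the bookkeeping around the $\bar{P}$-construction in item 7(\ref{item:when_iterate_above_kappa_n+1}): we must verify that the map $i_{\vV\vV'}\rest\mathscr{N}$ arising from the direct limits is genuinely short-normal as a $\Sigma_{\mathscr{N}}$-iteration (rather than a stack), which requires checking that the induced trees interact correctly with the hull collapses, since in the present context $\vV$ need not satisfy the simplifying $\kappa_1$-soundness that was available in \cite{vm2}. Once this compatibility is secured, the remaining verifications reduce to straightforward transcriptions of the corresponding arguments in \cite{vm2}.
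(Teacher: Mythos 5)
The paper's own proof is very short: it defers essentially everything to the analogue in \cite[Lemma 5.45]{vm2}, and only pauses to address part 7(a), which was not stated there, indicating that it follows from the fact that the correct cofinal branches through the relevant limit-length trees are determined by their action on enough indiscernibles (which $i_{\vV\vV'}$ is correct about, since it fixes $\mathscr{I}^\vV$ pointwise), together with the fact that those trees are definable inside $\vV$ and $\vV'$. Your sketch is a much more detailed reconstruction of the vm2 argument for items 1--6 and is broadly on track there.

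Where your proposal diverges from the paper is part 7(a). You argue by functoriality: $i_{\vV\vV'}$ maps $\mathscr{F}^{\vV}_{n+1}$ cofinally into $\mathscr{F}^{\vV'}_{n+1}$ and respects the hull collapses $\pi_{\bar{P}P}$, and you conclude $i_{\vV\vV'}\com i_{\bar{\vV}\mathscr{N}}=i_{\bar{\vV}\mathscr{N}'}$. But the functoriality of the covering systems only produces \emph{some} commuting map $\mathscr{N}\to\mathscr{N}'$; it does not by itself identify that induced map with the restriction $i_{\vV\vV'}\rest\mathscr{N}$, which is what 7(a) is really asserting. The identification is nontrivial because $i_{\vV\vV'}$ may have critical point strictly between $\delta_{n+1}^{\vV}$ and $\kappa_{n+1}^{\vV}$ (we are not in the case 7(b) where the iteration is above $\kappa_{n+1}^{\vV}$), so it moves $P$ to $i_{\vV\vV'}(P)$ nontrivially and one has to check that the induced map $\bar{P}\to\overline{i_{\vV\vV'}(P)}$ is what it should be and that the resulting direct-limit map agrees with $i_{\vV\vV'}$ on $\mathscr{N}\sub\vV$. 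This is precisely what the paper's remark about indiscernibles and definability is supplying: since $i_{\vV\vV'}$ fixes $\mathscr{I}^\vV$ pointwise, is elementary, and the trees in the covering system together with their correct cofinal branches (as pinned down by their action on indiscernibles) are definable in $\vV$ and $\vV'$, the two candidate maps must coincide. You should make this identification step explicit; with it, your argument for 7(a) becomes equivalent to the paper's.
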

\begin{proof}
For the most part see the proof of \cite[Lemma 5.45]{vm2_v2}.
Part \ref{item:rest_iter_maps_commute} (the analogue of which was not stated within \cite[Lemma 5.45]{vm2_v2})
follows from standard calculations using that initial segments of the correct cofinal branches are determined by the action on enough indiscernibles,
which $i_{\vV\vV'}$ is correct about, and by the definability of the relevant limit length iteration trees within $\vV$ and $\vV'$.
\end{proof}

\begin{cor}\label{cor:action_iter_maps_on_M_infty_unsound}
 Let $\wW$ be a  non-dropping $\Sigma_{\wW_{n+1}}$-iterate \tu{(}we do not assume that $\wW$ is $\kappa_{n+1}^{\wW}$-sound\tu{)}. Let $\delta=\delta_{n+1}^{\wW}$
 and $\kappa=\kappa_{n+1}^{\wW}$.
 Let $\nN=\mM_{\infty,n+1}^{\wW}$.
 Let $\bar{\wW}$ be the $\kappa$-core of $\wW$.
 Let $\bar{\nN}=\mM_{\infty,n+1}^{\bar{\wW}}$.
 Let $\bar{\bar{\wW}}$ be the $\delta$-core of $\wW$.
 Then:
 \begin{enumerate}
 \item\label{item:delta_and_kappa_agreement} $\delta_{n+1}^{\bar{\bar{\wW}}}=\delta_{n+1}^{\bar{\wW}}=\delta=\delta_{n+1}^{\wW}$
 and $\kappa_{n+1}^{\bar{\wW}}=\kappa=\kappa_{n+1}^{\wW}$,
 \item $\bar{\bar{\wW}}$ is a $\delta$-sound $\Sigma_{\wW_{n+1}}$-iterate,
 \item $\bar{\wW}$ is a $\kappa$-sound $\Sigma_{\bar{\bar{\wW}}}$-iterate, via an above-$\delta$ tree,
 \item\label{item:wW_is_Sigma_wW-bar_it} $\wW$ is a $\Sigma_{\bar{\wW}}$-iterate,
 via an above-$\kappa^{+\bar{\wW}}$ tree,
   \item\label{item:nN-bar_is_iterate} $\bar{\nN}$ is a $\delta_{n+1}^{\bar{\nN}}$-sound $\Sigma_{\bar{\bar{\wW}}}$-iterate,
  via a tree based on $\bar{\bar{\wW}}|\delta=\bar{\wW}|\delta=\wW|\delta$.									`
  \item\label{item:nN-bar_is_hull_of_indiscs} $\bar{\nN}=\cHull^{\nN}(\delta_{n+1}^{\nN}\cup\mathscr{I}^{\wW})$

  \item\label{item:nN|kappa^+=nN-bar|kappa^+}  $\nN|\kappa_{n+1}^{+\nN}=\bar{\nN}|\kappa_{n+1}^{+\bar{\nN}}$
  and $(\mM_{\infty,n+1})^{\nN}|\delta_{n+1}^{(\mM_{\infty,n+1})^{\nN}}=(\mM_{\infty,n+1})^{\bar{\nN}}|\delta_{n+1}^{(\mM_{\infty,n+1})^{\bar{\nN}}}$
  and $\kappa_{n+1}^{+\nN}=\delta_{n+1}^{(\mM_{\infty,n+1})^{\nN}}=\delta_{n+1}^{(\mM_{\infty,n+1})^{\bar{\nN}}}=\kappa_{n+1}^{+\bar{\nN}}$,
     \item\label{item:mM_infty,n+1^nN-bar_is_Sigma-it} $(\mM_{\infty,n+1})^{\bar{\nN}}$ is a $\delta_{n+1}^{(\mM_{\infty,n+1})^{\bar{\nN}}}$-sound $\Sigma_{\bar{\nN}}$-iterate, and \[*_{n+1}^{\bar{\wW}}\rest\delta_{n+1}^{\bar{\nN}}=*_{n+1}^{\wW}\rest\delta_{n+1}^{\nN}\sub\pi^{\wW}_{n+1,\infty}:\mathscr{N}\to(\mM_{\infty,n+1})^{\mathscr{N}},\]
   and $\pi^{\wW}_{n+1,\infty}\rest(\nN|\delta_{n+1}^{\nN})$ agrees with the $\Sigma_{\bar{\nN}}$-iteration map
   (note that $\pi^{\wW}_{n+1,\infty}$
   is defined analogously to \cite[Definition 5.33***]{vm2_v2}).
 \end{enumerate}

 Moreover, let $\wW'$ be a non-dropping $\Sigma_{\wW}$-iterate with $\wW|\delta\pins\wW'$.
 Let $\delta'=\delta_{n+1}^{\wW'}$ and $\kappa'=\kappa_{n+1}^{\wW'}$.
 Let $\nN'=(\mM_{\infty,n+1})^{\wW'}$.
 Let $\bar{\wW'}$ be the $\kappa'$-core of $\wW'$ and $\bar{\nN}'=(\mM_{\infty,n+1})^{\bar{\wW'}}$.
 \tu{(}So $\delta_{n+1}^{\wW'}=\delta$ and note that parts \ref{item:delta_and_kappa_agreement}--\ref{item:nN-bar_is_hull_of_indiscs} also apply to $(\wW',\nN',\bar{\wW}',\bar{\nN}',\bar{\bar{\wW}})$.\tu{)} Then:
 \begin{enumerate}[resume*]
  \item\label{item:wW-bar'_is_it_of_wW-bar} $\bar{\wW}'$ is a $\Sigma_{\bar{\wW}}$-iterate with $\bar{\wW}|\delta\pins\bar{\wW}'$,
  and $i_{\wW\wW'}\rest\kappa^{+\wW}=i_{\bar{\wW}\bar{\wW}'}\rest\kappa^{+\bar{\wW}}$,
   \item\label{item:iteration_action_on_M_infty_between_iterates_non_kappa-sound}
   We have:
   \begin{enumerate}
\item\label{item:rest_iter_maps_commute_non_kappa-sound} $i_{\wW\wW'}\com i_{\bar{\bar{\wW}}\bar{\nN}}\rest(\bar{\bar{\wW}}|\delta)=i_{\bar{\wW}\bar{\wW'}}\com i_{\bar{\bar{\wW}}\bar{\nN}}\rest(\bar{\bar{\wW}}|\delta)=i_{\bar{\bar{\wW}}\bar{\nN'}}\rest(\bar{\bar{\wW}}|\delta)$,

\item\label{item:when_iterate_above_kappa_n+1_non_kappa-sound}
If $\wW|\kappa\pins\wW'$ then:
\begin{enumerate}
\item $\bar{\wW}|\kappa\pins\bar{\wW}'$,
\item  $\bar{\nN}'$ is a $\Sigma_{\bar{\nN}}$-iterate, and
   \item  $i_{\wW\wW'}\rest(\bar{\nN}|\delta_{n+1}^{\bar{\nN}}):(\bar{\nN}|\delta_{n+1}^{\bar{\nN}})\to(\bar{N}'|\delta_{n+1}^{\bar{\nN}'})$ is just the restriction of the $\Sigma_{\bar{\mathscr{N}}}$-iteration map.
   \end{enumerate}
  \end{enumerate}
  \end{enumerate}
\end{cor}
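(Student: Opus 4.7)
The plan is to reduce everything to the $\kappa_{n+1}^{\wW}$-sound case proved in Lemma \ref{lem:action_iter_maps_on_M_infty} by passing through the cores $\bar{\wW}$ and $\bar{\bar{\wW}}$. For items (1)--(4), note that $\delta=\delta_{n+1}^{\wW}$ and $\kappa=\kappa_{n+1}^{\wW}$ are lightface definable over $\wW$ below any relevant indiscernible, hence fixed by the uncollapse maps; so $\delta_{n+1}^{\bar{\bar{\wW}}}=\delta_{n+1}^{\bar{\wW}}=\delta$ and $\kappa_{n+1}^{\bar{\wW}}=\kappa$, giving (1). Standard core-theoretic arguments, combined with the usual commuting diagram between uncollapse maps and the iteration strategy, then give that $\bar{\bar{\wW}}$ and $\bar{\wW}$ are $\Sigma_{\wW_{n+1}}$-iterates at the stated level of soundness (items (2)--(3)), with the tree from $\bar{\bar{\wW}}$ to $\bar{\wW}$ above $\delta$ and the tree from $\bar{\wW}$ to $\wW$ above $\kappa^{+\bar{\wW}}$, yielding (4).

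Next, apply Lemma \ref{lem:action_iter_maps_on_M_infty} to $\bar{\wW}$, whose $\delta_{n+1}$-core is precisely $\bar{\bar{\wW}}$. This gives item (5) directly, and item (8) in the form that $(\mM_{\infty,n+1})^{\bar{\nN}}$ is a $\delta_{n+1}$-sound $\Sigma_{\bar{\nN}}$-iterate with $*_{n+1}^{\bar{\wW}}\sub\pi_{n+1,\infty}^{\bar{\wW}}$ equal to the iteration map. Item (6) is then immediate from the defining formula for $\bar{\nN}$ together with the equality $\mathscr{I}^{\bar{\wW}}=\mathscr{I}^{\wW}$.

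The heart of the proof is item (7). The key observation is that the external systems $\mathscr{F}_{n+1}^{\wW}$ and $\mathscr{F}_{n+1}^{\bar{\wW}}$ induce the same direct limit on objects below $\kappa^{+}$: any short-normal $\delta_{n+1}$-based tree producing an iterate in either system lives in $\wW|\kappa=\bar{\wW}|\kappa$, and since $i_{\bar{\wW}\wW}$ is above $\kappa^{+\bar{\wW}}$ it does not affect which cofinal branches are selected. Here the identification of strategies uses the agreement of $\Sigma_{\bar{\wW}}$ and $\Sigma_{\wW}$ on such trees, which is a consequence of Lemma \ref{lem:unique_ext_to_(0,OR)-strat_n+1} for the self-consistent extension, together with the $\delta_{n+1}$-short tree strategy computations of Lemma \ref{lem:sts}. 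Thus every member of $\mathscr{F}_{n+1}^{\wW}$ is simultaneously a member of $\mathscr{F}_{n+1}^{\bar{\wW}}$ and the transition maps match below $\kappa^{+}$, so $\nN|\kappa^{+\nN}=\bar{\nN}|\kappa^{+\bar{\nN}}$; the identity $\kappa^{+\nN}=\delta_{n+1}^{(\mM_{\infty,n+1})^{\nN}}$ then transfers from $\bar{\nN}$.

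For the moreover part, item (9) is by naturality of the $\kappa$-hull under the iteration map $i_{\wW\wW'}$: the uncollapse maps commute with $i_{\wW\wW'}$ in the expected fashion, the resulting tree from $\bar{\wW}$ to $\bar{\wW}'$ is above $\delta$, and the restriction equality $i_{\wW\wW'}\rest\kappa^{+\wW}=i_{\bar{\wW}\bar{\wW}'}\rest\kappa^{+\bar{\wW}}$ holds because both maps are above $\kappa^{+\bar{\wW}}$ on that common initial segment. Item (10)(a) is then immediate from Lemma \ref{lem:action_iter_maps_on_M_infty}(7)(a) applied to $(\bar{\wW},\bar{\wW}')$, combined with (9). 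Finally, if $\wW|\kappa\pins\wW'$ then also $\bar{\wW}|\kappa\pins\bar{\wW}'$ (since $\bar{\wW}|\kappa=\wW|\kappa$ and similarly for $\wW'$), so item (11) reduces to Lemma \ref{lem:action_iter_maps_on_M_infty}(7)(b) applied to the $\kappa$-sound pair $(\bar{\wW},\bar{\wW}')$, after which item (7) promotes the conclusion from $\bar{\nN},\bar{\nN}'$ to the restrictions to $\delta_{n+1}$-initial segments as stated. The main obstacle will be item (7): rigorously verifying that the two direct limit systems agree below $\kappa^{+}$ despite $\nN$ and $\bar{\nN}$ potentially diverging above, which ultimately rests on the self-consistency and unique extendability of short-normal strategies together with the short-tree strategy computations of Lemma \ref{lem:sts}.
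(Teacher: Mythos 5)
Your overall approach is the same as the paper's: reduce to the $\kappa_{n+1}$-sound case by passing through $\bar{\wW}$ and $\bar{\bar{\wW}}$, observe that Lemma \ref{lem:action_iter_maps_on_M_infty} applies with $\vV=\bar{\wW}$ and $\bar{\vV}=\bar{\bar{\wW}}$, and then read off the conclusions. Parts (1)--(6), (9), and (10)--(11) are handled the same way in both proofs.

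The one real divergence is your treatment of part (7). The paper gets parts (7) and (8) from Lemma \ref{lem:action_iter_maps_on_M_infty} together with the \emph{local definability} of (what will be) $\vV_{n+2}|\gamma_{n+1}^{\vV_{n+2}}$ over $\wW|\kappa_{n+1}^{+\wW}$ (citing the analogue of \cite[Lemma 4.41]{vm2}), combined of course with $\wW|\kappa^{+\wW}=\bar{\wW}|\kappa^{+\bar{\wW}}$. You instead give a more hands-on argument that the covering systems $\mathscr{F}_{n+1}^{\wW}$ and $\mathscr{F}_{n+1}^{\bar{\wW}}$ produce the same direct limit below $\kappa^+$ because the defining trees and selection of branches live in $\wW|\kappa=\bar{\wW}|\kappa$. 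This is morally the same observation, but the paper's version is cleaner and more precise (it packages exactly what is needed into a single local-definability fact), while yours needs some care to be made rigorous: the models $P$ in the system are proper-class iterates determined by P-construction over $\wW$ versus $\bar{\wW}$, so you do have to verify that everything that goes into $\nN|\kappa_{n+1}^{+\nN}$ depends only on $\wW|\kappa^{+\wW}$, and that is precisely what the local-definability citation is doing for you.

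There is also a small structural wrinkle: you treat (8) \emph{before} (7) and only establish it ``in the form'' involving $*_{n+1}^{\bar{\wW}}$ and $\pi_{n+1,\infty}^{\bar{\wW}}$. To get the full item (8), with $*_{n+1}^{\wW}$ and $\pi_{n+1,\infty}^{\wW}$ and the stated restriction equalities, you need the agreement facts of item (7) to transfer from the $\bar{\wW}$-computation to the $\wW$-computation. The paper handles (7) and (8) together for exactly this reason, and you should do the same: prove (7) first, then promote your $\bar{\wW}$-version of (8) to the $\wW$-version.
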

\begin{proof}
Parts \ref{item:delta_and_kappa_agreement}--\ref{item:wW_is_Sigma_wW-bar_it} are clear,
and it follows that Lemma \ref{lem:action_iter_maps_on_M_infty} applies 
to $\vV=\bar{\wW}$ and $\bar{\vV}=\bar{\bar{\wW}}$.
 Parts \ref{item:nN-bar_is_iterate} and \ref{item:nN-bar_is_hull_of_indiscs} 
  are an easy consequence.
  Parts \ref{item:nN|kappa^+=nN-bar|kappa^+}
  and \ref{item:mM_infty,n+1^nN-bar_is_Sigma-it}
  then follow from Lemma \ref{lem:action_iter_maps_on_M_infty}
  and the local definability
  of (what will be defined as) $\vV_{n+2}|\gamma_{n+1}^{\vV_{n+2}}$ over $\wW|\kappa_{n+1}^{+\wW}$, as in \cite[***Lemma 4.41]{vm2_v2}.
Part \ref{item:wW-bar'_is_it_of_wW-bar} is straightforward, and together with Lemma \ref{lem:action_iter_maps_on_M_infty},
yields part \ref{item:iteration_action_on_M_infty_between_iterates_non_kappa-sound}.
\end{proof}

\begin{lem}\label{lem:N_kappa_n+1-sound_implies_vV^N_n+1}
 Let $N$ be a $\kappa_{n+1}^N$-sound non-dropping $\Sigma_M$-iterate.  Then  $\vV_{n+1}^N$ is a $\kappa_{n+1}^{\vV_{n+1}^N}$-sound
 non-dropping $\Sigma_{\wW_{n+1}}$-iterate.
\end{lem}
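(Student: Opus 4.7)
The non-dropping iterate part is given by Corollary \ref{cor:Sigma_wW_n+1_down_0=Sigma_M}, part \ref{item:vV_i+1^N}; only $\kappa_{n+1}^{\vV_{n+1}^N}$-soundness remains. Set $\vV=\vV_{n+1}^N$ and $\kappa=\kappa_{n+1}^{\vV}=\kappa_{n+1}^N$ (the two agree since both measure the first strong cardinal above $\kappa_n^{+N}=\delta_n^{\vV}$, and above this point the short-extender sequences of $N$ and $\vV$ are interderivable by P-construction). Let $\bar{\vV}$ be the $\kappa$-core of $\vV$, with associated short-normal $\Sigma_{\wW_{n+1}}$-tree $\Tt$ and iteration map $\pi:\bar{\vV}\to\vV$ satisfying $\crit(\pi)\geq\kappa$; then $\bar{\vV}$ is a $\kappa$-sound non-dropping $\Sigma_{\wW_{n+1}}$-iterate.

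The plan is to push the entire picture down to the $M$-side via $\down 0$ and exploit the $\kappa$-soundness of $N$. By the proof of Corollary \ref{cor:Sigma_wW_n+1_down_0=Sigma_M}, part \ref{item:vV_i+1^N} (applying Lemma \ref{lem:n+1-translation_pair}), the $\Sigma_{\wW_{n+1}}$-tree from $\wW_{n+1}$ to $\vV$ is the $\uparrow(n+1)$-translation of the $\Sigma_M$-tree from $M$ to $N$; in particular $\vV\down 0=N$. Setting $\bar N=\bar{\vV}\down 0$ and $\Tt'=\Tt\down 0$, Lemma \ref{lem:n+1-translation_pair} gives that $\Tt'$ is a $\Sigma_M$-tree from $\bar N$ to $N$ and that the models, embeddings, and critical points along the non-dropping main branches of $\Tt'$ and $\Tt$ correspond in the standard way; in particular the induced iteration map $\tilde\pi:\bar N\to N$ satisfies $\crit(\tilde\pi)\geq\kappa_{n+1}^{\bar N}=\kappa$.

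It then suffices to verify that $\bar N$ is itself $\kappa$-sound, since then $\bar N$ is the $\kappa$-core of $N$, so by hypothesis $\bar N=N$, whence $\Tt'$ is trivial, and by the faithful correspondence of Lemma \ref{lem:n+1-translation_pair} so is $\Tt$, giving $\bar{\vV}=\vV$. To prove $\kappa$-soundness of $\bar N$, first observe $\bar{\vV}=\vV_{n+1}^{\bar N}$: the $(n+1)$-pVl attached to an $M$-iterate by reversing $\down 0$ via the P-construction of Definition \ref{dfn:pVl}, part \ref{item:P-con_in_pVl}, is uniquely determined by that iterate, and both $\bar{\vV}$ and $\vV_{n+1}^{\bar N}$ yield $\bar N$ upon applying $\down 0$. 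Then $\bar N$ is a set-generic extension of $\bar{\vV}$ by a poset in $\bar{\vV}|\kappa_n^{\bar{\vV}}\sub\bar{\vV}|\kappa$ (composing the successive groundings $\vV_{i+1}^{\bar N}\subsetneq\vV_i^{\bar N}$), and conversely $\bar{\vV}$ is definable over $\bar N$ from the parameter $\bar{\vV}|\gamma_n^{\bar{\vV}}\in\bar N|\kappa$ (by Lemma \ref{lem:wW_n+1_def_from_seg_in_univ} adapted to $\bar{\vV}$). Since $\bar{\vV}$ and $\bar N$ are thus interdefinable below $\kappa$ in a uniform way, routine fine-structural hull manipulations transfer $\kappa$-soundness from $\bar{\vV}$ to $\bar N$. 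The main obstacle is precisely this last step: one must verify that the precise notion of $\kappa$-soundness appropriate for the proper class premice $\bar{\vV}$ and $\bar N$ is respected by the $\down 0\leftrightarrow\vV_{n+1}^{(-)}$ equivalence, which amounts to checking that the standard parameters and Silver-indiscernible structure transport correctly through the P-construction and the small-forcing groundings.
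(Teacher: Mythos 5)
Your proof is correct and reaches the same conclusion, but it takes a detour that is both unnecessary and is precisely where you flag the remaining hand-waving, so it is worth pointing out how to avoid it. Once you have set up $\bar\vV$ as the $\kappa$-core of $\vV$ with iteration tree $\Tt$ and map $\pi$, and established that $\Tt'=\Tt\down 0$ is a $\Sigma_M$-tree from $\bar N=\bar\vV\down 0$ to $N$ whose iteration map $\tilde\pi=\pi\rest\bar N$ satisfies $\crit(\tilde\pi)\geq\kappa$ and $\tilde\pi\rest\mathscr{I}^{\bar N}=\id$, you do not need to show that $\bar N$ is $\kappa$-sound at all. The hypothesis already gives $N=\Hull^N(\kappa\cup\mathscr{I}^N)$, and since $\rg(\tilde\pi)$ is an elementary substructure of $N$ containing $\kappa\cup\mathscr{I}^N$, you get $N\sub\rg(\tilde\pi)$, whence $\tilde\pi$ is surjective, whence $\tilde\pi=\id$, whence $\Tt'$ is trivial; by the length/drop/degree correspondence of Lemma \ref{lem:conservativity_stable}, $\Tt$ is then trivial too, so $\bar\vV=\vV$. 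This removes the entire final paragraph of your argument (including the appeal to interdefinability, the generic-extension soundness-transfer, and the identity $\bar\vV=\vV_{n+1}^{\bar N}$), which is exactly the step you correctly identified as needing more care.

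Two smaller issues. First, for the correspondence between $\Tt$ and $\Tt\down 0$ and the restriction of iteration maps, the relevant statement is Lemma \ref{lem:conservativity_stable} (conservativity for $\vec\gamma$-stable short-normal trees, applicable here since $\Tt$ is above $\kappa>\gamma_n^{\bar\vV}$), together with Lemma \ref{lem:Sigma_vV_is_e-vec_min_pullback} ensuring $\Tt\down 0$ is via the right strategy; Lemma \ref{lem:n+1-translation_pair}, which you cite, concerns the $\uparrow i$ direction (from $M$-like to pVl) and is not what is being used at this step. Second, your argument is genuinely different in shape from the paper's, which is a one-liner: since $\kappa_{n+1}^{\vV_{n+1}}=\kappa_{n+1}^M$, the tree-level correspondence of Corollary \ref{cor:Sigma_wW_n+1_down_0=Sigma_M} (and Lemma \ref{lem:n+1-translation_pair}) transfers $\kappa_{n+1}$-soundness directly from the $M$-side tree to the $\wW_{n+1}$-side tree. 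Your core-and-contradiction route gives a fuller account of why that transfer works, which is useful exposition, but the shorter argument above is the one to aim for.
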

\begin{proof}
Since $\kappa_{n+1}^{\vV_{n+1}}=\kappa_{n+1}^M$ by definition, this follows from Corollary \ref{cor:Sigma_wW_n+1_down_0=Sigma_M}
and its proof.
\end{proof}

\begin{lem}\label{lem:}\  We have:
\begin{enumerate}
	\item\label{item:Sigma_vV_n+2,Delta_def_over_N}
	Let $P$ be a $\kappa_{n+1}^P$-sound non-dropping $\Sigma_M$-iterate. Let $\lambda\in\OR$ and $\PP\sub\lambda$ and $x=P|\lambda^{+P}$ and $g\sub\PP$ be $(P,\PP)$-generic. Let $\mathscr{N}=\mM_{\infty,n+1}^P$.  Then:
\begin{enumerate}[label=\tu{(}\alph*\tu{)},ref=(\alph*)]
\item\label{item:nN_is_delta_n+1-sound} $\mathscr{N}$ is a  $\delta_{n+1}^{\mathscr{N}}$-sound non-dropping $\Sigma_{\wW_{n+1}}$-iterate,
\item\label{item:Sigma_def_over_N} $P$
is closed under $\Sigma_{\mathscr{N},\delta_{n+1}^{\mathscr{N}}}$  and $\Sigma_{\nN,\delta_{n+1}^{\nN}}\rest P$ is lightface definable over $P$,
 \item\label{item:Sigma_def_over_N[g]} $P[g]$ is closed under $\Sigma_{\nN,\delta_{n+1}^{\nN}}$ and $\Sigma_{\nN,\delta_{n+1}^{\nN}}\rest P[g]$
 is definable over $P[g]$ from the parameter $x$,
	\item moreover, the definitions  are uniform in $P,g,\lambda,x$.
	\end{enumerate}
\item\label{item:Sigma_vV_n+2,Delta_def_over_vV}  
Let $\wW$ be a $\kappa_{n+1}^{\wW}$-sound non-dropping $\Sigma_{\wW_{n+1}}$-iterate.
Let $\lambda\in\OR$ with $\delta_n^{\wW}\leq\lambda$,
let $\PP\sub\lambda$, and let $g$ be $(\wW,\PP)$-generic.
Let $x=\wW|\lambda^{+\wW}$.
Let $\nN=\mM_{\infty,n+1}^{\wW}$.
Then the conclusions of part \ref{item:Sigma_vV_n+2,Delta_def_over_N}
hold after replacing $P$ throughout with $\wW$.
\end{enumerate}
	\end{lem}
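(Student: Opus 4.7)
The plan is to handle part (1) by combining the prior identification of $\nN$ as a $\Sigma_{\wW_{n+1}}$-iterate with the $\delta_{n+1}$-short tree strategy machinery from Lemmas \ref{lem:delta_n+1-sts_def}, \ref{lem:delta_n+1-sts_def_g}, and \ref{lem:sts}, then to deduce part (2) by a parallel argument using Lemma \ref{lem:vV_def_over_vV[g]} to recover $\wW$ from $\wW[g]$ via the parameter $x$.

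For part (1)(a), I would first apply Lemma \ref{lem:N_kappa_n+1-sound_implies_vV^N_n+1} to conclude that $\vV_{n+1}^P$ is a $\kappa_{n+1}$-sound non-dropping $\Sigma_{\wW_{n+1}}$-iterate. Then Lemma \ref{lem:action_iter_maps_on_M_infty}(5) applied with $\vV=\vV_{n+1}^P$ gives that $(\mM_{\infty,n+1})^{\vV_{n+1}^P}$ is a $\delta_{n+1}$-sound $\Sigma_{\bar{\vV}}$-iterate, where $\bar{\vV}$ is the $\delta_{n+1}^{\vV_{n+1}^P}$-core of $\vV_{n+1}^P$; since $\bar{\vV}$ is itself a $\Sigma_{\wW_{n+1}}$-iterate, so is $\nN$. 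The identification $\nN = \mM_{\infty,n+1}^P = (\mM_{\infty,n+1})^{\vV_{n+1}^P}$ comes from the uniform first-order definition of the direct limit system $\mathscr{F}_{n+1}$ over both $P$ and $\vV_{n+1}^P$.

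For parts (1)(b)--(d), given a tree $\Tt\in P[g]$ based on $\nN|\delta_{n+1}^{\nN}$, I would split into cases as in the proof of Lemma \ref{lem:delta_n+1-sts_def_g}. When $\Tt$ is $\delta_{n+1}$-short, absorb it into a $\vec{\gamma}$-stable $*$-suitable $\delta_{n+1}$-short tree $\Uu\in P$ (or $P[g]$) via minimal genericity inflation inside $P[g]$, then read off the branch from the P-construction $\mathscr{P}^U(M(\Uu))$ using Lemma \ref{lem:*-suitable_P-con_correctness_no_short_overlaps_vV[g]} applied to $\nN$ (which is legitimate by part (a)). When $\Tt$ is $\delta_{n+1}$-maximal, similarly absorb into a $*$-suitable $\delta_{n+1}$-maximal $\Uu\in P$ and identify the $\delta$-core of the branch model with that of $\mathscr{P}^U(M(\Uu))$; recover the full branch via indiscernibles, as in the proof of \ref{lem:*-suitable_P-con_correctness_no_short_overlaps_vV}(2). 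Closure under short-normal trees based on $\nN|\Delta^{\nN}$ (i.e.~below all long extenders of $\nN$) and their extensions is also needed here, and is handled essentially as in hypothesis \ref{item:hyp_8}, transferred via the iteration map $\bar{\bar{\wW}}\to\nN$ furnished by Corollary \ref{cor:action_iter_maps_on_M_infty_unsound}(5). Uniformity in $P, g, \lambda, x$ follows from the first-order nature of the absorption and P-construction processes, plus Lemma \ref{lem:vV_def_over_vV[g]} when $g\neq\emptyset$.

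Part (2) proceeds identically, substituting $\wW$ for $P$: definability over $\wW$ or $\wW[g]$ from the parameter $x = \wW|\lambda^{+\wW}$ uses Lemma \ref{lem:vV_def_over_vV[g]} to recover $\wW$ (hence $\nN = \mM_{\infty,n+1}^{\wW}$) inside $\wW[g]$. The main obstacle I expect is verifying that the correctness of P-constructions in Lemma \ref{lem:*-suitable_P-con_correctness_no_short_overlaps_vV[g]} applies when the tree sits on $\nN$ rather than on a primary iterate such as $\vV_{n+1}^P$; this reduces to the fact, ensured by part (a), that $\nN$ is a $\Sigma_{\wW_{n+1}}$-iterate, so the lifting argument in that lemma (through $\vV_{n+1}^{\nN\down 0}=\Ult(\nN,\vec{e}^{\nN})$) applies to $\nN$ directly, and the Q-structure agreement needed for $\delta_{n+1}$-maximal trees then follows. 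The only further subtlety, as usual, is the handling of short extenders overlapping $\delta(\Tt)$ in the Q-structure, which is deferred via $*$-translation to \cite{*-trans_add}.
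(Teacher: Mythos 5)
Part (1)(a) and the treatment of part (2) match the paper's proof: you invoke Lemma \ref{lem:N_kappa_n+1-sound_implies_vV^N_n+1} and Lemma \ref{lem:action_iter_maps_on_M_infty} for (1)(a), and recover $\wW$ inside $\wW[g]$ from the parameter $x$ (the paper cites Lemma \ref{lem:wW_n+1_def_from_seg_in_univ}, though your citation of \ref{lem:vV_def_over_vV[g]} is essentially equivalent for this purpose).

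For parts (1)(b)--(d), however, your phrasing \textquotedblleft using Lemma \ref{lem:*-suitable_P-con_correctness_no_short_overlaps_vV[g]} applied to $\nN$\textquotedblright\ misfires on two counts. First, that lemma (with $\vV=\nN$) concludes that $\nN$ and its set-generic extensions $\nN[g']$ compute the short-tree strategy fragment for $\nN$; it does not give closure of $P$ or $P[g]$ under that strategy, which is what the lemma being proved requires. Second, and more structurally, the $*$-suitability and P-construction machinery of \S4.4 (Definitions \ref{dfn:gamma-stable-P-suitable}--\ref{dfn:P-con_for_gamma-unstable-P-suitable} and Lemmas \ref{lem:*-suitable_P-con_correctness_no_short_overlaps}, \ref{lem:*-suitable_P-con_correctness_no_short_overlaps_vV}, \ref{lem:*-suitable_P-con_correctness_no_short_overlaps_vV[g]}) are all formulated for trees on $\vV_{n+1}^N$ (respectively on a $\Sigma_{\wW_{n+1}}$-iterate $\vV$), \emph{not} for trees on $\nN=\mM_{\infty,n+1}^P$. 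Minimal genericity inflation of a tree $\Tt$ on $\nN$ produces another tree on $\nN$, so you never land in the domain of that machinery. The paper's actual route is shorter and avoids this: cite Lemma \ref{lem:sts}, which (combined with Lemma \ref{lem:delta_n+1-sts_def_g} part \ref{item:delta_n+1-sts_def_over_M-iterates_N_g}) gives directly that $P$ and $P[g]$ compute $\Sigma_{\vV_{n+1}^P,\delta_{n+1}\sss}$, including non-tame trees; then, since $\nN$ is a $\delta_{n+1}$-sound $\Sigma_{\bar{\vV}}$-iterate of the $\delta_{n+1}$-core $\bar{\vV}$ of $\vV_{n+1}^P$ via the internal covering-system tree, and $\bar{\vV}|\delta_{n+1}^{\bar{\vV}}=\vV_{n+1}^P|\delta_{n+1}^{\vV_{n+1}^P}$, the strategy for $\nN$ on trees based on $\nN|\delta_{n+1}^{\nN}$ is obtained inside $P[g]$ by normalization. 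You do gesture at a transfer through Corollary \ref{cor:action_iter_maps_on_M_infty_unsound}(\ref{item:nN-bar_is_iterate}), but you confine it to the trees below $\Delta^{\nN}$; in fact that normalization step is the mechanism for the \emph{entire} $\delta_{n+1}$-short tree strategy, and for the maximal case the paper appeals to the argument in \cite[Footnote 27]{vm2} rather than redoing the P-construction comparison.
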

\begin{proof}
 Part \ref{item:Sigma_vV_n+2,Delta_def_over_N}:
 Let $\wW=\vV_{n+1}^N$. Then $\kappa_{n+1}^{\wW}=\kappa_{n+1}^P$ and by Lemma \ref{lem:N_kappa_n+1-sound_implies_vV^N_n+1},
 $\wW$ is a $\kappa_{n+1}^{\wW}$-sound
$\Sigma_{\wW_{n+1}}$-iterate.
But also $\mM_{\infty,n+1}^{P}=\nN=\mM_{\infty,n+1}^{\wW}$.  So part \ref{item:nN_is_delta_n+1-sound} follows from Lemma \ref{lem:action_iter_maps_on_M_infty}.
 Now by Lemma \ref{lem:sts}
 (and normalization),
$P$ and $P[g]$ are both closed under $\Sigma_{\nN,\delta_{n+1}\sss}$, and the restriction of this strategy to $P$ and $P[g]$ is appropriately definable
(note that this includes non-tame trees).
For $\delta_{n+1}$-maximal trees,
argue as in \cite{vm2_v2},
 using the argument in \cite[Footnote 27]{vm2_v2}.
 
 Part \ref{item:Sigma_vV_n+2,Delta_def_over_vV}:
 The proof here is analogous, but for the version of part \ref{item:Sigma_def_over_N[g]} for $\wW$, first use Lemma \ref{lem:wW_n+1_def_from_seg_in_univ} to definably recover
 $\wW$ from $x$ in the universe of $\wW[g]$.
\end{proof}

\subsection{$\vV_{n+2}$ and $\wW_{n+2}$}

From this point, \cite[\S5.3]{vm2_v2} proceeds essentially as there, and in particular:
\begin{dfn}$\vV_{n+2}=\vV_{n+2}^M$ is defined
 over $M$ essentially as in \cite[\S5.3]{vm2_v2},
 but as a (putative) $(n+2)$-pVl.
\end{dfn}

The details discussed so far,
in the proof of the lemma below and
in \cite{vm2_v2} can be used to write precise formulas $\varphi_1,\varphi_2,\varphi_3$ used for $\vec{\varphi}$ in the specification
of the kind of pVls we use, and from now on, they are all of that kind; we  also now  retroactively declare that these were the formulas we have used all along.

\begin{lem}\label{lem:vV_n+2_is_pVl}$\vV_{n+2}$
is an $(n+2)$-pVl.\end{lem}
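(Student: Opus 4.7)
The plan is to verify each of the clauses (V1)--(V12) of Definition~\ref{dfn:pVl} for the putative structure $\vV_{n+2}$, exploiting the machinery built up for $\mM_{\infty,n+1}$, $*_{n+1}$ and $\mM_{\infty,n+1}[*_{n+1}]$, and paralleling the analogous verification carried out for $\vV_2$ in \cite[\S5.3]{vm2}. The underlying universe of $\vV_{n+2}$ is that of $\mM_{\infty,n+1}[*_{n+1}]$; the extender sequence $\es^{\vV_{n+2}}$ is prescribed so that $\vV_{n+2}|\kappa_{n+1}^{+\mM_{\infty,n+1}}=\mM_{\infty,n+1}|\kappa_{n+1}^{+\mM_{\infty,n+1}}$, the unique $(n{+}1)$-long extender is $e_{n+1}^{\vV_{n+2}}=F^{\vV_{n+2}|\gamma_{n+1}^{\vV_{n+2}}}$, taken to be the $(\delta_{n+1}^{\mM_{\infty,n+1}},\delta_{n+1}^{(\mM_{\infty,n+1})^{\mM_{\infty,n+1}}})$-extender derived from $\pi^{M}_{n+1,\infty}$, and above $\gamma_{n+1}^{\vV_{n+2}}$ the sequence is given by the (modified) P-construction of $M$ above $\kappa_{n+1}^{+M}$, translating short, $(n{+}1)$-long, and $h$-long ($h\leq n$) extenders $G\in\es^M$ exactly as in the second bullet of part~\ref{item:P-con_in_pVl} of \ref{dfn:pVl}.

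With this set-up, I would first verify the coarse structural clauses (V1)--(V6), (V8). The Woodins $\delta_0^{\vV_{n+2}}<\ldots<\delta_n^{\vV_{n+2}}$ are inherited from $\mM_{\infty,n+1}$; the new top Woodin $\delta_{n+1}^{\vV_{n+2}}=\kappa_{n+1}^{+\mM_{\infty,n+1}}$ is Woodin in $\vV_{n+2}$ by the uniform-grounds analysis of the previous subsection (the extender algebra generic $\mM_{\infty,n+1}|\kappa_{n+1}^{+\mM_{\infty,n+1}}$ is $\delta_{n+1}^{\vV_{n+2}}$-cc in $\vV_{n+2}$), just as in \cite[\S5.3]{vm2}. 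The strong cardinals $\kappa_{n+1}^{\vV_{n+2}}<\kappa_{n+2}^{\vV_{n+2}}<\ldots$ with supremum $\lambda^{\vV_{n+2}}$, and the fact that $\lambda^{\vV_{n+2}}$ is a limit of Woodins, follow from the P-construction of $M$, since strong cardinals and Woodin cardinals above $\kappa_{n+1}^{+M}$ are translated unchanged (and $\gamma_{n+1}^{\vV_{n+2}}<\kappa_{n+1}^{\vV_{n+2}}$ since $\kappa_{n+1}^{\vV_{n+2}}=\kappa_{n+1}^M$). Clauses (V4) and (V8) -- unboundedly many $m$-long extenders and the definition of $\gamma_m^{\vV_{n+2}}$ with $\rho_1=\delta_m^{\vV_{n+2}}$, $p_1=\emptyset$ -- hold for $m\leq n$ by inheritance from $\mM_{\infty,n+1}$ and for $m=n+1$ by construction together with the standard computation that the $e_{n+1}$-ultrapower collapses exactly $\delta_{n+1}^{\vV_{n+2}}$ in the first projectum.

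Next I would verify (V9), (V10), (V11). Here (V9) and (V10) say that the poset $\CC_{n+1}^{\vV_{n+2}}$ and the coded structure $i^{\vV_{n+2}}_{e_{n+1}^{\vV_{n+2}}}(\vV_{n+2}|\gamma_{n+1}^{\vV_{n+2}})=\mM_{\infty,n+1}|\kappa_{n+1}^{+\mM_{\infty,n+1}}$ are definable by the fixed formulas $\varphi_1,\varphi_2$ over $\vV_{n+2}|\gamma_{n+1}^{\vV_{n+2}}$; both of these are precisely the content of the direct-limit/uniform-grounds analysis of the preceding two subsections, matched to the formulas used at earlier stages. For (V11), I would use Lemma~\ref{lem:action_iter_maps_on_M_infty}(6) and Corollary~\ref{cor:action_iter_maps_on_M_infty_unsound}(\ref{item:mM_infty,n+1^nN-bar_is_Sigma-it}) to argue that $*_{n+1}\subseteq\pi^{M}_{n+1,\infty}$ is a $\Sigma_{\mM_{\infty,n+1}}$-iteration map, hence $\Ult(\vV_{n+2},e_{n+1}^{\vV_{n+2}})$ is well-founded and agrees on ordinals with $\pi^{M}_{n+1,\infty}(\mM_{\infty,n+1})$; reversing the prescribed P-construction above $\vV_{n+2}||\gamma_{n+1}^{\vV_{n+2}}$ then yields $\vV_{n+2}\down (n+1)=\mM_{\infty,n+1}$, which is an $(n{+}1)$-pVl by induction. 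Clause (V12), $\Delta$-conservativity, is then automatic: short $E\in\es^{\vV_{n+2}|\Delta^{\vV_{n+2}}}$ lies in $\mM_{\infty,n+1}=\vV_{n+2}\down(n+1)$, so any $f:[\kappa]^{<\om}\to\OR$ in $\vV_{n+2}$ is represented $E_a$-a.e.\ by a function in $\vV_{n+2}\down(n+1)$ (using that $\mM_{\infty,n+1}$ already satisfies its own $\Delta$-conservativity and the extenders agree on the ordinals).

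The main obstacle is clause (V7), the condensation hypotheses for normalization, for segments of $\vV_{n+2}$ lying above $\gamma_{n+1}^{\vV_{n+2}}$ (segments below $\gamma_{n+1}^{\vV_{n+2}}$ inherit everything from $\mM_{\infty,n+1}$). For these upper segments the extender sequence is obtained from the extender sequence of $M$ by translating each active extender through the pair $(e_{n+1}^{\vV_{n+2}}, \pi^M_{n+1,\infty})$ and its analogues for $h$-long extenders with $h\leq n$. The plan is to port the corresponding argument of \cite[\S\S4.12.3, 5.6.1]{vm2} (for $\vV_2$) to this context: one takes a hypothetical failure of condensation at some segment $\vV_{n+2}|\nu$, pulls it back through the P-construction to a failure of condensation at the corresponding segment of $M$, and then applies condensation in $M$ (which holds since $M$ is a legitimate premouse) to derive a contradiction. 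The only genuinely new aspect, compared to \cite{vm2}, is the simultaneous presence of $h$-long extenders for several values $h\leq n$ in $\es^M$, each of which must be commuted consistently through the $j_{h+1,n+1}$-maps during the pull-back; but this is a bookkeeping elaboration rather than a conceptual difficulty, and is precisely the kind of issue that \cite{*-trans_add} (via $*$-translation) is designed to handle uniformly across the levels, so I would appeal to it for the details exactly where the condensation arguments of \cite{vm2} appeal to $*$-translation at the $n=0$ level.
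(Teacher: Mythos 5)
Your proposal has the right architecture and lines up with the paper on most clauses, but there is a genuine gap in the argument for $\Delta$-conservativity (clause (V12)), and a less critical but notable omission in how the local definability clauses are discharged.

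On $\Delta$-conservativity: you assert that it is ``automatic'' because $E\in\es^{\vV_{n+2}|\Delta^{\vV_{n+2}}}$ lies in $\mM_{\infty,n+1}$, and cite as justification the $\Delta$-conservativity of $\mM_{\infty,n+1}$ together with agreement of extenders on ordinals. But the obligation is to represent an \emph{arbitrary} $f\in\vV_{n+2}$ (hence possibly $f\in\mM_{\infty,n+1}[*_{n+1}]\setminus\mM_{\infty,n+1}$) by some $g\in\vV_{n+2}\down(n+1)=\mM_{\infty,n+1}$ modulo $E_a$. The $\Delta$-conservativity of $\mM_{\infty,n+1}$ governs a different pair of models ($\mM_{\infty,n+1}$ vs.\ $\mM_{\infty,n+1}\down n$), and the extenders agreeing on the ordinals does nothing toward producing the representing function. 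As written, you are asserting exactly what needs to be proved. The paper's argument here is substantive: form the hull $\vV'_{n+2}=\cHull^{\vV_{n+2}}(\rg(j))$ where $j:\vV_{n+1}\to\mM_{\infty,n+1}$ is the iteration map, note $j\sub j^+$ where $j^+:\vV'_{n+2}\to\vV_{n+2}$ is the uncollapse, observe that $j^+$ is itself the iteration map of the tree $\Tt^+$ corresponding to the tree $\Tt$ from $\vV_{n+1}$ to $\mM_{\infty,n+1}$, and then use that every $\vV_{n+1}$-total measure $E\in\es^{\vV_{n+1}}|\delta_{n+1}^{\vV_{n+1}}$ is absorbed by $j$ (i.e.\ $j$ factors as $k\com i^{\vV_{n+1}}_E$). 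That absorption is what gives $\Delta$-conservativity for $\vV'_{n+2}$, and then elementarity transfers it to $\vV_{n+2}$. You would need to supply that argument or something equivalent.

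On (V9)--(V11): your reduction of these to ``the content of the direct-limit/uniform-grounds analysis'' glosses over the paper's central Claim (\ref{clm:vV_n+2_self-it_tame}), which is where the real work of the proof happens: namely, showing that the tame $\delta_i$-short tree strategy and the associated P-constructions can be simulated \emph{locally} inside $\vV_{n+2}|\mu$ (for appropriate $\mu$), and that the structures $\mM_{\infty,i}^U|\xi$ are \emph{lightface} definable over $\vV_{n+2}||\xi$, uniformly. This is what lets one write down the formula $\varphi_1$ (used in clause (V9)). It is not simply the uniform-grounds analysis repackaged; one must verify that what $M$ can do globally, $\vV_{n+2}$'s levels can do locally, for all $i\leq n+1$ simultaneously, including the cases $i\leq n$ where one needs to locate $i$-long extenders and $h$-long extenders for $h<i$ on the $\vV_{n+2}$-sequence, not on $M$'s. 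Your sketch does not surface this issue. The remaining parts of your proposal (the coarse clauses, the treatment of condensation via pull-back to $M$, the appeal to $*$-translation for the deferred details) match the paper's strategy.
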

\begin{proof}
One first verifies by induction through the segments of $\vV_{n+2}$ that its fine structure
is well behaved, and above $\gamma_{n+1}^{\vV_{n+2}}$,
matches that of the corresponding segment of $M$.
This proceeds basically like in the proof of \cite[Lemmas 4.41, 4.42]{vm2_v2}, but the following claim
is the key point verifying the relevant
definability and formulating the formula
$\varphi_1$ (within the formulas $\vec{\varphi}$
of Definition \ref{dfn:pVl}).
 Let $\nN=\mM_{\infty,n+1}$.

\begin{clm}\label{clm:vV_n+2_self-it_tame} 
Let $\Psi=\Sigma_{\nN,\delta_{n+1}}$,
and $\Psi_{\mathrm{tm}}$ be its restriction to tame trees. Let $\xi>\gamma_{n+1}^{\vV_{n+2}}$
be such that $M|\xi$ is active with an extender $F_0$ with $\crit(F_0)=\kappa_i^M$ for some $i\leq n+1$.
Let $\mu$ be the largest cardinal of $M||\xi$
(which by the induction mentioned above
is also the largest cardinal of $\vV_{n+2}||\xi$).
Let $U=\Ult(M,E)$.
Then:
\begin{enumerate}
 \item \label{item:vV_n+2|mu_closed_under_sts}$\vV_{n+2}|\mu$ is closed under $\Psi_{\mathrm{tm},\delta_i\sss}$, and $\Psi_{\mathrm{tm},\delta_i\sss}\rest(\vV_{n+2}|\mu)$ is lightface definable over $\vV_{n+2}|\mu$,
 \item\label{item:P-con_in_vV_n+2|mu} working in $\vV_{n+2}|\mu$, we can define a natural analogue of $*$-suitability and P-construction over $M(\Tt)$
 for $*$-suitable $\Tt$, with properties analogous to those for $M$ (such as in Lemma \ref{lem:delta_n+1-sts_def} part \ref{item:delta_n+1-sts_def_over_M-iterates_N}), \item \label{item:mM_infty_locally_def}$\mM_{\infty,i}^{U}|\xi$
 is lightface definable over $\vV_{n+2}||\xi$
 (uniformly in $\xi$).
 \item\label{item:F^vV|xi_is_correct}  $\mM_{\infty,i}^{U}$ is a
$\delta_i^{\mM_{\infty,i}^{U}}$-sound non-dropping $\Sigma_{\mM_{\infty,i}^M}$-iterate, 
and $F^{\vV_{n+2}|\xi}$ is the $(\delta_i^{\mM_{\infty,i}^M},\delta_i^{\mM_{\infty,i}^U})$-extender derived from the $\Sigma_{\mM_{\infty,i}^M}$-iteration map.
 \end{enumerate}
\end{clm}
\begin{proof}
The proof to follow is similar to  that of \cite[Lemma 5.47]{vm2_v2}.

Part \ref{item:vV_n+2|mu_closed_under_sts}:
We first observe that $\vV_{n+2}|\mu$ can define the $\delta_{i}$-short tree strategy (for trees in $\vV_{n+2}|\mu$); even if $i\leq n$ this does not seem entirely obvious, and in particular does not seem to follow directly from our inductive hypotheses. So we argue directly, observing that:
\begin{enumerate}[label=(\roman*)]
 \item\label{item:simulate_P-con} for each $j\leq i$, the P-constructions used in $M$ to compute tame $\delta_j$-short tree strategy 
 can be simulated appropriately in $\vV_{n+2}|\mu$, and
 \item for each $j<i$, $\delta_j$-maximal branches can be computed using $j$-long extenders in $\es^{\vV_{n+2}|\mu}$.
\end{enumerate}

Toward this, let $\gamma=\gamma_{n+1}^{\vV_{n+2}}$. So above $\gamma$, $\es^{\vV_{n+2}}$ is induced by $\es^M$ via P-construction.

First consider $\Sigma_{\vV_{n+2},\delta_0}$. For trees $\Tt$ based on $\vV_{n+2}|\delta_0^{\vV_{n+2}}$,
let $F'\in\es_+^{\vV_{n+2}|\xi}$ be $0$-long with $\gamma<\lh(F')$. Let $\mu'$ be the largest cardinal
of $\vV_{n+2}|\lh(F')$.
Then $\vV_{n+2}|\mu'$ can compute P-constructions (and $*$-translations) over ``$*$-suitable'' trees $\Tt$ on $\vV_{n+2}$, based on $\vV_{n+2}|\delta_0^{\vV_{n+2}}$, with $\lh(\Tt)<\mu'$. Here ``$*$-suitable'' is defined much like in $M$ for trees on $M|\delta_0^M$; we should have $\delta=\delta(\Tt)<\mu$
and $\vV_{n+2}|\delta$ generic over $M(\Tt)$, work above a cutpoint $\eta$ of $\vV_{n+2}||\xi$, etc. 
Since $F'$ is induced by some $F''\in\es^M$
with $\crit(F'')=\kappa_0$,
and given the equivalence between $M$ and $\vV_{n+2}$ above $\gamma$, the $*$-suitability and P-construction defined in $\vV_{n+2}$ works correctly for essentially the same reasons it does in $M$.\footnote{Like in Case \ref{case:i>0_working_in_vV_i} of the proof of Lemma \ref{lem:*-suitable_P-con_correctness_no_short_overlaps}, the only reason that it doesn't follow directly \emph{from} the correctness of the P-constructions of $M$, is that
the genericity requirements are slightly (or at least superficially) different.} Now suppose $0<i$. Then for $\delta_0$-maximal trees, factor into $0$-long extenders in $\es^{\vV_{n+2}|\mu}$ as usual,
i.e.~again like in  \cite[Footnote 27]{vm2_v2}. For $\gamma_0$-unstable trees,  simulate the P-constructions of $M$
using $\es^{\vV_{n+2}|\mu}$ above $\gamma$, much as above.
For $\vec{\gamma}$-stable trees $\Tt$ on $\vV_{n+2}$
of the form  $\Tt=\Tt_0\conc\Tt_1$ with $\Tt_0$ induced by some $0$-long extender $E'\in\es^{\vV_{n+2}|\mu}$ with $\gamma<\lh(E')$, and $\Tt_1$ on $M^{\Tt_0}_\infty$, based on $M^{\Tt_0}_\infty|\delta_1^{M^{\Tt_0}_\infty}$ and  above $\gamma_0^{M^{\Tt_0}_\infty}$,
 and for $1$-long extenders $F'\in\es_+^{\vV_{n+2}|\xi}$
such that $\lh(E')<\lh(F')$,
and for $\mu'$ the largest cardinal of $\vV_{n+2}|\lh(F')$,
 define ``$*$-suitable'' and ``P-construction''  with respect to such $\Tt$ with  $\lh(E')<\delta(\Tt)<\mu'$ and the usual further properties. Continue in this manner through $\delta_{n+1}$. 

 Part \ref{item:P-con_in_vV_n+2|mu}:
 Like part \ref{item:vV_n+2|mu_closed_under_sts}.
 
 Part \ref{item:mM_infty_locally_def}:
 This follows from the previous two parts,
as in the proof of \cite[Lemma 4.41(b)***]{vm2_v2}.

Part \ref{item:F^vV|xi_is_correct}: This follows from the previous parts and Lemma \ref{lem:action_iter_maps_on_M_infty}.
\end{proof}

The formula $\varphi_1$, within the tuple $\vec{\varphi}$ of formulas of Definition \ref{dfn:pVl}, should be extracted from the proof of the claim above.
The formulas $\varphi_2,\varphi_3$ can be extracted from an examination of details from \cite{vm2_v2}.

For $\Delta$-conservativity
(condition \ref{item:Delta-conservativity} of Definition \ref{dfn:pVl}), one can argue like in
 in \cite{Theta_Woodin_in_HOD}, \cite{vm1} and/or \cite{vm2_v2}, but here is a slightly simplified argument.
 As in those references, letting \[j:\vV_{n+1}\to\vV_{(n+2)\down(n+1)}=\mM_{\infty,n+1} \]
 be the iteration map, we have
 \[ \Hull^{\vV_{n+2}}(\rg(j))\cap\mM_{\infty,n+1}=\rg(j).\]
 So defining
 \[ \vV'_{n+2}=\cHull^{\vV_{n+2}}(\rg(j)) \]
 and $j^+:\vV'_{n+2}\to \vV_{n+2}$ as the uncollapse map,
 then $j\sub j^+$, and letting $\Tt$ be the tree leading from $\vV_{n+1}$ to $\mM_{\infty,n+1}$,
 and $\Tt^+$ be the corresponding tree on $\vV_{n+2}'$ (this makes sense as usual),
 then $j^+$ is in fact the iteration map of $\Tt^+$.
 But because all $\vV_{n+1}$-total measures $E\in\es^{\vV_{n+1}}|\delta_{n+1}^{\vV_{n+1}}$ are absorbed into $j$ (i.e.~for each such $E$,
 $j$ factors as $k\com i^{\vV_{n+1}}_E$, where $k:\Ult(\vV_{n+1},E)\to\mM_{\infty,n+1}$ is the iteration map), it easily follows that $\vV'_{n+2}$ is $\Delta$-conservative, and hence so is $\vV_{n+2}$.
 
 The first order condensation properties relevant for (short-normal) mhc (contained in condition \ref{item:normalization_cond} of Definition \ref{dfn:pVl}) are inherited from $M$ because of the fine structural correspondence between segments of $M$ and segments of $\vV_{n+2}$, just like in \cite{vm2_v2}.
 
 This essentially completes the proof.
\end{proof}

\begin{rem}\label{rem:wW_n+2}
Like in the proof of $\Delta$-conservativity
within the proof of Lemma \ref{lem:vV_n+2_is_pVl},
and as usual,
$\vV_{(n+2)\down(n+1)}$ is also a non-dropping $\Sigma_{\wW_{n+1}}$-iterate,
and letting \[i:\wW_{n+1}\to\vV_{(n+2)\down(n+1)} \]
be the iteration map, then $\mathscr{I}^M=\mathscr{I}^{\vV_{n+2}}=j``\mathscr{I}^{\wW_{n+1}}=\mathscr{I}^{\wW_{n+1}}$ and
\[ \Hull^{\vV_{n+2}}(\mathscr{I}^M)\cap(\vV_{(n+2)\down(n+1)})=\rg(i),\]
so defining
\[ \wW=\cHull^{\vV_{n+2}}(\mathscr{I}^M) \]
and letting $i^+:\wW\to\vV_{n+2}$
be the iteration map, then $\wW$ is an $(n+2)$-pVl
and $i\sub i^+$, and $\wW_{n+1}=\wW\down(n+1)$.
\end{rem}

\begin{dfn}\label{dfn:wW_n+2}
Define $\wW_{n+2}=\wW$, introduced in Remark \ref{rem:wW_n+2} above.

Let $\Sigma_{\wW_{n+2}}^{\sn}$ denote the $\vec{e}$-minimal pullback of $\Sigma_M$ (Definition \ref{dfn:e-vec_minimal_pullback}).\end{dfn}

\begin{lem}$\Sigma^{\sn}_{\wW_{n+2}}$ is a short-normal $(0,\OR)$-strategy for $\wW_{n+2}$, with short-normal mhc,
which
is conservative
over $\Sigma_{\wW_{i}}$ for $i\leq n+1$.
\end{lem}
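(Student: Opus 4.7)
The plan is to deduce essentially every clause directly from the machinery already developed. First, I would verify that the hypotheses of Lemma~\ref{lem:e-vec_minimal_pullback} are met: $\wW_{n+2}$ is an $(n+2)$-pVl by Lemma~\ref{lem:vV_n+2_is_pVl} together with the standard fine-structural/elementarity argument for $\wW_{n+2}=\cHull^{\vV_{n+2}}(\mathscr{I}^M)$ recorded in Remark~\ref{rem:wW_n+2}; and one has $\wW_{n+2}\down 0=M$, since by construction $\wW_{n+2}\down(n+1)=\wW_{n+1}$ and inductively $\wW_{i+1}\down i=\wW_i$, so iterating $\down$ down to $0$ yields $\wW_0=M$. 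Since $\Sigma_M$ is a (the) $(0,\OR)$-strategy for $M=\wW_{n+2}\down 0$, Lemma~\ref{lem:e-vec_minimal_pullback} directly gives that $\Sigma^{\sn}_{\wW_{n+2}}$, defined as the $\vec{e}$-minimal pullback of $\Sigma_M$, is a (unique) short-normal $(0,\OR)$-strategy for $\wW_{n+2}$.

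Second, for short-normal mhc, I would invoke Lemma~\ref{lem:mhc_inheritance} (MHC Inheritance) applied to $\vV=\wW_{n+2}$, $N=\wW_{n+2}\down 0=M$, noting that $\Sigma_N=\Sigma_M$ has full mhc (this is part of our standing hypotheses on $\Sigma_M$, and matches the use made of it in the analogous lemma at earlier stages). The inductive argument inside the proof of \ref{lem:mhc_inheritance} then propagates mhc up through $\wW_{n+2}\down i$ for each $i\leq n+2$, using the pVl condensation (clause \ref{item:normalization_cond} of Definition~\ref{dfn:pVl}) verified for $\wW_{n+2}$ during the proof of Lemma~\ref{lem:vV_n+2_is_pVl}; at $i=n+2$ this gives short-normal mhc for $\Sigma^{\sn}_{\wW_{n+2}}$.

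Third, for conservativity over $\Sigma_{\wW_i}$ ($i\leq n+1$), I would use the ``moreover'' clause of Lemma~\ref{lem:e-vec_minimal_pullback}: the $\vec{e}$-minimal pullback $\Sigma^{\sn}_{\wW_{n+2}}$ of $\Sigma_M$ is $\Sigma_{\wW_{n+2}\down i}$-conservative for each $i\in[1,n+1]$, where $\Sigma_{\wW_{n+2}\down i}$ denotes the unique $\Sigma_M$-conservative short-normal $(0,\OR)$-strategy for $\wW_{n+2}\down i$. But again $\wW_{n+2}\down i=\wW_i$, and by the inductive hypothesis (Remark~\ref{rem:ind_hyp}) $\Sigma^{\sn}_{\wW_i}$ is itself the $\vec{e}$-minimal pullback of $\Sigma_M$, hence is the \emph{unique} $\Sigma_M$-conservative short-normal $(0,\OR)$-strategy for $\wW_i$, by the uniqueness assertion of Lemma~\ref{lem:e-vec_minimal_pullback}. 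Therefore $\Sigma_{\wW_{n+2}\down i}=\Sigma^{\sn}_{\wW_i}$, and conservativity over $\Sigma_{\wW_i}$ follows. (For $i=0$ this is the defining property of $\Sigma^{\sn}_{\wW_{n+2}}$.)

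The only real friction I anticipate is the bookkeeping identifying $\wW_{n+2}\down i=\wW_i$ and $\Sigma_{\wW_{n+2}\down i}=\Sigma_{\wW_i}$ coherently across all levels simultaneously; once that chain of equalities is confirmed using the construction of $\wW_{n+2}$ in Remark~\ref{rem:wW_n+2}/Definition~\ref{dfn:wW_n+2} and the inductive hypotheses, the result is immediate from Lemmas~\ref{lem:e-vec_minimal_pullback} and~\ref{lem:mhc_inheritance}. There is no new comparison or reflection content here; the lemma is essentially a packaging statement.
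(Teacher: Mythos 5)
Your proposal is correct and takes the same approach as the paper, whose proof is simply a citation of Lemmas~\ref{lem:e-vec_minimal_pullback} and~\ref{lem:mhc_inheritance}; you have just expanded that reference into its intended details (identifying $\wW_{n+2}\down i = \wW_i$, invoking the ``moreover'' clause and uniqueness to get conservativity over each $\Sigma_{\wW_i}$, and noting that MHC Inheritance applied with $N=M$ gives short-normal mhc).
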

\begin{proof}
See Lemmas \ref{lem:e-vec_minimal_pullback} and \ref{lem:mhc_inheritance}.\end{proof}
 But before we can extend
$\Sigma^{\sn}_{\wW_{n+2}}$ to the full $(0,\OR)$-strategy
$\Sigma_{\wW_{n+2}}$, we need to know it is self-coherent.

\subsection{Self-coherence of $\Sigma_{\wW_{n+2}}$}

Related to \cite[Lemma 5.47]{vm2_v2},
we have the following, which will be generalized in Lemmas \ref{lem:vV_n+2_self-it} and \ref{lem:wW_n+2_self-it}:
\begin{lem}\label{lem:vV_n+2_self-it_tame}Let $N$ be a  non-dropping $\Sigma_M$-iterate. Let $\wW=\vV_{n+2}^N$
(so $\wW\down(n+1)=\mM_{\infty,n+1}^N$).
Let $\Psi=\Sigma_{\wW_{n+2},\delta_{n+1}}$,
and $\Psi_{\mathrm{tm}}$ be its restriction to tame trees. Then:
 \begin{enumerate}
  \item\label{item:vV_n+2_self-it} $\wW$ is closed under $\Psi_{\mathrm{tm}}$, and $\Psi_{\mathrm{tm}}\rest\wW$ is lightface definable over $\wW$.
  \item\label{item:vV_n+2[g]_it_vV_n+2} Let $\lambda\in\OR$ with $\delta_{n+1}^{\wW}\leq\lambda$,
  let $\PP\in\wW$ with $\PP\sub\lambda$, and let $g$ be $(\wW,\PP)$-generic (with $g$ appearing in some generic extension of $V$). Then $\wW[g]$ is closed under $\Psi_{\mathrm{tm}}$
  and $\Psi_{\mathrm{tm}}\rest\wW$ is  definable over the universe of $\wW[g]$ from the parameter $x=\wW|\lambda^{+\wW}$, uniformly in $\lambda$.
 \end{enumerate}
\end{lem}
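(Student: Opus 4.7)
The plan is to lift the proofs of Lemmas \ref{lem:delta_n+1-sts_def} and \ref{lem:delta_n+1-sts_def_g} by one layer: instead of analyzing the $\delta_{n+1}$-short tree strategy for $\vV_{n+1}^N$ inside the $\Sigma_M$-iterate $N$, I will analyze the corresponding strategy for $\wW_{n+2}$-iterates internally to $\wW=\vV_{n+2}^N$. First I would verify that $\wW$ is itself a non-dropping $\Sigma_{\wW_{n+2}}$-iterate. This uses the $\vec{e}$-minimal pullback setup of Definition \ref{dfn:wW_n+2} together with Lemma \ref{lem:n+1-translation_pair}, and is the $(n+2)$-layer analogue of part (2) of Corollary \ref{cor:Sigma_wW_n+1_down_0=Sigma_M}. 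Via normalization (Remark \ref{rem:Sigma^stk}), $\Psi$ induces a strategy on $\wW$; closure and definability of $\Psi_{\mathrm{tm}}$ on $\wW_{n+2}$-trees in $\wW$ is then equivalent to the corresponding statement for the induced strategy on $\wW$ itself.

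For Part 1, I would split on the location of a tame $\Tt\in\wW$ via $\Psi$. If $\Tt$ is based on $\wW|\Delta^{\wW}=\wW|\delta_n^{\wW}$, then by $\vec{e}$-minimal pullback conservativity (Lemmas \ref{lem:conservativity_stable} and \ref{lem:conservativity_unstable}), $\Tt$ corresponds to a tree on $\wW\down(n{+}1)=\mM_{\infty,n+1}^N$, and the inductive hypothesis \ref{item:hyp_8} from Remark \ref{rem:ind_hyp} (applied to $\wW_{n+1}$) delivers the result. For $\Tt$ not based on $\wW|\Delta^{\wW}$, I apply a minimal $*$-genericity inflation inside $\wW$ to absorb $\Tt$ into a $*$-suitable tree $\Uu$ via a minimal tree embedding, using the internalized notions of $*$-suitability (Definitions \ref{dfn:gamma-stable-P-suitable}, \ref{dfn:gamma-unstable-P-suitable}) and P-construction (Definitions \ref{dfn:P-con_for_gamma-stable-P-suitable}, \ref{dfn:P-con_for_gamma-unstable-P-suitable}), now taken inside $\wW$ rather than inside $M$. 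Correctness of the P-construction (Q-structure in the $\delta_{n+1}$-short case; $\delta$-core of the branch model in the $\delta_{n+1}$-maximal case) transfers from Lemma \ref{lem:*-suitable_P-con_correctness_no_short_overlaps_vV} applied to the $(n{+}1)$-pVl $\mM_{\infty,n+1}^N=\wW\down(n{+}1)$, lifted into $\wW$ exactly as in the proof of Claim \ref{clm:vV_n+2_self-it_tame}.

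For Part 2 (the generic case), I would first establish an $(n{+}2)$-layer analogue of Lemma \ref{lem:vV_def_over_vV[g]}, namely that $\wW$ is definable over the universe of $\wW[g]$ from the parameter $x=\wW|\lambda^{+\wW}$, uniformly in $\wW,\PP,g$. This proof follows Lemma \ref{lem:vV_def_over_vV[g]} verbatim, using that $\wW|\gamma_{n+1}^{\wW}$ is definable without parameters over $\wW$ (an analogue of Lemma \ref{lem:wW_n+1_def_from_seg_in_univ}) and that set-forcings over $\wW$ at levels $\geq\delta_{n+1}^{\wW}$ can be absorbed into the P-construction representation of $\wW$ above $\wW|\gamma_{n+1}^{\wW}$. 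Once $\wW$ is recovered inside $\wW[g]$ from $x$, Part 1 applied inside $\wW$ delivers definability of $\Psi_{\mathrm{tm}}\rest\wW[g]$ over $\wW[g]$ from the parameter $x$.

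The main obstacle is the bookkeeping needed to execute P-constructions inside $\wW$ for trees whose leading extender $E$ may be $i$-long for any $i\leq n+1$, with the commutativity conditions of Definitions \ref{dfn:P-con_for_gamma-stable-P-suitable} and \ref{dfn:P-con_for_gamma-unstable-P-suitable} properly accounting for the iteration maps $j_{k\ell}$ at all intermediate levels. Verifying that $\wW$, as an $(n{+}2)$-pVl, has the requisite internal structure (strong cardinals $\kappa_{n+2}^{\wW}<\kappa_{n+3}^{\wW}<\ldots$ together with appropriate long extenders at every lower level) to host these P-constructions is essentially a repetition of the local argument behind Claim \ref{clm:vV_n+2_self-it_tame}. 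The restriction to tame trees is precisely what allows us to bypass $*$-translation; the strengthening to arbitrary short-normal trees is deferred to Lemma \ref{lem:sts}, whose proof requires \cite{*-trans_add}.
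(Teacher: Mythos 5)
Your proposal is broadly in the right spirit and hits the main ingredients: reduce to the local analysis of Claim \ref{clm:vV_n+2_self-it_tame}, use conservativity and the inductive hypotheses for trees at lower levels, use minimal genericity inflation to reduce to $*$-suitable trees, and use the correctness lemma for the P-construction. For Part~2 you propose first establishing definability of $\wW$ over the universe of $\wW[g]$ from the parameter $x$ (an $(n{+}2)$-level analogue of Lemma~\ref{lem:vV_def_over_vV[g]}/Lemma~\ref{lem:wW_n+1_def_from_seg_in_univ}) and then running Part~1 inside; this is the route the paper takes too.

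However, you miss the one substantive point the paper flags. The argument of Claim~\ref{clm:vV_n+2_self-it_tame} is run for the base model $M$, which is $\kappa_{n+1}^M$-sound; in particular its part~\ref{item:F^vV|xi_is_correct} (identifying the long extenders $F^{\vV_{n+2}|\xi}$ as the extenders of correct iteration maps on the $\mM_{\infty,i}$'s) invokes Lemma~\ref{lem:action_iter_maps_on_M_infty}, whose statement requires $\kappa_{n+1}$-soundness. Here $N$ is an \emph{arbitrary} non-dropping $\Sigma_M$-iterate, so neither $N$ nor $\wW=\vV_{n+2}^N$ need be $\kappa_{n+1}$-sound. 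To carry the lifting through, Lemma~\ref{lem:action_iter_maps_on_M_infty} must be replaced by Corollary~\ref{cor:action_iter_maps_on_M_infty_unsound}, which handles the unsound case by first passing to the $\kappa_{n+1}$-core and the $\delta_{n+1}$-core. Your proposal gestures at ``lifted into $\wW$ exactly as in the proof of Claim~\ref{clm:vV_n+2_self-it_tame}'' but never confronts the soundness hypothesis that the Claim's proof (via Lemma~\ref{lem:action_iter_maps_on_M_infty}) relies on; this is a genuine gap. Separately, note an index slip: since $\wW$ is an $(n{+}2)$-pVl, $\Delta^{\wW}=\delta_{n+1}^{\wW}$, not $\delta_n^{\wW}$, so the split you describe should be phrased at $\delta_n^{\wW}$ (i.e.\ at $\gamma_n^{\wW}$), not $\Delta^{\wW}$; trees based on $\wW|\delta_n^{\wW}$ pull down to $\wW\down(n{+}1)=\mM_{\infty,n+1}^N$ via conservativity and are handled by \ref{item:hyp_8}, while the interval $[\delta_n^{\wW},\delta_{n+1}^{\wW}]$ is where the P-construction machinery and the long-extender identification (hence Corollary~\ref{cor:action_iter_maps_on_M_infty_unsound}) enter.
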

\begin{proof}
Part \ref{item:vV_n+2_self-it}: Like the proof
of Claim \ref{clm:vV_n+2_self-it_tame} 
in the proof of Lemma \ref{lem:vV_n+2_is_pVl},
but 
appealing to Corollary \ref{cor:action_iter_maps_on_M_infty_unsound} instead of Lemma \ref{lem:action_iter_maps_on_M_infty}.

Part \ref{item:vV_n+2[g]_it_vV_n+2}: As for part \ref{item:vV_n+2_self-it}, but  using Lemma \ref{lem:wW_n+1_def_from_seg_in_univ} for the definability of $\wW$.
\end{proof}

\begin{lem}\label{lem:vV_n+2_self-it}Let $N$ be a  non-dropping $\Sigma_M$-iterate. Let $\wW=\vV_{n+2}^N$.
Let $\Psi=\Sigma_{\wW_{n+2},\delta_{n+1}}$.
Then the conclusions of Lemma \ref{lem:vV_n+2_self-it_tame}
still hold, after replacing $\Psi_{\mathrm{tm}}$
throughout by $\Psi$.
\end{lem}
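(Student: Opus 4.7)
The plan is to reduce the extension from $\Psi_{\mathrm{tm}}$ to the full $\Psi$ to an application of $*$-translation, integrated with the P-construction framework already in place for tame trees. Let $\nN = \mM_{\infty,n+1}^N$, so $\wW\down(n+1) = \nN$. By Lemma \ref{lem:vV_n+2_self-it_tame} the tame part is handled, so it suffices to treat trees $\Tt$ on $\nN$ via $\Psi$ that are non-tame: there is a limit $\eta \leq \lh(\Tt)$ and a short extender $E \in \es_+^{Q(\Tt\rest\eta,b)}$, where $b = \Sigma_{\nN}(\Tt \rest \eta)$, with $\crit(E) < \delta(\Tt\rest\eta) < \lh(E)$. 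By the equivalence in Lemma \ref{lem:*-suitable_P-con_correctness_no_short_overlaps}(\ref{item:delta(Tt)_overlapped_iff_reach_corresp_stage_in_P-con}) (and its uniform version promised in Lemma \ref{lem:sts}), the existence of such $E$ is detected internally to the ordinary P-construction $\mathscr{P}^U(M(\Tt\rest\eta))$ by a failure of the hull property below $\delta(\Tt\rest\eta)$ at the relevant projectum. Hence the restriction of $\Psi$ to non-tame trees is at least first-order recognizable over $\wW$, modulo identifying the correct $*$-translated Q-structure.

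To actually identify the correct branch or Q-structure in the non-tame case, I would replace $\mathscr{P}^U(M(\Tt\rest\eta))$ by its $*$-translated counterpart: at the stage where the ordinary P-construction would first project below $\delta(\Tt\rest\eta)$, one $*$-translates the relevant active extender of $U$ to produce a correct structure overlying $M(\Tt\rest\eta)$, and continues. The output should coincide with $Q(\Tt,b)$ in the $\delta_{n+1}$-short case and with the $\delta(\Tt)$-core of $M^\Tt_b$ in the $\delta_{n+1}$-maximal case, matching Lemma \ref{lem:sts}. The hard part, and precisely what Remark \ref{rem:*-trans} and the deferral of Lemma \ref{lem:sts} isolate, is verifying that $*$-translation commutes routinely with the specific long-extender clauses of Definitions \ref{dfn:P-con_for_gamma-stable-P-suitable} and \ref{dfn:P-con_for_gamma-unstable-P-suitable}, in particular the identities of the form $F^{P|\nu} \com E \com j_{i'+1,n+1} \rest (\vV_{i'+1}|\Delta^{\vV_{i'+1}}) = G \rest (\vV_{i'+1}|\Delta^{\vV_{i'+1}})$ and their $\gamma_n$-unstable counterparts involving the map $j: \vV_{n+1} \to M^{\Tt_0}_\infty$. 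This verification is deferred to \cite{*-trans_add}.

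Granting this integration, the remainder is uniform bookkeeping. For part \ref{item:vV_n+2_self-it}, the $*$-translated P-construction is lightface definable over $\wW$ by the same uniform recipe used for tame trees in Lemma \ref{lem:vV_n+2_self-it_tame}, augmented by one further (still lightface, still uniform) clause which, upon detecting the failure-of-hull-property signal inside $\mathscr{P}^U(M(\Tt\rest\eta))$, invokes the $*$-translation step; here Claim \ref{clm:vV_n+2_self-it_tame}(\ref{item:vV_n+2|mu_closed_under_sts},\ref{item:P-con_in_vV_n+2|mu}) supplies the local $\delta_i$-short-tree strategy fragments and P-construction operations inside $\wW$ that are needed to support the $*$-translated computation. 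For part \ref{item:vV_n+2[g]_it_vV_n+2}, I would first recover $\wW$ over the universe of $\wW[g]$ from the parameter $x = \wW|\lambda^{+\wW}$ by applying Lemma \ref{lem:vV_def_over_vV[g]} at the $\wW_{n+2}$-level (equivalently, the obvious level-raised version of Lemma \ref{lem:wW_n+1_def_from_seg_in_univ}), and then invoke the preceding paragraph inside $\wW[g]$, with uniformity witnesses passing through the generic extension exactly as in Lemma \ref{lem:delta_n+1-sts_def_g}.
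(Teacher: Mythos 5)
The paper's own ``proof'' of this lemma is a one-line deferral: ``The proof requires $*$-translation, and is deferred to \cite{*-trans_add}.'' Your proposal therefore goes further than the paper itself, fleshing out a plan whose skeleton the paper only implies via Remark \ref{rem:*-trans} and the parallel deferral of Lemma \ref{lem:sts}. The structure you identify is sound: reduce to non-tame trees (tame ones being Lemma \ref{lem:vV_n+2_self-it_tame}), detect non-tameness internally via the hull-property failure signal of Lemma \ref{lem:*-suitable_P-con_correctness_no_short_overlaps}(\ref{item:delta(Tt)_overlapped_iff_reach_corresp_stage_in_P-con}), replace the ordinary P-construction with a $*$-translated variant at the detected stage, and then propagate the same uniform definability and generic-absorption bookkeeping as in the tame case (with Lemma \ref{lem:wW_n+1_def_from_seg_in_univ}/\ref{lem:vV_def_over_vV[g]} recovering $\wW$ from $x$ over $\wW[g]$). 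You also correctly isolate the genuine obstacle --- verifying that $*$-translation commutes with the long-extender commutativity clauses of the P-construction --- which is precisely what the paper punts to \cite{*-trans_add}.

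Two minor cautions. First, you phrase the reduction as treating ``trees $\Tt$ on $\nN$ via $\Psi$''; the strategy $\Psi$ in the lemma is for trees on $\wW_{n+2}$ (or on $\wW$), not on $\nN$, and the translation between these is the $\down$/$\uparrow$ machinery of \S\ref{sec:vV_n+1+to_vV_n+2} --- worth being explicit so the P-construction is applied at the right level. Second, since the paper supplies no actual proof, the correctness of your $*$-translation sketch cannot be checked against it; what can be said is that your plan is consistent with the paper's intent and with the shape of the tame-tree argument it does give.
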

\begin{proof}[Proof deferral]
The proof requires $*$-translation and is deferred to \cite{*-trans_add}.
\end{proof}

\begin{lem}\label{lem:M_infty_to_M_infty^M_infty_iter_map}
Let $\wW=\wW_{n+1}$ and $\mathscr{N}=\mM_{\infty,n+1}^{\wW}$. Then:
\begin{enumerate}
\item\label{item:N_delta_n+1-sound}  $\mathscr{N}$ is a $\delta_{n+1}$-sound
 $\Sigma_{\wW}$-iterate,
 \item\label{item:M_inf^N_delta_n+1-sound} $\mM_{\infty,n+1}^{\mathscr{N}}$ is a $\delta_{n+1}$-sound
 $\Sigma_{\mathscr{N}}$-iterate,
 \item\label{item:e_n+1^vV_n+2^wW_correct}
 $e_{n+1}^{\vV_{n+2}^{\wW}}$ 
 is  the $(\delta_{n+1}^{\mathscr{N}},\delta_{n+1}^{\mM_{\infty,n+1}^{\mathscr{N}}})$-extender
 derived from the $\Sigma_{\mathscr{N}}$-iteration map
  $k:\mathscr{N}\to\mM_{\infty,n+1}^{\mathscr{N}}$,
  
\item\label{item:e_n+1^wW_n+2_correct} $e_{n+1}^{\wW_{n+2}}$ is 
the $(\delta_{n+1}^{\wW},\delta_{n+1}^{\mathscr{N}})$-extender derived from the $\Sigma_{\wW}$-iteration map
$k:\wW\to\mathscr{N}$,
  \end{enumerate}
\end{lem}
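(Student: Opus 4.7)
The four clauses can be read off by specializing the general action-of-iteration-maps analysis (Lemma \ref{lem:action_iter_maps_on_M_infty}, and if necessary Corollary \ref{cor:action_iter_maps_on_M_infty_unsound}) to the base model $\wW=\wW_{n+1}$, combined with the structural description of $\vV_{n+2}$ and $\wW_{n+2}$ supplied by Lemma \ref{lem:vV_n+2_is_pVl} and Remark \ref{rem:wW_n+2}.

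First I would verify that $\wW_{n+1}$ is $\delta_{n+1}^{\wW_{n+1}}$-sound (hence in particular $\kappa_{n+1}^{\wW_{n+1}}$-sound), as follows from its inductive construction as the uncollapse of a hull under $\mathscr{I}^M$ in the manner of Remark \ref{rem:wW_n+2}. In the notation of Lemma \ref{lem:action_iter_maps_on_M_infty} we then have $\bar{\wW}=\wW$, and part (v) of that lemma, applied with $\vV=\wW$, yields (1) directly. For (2), I would apply either part (vi) of the same lemma (with $\vV=\mathscr{N}$, noting $\mathscr{N}$ is $\delta_{n+1}^{\mathscr{N}}$-sound by (1)) or, if the $\kappa_{n+1}^{\mathscr{N}}$-soundness of $\mathscr{N}$ is not immediate, Corollary \ref{cor:action_iter_maps_on_M_infty_unsound} applied with $\wW=\mathscr{N}$, using the agreement between $\mathscr{N}$ and its $\kappa_{n+1}$-core up to $\kappa_{n+1}^{+\mathscr{N}}$ furnished by part (7) there. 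Either way the identified map $k:\mathscr{N}\to(\mM_{\infty,n+1})^{\mathscr{N}}$ is the $\pi^{\wW}_{n+1,\infty}\supseteq *^{\wW}_{n+1}$ from that setup.

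For (3), Lemma \ref{lem:vV_n+2_is_pVl} gives that $\vV_{n+2}$ is an $(n+2)$-pVl with $\vV_{n+2}\!\!\down(n+1)=\mathscr{N}$. The active extender $e_{n+1}^{\vV_{n+2}}=F^{\vV_{n+2}|\gamma_{n+1}^{\vV_{n+2}}}$ is, by the defining property of the top-layer $(n+1)$-long extender of a pVl recorded in item \ref{item:F^vV_i+1|gamma^vV_i+1} of the Remark after the inductive hypotheses (cf.~equation (\ref{eqn:F^vV_i+1|gamma^vV_i+1})), the extender derived from the iteration map $i^{j(\Tt)}_c$ applied to the next direct-limit stage of $\vV_{n+2}\!\!\down(n+1)$. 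Since that down-one model is $\mathscr{N}$ and $i^{j(\Tt)}_c$ is precisely the $\Sigma_{\mathscr{N}}$-iteration map $k$ from (2), clause (3) follows. For (4), Remark \ref{rem:wW_n+2} gives $\wW_{n+2}=\cHull^{\vV_{n+2}}(\mathscr{I}^M)$ with $\wW_{n+2}\!\!\down(n+1)=\wW$, and the uncollapse $i^{+}:\wW_{n+2}\to\vV_{n+2}$ extends the $\Sigma_{\wW}$-iteration map $i:\wW\to\mathscr{N}$. Applying the analogous defining property inside the pVl $\wW_{n+2}$ identifies $e_{n+1}^{\wW_{n+2}}$ as the $(\delta_{n+1}^{\wW},\delta_{n+1}^{\mathscr{N}})$-extender derived from $i$.

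The main obstacle will be this last identification in (4): one must verify, by fine-structural analysis of the hull $\cHull^{\vV_{n+2}}(\mathscr{I}^M)$, that $e_{n+1}^{\wW_{n+2}}$ really encodes the full iteration map $i:\wW\to\mathscr{N}$ and not some proper restriction, and that the level-$(n+1)$ layer of $\wW_{n+2}$ correctly presents $\wW$ together with its iteration into $\mathscr{N}$. This parallels the corresponding verification at the $\vV_1/\wW_1$ level discussed near the end of \S\ref{sec:vV_1,wW_1}, but now one must also check that the relevant fragments of $i\rest\delta_{n+1}^{\wW}$ lie in the hull, i.e.~are appropriately handled by the indiscernibles $\mathscr{I}^M$ together with ordinals below $\delta_{n+1}^{\wW}$.
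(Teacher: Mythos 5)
Your treatment of parts (1) and (2) via Lemma \ref{lem:action_iter_maps_on_M_infty} (applied to the $\delta_{n+1}$-sound base $\vV=\wW_{n+1}$) is fine and matches the paper's terse ``by previous lemmas.''

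However, there is a genuine gap in parts (3) and (4). You invoke equation (\ref{eqn:F^vV_i+1|gamma^vV_i+1}) and Remark \ref{rem:wW_n+2}, which concern $\vV_{n+2}=\vV_{n+2}^M$, $\mM_{\infty,n+1}=\mM_{\infty,n+1}^{\vV_{n+1}}$, and the uncollapse map $i^+:\wW_{n+2}\to\vV_{n+2}$ whose restriction to $\wW_{n+1}$ is the iteration map into $\vV_{(n+2)\down(n+1)}=\mM_{\infty,n+1}^{\vV_{n+1}}$. But the lemma asserts facts about $\vV_{n+2}^{\wW}$ and $\mathscr{N}=\mM_{\infty,n+1}^{\wW}$, computed internally to $\wW=\wW_{n+1}$. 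These are \emph{not} the same models: $\mM_{\infty,n+1}^{\vV_{n+1}}$ is the direct limit of the covering system as seen from $\vV_{n+1}$ (and is built over the $\delta_{n+1}$-core $\bar{\vV_{n+1}}$, which strictly contains $\wW_{n+1}$), whereas $\mathscr{N}$ is the direct limit as seen from $\wW_{n+1}$; in particular $\delta_{n+1}^{\mM_{\infty,n+1}^{\vV_{n+1}}}\neq\delta_{n+1}^{\mathscr{N}}$, so the derived extenders are literally different sets. Consequently ``the uncollapse extends the $\Sigma_{\wW}$-iteration map $i:\wW\to\mathscr{N}$'' does not follow from Remark \ref{rem:wW_n+2}; the target of the map furnished there is $\mM_{\infty,n+1}^{\vV_{n+1}}$, not $\mathscr{N}$. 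Similarly, in your argument for (3), the assertion ``that down-one model is $\mathscr{N}$'' is false for $\vV_{n+2}=\vV_{n+2}^M$; it holds only for $\vV_{n+2}^{\wW}$.

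The paper's actual proof of (4) avoids this by using the alternative characterization $\wW_{n+2}=\cHull^{\vV_{n+2}^{\wW}}(\mathscr{I}^{\wW})$ -- i.e., taking the hull of the Varsovian model built \emph{inside} $\wW$ rather than inside $M$. With that characterization, letting $j$ be the uncollapse of this hull, one has $\wW_{n+2}\down(n+1)=\wW$, $j(\wW)=\vV_{n+2}^{\wW}\down(n+1)=\mathscr{N}$ by the internal construction, and $j\rest\wW:\wW\to\mathscr{N}$ \emph{is} the $\Sigma_{\wW}$-iteration map. Then $j(e_{n+1}^{\wW_{n+2}})=e_{n+1}^{\vV_{n+2}^{\wW}}$, which is correct by part (3), and since the iteration maps fix $\mathscr{I}^{\wW}$ pointwise, the correctness of $e_{n+1}^{\wW_{n+2}}$ follows. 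To repair your argument you would need to first establish the equivalence of the two hull characterizations (which requires a tail-agreement argument of the flavour of Lemma \ref{lem:hull_conservativity_vV_om}), and then carry out the above relative to $\wW$ rather than $M$.
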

\begin{proof}
 Parts \ref{item:N_delta_n+1-sound}--\ref{item:e_n+1^vV_n+2^wW_correct} are by previous lemmas.
 
 Part \ref{item:e_n+1^wW_n+2_correct}:
 $\wW_{n+2}=\cHull^{\vV_{n+2}^{\wW}}(\mathscr{I}^{\wW})$,
 and letting $k:\wW_{n+2}\to\vV_{n+2}^{\wW}$
 be the uncollapse map, we know that $\wW=\wW_{n+2\down n+1}\sub\wW_{n+2}$ and $j(\wW)=\mathscr{N}$ and $j\rest\wW$ is just the $\Sigma_{\wW}$-iteration map.
 But then since the various iteration maps fix $\mathscr{I}^{\wW}$ pointwise and $j(e_{n+1}^{\wW_{n+2}})=e_{n+1}^{\vV_{n+2}^{\wW}}$,
 it follows that $e_{n+1}^{\wW_{n+2}}$ agrees with the iteration map.
\end{proof}

\begin{lem}\label{lem:Sigma_wW_n+2}
 $\Sigma^{\sn}_{\wW_{n+2}}$ is self-coherent.
\end{lem}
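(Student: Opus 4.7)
My plan is to split the argument by the index $m$ of the long extender $E \in \es_+^{M^\Tt_\infty}$ whose correctness we must certify, where $\Tt$ is a successor length short-normal tree via $\Sigma^{\sn}_{\wW_{n+2}}$. For $m \leq n$ I would reduce to the inductive hypothesis by conservativity; for $m = n+1$ I would verify the base model $\wW_{n+2}$ directly and then propagate to iterates via the action of iteration maps on the $\mM_{\infty,n+1}$-level.

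First, for $m \leq n$: by Lemma \ref{lem:e-vec_minimal_pullback}, $\Sigma^{\sn}_{\wW_{n+2}}$ is $\Sigma^{\sn}_{\wW_{n+2\down(m+1)}}$-conservative, and the construction of $\wW_{n+2}$ (Definition \ref{dfn:wW_n+2}) together with Lemma \ref{lem:vV_n+2_is_pVl} and the inductive hypothesis \ref{rem:ind_hyp} give $\wW_{n+2\down(m+1)} = \wW_{m+1}$. Through the associated tree-pair, the $m$-long extender $E$ appearing along $\Tt$ corresponds to the analogous $m$-long extender on the iterate $M^{\Tt\down(m+1)}_\infty$ of $\wW_{m+1}$; self-coherence of $\Sigma^{\sn}_{\wW_{m+1}}$ (inductive hypothesis) provides a $\Sigma_{M^{\Tt\down(m+1)}_\alpha}$-iteration tree based on $M^{\Tt\down(m+1)}_\alpha|\delta_m$ whose derived extender is the corresponding lift of $E$, and lifting back gives the required $\Sigma_{M^\Tt_\alpha}$-iteration map on $M^\Tt_\alpha|\delta_m^{M^\Tt_\alpha}$.

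Next, for $m = n+1$ at the base $\wW_{n+2}$: the least $(n+1)$-long extender $e_{n+1}^{\wW_{n+2}}$ is the $(\delta_{n+1}^{\wW_{n+1}},\delta_{n+1}^{\mathscr{N}})$-extender derived from the $\Sigma_{\wW_{n+1}}$-iteration map $\wW_{n+1} \to \mathscr{N}$ by Lemma \ref{lem:M_infty_to_M_infty^M_infty_iter_map} part \ref{item:e_n+1^wW_n+2_correct}; since $\wW_{n+2}|\delta_{n+1}^{\wW_{n+2}} = \wW_{n+1}|\delta_{n+1}^{\wW_{n+1}}$ (part of Lemma \ref{lem:M_infty_to_M_infty^M_infty_iter_map}), this is exactly the self-coherence condition at this extender. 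For higher $(n+1)$-long $F^{\wW_{n+2}|\xi}$ with $\gamma_{n+1}^{\wW_{n+2}} < \xi$, the P-construction defining $\wW_{n+2}$ above $\gamma_{n+1}^{\wW_{n+2}}$ together with Claim \ref{clm:vV_n+2_self-it_tame} part \ref{item:F^vV|xi_is_correct} (whose proof transfers to $\wW_{n+2}$, since $\wW_{n+2}$ is the hull $\cHull^{\vV_{n+2}}(\mathscr{I}^M)$ by Definition \ref{dfn:wW_n+2}, and the uncollapse map agrees with the $\Sigma_{\mathscr{N}}$-iteration map on the relevant direct limit data) identifies $F^{\wW_{n+2}|\xi}$ as the extender derived from a $\Sigma_{\mathscr{N}}$-iteration map in the direct limit system $\mathscr{F}_{n+1}^{\wW_{n+1}}$. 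To propagate to an arbitrary short-normal $\Tt$ with $(n+1)$-long $E \in \es_+^{M^\Tt_\infty}$, take $\beta \in b^\Tt$ least with either $\beta+1 = \lh(\Tt)$ or $[0,\beta]_\Tt \cap \dropset^\Tt = \emptyset$ and $\delta_{n+1}^{M^\Tt_\beta} < \lh(E^\Tt_\beta)$. Then $\Tt\rest(\beta+1)$ is based on $\wW_{n+2}|\delta_{n+1}^{\wW_{n+2}}$, later extenders along $\Tt$ have length exceeding the image of $\delta_{n+1}$, and the $(n+1)$-long structure of $M^\Tt_\beta$ agrees with that of $M^\Tt_\infty$ at $\lh(E)$. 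Applying $i^\Tt_{0\beta}$ to the base correctness and invoking Corollary \ref{cor:action_iter_maps_on_M_infty_unsound} part \ref{item:iteration_action_on_M_infty_between_iterates_non_kappa-sound} (which guarantees iteration maps on $\wW_{n+2}$-iterates restrict correctly to iteration maps on the $\mM_{\infty,n+1}$-level) produces the required $\Sigma_{M^\Tt_\beta}$-iteration map; Remark \ref{rem:Sigma^stk} together with short-normal mhc of $\Sigma^{\sn}_{\wW_{n+2}}$ confirms that the resulting tree is indeed via the correct strategy.

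The hard part will be the base-case verification for the higher $(n+1)$-long extenders in $\wW_{n+2}$, beyond $e_{n+1}^{\wW_{n+2}}$ itself, because this requires combining the P-construction correspondence between $\wW_{n+2}$ and $M$ with the direct limit structure $\mathscr{F}_{n+1}^{\wW_{n+1}}$ and the action of iteration maps on $\mM_{\infty,n+1}$. The essentials are assembled in Claim \ref{clm:vV_n+2_self-it_tame}, Lemma \ref{lem:action_iter_maps_on_M_infty}, and Corollary \ref{cor:action_iter_maps_on_M_infty_unsound}, but bookkeeping the interaction of the uncollapse $\wW_{n+2} \to \vV_{n+2}$ (Definition \ref{dfn:wW_n+2}) with the direct limit maps, so as to certify pointwise self-coherence at every $(n+1)$-long extender of $\wW_{n+2}$, will require careful attention.
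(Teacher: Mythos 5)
Your overall decomposition (lower-index extenders via the inductive hypothesis, then the $(n{+}1)$-long case) is in the spirit of the paper, and your handling of the reduction for $m\leq n$ is fine (if more verbose than the paper's one-line reduction to $\gamma_{n+1}^{\wW}\leq\lh(E)$). But the ``base case $+$ propagate via iteration maps'' architecture for $m=n+1$ does not survive the case the paper calls ``Otherwise.''

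Specifically, your propagation step asserts that ``the $(n+1)$-long structure of $M^\Tt_\beta$ agrees with that of $M^\Tt_\infty$ at $\lh(E)$.'' When $\Tt=\Tt_0\conc\Tt_1$ with $\Tt_1$ nontrivial and above $\gamma_{n+1}^{M^{\Tt_0}_\infty}$, this is false unless $\lh(E)=\gamma_{n+1}^{M^\Tt_\infty}$: a long extender $E$ with $\lh(E)>\gamma_{n+1}^{M^\Tt_\infty}$ is created by $\Tt_1$ and is not the image of any extender of $M^\Tt_\beta$. So you cannot certify such an $E$ by pushing forward a fact about $\wW_{n+2}$ with $i^\Tt_{0\beta}$. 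The paper instead passes to $\Tt'=\Tt\down(n{+}1)$, observes that $\Ult_0(M^\Tt_\infty|\xi,e)$ sits inside $\vV_{n+2}^{M^{\Tt'}_\infty}$ as a P-construction, identifies $E$ via the induced short extender $F\in\es^{M^{\Tt'}_\infty}$, appends $F$ to $\Tt'$, and invokes Lemma \ref{lem:action_iter_maps_on_M_infty}; this translation across the P-construction is the essential move your argument is missing.

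Even after one has correctness of $F$ (equivalently of $F\com e$), there is a further gap you do not address: this gives that $F\com e$ is derived from a correct iteration map on $M^{\Tt'}_\alpha$, but what is needed is that $E$ itself is derived from a correct iteration map on $M^\Tt_\alpha|\delta_{n+1}^{M^\Tt_\alpha}$. The paper closes this with a branch-condensation argument: $j:M^\Tt_\infty|\xi\to U$ (the $e$-ultrapower) embeds the tree $\Xx$ one needs to verify into $j(\Xx)$, which is known correct, and then an argument like that of Lemma \ref{lem:*-suitable_P-con_correctness_no_short_overlaps_vV} transfers correctness back. Your write-up cites Corollary \ref{cor:action_iter_maps_on_M_infty_unsound} here, but that corollary is about iteration maps on $\wW_{n+2}$-iterates acting on the $\mM_{\infty,n+1}$ level — it is not a substitute for this branch-condensation step, since $E$ (at index $>\gamma_{n+1}$) is not reached by applying such an iteration map to a prior extender.

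Finally, your ``hard part'' assessment points at the base-case verification of higher $(n{+}1)$-long extenders of $\wW_{n+2}$, but the base is handled in the paper uniformly within the two cases above (with the trivial tree $\Tt$ an instance of the first), and the genuinely delicate work is precisely the ``Otherwise'' case with its P-construction translation and branch-condensation closure.
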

\begin{proof}

 Let $\Tt$ on $\wW_{n+2}$ be according to $\Sigma^{\sn}_{\wW_{n+2}}$, of successor length. Let $\wW=M^\Tt_\infty$. Let $E\in\es_+^{\wW}$ be a long extender.
 We may assume that $\gamma=\gamma_{n+1}^{\wW}$
 exists and $\gamma\leq\lh(E)$, since otherwise
 $E$ is induced by $\Sigma^{\sn}_{\wW_{n+1}}$,
 hence also by $\Sigma_{\wW_{n+2}}^{\sn}$.
 
 \begin{case}
 $\Tt$ is based on $\wW_{n+2}|\gamma_{n+1}^{\wW_{n+2}}$.
 
 Let $\Tt'$ be the corresponding tree on $\wW_{n+1}$, so $\Tt'$ is via $\Sigma_{\wW_{n+1}}$, and $\vV=M^{\Tt'}_\infty$
 is $\kappa_{n+1}^{\vV}$-sound.
 Let $\bar{\vV}$ be the $\delta_{n+1}^{\vV}$-core of $\vV$,
 so $\bar{\vV}=M^{\Tt'}_\alpha$
 where $\alpha\in b^{\Tt'}$ and $\alpha$ is least such that $[0,\alpha]^{\Tt'}\cap\dropset^{\Tt'}=\emptyset$
 and either $\alpha+1=\lh(\Tt')$ or $\delta_{n+1}^{M^{\Tt'}_\alpha}<\lh(E^{\Tt'}_\alpha)$.
 We have $\bar{\vV}|\delta_{n+1}^{\bar{\vV}}=\vV|\delta_{n+1}^{\vV}$.
 By Lemma \ref{lem:action_iter_maps_on_M_infty},
 $\mathscr{N}=\mM_{\infty,n+1}^{\vV}$ is
 a $\delta_{n+1}$-sound
 a $\Sigma_{\bar{\vV}}$-iterate
 of $\bar{\vV}$ (but if $\bar{\vV}\neq\vV$ then it is
 not an iterate of $\vV$!). Likewise for $\bar{\mathscr{N}}=\mM_{\infty,n+1}^{\bar{\vV}}$. And $[0,\alpha]^{\Tt}\cap\dropset^{\Tt}=\emptyset$. Note that facts from Lemma \ref{lem:M_infty_to_M_infty^M_infty_iter_map}
 are preserved appropriately by the iteration maps,
 so that  $e_{n+1}^{M^{\Tt}_\alpha}$ is the
 $(\delta_{n+1}^{\bar{\vV}},\delta_{n+1}^{\bar{\mathscr{N}}})$-extender
 derived from the $\Sigma_{\bar{\vV}}$-iteration map $\bar{k}:\bar{\vV}\to\bar{\mathscr{N}}$.
 But then if $\alpha+1<\lh(\Tt')$,
 then since $\gamma_{n+1}^{M^\Tt_\infty}$ exists,
 $b^{\Tt'}$ does not drop,
 so by Lemma \ref{lem:action_iter_maps_on_M_infty},
 $i^{\Tt'}_{\alpha\infty}\com\bar{k}=k$
 where $k:\bar{\vV}\to \mathscr{N}$ is the $\Sigma_{\bar{\vV}}$-iteration map.
 But then since $i^\Tt_{\alpha\infty}\sub i^{\Tt'}_{\alpha\infty}$, we have that $i^{\Tt'}_{\alpha\infty}\com e_{n+1}^{M^\Tt_\alpha}=e_{n+1}^{M^\Tt_\infty}$
 is derived from $k$,
 as desired.
 \end{case}

 \begin{case} Otherwise.

By the previous case, we may assume that $\gamma_{n+1}^{M^\Tt_\infty}<\lh(E)$, and that $\Tt=\Tt_0\conc\Tt_1$ where $\Tt_0$ is based on $\wW_{n+2}|\delta_{n+1}^{\wW_{n+2}}$ and has successor length and $b^{\Tt_0}$ is non-dropping, and $\Tt_1$ is above $\gamma_{n+1}^{M^{\Tt_0}_\infty}$. Let $\alpha$ be as before. So $\gamma_{n+1}^{M^\Tt_\infty}=\gamma_{n+1}^{M^\Tt_\alpha}$ and $\gamma_{n+1}^{M^\Tt_\alpha}<\lh(E^\Tt_\alpha)$, if $\alpha+1<\lh(\Tt)$.
And $e=e_{n+1}^{M^\Tt_\alpha}$ is via $\Sigma_{M^\Tt_\alpha}$ and via $\Sigma_{M^\Tt_\infty}$.

 Let $\Tt'=\Tt\down(n+1)$.
 Let $\xi=\lh(E)$. We have $U=\Ult_0(M^{\Tt}_\infty|\xi,e)\ins \vV_{n+2}^{M^{\Tt'}_\infty}$ and $\kappa_{n+1}^{+M^{\Tt'}_\infty}=\gamma_{n+1}^{\vV_{n+2}^{M^{\Tt'}_\infty}}<\OR^U$.
So $U$ is given by the P-construction of $M^{\Tt'}_\infty$
over $U|\gamma_{n+1}^U$. In particular,
$F^U$ is induced by an extender $F\in\es^{M^{\Tt'}_\infty}$
with $\crit(F)=\kappa_{n+1}^{M^{\Tt'}_\infty}$,
and we have $\kappa_{n+1}^{+M^{\Tt'}_\infty}=\kappa_{n+1}^{+M^{\Tt'}_\alpha}$ (and $[0,\alpha]^{\Tt'}\cap\dropset^{\Tt'}=\emptyset$). Assuming that we took $\Tt$ of minimal length such that $E\in\es_+^{M^\Tt_\infty}$, we can extend $\Tt'$ to $\Uu=\Tt'\conc\left<F\right>$,
as a $0$-maximal, and hence short-normal tree. Note that $b^\Uu$ is via $\Sigma_{\wW_{n+1}}$ and $b^\Uu$ is non-dropping,
and $M^\Uu_\infty$ is $\kappa_{n+1}^{M^\Uu_\infty}$-sound.
So Lemma \ref{lem:action_iter_maps_on_M_infty} 
(in particular part \ref{item:iteration_action_on_M_infty_between_iterates}) applies to the iteration map (and ultrapower map) $i^\Uu_{\alpha\infty}:M^{\Tt'}_\alpha\to\Ult(M^{\Tt'}_\alpha,F)$.
We have $M^{\Tt'}_\alpha|\kappa_{n+1}^{M^{\Tt'}_\alpha}\pins M^\Uu_\infty$,
so part \ref{item:when_iterate_above_kappa_n+1} applies.
It follows that $F$ is the extender of a correct iteration map on $\mM_{\infty,n+1}^{M^{\Tt'}_\alpha}$, and so $F\com e$ is likewise on $M^{\Tt'}_\alpha=M^\Tt_\alpha\down(n+1)$. But letting $j:M^\Tt_\infty|\xi\to U$ be the (degree $0$) ultrapower map, then $j\com i_E=i_{F\com e}$.
This will give that $E$ is also the extender of a correct iteration map on $M^{\Tt'}_\alpha$,
as long as the tree $\Xx$ on $M^{\Tt}_\alpha|\delta_{n+1}^{M^\Tt_\alpha}$ leading to $\mM_{\infty,n+1}^{M^\Tt_\infty||\xi}$ is itself correct.
But $j$ embeds $\Xx$ into $j(\Xx)$, which is correct,
and using an argument like that for \ref{lem:*-suitable_P-con_correctness_no_short_overlaps_vV} (applied to proper segments of $\Xx$, and using the current $j$, which might be set-sized) we get that $\Xx$ is also correct.\qedhere
 \end{case}
\end{proof}

\begin{dfn}
 $\Sigma_{\wW_{n+2}}$ is the unique self-consistent $(0,\OR)$-strategy for $\wW_{n+2}$ extending $\Sigma^{\sn}_{\wW_{n+2}}$
 (see Lemma \ref{lem:unique_ext_to_(0,OR)-strat_n+1}).
\end{dfn}

Generalizing Lemma \ref{lem:vV_n+2_self-it_tame}, we now get the following lemma.
Note that by conservativity of $\Sigma_{\wW_{n+2}}$,
$\wW\down i$ is a non-dropping $\Sigma_{\wW_i}$-iterate
for each $i\leq n+1$. 
\begin{lem}[Tame self-iterability for iterates of $\wW_{n+2}$]\label{lem:wW_n+2_self-it_tame}
Let $\wW$ be any non-dropping $\Sigma_{\wW_{n+2}}$-iterate. Let $\Psi_{\mathrm{tm}}$
be the restriction of $\Sigma_{\wW,\delta_{n+1}}$
to tame trees. Then:
\begin{enumerate}
  \item\label{item:wW_n+2_closed_under_strat}$\wW$ is closed under $\Psi_{\mathrm{tm}}$, and $\Psi_{\mathrm{tm}}\rest\wW$ is lightface definable over $\wW$.
  \item\label{item:wW_n+2[g]} Let $\lambda\geq\delta_{n+1}^{\wW}$,
  let $\PP\in\wW|\lambda$, and $g$ be $(\wW,\PP)$-generic (with $g$ appearing in some generic extension of $V$). Then $\wW[g]$ is closed under $\Psi_{\mathrm{tm}}$
  and $\Psi_{\mathrm{tm}}\rest\wW[g]$ is  definable over the universe of $\wW[g]$ from the parameter $x=\wW|\lambda^{+\wW}$, uniformly in $\lambda$.
 \end{enumerate}
\end{lem}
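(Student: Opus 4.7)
The plan is to reduce the lemma to Lemma \ref{lem:vV_n+2_self-it_tame} by exploiting the $\vec{e}$-extender relationship between $\wW$ and $\vV_{n+2}^{\wW\down 0}$. First I would observe that by conservativity of $\Sigma_{\wW_{n+2}}$ (Lemmas \ref{lem:e-vec_minimal_pullback} and \ref{lem:mhc_inheritance}) together with the level-$(n+2)$ analogue of Corollary \ref{cor:Sigma_wW_n+1_down_0=Sigma_M}, the premouse $N := \wW\down 0$ is a non-dropping $\Sigma_M$-iterate, and the structure $\vV^{*} := \vV_{n+2}^N = \Ult(\wW,\vec{e}^{\wW})$ is a non-dropping $\Sigma_{\wW_{n+2}}$-iterate. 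Lemma \ref{lem:vV_n+2_self-it_tame} therefore applies directly to $\vV^{*}$ and supplies, lightface over $\vV^{*}$, the restriction of $\Sigma_{\vV^*,\delta_{n+1}}$ to tame trees (and, with parameter $x$, over $\vV^{*}[g]$).

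Second, I would transport this to $\wW$. Let $j:\wW\to\vV^{*}$ be the $\vec{e}^{\wW}$-ultrapower map. Given a tame $\delta_{n+1}$-short tree $\Tt$ on $\wW$ via $\Sigma_{\wW}$, Lemma \ref{lem:Sigma_vV_is_e-vec_min_pullback} (at level $n+2$) implies that $\Tt$ is via the $\vec{e}$-minimal pullback of $\Sigma_{N}$; equivalently, the $\vec{e}$-minimal inflation of $\Tt$ to a tree $\Tt^{*}$ on $\vV^{*}$ is via $\Sigma_{\vV^{*}}$. By Lemmas \ref{lem:conservativity_stable} and \ref{lem:conservativity_unstable}, the correct branch of $\Tt$ is uniquely determined by the correct branch of $\Tt^{*}$, and the properties of being $\delta_{n+1}$-short and tame are preserved by the inflation. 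By Lemma \ref{lem:vV_n+2_self-it_tame} applied to $\vV^{*}$, the branch of $\Tt^{*}$ is computed lightface inside $\vV^{*}$. Since $\vV^{*}$ is lightface definable inside $\wW$ (as $\Ult(\wW,\vec{e}^{\wW})$, with $\vec{e}^{\wW}$ lightface over $\wW$ via Definition \ref{dfn:e-vec_finite}), and both the $\vec{e}$-minimal inflation operation and the branch-pullback operation are first-order over $\wW$, we conclude that $\Psi_{\mathrm{tm}}\rest\wW$ is lightface definable over $\wW$. This yields part \ref{item:wW_n+2_closed_under_strat}.

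For part \ref{item:wW_n+2[g]}, I would first recover $\wW$ definably over $\wW[g]$ from $x=\wW|\lambda^{+\wW}$, using the analogue of Lemma \ref{lem:vV_def_over_vV[g]} at level $n+2$ (which is proved exactly as Lemma \ref{lem:wW_n+1_def_from_seg_in_univ} was used there: apply $\Ult(\cdot,\vec{e}^{\wW})$ to the universe and recover $\wW\down 0$ by reverse P-construction, then $\wW$ is definable from $\wW\down 0$ and $\wW|\kappa_{n+1}^{\wW}$). Once $\wW$ is recovered, a tree $\Tt\in\wW[g]$ on $\wW$ lifts under $j$ to a tree $\Tt^{*}\in\vV^{*}[g]$ on $\vV^{*}$; by part \ref{item:vV_n+2[g]_it_vV_n+2} of Lemma \ref{lem:vV_n+2_self-it_tame} the branch of $\Tt^{*}$ is definable over $\vV^{*}[g]$ from $j(x)$, and pulling back via the $\wW[g]$-definability of $(\vV^{*},j\rest\wW)$ from $x$ gives the desired definability of the branch of $\Tt$ over $\wW[g]$ from $x$.

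The main obstacle I anticipate is verifying in detail that $\vec{e}$-minimal inflation preserves tameness and $\delta_{n+1}$-shortness in both directions and that the correspondence of branches and $Q$-structures survives (including in the $\gamma_k$-unstable case covered by Lemma \ref{lem:conservativity_unstable}); this is essentially bookkeeping along the lines of Lemma \ref{lem:n+1-translation_pair} lifted to level $n+2$. A secondary subtlety is that $\wW$ need not be $\kappa_{n+1}^{\wW}$-sound, so in identifying the direct-limit objects $\mM_{\infty,n+1}^{\wW}$ and their images across $j$ one must invoke Corollary \ref{cor:action_iter_maps_on_M_infty_unsound} in place of Lemma \ref{lem:action_iter_maps_on_M_infty}; this is exactly the sort of non-sound refinement that required care in the statement of \ref{lem:vV_n+2_self-it_tame} itself, and no new difficulty is expected here.
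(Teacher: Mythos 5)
Your reduction for part~\ref{item:wW_n+2_closed_under_strat} is essentially the paper's: both set $\wW' = \Ult(\wW,\vec{e}^{\wW}) = \vV_{n+2}^{\wW\down 0}$, apply Lemma~\ref{lem:vV_n+2_self-it_tame} there (noting $\wW\down 0$ is a non-dropping $\Sigma_M$-iterate via the level-$(n+2)$ analogue of Corollary~\ref{cor:Sigma_wW_n+1_down_0=Sigma_M}, and that $\wW'$ is a $\Sigma_{\wW}$-iterate via Lemma~\ref{lem:Sigma_wW_n+2}), and then transport back along $j:\wW\to\wW'$, arguing as in Lemma~\ref{lem:*-suitable_P-con_correctness_no_short_overlaps_vV} (lifting via $j$ and comparing the minimal $\vec{e}$-inflation $\Uu$ of $\Tt$ with $j(\Uu)$). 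One terminological slip: the $\vec{e}$-minimal inflation of $\Tt$ is another tree on $\wW$ itself (with $\Tt\down 0$ being its translation to $N$), not a tree on $\vV^{*}$; what you actually want to compare against is $j(\Tt)$, which is the tree on $\vV^{*}$. The intended argument is the same, but the wording should be fixed.

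For part~\ref{item:wW_n+2[g]} you depart from the paper in a way that introduces a real gap. You try to lift a generic tree $\Tt\in\wW[g]$ to a tree $\Tt^{*}\in\vV^{*}[g]$ and then invoke part~\ref{item:vV_n+2[g]_it_vV_n+2} of Lemma~\ref{lem:vV_n+2_self-it_tame} over $\vV^{*}[g]$. But that lemma requires $g$ to be $(\vV^{*},\PP')$-generic for some $\PP'\in\vV^{*}$. Here $\PP\in\wW|\lambda$ for an arbitrary $\lambda\geq\delta_{n+1}^{\wW}$, while $\crit(j)=\delta_0^{\wW}<\delta_{n+1}^{\wW}$, so $\wW$ and $\vV^{*}$ already disagree below $\lambda$; there is no reason for $\PP$ to lie in $\vV^{*}$ nor for $g$ to be $\vV^{*}$-generic, so $\vV^{*}[g]$ is not a well-defined generic extension of $\vV^{*}$ and $\Tt^{*}$ need not lie in it even if it were. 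The paper sidesteps this entirely: having established part~\ref{item:wW_n+2_closed_under_strat}, it derives part~\ref{item:wW_n+2[g]} by working in $\wW$ with a $\PP$-name for $\Tt$, using the genericity-iteration/absorption technique of Lemma~\ref{lem:*-suitable_P-con_correctness_no_short_overlaps_vV[g]} to build a sequence of ground-model trees $\left<\Uu_\alpha\right>_{\alpha<\xi}$ via $\Psi_{\mathrm{tm}}$ into one of which $\Tt$ is forced to embed minimally, and then reading off the branch via mhc. You should replace your part~\ref{item:wW_n+2[g]} argument with this absorption step rather than attempting a direct lift into $\vV^{*}[g]$.
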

\begin{proof}
 Letting $\wW'=\Ult(\wW,\vec{e}^{\wW})=\vV_{n+2}^{\wW\down 0}$, the lemma holds for $\wW'$ in place of $\wW$, by Lemma \ref{lem:vV_n+2_self-it_tame}
 (here we use that by Lemma \ref{lem:Sigma_wW_n+2},
 $\wW'$ is a $\Sigma_{\wW}$-iterate,
 and that by Corollary \ref{cor:Sigma_wW_n+1_down_0=Sigma_M}
 (note that this is already know to hold for $\Sigma_{\wW_{n+2}}$),
$\Sigma_M=\Sigma_{\wW_{n+2}}\down 0$
and $\wW\down 0$ is a non-dropping $\Sigma_M$-iterate).

 Now deduce part \ref{item:wW_n+2_closed_under_strat}
 from the previous paragraph by arguing as in the proof of Lemma \ref{lem:*-suitable_P-con_correctness_no_short_overlaps_vV}.
 (That is, let $j:\wW\to\wW'$ be the $\vec{e}^{\wW}$-ultrapower map. Let $\Tt$ on $\wW\down(n+1)$
 be such that $j(\Tt)$ is via $\Sigma_{\wW'\down(n+1),\delta_{n+1}}$. By considering the minimal $\vec{e}$-inflation $\Uu$ of $\Tt$ (but note this only involves $\vec{e}^{\wW\down(n+1)}$, not $\vec{e}^{\wW}$)
 and $j(\Uu)$, observe that $\Tt$ is correct.)
Now deduce part \ref{item:wW_n+2[g]} from part \ref{item:wW_n+2_closed_under_strat} by absorbing generic trees by trees in the ground model much like in the proof of Lemma \ref{lem:*-suitable_P-con_correctness_no_short_overlaps_vV[g]}. 
\end{proof}

\begin{lem}[Self-iterability for iterates of $\wW_{n+2}$]\label{lem:wW_n+2_self-it}
Let $\wW$ be any non-dropping $\Sigma_{\wW_{n+2}}$-iterate. Let $\Psi=\Sigma_{\wW,\delta_{n+1}}$.
Then the conclusions of Lemma \ref{lem:wW_n+2_self-it_tame}
still hold, after replacing $\Psi_{\mathrm{tm}}$
with $\Psi$ throughout.
\end{lem}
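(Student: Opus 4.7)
The plan is to follow the same template as the proof of Lemma \ref{lem:wW_n+2_self-it_tame}, with the single change that at the key reduction step we invoke Lemma \ref{lem:vV_n+2_self-it} in place of Lemma \ref{lem:vV_n+2_self-it_tame}. First, set $\wW'=\Ult(\wW,\vec{e}^{\wW})=\vV_{n+2}^{\wW\down 0}$. By conservativity of $\Sigma_{\wW_{n+2}}$ (together with the $(n{+}2)$-analogue of Corollary \ref{cor:Sigma_wW_n+1_down_0=Sigma_M}), $\wW\down 0$ is a non-dropping $\Sigma_M$-iterate, so Lemma \ref{lem:vV_n+2_self-it} supplies the full, \emph{non-tame}, versions of parts (\ref{item:wW_n+2_closed_under_strat}) and (\ref{item:wW_n+2[g]}) of \ref{lem:wW_n+2_self-it_tame} for $\wW'$ in place of $\wW$.

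Second, to deduce part (\ref{item:wW_n+2_closed_under_strat}) for $\wW$, let $j:\wW\to\wW'$ be the $\vec{e}^{\wW}$-ultrapower map (here only $\vec{e}^{\wW\down(n+1)}$ is really in play, since $\wW\down(n{+}1)=\wW_{(n+2)\down(n+1)}$-content is what matters for $\delta_{n+1}$-short trees on $\wW$). Given a putative $\delta_{n+1}$-short tree $\Tt\in\wW$, form its minimal $\vec{e}$-inflation $\Uu$ on $\wW\down(n{+}1)$ and its image $j(\Uu)$, which lies on $\wW'\down(n{+}1)$. The image $j(\Uu)$ is via the short-tree strategy known correct by Lemma \ref{lem:vV_n+2_self-it}, and commutativity of minimal $\vec{e}$-inflation with the ultrapower map $j$ lets us transfer correctness and definability of branches back through $j$. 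This is precisely the copying trick used in the proof of Lemma \ref{lem:*-suitable_P-con_correctness_no_short_overlaps_vV}; the uniformity of the definition follows from that of Lemma \ref{lem:vV_n+2_self-it} together with the uniform definability of $j$ and of $\vec{e}^{\wW}$ over $\wW$.

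Third, for part (\ref{item:wW_n+2[g]}), apply Lemma \ref{lem:wW_n+1_def_from_seg_in_univ} (in its straightforward $\wW_{n+2}$-adaptation) to definably recover $\wW$ inside the universe of $\wW[g]$ from the parameter $x=\wW|\lambda^{+\wW}$; then absorb arbitrary trees in $\wW[g]$ by forcing-name trees in $\wW$, exactly as in the proof of Lemma \ref{lem:*-suitable_P-con_correctness_no_short_overlaps_vV[g]}, and apply part (\ref{item:wW_n+2_closed_under_strat}) internally. Uniformity in $(\wW,\lambda,\PP,g,x)$ is again inherited from the $\wW'$-side via $j$.

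The main obstacle—and the reason for the deferral—is entirely outsourced to the input Lemma \ref{lem:vV_n+2_self-it}, whose proof requires the integration of $*$-translation with the P-constructions of this paper (to handle Q-structures with short extenders overlapping $\delta(\Tt)$, the feature that distinguishes $\Psi$ from $\Psi_{\mathrm{tm}}$); this is the content of \cite{*-trans_add}. Once Lemma \ref{lem:vV_n+2_self-it} is in hand, the reduction described above introduces no essentially new ideas beyond those already deployed in the tame case, and so the present lemma should be recorded with the same deferral pointer.
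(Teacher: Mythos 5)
Your proposal correctly identifies the right structure: the paper's own "proof" is a one-line deferral to \cite{*-trans_add}, and your sketch of how that deferred proof ought to run — set $\wW'=\Ult(\wW,\vec{e}^{\wW})=\vV_{n+2}^{\wW\down 0}$, invoke Lemma \ref{lem:vV_n+2_self-it} there, and transfer back to $\wW$ via the $\vec{e}$-ultrapower/lifting argument of Lemma \ref{lem:wW_n+2_self-it_tame} — is the natural adaptation of the tame case and is consistent with what the paper implicitly intends. So your proposal is essentially correct and takes the same approach.

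One caveat worth flagging: your closing claim that the deferral is "entirely outsourced" to Lemma \ref{lem:vV_n+2_self-it} is slightly too optimistic. In the tame case, the transfer step back through $j$ invokes the comparison argument of Lemma \ref{lem:*-suitable_P-con_correctness_no_short_overlaps_vV}, which is stated only for trees whose Q-structures have no short extenders overlapping $\delta(\Tt)$. For non-tame trees, the transfer step itself (not merely the input lemma for $\wW'$) must deal with those overlapping Q-structures, and so it too needs $*$-translation/inverse $*$-translation rather than carrying over verbatim. Since both obstructions route to the same forthcoming paper, your deferral pointer is still correct — but the non-tame reduction is not quite "no new ideas" over the tame case; it has its own $*$-translation content.
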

\begin{proof}[Proof deferral]
 The proof requires $*$-translation,
 and is deferred to \cite{*-trans_add}.
\end{proof}

\begin{lem}\label{lem:N_can_iterate_vV_n+2^N_tame}
 Let $N$ be a non-dropping $\Sigma_M$-iterate.
 Let $\wW=\vV_{n+2}^N$.
 Let $\Psi_{\mathrm{tm}}$
be the restriction of $\Sigma_{\wW,\delta_{n+1}}$
to tame trees. Then:
\begin{enumerate}
  \item\label{item:N_closed_under_vV_n+2^N_strat}$N$ is closed under $\Psi_{\mathrm{tm}}$, and $\Psi_{\mathrm{tm}}\rest N$ is lightface definable over $N$, uniformly in $N$.
  \item\label{item:N[g]_closed_under_vV_n+2^N_strat} Let $\lambda\in\OR$,
  let $\PP\in N$ with $\PP\sub\lambda$, and $g$ be $(N,\PP)$-generic (with $g$ appearing in some generic extension of $V$). Then $N[g]$ is closed under $\Psi_{\mathrm{tm}}$
  and $\Psi_{\mathrm{tm}}\rest N[g]$ is  definable over the universe of $N[g]$ from the parameter $x=\pow(\lambda)^N$, uniformly in $\lambda,N$.
 \end{enumerate}
\end{lem}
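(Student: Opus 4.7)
The plan is to reduce to the earlier internal-iterability statement for $\mathscr{N} = \mM_{\infty,n+1}^N$ (the lemma in the subsection ``Action of iteration maps on $\mM_{\infty,n+1}$'', which for $\kappa_{n+1}^P$-sound $\Sigma_M$-iterates $P$ gives that $P$ and $P[g]$ are closed under $\Sigma_{\mathscr{N}, \delta_{n+1}^{\mathscr{N}}}$ restricted to tame trees, definably), combined with the conservativity machinery of Lemmas \ref{lem:conservativity_stable}--\ref{lem:conservativity_unstable} linking trees on $\wW$ to trees on $\wW\down(n+1) = \mathscr{N}$. The key identifications are $\wW|\delta_{n+1}^{\wW} = \mathscr{N}|\delta_{n+1}^{\mathscr{N}}$ and, by Lemma \ref{lem:e-vec_minimal_pullback} together with self-consistency, that $\Sigma_{\wW, \delta_{n+1}}$ is determined by $\Sigma_{\mathscr{N}, \delta_{n+1}^{\mathscr{N}}}$ via the $\vec{e}$-pullback; both tameness and the $\delta_{n+1}$-short/$\delta_{n+1}$-maximal distinction are preserved by this correspondence.

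So for $\delta_{n+1}$-short tame trees $\Tt \in N$ on $\wW$, the translated tree $\widetilde{\Tt} = \Tt\down(n+1)$ is a tame $\delta_{n+1}^{\mathscr{N}}$-short tree on $\mathscr{N}$; the $\mathscr{N}$-level lemma places its branch in $N$ (resp.\ $N[g]$) with the required definability, and pulling back via conservativity gives $\Psi_{\mathrm{tm}}(\Tt) \in N$ (resp.\ $N[g]$). For $\delta_{n+1}$-maximal tame trees I would absorb $\Tt$ into a $*$-suitable tree $\Uu$ via a genericity iteration definable over $N$ (resp.\ $N[g]$); by Lemma \ref{lem:*-suitable_P-con_correctness_no_short_overlaps_vV} (resp.\ its $[g]$ version) applied in the conservativity framework, the $\delta(\Uu)$-core of $\mathscr{P}^U(M(\Uu))$ equals the $\delta(\Uu)$-core of the correct branch model, with the extraction performed inside $N$ (resp.\ $N[g]$). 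Uniformity follows because $\wW = \vV_{n+2}^N$ and $\mathscr{N} = \mM_{\infty,n+1}^N$ are uniformly lightface definable over $N$ (Definition \ref{dfn:vV_n+1^vV} iterated, plus the local definability from the proof of Lemma \ref{lem:vV_n+2_is_pVl}), the conservativity correspondence is first order, and in the generic case $x = \pow(\lambda)^N$ together with the lightface definability of $N|\lambda$ inside $N$ determines $N|\lambda^{+N}$ and hence the corresponding initial segments of $\mathscr{N}$ and $\wW$ at level $\lambda^{+}$.

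The main obstacle is that the cited $\mathscr{N}$-level lemma assumes $N$ is $\kappa_{n+1}^N$-sound, while in general $N$ need not be. To remove this hypothesis I would pass to the $\kappa_{n+1}^N$-core $\bar{N}$ and apply the cited lemma there, then use Corollary \ref{cor:action_iter_maps_on_M_infty_unsound} (applied to $\vV_{n+1}^N$ as a $\Sigma_{\wW_{n+1}}$-iterate) to transfer the conclusion back: since $\mathscr{N}$ and $\bar{\mathscr{N}} := \mM_{\infty,n+1}^{\bar{N}}$ agree on their $\kappa_{n+1}^{+}$-initial segments and all data relevant to trees on $\wW|\delta_{n+1}^\wW$ lies in this common part, the strategy computations done inside $\bar{N}$ lift to $N$ via the iteration map $i_{\bar{N} N}$, which is itself lightface definable over $N$. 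Everything else is routine bookkeeping within the already established conservativity framework.
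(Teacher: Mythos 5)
Your route is genuinely different from the paper's, and it misses a much shorter argument. The paper's proof is essentially one observation: $N$ is a set-generic extension of $\wW=\vV_{n+2}^N$ (since $\vV_{i}^N$ is a ground of $\vV_{i-1}^N$ via $\CC_{i-1}$ for each $i\le n+2$), so part (2) of Lemma~\ref{lem:vV_n+2_self-it_tame} applies directly with $\wW$ as the ground and the composed generic; the only adjustment needed is that the parameter $\wW|\lambda'^{+\wW}$ appearing there can itself be recovered lightface over $N$ (resp.\ from $x$ over $N[g]$), because $N$ is so recoverable and $\wW=\vV_{n+2}^N$ is definable from $N$. Your argument instead reduces to the unnamed lemma in the ``Action of iteration maps on $\mM_{\infty,n+1}$'' subsection via conservativity, which is considerably more work and, more importantly, has a gap.

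Two concrete problems. First, the lemma you cite is stated for the \emph{full} $\Sigma_{\nN,\delta_{n+1}^{\nN}}$, not for its tame restriction, and its proof goes through the deferred Lemma~\ref{lem:sts} (which relies on $*$-translation). The whole point of formulating Lemma~\ref{lem:N_can_iterate_vV_n+2^N_tame} for $\Psi_{\mathrm{tm}}$ is to have a version whose proof is not deferred; you would need to extract and justify a tame-only version of the cited lemma (using Lemmas~\ref{lem:delta_n+1-sts_def} and \ref{lem:delta_n+1-sts_def_g} in place of Lemma~\ref{lem:sts}), which you don't do. Second, and more seriously, your workaround for the $\kappa_{n+1}^N$-soundness hypothesis does not close. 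You pass to the $\kappa_{n+1}^N$-core $\bar{N}$ and appeal to the cited lemma there. But $N$ is an \emph{iterate} of $\bar{N}$ above $\kappa_{n+1}^{\bar{N}}$, not a set-generic extension of it, so a tree $\Tt\in N$ (or $\Tt\in N[g]$) based on $\wW|\delta_{n+1}^{\wW}$ need not lie in $\bar{N}$ nor in any set-generic extension of $\bar{N}$ sitting inside $N$. The agreement $\nN|\kappa_{n+1}^{+\nN}=\bar\nN|\kappa_{n+1}^{+\bar\nN}$ from Corollary~\ref{cor:action_iter_maps_on_M_infty_unsound}(7) gives that the \emph{base model data} coincide, hence that the two strategies agree where both are defined, but it gives no mechanism by which $N$ can \emph{compute} the branch of a tree it contains but $\bar{N}$ does not. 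The generic-extension reduction to $\wW=\vV_{n+2}^N$ avoids this issue entirely, because $N$ literally is $\wW[g']$ for an appropriate generic $g'$, so Lemma~\ref{lem:vV_n+2_self-it_tame}(2) applies verbatim.
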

\begin{proof}
 Since $N$ is a set-generic extension of $\wW$,
 this follows from Lemma \ref{lem:vV_n+2_self-it_tame} and its proof. (The only reason that it doesn't follow directly from the lemma itself is that the lemma doesn't quite give the stated definability of the restrictions of $\Psi_{\mathrm{tm}}$. But note that it suffices to see we can define $\wW$ in the manner mentioned, i.e.~lightface over $N$ if $g=\emptyset$,
 and from the parameter $x$ over $N[g]$ otherwise.
  But we can define $N$ in this manner,
 and hence also define $\wW=\vV_{n+2}^N$.)
\end{proof}

\begin{lem}
 Let $N$ be a non-dropping $\Sigma_M$-iterate.
 Let $\wW=\vV_{n+2}^N$.
 Let $\Psi=\Sigma_{\wW,\delta_{n+1}}$. Then the conclusions of Lemma \ref{lem:N_can_iterate_vV_n+2^N_tame}
 still hold, after replacing $\Psi_{\mathrm{tm}}$
 with $\Psi$ throughout.
\end{lem}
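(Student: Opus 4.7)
The plan is to deduce this lemma from Lemma \ref{lem:wW_n+2_self-it} (or equivalently Lemma \ref{lem:vV_n+2_self-it}) in precisely the manner that Lemma \ref{lem:N_can_iterate_vV_n+2^N_tame} was deduced from Lemmas \ref{lem:vV_n+2_self-it_tame} and \ref{lem:wW_n+2_self-it_tame}. That is, since $\wW=\vV_{n+2}^N$ and $N$ is a set-generic extension of $\wW$ (via the forcing $\CC^{\wW}\in\wW|\kappa_{n+1}^{+\wW}$ followed by the inverse P-construction of $\vec{\varphi}$), any $\Tt\in N[g]$ via $\Psi$ can be absorbed into a tree in $\wW[g^+]$ for a suitable combined generic $g^+$ over $\wW$; then Lemma \ref{lem:vV_n+2_self-it} computes the correct branch (or Q-structure) and places it in $\wW[g^+]\subseteq N[g]$.

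For definability, I would first observe, as in the proof of Lemma \ref{lem:N_can_iterate_vV_n+2^N_tame}, that $\wW=\vV_{n+2}^N$ is lightface definable over $N$ (iterating Lemma \ref{lem:vV_n+2^vV_down_n_def} through the pVl stages), so the lightface definability of $\Psi\rest\wW$ over $\wW$ given by Lemma \ref{lem:vV_n+2_self-it} descends to lightface definability of $\Psi\rest N$ over $N$. For the $g\neq\emptyset$ case, the parameter $x=\pow(\lambda)^N$ determines $N|\lambda^{+N}$ in $N[g]$ and hence, via the argument of Lemma \ref{lem:vV_def_over_vV[g]} applied inside $N[g]$, also determines $\wW|\lambda^{+\wW}$; feeding this into the definition granted by Lemma \ref{lem:vV_n+2_self-it} part \ref{item:vV_n+2[g]_it_vV_n+2} yields the required $x$-definability of $\Psi\rest N[g]$ over $N[g]$, uniformly in $\lambda$ and $N$.

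The main obstacle (and the reason for the proof deferral) is not in this reduction, which is routine, but in Lemma \ref{lem:vV_n+2_self-it} itself. The gap between the tame and the full strategy lies exactly in those $\delta_{n+1}$-short trees $\Tt$ whose Q-structures $Q(\Tt,b)$ carry short extenders $E\in\es_+^Q$ overlapping $\delta(\Tt)$, i.e.\ with $\crit(E)\leq\delta(\Tt)\leq\lh(E)$. The P-constructions of Definitions \ref{dfn:P-con_for_gamma-stable-P-suitable} and \ref{dfn:P-con_for_gamma-unstable-P-suitable} are insufficient to capture such overlapping short extenders, and this is where the integration of $*$-translation with P-constructions (cf.\ Remark \ref{rem:*-trans}) becomes essential. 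Granting that integration, as will be carried out in \cite{*-trans_add}, one verifies that the computation of branches/Q-structures for non-tame trees internal to $\wW$ proceeds just as for tame trees; the reduction outlined above then completes the proof.
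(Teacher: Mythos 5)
Your proposal matches the paper's intent. The paper's own ``proof'' is simply a deferral to the forthcoming $*$-translation paper, and you have correctly identified the structure it implies: the reduction from the lemma as stated to Lemma \ref{lem:vV_n+2_self-it} (equivalently \ref{lem:wW_n+2_self-it}) mirrors exactly how Lemma \ref{lem:N_can_iterate_vV_n+2^N_tame} was deduced from Lemma \ref{lem:vV_n+2_self-it_tame}, using that $N$ is a set-generic extension of $\wW=\vV_{n+2}^N$ and that $\wW$ is definable over $N$ (lightface when $g=\emptyset$, from $x$ otherwise). You also correctly locate the genuine gap — non-tame trees whose Q-structures carry short extenders overlapping $\delta(\Tt)$ — as residing in Lemma \ref{lem:vV_n+2_self-it} itself, which is precisely why the paper defers to \cite{*-trans_add}; the reduction step you outline would indeed be routine once that input is in place.
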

\begin{proof}[Proof deferral]
 The proof requires $*$-translation,
 and is deferred to \cite{*-trans_add}.
\end{proof}

\subsection{Conclusion}

This completes the propagation of the inductive hypotheses \ref{item:hyp_1}--\ref{item:hyp_8}, and hence the construction of all $\vV_{n},\wW_{n}$ and the iteration strategies $\Sigma_{\vV_{n}},\Sigma_{\wW_n}$.

\section{The $\om$th Varsovian models $\vV_\om$ and $\wW_\om$}

\subsection{Construction and properties of $\mM_{\infty\om}$}

 \begin{dfn}
 Let $\Tt_{n,n+1}$ be the limit length tree leading from $\vV_n$ to $\mM_{\infty n}$, $\Tt_{n,n+1}^+=\Tt_{n,n+1}\conc\Sigma_{\vV_n}(\Tt_{n,n+1})$, 
 and $j_{n,n+1}$ the iteration map
 of $\Tt_{n,n+1}^+$.
 Write $\Tt_{n+1,n+2\down n}=\Tt_{n+1,n+2}\down n$,
 which is on $\mM_{\infty n}$. Etc.
 Then
 \[ \Uu_n=\Tt_{n,n+1}^+\conc\Tt_{n+1,n+2\down n}^+\conc\Tt_{n+2,n+3\down n}^+\conc\ldots \]
 is an $\om$-stack on $\vV_n$ (which normalizes to a tree) according to $\Sigma_{\vV_n}$, of successor length, with $b^{\Uu_n}$ non-dropping.

 Let $\vV_{n\om}^+=M^{\Uu_n}_\infty$.
 Let $\vec{\vV}_\om^+=\left<\vV^+_{n\om}\right>_{n<\om}$.
 Let $\mM_{\infty\om}^+=(L[\vec{\vV}_{\om}^+],\vec{\vV}_{\om}^+)$.
\end{dfn} 

We will show that $\mM_{\infty\om}^+$
is definable inside $M$,
and relabel it as $\mM_{\infty\om}$.
 We will rearrange $\mM_{\infty\om}$ as a strategy mouse $\vV_{\om}$
 with $\om$ Woodin cardinals, fully iterable in $V$ and in $M$
 and closed under its strategy.
  We will show that the universe of $\vV_\om$ is
 the mantle of $M$, and is $\HOD^{M[G]}$, for $G$ a large enough collapse generic.

 We first show that $\mM_{\infty\om}^+$ is a class of $\vV_n$
 for each $n$ (hence in particular of $M$).
 \begin{dfn}
Work in $\vV_{n_0}$ (where $n_0<\om$).
So the sequence $\left<\vV_n\right>_{n\geq n_0}$ is a lightface class.
Given $s\in[\OR]^{<\om}$ with $\lambda<\max(s)$ and $n\in[n_0,\om)$,
say that $s$ is \emph{$n$-good}
iff for all $m\geq n$, $\Coll(\om,\max(s))$ forces
that there is a $\Tt_{m,m+1}$-cofinal branch $b$ such that
$\max(s)\in\mathrm{wfp}(M^{\Tt_{m,m+1}}_b)$
and $i^{\Tt_{m,m+1}}_b(s)=s$.
Let $\mathscr{D}$ be the class of all pairs $(n,s)$ such that $s$ is $n$-good. Given $(m,r),(n,s)\in\mathscr{D}$,
say $(m,r)\leq(n,s)$ iff $m\leq n$ and $r\sub s$.
Given $(n,s)\in\mathscr{D}$ and $\ell\leq n$,
define
\[ \gamma_{ns}=\delta_n^{\vV_n}\cap\Hull^{\vV_n|\max(s)}(s^-), \]
\[ H_{\ell;ns}=\Hull^{\vV_n|\max(s)}(s^-\cup\gamma_{ns})\cap((\vV_n\down \ell)|\max(s)) \]
and given also $(m,r)\in\mathscr{D}$ with $(m,r)\leq(n,s)$
and $\ell\leq m$, define
\[ \pi_{\ell;mr,ns}:H_{\ell;mr}\to H_{\ell;ns} \]
by
\[ \pi_{\ell;mr,ns}(x)=i^{\Tt_{m,m+1}\conc\Tt_{m+1,m+2\down m}\conc\ldots\conc\Tt_{n-1,n\down m}}_{\vec{b}}(x) \]
where $\vec{b}=(b_m,\ldots,b_{n-1})$
is any tuple of branches through $(\Tt_{m,m+1},\ldots,\Tt_{n-1,n})$
(equivalently, through $(\Tt_{m,m+1},\Tt_{m+1,m+2\down m},\ldots,\Tt_{n-1,n\down m})$)
witnessing that $r$ is $m$-good (note that $\pi_{\ell;mr,ns}$
is independent of $s$, and $s=r$ is possible).

Working in $V$,
say that $(n,s)$ is \emph{stable}
iff $j_{m,m+1}(s)=s$ for all $m\geq n$.
\end{dfn}

\begin{lem}We have:
\begin{enumerate}
 \item  For every $s\in[\OR]^{<\om}$ with $\lambda<\max(s)$, 
 there is $n<\om$ such that $(n,s)$ is stable.
 \item If $(k,s)$ is stable and $\ell\leq k\leq m\leq n$
 then $\pi_{\ell;ms,ns}\sub j_{n-1,n}\com \ldots\com j_{m,m+1}$.
 \end{enumerate}
 Therefore, working in $\vV_{n_0}$:
\begin{enumerate}[resume*]
\item for every $s\in[\OR]^{<\om}$ with $\lambda<\max(s)$,
 there is $n<\om$ such that $(n,s)\in\mathscr{D}$, 
 \item  $\leq$ is a directed partial order on $\mathscr{D}$, and
 \item if $(\ell,q)\leq(m,r)\leq(n,s)$ where $(\ell,q),(m,r),(n,s)\in\mathscr{D}$ and $k\leq\ell$, then $\pi_{k;\ell q,ns}=\pi_{k;mr,ns}\com\pi_{k;\ell q,mr}$.
 \end{enumerate}
\end{lem}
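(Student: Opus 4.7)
The plan is to prove the five clauses in turn, with the substantive work concentrated in clauses (1) and (3); the remaining three are bookkeeping about how iteration maps compose.

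For clause (1), I fix $s\in[\OR]^{<\om}$ and aim to produce $n<\om$ with $j_{m,m+1}(s)=s$ for every $m\geq n$. Each iteration map $j_{m,m+1}:\vV_m\to\mM_{\infty m}$ comes from the tree $\Tt_{m,m+1}^+$ based on $\vV_m|\delta_m^{\vV_m}$, so by Lemma \ref{lem:action_iter_maps_on_M_infty} and Corollary \ref{cor:action_iter_maps_on_M_infty_unsound} it fixes $\mathscr{I}^{\vV_m}=\mathscr{I}^M$ and realizes $\vV_m$-ordinals as limits of coherent systems of stable points in $\mathscr{F}_m^{\vV_m}$. It suffices to handle one $\alpha\in s$ at a time: for each $\alpha$ I exhibit $n_\alpha$ such that, for every $m\geq n_\alpha$, $\alpha$ already lies in the range of $\pi^{\vV_m}_{m,\infty}$ at an iterate which is stable in the sense of $\mathscr{F}_m^{\vV_m}$, at which point $j_{m,m+1}(\alpha)=\alpha$. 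Taking $n=\max_{\alpha\in s}n_\alpha$ gives stability of $s$.

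For clause (2), assume $(k,s)$ is stable and $\ell\leq k\leq m\leq n$. In $V$, the strategic branches $b_i$ of $\Tt_{i,i+1\down m}$ (for $m\leq i<n$) compose to give $j_{n-1,n}\com\cdots\com j_{m,m+1}$, and by stability these branches fix $s$, so $(m,s),(n,s)\in\mathscr{D}$ and the strategic branches themselves are valid witnesses of $m$-goodness of $s$. Since $\pi_{\ell;ms,ns}$ is defined independently of the choice of witness tuple, the map induced by the strategic choice equals $\pi_{\ell;ms,ns}$, and this is by construction just the restriction of $j_{n-1,n}\com\cdots\com j_{m,m+1}$ to the hull $H_{\ell;ms}$.

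Clause (3) is the internal version of (1): working in $\vV_{n_0}$ and passing to a $\Coll(\om,\max(s))$-extension, I need to exhibit branches making $s$ fixed. By clause (1) in $V$ together with the internal definability of the short-tree strategies (Lemma \ref{lem:sts}, combined with the definability of $\vV_m$ via Lemma \ref{lem:vV_def_over_vV[g]}), the correct cofinal non-dropping branches for $\Tt_{m,m+1\down m}$ are first order identifiable generically over $\vV_m$. By homogeneity of $\Coll(\om,\max(s))$ and absoluteness of the existence statement witnessing $m$-goodness, the branches from $V$ translate to branches in the collapse extension of $\vV_{n_0}$, giving $n$-goodness of $s$ at every $m\geq n$. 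Clause (4) is then immediate: given $(m_1,r_1),(m_2,r_2)\in\mathscr{D}$, apply clause (3) to $r_1\cup r_2$ to find $n\geq\max(m_1,m_2)$ with $(n,r_1\cup r_2)\in\mathscr{D}$, which dominates both pairs. Clause (5) is similarly immediate from clause (2) applied to a common stable extension of $(\ell,q),(m,r),(n,s)$: both sides of the asserted equation agree with the restriction of $j_{n-1,n}\com\cdots\com j_{\ell,\ell+1}$ to $H_{k;\ell q}$, and hence with each other.

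The hard part will be clause (1): although $j_{m,m+1}$ fixes indiscernibles, it can and does move arbitrary ordinals (for example $j_{m,m+1}(\delta_m^{\vV_m})=\kappa_m^{+\vV_m}$), so one must carefully track, for each $\alpha\in s$, the sequence of iterates into which $\alpha$ eventually settles as a stable element. The argument mirrors the stability computations of \cite{vm1} and \cite{vm2}, lifted through the layered $(n+1)$-pVl structure of \S\ref{sec:vV_n+1+to_vV_n+2}; given the definability and conservativity apparatus already in place, this lift is routine but unavoidable to spell out.
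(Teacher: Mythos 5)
The paper states this lemma without proof, so there is no text to compare against; I'm evaluating your sketch on its own merits.

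Your division of labour is right — (2), (4), (5) are bookkeeping once (1) and (3) are in hand, and you handle them correctly (in (4), use that $n$-goodness is monotone in $n$ to push $n$ above $\max(m_1,m_2)$; in (5), the independence of $\pi_{k;\ell q,ns}$ from the witness tuple is what lets you take a common strategic/stable witness tuple and read off the commutativity). The notation $\pi^{\vV_m}_{m,\infty}$ you invoke in (1) is not the paper's; you seem to mean something like the direct-limit or $*$-map from Lemma \ref{lem:action_iter_maps_on_M_infty}, but nothing of that form is needed.

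The substantive gap is that your treatment of clause (1) is a description of the target rather than a proof, and the route you gesture at (tracking each $\alpha$ into the stable region of $\mathscr{F}_m^{\vV_m}$) is more involved than necessary. Clause (1) has a short, self-contained proof from the facts already set up. For any ordinal $\alpha$, the maps $j_{m,\om}:\vV_m\to\vV_{m\om}$ satisfy $j_{m,\om}=j_{m+1,\om}\com j_{m,m+1}$ (the commutativity the paper records when introducing $\vV_{n\om}$). Since iteration maps are non-decreasing on ordinals, $j_{m,m+1}(\alpha)\geq\alpha$, and therefore
\[
j_{m,\om}(\alpha)\;=\;j_{m+1,\om}\bigl(j_{m,m+1}(\alpha)\bigr)\;\geq\;j_{m+1,\om}(\alpha),
\]
so $\left<j_{m,\om}(\alpha)\right>_{m\geq n_0}$ is a non-increasing sequence of ordinals and hence eventually constant. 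Once $j_{m,\om}(\alpha)=j_{m+1,\om}(\alpha)$, injectivity of $j_{m+1,\om}$ forces $j_{m,m+1}(\alpha)=\alpha$. Taking the maximum of the stabilization points over the finitely many $\alpha\in s$ gives the desired $n$. This removes the one place in your sketch that is genuinely incomplete.

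For clause (3), what you write is in the right direction but a bit misdirected: the definition of $n$-goodness only requires that \emph{some} cofinal branch $b$ with the stated wellfoundedness and fixed-point properties exists (after the collapse), not that the strategic branch be internally identifiable. Given (1), the strategic branch $b_m=\Sigma_{\vV_m}(\Tt_{m,m+1})$ witnesses the required properties in $V$, and since $\mM_{\infty m}=\vV_{m+1\down m}$ is a definable inner model of $\vV_{n_0}$, the branch $b_m$ (being recoverable from $\mM_{\infty m}$ and the indiscernibles once the tree is countable) is available in the collapse extension of $\vV_{n_0}$; from here the usual homogeneity/absoluteness considerations do close the gap, as you indicate, but it is the branch's recoverability — not the short-tree strategy's internal definability, which only covers $\delta_m$-short trees — that makes this work.
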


\begin{dfn}\label{dfn:covering_dl}
Work in $\vV_{n_0}$. Define
\[ \vV_{\ell\om}=\dirlim\left<H_{\ell;mr},H_{\ell;ns},\pi_{\ell;mr,ns}\right>_{(m,r)\leq(n,s)\in\mathscr{D}\text{ and }\ell\leq m}, \]
and let
\[ \pi_{\ell;ns,\infty}:H_{\ell;ns}\to \vV_{\ell\om} \]
be the direct limit map.

Given $\alpha\in\OR$, define $\alpha^*=\pi_{\ell;ns,\infty}(\alpha)$
 for any/all $\ell,n,s$ such that $\ell\leq n$, $(n,s)$ is good
 and $\alpha\in s^-$.

Work now in $V$.
Define $\chi_\ell:\vV_{\ell\om}\to \vV_{\ell\om}^+$
by setting
\[ \chi_\ell(\pi_{\ell;ns,\infty}(x))=i^{\Uu_n}_{0\infty}(x) \]
where $(n,s)$ is stable (and hence $(n,s)\in\mathscr{D}$).
\end{dfn}

\begin{rem}
 Note that $\vV_{\ell\om}$, $\pi_{\ell;ns,\infty}$,
 and $\alpha^*$ are independent of $n_0$ (determining
 the model $\vV_{n_0}$ in which we interpret the definitions).
\end{rem}

The usual considerations with Silver indiscernibles give:
\begin{lem}
 $\vV_{\ell\om}=\vV_{\ell\om}^+$
 and $\chi_\ell=\id$.
\end{lem}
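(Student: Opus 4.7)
The plan is to use the standard Silver-indiscernible analysis to identify the internal direct limit $\vV_{\ell\om}$ with the external stack iterate $\vV_{\ell\om}^+$, much as in the proof of Lemma \ref{lem:action_iter_maps_on_M_infty} and its analogues in \cite{vm2}.

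First I would recall that $\mathscr{I}=\mathscr{I}^M=\mathscr{I}^{\vV_n}$ for all $n$, and each iteration map $j_{n,n+1}$ fixes $\mathscr{I}$ pointwise; the same holds for every $\Sigma_{\vV_n}$-correct cofinal branch of $\Tt_{n,n+1}$. Thus, by the first item of the preceding lemma, for every $s\in[\OR]^{<\om}$ with $\lambda<\max(s)$ there is $n_s<\om$ such that $(n,s)$ is stable for all $n\geq n_s$. Now fix stable $(n,s)$ with $\ell\leq n$ and $x\in H_{\ell;ns}$. By the second item of the preceding lemma, applied with $k=n$, for every $m\geq n$ we have
\[ \pi_{\ell;ns,ms}(x) = (j_{m-1,m}\com\ldots\com j_{n,n+1})(x). \]
Taking the limit over $m$ in the external direct limit $\vV_{\ell\om}^+$ (which is by construction the union of the ranges of $i^{\Uu_n}_{0\infty}\rest(\vV_n\down\ell)$), the right-hand side converges to $i^{\Uu_n}_{0\infty}(x)$; hence the internal representative $\pi_{\ell;ns,\infty}(x)$ is naturally identified with $i^{\Uu_n}_{0\infty}(x)\in\vV_{\ell\om}^+$. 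This shows that $\chi_\ell$ is a well-defined, $\elem$-preserving map. Surjectivity is immediate: any $y\in\vV_{\ell\om}^+$ has the form $y=i^{\Uu_n}_{0\infty}(x)$ for some $n$ and some $x\in\vV_n\down\ell$, and by enlarging $s$ to contain a suitable tuple of parameters defining $x$ (and passing to some $n'\geq n_s$) we obtain a stable $(n',s)$ with $x\in H_{\ell;n's}$ and $y\in\rg(\chi_\ell)$.

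Consequently, $\chi_\ell$ is an isomorphism of the abstract direct limit $\vV_{\ell\om}$ onto the transitive class $\vV_{\ell\om}^+$; the transitive realization of $\vV_{\ell\om}$ in $V$ therefore literally equals $\vV_{\ell\om}^+$, and under this identification $\chi_\ell=\id$. The main obstacle is justifying the compatibility between the \emph{internally forced} branches used to witness $n$-goodness, chosen arbitrarily in $\vV_{n_0}^{\Coll(\om,\max(s))}$ subject only to fixing $s$, and the \emph{externally correct} branches $\Sigma_{\vV_n}(\Tt_{n,n+1})$ defining $j_{n,n+1}$. For stable $(n,s)$ this reduces to branch uniqueness: each $\Tt_{n,n+1}$ is $\delta_{n+1}$-maximal, tame and $\vec{\gamma}$-stable (Remark \ref{rem:every_delta_n+1-max_tame_stable}), so its $Q$-structure has no short extenders overlapping $\delta(\Tt_{n,n+1})$, and the $\Sigma_{\vV_n}$-branch is then absolutely unique. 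Hence any branch (forced or external) that fixes the stable parameter $s$ pointwise must agree on $s$ with the correct branch, which is precisely what validates the comparison of the two direct-limit systems and closes the argument.
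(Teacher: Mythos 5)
The overall architecture of your argument is right: use item (2) of the preceding lemma to identify the internal maps $\pi_{\ell;ns,\cdot}$ with the external maps $j_{m,m+1}$ for stable pairs, use cofinality of stable pairs in $\mathscr{D}$, and conclude that $\chi_\ell$ is a well-defined $\in$-isomorphism between two transitive classes, hence the identity. Since the paper offers no written proof (only the remark that ``the usual considerations with Silver indiscernibles give'' the lemma), this is a reasonable route, and the surjectivity paragraph is fine.

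Your final paragraph, however, contains a genuine error. You claim that compatibility between the forced branches witnessing $n$-goodness and the correct branch $\Sigma_{\vV_n}(\Tt_{n,n+1})$ ``reduces to branch uniqueness'' because $\Tt_{n,n+1}$ is $\delta_{n+1}$-maximal, tame and $\vec{\gamma}$-stable, so ``its $Q$-structure has no short extenders overlapping $\delta(\Tt_{n,n+1})$, and the $\Sigma_{\vV_n}$-branch is then absolutely unique.'' First, $\Tt_{n,n+1}$ is based on $\vV_n|\delta_n^{\vV_n}$, so it is $\delta_n$-maximal, not $\delta_{n+1}$-maximal. More importantly, a $\delta_n$-maximal tree by definition has $M(\Tt_{n,n+1})\sats$``$\delta(\Tt_{n,n+1})$ is Woodin'', so there is \emph{no} $Q$-structure at all, and tameness/$\vec\gamma$-stability (which concern $Q$-structures for $\delta$-short initial segments) say nothing about uniqueness of a cofinal branch at the $\delta$-maximal stage. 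In fact, $\delta_n$-maximal trees typically have many cofinal branches with wellfounded models; the correct branch is distinguished by identifying $M^{\Tt_{n,n+1}}_b$ with the direct limit $\mM_{\infty n}$, not by $Q$-structure considerations. So branch uniqueness is simply false here, and this cannot be the argument.

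What the ``Silver indiscernibles'' remark points to is a different and weaker compatibility claim: not that the forced branch $b$ equals the correct branch, but that $i^{\Tt_{m,m+1}}_b$ and $j_{m,m+1}$ agree on the hull $H_{\ell;ms}$. The hull is generated by $s^-$ and by ordinals below $\gamma_{ms}<\delta_m^{\vV_m}$, and the key point is that for $\alpha<\gamma_{ms}$ the value of the iteration map on $\alpha$ is pinned down by the direct-limit/covering-system data definable from $s^-$ together with the requirement that $\max(s)$ lies in the wellfounded part and $s$ is fixed pointwise --- this is the content of item (2) of the preceding lemma, which you correctly invoke at the start but then, confusingly, try to rederive from a false branch-uniqueness principle. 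You should either treat item (2) of the preceding lemma as given (in which case the last paragraph should be deleted), or, if you intend to justify it, replace the branch-uniqueness appeal with the genuine Silver-indiscernible argument identifying the action of any $n$-good branch on the bounded hull with the direct-limit map.
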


\begin{dfn}
 Define $\mM_{\infty\om}$ to be the structure $(L[\left<\vV_{\ell\om}\right>_{\ell<\om}],\left<\vV_{\ell\om}\right>_{\ell<\om})$.

 Define $\delta_{n\infty}=i^{\Uu_0}_{0\infty}(\delta_n)$,
 and $\lambda_\infty=i^{\Uu_0}_{0\infty}(\lambda)=\sup_{n<\om}\delta_{n\infty}$.
\end{dfn}

\begin{lem}\label{lem:M_infty,om_not_project_across_lambda_inf}
$V_{\delta_{n\infty}}^{\mM_{\infty\om}}=V_{\delta_{n\infty}}^{\vV_{n\om}}$
for each $n<\om$. Therefore $\mM_{\infty\om}\sats$``$\delta_{n\infty}$ is Woodin'', for each $n<\om$.
\end{lem}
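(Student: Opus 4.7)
Plan: The proof splits into showing $V_{\delta_{n\infty}}^{\mM_{\infty\om}} = V_{\delta_{n\infty}}^{\vV_{n\om}}$ and then deducing the Woodinness.

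For the $V_{\delta_{n\infty}}$-equality, the main structural claim is $\vV_{\ell\om}\down n = \vV_{n\om}$ for each $n\leq\ell<\om$. I would derive it by unpacking the stacks $\Uu_n,\Uu_\ell$: applying $\down$ iteratively to $\vV_\ell$ produces $\mM_{\infty,\ell-1}\down(\ell-2)\down\ldots\down n$, an iterate of $\vV_n$ appearing as an initial segment of $\Uu_n$. The coherence of iteration maps with the $\down$ operation (emphasized in the introduction and established for finite stages in Lemma~\ref{lem:Sigma_vV_is_e-vec_min_pullback} via the conservativity results of Lemmas~\ref{lem:conservativity_stable}--\ref{lem:e-vec_minimal_pullback}) then lifts to the $\om$-limit: the tail of $\Uu_\ell$ acts on $\vV_\ell\down\ldots\down n$ exactly as the corresponding tail of $\Uu_n$, yielding $\vV_{\ell\om}\down n = \vV_{n\om}$. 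The main obstacle lies here, in verifying that the $\down$ operation, the $\vec{e}$-minimal pullback, and the action of iteration maps all continue to commute cleanly through the $\om$-limit.

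Given this identity, part~\ref{item:P-con_in_pVl} of Definition~\ref{dfn:pVl} says $\vV$ and $\vV\down m$ share the same passive initial segment below $\gamma_m^\vV$; composing, $\vV_{\ell\om}$ and $\vV_{n\om}$ agree below $\gamma_n^{\vV_{(n+1)\om}}$ for all $\ell\geq n$. Since $\delta_{n\infty} = j_{n\om}(\delta_n^M) < j_{n\om}(\kappa_n^M) = \kappa_n^{\vV_{n\om}} < \kappa_n^{+\vV_{n\om}} = \gamma_n^{\vV_{(n+1)\om}}$, this covers $V_{\delta_{n\infty}}$, yielding $V_{\delta_{n\infty}}^{\vV_{\ell\om}} = V_{\delta_{n\infty}}^{\vV_{n\om}}$ for $\ell\geq n$. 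For $\ell<n$, $\vV_{\ell\om}$ is a definable inner model of $\vV_{n\om}$ (via iterated $\down$), so $\vV_{\ell\om}|\delta_{n\infty}$ is lightface definable within $\vV_{n\om}|\delta_{n\infty}$. Combining, the entire predicate $\langle\vV_{\ell\om}\rangle_{\ell<\om}$ restricted to rank $<\delta_{n\infty}$ is already definable inside $\vV_{n\om}|\delta_{n\infty}$. A standard $L$-style computation (using that $\delta_{n\infty}$ is inaccessible in $\vV_{n\om}$, hence in $\mM_{\infty\om}$ by the lower-rank agreement) then gives $V_{\delta_{n\infty}}^{\mM_{\infty\om}} = V_{\delta_{n\infty}}^{\vV_{n\om}}$.

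For the Woodinness: $\delta_{n\infty}$ is Woodin in $\vV_{n\om}$ since $\vV_{n\om}$ is a non-dropping iterate of $\vV_n$ via $\Uu_n$ and $\delta_n^M = \delta_n^{\vV_n}$ is Woodin in $\vV_n$. The Woodinness witnesses are extenders on the sequence of $\vV_{n\om}$ of index $<\delta_{n\infty}$, all lying in $V_{\delta_{n\infty}}^{\vV_{n\om}} = V_{\delta_{n\infty}}^{\mM_{\infty\om}} \subseteq \mM_{\infty\om}$. For any $A\subseteq V_{\delta_{n\infty}}$ with $A\in\mM_{\infty\om}$, the local definability of $\langle\vV_{\ell\om}\rangle$ below $\delta_{n\infty}$ inside $\vV_{n\om}|\delta_{n\infty}$ places $A$ in $\vV_{n\om}$; Woodinness of $\delta_{n\infty}$ in $\vV_{n\om}$ then supplies the required reflecting extender, which also lies in $\mM_{\infty\om}$. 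Hence $\mM_{\infty\om}\sats$ ``$\delta_{n\infty}$ is Woodin''.
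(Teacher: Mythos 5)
The crux of your argument is the step "a standard $L$-style computation \ldots{} then gives $V_{\delta_{n\infty}}^{\mM_{\infty\om}}=V_{\delta_{n\infty}}^{\vV_{n\om}}$," and this inference does not go through. The model $\mM_{\infty\om}=(L[\langle\vV_{\ell\om}\rangle_{\ell<\om}],\langle\vV_{\ell\om}\rangle_{\ell<\om})$ is constructed relative to the \emph{global} predicates, and a construction of the form $L[\vec{A}]$ can manufacture new sets of rank $<\delta$ by consulting the predicates above $\delta$, even when $\vec{A}\cap V_\delta$ is already present in the smaller model; knowing that $\langle\vV_{\ell\om}\rangle$ agrees below $\delta_{n\infty}$ with a sequence available in $\vV_{n\om}|\delta_{n\infty}$ does not by itself bound $\pow(\delta_{n\infty})\cap\mM_{\infty\om}$. (Compare $L[E]$ for $E$ coding $0^\sharp$: it has new reals although $E\cap V_\om$ is trivial.) The parenthetical appeal to ``$\delta_{n\infty}$ is inaccessible in $\mM_{\infty\om}$ by the lower-rank agreement'' is also circular, since the lower-rank agreement is precisely what is to be proved. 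The same unjustified step reappears in your Woodinness deduction when you assert that local definability of the predicates ``places $A$ in $\vV_{n\om}$.''

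The paper closes this gap by a direct pullback-and-propagate argument: fix $A\in\pow(\delta_{n\infty})\cap\mM_{\infty\om}$ and let $\xi$ be its rank in the $\mM_{\infty\om}$-constructibility order; choose $m$ so that $\xi$ is $(n+m+1)$-stable and relabel so $m=0$; set $\bar{A}_{n+1}=(k_{n+1,\om}^{-1})``A\in\vV_{n+1}$, where $k_{n+1,\om}\in\vV_{n+1}$ is the correct iteration map on $\vV_{n+1}|\delta_n^{\vV_{n+1}}$. One then verifies by a chain of equivalences --- using stability of $\xi$, the identity $\Ult(\vV_{n+2},e_1^{\vV_{n+2}})=\vV^{\vV_{(n+2)\down(n+1)}}$, and commutativity of the relevant iteration maps --- that $j_{n+1,n+2}(\bar{A}_{n+1})=\bar{A}_{n+2}$, hence $i^{\Uu_{n+1}}_{0\infty}(\bar{A}_{n+1})=A$, so $A\in\vV_{n+1,\om}$. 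Your structural observation that $\vV_{\ell\om}\down n=\vV_{n\om}$ for $\ell\geq n$ (and the resulting agreement of the models below $\gamma_n$) is correct and is what lets one pass from $A\in\vV_{n+1,\om}$ to the stated equality, but it cannot replace the constructibility-rank argument that shows the $L$-construction over the full sequence does not escape $\vV_{n+1,\om}$ below $\delta_{n\infty}$.
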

\begin{proof}
 Let $A\in\pow(\delta_{n\infty})\cap \mM_{\infty\om}$,
 and let $\xi$ be such that $A$ is the $\xi$th set in the $\mM_{\infty\om}$-constructibility order.
 Work in $\vV_{n+m+1}$
 where $m$ is large enough that $\xi$ is $(n+m+1)$-stable.
 By relabelling if necessary, we may assume $m=0$.
 Let
 \[k_{n+1,\om}:(\vV_{n+1}|\delta_n^{\vV_{n+1}})\to \vV_{n+1,\om}|\delta_n^{\vV_{n+1\om}} \]
 be the correct iteration map (recall $k_{n+1,\om}\in\vV_{n+1}$).
 Define
 \[ \bar{A}_{n+1}=(k_{n+1,\om}^{-1})``A.\]
 
 We claim that $i^{\Uu_{n+1}}_{0\infty}(\bar{A}_{n+1})=A$, which suffices.
 For this, it is enough to see that $j_{n+1,n+2}(\bar{A}_{n+1})=\bar{A}_{n+2}$. So let $\alpha<\delta_n^{\vV_{n+2}}=\delta_n^{\vV_{n+2\down n+1}}=\delta_n^{\mM_{\infty,n+1}}$.
 Then the following are equivalent, where
 where $e=F^{\vV_{n+2}|\gamma^{\vV_{n+2}}}$:
 \begin{enumerate}[label=(\roman*)]
  \item 
$\alpha\in\bar{A}_{n+2}$ 
\item $k_{n+2,\om}(\alpha)\in A$
\item $i_e^{\vV_{n+2}}(k_{n+2,\om}(\alpha))\in i_e^{\vV_{n+2}}(A)$ 
 \item $i_e^{\vV_{n+2}}(k_{n+2,\om})(i_e^{\vV_{n+2}}(\alpha))\in i_e^{\vV_{n+2}}(A)$
 \item\label{item:no_project_5} $(k_{n+2,\om})^{\vV^{\vV_{n+2\down n+1}}}(i_e^{\vV_{n+2}}(\alpha))\in i_e^{\vV_{n+2}}(A)$
 \item\label{item:no_project_6} $k_{n+1,\om}^{\vV_{n+2\down n+1}}(\alpha)\in j_{n+1,n+2}(A)$
 \item $\alpha\in j_{n+1,n+2}(\bar{A}_{n+1})$.
 \end{enumerate}
Here the equivalence between \ref{item:no_project_5}
and \ref{item:no_project_6}
is because
\[ (k_{n+2,\om})^{\vV^{\vV_{n+2\down n+1}}}(i_e^{\vV_{n+2}}(\alpha))=k_{n+1,\om}^{\vV_{n+2\down n+1}}(\alpha),\]
since the maps here are (restrictions of) correct iteration maps, which commute and appropriately agree,
and
\[ i_e^{\vV_{n+2}}(A)=j_{n+1,n+2}(A),\]
since:
\begin{enumerate}[label=--]
 \item  $i_e^{\vV_{n+2}}(A)$ is the $\xi$th set in the $(\mM_{\infty\om})^{\Ult(\vV_{n+2},e)}$-constructibility order
(as $\xi=i_e^{\vV_{n+2}}(\xi)$ by stability),
\item  $j_{n+1,n+2}(A)$
is the $\xi$th set in the $(\mM_{\infty\om})^{\vV_{n+2\down n+1}}$-constructibility order (as $\xi=j_{n+1,n+2}(\xi)$ by stability), and
\item  $(\mM_{\infty\om})^{\Ult(\vV_{n+2},e)}=(\mM_{\infty\om})^{\vV_{n+2\down n+1}}$ since $\Ult(\vV_{n+2},e)=\vV^{\vV_{n+2\down n+1}}$.\qedhere
\end{enumerate}
\end{proof}

\begin{dfn}
For $n<\om$, define
 $\mM_{\infty}^{\vV_{n\om}}$ over $\vV_{n\om}$ just as $\mM_{\infty,n+1}$ is defined over $\vV_n$ (using trees based
 on $\vV_{n\om}|\delta_{n+1}^{\vV_{n\om}}$, and the corresponding internal  covering system). Likewise define $\vV^{\vV_{n\om}}$,
 so $\vV^{\vV_{n\om}}$ has universe $\mM_{\infty,n+1}[*]$ for the appropriate $*$-map. Similarly,
 for $m,n<\om$, let $(\vV_{m})^{\vV_{n\om}}$
be defined over $\vV_{n\om}$ just as $\vV_{m}$ is defined over $\vV_n$.
Here we can have $m<n$, $m=n$ or $m>n$.
When $m<n$, we mean that $(\vV_m)^{\vV_{n\om}}$ is just $\vV_{n\om}\down m$, $(\vV_n)^{\vV_{n\om}}=\vV_{n\om}$, $(\vV_{n+1})^{\vV_{n\om}}=\vV^{\vV_{n\om}}$, $(\vV_{n+2})^{\vV_{n\om}}=\vV^{(\vV^{(\vV_{n\om})})}$, etc.
And likewise define $(\vV_{m\om})^{\vV_{n\om}}$;
here if $m<n$ then $(\vV_{m\om})^{\vV_{n\om}}=(\vV_{n\om})^{\vV_{n\om}}\down m$, etc.\end{dfn}

Note that because of the relationship between $\vV_{n\om}$ and $\vV_{n+1,\om}$, in particular that $\vV_{n+1,\om}$ is not a generic
extension of $\vV_{n\om}$,
there doesn't seem to be any reason to expect that $\vV_{m\om}^{\vV_{n\om}}=\vV_{m\om}^{\vV_{n',\om}}$ when $n'\neq n$. 

The usual arguments do not seem to suffice to show that, e.g.,
 $\mM_\infty^{\vV_{n\om}}$ is a correct iterate of $\vV_{n\om}$,
 because of a lack of soundness (that is,
 $\vV_{n\om}$ is not the hull of $\mathscr{I}^{\vV_{n\om}}\cup\delta_{n+1}^{\vV_{n\om}}$).

 \begin{dfn}\label{dfn:e-vec}
  Let $\vV$ be an $n$-pVl
  Let $k\leq\ell\leq n$.
  Then $\vec{e}^{\vV}_{k\ell}$
  is the extender of the iteration
  \[ (\left<M_i\right>_{k\leq i\leq\ell},\left<e_i\right>_{k\leq i<\ell}), \]
  where $M_k=\vV$, $M_{i+1}=\Ult(M_i,e_i)$ for $i\in[k,\ell)$,
  and $e_i=e_i^{M_i}$.
 \end{dfn}

 Like before we have:
 \begin{lem}\label{lem:vV_n^vV_k,om_as_Ult_of_vV_n,om}
  Let $k\leq n<\om$.
  Then $\Ult(\vV_{n\om},\vec{e}^{\vV_{n\om}}_{kn})=(\vV_n)^{\vV_{k\om}}$, $(\vV_n)^{\vV_{k\om}}$ is a $\Sigma_{\vV_{n\om}}$-iterate of $\vV_{n\om}$, and each of the (finitely many) extenders in the tuple $\vec{e}^{\vV_{n\om}}_{kn}$
  is the extender of a correct iteration map.
 \end{lem}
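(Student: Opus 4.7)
The plan is to reduce the lemma to two ingredients: (I) the identification $\vV_{n\om}\down k=\vV_{k\om}$ as $k$-pVls, and (II) the natural generalization of Lemma \ref{lem:e-vec_finite_stage} that allows an arbitrary starting level $k$ in place of $0$. Given (I) and (II), the ultrapower identity follows at once: by (II), $\Ult(\vV_{n\om},\vec{e}^{\vV_{n\om}}_{kn})\down n=\vV_n^{\vV_{n\om}\down k}$, but the left-hand side is itself an $n$-pVl and hence equals $\Ult(\vV_{n\om},\vec{e}^{\vV_{n\om}}_{kn})$, while the right-hand side equals $(\vV_n)^{\vV_{k\om}}$ by (I) and the definition of $(\vV_n)^{\vV_{k\om}}$.

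For (I), I would work inside any sufficiently late $\vV_{n_0}$. On one hand, $\vV_{k\om}$ is by Definition \ref{dfn:covering_dl} the direct limit of the hulls $H_{k;ms}\subseteq\vV_m\down k$ under the restrictions to the $\down k$-part of the correct iteration maps $j_{m,m+1}$. On the other hand, by the conservativity results (Lemmas \ref{lem:conservativity_stable} and \ref{lem:conservativity_unstable}), the external stack $\Uu_n$ defining $\vV_{n\om}$ descends, under $\down k$, to an iteration of $\vV_n\down k=\vV_k$ whose iteration map agrees on the relevant hulls with $\pi_{k;ns,\infty}$. Combined with the Silver-indiscernible argument already used to identify $\vV_{\ell\om}$ with $\vV_{\ell\om}^+$, this yields the identification of structures $\vV_{n\om}\down k=\vV_{k\om}$.

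For (II), set $M_k=\vV_{n\om}$ and $M_{i+1}=\Ult(M_i,e_i^{M_i})$ for $i\in[k,n)$. By induction on $i-k$, using the proof method of Lemma \ref{lem:e-vec_finite_stage} one step at a time (with Lemma \ref{lem:e-vec_commutativity} securing the compatibility of consecutive ultrapowers), one shows $M_i\down i=\vV_i^{\vV_{n\om}\down k}$ for $i\in[k,n]$; at $i=n$, since $M_n$ is itself an $n$-pVl we get $M_n=\vV_n^{\vV_{n\om}\down k}$. For the iterate-ness and correctness assertions, each $e_i^{M_i}$ is a long extender on the sequence of the $n$-pVl $M_i$, and by self-coherence of $\Sigma_{M_i}$ (inherited from $\Sigma_{\vV_{n\om}}$) it is the extender of a correct iteration map on the appropriate internal direct-limit structure (cf.~Lemma \ref{lem:M_infty_to_M_infty^M_infty_iter_map} and Corollary \ref{cor:action_iter_maps_on_M_infty_unsound}); MHC inheritance (Lemma \ref{lem:mhc_inheritance}) then ensures that the composed stack lies via the $\vec{e}$-minimal pullback strategy, which is $\Sigma_{\vV_{n\om}}$ itself.

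The main obstacle is (I). The stacks $\Uu_n$ and $\Uu_k$ begin at different base models and are not literally comparable, so verifying that their $\om$-length covering direct limits agree under $\down k$ requires careful tracking, via the conservativity apparatus, of the behaviour of the maps $j_{m,m+1}$ on indiscernibles and on the hulls that define the direct limit. Once (I) is pinned down, the rest consists of essentially routine inductions using the finite-stage machinery of \S\ref{sec:vV_n+1+to_vV_n+2}.
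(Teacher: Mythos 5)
Your proof is correct and fills in precisely the appeal the paper makes: the paper records no explicit argument for this lemma (it is introduced only with ``Like before we have:''), and your decomposition into (I) $\vV_{n\om}\down k=\vV_{k\om}$ together with (II) the $k$-shifted analogue of Lemma \ref{lem:e-vec_finite_stage} (secured via Lemma \ref{lem:e-vec_commutativity} and self-coherence of the strategy) is the natural way to make that appeal explicit. One remark: (I) is more direct than you suggest, since by conservativity (Lemmas \ref{lem:conservativity_stable}, \ref{lem:e-vec_minimal_pullback}) $\Uu_n\down k$ is literally a tail of $\Uu_k$ --- obtained by removing the first $n-k$ tree components, whose composite iterates $\vV_k$ out to $\vV_n\down k$ --- so the external direct limits coincide at once, and the indiscernible tracking you anticipate is already subsumed by the previously established identity $\vV_{\ell\om}=\vV_{\ell\om}^{+}$.
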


\begin{lem}\label{lem:Vv_m,om^V_n,om_is_correct_iterate}
For $n\leq m<\om$,
$(\vV_{m\om})^{\vV_{n\om}}$ is a correct iterate of $\vV_{m\om}$.
\end{lem}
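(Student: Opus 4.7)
The plan is to reduce the claim to the preceding lemma by composing two correct iterations: first, a short ultrapower exhibiting $(\vV_m)^{\vV_{n\om}}$ as a correct iterate of $\vV_{m\om}$, and then an $\om$-stack performed internally to $\vV_{n\om}$ which lifts to a correct $\Sigma_{\vV_{m\om}}$-stack externally.

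First I would fix notation: set $\Qq = (\vV_m)^{\vV_{n\om}}$ and note that by Lemma \ref{lem:vV_n^vV_k,om_as_Ult_of_vV_n,om} (applied with the roles of $k,n$ played by $n,m$), $\Qq$ is a correct $\Sigma_{\vV_{m\om}}$-iterate, namely $\Ult(\vV_{m\om},\vec{e}^{\vV_{m\om}}_{nm})$ after identifying the appropriate derivatives. In particular $\Qq$ carries the canonical (correct) iteration strategy $\Sigma_\Qq$, and every initial segment $(\vV_k)^{\vV_{n\om}}$ for $k\in[n,m]$ is similarly a correct $\Sigma_{\vV_{k\om}}$-iterate.

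Next, internally to $\vV_{n\om}$, the model $(\vV_{m\om})^{\vV_{n\om}}$ is built from $\Qq$ by the $\om$-stack that imitates the definition of $\vV_{m\om}$ from $\vV_m$: concatenate the internal trees corresponding to $\Tt_{m,m+1}, \Tt_{m+1,m+2\down m}, \ldots$ computed using the covering/direct-limit systems living inside $\vV_{n\om}$. I would prove by induction on $k\geq m$ that each segment of this stack is via $\Sigma_{\Qq}$, and that the internal iteration map $j^{\vV_{n\om}}_{k,k+1}$ from $(\vV_k)^{\vV_{n\om}}$ into $(\mM_{\infty,k+1})^{(\vV_k)^{\vV_{n\om}}}$ agrees with the external $\Sigma$-iteration map. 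The key input is (the analogue of) Lemma \ref{lem:M_infty_to_M_infty^M_infty_iter_map}, which identifies $e_k^{(\vV_{k+1})^{\vV_{n\om}}}$ with the extender derived from the correct $\Sigma_{\mM_{\infty,k+1}^{(\vV_k)^{\vV_{n\om}}}}$-iteration map. Combining this with the commutativity of restricted iteration maps between $\Sigma_{\wW_{n+1}}$-iterates (Corollary \ref{cor:action_iter_maps_on_M_infty_unsound}(\ref{item:iteration_action_on_M_infty_between_iterates_non_kappa-sound})) and with the fact that $\vec{e}^{\vV_{n\om}}_{nk}$ is a genuine iteration extender (Lemma \ref{lem:vV_n^vV_k,om_as_Ult_of_vV_n,om}) will give the required agreement at each successor stage.

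For the limit step, I would imitate the stability/$n$-goodness argument of Definition \ref{dfn:covering_dl}: any individual ordinal of the internal direct limit is eventually moved only by cofinally many of the correct external maps, so the internal direct limit taken in $\vV_{n\om}$ coincides with the external one, and the resulting direct limit embedding is a correct $\Sigma_{\Qq}$-iteration map. Composing with the embedding from step one then exhibits $(\vV_{m\om})^{\vV_{n\om}}$ as a correct $\Sigma_{\vV_{m\om}}$-iterate. The main obstacle I expect is the coherence between the internal strategy of $\vV_{n\om}$ and the external strategy $\Sigma_{\Qq}$: the internal strategy is computed via the (modified) P-constructions and covering systems living in $\vV_{n\om}$, while the external one is determined by $\vec{e}$-minimal pullback of $\Sigma_M$. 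Establishing this coherence is where the inductive hypotheses \ref{item:hyp_1}--\ref{item:hyp_8} together with Lemmas \ref{lem:mhc_inheritance}, \ref{lem:Sigma_vV_is_e-vec_min_pullback} and \ref{lem:wW_n+2_self-it_tame} will do the real work, since they allow the internal definitions of $\delta_{k+1}$-short tree strategy and the associated P-construction to be matched with genuine branches of $\Sigma_{\Qq}$.
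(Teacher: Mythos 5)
The overall reduction in your proposal — first use Lemma \ref{lem:vV_n^vV_k,om_as_Ult_of_vV_n,om} to get that $(\vV_m)^{\vV_{n\om}}$ is a correct $\Sigma_{\vV_{m\om}}$-iterate, then argue the internal $\om$-stack yields a correct iteration — matches the paper's structure. But you are missing the single ingredient the paper flags as crucial, and your substitute for it does not do the job.

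The paper explicitly remarks, immediately before this lemma, that ``the usual arguments do not seem to suffice to show that, e.g., $\mM_\infty^{\vV_{n\om}}$ is a correct iterate of $\vV_{n\om}$, because of a lack of soundness (that is, $\vV_{n\om}$ is not the hull of $\mathscr{I}^{\vV_{n\om}}\cup\delta_{n+1}^{\vV_{n\om}}$).'' The object $(\vV_{m\om})^{\vV_{n\om}}$ is built internally in $\vV_{n\om}$ as the direct limit of a \emph{covering system} — a directed system of hulls, not a genuine iteration stack — and when the base model is not suitably sound, the collapse of such hulls can produce something which is not itself a correct iterate of the base (compare how $(\mM^{\mathrm{ext}}_{\infty,n+1})_{\vV}$ in Lemma \ref{lem:action_iter_maps_on_M_infty} is a correct iterate only of the $\delta_{n+1}^{\vV}$-core $\bar\vV$, not of $\vV$ itself). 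Your limit-step argument appeals to the stability/$n$-goodness machinery of Definition \ref{dfn:covering_dl}, but that machinery only shows the internal direct limit is well-defined and its ordinals are eventually fixed; it does not by itself reconcile the covering-system limit with the iteration-map limit. The reconciliation requires a soundness fact, and the one that applies here is that $\vV_{n\om}$ and each model $\vV_{m\down k}^{\vV_{n\om}}$ is $\lambda^{\vV_{n\om}}$-sound. Because $\lambda^{\vV_{n\om}}$ is the supremum of all the relevant Woodins, this $\lambda$-soundness is available even though $\delta_{k+1}$-soundness fails for finite stages, and it is exactly enough to force the hull/covering direct limit to coincide with the direct limit under the genuine $\Sigma$-iteration maps. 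Once that is in place, conservativity of the strategies (and Lemma \ref{lem:vV_n^vV_k,om_as_Ult_of_vV_n,om}, as you correctly cite) give that the stack is via $\Sigma_{\vV_{m\om}}$, and the conclusion follows. Without this soundness observation, your successor-stage identification of extenders is fine but the limit stage would not close.

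Secondarily, you frame the successor stage as invoking Lemma \ref{lem:M_infty_to_M_infty^M_infty_iter_map} and Corollary \ref{cor:action_iter_maps_on_M_infty_unsound}; the paper does not need these here — once the covering system is recognized as a $\Sigma_{\vV_{m\om}}$-stack (by Lemma \ref{lem:vV_n^vV_k,om_as_Ult_of_vV_n,om} plus conservativity), the $\lambda_\infty$-soundness handles the rest in one stroke, with no stage-by-stage induction required.
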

\begin{proof}
This will follow  from Lemma \ref{lem:vV_n^vV_k,om_as_Ult_of_vV_n,om} and that the $\vV_{n\om}$ are $\lambda_\infty$-sound.

Working in $\vV_{n\om}$,
 $\left<\vV_{m\down k}^{\vV_{n\om}}\right>_{m\geq\max(n,k)}$
 is used to define $\vV_{k\om}^{\vV_{n\om}}$
 via the direct limit of the covering system defined as
above. But by Lemma \ref{lem:vV_n^vV_k,om_as_Ult_of_vV_n,om}
and by conservativity of the iteration strategies,
$\left<\vV_{m\down k}^{\vV_{n\om}}\right>_{m\geq\max(n,k)}$ is a stack induced by $\Sigma_{\vV_{k\om}}$ via normalization. But since $\vV_{n\om}$
and each $\vV_{m\down k}^{\vV_{n\om}}$
as above is $\lambda^{\vV_{n\om}}$-sound,
the direct limit $\vV_{k\om}^{\vV_{n\om}}$
of the covering system is just the same as the direct limit under the iteration maps. This completes the proof.
\end{proof}

\begin{dfn}
 Let $\ell_n:\vV_{n\om}\to(\vV_{n\om})^{\vV_{n\om}}$ be the correct iteration map.
\end{dfn}

Recall  the $*$-map $\alpha\mapsto\alpha^*$ introduced in Definition \ref{dfn:covering_dl}.

\begin{lem}\label{lem:alpha^*=lim_n_n(alpha)}
 For all $\alpha\in\OR$, there is $m<\om$ such that $\alpha^*=\ell_n(\alpha)$
 for all $n\geq m$.
\end{lem}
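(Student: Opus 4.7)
The plan is, given $\alpha$, to find $m$ and then verify for each $n\geq m$ the two equalities (a) $\alpha^*=i^{\Uu_n}_{0\infty}(\alpha)$ and (b) $\ell_n(\alpha)=i^{\Uu_n}_{0\infty}(\alpha)$, from which the lemma is immediate. For (a), apply the preceding (unlabelled) lemma on stability: choose $m<\om$ so that for every $n\geq m$ there exists $s\in[\OR]^{<\om}$ with $\alpha\in s^-$, $\lambda<\max s$, and $(n,s)$ stable (in $V$). For any such $n$ and $s$, take $\ell=n$ in the defining formula $\chi_\ell\circ\pi_{\ell;ns,\infty}=i^{\Uu_n}_{0\infty}$ of Definition \ref{dfn:covering_dl}, and combine with the preceding (unlabelled) lemma that $\chi_\ell=\id$ and $\vV_{\ell\om}=\vV_{\ell\om}^+$, to obtain
\[
\alpha^*\;=\;\pi_{n;ns,\infty}(\alpha)\;=\;i^{\Uu_n}_{0\infty}(\alpha).
\]

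For (b), I will establish the stronger statement that $i^{\Uu_n}_{0\infty}\rest\vV_{n\om}=\ell_n$. Recall from the remark following Definition \ref{dfn:covering_dl} that $\vV_{n\om}$ is a lightface definable class of $\vV_n$ (the direct limit of Definition \ref{dfn:covering_dl} taken inside $\vV_n$, which by $\chi_n=\id$ coincides with the external $\vV^+_{n\om}$). Since $i^{\Uu_n}_{0\infty}:\vV_n\to\vV_{n\om}$ is fully elementary, it sends this definable class to the class in $\vV_{n\om}$ defined by the same formula, which is precisely $(\vV_{n\om})^{\vV_{n\om}}$ in the notation of the section. Consequently, $i^{\Uu_n}_{0\infty}$ restricts to a fully elementary embedding $\vV_{n\om}\to(\vV_{n\om})^{\vV_{n\om}}$.

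It remains to identify this restriction with the correct iteration map $\ell_n$. Argue as in the proof of Lemma \ref{lem:Vv_m,om^V_n,om_is_correct_iterate}: internally in $\vV_{n\om}$, $(\vV_{n\om})^{\vV_{n\om}}$ is the direct limit of the covering system of Definition \ref{dfn:covering_dl} computed over $\vV_{n\om}$ at level $n$, and $\ell_n$ is the corresponding direct limit map; using $\lambda^{\vV_{n\om}}$-soundness of $\vV_{n\om}$ and conservativity of the strategies, this direct limit coincides with the normalized $\Sigma_{\vV_{n\om}}$-iteration that witnesses $(\vV_{n\om})^{\vV_{n\om}}$ is a correct iterate of $\vV_{n\om}$. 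The restriction $i^{\Uu_n}_{0\infty}\rest\vV_{n\om}$ is, by the definitional elementarity just used, obtained by applying the image of this covering system under $i^{\Uu_n}_{0\infty}$; it therefore agrees with $\ell_n$ on each direct-limit representative, hence everywhere. Combining (a) and (b) gives $\ell_n(\alpha)=\alpha^*$, as required.

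The main obstacle is the final identification $i^{\Uu_n}_{0\infty}\rest\vV_{n\om}=\ell_n$: one must verify that the external iteration of $\vV_n$, when restricted to its definable inner model $\vV_{n\om}$, agrees with the internal correct iteration map of $\vV_{n\om}$. This is an internal/external coherence statement for the covering systems, exactly in the spirit of Lemmas \ref{lem:M_infty,om_not_project_across_lambda_inf} and \ref{lem:Vv_m,om^V_n,om_is_correct_iterate}, and the argument there should transfer after matching the covering-system data on the two sides via the elementarity of $i^{\Uu_n}_{0\infty}$.
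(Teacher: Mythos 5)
Your split into (a) $\alpha^*=i^{\Uu_n}_{0\infty}(\alpha)$ and (b) $\ell_n(\alpha)=i^{\Uu_n}_{0\infty}(\alpha)$ is a reasonable plan, and (a) is correct: it is exactly the computation $\alpha^*=\pi_{n;ns,\infty}(\alpha)=i^{\Uu_n}_{0\infty}(\alpha)$ obtained from the $*$-map definition, $\chi_n=\id$, and the stability lemma, with $m$ supplied by the stability lemma. This piece matches the first part of what the paper does.

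However, (b) is where your proposal diverges from the paper's argument, and as written there is a gap. You try to prove the global statement $i^{\Uu_n}_{0\infty}\rest\vV_{n\om}=\ell_n$. Your elementarity argument does show that $i^{\Uu_n}_{0\infty}$ restricts to \emph{some} fully elementary embedding $\vV_{n\om}\to(\vV_{n\om})^{\vV_{n\om}}$ (the one induced on the definable classes), and Lemma \ref{lem:Vv_m,om^V_n,om_is_correct_iterate} does identify $(\vV_{n\om})^{\vV_{n\om}}$ as a correct $\Sigma_{\vV_{n\om}}$-iterate with $\ell_n$ the iteration map. But nothing you have said identifies these two elementary embeddings with one another. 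The sentence ``it therefore agrees with $\ell_n$ on each direct-limit representative, hence everywhere'' is an assertion, not a derivation: a priori, the map induced on direct limits by $i^{\Uu_n}_{0\infty}$ could be a different elementary embedding $\vV_{n\om}\to(\vV_{n\om})^{\vV_{n\om}}$ than the iteration map. Identifying them requires exactly the internal/external coherence fact you flag as ``the main obstacle,'' and the proposal never discharges it; invoking Lemma \ref{lem:Vv_m,om^V_n,om_is_correct_iterate} is not enough, because that lemma concerns only the internal covering system and says nothing about the restriction of the \emph{external} iteration map $i^{\Uu_n}_{0\infty}$.

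The paper avoids the global claim entirely. Instead of $\alpha\in s^-$, it takes $s\in[\mathscr{I}^M]^{<\om}$ with $\alpha\in\rg(\pi_{n;ns,\infty})$ and a preimage $\bar\alpha\in H_{n;ns}$, applies elementarity of $i^{\Uu_n}_{0\infty}$ to the concrete $\Sigma_1$ relation $\vV_n\sats$``$\alpha=\pi_{n;ns,\infty}(\bar\alpha)$'' (using $i^{\Uu_n}_{0\infty}(\bar\alpha)=\alpha$, $i^{\Uu_n}_{0\infty}(\alpha)=\alpha^*$, and $s^*=s$), and lands directly on the internal statement $\vV_{n\om}\sats$``$\alpha^*=\pi^{\vV_{n\om}}_{n;ns,\infty}(\alpha)$''. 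The remaining coherence needed is then only the pointwise identification of $\pi^{\vV_{n\om}}_{n;ns,\infty}(\alpha)$ with $\ell_n(\alpha)$, which follows from the agreement of the internal covering-system direct limit with the iteration direct limit established in Lemma \ref{lem:Vv_m,om^V_n,om_is_correct_iterate}. That is a genuinely lighter burden than $i^{\Uu_n}_{0\infty}\rest\vV_{n\om}=\ell_n$: it is a localized statement about one ordinal in one hull, reachable without first arguing that the restriction of the external iteration map to $\vV_{n\om}$ is itself a $\Sigma_{\vV_{n\om}}$-iteration map. To complete your route you would have to carry out precisely such an argument (essentially a conservativity fact for $\Sigma_{\vV_n}$ over the strategy of its inner model $\vV_{n\om}$), which is not among the lemmas already in hand.
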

\begin{proof}
 Fix $(n,s)$ such that $s\in[\mathscr{I}^M]^{<\om}$
 and $\alpha\in\rg(\pi_{n;ns,\infty})$ and $(n,\{\alpha\})$ is stable.
Let $\bar{\alpha}\in H_{n;ns}$ be such that $\pi_{n;ns,\infty}(\bar{\alpha})=\alpha$.
 Then $\vV_n\sats$ ``$\alpha=\pi_{n;ns,\infty}(\bar{\alpha})$'',
 so $\vV_{n\om}\sats$``$\alpha^*=\pi_{n;n s^*,\infty}(\alpha)$'',
 and $s^*=s\in[\mathscr{I}^{\vV_{n\om}}]^{<\om}$,
but then the usual considerations give that $\ell_{n}(\alpha)=\alpha^*$.
\end{proof}

\begin{lem}\label{lem:alpha^*_as_ev_it_map} We have:
\begin{enumerate}
 \item\label{item:j_n_M_infty,om-def} $\ell_n$ is a lightface class of $\mM_{\infty\om}$,  uniformly in $n$. \item\label{item:star_is_M_infty,om-def} $\alpha\mapsto\alpha^*$ is a lightface class of $\mM_{\infty\om}$.
 \end{enumerate}
\end{lem}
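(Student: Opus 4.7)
The plan is to deduce both parts from the two immediately preceding lemmas: Lemma \ref{lem:Vv_m,om^V_n,om_is_correct_iterate}, which identifies the internal model $(\vV_{n\om})^{\vV_{n\om}}$ with the correct $\Sigma_{\vV_{n\om}}$-iterate, and Lemma \ref{lem:alpha^*=lim_n_n(alpha)}, which characterizes $\alpha^*$ as the eventual value of $\ell_n(\alpha)$.

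For part \ref{item:j_n_M_infty,om-def}, I would first observe that $(\vV_{n\om})^{\vV_{n\om}}$ is, by its very definition, the direct limit (inside $\vV_{n\om}$) of the covering system consisting of the models $(\vV_{m\om})^{\vV_{n\om}}$ for $m\geq n$, where these are obtained by applying the fixed formulas $\vec{\varphi}$ to $\vV_{n\om}$ in the same manner in which $\vV_{m\om}$ is defined from $\vV_n$. This covering system, together with its direct limit maps, is uniformly lightface definable over $\vV_{n\om}$. By Lemma \ref{lem:Vv_m,om^V_n,om_is_correct_iterate}, this internal direct limit coincides with the external correct iterate, and the internal direct limit map coincides with $\ell_n$. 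Since $\vV_{n\om}$ is a predicate of $\mM_{\infty\om}$ uniformly in $n$ (it is the $n$th entry of the predicate $\vec{\vV}_\om$), the lightface definability of $\ell_n$ over $\vV_{n\om}$ transfers to a lightface definition over $\mM_{\infty\om}$, uniform in $n$.

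For part \ref{item:star_is_M_infty,om-def}, I would combine part \ref{item:j_n_M_infty,om-def} with Lemma \ref{lem:alpha^*=lim_n_n(alpha)}. By part \ref{item:j_n_M_infty,om-def}, the function $(n,\alpha)\mapsto\ell_n(\alpha)$ is a lightface class of $\mM_{\infty\om}$. By Lemma \ref{lem:alpha^*=lim_n_n(alpha)}, in $V$ the sequence $\langle\ell_n(\alpha)\rangle_{n<\om}$ is eventually constant with value $\alpha^*$. The assertion ``there exists $m<\om$ such that $\ell_n(\alpha)=\ell_m(\alpha)$ for all $n\geq m$'' is first order over $\mM_{\infty\om}$ with parameter $\alpha$, and holds in $V$ with a witness $m$ which, being a natural number, lies in $\mM_{\infty\om}$; hence it holds in $\mM_{\infty\om}$ with the same witness, and the stabilized value must be $\alpha^*$ (as $V$ and $\mM_{\infty\om}$ agree on which ordinal this is). So $\alpha\mapsto\alpha^*$ is lightface definable over $\mM_{\infty\om}$.

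The only nontrivial point will be confirming that the internal definition of $\ell_n$ over $\vV_{n\om}$ uses no parameters beyond what is already coded into the predicate of $\mM_{\infty\om}$; this is just the verification that the formulas $\vec{\varphi}$ (fixed throughout) suffice, and that the covering system defining $(\vV_{n\om})^{\vV_{n\om}}$ can be laid out lightface over $\vV_{n\om}$ in the same way the original system defining $\mM_{\infty,n+1}$ was laid out lightface over $\vV_n$. Granted this, both parts follow with essentially no further work.
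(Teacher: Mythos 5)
Your proposal is correct and follows essentially the same route as the paper's proof: part \ref{item:j_n_M_infty,om-def} is established by noting that $(\vV_{n\om})^{\vV_{n\om}}$ and the associated covering-system direct limit map are lightface definable over $\vV_{n\om}$ (and hence over $\mM_{\infty\om}$), and then Lemma \ref{lem:Vv_m,om^V_n,om_is_correct_iterate} identifies this internal map with the true iteration map $\ell_n$; part \ref{item:star_is_M_infty,om-def} then follows from Lemma \ref{lem:alpha^*=lim_n_n(alpha)}. Your part \ref{item:star_is_M_infty,om-def} argument is somewhat more spelled out than the paper's one-line citation, and is fine; the only minor imprecision is in the description of the covering system underlying $(\vV_{n\om})^{\vV_{n\om}}$ (the paper uses the system $\left<\vV_{m\down k}^{\vV_{n\om}}\right>_{m\geq\max(n,k)}$, not the sequence of models $(\vV_{m\om})^{\vV_{n\om}}$), but this does not affect the definability claim you actually need.
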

\begin{proof}Part \ref{item:j_n_M_infty,om-def} is by Lemmas \ref{lem:vV_n^vV_k,om_as_Ult_of_vV_n,om}
and \ref{lem:Vv_m,om^V_n,om_is_correct_iterate} and their proofs. (Note they show 
that $\mM_{\infty\om}$
can define the iteration map $\vV_k^{\vV_{n\om}}\to\mM_{\infty k}^{\vV_{n\om}}$, uniformly in $n<\om$ and $k\in[n,\om)$.)
 Part \ref{item:star_is_M_infty,om-def} follows from this and Lemma \ref{lem:alpha^*=lim_n_n(alpha)}.
\end{proof}

We now adapt \cite[4.25--4.28***]{vm2_v2}, leading to Lemma \ref{lem:HOD_P} below:

\begin{rem}
Work in $M[G]$ where $G$ is $(M,\Coll(\om,\lambda))$-generic.
Then $M$ is definable over $M[G]$ from the parameter $M|\lambda^{+M}$
(and note $\lambda^{+M}=\om_1^{M[G]}$);
in fact,
 $M$ is the direct condensation stack above $M|\lambda^{+M}$(see \cite[Definition 4.2]{V=HODX_pub}), as computed in $M[G]$.
\end{rem}

\begin{dfn}\label{dfn:pP}Fix the natural $\Coll(\om,{<\lambda})$-name $\dot{\mathbb{R}}^*\in M$ for the set of symmetric reals in the $M[\dot{G}]$ where $\dot{G}$ is the standard name for the $\Coll(\om,{<\lambda})$-generic.
(Here by \emph{symmetric reals},
we just mean the reals in $\bigcup_{\alpha<\lambda}M[G\rest\alpha]$.) Now work in $M[G]$ where $G$ is $(M,\Coll(\om,{<\lambda}))$-generic.
Let $\mathscr{P}$ denote the set of premice $P$
such that $\OR^P=\om_1=\lambda^{+M}$, $P$ has similar first-order properties as does
 $M|\lambda^{+M}$, the direct condensation stack $C$ above $P$
is proper class and has standard condensation properties,
there is a $(C,\Coll(\om,{<\lambda}))$-generic filter $H$
such that $C[H]$ is the entire universe, 
there is $n<\om$ and $x\in\dot{\mathbb{R}}^*_G$ such that
$\vV_{n+1}^C=\vV_{n+1}$, $\kappa_n^{+P}=\delta_n^{\vV_{n+1}^C}$ and $x$ codes $P|\kappa_n^P$.
\end{dfn}

\begin{lem}\label{lem:HOD_P}
Let $G$ be $(M,\Coll(\om,\lambda))$-generic. Then
 $\mM_{\infty\om}$ and $\HOD^{M[G]}_{\{\mathscr{P}\}}$ have the same universe.
\end{lem}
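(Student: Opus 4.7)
The plan is to prove both inclusions separately, following the pattern established in the analogous results of \cite[\S4***]{vm2} for $M_{\mathrm{sw}2}$, but with the extra complication that we now have an $\om$-sequence of direct limits to track rather than finitely many.

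First I would establish $\mM_{\infty\om}\sub\HOD^{M[G]}_{\{\mathscr{P}\}}$. By Lemma \ref{lem:alpha^*_as_ev_it_map} and the definition of $\mM_{\infty\om}$, it suffices to show that the sequence $\langle\vV_{n\om}\rangle_{n<\om}$ is ordinal-definable in $M[G]$ from $\mathscr{P}$. Given $P\in\mathscr{P}$ with associated $n$ and direct condensation stack $C$, we have $\vV_{n+1}^C=\vV_{n+1}$ (the genuine one), and $P|\kappa_n^P$ is coded by a symmetric real. Varying $P\in\mathscr{P}$, we recover $\vV_{n+1}$ for every $n<\om$ (and hence also each $\vV_n=\vV_{n+1}\down n$ and the internal covering system used in Definition \ref{dfn:covering_dl}). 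Since the formation of $\vV_{n\om}$ from this covering system, followed by the Jensen-style construction of $\mM_{\infty\om}=(L[\langle\vV_{n\om}\rangle_{n<\om}],\langle\vV_{n\om}\rangle_{n<\om})$, is a purely definable procedure, we conclude $\mM_{\infty\om}\sub\HOD^{M[G]}_{\{\mathscr{P}\}}$.

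For the reverse inclusion, I would mimic the strategy of \cite[4.25--4.28***]{vm2}. Let $A\sub\OR$ with $A\in\HOD^{M[G]}_{\{\mathscr{P}\}}$, say $A=\{\alpha : M[G]\sats\varphi(\alpha,\mathscr{P},\vec{s})\}$ for some ordinals $\vec{s}$. The key observation is that, by the homogeneity of $\Coll(\om,\lambda)$ and the fact that $\mathscr{P}$ is canonically named over $M$ (via $\dot{\mathbb{R}}^*$), there is a condition-free statement $\psi$ in $M$ expressing the forced value of $\varphi$. Now using the $*$-map of Definition \ref{dfn:covering_dl}, the sequence $\vec{s}^*$ lives in $\mM_{\infty\om}$, and for sufficiently stable $(n,s\cup\vec{s})$ the eventual iteration maps $\ell_n$ of Lemma \ref{lem:alpha^*_as_ev_it_map} fix $\vec{s}^*$. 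Applying $\ell_n$ to the definition of $A$ and pushing through the direct limit, we see that $A$ is uniformly definable over $\mM_{\infty\om}$ from $\vec{s}^*$, hence $A\in\mM_{\infty\om}$.

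The main obstacle will be the second inclusion, and specifically the step of absorbing ``$\mathscr{P}$'' as a parameter into the direct limit. The issue is that $\mathscr{P}$ itself lives in $M[G]$, not in $M$; one must verify that the definable action of $\mathscr{P}$ on the ordinals is captured by how the $\vV_{n\om}$'s are arranged together with the $*$-map. This is where the precise choice of $\mathscr{P}$ in Definition \ref{dfn:pP} (requiring $\vV_{n+1}^C=\vV_{n+1}$ and that $P|\kappa_n^P$ be coded by a symmetric real) becomes crucial: every $P\in\mathscr{P}$ ``lives below $\lambda$'' in a way that the $*$-map sees. Once this is verified, the homogeneity argument goes through as in \cite{vm2}, using that $\Coll(\om,\lambda)$ is weakly homogeneous and that forcing statements about $\dot{\mathbb{R}}^*$ and $\dot{\mathscr{P}}$ are invariant under the relevant automorphisms.
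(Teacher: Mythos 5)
Your overall structure matches the paper's: both inclusions, with the first done via defining $\mM_{\infty\om}$ from $\mathscr{P}$ and the second via a homogeneity/forcing argument. However, there are two substantive problems in the details.

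For the inclusion $\mM_{\infty\om}\sub\HOD^{M[G]}_{\{\mathscr{P}\}}$, you claim that ``varying $P\in\mathscr{P}$, we recover $\vV_{n+1}$ for every $n<\om$.'' This step has a circularity: given a particular $P\in\mathscr{P}$, to determine its ``associated $n$'' you would need to already know $\vV_{n+1}$ (so as to check $\vV_{n+1}^{\ds(P)}=\vV_{n+1}$), which is precisely what you are trying to define. For $P\in\mathscr{P}$ with large associated index, the initial segments $\vV_m^{\ds(P)}$ ($m\leq n_P$) need not equal $\vV_m$, and $\mathscr{P}$ alone gives no way to single out which of these candidate models is the genuine one. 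The paper avoids this entirely: it computes $\mM_{\infty\om}^P$ directly from each single $P$ via the covering-system construction, and observes that tail agreement of $\langle\vV_m^{\ds(P)}\rangle_m$ with $\langle\vV_m\rangle_m$ already forces $\mM_{\infty\om}^P=\mM_{\infty\om}$, since the direct limit only depends on a tail. You do not need, and likely cannot get, the individual $\vV_{n+1}$'s.

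For the reverse inclusion, your account of the transfer is too vague, and the named mechanism is off. You invoke the maps $\ell_n$, but $\ell_n$ maps $\vV_{n\om}\to(\vV_{n\om})^{\vV_{n\om}}$; it plays no role in passing from a statement about $M[G]$ to a statement about $\vV_{n\om}$. The correct transfer uses the iteration map $j_n:\vV_n\to\vV_{n\om}$. The paper's argument proceeds through an explicit chain of equivalences: $\beta\in A$ iff for all (equivalently, some) $n$, $\vV_n\sats$ ``$\Coll(\om,\lambda)\forces\varphi(\dot{\mathscr{P}},\eta,\beta)$'' (using that $\vV_n$ is a ground with $\mathscr{P}^{\vV_n}=\mathscr{P}$, together with homogeneity), iff for all sufficiently large $n$, $\vV_{n\om}\sats$ ``$\Coll(\om,\lambda^*)\forces\varphi(\dot{\mathscr{P}},\eta^*,\beta^*)$'' (using elementarity of $j_n$ and the stability of $(\lambda,\eta,\beta)$ at sufficiently large $n$). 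Only at the very last step does Lemma \ref{lem:alpha^*_as_ev_it_map} enter: it shows $\mM_{\infty\om}$ can define $\vec{\vV}_\om$ and the $*$-map, so the final statement is expressible in $\mM_{\infty\om}$. Your proposal blurs these roles, replacing the elementary transfer by ``applying $\ell_n$ to the definition of $A$,'' which is not a meaningful step as written.
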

\begin{proof}
We write $\HOD$ for $\HOD^{M[G]}_{\{\mathscr{P}\}}$.

We have $\mM_{\infty\om}\sub\HOD$ because $M|\lambda^{+M}\in\mathscr{P}$
and from each $P\in\mathscr{P}$
we compute $\mM_{\infty\om}$ in the same manner as from $M|\lambda^{+M}$:
first compute the direct condensation stack above $P$, then compute
the sequence $\left<\vV_n^P\right>_{n<\om}$,
and compute $\mM_{\infty\om}^P$ from this;
the tail agreement with $\left<\vV_n\right>_{n<\om}$
ensures that $\mM_{\infty\om}^P=\mM_{\infty\om}$.

Now let $A\sub\alpha\in\OR$ with $A\in\HOD$,
and let $\varphi$ be a formula and $\eta\in\OR$
be such that
\[ \beta\in A\iff M[G]\sats\varphi(\mathscr{P},\eta,\beta) \]
for all $\beta\in\OR$.
Given any model $N$ of the form of a $\vV_n$, we define $\mathscr{P}^N$
over $N[H]$, where $H$ is $N$-generic
for $\Coll(\om,\lambda^N)$,
just like we defined $\mathscr{P}$ over $M[G]$
(now using the sequence
$\left<\vV_m^N\right>_{m<\om}$).
Note that if $N|(\lambda^N)^{+N}\in\mathscr{P}$ then $\mathscr{P}^N=\mathscr{P}$. Working in $N$, we write $\dot{\mathscr{P}}$ to refer to $\mathscr{P}^N$.

Now the following are equivalent:
\begin{enumerate}[label=(\roman*)]
 \item 
$\beta\in A$
\item $\exists n<\om\ \vV_n\sats\text{``}\Coll(\om,\lambda)\forces\varphi(\dot{\mathscr{P}},\eta,\beta)\text{''}$
\item $\all n<\om\ \vV_n\sats\text{``}\Coll(\om,\lambda)\forces\varphi(\dot{\mathscr{P}},\eta,\beta)\text{''}$
\item\label{item:def_A_via_*} $\exists m<\om\ \all n\geq m\ \vV_{n\om}\sats\text{``}\Coll(\om,\lambda^*)\forces
\varphi(\dot{\mathscr{P}},\eta^*,\beta^*)\text{''}$.
\end{enumerate}
But working in $\mM_{\infty\om}$, we can define $\vec{\vV}_{\om}$
and $*:\OR\to\OR$ (using Lemma \ref{lem:alpha^*_as_ev_it_map}),
so we can define $A$ using \ref{item:def_A_via_*}.
\end{proof}

\begin{dfn}
Define
the structure $\mM_{\infty\om}|\lambda_\infty$
with ordinal height $\lambda_\infty$
and extender sequence $\es$
such that $\es\rest\Delta_n^{\vV_{n\om}}=\es^{\vV_{n\om}}|\Delta_n^{\vV_{n\om}}$.
\end{dfn}

By Lemma \ref{lem:M_infty,om_not_project_across_lambda_inf}, this structure has universe $V_{\lambda_\infty}^{\mM_{\infty\om}}$.

Like at the finite stages, we have the following crucial fact:

\begin{lem}\label{lem:hull_conservativity_vV_om}
We have:
\begin{enumerate}
\item\label{item:i_m_hull_conservativity} Let $i_m:\wW_m\to\vV_{m\om}$ be the $\Sigma_{\wW_m}$-iteration map.
 Let $H=\Hull^{\mM_{\infty\om}}(\mathscr{I}^M)$.
 Then $H\cap\vV_{m\om}=\rg(i_m)$. 
 
\item\label{item:j_m_hull_conservativity} Let $j_m:\vV_m\to\vV_{m\om}$
 be the $\Sigma_{\vV_m}$-iteration map.
 Let $H=\Hull^{\mM_{\infty\om}}(\rg(j_m))$.
 Then $H\cap\vV_{m\om}=\rg(j_m)$.
 \end{enumerate}
\end{lem}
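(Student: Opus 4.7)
The plan is to first establish part \ref{item:i_m_hull_conservativity}, and then reduce part \ref{item:j_m_hull_conservativity} to it via a direct adaptation of the same argument. The key input will be Lemma \ref{lem:alpha^*_as_ev_it_map} (the lightface definability of $\vec{\vV}_\om$ and of $*$ over $\mM_{\infty\om}$) together with Lemma \ref{lem:alpha^*=lim_n_n(alpha)} (the eventual-iteration-map characterization of $*$), and the fact, propagated from Remark \ref{rem:wW_n+2} through all finite stages and then to $\om$, that $\wW_m=\cHull^{\vV_m}(\mathscr I^M)$.

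Since every iteration map in sight (via either $\Sigma_{\wW_m}$ or $\Sigma_{\vV_m}$, and their internal companions) is the identity on the Silver indiscernibles of $M$, we immediately obtain
\[ \rg(i_m)=j_m``\cHull^{\vV_m}(\mathscr I^M)=\Hull^{\vV_{m\om}}(\mathscr I^M), \]
and similarly $\rg(j_m)=\Hull^{\vV_{m\om}}(\rg(j_m))$. The inclusion ``$\supseteq$'' in each part is then trivial, since $\vV_{m\om}$ is lightface definable over $\mM_{\infty\om}$ as the $m$th term of the class $\vec\vV_\om$.

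The content of part \ref{item:i_m_hull_conservativity} is the reverse inclusion: given $x\in\vV_{m\om}$ with $x=\iota y\,\mM_{\infty\om}\sats\varphi(y,\vec s)$ for some $\vec s\in[\mathscr I^M]^{<\om}$, show $x\in\Hull^{\vV_{m\om}}(\mathscr I^M)$. By Lemma \ref{lem:alpha^*_as_ev_it_map} we may take $\varphi$ to be a formula of pure set theory over $\mM_{\infty\om}$ (with the predicates for $\vec\vV_\om$ and $*$ expanded out by their lightface definitions). I would then form the transitive collapse $\bar{\mM}$ of $H=\Hull^{\mM_{\infty\om}}(\mathscr I^M)$ with uncollapse $\sigma:\bar{\mM}\to\mM_{\infty\om}$, set $\bar{\vV}_{m\om}=\sigma^{-1}(\vV_{m\om})$, and show that $\bar{\vV}_{m\om}$ is (isomorphic to) $\wW_m$ with $\sigma\rest\bar{\vV}_{m\om}=i_m$. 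For this, one must verify that $\bar{\vV}_{m\om}$ is an $m$-pVl, is $\mathscr I^M$-sound (being a hull generated by $\mathscr I^M$), and that $\sigma\rest\bar{\vV}_{m\om}$ arises as an iteration map. The $m$-pVl structure transfers by the elementarity of $\sigma$ combined with the internal characterization of $\vec\vV_\om$; the iteration-map property should follow by comparing $\sigma\rest\bar{\vV}_{m\om}$ with $i_m$ via the usual argument that two elementary embeddings of $\wW_m$ into $\vV_{m\om}$ that agree on $\mathscr I^M$ must agree, using the $\mathscr I^M$-soundness of $\wW_m$.

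For part \ref{item:j_m_hull_conservativity}, the same argument applies with $\rg(j_m)$ replacing $\mathscr I^M$ throughout: the transitive collapse of $\Hull^{\mM_{\infty\om}}(\rg(j_m))$ restricted to the preimage of $\vV_{m\om}$ should be (isomorphic to) $\vV_m$ itself with uncollapse $j_m$, since $\rg(j_m)\supseteq\mathscr I^M$ and $\vV_m$ is $\rg(j_m)$-sound trivially. I expect the main obstacle to be verifying that the uncollapse really is the iteration map, not some random elementary embedding: concretely, one must rule out that an element of $\vV_{m\om}\setminus\rg(j_m)$ lies in $\Hull^{\mM_{\infty\om}}(\rg(j_m))$. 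This undefinability of the generators of the stack $\Uu_m$ from $\rg(j_m)$ in $\mM_{\infty\om}$ is the analogue of the $\Delta$-conservativity (Lemma \ref{lem:conservativity_stable}) and its consequences at the finite stages, now passed to the length-$\om$ limit. The correct technical statement should reduce, via the internal characterization of the direct-limit covering system (Definition \ref{dfn:covering_dl}) and the tail-stability of indiscernibles, to the corresponding fact for each $\Tt_{n,n+1}^+$ component of $\Uu_m$, for which the finite-stage analysis already delivers exactly what is needed.
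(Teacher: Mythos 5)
Your reframing correctly isolates the hard direction and even correctly observes that the whole claim is equivalent to showing the transitive collapse of $H\cap\vV_{m\om}$ is $\mathscr{I}^M$-sound — but that is where the argument becomes circular. You assert that $\bar\vV_{m\om}=\sigma^{-1}(\vV_{m\om})$ is ``$\mathscr I^M$-sound (being a hull generated by $\mathscr I^M$),'' but this does not follow from $\bar\mM$ being $\mathscr I^M$-sound: an element $x\in\bar\vV_{m\om}$ is definable over $\bar\mM$ from indiscernibles, but nothing yet says it is definable over the smaller structure $\bar\vV_{m\om}$ itself from indiscernibles. Establishing exactly that fact is the entire content of the lemma. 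Your subsequent ``compare $\sigma\rest\bar\vV_{m\om}$ with $i_m$'' step requires $\bar\vV_{m\om}=\wW_m$ as an input, which presupposes what is to be shown. The same circularity affects your treatment of part (2): you correctly name the obstacle (ruling out elements of $\vV_{m\om}\setminus\rg(j_m)$ in the $\mM_{\infty\om}$-hull), but the appeal to ``the corresponding fact for each $\Tt_{n,n+1}^+$ component'' and tail-stability is not specific enough to constitute a proof.

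The paper's argument is more concrete and does not collapse $H$ at all. Given $\alpha=t^{\mM_{\infty\om}}(s)$ with $s\in[\mathscr I^M]^{<\om}$, one picks $n\geq m$ large enough that $\alpha$ is $n$-stable, writes $\alpha=j_n(\bar\alpha)$ with $\bar\alpha=t'^{\vV_n}(s,\gamma)$ for some $\gamma<\delta_n^{\vV_n}$, and then exhibits a \emph{set-sized} isomorphism $\pi\in\vV_n$ between $\Hull^{\vV_n|\xi}(s\cup\delta_n^{\vV_n})$ and $\Hull^{\vV_{n\om}|\xi}(s\cup j_n``\delta_n^{\vV_n})$ (where $\xi\in\mathscr I^M$ sits above $\max(s)$), noting $\pi\subseteq j_n$, $\pi(\bar\alpha)=\alpha$, and crucially $\pi\in\Hull^{\vV_n}(\mathscr I^M)$. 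Applying $j_n$ to $\pi$ and using that both $j_n(\pi)$ and $\alpha^*$ land in $\Hull^{\vV_{n\om}}(\mathscr I^M)$, one recovers $\alpha\in\Hull^{\vV_{n\om}}(\mathscr I^M)$; then conservativity of $\Sigma_{\vV_n}$ over $\Sigma_{\vV_{n\down m}}$ transfers this back to $\Hull^{\vV_{m\om}}(\mathscr I^M)$. This explicit use of the set-sized map $\pi$, the $*$-map, and conservativity between the different $\vV_{k\om}$ is the content your sketch defers to ``the usual argument'' without supplying.
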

\begin{proof}
Part \ref{item:i_m_hull_conservativity}:
The proof is basically like usual.
That is, let $s\in[\mathscr{I}^M]^{<\om}$ and $t$ be a term and $\alpha\in\OR$  be such that
\[ \mM_{\infty\om}\sats\alpha=t(s).\]
Let $j_n:\vV_n\to\vV_{n\om}$ and $j_{nk}:\vV_n\to\vV_{k\down n}$ be the $\Sigma_{\vV_n}$-iteration maps.
Let $n\in[m,\om)$ be large enough that $j_{nk}(\alpha)=\alpha$ for all $k\in[n,\om)$ and $\alpha\in\rg(j_n)$. We have that $\vV_{n}=\Hull^{\vV_{n}}(\mathscr{I}^M\cup\delta_n^{\vV_{n}})$,
so let $s'\in[\mathscr{I}^M]^{<\om}$ and $\gamma<\delta_n^{\vV_{n\om}}$ and $t'$ be a term $\bar{\alpha}\in\OR$ be such that
\[\vV_{n}\sats\bar{\alpha}=t'(s',\gamma) \]
and
\[ j_{n}(\bar{\alpha})=\alpha.\]
We may assume that $s=s'$.
Let $s^+=s\cup\{\xi\}$ where $\xi\in\mathscr{I}^{M}$
with $\max(s)<\xi$.
Let $H_{s^+}=\Hull^{\vV_{n}|\xi}(s\cup\delta_n^{\vV_{n}})$. Let $X=j_n``\delta_n^{\vV_n}$, so $X\in\vV_n$.
Let $H'_{s^+}=\Hull^{\vV_{n\om}|\xi}(s\cup X)$
and let
\[ \pi:H_{s^+}\to H'_{s^+} \]
be the isomorphism. So $\pi\in \vV_n$ and $\pi\sub j_n$
and $\bar{\alpha}\in H_{s^+}$, so $\pi(\bar{\alpha})=\alpha$. Note that $\pi\in\Hull^{\vV_n}(\mathscr{I}^M)$. But $j_n(\pi)(\alpha)=\alpha^*$,
and $\alpha^*\in\Hull^{\vV_{n\om}}(\mathscr{I}^M)$
as usual, but $j_n(\pi)\in\Hull^{\vV_{n\om}}(\mathscr{I}^M)$, and therefore $\alpha$ is also in that hull.

But finally by the conservativity of $\Sigma_{\vV_n}$
over $\Sigma_{\vV_{n\down m}}$,
we have $\Hull^{\vV_{n\om}}(\mathscr{I}^M)\cap\OR=\Hull^{\vV_{m\om}}(\mathscr{I}^M)\cap\OR$,
and therefore $\alpha\in\Hull^{\vV_{m\om}}(\mathscr{I}^M)$, as desired.

Part \ref{item:j_m_hull_conservativity} is similar.
\end{proof}

\begin{lem}\label{lem:M_infty,om_functions_repd_mod_measure_one}
 Let $E\in\es^{\mM_{\infty\om}|\lambda_\infty}$ be
 a short $\mM_{\infty\om}$-total extender and $a\in[\nu(E)]^{<\om}$. Let $\kappa=\crit(E)$
 and  $f\in \mM_{\infty\om}$ be such that $f:[\kappa]^{|a|}\to\OR$.
 Let $n<\om$ be such that $\lh(E)<\delta_n^{\mM_{\infty\om}}$.
 Then there is $g\in\vV_{0\om}$ such that $\{u\bigm|f(u)=g(u)\}\in E_a$.
\end{lem}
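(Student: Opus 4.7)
The lemma is an $\om$-stage analogue of the $\Delta$-conservativity property (V12) of Definition \ref{dfn:pVl}. The plan is a two-step reduction: first locate $E$ and a representative of $f$ at some finite stage $\vV_{m\om}$, then descend through the pVl structure of $\vV_{m\om}$ down to $\vV_{0\om}$.

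First I would note that, since $\lh(E)<\delta_{n\infty}$, Lemma \ref{lem:M_infty,om_not_project_across_lambda_inf} gives $V_{\delta_{n\infty}}^{\mM_{\infty\om}}=V_{\delta_{n\infty}}^{\vV_{n\om}}$, so $E\in\es^{\vV_{n\om}}$ and $E_a\in\vV_{n\om}$. In particular, $E_a$-measure-one sets (and $E_a$-integration of bounded-range functions) can be computed interchangeably in $\mM_{\infty\om}$ and in $\vV_{n\om}$.

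Next I would represent $f$ at a finite stage. Since $\mM_{\infty\om}=(L[\vec{\vV}_\om],\vec{\vV}_\om)$, the set $f$ is $\Sigma_1$-definable over $\mM_{\infty\om}$ from an ordinal tuple $\vec{\alpha}$ and a finite initial segment $\vec{\vV}_\om\rest m_0$ of the predicate. Using Lemma \ref{lem:alpha^*_as_ev_it_map}, which identifies the $*$-map with the eventual iteration maps $\ell_k$ on any given ordinal, I would enlarge $m\geq\max(n,m_0)$ so that each component of $\vec{\alpha}$ lies in the $*$-image of $\vV_{m\om}$, i.e. $\alpha_i=\ell_m(\bar{\alpha}_i)$. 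Together with the fact that $\vV_{k\om}$ for $k<m$ sits inside $\vV_{m\om}$ via the down-reductions $\vV_{m\om}\down(m-1)\down\ldots\down k$, related to $\vV_{k\om}$ by a correct iteration map (Lemma \ref{lem:Vv_m,om^V_n,om_is_correct_iterate}), this yields a function $f_m\in\vV_{m\om}$ whose values agree with $f$ on an $E_a$-measure one set modulo composition with a correct iteration map that fixes everything the values of $f$ involve.

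Finally I would descend from $\vV_{m\om}$ to $\vV_{0\om}$. Since $\vV_{m\om}$ is an $m$-pVl (iterate of the $m$-pVl $\vV_m$), it inherits property (V12) ($\Delta$-conservativity) and, crucially, the P-construction equivalence: above $\gamma_i^{\vV_{m\om}}$, the extender sequences of $\vV_{m\om}$ and $\vV_{m\om}\down i$ are inter-translatable. For the short extender $E$ these equivalences let me successively replace $f_m$ by a function in $\vV_{m\om}\down(m-1),\ldots,\vV_{m\om}\down 0$, with $\Delta$-conservativity handling the case $\lh(E)$ below the relevant $\delta_i$ and the P-construction translation handling the case $\lh(E)$ above the relevant $\gamma_i$. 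At the bottom, $\vV_{m\om}\down 0$ is a $\Sigma_M$-iterate of $M$ (Corollary \ref{cor:Sigma_wW_n+1_down_0=Sigma_M} adapted to the $\om$-stack); by stability of the eventual direct limit it coincides with $\vV_{0\om}$, yielding the desired $g$.

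The hardest step will be Paragraph 3: I must check carefully that the finite-stage conservativity and P-construction translations combine to handle an $E$ whose length lies in the intermediate zone between the special Woodins $\delta_i^{\vV_{m\om}}$, where neither tool applies alone. I expect the correct way is to iterate ultrapower-factoring: a short $E$ in this zone is absorbed into the correct iteration map out of the relevant $\vV_{m\om}\down i$, which by \ref{lem:action_iter_maps_on_M_infty} acts coherently on the shared initial segment, reducing the calculation to an absorbed version of (V12) applied once at each level. The identification $\vV_{m\om}\down 0=\vV_{0\om}$ also deserves care and likely needs the argument from the proof of Lemma \ref{lem:alpha^*=lim_n_n(alpha)} together with Lemma \ref{lem:hull_conservativity_vV_om}.
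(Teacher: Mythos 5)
Your Paragraph 2 is where the proposal breaks down, and the failure is essential, not cosmetic. You want to parametrize $f$ by finitely many ordinals $\vec\alpha$ plus a finite initial segment of the predicate, and then ``enlarge $m$ so that $\alpha_i=\ell_m(\bar\alpha_i)$.'' But $\ell_m:\vV_{m\om}\to(\vV_{m\om})^{\vV_{m\om}}$ is an iteration map, hence not surjective on the ordinals, and there is no reason why a given $\alpha_i$ should enter the range of $\ell_m$ as $m$ grows. The related fact (Lemma \ref{lem:alpha^*=lim_n_n(alpha)}) says only that the $*$-map on a \emph{fixed input} $\alpha$ is eventually computed by $\ell_n$; it does not say that arbitrary ordinals are eventually in $\rg(\ell_n)$. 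So the step that ``yields a function $f_m\in\vV_{m\om}$ whose values agree with $f$ on an $E_a$-measure one set'' is unjustified: $f\in\mM_{\infty\om}=L[\vec{\vV}_\om]$ is constructed from the whole sequence $\vec{\vV}_\om$, not from any finite stage, and a priori its parameters live nowhere convenient. Producing such an $f_m$ is essentially the whole content of the lemma, so you cannot assume it.

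The paper instead runs a minimal-counterexample-plus-hull argument, and that is precisely what fills the gap you are missing. One takes the $<_{\mM_{\infty\om}}$-least purported counterexample $(f,E,a,n)$; minimality forces it into $H=\Hull^{\mM_{\infty\om}}(\rg(j_n))$, and Lemma \ref{lem:hull_conservativity_vV_om}\ref{item:j_m_hull_conservativity} shows that the transitive collapse $C$ of $H$ has $\vV_n\sub C$ with the uncollapse $\pi$ extending $j_n$. Thus the counterexample reflects to $(\bar f,\bar E,\bar a)$ in $C$, with $\vV_n$ playing the role of $\vV_{n\om}$. Since $\bar E_{\bar a}=F_{k(\bar a)}$ where $F$ is the extender derived from $j_n$, one manufactures a representative $g'\in\vV_n$ for $\bar f$ mod $\bar E_{\bar a}$ by an explicit projection calculation with $F$, contradicting that $(\bar f,\bar E,\bar a)$ was a counterexample. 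So the ``reflection to a finite stage'' is achieved not by chasing definability parameters but by using the hull and minimality, and this is the idea your sketch is missing. Your Paragraphs 1 and 3 are in the right spirit (the paper's one-line reduction from $\vV_{0\om}$ to $\vV_{n\om}$ corresponds to your descent step), but without the hull argument the proposal does not get off the ground.
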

\begin{proof}
By related properties regarding
the relationship between $\vV_{0n}$
and $\vV_n$, it suffices to see there is such a $g\in\vV_{n\om}$.
 Suppose not and let $(f,E,a,n)$ be the $<_{\mM_{\infty\om}}$-least counterexample.
 Let $H=\Hull^{\mM_{\infty\om}}(\rg(j_n))$
 and $C$ its transitive collapse.
 Then by Lemma \ref{lem:hull_conservativity_vV_om}(\ref{item:j_m_hull_conservativity}), $\vV_n\sub C$ and letting $\pi:C\to \mM_{\infty\om}$ be the uncollapse map,
 we have $j_n\sub\pi$.
 By minimality,
 we have $(f,E,a)\in\rg(\pi)$, so let $\pi(\bar{f},\bar{E},\bar{a})=(f,E,a)$.
 By elementarity, $(\bar{f},\bar{E},\bar{a})$
 is also a counterexample with respect to $C,\vV_n$.
 Let $U=\Ult(\vV_n,\bar{E})$.
 Let $k:U\to\vV_{n\om}$ be the correct iteration map. We have $\pi(\bar{f})=f$.
 Let $g\in\vV_n$ and $c\in[\delta_n^{\vV_{n\om}}]^{<\om}$  be such
 that $g:[\delta_n^{\vV_n}]^{|c|}\to\OR$
 and $k(\bar{a})\sub c$ and \[\pi(g)(c)=j_n(g)(c)=f(k(\bar{a}))=\pi(\bar{f})(k(\bar{a})).\]Let $\bar{f}':[\delta_n^{\vV_n}]^{|c|}\to\OR$ be  defined $\bar{f}'(u)=\bar{f}(u^{c,k(\bar{a})})$.
 Letting $F$ be the $(\delta_n^{\vV_n},\delta_n^{\vV_{n\om}})$-extender
 derived from $j_n$ (equivalently, from $\pi$),
 then \[A=\{u\in[\delta_n^{\vV_n}]^{|c|}\bigm|\bar{f}'(u)=g(u)\}\in F_c.\] Also $\bar{E}_{\bar{a}}=F_{k(\bar{a})}$, and $\bar{E}_{\bar{a}}$
 is the projection of $F_{c}$ on $k(\bar{a})$.
 And $\bar{f}'(u)=\bar{f}'(u_1)$ whenever
 $u\in A$ and $u^{c,k(\bar{a})}=(u_1)^{c,k(\bar{a})}$. Let $B=\{v^{c,k(\bar{a})}\bigm|v\in A\}$ and let
  $g':B\to\OR$ be defined $g'(u)=g(v)$ for any $v\in A$ such that $u=v^{c,k(\bar{a})}$.
  Then  $g,A,B,g'\in\vV_n$,
and $\pi(g')(k(\bar{a}))=\pi(g)(c)=\pi(\bar{f})(k(\bar{a}))$, so the existence of $g'$
contradicts that $(\bar{f},\bar{E},\bar{a})$
is a counterexample.
\end{proof}

\begin{dfn}
 A \emph{\tu{(}putative\tu{)} $\mM_{\infty\om}|\lambda_\infty$-based short-normal tree on $\mM_{\infty\om}$}
 is a $0$-maximal iteration tree on $\mM_{\infty\om}$
 which uses only short extenders from
 $\mM_{\infty\om}|\lambda_\infty$ and its images. 
 
 The \emph{short-normal $\mM_{\infty\om}|\lambda_\infty$-based $\vec{e}$-minimal pullback strategy}
 $\Psi_{\mM_{\infty\om},\lambda_\infty}^{\mathrm{sn}}$
is the short-normal  iteration strategy for $\mM_{\infty\om}$ which is just the union of the short-normal $\vec{e}$-minimal pullback strategies
for $\vV_{n\om}$, for trees based on $\vV_{n\om}|\Delta^{\vV_{n\om}}$, over all $n<\om$.
\end{dfn}
\begin{lem}\label{lem:Psi_M_infty,om,sn_conservative_ext}
 $\Psi_{\mM_{\infty\om},\lambda_\infty}^{\mathrm{sn}}$
 is an iteration strategy,
 and it is conservative over $\Sigma_{\vV_{0\om}}$.
\end{lem}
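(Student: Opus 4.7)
The argument splits into three verifications: coherence of the constituent strategies across different finite levels, coverage of all $\mM_{\infty\om}|\lambda_\infty$-based short-normal trees, and conservativity over $\Sigma_{\vV_{0\om}}$.

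For coherence, fix $m\leq n<\om$ and a short-normal tree $\Tt$ on $\mM_{\infty\om}$ based on $\vV_{m\om}|\Delta^{\vV_{m\om}}$. By Lemma \ref{lem:M_infty,om_not_project_across_lambda_inf} together with the identity $\vV_{n\om}\down m=\vV_{m\om}$ (a consequence of conservativity of the $\Sigma_{\vV_n}$-iterations together with the basic identity $\vV_{i+1}\down i=\mM_{\infty i}$), $\vV_{m\om}|\Delta^{\vV_{m\om}}$ sits as an initial segment of $\vV_{n\om}|\Delta^{\vV_{n\om}}$, so $\Tt$ lies in the domains of both the level-$m$ and level-$n$ $\vec{e}$-minimal pullback strategies. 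The level-$n$ pullback uses the chain $e_0,\ldots,e_{n-1}$ of long extenders of $\vV_{n\om}$, while the level-$m$ pullback uses $e_0,\ldots,e_{m-1}$ of $\vV_{m\om}$; both ultimately pull $\Tt$ back to the same tree $\Tt\down 0$ on $\vV_{0\om}$. Agreement of the branch choices therefore follows from Lemma \ref{lem:e-vec_minimal_pullback} together with Lemmas \ref{lem:conservativity_stable}, \ref{lem:conservativity_unstable} and \ref{lem:e-vec_commutativity}, each applied after propagation from $\vV_n$ to $\vV_{n\om}$ via the iteration map.

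For coverage, every $\mM_{\infty\om}|\lambda_\infty$-based short-normal tree $\Tt$ is based on some $\vV_{n\om}|\Delta^{\vV_{n\om}}$: its standard decomposition, in the spirit of Definition \ref{dfn:e-vec_min_inflation_stable} but at the $\om$-th level, splits $\Tt$ into finitely many segments based on successive Woodins $\delta_{k\infty}$, with the final segment placed over some $\delta_{n\infty}=\Delta^{\vV_{n\om}}$. So $\Tt$ lies in the domain of the level-$n$ $\vec{e}$-minimal pullback strategy, and the union $\Psi$ is consequently total on its claimed domain.

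Conservativity over $\Sigma_{\vV_{0\om}}$ is then essentially built into the definition: each constituent is, by Lemma \ref{lem:e-vec_minimal_pullback}, conservative over $\Sigma_{\vV_{n\om}\down 0}$, and an inductive application of the commutation of iteration with $\down$ shows $\vV_{n\om}\down 0=\vV_{0\om}$ (using $\vV_{i+1}\down i=\mM_{\infty i}$ together with the fact that the $\down 0$ of each successive tree $\Tt_{k,k+1\down n}$ in the stack $\Uu_n$ matches the $\down 0$ of the corresponding tree in $\Uu_0$). The main new ingredient, compared with the finite-stage setup, is Lemma \ref{lem:M_infty,om_functions_repd_mod_measure_one}, which substitutes for property \ref{item:Delta-conservativity} of Definition \ref{dfn:pVl} at the $\om$-th stage: functions from below critical points of short extenders in $\mM_{\infty\om}|\lambda_\infty$ are absorbed modulo measure one into $\vV_{0\om}$, so the ultrapower and extender-absorption calculations transfer cleanly between the two models. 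The principal obstacle will be verifying that the ultrapower and inflation calculations underlying Lemmas \ref{lem:conservativity_stable} and \ref{lem:conservativity_unstable}, which at finite stages appeal directly to \ref{item:Delta-conservativity}, survive the passage through the $\om$ Woodins of $\mM_{\infty\om}$ with Lemma \ref{lem:M_infty,om_functions_repd_mod_measure_one} in its place.
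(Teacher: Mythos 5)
Your proof is correct and takes essentially the same approach as the paper, which merely cites Lemma \ref{lem:M_infty,om_functions_repd_mod_measure_one} and ``properties of the $\Sigma_{\wW_n}$''; you are simply unpacking what that citation means. Your identification of Lemma \ref{lem:M_infty,om_functions_repd_mod_measure_one} as the surrogate for $\Delta$-conservativity (condition \ref{item:Delta-conservativity} of Definition \ref{dfn:pVl}) at level $\om$, and of the coherence of the level-$n$ pullback strategies via $\vV_{n\om}\down 0=\vV_{0\om}$, is exactly the intended content. One small imprecision: your ``coverage'' claim that \emph{every} $\mM_{\infty\om}|\lambda_\infty$-based short-normal tree is based on some $\vV_{n\om}|\Delta^{\vV_{n\om}}$ is too strong --- a tree whose exit-extender indices are cofinal in $\lambda_\infty$ would not be, and its standard decomposition would have $\om$ many components rather than finitely many. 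But this does not affect the lemma, since $\Psi_{\mM_{\infty\om},\lambda_\infty}^{\mathrm{sn}}$ is by definition only the union over trees based at some finite level; nothing more is asserted or needed.
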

\begin{proof}
 This follows directly from Lemma \ref{lem:M_infty,om_functions_repd_mod_measure_one} and properties of the $\Sigma_{\wW_n}$.
\end{proof}
Recall that $\ell_n:\vV_{n\om}\to(\vV_{n\om})^{\vV_{n\om}}$
is the iteration map. 
Using the previous lemma, we can define:
\begin{dfn}
Let $\ell_n^+:\mM_{\infty\om}\to (\mM_{\infty\om})^{\vV_{n\om}}$
be the elementary extension of $\ell_n$;
equivalently, if $\Tt$ is the $\Sigma_{\vV_{n\om}}$-tree
leading to $(\vV_{n\om})^{\vV_{n\om}}$ (so with iteration map $\ell_n$) then note there is a tree $\Tt'$ on $\mM_{\infty\om}$, via $\Psi_{\mM_{\infty\om},\lambda_\infty}^{\sn}$, such that $\Tt'\down 0=\Tt$,
and we define $\ell_n^+=i^{\Tt'}_{0\infty}$.
Also equivalently, $\ell_n^+$
is the iteration map given by the iteration of $\mM_{\infty\om}$ with models $\left<M_i\right>_{i\leq\om}$
and extenders $\left<E_i\right>_{i<\om}$
where $M_0=\mM_{\infty\om}$ and $E_i=e_{n_i}^{M_i}$.
(Then note that $M_0\down n=\vV_{n\om}$,
$M_1\down(n+1)=\vV_{n+1}^{\vV_{n\om}}$, etc.)
\end{dfn}

\begin{lem}\label{lem:k_n^+_def} We have:
\begin{enumerate}
\item\label{item:k_n^+_unif_def_over_M_infty,om}  $\left<\ell_n^+\right>_{n<\om}$ is a lightface class
 of $\mM_{\infty\om}$,
\item\label{item:k_n^+_rest_lambda_infty} $\left<\ell_n^+\rest(\mM_{\infty\om}|\lambda_\infty)\right>_{n<\om}$
is definable in the codes over $\mM_{\infty\om}|\lambda_\infty$,
\item\label{item:k_n^+_rest_P|lambda_infty^+} if $S\sub \mM_{\infty\om}$ is a fine structural strategy premouse such that $\mM_{\infty\om}|\lambda_\infty\pins S$ and $S\sats$``$(\lambda_\infty)^{+}$ exists'',
then $S|(\lambda_\infty^{+S})$ is closed under each $\ell_n^+$, and $\left<\ell_n^+\rest (S|(\lambda_\infty)^{+S})\right>_{n<\om}$ is a lightface class of $S|(\lambda_\infty)^{+S}$.
\end{enumerate}
\end{lem}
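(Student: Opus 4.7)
The plan rests on two ingredients: (i) the map $\ell_n$ is already lightface over $\mM_{\infty\om}$ by Lemma \ref{lem:alpha^*_as_ev_it_map}, uniformly in $n$, and (ii) $\ell_n^+$ is uniquely determined by $\ell_n$ together with the long extenders $e_m^{\mM_{\infty\om}}$ sitting on $\es^{\mM_{\infty\om}|\lambda_\infty}$, via the $\vec{e}$-minimal pullback of the tree producing $\ell_n$. The latter is canonical by Lemma \ref{lem:Psi_M_infty,om,sn_conservative_ext}.

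I would begin with part 2, which is the heart of the matter. The tree $\Tt^+$ on $\mM_{\infty\om}$ whose iteration map is $\ell_n^+$ uses, at step $i$, the $(n{+}i)$-long active extender of its $i$-th model; each of these extenders is coded amenably onto an initial segment lying below the successive images of $\lambda_\infty$, so all the ultrapower computations implicit in $\ell_n^+\rest\mM_{\infty\om}|\lambda_\infty$ take place inside $\mM_{\infty\om}|\lambda_\infty$ and its iterates. The seed datum needed is $\ell_n\rest V_{\delta_{n\infty}}^{\mM_{\infty\om}}$, which by Lemma \ref{lem:M_infty,om_not_project_across_lambda_inf} equals $\ell_n\rest V_{\delta_{n\infty}}^{\vV_{n\om}}$. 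This restriction is first-order definable over $\mM_{\infty\om}|\lambda_\infty$, because it is the direct limit map of the internal covering system of $\vV_{n\om}$ for trees based on $\vV_{n\om}|\delta_{n+1}^{\vV_{n\om}}$, and both this system and its direct limit are locally definable over $\mM_{\infty\om}|\lambda_\infty$ by iterating the finite-stage analysis (Claim \ref{clm:vV_n+2_self-it_tame} and its analogues, propagated through the $\om$-stack). Patching these ingredients gives that $\ell_n^+\rest\mM_{\infty\om}|\lambda_\infty$ is definable in the codes over $\mM_{\infty\om}|\lambda_\infty$, uniformly in $n$.

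Part 1 follows by extending via elementarity. Every $x\in\mM_{\infty\om}$ is first-order definable over some $\mM_{\infty\om}|\xi$ from a finite ordinal parameter (as $\mM_{\infty\om}=L[\vec{\vV}_\om]$), so $\ell_n^+(x)$ is determined by $\ell_n^+$ applied to the parameter and to $\xi$. It therefore suffices to compute $\ell_n^+$ on ordinals: for $\alpha$ large enough that $(n,\{\alpha\})$ is stable (Definition \ref{dfn:covering_dl}) we have $\ell_n^+(\alpha)=\alpha^*$ by Lemma \ref{lem:alpha^*=lim_n_n(alpha)}, while for the remaining ordinals either $\alpha<\lambda_\infty$ and part 2 applies directly, or one reduces to a bounded situation by a routine elementarity argument using indiscernibles. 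Part 3 proceeds by relativizing the part 2 argument to $S$: the computations over $\mM_{\infty\om}|\lambda_\infty$ rely only on extender-sequence data and internal covering systems below $\lambda_\infty$, all available to $S|\lambda_\infty^{+S}$ since $\mM_{\infty\om}|\lambda_\infty\pins S$; and every element of $S|\lambda_\infty^{+S}$ is coded in $S$ by a subset of $\lambda_\infty$, on which $\ell_n^+$ acts via the definition from part 2. Closure of $S|\lambda_\infty^{+S}$ under $\ell_n^+$ then reduces to checking $\ell_n^+(\lambda_\infty^{+S})\le\lambda_\infty^{+S}$, which follows from the indiscernibility of $\lambda_\infty$ (an analogue of Lemma \ref{lem:hull_conservativity_vV_om}) together with the cardinality of codes in $S$.

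The main obstacle, I expect, is making rigorous the claim that $\ell_n\rest V_{\delta_{n\infty}}^{\vV_{n\om}}$ is definable over $\mM_{\infty\om}|\lambda_\infty$ without appealing to indiscernibles (which only handle sufficiently large ordinals). This amounts to internalizing the covering system of $\vV_{n\om}$ entirely within set-sized pieces of $\mM_{\infty\om}|\lambda_\infty$ and showing its direct limit map computes $\ell_n$ correctly on the relevant initial segment. The required local definability is the direct $\om$-indexed iteration of the finite-stage self-iterability results (Lemmas \ref{lem:vV_n+2_self-it_tame}--\ref{lem:N_can_iterate_vV_n+2^N_tame}), and I would devote the bulk of the write-up to carrying this out carefully.
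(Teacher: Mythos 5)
Your opening paragraph identifies exactly the right ingredients, which are also the two things the paper cites: Lemma~\ref{lem:Psi_M_infty,om,sn_conservative_ext} (conservativity of the $\vec{e}$-minimal pullback strategy) and the proof of Lemma~\ref{lem:Vv_m,om^V_n,om_is_correct_iterate} (the internal covering system computes the iteration maps). Parts~2 and~3 of your write-up also match the paper's approach: the extenders of the tree producing $\ell_n^+$ sit below $\lambda_\infty$, so the restricted action is locally definable, and part~3 is a relativization using $\mM_{\infty\om}|\lambda_\infty\pins S$ and $V_{\lambda_\infty}^S=V_{\lambda_\infty}^{\mM_{\infty\om}}$.

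However, your detailed argument for part~1 introduces an unnecessary detour that contains a genuine gap. You claim that ``for $\alpha$ large enough that $(n,\{\alpha\})$ is stable we have $\ell_n^+(\alpha)=\alpha^*$'' — but for a \emph{fixed} $n$, the class of $\alpha$ with $(n,\{\alpha\})$ stable is only cofinal (it contains $\mathscr{I}^M$), not eventually all ordinals; Lemma~\ref{lem:alpha^*=lim_n_n(alpha)} stabilizes in $n$ for each fixed $\alpha$, not in $\alpha$ for fixed $n$. Knowing $\ell_n^+$ on a cofinal but proper subclass of $\OR$, plus on $\lambda_\infty$, does not by itself determine $\ell_n^+$ everywhere, and ``reduces to a bounded situation by a routine elementarity argument using indiscernibles'' is too thin a bridge to cover the remaining ordinals (one would also need to internalize the stability notion, which is defined in $V$). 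The cleaner route — which your own ingredient (ii) already sets up — avoids the detour entirely: by Definition of $\ell_n^+$ it is the iteration map of a length-$(\om{+}1)$ tree on $\mM_{\infty\om}$ whose $i$th extender is the canonical $(n{+}i)$-long extender $e_{n+i}^{M_i}$ of the $i$th model; these extenders are uniformly lightface, hence so is the tree, hence so is the direct limit map, and Lemma~\ref{lem:Psi_M_infty,om,sn_conservative_ext} is precisely what guarantees this iteration is a well-defined, well-founded, conservative object so the resulting class map is the right one. You should replace the second paragraph's ordinal-by-ordinal computation with this direct observation.
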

\begin{proof}
Parts \ref{item:k_n^+_unif_def_over_M_infty,om}
and \ref{item:k_n^+_rest_lambda_infty}: By Lemma \ref{lem:Psi_M_infty,om,sn_conservative_ext} and the proof of Lemma \ref{lem:Vv_m,om^V_n,om_is_correct_iterate}.

Part \ref{item:k_n^+_rest_P|lambda_infty^+}:
This follows from part \ref{item:k_n^+_rest_lambda_infty},
using that $\mM_{\infty\om}|\lambda_\infty\pins S$ and $V_{\lambda_\infty}^S=V_{\lambda_\infty}^{\mM_{\infty\om}}$.
\end{proof}

\subsection{Construction of $\vV_\om$ and equivalence with $\mM_{\infty\om}$}\label{sec:con_vV_om}

The fine structure of $\mM_{\infty\om}$ above
$\lambda^\infty$ is, so far, unclear. We now rearrange $\mM_{\infty\om}$
as a fine structural strategy mouse $\vV_\om$ extending $\mM_{\infty\om}|\lambda^\infty$.
Analogously to the successor case,
$\es^{\vV_\om}\rest(\lambda^\infty,\infty)$ will  just be the restriction of $\es^M$, and $\vV_\om$ will be a set-ground of $M$. The forcing for this will be a certain instance of Vopenka.

However, (noting that $\lambda<\lambda_\infty<\lambda^{+M}=(\lambda_\infty)^{+M}=(\lambda_\infty)^{+\mM_{\infty\om}}$)
the interval $(\lambda_\infty,\lambda_\infty^{+\mM_{\infty\om}})$ is more subtle, and here we will need to use various local versions of Vopenka and the P-construction
producing $\vV_\om$ there.

The argument we use is motivated by that of \cite[\S7]{odle_v2} (also see \cite[\S9]{odle_v2}),
to which it is also very similar.
The author's original setup involved a variant $\mathscr{P}'$ of $\mathscr{P}$ from
\ref{dfn:pP}, in which there is no requirement
 that some $x\in\dot{\mathbb{R}}^*_G$ codes $P|\kappa_n^P$ (but of course, it is simply coded by some $x\in\mathbb{R}^{M[G]}$) (thus $\dot{\mathbb{R}}^*$ was irrelevant). One can still prove
the version of Theorem \ref{lem:HOD_P} given by replacing $\mathscr{P}$ by $\mathscr{P}'$. However, it is not clear that the Vopenka forcing associated to $\HOD_{\mathscr{P}'}$
is a subset of $\lambda^{+M}$; it is (or can be taken as) a subset of $\lambda^{++M}$. This made the kind of analysis to follow (where the Vopenka forcing associated to $\mathscr{P}$ is indeed a subset of $\lambda^{+M}$) more complicated. Before the author got to writing the original version up in detail, Ralf Schindler
suggested employing another forcing,
which has size $\lambda^{+M}$, which significantly simplified the analysis. Later, the author noticed that by using $\mathscr{P}$ instead of $\mathscr{P}'$,
the resulting Vopenka has size $\lambda^{+M}$,
which then results in a similarly simplified analysis. We will use the latter Vopenka forcing (associated to $\mathscr{P}$).

For this section, fix $G$ which is $(M,\Coll(\om,\lambda))$-generic.
 Recall that $\mathscr{P}\in M[G]$ was
introduced in Definition \ref{dfn:pP}.
\begin{dfn}
Work in $M[G]$.
For $P\in\mathscr{P}$ let $\ds(P)$ denote the direct condensation stack above $P$ (cf.~\cite[Definition 4.2]{V=HODX_pub} and Definition \ref{dfn:pP}).

Let $\varphi$ be a formula and $\lambda^{\mM_{\infty\om}}<\alpha<\beta<\lambda^{+M})$. Then $A_{\beta\alpha\varphi}$ denotes the set of all $P\in\mathscr{P}$ such that $\ds(P)|\beta\sats\varphi(\alpha)$.

We say that $A\sub\mathscr{P}$ is \emph{nice}
if $A=A_{\beta\alpha\varphi}$ for some such $\beta,\alpha,\varphi$.
\end{dfn}

\begin{lem}
Work in $M[G]$ and let $A\sub\mathscr{P}$.
Then $A$ is $\OD_{\{\mathscr{P}\}}$ iff $A$ is nice.
\end{lem}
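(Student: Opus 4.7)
The forward direction is immediate: the map $P\mapsto\ds(P)$ is uniformly definable in $P$, so for any formula $\varphi$ and ordinals $\alpha<\beta<\lambda^{+M}$, the set $A_{\beta\alpha\varphi}$ is defined over $M[G]$ from the parameters $\mathscr{P},\alpha,\beta$, hence is $\OD_{\{\mathscr{P}\}}$.

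For the converse, suppose $A$ is $\OD_{\{\mathscr{P}\}}$, so fix a formula $\psi$ and ordinals $\vec\eta$ with $P\in A\iff M[G]\sats\psi(P,\mathscr{P},\vec\eta)$. The key step is to reduce this to a statement in $\ds(P)$ alone. For each $P\in\mathscr{P}$, by the very clauses of Definition \ref{dfn:pP}, there is a filter $H$ which is $\ds(P)$-generic for $\Coll(\om,{<}\lambda)^{\ds(P)}$ with $\ds(P)[H]$ having universe $M[G]$. Moreover, the definition of $\mathscr{P}$ (via the symmetric reals and the first $\om$ layers $\vV_{n+1}$) is absolute between $M[G]$ and $\ds(P)[H]$, giving $\mathscr{P}^{\ds(P)[H]}=\mathscr{P}$ uniformly in $P$. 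Using weak homogeneity of $\Coll(\om,{<}\lambda)^{\ds(P)}$ inside $\ds(P)$, the truth of $\ds(P)[H]\sats\psi(P,\mathscr{P},\vec\eta)$ does not depend on $H$, so there is a forcing-language formula $\tilde\psi$ over $\ds(P)$ with
\[ P\in A\iff \ds(P)\sats\tilde\psi(P,\vec\eta). \]

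It remains to reflect this statement into $\ds(P)|\beta$ for a uniform $\beta<\lambda^{+M}$ and to absorb $\vec\eta$ and $P$ into a single ordinal $\alpha<\beta$ and a formula $\varphi$. Since $P=\ds(P)|\lambda^{+M}$ is internally identified as the initial segment of $\ds(P)$ of height $\om_1^{M[G]}=\lambda^{+M}$, the parameter $P$ can be absorbed into $\varphi$ once $\beta>\lambda^{+M}$ — but we need $\beta<\lambda^{+M}$. This is where we exploit that $A\sub\mathscr{P}$ has cardinality at most $\lambda^{+M}$ in $M[G]$, so $A$ is coded by a subset of $\lambda^{+M}$; a standard reflection/Löwenheim--Skolem argument inside $\ds(P)$ (whose cardinals include $\lambda^{+M}$) produces $\beta_P<\lambda^{+M}$ and $\alpha_P<\beta_P$ such that $\ds(P)\sats\tilde\psi(P,\vec\eta)\iff\ds(P)|\beta_P\sats\varphi(\alpha_P)$. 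Uniformity of $\beta,\alpha,\varphi$ across $P\in\mathscr{P}$ is obtained via the canonical definability of $\ds(P)$ from $P$ together with the agreement of all $\ds(P)$'s on their tail structure (they all have $\mathscr{I}^M$ as an indiscernible class and project onto the same $\mM_{\infty\om}$); this allows one to pick $\beta,\alpha,\varphi$ depending only on $\psi,\vec\eta$, not on $P$, yielding $A=A_{\beta\alpha\varphi}$.

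The main obstacle is this last uniformity step: ensuring a single $\beta<\lambda^{+M}$ works across all $P\in\mathscr{P}$, rather than a separate $\beta_P$ for each. The plan is to use the uniform structure of $P\mapsto\ds(P)$ and indiscernibility of the common Silver indiscernibles $\mathscr{I}^M\sub\mathscr{I}^{\ds(P)}$ to transfer the reflection bound from one $P$ to all others, so that a single canonical $\beta$ suffices.
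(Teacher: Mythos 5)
Your forward direction is correct and matches the paper, which dismisses it in one sentence. Your reduction in the converse to a forcing statement over $\ds(P)$ — using the absoluteness $\mathscr{P}^{\ds(P)[H]}=\mathscr{P}$ (which is indeed noted in the paper, in the proof of Lemma \ref{lem:HOD_P}) and weak homogeneity of the collapse — is the right first move and is correct as far as it goes. The paper itself proves the converse only by reference to a lemma in \cite{odle}, so there is no detailed argument to compare against, but the shape you propose is plausible.

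The gap is in your localization step, and it is a genuine one. You claim that ``a standard reflection/L\"owenheim--Skolem argument inside $\ds(P)$\ldots produces $\beta_P<\lambda^{+M}$ and $\alpha_P<\beta_P$ such that $\ds(P)\sats\tilde\psi(P,\vec\eta)\iff\ds(P)|\beta_P\sats\varphi(\alpha_P)$.'' This is not a standard reflection. The sentence $\ds(P)\sats\tilde\psi(P,\vec\eta)$ quantifies over the whole proper-class model $\ds(P)$ and carries ordinal parameters $\vec\eta$ that can lie arbitrarily high above $\lambda^{+M}$; L\"owenheim--Skolem gives you some elementary hull reflecting the statement, but that hull is neither an initial segment of $\ds(P)$ nor bounded by $\lambda^{+M}$ when $\vec\eta>\lambda^{+M}$. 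Noting that $\ds(P)$'s cardinals include $\lambda^{+M}$ does not help: the obstruction is the parameters and the unbounded quantifiers, not the cardinal arithmetic. So ``$\beta_P$'' does not exist for free; getting the parameters down is the whole content of the converse.

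What actually does the work — and what your sketch gestures at but does not pin down — is the structural coherence among the $\ds(P)$'s: they all share $\mathscr{I}^M$ as Silver indiscernibles, they all compute the \emph{same} $\mM_{\infty\om}$ (this is the tail-agreement observation isolated in the proof of Lemma~\ref{lem:HOD_P}), and each $\ds(P)$ is generated as a hull of $\mathscr{I}^M\cup\lambda^{+M}$. These facts let you replace $\vec\eta$ by a Skolem term in indiscernibles and parameters below $\lambda^{+M}$ \emph{uniformly in $P$}, and then apply the $*$-map machinery (Lemmas \ref{lem:alpha^*=lim_n_n(alpha)} and \ref{lem:k_n^+_def}) to turn the class statement into one decided by a bounded piece of $\ds(P)$, precisely in the window $(\lambda_\infty,\lambda^{+M})$ required by the definition of niceness. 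Your paragraph on uniformity correctly identifies this as the obstacle but offers only the general suggestion to ``use indiscernibility''; that suggestion is in the right direction, but without the specific mechanism — tail agreement of $\ds(P)$ with $M$, equality of the computed $\mM_{\infty\om}$'s, and internal definability of $*$ — the argument does not close. In short: step 1 is sound; step 2 as written is a hand-wave around the real difficulty, and the cardinality bound on $A$ is a red herring (it plays no role in producing a uniform local criterion for membership).
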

\begin{proof}
Clearly every nice $A$ is $\OD_{\{\mathscr{P}\}}$
(recall that for $P\in\mathscr{P}$,
we have $\OR^P=\lambda^{+M}$, so the definability of $\ds(P)$ from $P$ is irrelevant).
The other direction is much like the proof of \cite[Lemma 7.3, Claim 1]{odle_v2}.
\end{proof}

\begin{dfn}[$\mathscr{P}$-Vopenka]
Let $S$ be a strategy premouse such that $S\sub \mM_{\infty\om}$, $S$ extends
$\mM_{\infty\om}|\lambda_\infty$ and  $(\lambda_\infty)^{+S}<\OR^S$.
Then $\Vop^S_{\mathscr{P}}$ denotes the forcing such that:
\begin{enumerate}[label=--]
 \item 
the conditions of $\Vop_{\mathscr{P}}^S$ are the
tuples $(\beta,\alpha,\varphi)$
such that $\varphi$ is a formula
and $\alpha<\beta<\lambda^{+S}$
and $A_{\beta\alpha\varphi}\neq\emptyset$, and
\item given $p,p'\in\Vop^S_{\mathscr{P}}$,
letting $p=(\beta,\alpha,\varphi)$
and $p'=(\beta',\alpha',\varphi')$, then
\[ p\leq p'\Leftrightarrow A_{\beta\alpha\varphi}\sub A_{\beta'\alpha'\varphi'}.\]
\end{enumerate}
Given $p=(\beta,\alpha,\varphi)\in\Vop^S_{\mathscr{P}}$, let $A_p=A_{\beta\alpha\varphi}$.
\end{dfn}

\begin{lem}
$\Vop^S_{\mathscr{P}}$ is definable over $S|(\lambda_\infty)^{+S}$. In particular, $\Vop^S_{\mathscr{P}}\in S$.\end{lem}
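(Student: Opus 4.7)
The plan is to reduce the definability of $\Vop^S_{\mathscr{P}}$ to the definability, over $S|(\lambda_\infty)^{+S}$, of the relation ``$P\in\mathscr{P}$ and $P|\beta\sats\varphi(\alpha)$'' as $P$ ranges over $\mathscr{P}$ and $(\beta,\alpha,\varphi)$ ranges over potential conditions. The first step is to observe that $(\lambda_\infty)^{+S}=\lambda^{+M}$: since $\mM_{\infty\om}|\lambda_\infty\pins S\sub\mM_{\infty\om}$ and (as will be shown as part of Theorem \ref{tm:main}) $\mM_{\infty\om}$ is a ground of $M$ for a forcing of size $\lambda_\infty$ in $\mM_{\infty\om}$, the three ordinals $(\lambda_\infty)^{+S}$, $(\lambda_\infty)^{+\mM_{\infty\om}}$, and $\lambda^{+M}$ coincide. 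In particular, for any $P\in\mathscr{P}$, which has $\OR^P=\lambda^{+M}$, and any $\beta<(\lambda_\infty)^{+S}$, we have $\ds(P)|\beta=P|\beta$, so the sets $A_{\beta\alpha\varphi}$ that appear in $\Vop^S_{\mathscr{P}}$ depend only on the initial segments $P|\beta$ of elements of $\mathscr{P}$.

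The second step is to show that for each $n<\om$ and each $\beta\in[\kappa_n^{+P},\lambda^{+M}]$, the initial segment $P|\beta$ is uniformly recoverable from $P|\kappa_n^P$ via data in $S|(\lambda_\infty)^{+S}$. The witnessing condition $\vV_{n+1}^{\ds(P)}=\vV_{n+1}$ forces the Varsovian computation inside $\ds(P)$ to coincide with the actual sequence $\left<\vV_m\right>_{m<\om}$, and hence $\mM_{\infty\om}^{\ds(P)}=\mM_{\infty\om}$ (as in the proof of Lemma \ref{lem:HOD_P}). Combined with the fact that $\ds(P)$ is a ground of $M[G]$ via the $(\ds(P),\Coll(\om,{<}\lambda))$-generic filter $H$, this pins down the extender sequence of $P$ above $\kappa_n^{+P}$ as the outcome of a P-construction over $P|\kappa_n^{+P}$ relative to the canonical extender sequence inherited from $\mM_{\infty\om}|\lambda_\infty$ together with $\es^S$ on the interval $(\lambda_\infty,(\lambda_\infty)^{+S})$. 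Because $\mM_{\infty\om}|\lambda_\infty\pins S|(\lambda_\infty)^{+S}$, this P-construction and the resulting structures $P|\beta$ for $\beta\in[\kappa_n^{+P},(\lambda_\infty)^{+S})$ are first-order definable, uniformly in the parameter $P|\kappa_n^{+P}$, over $S|(\lambda_\infty)^{+S}$.

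The third step is to characterize, over $S|(\lambda_\infty)^{+S}$, which premice $Q$ of height $\leq\kappa_n^P$ arise as $P|\kappa_n^P$ for some $P\in\mathscr{P}$. Since $\kappa_n^P<\kappa_n^M<\lambda<\lambda_\infty$, such a $Q$ has rank below $\lambda_\infty$. The requirement that $Q$ be coded by a real $x\in\dot{\mathbb{R}}^*_G$, i.e.\ that $Q$ sits in some intermediate generic extension of $M$ by a collapse of size ${<}\lambda$, is, by the homogeneity of $\Coll(\om,\xi)$ for $\xi<\lambda$, equivalent to a first-order statement about $Q$ in $M|\lambda$. Since $\mM_{\infty\om}|\lambda_\infty$ agrees with $V_{\lambda_\infty}^{\mM_{\infty\om}}=V_{\lambda_\infty}^{\vV_{n\om}}$ (Lemma \ref{lem:M_infty,om_not_project_across_lambda_inf}) and since by Lemma \ref{lem:k_n^+_def}(\ref{item:k_n^+_rest_P|lambda_infty^+}) the map $\ell_n^+\rest S|(\lambda_\infty)^{+S}$ is lightface definable over $S|(\lambda_\infty)^{+S}$, the relevant fragment of the $\Coll(\om,\xi)$-forcing relation over $M$ is itself definable over $S|(\lambda_\infty)^{+S}$ via the transfer through $\ell_n^+$. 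Hence the class of allowable $Q$'s is first-order over $S|(\lambda_\infty)^{+S}$.

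Combining the three steps, the predicates $A_{\beta\alpha\varphi}\ne\emptyset$ and $A_p\sub A_{p'}$ are first-order expressible over $S|(\lambda_\infty)^{+S}$: existentially quantify over pairs $(n,Q)$ satisfying the third step's definable condition, and check, via the P-construction from the second step, whether the resulting $P|\beta$ satisfies the required $\varphi$-statement. This yields the definability of $\Vop^S_{\mathscr{P}}$ over $S|(\lambda_\infty)^{+S}$. The ``in particular'' clause follows since $\Vop^S_{\mathscr{P}}\sub(\lambda_\infty)^{+S}\times\om$, and $S$ is closed under taking such definable subsets below $\OR^S$. The principal obstacle will be verifying the third step cleanly: one must show that the symmetric collapse characterization of ``$Q$ is coded by some $x\in\dot{\mathbb{R}}^*_G$'' really does transfer across the $*$-map and become internal to $S|(\lambda_\infty)^{+S}$, rather than merely to $\mM_{\infty\om}$ or $M$ globally. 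This hinges on the local uniformity provided by Lemma \ref{lem:k_n^+_def}(\ref{item:k_n^+_rest_P|lambda_infty^+}) and the fact that $\mM_{\infty\om}|\lambda_\infty$ already captures all the $<\lambda$-size forcing data over the ground $\mM_{\infty\om}$.
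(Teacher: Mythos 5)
Your proposal takes a genuinely different route from the paper's, and it has a gap at exactly the point you flag as "the principal obstacle"; the gap is fatal for the method as sketched. Your Step 3 tries to characterize, over $S|(\lambda_\infty)^{+S}$, which premice $Q$ arise as $P|\kappa_n^P$ for some $P\in\mathscr{P}$. But the definition of $\mathscr{P}$ only requires that there be \emph{some} $(C,\Coll(\om,{<}\lambda))$-generic $H$ with $C[H]$ equal to the universe, so members $P$ of $\mathscr{P}$ (and hence $Q=P|\kappa_n^P$) are in general $M$-generic objects and need not be elements of $M$, much less of $\mM_{\infty\om}$ or of $S|(\lambda_\infty)^{+S}$. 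There is therefore no class of "allowable $Q$'s" definable over $S|(\lambda_\infty)^{+S}$ to quantify over. The forcing facts about $M|\lambda$ that would pin down membership in $\dot{\mathbb{R}}^*_G$ are likewise not recoverable from $\mM_{\infty\om}|\lambda_\infty$: the definability goes the wrong direction (one can read $\mM_{\infty\om}|\lambda_\infty$ off from $M|\lambda$, not conversely). The appeal to $\ell_n^+$ does not repair this either; $\ell_n^+:\mM_{\infty\om}\to(\mM_{\infty\om})^{\vV_{n\om}}$ stays entirely inside the $\mM_{\infty\om}$-world and gives no access to $M|\lambda$ or the $\Coll(\om,{<}\lambda)$-forcing over $M$.

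The paper avoids reconstructing $\mathscr{P}$ or its elements altogether. It instead rewrites "$A_{\beta\alpha\varphi}\neq\emptyset$" as a forcing statement over $\vV_n$ using a \emph{name} $\dot{\mathscr{P}}^{\vV_n}$ (for any/all $n$), then applies the iteration maps $j_n:\vV_n\to\vV_{n\om}$ for sufficiently large $n$, shifting the ordinal parameters by $*$, to obtain an equivalent forcing statement over $\vV_{n\om}$. That statement \emph{is} expressible over $S|(\lambda_\infty)^{+S}$, because the relevant fragments of the $\vV_{n\om}$'s and their forcing relations are encoded there, and the $*$-map on ordinals below $(\lambda_\infty)^{+S}$ is definable over $S|(\lambda_\infty)^{+S}$ by Lemma \ref{lem:alpha^*=lim_n_n(alpha)} together with Lemma \ref{lem:k_n^+_def}(\ref{item:k_n^+_rest_P|lambda_infty^+}). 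That names-and-forcing transfer through the Varsovian models, rather than a direct internalization of $\mathscr{P}$, is the idea your proposal is missing; once you have it, your Steps 1 and 2 are no longer needed.
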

\begin{proof}
 by Lemma \ref{lem:k_n^+_def} and since
the references to truth in $\vV_{n\om}$
only really depend on $\vV_{n\om}|\lambda_\infty$. 

For $n<\om$, let $\dot{\mathscr{P}}^{\vV_n}\in\vV_n$
be the natural $\Coll(\om,{<\lambda})$-name
for $\mathscr{P}$.

Let $p=(\beta,\alpha,\varphi)$ be such that $\alpha<\beta<(\lambda_\infty)^{+S}$
and $\varphi$ is a formula.

Then the following are equivalent:
\begin{enumerate}[label=--]
 \item 
$p\in\Vop_{\mathscr{P}}^S$, \item $A_p=A_{\beta\alpha\varphi}\neq\emptyset$
\item  for any/all $n<\om$,  we have
$\vV_n\sats\ \sststile{\Coll(\om,{<\lambda})}{}\exists P\in\dot{\mathscr{P}}^{\vV_n}\ \Big[(P|\beta)\sats\varphi(\alpha)\Big]$,
\item $\exists m<\om\ \all n\geq m\ \Big(\vV_{n\om}\sats\ \sststile{\Coll(\om,{<\lambda_\infty})}{}\exists P\in\dot{\mathscr{P}}^{\vV_{n\om}}\ \Big[(P|\beta^*)\sats\varphi(\alpha^*)\Big]\Big)$.
\end{enumerate}
So by Lemmas \ref{lem:alpha^*=lim_n_n(alpha)} and \ref{lem:k_n^+_def}, $S|(\lambda_\infty)^{+S}$
can define the set of conditions of $\Vop_{\mathscr{P}}^S$. 
It can similarly define the ordering of $\Vop_{\mathscr{P}}^S$.
\end{proof}

\begin{lem}\label{lem:lambda^+-cc} Let $S$ be a strategy premouse such that $S\sub\mM_{\infty\om}$ and there is some $P\ins M$ with $\lambda<\OR^P$,
 $P\sats$``$\lambda^+$ exists'',
$\lambda^{+P}=(\lambda_\infty)^{+S}$
and $S\sub P$.
Then $\Vop_{\mathcal{P}}^S$ is $((\lambda_\infty)^+)^S$-cc in $S$.\end{lem}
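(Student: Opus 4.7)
The plan is to reduce the claim to a count of $|\mathscr{P}|^{M[G]}$ in the ambient forcing extension $M[G]$, exploiting that antichains of $\Vop_{\mathscr{P}}^S$ computed in $S$ are still antichains in $M[G]$ (the forcing order is absolute). First I would verify that incompatibility of $p = (\beta,\alpha,\varphi)$ and $q = (\beta',\alpha',\varphi')$ in $\Vop_{\mathscr{P}}^S$ implies $A_p \cap A_q = \emptyset$. For this, assuming $A_p \cap A_q \neq \emptyset$, I would exhibit a common extension: take $\beta'' = \max(\beta,\beta')$, take $\alpha''$ coding the tuple $(\beta,\alpha,\beta',\alpha')$ via $S$-definable pairing, and let $\psi(\alpha'')$ be the formula asserting ``$\alpha''$ decodes to $(\beta,\alpha,\beta',\alpha')$, $M|\beta \sats \varphi(\alpha)$, and $M|\beta' \sats \varphi'(\alpha')$''. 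The resulting triple is a condition in $\Vop_{\mathscr{P}}^S$ (witnessed by any $P \in A_p \cap A_q$) extending both $p$ and $q$.

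Next I would bound $|\mathscr{P}|^{M[G]}$ by $\lambda$. By Definition~\ref{dfn:pP}, each $P \in \mathscr{P}$ is determined by a pair $(n, x)$ with $n < \om$ and $x \in \dot{\mathbb{R}}^*_G$ coding $P|\kappa_n^P$: the condensation stack $C$ above $P$ satisfies $\vV_{n+1}^C = \vV_{n+1}$ and $\kappa_n^{+P} = \delta_n^{\vV_{n+1}}$, so $P|\kappa_n^{+P}$ is recovered from $P|\kappa_n^P$ together with the fixed $\vV_{n+1}$, and the structure of $P$ above $\kappa_n^{+P}$ (up to $\OR^P = \lambda^{+M}$) is determined by the direct condensation stack / P-construction over $\vV_{n+1}$. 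Since $\lambda$ is inaccessible in $M$ and $\dot{\mathbb{R}}^*_G$ consists of reals appearing in $M[G \rest \alpha]$ for $\alpha < \lambda$, a standard counting argument gives $|\dot{\mathbb{R}}^*_G|^{M[G]} \leq \lambda$, so $|\mathscr{P}|^{M[G]} \leq \om \cdot \lambda = \lambda$.

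Combining, if $(p_i)_{i < \gamma}$ is any antichain of $\Vop_{\mathscr{P}}^S$ in $S$, then in $M[G]$ the sets $(A_{p_i})_{i < \gamma}$ are pairwise disjoint and nonempty, so a choice of $P_i \in A_{p_i}$ yields an injection $\gamma \hookrightarrow \mathscr{P}$ in $M[G]$. Hence $\gamma \leq \lambda$ as ordinals. Since by hypothesis $(\lambda_\infty)^{+S} = \lambda^{+P} > \lambda$, we conclude $\gamma < ((\lambda_\infty)^+)^S$, which is the desired chain condition. I expect the main obstacle to be purely bookkeeping: making the intersection-of-conditions argument genuinely internal to $\Vop_{\mathscr{P}}^S$ (i.e.\ checking the resulting formula $\psi$ is in the language used to define conditions and that the new triple really lies in $S$), which follows from the local definability of $\Vop_{\mathscr{P}}^S$ over $S|(\lambda_\infty)^{+S}$ established in the preceding lemma.
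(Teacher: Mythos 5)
Your overall plan mirrors the paper's (compatibility iff $A_p \cap A_q \neq \emptyset$, then a cardinality count against $\mathscr{P}$), but the final step has a genuine gap, and it is precisely the point where the hypothesized $P$ in the lemma's statement becomes essential. You carry out the injection argument in $M[G]$ and infer ``$\gamma \leq \lambda$ as ordinals.'' That inference does not follow: the injection $\gamma \hookrightarrow \mathscr{P}$ only yields $|\gamma|^{M[G]} \leq |\mathscr{P}|^{M[G]} \leq \lambda$, which is automatic for \emph{every} $\gamma < \lambda^{+M}$ (since $G$ collapses $\lambda$ to $\om$, all such ordinals have $M[G]$-cardinality $\leq \lambda$), and gives no information about $\gamma$ as an ordinal. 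In the applications of this lemma (e.g., to $S = \vV_\om|\alpha$ with $\alpha < \lambda^{+M}$ in the proof that $\vV_\om$ is a fine structural premouse), one has $(\lambda_\infty)^{+S} = \lambda^{+P} < \lambda^{+M}$; this ordinal is \emph{not} a cardinal in $M[G]$, so a bound of $\gamma$ below $\lambda^{+M}$ is vacuous and does not rule out an antichain of $S$-cardinality $(\lambda_\infty)^{+S}$.

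The paper's proof avoids this by working in $P[G]$ rather than $M[G]$, where $P$ is the initial segment of $M$ given in the hypothesis: there, $\lambda^{+P} = (\lambda_\infty)^{+S}$ \emph{is} a cardinal (the collapse $\Coll(\om,\lambda)$ is trivially $\lambda^{+P}$-cc over $P$, so $\lambda^{+P}$ is preserved), and $|\mathscr{P}|^{P[G]} = \lambda_\infty < \lambda^{+P}$; then pigeonhole produces a single $P' \in \mathscr{P}$ lying in $A_p$ for $\lambda^{+P}$-many $p \in X$, forcing two elements of $X$ to be compatible. Your injection formulation works equally well \emph{once transported to $P[G]$}: one gets $|\gamma|^{P[G]} < \lambda^{+P}$, whence $\gamma < \lambda^{+P} = (\lambda_\infty)^{+S}$ since $\lambda^{+P}$ is a $P[G]$-cardinal. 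In other words, the hypothesis about $P$ is not idle bookkeeping --- it is the mechanism that keeps the relevant cardinal from being collapsed, and the argument must be run inside $P[G]$. Your first paragraph (exhibiting a common extension when $A_p \cap A_q \neq \emptyset$) and your counting of $\dot{\mathbb{R}}^*_G$ are fine and essentially what the paper leaves implicit.
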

\begin{proof}Note that  $p,q\in\Vop^S_{\mathcal{P}}$ are compatible iff $A_p\cap A_q\neq\emptyset$.
Let $X\in\pow(\Vop^S_{\mathscr{P}})\cap S$ have cardinality $(\lambda_\infty)^{+S}$ in $S$.
Then working in $P[G]$, where we have both $X$ and
$\mathscr{P}$, and $\mathscr{P}$  has cardinality $\lambda_\infty$,
there is $P\in\mathscr{P}$ such that $P\in A_p$ for $\lambda^{+P}$-many $p\in\Vop_{\mathscr{P}}^S$. This suffices.\end{proof}

\begin{lem}\label{lem:M|lambda_is_P-generic}
Let $S$ be as in Lemma \ref{lem:lambda^+-cc}.
Suppose further that $S|(\lambda_\infty)^{+S}$
is level-by-level definable over each $P\in\mathscr{P}$, uniformly in $P$;
that is, there is a fixed formula $\psi$
such that for all $P\in\mathscr{P}$ and all $\alpha,\beta<(\lambda_\infty)^{+S}$
with $\lambda_\infty<\alpha<\beta$, and all formulas $\varphi$,
we have
\[ S|\beta\sats\varphi(\alpha)\Leftrightarrow P\sats\psi(\beta,\alpha,\varphi).\]

Let $G$ be the set of all conditions
$p\in\Vop_{\mathscr{P}}^S$ such that $M|\lambda^{+M}\in A_p$ \tu{(}equivalently,
$M|(\lambda_\infty)^{+S}\in A_p$\tu{)}.
Then $G$ is an $(S,\Vop^S_{\mathscr{P}})$-generic filter. Moreover, $S[G]$ is equivalent to $S[M|(\lambda_\infty)^{+S}]$.\end{lem}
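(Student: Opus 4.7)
The plan is to carry out a standard $\OD$-Vopenka argument, leveraging Lemma \ref{lem:HOD_P}, which identifies the universe of $\mM_{\infty\om}$ with $\HOD^{M[g]}_{\{\mathscr{P}\}}$ for a $\Coll(\om,\lambda)$-generic $g$ over $M$ (I use $g$ for this to avoid clashing with the $G$ of the lemma).

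First I would verify that $G$ is a filter. Compatibility of $p,q\in G$ follows since $M|(\lambda_\infty)^{+S}\in A_p\cap A_q$; a common extension $r\in G$ is obtained by taking the Vopenka condition whose defining formula is the conjunction of those of $p$ and $q$ (padded to a common $\beta$), so that $A_r=A_p\cap A_q$ still contains $M|(\lambda_\infty)^{+S}$. Upward closure is immediate from $A_p\subseteq A_q$ whenever $p\leq q$. For genericity, let $D\in S$ be dense open and set $B_D=\bigcup_{p\in D} A_p$. Since $S\subseteq\mM_{\infty\om}$, Lemma \ref{lem:HOD_P} gives $S\in\HOD^{M[g]}_{\{\mathscr{P}\}}$, so both $D$ and $B_D$ are $\OD_{\{\mathscr{P}\}}$ in $M[g]$. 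By the characterization of $\OD_{\{\mathscr{P}\}}$ subsets of $\mathscr{P}$ as precisely the nice sets (established earlier in the section), $B_D=A_r$ for some condition $r=(\beta,\alpha,\varphi)$. Suppose for contradiction that $G\cap D=\emptyset$; then $M|(\lambda_\infty)^{+S}\notin B_D=A_r$, so $M|(\lambda_\infty)^{+S}\in A_{r'}$ where $r'=(\beta,\alpha,\neg\varphi)$, making $r'$ a legitimate (nonempty) Vopenka condition with $A_{r'}=\mathscr{P}\setminus A_r$. By density, there is $s\in D$ with $s\leq r'$; but $s\in D$ gives $A_s\subseteq A_r$ while $s\leq r'$ gives $A_s\subseteq A_{r'}$, so $A_s\subseteq A_r\cap A_{r'}=\emptyset$, contradicting that $s$ is a condition.

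For the final equivalence $S[G]=S[M|(\lambda_\infty)^{+S}]$, each direction is first-order. Going from $M|(\lambda_\infty)^{+S}$ to $G$: since $\ds(M|\lambda^{+M})=M$ and every $p=(\beta,\alpha,\varphi)\in\Vop^S_{\mathscr{P}}$ has $\beta<(\lambda_\infty)^{+S}=\lambda^{+M}$, we have $\ds(M|(\lambda_\infty)^{+S})|\beta=(M|(\lambda_\infty)^{+S})|\beta$, so $p\in G$ iff this initial segment satisfies $\varphi(\alpha)$ — first-order over $S[M|(\lambda_\infty)^{+S}]$. Conversely, from $G$ one reads off, for each $\beta<(\lambda_\infty)^{+S}$, the complete first-order theory of $M|\beta$ with ordinal parameters $<\beta$ (namely $(\beta,\alpha,\varphi)\in G$ iff $M|\beta\sats\varphi(\alpha)$); combining this with the uniform definability hypothesis of $S|(\lambda_\infty)^{+S}$ over each $P\in\mathscr{P}$, and with the standard reconstruction of a pointwise-definable premouse from its complete theory with ordinal parameters, one rebuilds $M|(\lambda_\infty)^{+S}$ inside $S[G]$.

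The main obstacle is the niceness step $B_D=A_r$ in the genericity argument. It relies on the equivalence between $\OD_{\{\mathscr{P}\}}$ subsets of $\mathscr{P}$ and nice sets (which is in hand), together with the compatibility of complementation on the formula side with complementation on the $A$-side. The uniform definability hypothesis of $S|(\lambda_\infty)^{+S}$ over $P\in\mathscr{P}$ is precisely what guarantees this Boolean compatibility holds uniformly across $P\in\mathscr{P}$, so that the passage $\varphi\mapsto\neg\varphi$ inside a condition really produces the set-theoretic complement within $\mathscr{P}$ and yields a \emph{bona fide} Vopenka condition (in particular, a nonempty $A_{r'}$).
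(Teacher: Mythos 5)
Your filter argument is fine, and the final equivalence $S[G]=S[M|(\lambda_\infty)^{+S}]$ is correctly reduced to first-order translations in both directions. However, the genericity step has a genuine gap.

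You let $D\in S$ be dense open and argue that $B_D=\bigcup_{p\in D}A_p$ is $\OD_{\{\mathscr{P}\}}$ in $M[g]$, hence nice, hence of the form $A_r$ for some $r=(\beta,\alpha,\varphi)$. But the niceness characterization only gives you $\lambda_\infty<\alpha<\beta<\lambda^{+M}$, whereas a condition of $\Vop^S_{\mathscr{P}}$ requires $\alpha<\beta<(\lambda_\infty)^{+S}$. The hypotheses of Lemma \ref{lem:lambda^+-cc} only say $(\lambda_\infty)^{+S}=\lambda^{+P}$ for some $P\ins M$; it is entirely possible that $(\lambda_\infty)^{+S}<\lambda^{+M}$ (and that is exactly the case that arises when $\vV_\om$ is built up level by level). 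In that situation $r$, and hence $r'=(\beta,\alpha,\neg\varphi)$, is not a condition of $\Vop^S_{\mathscr{P}}$ at all, so ``by density there is $s\in D$ with $s\leq r'$'' has no content: density of $D$ in $\Vop^S_{\mathscr{P}}$ says nothing about elements outside the poset. Nothing in your argument bounds the ordinal parameters in the nice presentation of $B_D$.

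The paper avoids this precisely by \emph{not} working with the union $B_D$ over all of a dense set. It passes to a maximal antichain $X\sub D$ (or directly to a maximal antichain) and invokes the $(\lambda_\infty)^{+S}$-cc from Lemma \ref{lem:lambda^+-cc} to conclude $|X|\leq\lambda_\infty$ in $S$, so $X\in S|(\lambda_\infty)^{+S}$, and therefore $X$ is the $\alpha$-th set in the $S$-constructibility order for some $\alpha<(\lambda_\infty)^{+S}$. It then builds a condition $q=(\beta,\alpha,\varrho)$ with $\beta\in(\alpha,(\lambda_\infty)^{+S})$, where $\varrho$ expresses over $P\in\mathscr{P}$ --- \emph{using the hypothesized formula $\psi$} --- that the $\alpha$-th set $Y$ in the constructibility order of $S|\OR$ contains no triple $(\beta',\alpha',\varphi')$ with $P|\beta'\sats\varphi'(\alpha')$, so that $A_q$ is disjoint from $\bigcup_{p\in X}A_p$, contradicting maximality. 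Note that this is also where the uniform definability hypothesis $\psi$ does real work in the genericity argument: it is what lets $\varrho$ refer to the $\alpha$-th set of $S$ inside each $P\in\mathscr{P}$. In your write-up $\psi$ is only invoked for the final equivalence and plays no role in genericity, which is a second sign the argument is not closing up. To repair your proposal you would need to replace the dense-set/union step with the antichain/chain-condition step and explicitly produce the Vopenka condition with parameters below $(\lambda_\infty)^{+S}$ by way of $\psi$.
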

\begin{proof} Clearly $G$ is a filter.
So let $X\in\pow(\Vop^S_{\mathscr{P}})\cap S$ be a maximal antichain. We need to see that there is some $p\in X$ such that $M|\lambda^{+M}\in A_p$. 

By Lemma \ref{lem:lambda^+-cc},
$X$ has cardinality $\lambda_\infty$ in $S$.
So $X\in S|(\lambda_\infty)^{+S}$.
Let $\alpha<(\lambda_\infty)^{+S}$
be such that $X$ is the $\alpha$th
set in the $S$-constructibility order,
and let $\beta\in(\alpha,(\lambda_\infty)^{+S})$.

Now if $M|\lambda^{+M}\notin A_p$
for all $p\in X$, 
then we get a condition $q=(\beta,\alpha,\varrho)\in\Vop_{\mathscr{P}}^S$
by taking $\varrho(v)$ to be the natural formula in the language of passive premice, whose only free variable is $v$ (to be interpreted as $\alpha$), which means
``Let $Y$ be the $v$th set in the constructibility order of $S|\OR$.
Then there is no $(\beta',\alpha',\varphi')\in Y$
such that $L[\es]|\beta'\sats\varphi(\alpha')$'',
when interpreted by any $P\in\mathscr{P}$
and with $v$ set to $\alpha$.
Here the reference to $S$ in $\varrho$
is achieved through use of the hypothesized formula $\psi$. (Note that by the contradictory hypothesis, $M|\lambda^{+M}\in A_q$.)
But $A_q\cap A_p=\emptyset$
for all $p\in X$, contradicting the maximality of $X$. This completes the proof.
\end{proof}

\begin{dfn}
We now define $\vV_\om$, setting $\mM_{\infty\om}|\lambda_\infty\ins\vV_\om$,
and then restricting $M$'s extender sequence above $\lambda_\infty$
to form $\es^{\vV_\om}$ above $\lambda_\infty$ (modulo shifting the domain of extenders overlapping $\lambda_\infty$ in the usual manner). That is, $\es^{\vV_\om}$ is defined as follows (recalling that $\mM_{\infty\om}|\lambda_\infty$ is definable in the codes over $M|\lambda$, so $\lambda_\infty<\gamma$, where $\gamma$
is least such that $\gamma>\lambda$ and $M|\lambda$ is admissible):
\begin{enumerate}
 \item $\vV_\om|\lambda_\infty=\mM_{\infty\om}|\lambda_\infty$, and
 \item for $\nu>\lambda_\infty$,
 \begin{enumerate}
\item $\vV_\om|\nu$ is active iff $M|\nu$ is active
\item if $M|\nu$ is active
and $\lambda<\crit(F^{M|\nu})$
then $F^{\vV_\om|\nu}=F^{M|\nu}\rest(\vV_\om|\nu)$ (which is a short extender),
\item if $M|\nu$ is active 
and $\crit(F^{M|\nu})<\lambda$,
then letting $n<\om$ be such that $\crit(F^{M|\nu})=\kappa_n^M$, $F^{\vV_\om|\nu}$
is the unique $(\delta_n^{\vV_\om},\nu)$-extender $F$
over $\vV_\om|\delta_n^{\vV_\om}$ such that
\[ F\com j_{n+1,\om}\rest(\vV_{n+1}|\Delta^{\vV_{n+1}})=F^{M|\nu}\rest(\vV_{n+1}|\Delta^{\vV_{n+1}})\]
(which is a long extender).\qedhere
\end{enumerate}
\end{enumerate}
\end{dfn}

\begin{lem}We have:
\begin{enumerate}
 \item \label{item:vV_om_sub_M_infty,om}
 $\vV_{\om}\sub \mM_{\infty\om}$. 
 \item\label{item:vV_om_not_project_across_lambda_inf} No segment of $\vV_\om$ projects across $\lambda_\infty$.
 \end{enumerate}
\end{lem}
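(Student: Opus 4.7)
\emph{Plan.} Both parts will be proved in a single inductive argument on $\nu$, showing that for every $\nu \geq \lambda_\infty$, we have $\vV_\om|\nu \in \mM_{\infty\om}$ and no proper segment $\vV_\om|\nu'$ with $\lambda_\infty < \nu' \leq \nu$ has $\rho_\om^{\vV_\om|\nu'} < \lambda_\infty$. The base case $\nu = \lambda_\infty$ is immediate from the definition of $\vV_\om$ and Lemma \ref{lem:M_infty,om_not_project_across_lambda_inf}. For the inductive step, the three ``interesting'' cases are passive limits (trivial), active indices carrying a long extender, and active indices carrying a short extender with critical point greater than $\lambda$.

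For the long extender case at index $\nu$, say $n$-long for some $n < \om$, the defining equation $F^{\vV_\om|\nu}\com j_{n+1,\om}\rest(\vV_{n+1}|\Delta^{\vV_{n+1}})=F^{M|\nu}\rest(\vV_{n+1}|\Delta^{\vV_{n+1}})$ determines $F^{\vV_\om|\nu}$ from the restriction of the iteration map $j_{n+1,\om}$ to the bounded structure $\vV_{n+1}|\Delta^{\vV_{n+1}}$ and from $F^{M|\nu}\rest(\vV_{n+1}|\Delta^{\vV_{n+1}})$. The first is accessible in $\mM_{\infty\om}$ via Lemma \ref{lem:k_n^+_def} (the sequence $\langle\ell_n^+\rest(\mM_{\infty\om}|\lambda_\infty)\rangle$ is definable in the codes over $\mM_{\infty\om}|\lambda_\infty$); the second is just the action on bounded subsets of $\delta_n^{\vV_{n+1}}$, i.e.\ on subsets of $\lambda_\infty$, and is encoded once we know the shift at level~$\nu$. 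Thus $F^{\vV_\om|\nu} \in \mM_{\infty\om}$ once we know $\vV_\om|\nu$ itself is there.

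For the short extender case and the passive case (where the issue is really just that $\vV_\om|\nu$ itself should belong to $\mM_{\infty\om}$), I would use the $\mathscr{P}$-Vopenka setup. Pick $S = \mM_{\infty\om}|\beta$ with $\beta$ such that $(\lambda_\infty)^{+S}$ exists and $\nu \leq \OR^S$ (this uses part 2 of the induction: no projection across $\lambda_\infty$ up to $\nu$ shows the ordinal-height of $\vV_\om$ grows sufficiently, so a sufficient $\beta$ exists in $\mM_{\infty\om}$). By Lemma \ref{lem:M|lambda_is_P-generic}, once the level-by-level definability hypothesis is verified, $M|(\lambda_\infty)^{+S}$ is $(S,\Vop^S_{\mathscr{P}})$-generic and $S[G] = S[M|(\lambda_\infty)^{+S}]$. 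In this generic extension, the direct condensation stack above $M|\lambda^{+M}$ yields all of $M$ above $\lambda^{+M}$, and then the definition of $\vV_\om|\nu$ from $M|\nu$ and $\mM_{\infty\om}|\lambda_\infty$ produces $\vV_\om|\nu$ as an element of $S[G]$. Since $\vV_\om|\nu$ is uniquely characterized by its first-order theory in this generic extension (being the canonical ``shifted P-construction'' of $M|\nu$) and since its description is invariant under the automorphism group of the Vopenka forcing acting on possible generics, a standard symmetric-names argument shows $\vV_\om|\nu \in S \subseteq \mM_{\infty\om}$.

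Part (2) at stage $\nu$ follows from the same correspondence: the fine structure of $\vV_\om|\nu$ above $\lambda_\infty$ mirrors that of $M|\nu$ above $\lambda$ via the canonical P-construction/extender-shift translation. Since $\lambda$ is a cardinal of $M$ (in fact a limit of Woodins and strongs), no proper segment $M|\nu'$ with $\nu' > \lambda$ has $\rho_\om^{M|\nu'} \leq \lambda$; the analogous statement transfers to $\vV_\om$ because short extenders with critical point above $\lambda$ are just restrictions of those of $M$, while long extenders only enlarge (rather than shrink) the projection, and passive levels preserve projection by construction. The main obstacle is verifying the level-by-level definability hypothesis of Lemma \ref{lem:M|lambda_is_P-generic} uniformly over all $P \in \mathscr{P}$, which requires that the internal covering system inside an arbitrary $P \in \mathscr{P}$ recovers the relevant segments of $\mM_{\infty\om}|(\lambda_\infty)^{+\mM_{\infty\om}}$ via a common formula --- this is ultimately a uniform version of the argument used in the proof of Lemma \ref{lem:HOD_P}, and is where some of the deferred $*$-translation details of \cite{*-trans_add} are needed to cover the case of short-extender overlaps at $\delta(\Tt)$.
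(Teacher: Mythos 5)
Your approach is substantially more complicated than the paper's, and it has genuine gaps. The paper proves part~\ref{item:vV_om_sub_M_infty,om} in one stroke: since the $*$-map $\alpha\mapsto\alpha^*$ is a lightface class of $\mM_{\infty\om}$ (Lemma~\ref{lem:alpha^*_as_ev_it_map}) and the $\vV_{n\om}$ are by definition predicates of $\mM_{\infty\om}$, the extender sequence of $\vV_\om$ above $\lambda_\infty$ is characterized directly over $\mM_{\infty\om}$ by: $\vV_\om|\nu$ is active (respectively, has a short extender) iff for all sufficiently large $n$, $\vV_{n\om}|\nu^*$ is active (respectively, has a short extender); and $F^{\vV_\om|\nu}(\alpha)=\beta$ iff for all sufficiently large $n$ we have $F^{\vV_{n\om}|\nu^*}(\alpha^*)=\beta^*$ in the short case, $F^{\vV_{n\om}|\nu^*}(\alpha)=\beta^*$ in the long case. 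That makes $\es^{\vV_\om}$ a lightface class of $\mM_{\infty\om}$, so $\vV_\om\sub\mM_{\infty\om}$; part~\ref{item:vV_om_not_project_across_lambda_inf} then drops out immediately from part~\ref{item:vV_om_sub_M_infty,om} together with Lemma~\ref{lem:M_infty,om_not_project_across_lambda_inf}. No induction, no Vopenka, no symmetric names.

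The specific gaps in your version: (i) In the long-extender case, you invoke Lemma~\ref{lem:k_n^+_def} to access $j_{n+1,\om}\rest(\vV_{n+1}|\Delta^{\vV_{n+1}})$, but that lemma concerns $\ell_n^+:\mM_{\infty\om}\to(\mM_{\infty\om})^{\vV_{n\om}}$, a different map. Moreover your conclusion ``$F^{\vV_\om|\nu}\in\mM_{\infty\om}$ once we know $\vV_\om|\nu$ itself is there'' is vacuous: $F^{\vV_\om|\nu}$ \emph{is} part of $\vV_\om|\nu$, so you have deferred exactly the thing to be shown. (ii) The Vopenka argument for the short/passive cases is circular: to apply Lemma~\ref{lem:M|lambda_is_P-generic} you need the level-by-level definability hypothesis for $S$, and you yourself flag verifying this as ``the main obstacle'' and defer it to $*$-translation. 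But the paper's proof of \emph{this} lemma is entirely independent of that machinery; the level-by-level definability over $P\in\mathscr{P}$ is only used \emph{afterwards}, when analyzing the fine structure of $\vV_\om|\alpha$ relative to $M|\alpha$ and proving genericity. You have thus imported an obligation that doesn't belong here. Moreover the assertion that ``a standard symmetric-names argument'' yields $\vV_\om|\nu\in S$ is not justified; you would also need $S$ to be a strategy premouse (or reorganizable as one), which is itself part of what Lemma~\ref{lem:vV_om_mM_infty,om_equivalence} later establishes. (iii) For part~\ref{item:vV_om_not_project_across_lambda_inf}, your claim that no $M|\nu'$ with $\nu'>\lambda$ has $\rho_\om^{M|\nu'}\leq\lambda$ is false (many segments of $M$ between $\lambda$ and $\lambda^{+M}$ project \emph{to} $\lambda$); the correct statement is with strict inequality, and in any case the transfer argument you sketch belongs to the later lemma about $\vV_\om|\alpha$ and $M|\alpha$, not here.
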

\begin{proof}
 Part \ref{item:vV_om_sub_M_infty,om}: This
 follows immediately from the following
 characterizations:
 \begin{enumerate}
  \item $\vV_\om|\nu$ is active iff for all sufficiently large $n$, $\vV_{n\om}|\nu^*$ is active, and if active,
  $F^{\vV_\om|\nu}$ is short iff for all sufficiently large $n$, $F^{\vV_{n\om}|\nu^*}$ is short,
  \item if $\vV_\om|\nu$ is active with a short extender then for all $\alpha,\beta<\nu$, \[ F^{\vV_\om|\nu}(\alpha)=\beta\Leftrightarrow
\all^*n\ F^{\vV_{n\om}|\nu^*}(\alpha^*)=\beta^*, \]
 \item\label{item:vV_om|alpha_active_with_long} if $\vV_\om|\nu$ is active with a long extender then for all $\alpha,\beta<\nu$,
 \[ F^{\vV_\om|\nu}(\alpha)=\beta
\Leftrightarrow \all^*n<\om\ F^{\vV_{n\om}|\nu^*}(\alpha)=\beta^* \]
(in part \ref{item:vV_om|alpha_active_with_long},
note the input
to $F^{\vV_{n\om}|\nu^*}$ is $\alpha$,
not $\alpha^*$). 
 \end{enumerate}
 
 Part \ref{item:vV_om_not_project_across_lambda_inf}
is an immediate consequence of part \ref{item:vV_om_sub_M_infty,om}
and Lemma \ref{lem:M_infty,om_not_project_across_lambda_inf}.
\end{proof}

\begin{lem}
 Let $\alpha\in(\lambda_\infty,\lambda^{+M})$.
 Then letting $P=\vV_\om|\alpha$, we have:
 \begin{enumerate}
  \item\label{item:P_is_Delta_1^M} $P$ is $\rDelta_1^{M|\alpha}(\{\lambda\})$,
  \item $P$ is fully sound, 
  \item $M|\alpha\sats$``$\lambda^+$ exists''
  iff $P\sats$``$(\lambda_\infty)^{+}$ exists'',
  \item if $M|\alpha\sats$``$\lambda^+$ exists'' then  we have:
  \begin{enumerate}
\item  $\lambda^{+(M|\alpha)}=(\lambda_\infty)^{+P}$,
\item  $M|\lambda^{+(M|\alpha)}=M|(\lambda_\infty)^{+P}$ is $(P,\Vop^P_{\mathscr{P}})$-generic,
\item  $M|\alpha$ has universe that of $P[M|\lambda^{+(M|\alpha)}]$, which is the same as that of $P[M|\lambda]$,
\item  $M|\alpha$ is $\rDelta_1^{P[M|\lambda]}(\{P,M|\lambda\})$,
\item if $k<\om$ and $\lambda^{+M|\alpha}\leq\rho_k^{M|\alpha}$
then
 $\rho_k^P=\rho_k^{M|\alpha}$ and $p_k^{P}=p_k^{M|\alpha}$,
  \end{enumerate}
  \item if $\rho_\om^{M|\alpha}=\lambda$ then
   $\rho_\om^P=\lambda_\infty$.
 \end{enumerate}
\end{lem}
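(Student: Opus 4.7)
The plan is to prove all clauses by simultaneous induction on $\alpha\in(\lambda_\infty,\lambda^{+M})$, with the driving principle that $\vV_\om$ above $\lambda_\infty$ is obtained from $M$ by the very same P-construction recipe we have used at every finite stage, but with the Vopenka forcing $\Vop^P_{\mathscr{P}}$ playing the role that $\CC^{\vV_{n+1}}$ played in passing from $\vV_n$ to $\vV_{n+1}$.

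For clause (1), I would verify that $P=\vV_\om|\alpha$ is $\rDelta_1^{M|\alpha}(\{\lambda\})$ by writing out the level-by-level recipe defining $\vV_\om$. The base level $\vV_\om|\lambda_\infty=\mM_{\infty\om}|\lambda_\infty$ is definable in the codes over $M|\lambda$ via the direct-limit recipe from Definition \ref{dfn:covering_dl} together with Lemma \ref{lem:alpha^*_as_ev_it_map}. Above $\lambda_\infty$, activity transfers to and from $M$ directly, short extenders are simple restrictions of $F^{M|\nu}$, and long extenders are recovered by composing $F^{M|\nu}$ with $j_{n+1,\om}\rest(\vV_{n+1}|\Delta^{\vV_{n+1}})$; by Lemma \ref{lem:k_n^+_def} parts \ref{item:k_n^+_rest_lambda_infty} and \ref{item:k_n^+_rest_P|lambda_infty^+}, the restrictions of these iteration maps to $P|(\lambda_\infty)^{+P}$ (which is all we ever need in order to specify a single $F^{P|\nu}$) are lightface over $P|(\lambda_\infty)^{+P}$, hence available $\rDelta_1$ over $M|\alpha$ from $\{\lambda\}$.

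For clauses (3), (4a) and (4b), I would first verify that $\Vop^P_{\mathscr{P}}\in P$ and $\Vop^P_{\mathscr{P}}$ is $(\lambda_\infty)^{+P}$-cc in $P$ whenever $P$ satisfies ``$(\lambda_\infty)^+$ exists''; these are exactly Lemmas \ref{lem:lambda^+-cc} and the preceding definability lemma. The chain-condition then forces the cardinal successor agreement $\lambda^{+(M|\alpha)}=(\lambda_\infty)^{+P}$ in the usual way: a would-be collapse of $\lambda^{+(M|\alpha)}$ to $\lambda_\infty$ would descend to $P$ through Vopenka-genericity. Clause (4b) is then an immediate application of Lemma \ref{lem:M|lambda_is_P-generic} to $S=P$; the uniform definability hypothesis there is supplied by clause (1) applied to each $Q\in\mathscr{P}$, since the recipe in (1) is first-order in $\ds(Q)$. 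Clauses (4c) and (4d) follow by inverting the P-construction: given $M|\lambda$, one reads off $F^{M|\nu}$ from $F^{P|\nu}$ by composing with the same $j_{n+1,\om}\rest(\vV_{n+1}|\Delta^{\vV_{n+1}})$, which is already coded into $P|(\lambda_\infty)^{+P}\sub P[M|\lambda]$.

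For the fine-structural clauses (2), (4e), and (5), I would transfer projecta and standard parameters between $M|\alpha$ and $P$ using the forcing/ground correspondence from (4c). Since $\Vop^P_{\mathscr{P}}\in P|(\lambda_\infty)^{+P}$ and is $(\lambda_\infty)^{+P}$-cc, a $\Sigma_k$-new subset of $\rho_k^{M|\alpha}\geq\lambda^{+(M|\alpha)}$ computed over $M|\alpha$ pulls back to a $\Sigma_k$-new subset of $\rho_k^{M|\alpha}$ over $P$ via the translation from clause (1), giving $\rho_k^P\leq\rho_k^{M|\alpha}$; the reverse inequality, and the matching of $p_k$'s, use that $M|\alpha$ is a Vopenka extension of $P$. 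For clause (5), $\rho_\om^{M|\alpha}=\lambda$ corresponds under the translation to $\rho_\om^P=\lambda_\infty$, because $\lambda$ in $M$ is the generic collapse-image of $\lambda_\infty$ in $P$ in the sense provided by (4b), and the P-construction restriction does not move the ``projectum index''. Full soundness in clause (2) is then the union of (4e) and (5) together with solidity of standard parameters, which lifts across the forcing by the same chain-condition transfer.

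The main obstacle will be clause (4e) in the sub-interval where $M|\alpha$ does not yet see $\lambda^{+M}$, i.e.\ between those $\alpha$ where $M|\alpha\sats$``$\lambda^+$ exists'' fails. In that range, the ground/extension framework of (4c) is not directly applicable and one has to argue the fine structural correspondence by hand, showing that the purely local recipe from (1) does not create spurious projecta below those already present in $M|\alpha$; this should proceed exactly by the techniques developed in \S\ref{sec:vV_n+1+to_vV_n+2} (and modeled on \cite{vm2}) for the corresponding intermediate intervals at the finite stages, but we have to be careful since the shifting of long extenders at the boundary $\lambda_\infty$ is slightly more delicate than at the boundaries $\gamma_i^{\vV_{n+1}}$.
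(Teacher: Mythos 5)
Your high-level framework (level-by-level translation, Vopenka genericity, $(\lambda_\infty)^{+P}$-cc transfer) is the right one and matches the paper's. But you have misidentified where the real work is, and glossed over the genuinely delicate part.

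You flag as ``the main obstacle'' the sub-interval where $M|\alpha\not\sats$``$\lambda^+$ exists''. In fact the paper disposes of that range quickly and in two half-lines of argument: when $M|\eta\sats$``$\lambda$ is the largest cardinal'', either $\rho_\om^{\vV_\om|\eta}=\lambda_\infty$ (in which case condensation for $M$, applied to the $\rDelta_1^{M|\eta}(\{\lambda\})$-definable $\vV_\om|\eta$, forces $\rho_\om^{M|\eta}=\lambda$ and soundness is easy), or $\rho_\om^{\vV_\om|\eta}=\eta$ (in which case the Vopenka genericity from Lemma \ref{lem:M|lambda_is_P-generic} applied to $P=\J(\vV_\om|\eta)$ shows $\eta$ stays regular in $P[M|\lambda]\supseteq M|\eta$, hence $\rho_\om^{M|\eta}=\eta$ too). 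Note also that (4e) is \emph{conditioned} on $M|\alpha\sats$``$\lambda^+$ exists'', so the scenario you name cannot arise for that clause.

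The real obstacle is the later case: $M|\alpha\sats$``$\lambda^+$ exists'' and $\rho_\om^{M|\alpha}=\lambda$. Two things there are non-trivial and your sketch does not reach them. First, knowing $\rho_{k+1}^{M|\alpha}=\lambda$ gives only $\rho_{k+1}^P\le(\lambda_\infty)^{+P}$; to get $\rho_\om^P=\lambda_\infty$ one must rule out $\rho_\om^P=(\lambda_\infty)^{+P}$, which the paper does by observing that otherwise $(\lambda_\infty)^{+P}$ would remain regular in $\J(P)[M|\lambda]$, contradicting that $M|\alpha$ already sits inside that extension and projects to $\lambda$; your remark that ``the P-construction restriction does not move the projectum index'' does not substitute for this argument. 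Second, and more seriously, full soundness of $P$: when $\rho_{n+1}^P=\lambda_\infty<(\lambda_\infty)^{+P}<\rho_n^P$, the standard parameter $p_{n+1}^P$ may contain an element $\iota^P\in(\lambda_\infty,(\lambda_\infty)^{+P})$ which does not directly correspond to anything in $M|\alpha$. The paper needs (i) a fairly intricate antichain-sequence argument inside $\Vop^P_{\mathscr{P}}$ to show $\Hull_{n+1}^P(\lambda_\infty\cup\{\pvec_{n+1}^P\})=P$, and (ii) a comparison between $\iota^P$ and $\iota^{M|\alpha}$ that splits into cases and invokes condensation for $M$ to produce the solidity witness. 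Your claim that solidity ``lifts across the forcing by the same chain-condition transfer'' is not correct as stated: the chain condition controls behaviour at and above $(\lambda_\infty)^{+P}$, but the offending parameter lives strictly below it, and the two sides' parameters in the respective intervals $(\lambda_\infty,(\lambda_\infty)^{+P})$ and $(\lambda,\lambda^{+(M|\alpha)})$ are not a priori aligned. This is the genuine gap.
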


\begin{proof}
Part \ref{item:P_is_Delta_1^M}: This is an easy consequence of the fact that
 $\mM_{\infty\om}$ is definable in the codes over $M|\lambda$.
 
 We prove the rest of the lemma by induction on $\alpha$. Suppose it holds for all $\alpha<\eta$, and $M|\eta\sats$``$\lambda$ is the largest cardinal''.
 By induction then, $\vV_\om|\eta\sats$``$\lambda_\infty$ is the largest cardinal''.

 Suppose $\vV_\om|\eta$ projects $\leq\lambda_\infty$; so in fact $\rho_\om^{\vV_\om|\eta}=\lambda_\infty$. Then $M$ projects to $\lambda$, by condensation for $M$
 and since $\vV_\om|\eta$ is  definable over $M|\eta$ from the parameter $\lambda$.
And since $\vV_\om|\eta\sats$``$\lambda_\infty$ is the largest cardinal'',
it is easy to see that $\vV_\om|\eta$ is sound.

Now suppose that $\rho_\om^{\vV_\om|\eta}=\eta$. So $P\sats$``$\eta=(\lambda_\infty)^+$'' where $P=\J(\vV_\om|\eta)$.
Note that  $\vV_\om|\eta$ is level-by-level
definable over each $P\in\mathscr{P}$, 
uniformly in $P$, in the sense described
in the hypotheses to Lemma \ref{lem:M|lambda_is_P-generic}. So by Lemmas \ref{lem:lambda^+-cc}
and \ref{lem:M|lambda_is_P-generic},
$M|(\lambda_\infty)^{+P}$ is $(P,\Vop^P_{\mathscr{P}})$-generic
and $\eta$ is regular in $P[M|(\lambda_\infty)^{+P}]$,
but noting that $M|\eta\in P[M|(\lambda_\infty)^{+P}]$,
therefore $\rho_\om^{M|\eta}=\eta$.
 
 Now the usual arguments work to maintain
 the inductive conditions through to the least $\alpha$ such that $\eta<\alpha$
 and $\rho_\om^{M|\alpha}=\lambda$,
 so consider this, and let $k$ be least such that $\rho_{k+1}^{M|\alpha}=\lambda$.
 Let $P=\vV_\om|\alpha$.
 Then we get that $P$ is $i$-sound and $\rho_i^{P}=\rho_i^{M|\alpha}$
 and $p_i^P=p_i^{M|\alpha}$ for $i\leq k$ as usual. We have $\rho_{k+1}^{M|\alpha}=\lambda$, which gives that $\rho_{k+1}^{P}\leq(\lambda_\infty)^{+P}$ and $P$ is $(\lambda_\infty)^{+P}$-sound. If $\rho_\om^{P}=(\lambda_\infty)^{+P}$
 then as before, $(\lambda_\infty)^{+P}=\lambda^{+(M|\alpha)}$ remains regular in $\J(P)[M|\lambda]$, contradicting
 that $M|\alpha\in\J(P)[M|(\lambda_\infty)^{+P}]$
 and $M|\alpha$ projects to $\lambda$.
 So $\rho_\om^P=\lambda_\om$,
 and it remains to see that $P$ is fully sound.
 
 Suppose there is $n<\om$ such that $\rho_{n+1}^P=\lambda_\infty<(\lambda_\infty)^{+P}=\rho_n^P$. Then $P$ is $n$-sound.
 And 
 \[ \Hull_{n+1}^P(\lambda_\infty\cup\{\pvec_{n+1}^P\})=P \]
 since, as usual, it
 cofinal in $\rho_n^P$. And $P$ is $(n+1)$-solid  because if $p_{n+1}^P\neq\emptyset$ then there is $\xi\in(\lambda_\infty,(\lambda_\infty)^{+P})$
 such that $p_{n+1}^P=\{\xi\}$,
 and \[(\lambda_\infty)^{+P}\cap\Hull_{n+1}^P(\xi\cup\{\pvec_n^P\})=\xi,\]
 which implies that the theory of this hull is in $P$ (irrespective of whether condensation holds here). This gives $(n+1)$-soundness,
 and hence $\om$-soundness, for $P$.
 
 Now suppose otherwise, so there is $n<\om$
 such that $\rho_{n+1}^P=\lambda_\infty<(\lambda_\infty)^{+P}<\rho_n^P$. Then the witnessed $\rSigma_{n+1}$ forcing relation for $(P,\Vop^P_{\mathscr{P}})$
 is $\rSigma_{n+1}^P(\{\lambda_\infty\})$,
 and $\rho_n^P=\rho_n^{M|\alpha}$
 and $\pvec_n^P=\pvec_n^{M|\alpha}$
 and $p_{n+1}^P\cut(\lambda_\infty)^{+P}=p_{n+1}^{M|\alpha}\cut\lambda^{+(M|\alpha)}$. And $\rho_{n+1}^{M|\alpha}=\lambda$,
 since otherwise we easily get a contradiction by condensation applied to $M|\alpha$.
Let
\[H=\Hull_{n+1}^P(\lambda_\infty\cup\{\pvec_{n+1}^P\}).\]

We claim $p_{n+1}^{M|\alpha}\cap\lambda^{+(M|\alpha)}\leq p_{n+1}^{P}\cap(\lambda_\infty)^{+P}$. For suppose $p_{n+1}^{M|\alpha}\cap\lambda^{+(M|\alpha)}>p_{n+1}^{P}\cap(\lambda_\infty)^{+P}$. Then there is $\iota$ such that $p_{n+1}^{M|\alpha}\cap\lambda^{+(M|\alpha)}=\{\iota\}$, and
\[ H'=\Hull_{n+1}^{M|\alpha}(\iota\cup\pvec_{n+1}^{M|\alpha}\cut\lambda^{+(M|\alpha)}) \]
is such that $H'\cap\lambda^{+(M|\alpha)}=\iota$, and $H\sub H'$
and $H'$ computes $H$, 
and letting $C'$ be the transitive collapse
of $H'$, then by condensation for $M|\alpha$,
$C'\pins M|\alpha$ (noting that $\lambda$ is not measurable in segments of $M$,
and $M|\iota$ is not active, since it has largest cardinal $\lambda$, and we are using Jensen indexing), but then $\vV_\om|\OR^{C'}\in P$ and this structure computes the transitive collapse $C$ of $H$,
so $C\in P$, which is a contradiction.

We now claim that $H=P$.
For $H\cap\rho_n^P$
is cofinal in $\rho_n^P=\rho_n^{M|\alpha}$,
and by the previous paragraph,
$\pvec_{n+1}^{M|\alpha}\in H$
and $\lambda\sub\lambda_\infty\sub H$.
Let $\beta<\alpha$ and $\gamma<\lambda$
and $u$ be an $\rSigma_{n+1}$ term
such that $\beta=u^{M|\alpha}(\gamma,\pvec_{n+1}^{M|\alpha})$. There is $\eta<\rho_n^{P}$ and $p\in\Vop^P_{\mathscr{P}}$
such that $P\sats p\forces$``$\widetilde{t}_\eta$
codes a witness to $u(\gamma,\pvec)=\beta$'',
where $\widetilde{t}_\eta$ is the canonical
name for $\Th_{\rSigma_n}(\eta\cup\{\pvec_n^{M|\alpha}\})$ and $\pvec=\pvec_{n+1}^{M|\alpha}$. Given $\xi<\rho_n^{P}$,
let $A_\xi$ be the antichain $\left<p^\xi_\delta\right>_{\delta<\gamma_\xi}$ of conditions of $\Vop^P_{\mathscr{P}}$ which is defined by recursion on $\xi$, with $A_\xi\ins A_{\xi'}$
when $\xi<\xi'$, and for each $p\in A_\xi$,
there is $\beta_p$ such that
$P\sats p\forces$``$\widetilde{t}_\xi$ codes
a witness to $u(\gamma,\pvec)=\beta_p$'',
and if $p,q\in A_\xi$ with $p\neq q$ then $\beta_p\neq\beta_q$,
and if $\beta_1<\alpha$ is such that there is $p\in\Vop^P_{\mathscr{P}}$ such that $P\sats p\forces$``$\widetilde{t}_\xi$ codes a witness to $u(\gamma,\pvec)=\beta_1$'', then there is some $p\in A_\xi$ such that $\beta_p=\beta_1$,
and using the $P$-constructibility order
to rank the conditions $p\in A_\xi\cut\bigcup_{\xi'<\xi}A_{\xi'}$. Note here
that $A_\xi\in P$ and $\gamma_\xi<(\lambda_\infty)^{+P}$. 
It follows that we can effectively assign to $A_\xi$
an enumeration $\left<p_\mu\right>_{\mu<\lambda_\infty}\in P$
of $A_\xi$. It follows that $\beta\in H$, and therefore $P=H$, as desired.

It remains to see that $P$ is $(n+1)$-solid.
For $p_{n+1}^P\cut(\lambda_\infty)^{+P}$
this is as usual, so suppose $p_{n+1}^P\cap(\lambda_\infty)^{+P}=\{\iota^P\}\neq\emptyset$.
So $p_{n+1}^{M|\alpha}\cap\lambda^{+(M|\alpha)}=\{\iota^{M|\alpha}\}\neq\emptyset$,
and $\iota^{M|\alpha}\geq\iota^P$.

Suppose that $\iota^{M|\alpha}<\iota^P$.
Then $\Hull_{n+1}^P(\iota^P\cup\{\pvec_{n+1}^P\cut(\lambda_\infty)^{+P}\})$
is bounded in $\rho_n^P$, by the preceding argument (otherwise we get $P=H$ again).
But then the corresponding theory is in $P$, giving the desired solidity witness.

Finally supppose that $\iota=\iota^{M|\alpha}=\iota^P$.
Let  $H=\Hull_{n+1}^M(\iota\cup\{\pvec_{n+1}^{M|\alpha}\cut\lambda^{+(M|\alpha)}\})$
and $C$ be the transitive collapse of $H$
and $\pi:C\to H$ the uncollapse.
Then $H\cap\lambda^{+(M|\alpha)}=\iota=\crit(\pi)$
and $\iota=\lambda^{+C}$. For every $\rSigma_{k+1}$ term $u$
and $\gamma<\iota$ and $\beta<\alpha$
such that $\beta=u^{M|\alpha}(\pvec,\gamma)$,
where $\pvec=\pvec_{n+1}^{M|\alpha}\cut\lambda^{+(M|\alpha)}$,
there is $p\in G_{M|(\lambda_\infty)^{+P}}$
and $\xi<\rho_n^{M|\alpha}$
such that $P\sats p\forces$``$\widetilde{t}_\xi$
codes a witness to $u(\vec{p},\gamma)=\beta$'',
and so in fact there is some such $p\in P|\iota$,
by $\rSigma_{n+1}$ elementarity. 
It follows that $\Hull_{n+1}^P(\iota\cup\pvec)\cap\OR=H\cap\OR$. But then $C\pins M$
and $P|\OR^C$ encodes the desired solidity witness in $P$, giving $(n+1)$-solidity. This completes the proof.
\end{proof}

So $\vV_\om$ is a fine structural strategy premouse,
$M|\lambda^{+M}$ is $(\vV_\om,\Vop^{\vV_\om}_{\mathscr{P}})$-generic,
and $\vV_\om[M|\lambda^{+M}]$ (or equivalently,
$\vV_\om[M|\lambda]$) has universe that of $M$.
We also saw that $\vV_\om\sub\mM_{\infty\om}$.
However, it doesn't quite seem we can immediately deduce
that $\vV_\om$ and $\mM_{\infty\om}$ have the same universe,
since we haven't shown that $G_{M|\lambda^{+M}}$
(the $\Vop_{\mathscr{P}}^{\vV_\om}$-generic filter) is also generic over $\mM_{\infty\om}$. (Recall that if it is, then since $\vV_\om[M|\lambda]$ and $\mM_{\infty\om}[M|\lambda]$ have the same universe (that of $M$), it would follow that $\vV_\om$ and $\mM_{\infty\om}$
also have the same universe.) So instead,
we adapt the process that works at the finite stages,
via which $\vV_{n+1\down n}$ is a lightface class of $\vV_{n+1}$,
to show:
\begin{lem}\label{lem:vV_om_mM_infty,om_equivalence} $\vV_{n\om}$ is a lightface class of $\vV_\om$,
uniformly in $n<\om$. Therefore $\vV_\om$ and $\mM_{\infty\om}$ have the same universe and are lightface classes in each other.\end{lem}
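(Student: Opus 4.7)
The plan is to generalize the finite-stage recovery. Recall that at finite stages $\vV_{n+1}\down n=\mM_{\infty,n}$ is a lightface class of $\vV_{n+1}$, recovered by reversing the P-construction applied to $\Ult(\vV_{n+1},e_n^{\vV_{n+1}})$ (cf.\ part \ref{item:P-con_in_pVl} of \ref{dfn:pVl} and Lemma \ref{lem:vV_n+1^vV_down_n_def}). I would mimic this at the $\omega$th stage, producing each $\vV_{n\om}$ as $\vV_\om\down n$ in a suitable sense.

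First, I would identify the analogues of the long extenders $e_n^{\vV}$ inside $\es^{\vV_\om}$. For each $n<\om$, let $\gamma_n^{\vV_\om}$ be least with $F^{\vV_\om|\gamma_n^{\vV_\om}}$ long with critical point $\delta_{n\infty}$; this exists by the definition of $\es^{\vV_\om}$ above $\lambda_\infty$, since $M$ has cofinally many total extenders with critical point $\kappa_n^M$. Set $e_n^{\vV_\om}=F^{\vV_\om|\gamma_n^{\vV_\om}}$. A routine check (using Lemma \ref{lem:alpha^*_as_ev_it_map} and Lemma \ref{lem:k_n^+_def}) shows that $\vV_\om$ satisfies the $\om$-stage generalization of the pVl axioms with respect to these $e_n^{\vV_\om}$'s.

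Second, I would imitate \ref{dfn:pVl}(\ref{item:P-con_in_pVl}) verbatim to define $\vV_\om\down n$ as a lightface class of $\vV_\om$: form $U_n=\Ult(\vV_\om,e_n^{\vV_\om})$ (lightface definable by soundness) and then reverse the P-construction level-by-level above $\gamma_n^{\vV_\om}$, using the $\varphi_0,\varphi_1,\varphi_2$ already fixed for the paper. This definition is uniform in $n$.

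The main step, and the genuine obstacle, is to verify that $\vV_\om\down n=\vV_{n\om}$. Below $\lambda_\infty$, the two structures agree by Lemma \ref{lem:M_infty,om_not_project_across_lambda_inf} together with $\vV_\om|\lambda_\infty=\mM_{\infty\om}|\lambda_\infty$. Above $\lambda_\infty$, the task is to check that the P-construction clauses for $\vV_\om\down n$ reproduce exactly the extender sequence of the iterate $\vV_{n\om}=M^{\Uu_n}_\infty$ of $\vV_n$. The explicit description of $\es^{\vV_\om}$ above $\lambda_\infty$ in terms of $\es^M$ and the maps $j_{k+1,\om}\rest(\vV_{k+1}|\Delta^{\vV_{k+1}})$, combined with Lemma \ref{lem:vV_n^vV_k,om_as_Ult_of_vV_n,om} and Lemma \ref{lem:Vv_m,om^V_n,om_is_correct_iterate} (which identify the ultrapower maps by $\vec{e}^{\vV_{n\om}}_{kn}$ with the correct iteration maps between the various $\vV_{k\om}$), makes this a diagram-chase: long extenders of $\vV_\om$ with critical point $\kappa_k^M$ for $k\neq n$ translate under $e_n^{\vV_\om}$ to short ultrapower images in $U_n$, and applying the reverse P-construction recovers precisely the extenders of $\vV_{n\om}$ coming from the iteration $\Uu_n$. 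The commutativity needed is the same kind already used at finite stages (cf.\ the proof of Lemma \ref{lem:Psi_vV_1_is_conservative}), applied to the cofinal branches of the $\Uu_n$.

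With the identification $\vV_\om\down n=\vV_{n\om}$ in hand, each $\vV_{n\om}$ is a lightface class of $\vV_\om$, uniformly in $n$, whence $\mM_{\infty\om}=L[\langle\vV_{n\om}\rangle_{n<\om}]$ is a lightface class of $\vV_\om$. Combined with the already-established inclusion $\vV_\om\subseteq\mM_{\infty\om}$, this yields that $\vV_\om$ and $\mM_{\infty\om}$ have the same universe and are lightface classes of each other.
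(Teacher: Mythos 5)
Your proposed construction of $\vV_\om\down n$ uses a \emph{single} ultrapower $U_n=\Ult(\vV_\om,e_n^{\vV_\om})$ followed by one reverse P-construction, imitating Definition \ref{dfn:pVl}(\ref{item:P-con_in_pVl}) verbatim. That definition, however, is designed for an $(n+1)$-pVl $\vV$, where it peels off only the top level of long extenders, yielding an $n$-pVl $\vV\down n$; to reach $\vV\down 0$ one must iterate the down-operation $n+1$ times (cf.\ the footnote in \ref{dfn:pVl}). For $\vV_\om$ there is no top level to peel: $\vV_\om$ has $m$-long extenders for every $m<\om$, and each $\vV_{n\om}$, being an iterate of the $n$-pVl $\vV_n$, has $m$-long extenders only for $m<n$. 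So the relationship between $\vV_\om$ and $\vV_{n\om}$ involves undoing $\om$-many level shifts, not one. Concretely, $U_n$ has $m$-long extenders for all $m<\om$ by elementarity, and the translation rules in \ref{dfn:pVl}(\ref{item:P-con_in_pVl}) do not even cover $m$-long extenders with $m>n$; extending them in the obvious way, the resulting $P$ still carries $m$-long extenders for all $m>n$, so it cannot be $\vV_{n\om}$. Your "main step" of verifying $\vV_\om\down n=\vV_{n\om}$ would therefore fail, not succeed, under your definition of $\vV_\om\down n$.

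The paper instead forms the direct limit of the $\om$-length (non-normal) iteration of $\vV_\om$ by the extenders $E_0=e_0^{\vV_\om}$, $E_1=i_{E_0}^{\vV_\om}(e_1^{\vV_\om})$, etc.\ (that is, by $\vec{e}^{\vV_\om}$ in the $\om$-step sense). That limit is $\vV_\om^{\vV_{0\om}}$, which \emph{does} sit inside $\vV_{0\om}$ as the P-construction above $(\lambda_\infty)^{+\vV_{0\om}}$, and since there are no extenders of $\vV_{0\om}$ indexed in $[\lambda_\infty,(\lambda_\infty)^{+\vV_{0\om}}]$, one can invert that P-construction to recover $\es^{\vV_{0\om}}\rest[\lambda_\infty,\OR)$, and hence $\vV_{0\om}$; the recovery of $\vV_{n\om}$ is analogous, iterating from level $n$ onward. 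This $\om$-iteration, rather than a single ultrapower, is the essential ingredient you are missing.
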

\begin{proof}
We already have $\vV_\om|\lambda_\infty$ lightface defines $\left<\vV_{n\om}|\lambda_\infty\right>_{n<\om}$.
Now working in $\vV_\om$, we can define $\vV_\om^{\vV_{0\om}}$
 as the direct limit of the (non-normal)
iteration on $\vV_\om$  given by the extenders $E_0=e_0^{\vV_\om}$,
$E_1=i_{E_0}^{\vV_\om}(e_1^{\vV_\om})$,
etc. This yields $\vV_\om^{\vV_{0\om}}$.
But since $\vV_\om^{\vV_{0\om}}$ is given by P-construction
in $\vV_{0\om}$ above $(\lambda_\infty)^{\vV_{0\om}}$
(and there are no extenders in $\es^{\vV_{0\om}}$
indexed in the interval $[\lambda_\infty,(\lambda_\infty)^{\vV_{0\om}}]$)
it follows that we can recover $\es^{\vV_{0\om}}\rest[\lambda_\infty,\OR)$ from $\vV_{\om}^{\vV_{0\om}}$.
This gives $\vV_{0\om}$. The recovery of $\vV_{n\om}$ is analogous.
\end{proof}

So by the lemma, we have successfully reorganized
$\mM_{\infty\om}$ as a fine structural structure $\vV_\om$,
and $\mM_{\infty\om}$ is
a ground of $M$ via $\Vop^{\vV_\om}_{\mathscr{P}}$.

\begin{dfn}\label{dfn:wW_om}
 Let $\wW_\om=\cHull^{\vV_\om}(\mathscr{I}^M)$
 and $i_\om:\wW_\om\to\vV_\om$ be the uncollapse map.
\end{dfn}

\begin{rem}
By Lemmas \ref{lem:vV_om_mM_infty,om_equivalence}
and \ref{lem:hull_conservativity_vV_om},
we get that $\wW_n\sub\wW_\om$ for each $n<\om$,
and $i_n\sub i$ for each $n<\om$ (recall $i_n:\wW_n\to\vV_{n\om}$ is the iteration map).
\end{rem}

\subsection{The iteration strategy $\Sigma_{\wW_\om}$}

\begin{dfn}
 \emph{Short-normal} trees on $\wW_\om$ and related structures are defined via the obvious adaptation of the definition for trees on $\wW_n$.
\end{dfn}

\begin{dfn}
 We define $\Sigma^{\sn}_{\wW_\om}$, the \emph{$\vec{e}$-minimal pullback
 of $\Sigma_M$}, as the following short-normal strategy for $\wW_\om$; this is just a straightforward generalization of the version for the models $\wW_n$. 
 
 For trees based on $\wW_\om|\delta_n^{\wW_\om}$, we follow $\Sigma_{\wW_n}$.
 Now let $\Tt$ be a successor length short-normal tree based on $\wW_\om|\lambda$ such that $b^\Tt$ does not drop. We define a strategy for above-$\lambda^{M^\Tt_\infty}$ short-normal trees $\Uu$
on $M^\Tt_\infty$.
 Note that since $\wW_\om$ has no total measures with critical point $\geq\lambda$, and the only extenders in $\es^{\wW_\om}$ which overlap $\lambda$ are long,
there is some $M^\Tt_\infty$-cardinal $\eta\geq\lambda^{M^\Tt_\infty}$ such that $\Uu$ is above $\eta$ and based on $P$ for some $P\pins M^\Tt_\infty|\eta^{+M^\Tt_\infty}$ such that $\rho_\om^P=\eta$.
Let $\Xx$ be the $\vec{e}$-iteration of $M^\Tt_\infty$; i.e., $\Xx$ is the length $\om+1$ tree
such that $E^\Xx_k=e_k^{M^\Xx_k}$ for all $k<\om$.
Then $M^\Xx_\infty=\vV_\om^{(M^\Tt_\infty)\down 0}$.
We now declare that $\Tt\conc\Uu$ is via $\Sigma_{\wW_\om}^{\sn}$ iff $i^\Xx_{0\om}``\Uu$ translates to a tree via $\Sigma_{M^\Tt_\infty\down 0}$.
\end{dfn}

\begin{lem}
 $\Sigma^{\sn}_{\wW_\om}$ is a self-coherent short-normal $(0,\OR)$-strategy for $\wW_\om$ with mhc.
\end{lem}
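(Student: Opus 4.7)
The plan is to reduce each of the three required properties---being a genuine $(0,\OR)$-iteration strategy, having mhc, and self-coherence---to the corresponding properties for $\Sigma_M$ and for the finite-stage strategies $\Sigma_{\wW_n}$, following the template set by Lemmas \ref{lem:e-vec_minimal_pullback}, \ref{lem:mhc_inheritance}, and \ref{lem:Sigma_wW_n+2}.

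First I would verify that $\Sigma^{\sn}_{\wW_\om}$ is indeed a strategy, not merely putative. The below-$\lambda$ fragment is the union (as $n<\om$) of the pullbacks $\Sigma_{\wW_n}$, which by induction are strategies, and these cohere as $n$ varies by the conservativity they inherit from $\Sigma_{\wW_n}\down 0=\Sigma_M$ (cf.~Corollary \ref{cor:Sigma_wW_n+1_down_0=Sigma_M}) together with Lemma \ref{lem:Psi_M_infty,om,sn_conservative_ext}. For the above-$\lambda^{M^\Tt_\infty}$ fragment, given a successor-length $\Tt$ based on $\wW_\om|\lambda$ with non-dropping cofinal branch, the $\om$-stage analogs of Lemmas \ref{lem:vV_n^vV_k,om_as_Ult_of_vV_n,om} and \ref{lem:Vv_m,om^V_n,om_is_correct_iterate} identify $M^\Xx_\infty=\vV_\om^{(M^\Tt_\infty)\down 0}$ as a correct $\Sigma_{(M^\Tt_\infty)\down 0}$-iterate. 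Since any above-$\lambda^{M^\Tt_\infty}$ short-normal tree $\Uu$ uses only extenders whose critical points exceed $\lambda^{M^\Tt_\infty}$, the map $i^\Xx_{0\om}$ translates $\Uu$ level by level into an above-$\lambda^{(M^\Tt_\infty)\down 0}$ $0$-maximal tree on $(M^\Tt_\infty)\down 0$ with the same tree, drop and degree structure; $\Sigma_M$ then supplies branches at limit stages, and the elementarity of $i^\Xx_{0\om}$ pulls wellfoundedness back.

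Next I would verify mhc, following Lemma \ref{lem:mhc_inheritance}. Given a minimal tree embedding $\Pi:\Tt\hookrightarrow_{\min}\Xx$ with $\Xx$ via $\Sigma^{\sn}_{\wW_\om}$ and both trees short-normal, split each into its below-$\lambda$ part and its above-$\lambda$ part. The below-$\lambda$ case reduces to mhc for the relevant $\Sigma_{\wW_n}$, known by induction. For the above-$\lambda$ part, $i^\Xx_{0\om}\circ\Pi$ descends to a minimal tree embedding between the $\vec{e}$-translated trees on the appropriate $\Sigma_M$-iterate, whence mhc for $\Sigma_M$ applies; clause \ref{item:normalization_cond} of the pVl definition guarantees the condensation needed to pull minimal hulls back through the $\vec{e}$-translation, just as in Lemma \ref{lem:mhc_inheritance}. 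Self-coherence then mimics the two cases of Lemma \ref{lem:Sigma_wW_n+2}: if the $m$-long extender $E\in\es_+^{M^\Tt_\infty}$ arises below $\lambda^{M^\Tt_\infty}$, reduce to self-coherence of $\Sigma_{\wW_n}$ for sufficiently large $n$; otherwise $\gamma_m^{M^\Tt_\infty}$ lies above $\lambda^{M^\Tt_\infty}$ and $E$ is induced by some $F\in\es^{(M^\Tt_\infty)\down 0}$ with $\crit(F)=\kappa_m^{(M^\Tt_\infty)\down 0}$, so extending the translated tree by $F$ and applying Lemma \ref{lem:action_iter_maps_on_M_infty}\ref{item:iteration_action_on_M_infty_between_iterates} recognizes $F$ as the extender of a correct $\Sigma_{\mM_{\infty,m}}$-iteration map, which pulls back to show that $E$ too realizes a correct iteration map.

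The hard part will be the $\om$-stage analog of the conservativity machinery of Lemmas \ref{lem:conservativity_stable}, \ref{lem:conservativity_unstable}, and \ref{lem:e-vec_commutativity}: one must verify, for a non-dropping iterate $M^\Tt_\infty$ of $\wW_\om$ by a below-$\lambda$ tree $\Tt$, that $M^\Tt_\infty$ is itself an $\om$-pVl, that $(M^\Tt_\infty)\down 0$ is a correct $\Sigma_M$-iterate, and that $\vec{e}$-translation intertwines the above-$\lambda$ strategies on $M^\Tt_\infty$ and on $(M^\Tt_\infty)\down 0$. The needed ingredients are available in \S\ref{sec:con_vV_om}---especially Lemmas \ref{lem:hull_conservativity_vV_om} and \ref{lem:vV_om_mM_infty,om_equivalence}, together with Lemma \ref{lem:M_infty,om_functions_repd_mod_measure_one}---but at the $\om$th step there is no single extender $e$ to commute through as in the finite-stage conservativity arguments, so the matching must instead be extracted uniformly from the eventual-stability of the indiscernible-based definitions of $\vec{\vV}_\om$ and the $*$-map, which is where the care is concentrated.
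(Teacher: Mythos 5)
Your plan follows the paper's own approach, which is exactly: it's clearly a strategy, mhc is as for $\Sigma_{\wW_n}$ (Lemma~\ref{lem:mhc_inheritance}), self-coherence is as in Lemma~\ref{lem:Sigma_wW_n+2}. You have, with reasonable fidelity, expanded the paper's two-sentence proof into a coherent sketch and correctly identified the supporting lemmas (\ref{lem:Psi_M_infty,om,sn_conservative_ext}, \ref{lem:M_infty,om_functions_repd_mod_measure_one}, \ref{lem:hull_conservativity_vV_om}, \ref{lem:vV_om_mM_infty,om_equivalence}, plus clause~\ref{item:normalization_cond} of the pVl definition).

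One slip in the self-coherence case split: you write ``otherwise $\gamma_m^{M^\Tt_\infty}$ lies above $\lambda^{M^\Tt_\infty}$,'' but $\gamma_m^{M^\Tt_\infty}$ is by definition the least index of an $m$-long extender on $\es^{M^\Tt_\infty}$, hence always below $\delta_m^{+M^\Tt_\infty}<\lambda^{M^\Tt_\infty}$. What you mean is that $\lh(E)>\lambda^{M^\Tt_\infty}$, i.e.\ the $m$-long extender $E$ is indexed above $\lambda$; that is the correct analog of the ``otherwise'' case of Lemma~\ref{lem:Sigma_wW_n+2}, since at the $\om$th stage there is no $\gamma_\om$ and $\lambda^{\wW_\om}$ plays its role (this is the paper's parenthetical ``note we do not need the preparatory lemmas which handle $\delta_{n+1}$ there''). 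With that correction the split is: trees based on $\wW_\om|\lambda^{\wW_\om}$ reduce to self-coherence of the $\Sigma_{\wW_n}$; trees with $\lh(E)>\lambda$ are handled like Case~2 of Lemma~\ref{lem:Sigma_wW_n+2}, reducing via the $\vec{e}$-translation $\Xx$ to $(M^\Tt_\infty)\down 0$ and using Lemma~\ref{lem:action_iter_maps_on_M_infty}, together with the correctness-pullback argument from the proof of Lemma~\ref{lem:*-suitable_P-con_correctness_no_short_overlaps_vV} for the tree on the $\delta_m$-slice (a step worth flagging explicitly, as it is the part of Case~2 in Lemma~\ref{lem:Sigma_wW_n+2} that is not a pure translation). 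Your closing caution about the absence of a single commuting extender $e$ at the $\om$th level is fair, but Lemma~\ref{lem:M_infty,om_functions_repd_mod_measure_one} and Lemma~\ref{lem:Psi_M_infty,om,sn_conservative_ext} are precisely the replacements, so this is not an actual gap.
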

\begin{proof}
 It is clearly a short-normal $(0,\OR)$-strategy.
 The proof of mhc is like for $\Sigma_{\wW_n}$.
 The proof of self-coherence is like that of Lemma \ref{lem:Sigma_wW_n+2}
 (note we do not need the preparatory lemmas here
 which handle $\delta_{n+1}$ there).
\end{proof}

\begin{dfn}
 $\Sigma_{\wW_\om}$ is the unique self-consistent $(0,\OR)$-strategy for $\wW_\om$ which extends $\Sigma^{\sn}_{\wW_\om}$. 
\end{dfn}

\begin{rem}
As usual, for a non-dropping $\Sigma_{\wW_\om}$-iterate $\vV$,
 $\Sigma^{\sn}_{\vV}$ denotes the short-normal strategy for $\vV$ induced by $\Sigma^{\sn}_{\wW_\om}$ via normalization, and $\Sigma_{\vV}$
 is its extension to a self-consistent $(0,\OR)$-strategy. \end{rem}
 
\begin{lem}\label{lem:vV_om^N_is_it}
 Let $N$ be a non-dropping $\Sigma_M$-iterate.
 Then:
 \begin{enumerate}
  \item 
$\vV_\om^N$ is a  $\Sigma_{\wW_\om}$-iterate.
\item $\vV_{n\om}^N$ is a $\Sigma_{\wW_n}$-iterate.
\item\label{item:vV_n,om_is_Sigma_vV-it} Let $n<\om$ and let $\vV$ be the $\delta_n^{\vV_{n+1}^N}$-core
of $\vV_{n+1}^N$. Then $\vV_{n\om}^N$
is a $\Sigma_{\vV}$-iterate.
\end{enumerate}
\end{lem}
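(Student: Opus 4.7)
My plan is to establish the three items in the order (2), (3), (1), since (1) will follow by lifting once (2) and (3) are in hand. First, I identify $\vV_{n\om}^N$ with the last model of the natural $\om$-stack $\Uu_n^N$ on $\vV_n^N$ analogous to $\Uu_n$, namely
\[ \Uu_n^N=\Tt_{n,n+1}^{+,N}\conc\Tt_{n+1,n+2\down n}^{+,N}\conc\Tt_{n+2,n+3\down n}^{+,N}\conc\ldots, \]
where each component is the internal version computed in $N$. Since $N$ is a non-dropping $\Sigma_M$-iterate, Corollary \ref{cor:Sigma_wW_n+1_down_0=Sigma_M} yields that $\vV_n^N$ is a non-dropping $\Sigma_{\wW_n}$-iterate. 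I then verify inductively along the stack that each component $\Tt_{n+k,n+k+1\down n}^{+,N}$ is via the correct normal strategy on the current last model, appealing to Lemma \ref{lem:action_iter_maps_on_M_infty} and Corollary \ref{cor:action_iter_maps_on_M_infty_unsound} to identify the intermediate iterates as $\Sigma$-iterates of the appropriate $\delta$-cores. It follows that the full stack is via $\Sigma_{\vV_n^N}^{\stk}$, so $\vV_{n\om}^N=M^{\Uu_n^N}_\infty$ is a $\Sigma_{\vV_n^N}$-iterate, and hence a $\Sigma_{\wW_n}$-iterate. This gives (2).

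For (3), note that $\vV$ is $\kappa_{n+1}^\vV$-sound (being the $\delta_n$-core of $\vV_{n+1}^N$) and is itself a non-dropping $\Sigma_{\wW_{n+1}}$-iterate. Applying Lemma \ref{lem:action_iter_maps_on_M_infty} with $\vV_{n+1}^N$ and its core $\vV$ in place of $\vV$ and $\bar{\vV}$ there, $\mM_{\infty,n}^{\vV_{n+1}^N}$ is a $\delta_n$-sound $\Sigma_\vV$-iterate via a short-normal tree based on $\vV|\delta_n^\vV$. Proceeding inductively along $\Uu_n^N$, using part (7) of Lemma \ref{lem:action_iter_maps_on_M_infty} at each step to compose iterates and their iteration maps, each successive model is a $\Sigma_\vV$-iterate and the transition maps agree with the corresponding $\Sigma_\vV$-iteration maps on the relevant restrictions. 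Taking the direct limit yields that $\vV_{n\om}^N$ is a $\Sigma_\vV$-iterate.

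For (1), let $\Tt_N$ be the $\Sigma_M$-tree leading from $M$ to $N$. Using the definition of $\Sigma_{\wW_\om}^{\sn}$ as the $\vec{e}$-minimal pullback of $\Sigma_M$, together with lifting arguments paralleling those in Corollary \ref{cor:Sigma_wW_n+1_down_0=Sigma_M}, we obtain a short-normal tree $\Tt_N'$ on $\wW_\om$ via $\Sigma_{\wW_\om}^{\sn}$ such that $\Tt_N'\down 0=\Tt_N$. Its last model $M^{\Tt_N'}_\infty$ satisfies $M^{\Tt_N'}_\infty\down n=\vV_n^N$ for every $n<\om$ (by part (2) applied at each finite level together with the pullback correspondence), and since by Lemma \ref{lem:vV_om_mM_infty,om_equivalence} the structure $\vV_\om^N$ is determined uniformly from the sequence $\left<\vV_n^N\right>_{n<\om}$, we conclude $M^{\Tt_N'}_\infty=\vV_\om^N$. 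The main obstacle I expect is the identification in (2) of $\vV_{n\om}^N$ with $M^{\Uu_n^N}_\infty$: one must verify that the direct limit of hulls defining $\vV_{n\om}^N$ inside $N$ coincides with the external $\om$-iterate $M^{\Uu_n^N}_\infty$, which parallels the stability and Silver-indiscernibles argument leading to Definition \ref{dfn:covering_dl} but now relativized to $N$, and crucially relies on the tail agreement between $\left<\vV_m^N\right>_{m<\om}$ and $\left<\vV_m\right>_{m<\om}$ on indiscernibles.
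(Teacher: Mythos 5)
Your proposal takes a genuinely different route from the paper's proof. The paper does not set up an internal-to-$N$ $\om$-stack $\Uu_n^N$ and verify its components inductively; rather, it proceeds as follows. From the $\delta_n^{\vV_{n+1}^N}$-core $\vV$ of $\vV_{n+1}^N$, it observes there is a $\Sigma_{\vV}$-tree based on $\vV|\delta_n^{\vV}$ whose last model agrees with $\vV_\om^N$ below $\delta_n^{\vV_\om^N}$, and that $\vV$ is itself the last model of a $\Sigma_{\wW_{n+1}}$-tree based on $\wW_{n+1}|\delta_n^{\wW_{n+1}}$. The compatibility of the strategies $\Sigma_{\wW_m}$, $\Sigma_{\wW_n}$, $\Sigma_{\wW_\om}$ then yields a single successor-length non-dropping $\Sigma_{\wW_\om}$-tree $\Ww$ with $\vV_\om^N|\lambda^{\vV_\om^N}=M^{\Ww}_\infty|\lambda^{M^{\Ww}_\infty}$. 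The decisive step, which is absent from your proposal, is the hull condition $\vV_{n\om}^N=\Hull^{\vV_{n\om}^N}(\lambda^{\vV_\om^N}\cup\mathscr{I}^N)$: from this, each $\vV_{n\om}^N$ is forced to equal $M^{\Ww\down n}_\infty$, and since $\vV_\om^N$ is recovered from $\left<\vV_{n\om}^N\right>_{n<\om}$ in exactly the way $\wW_\om$ is recovered from $\left<\wW_n\right>_{n<\om}$, one concludes $\vV_\om^N=M^{\Ww}_\infty$.

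Two concrete issues with your route. First, for part (1) you propose to lift $\Tt_N$ to a tree $\Tt_N'$ on $\wW_\om$ with $\Tt_N'\down 0=\Tt_N$, appealing to ``lifting arguments paralleling Corollary \ref{cor:Sigma_wW_n+1_down_0=Sigma_M}.'' But that corollary rests on the $(n+1)$-translation-pair machinery of Lemma \ref{lem:n+1-translation_pair}, and no $\om$-analogue of that lemma is available at this point (the relevant statement $\Sigma_{\vV}\down 0=\Sigma_{\vV\down 0}$ for $\om$-pVls comes after Lemma \ref{lem:vV_om^N_is_it} in the paper, and moreover the $\om$-translatable-padded tree machinery is never developed in full). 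So your argument for (1) is assuming unproven infrastructure. Second, for part (2), ``verify inductively along the stack'' glosses over the fact that the intermediate models of $\Uu_n^N$ need not be $\delta_n$-sound (or $\kappa_{n+1}$-sound), so Lemma \ref{lem:action_iter_maps_on_M_infty} does not apply directly; you would need to pass to cores via Corollary \ref{cor:action_iter_maps_on_M_infty_unsound} at each step and track the interaction between the core embeddings and the stack, which is non-trivial and not spelled out. The hull-condition argument in the paper sidesteps both of these difficulties entirely, because it only needs agreement below $\lambda$ and soundness of $\vV_{n\om}^N$ itself, not of the intermediate models of any explicit stack.
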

\begin{proof}
Let $\vV$ be as in part \ref{item:vV_n,om_is_Sigma_vV-it}.
It is straightforward to see that there is a successor length $\Sigma_{\vV}$-tree $\Uu_n$
such that $[0,\infty]^{\Uu_n}$ is non-dropping,
$\Uu_n$ is based on $\vV|\delta_n^{\vV}$, and
$\vV_\om^N|\delta_n^{\vV_\om^N}=M^{\Uu_n}_\infty|\delta_n^{M^\Uu_\infty}$.
  But there is also a $\Sigma_{\wW_{n+1}}$-tree $\Tt_n$, based on $\wW_{n+1}|\delta_n^{\wW_{n+1}}$, of successor length and with $[0,\infty]^{\Tt_n}$ non-dropping,
such that $\vV=M^{\Tt_n}_\infty$.
Given the agreement between $\Sigma_{\wW_m},\Sigma_{\wW_n},\Sigma_{\wW_\om}$ for $m,n<\om$,
it follows that there is a $\Sigma_{\wW_\om}$-tree
$\Ww$ of successor length, such that $[0,\infty]^{\Ww}$ is non-dropping,
and $\vV_\om^N|\lambda^{\vV_\om^N}=M^{\Ww}_\infty|\lambda^{M^\Ww_\infty}$. But it is straightforward
to see that for each $n<\om$,
$\vV_{n\om}^N=\Hull^{\vV_{n\om}^N}(\lambda^{\vV_\om^N}\cup\mathscr{I}^N)$
and therefore (using the preceding remarks,  ranging over all $m<\om$) $\vV_{n\om}^N$ is  a $\Sigma_{\wW_n}$-iterate. But $\vV_\om^N$
is computed from $\left<\vV_{n\om}^N\right>_{n<\om}$
in the same manner as $\wW_\om$ is computed
 from $\left<\wW_n\right>_{n<\om}$,
 and it follows that $\vV_\om^N=M^{\Ww}_\infty$.
 The remaining details now follow readily.
\end{proof}
\begin{dfn}
Given a non-dropping $\Sigma_{\wW_\om}$-iterate $\vV$,
 $\Sigma_{\vV}\down 0$ is the $(0,\OR)$-strategy for $\vV\down 0$ computed from $\Sigma_{\vV}$
 by analogy with how $\Sigma_{\wW_n}\down 0$
 is computed from $\Sigma_{\wW_n}$.
\end{dfn}

\begin{lem}
 Let $\vV$ be any non-dropping $\Sigma_{\wW_\om}$ iterate. Then $\Sigma_{\vV}\down 0=\Sigma_{\vV\down 0}$.
\end{lem}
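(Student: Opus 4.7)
The plan is to reduce the identity to its finite-stage analogues, which we have already established, by exploiting two facts: first, that $\wW_\om\down 0 = M$ and $\Sigma_M$ is the unique $(0,\OR)$-strategy for $M$; second, that the $\down 0$-translation and normalization commute in the same way at the $\om$-stage as they do at each finite stage.

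First I would verify the base case $\Sigma_{\wW_\om}\down 0 = \Sigma_M$. Since $\wW_\om\down 0 = \wW_0 = M$ (as built into the construction following Definition \ref{dfn:wW_om}), the previous definition gives that $\Sigma_{\wW_\om}\down 0$ is a $(0,\OR)$-strategy for $M$; by the uniqueness of $\Sigma_M$ (cf.\ the argument at the start of the section on $\Mswom$), the two strategies coincide. As a consequence, for any non-dropping $\Sigma_{\wW_\om}$-iterate $\vV$, the $\Sigma_{\wW_\om}$-tree $\Tt$ leading from $\wW_\om$ to $\vV$ has a translate $\Tt\down 0$, which is a $\Sigma_M$-tree with last model $\vV\down 0$, so $\vV\down 0$ is a non-dropping $\Sigma_M$-iterate; this is precisely the $\om$-analogue of Corollary \ref{cor:Sigma_wW_n+1_down_0=Sigma_M}.

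Next I would establish the $\om$-analogue of Lemma \ref{lem:Sigma_vV_is_e-vec_min_pullback}: for any non-dropping $\Sigma_{\wW_\om}$-iterate $\vV$, the short-normal component $(\Sigma_\vV)^{\sn}$ is characterized as the short-normal strategy on $\vV$ whose image under $\down 0$ agrees with $\Sigma_{\vV\down 0}$. The argument parallels the finite-stage proof: a short-normal tree $\Uu$ on $\vV$ decomposes according to the standard decomposition relative to its $\vec{\gamma}$-stable/$\gamma_k$-unstable structure, each piece is handled by the conservativity lemmas \ref{lem:conservativity_stable} and \ref{lem:conservativity_unstable} (applied at each finite level), and normalization of the stack leading to $\vV$ concatenated with $\Uu$, together with normalization of its translate down to $\vV\down 0$, proceed in lockstep by Lemma \ref{lem:e-vec_commutativity}. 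The key observation that makes this work at the $\om$-stage is that any fixed tree $\Uu$ on $\vV$ only uses long extenders up to some finite layer $n$ above which all further action is either above $\lambda^\vV$ or of short-normal type, so the commutativity assertion reduces to finitely many instances of the finite-stage results, which are already in hand.

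From the pullback characterization the short-normal equality $(\Sigma_\vV\down 0)^{\sn} = (\Sigma_{\vV\down 0})^{\sn}$ follows immediately. To extend to arbitrary (not just short-normal) trees, I would invoke the unique self-consistent extension principle from Lemma \ref{lem:unique_ext_to_(0,OR)-strat_n+1} (and its obvious $\om$-analogue): both $\Sigma_\vV\down 0$ and $\Sigma_{\vV\down 0}$ are self-consistent $(0,\OR)$-strategies for $\vV\down 0$ extending agreeing short-normal strategies, so by uniqueness they coincide. Here one must check that $\Sigma_\vV\down 0$ is itself self-consistent, which follows from self-consistency of $\Sigma_\vV$ (established in defining $\Sigma_{\wW_\om}$ and inherited by iterates) together with the fact that the $\down 0$ process commutes with self-coherence at long extenders.

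The main obstacle will be verifying the commutativity step at the $\om$-stage rigorously, since a priori a tree on $\vV$ can produce new long extenders in its iterates at infinitely many distinct layers, and one must track the bookkeeping of how the $\down 0$-padding and translation interact with the $\vec{e}$-minimal inflation when such layering occurs unboundedly. My expectation is that this reduces, by the length-of-tree-is-fixed observation above, to a finite induction that has already been completed in Lemmas \ref{lem:conservativity_stable}--\ref{lem:e-vec_commutativity} and \ref{lem:n+1-translation_pair}; the remaining work is essentially bookkeeping to package the direct-limit structure relating the $\wW_n$'s to $\wW_\om$ with these finite-stage calculations.
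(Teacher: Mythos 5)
The paper's own proof is the single line ``As in the proof of Corollary \ref{cor:Sigma_wW_n+1_down_0=Sigma_M},'' which in turn rests on the uniqueness of $\Sigma_M$ plus the $\om$-analogue of Lemma \ref{lem:Sigma_down_0_is_strat} (and, for general iterates $\vV$, the commutativity of $\down 0$ with normalization established via the translation-pair machinery). Your proposal unpacks exactly this: base case $\Sigma_{\wW_\om}\down 0=\Sigma_M$ by uniqueness, then propagation to arbitrary iterates via the conservativity/translation lemmas at the level of short-normal trees, then the self-consistent-extension uniqueness to handle trees on $\vV$ with long extenders. So the approach is the same; you are filling in the paper's elided details.

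One caveat worth flagging: the ``key observation'' that a fixed tree on $\vV$ only acts at finitely many layers is not correct in the generality stated. Since $\vV$ is an $\om$-pVl, a short-normal tree on $\vV$ can use short extenders with indices cofinal in $\lambda^{\vV}$, so its standard decomposition genuinely has $\om$ many nontrivial pieces, and a $0$-maximal tree on $\vV$ can apply $m$-long extenders for unboundedly many $m<\om$. You raise this concern yourself in your last paragraph but then appeal back to the very observation being questioned to dismiss it. Fortunately the claim is not actually needed: the commutativity of $\down 0$ with normalization is established level by level by the conservativity lemmas (\ref{lem:conservativity_stable}, \ref{lem:conservativity_unstable}) and Lemma \ref{lem:n+1-translation_pair}, and those arguments carry through along an $\om$-length decomposition by the usual direct-limit bookkeeping; no reduction to finitely many layers is required. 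Also note that $\vV\down 0$ is an ordinary premouse, so every $0$-maximal tree on it is already short-normal; the self-consistency/uniqueness-of-extension step is needed only on the $\vV$ side of the correspondence (to pin down how $\Sigma_\vV$ handles long extenders), not on the $\vV\down 0$ side.
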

\begin{proof}
 As in the proof of Corollary \ref{cor:Sigma_wW_n+1_down_0=Sigma_M}.
\end{proof}
`
\begin{lem}\label{lem:N_computes_Sigma_vV,lambda^vV}
 Let $N$ be a non-dropping $\Sigma_M$-iterate.
 Let $\vV=\vV_\om^N$.
 Let $\xi\in\OR$  and $x=N|\xi^{+N}$. Let $\PP\in\pow(\xi)\cap N$
 and $g$ be $(N,\PP)$-generic. 
 Then:
 \begin{enumerate}
  \item $N$ is closed under $\Sigma_{\vV,\lambda^{\vV}}$ and $\Sigma_{\vV,\lambda^{\vV}}\rest N$ is lightface definable over $N$, uniformly in $N$.
  \item $N[g]$ is closed under $\Sigma_{\vV,\lambda^{\vV}}$ and $\Sigma_{\vV,\lambda^{\vV}}\rest N[g]$ is definable over $N[g]$ from the parameter $x$,
  uniformly in $N,g,x$.
  \end{enumerate}
\end{lem}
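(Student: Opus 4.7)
The plan is to reduce $\Sigma_{\vV,\lambda^{\vV}}$ to the finite-level strategy fragments $\Sigma_{\vV_{n+1}^N,\Delta^{\vV_{n+1}^N}}$ for which closure and definability have already been established at the $\wW_{n+1}$-stage (in its full form modulo \cite{*-trans_add}). By Lemma \ref{lem:vV_om^N_is_it}, $\vV=\vV_\om^N$ is a non-dropping $\Sigma_{\wW_\om}$-iterate, and its reduct $\vV\down(n+1)=\vV_{n+1,\om}^N$ is a $\Sigma_{\wW_{n+1}}$-iterate which (by the same lemma) is itself a $\Sigma$-iterate of the $\delta_n$-core of $\vV_{n+1}^N$. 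Moreover $\Sigma^{\sn}_{\wW_\om}$ is, by definition, the $\vec{e}$-minimal pullback of $\Sigma_M$, and its restriction to trees based on $\vV|\delta_n^{\vV}$ is inherited from $\Sigma_{\wW_{n+1}}$.

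First I would handle trees $\Tt$ whose support is bounded below some $\delta_n^{\vV}$. For each fixed $n$, the $\vec{e}$-pullback machinery of Section \ref{sec:vV_n+1+to_vV_n+2} (cf.\ Lemmas \ref{lem:conservativity_stable}, \ref{lem:conservativity_unstable}, \ref{lem:e-vec_minimal_pullback}, and \ref{lem:Sigma_vV_is_e-vec_min_pullback}) translates $\Tt$ canonically to $\Tt\down(n{+}1)$ on $\vV\down(n{+}1)$, and the strategy choice for $\Tt$ is determined by that on $\Tt\down(n{+}1)$. Since $\vV\down(n{+}1)$ is a $\Sigma_{\wW_{n+1}}$-iterate, the analogue of Lemma \ref{lem:N_can_iterate_vV_n+2^N_tame} (in its full, non-tame form, once \cite{*-trans_add} is in hand) shows that $N$ is closed under $\Sigma_{\vV\down(n+1),\Delta^{\vV\down(n+1)}}$ and that this restriction is lightface definable over $N$, uniformly in $N$ and $n$. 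Combining the definitions uniformly in $n$ yields closure of $N$ under $\Sigma_{\vV,\lambda^{\vV}}$ restricted to $n$-bounded trees, and the uniform lightface definition.

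Next I would address trees whose extender indices are cofinal in $\lambda^{\vV}$. Here I decompose $\Tt=\bigcup_{k<\om}\Tt_k$ where $\Tt_k$ is the maximal initial segment based on $\vV|\delta_{n_k}^{\vV}$ for an increasing sequence $n_k\nearrow\om$; each $\Tt_k$ has successor length with non-dropping branch (otherwise $\Tt$ would be bounded, reducing to the prior case) and $\Tt_{k+1}$ extends the last model by stages above $\gamma_{n_k}^{M^{\Tt_k}_\infty}$ but below $\delta_{n_{k+1}}^{M^{\Tt_k}_\infty}$. The branch choices at each limit are forced by the branch of the relevant $\Tt\down(n_k{+}1)$, so the strategy value for $\Tt$ at any given stage is computed by the already-defined definition at the appropriate finite level. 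There is no new branch problem at $\lambda^{\vV}$ since $\lambda^{\vV}$ is a limit of Woodins and not itself Woodin in $\vV$, and since $\Sigma_{\vV,\lambda^{\vV}}$ only selects branches at limit stages $\eta$ with $\delta(\Tt\rest\eta)<\lambda^{\vV}$; any such $\eta$ is below some $\delta_{n_k}^{\vV}$ and so its branch is handled by the $n_k$-stage computation.

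Finally, the $N[g]$ case is deduced from the $N$-case via the now-standard recovery of $N$ (and hence of $\vV$ and all reducts $\vV\down(n{+}1)$) from $x=N|\xi^{+N}$ in $N[g]$; this is the analogue for $N$ of Lemma \ref{lem:vV_def_over_vV[g]} and follows from the definability of the direct condensation stack. Once $N$ is definable over the universe of $N[g]$ from $x$, the formula giving $\Sigma_{\vV,\lambda^{\vV}}\rest N$ lightface over $N$ pulls back to a formula giving $\Sigma_{\vV,\lambda^{\vV}}\rest N[g]$ definable over $N[g]$ from $x$, uniformly. The principal obstacle I anticipate is ensuring that the stratification into $n_k$-bounded stages, together with the corresponding switch between the finite-level definitions, can be packaged into a single uniform formula (as opposed to a formula depending on the tree $\Tt$); this comes down to checking that the inductive hypotheses \ref{item:hyp_8} produce a single definition scheme indexed by $n$, which by inspection of the $\wW_{n+1}$-constructions they do.
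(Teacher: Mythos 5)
Your proposal matches the paper's (very terse) proof deferral: both reduce $\Sigma_{\vV,\lambda^{\vV}}$ to the finite-level fragments $\Sigma_{\vV_{n+1}^N,\delta_n^{\vV_{n+1}^N}}$ via Lemma \ref{lem:vV_om^N_is_it} together with the inductive closure/definability established for the $\wW_{n+1}$-stage, and both flag the non-tame case as deferred to $*$-translation. One small slip: Lemma \ref{lem:vV_om^N_is_it}(\ref{item:vV_n,om_is_Sigma_vV-it}) makes $\vV_{n\om}^N$ (not $\vV_{n+1,\om}^N$) the $\Sigma$-iterate of the $\delta_n^{\vV_{n+1}^N}$-core of $\vV_{n+1}^N$, so your indices are off by one there, but this does not affect the argument.
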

\begin{proof}[Proof (deferral)]
 This follows directly from Lemma \ref{lem:vV_om^N_is_it}
 and, for each $n<\om$, the closure of $N$ (respectively, $N[g]$) under $\Sigma_{\vV_{n+1}^N,\delta_n^{\vV_{n+1}^N}}$ and its (uniform) definability. For non-tame trees, the proof relies on $*$-translation, due to its reliance on earlier lemmas which use this, and hence is only fully proven with \cite{*-trans_add}.
\end{proof}

\begin{lem}\label{lem:vV_computes_Sigma_vV,lambda^vV}
Let $\vV$ be a non-dropping $\Sigma_{\wW_\om}$-iterate.
 Let $\xi\in\OR$ with $\lambda^{\vV}\leq\xi$ and $x=\vV|\xi^{+\vV}$. Let $\PP\in\pow(\xi)\cap \vV$
 and $g$ be $(\vV,\PP)$-generic. 
 Then:
 \begin{enumerate}
  \item $\vV$ is closed under $\Sigma_{\vV,\lambda^{\vV}}$ and $\Sigma_{\vV,\lambda^{\vV}}\rest \vV$ is lightface definable over $\vV$, uniformly in $\vV$,
  \item $\vV[g]$ is closed under $\Sigma_{\vV,\lambda^{\vV}}$ and $\Sigma_{\vV,\lambda^{\vV}}\rest \vV[g]$ is definable over $\vV[g]$ from the parameter $x$,
  uniformly in $\vV,g,x$.
  \end{enumerate}
\end{lem}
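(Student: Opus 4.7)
The plan is to mimic the strategy used at the finite stages (cf.\ Lemmas \ref{lem:vV_n+2_self-it_tame} and \ref{lem:wW_n+2_self-it_tame}), reducing to the previously established Lemma \ref{lem:N_computes_Sigma_vV,lambda^vV} by means of a pullback embedding. First I would observe that $\vV\down 0$ is a non-dropping $\Sigma_M$-iterate, which is the $\om$th-stage analog of Corollary \ref{cor:Sigma_wW_n+1_down_0=Sigma_M} and follows from the definition of $\Sigma_{\wW_\om}$ as the $\vec{e}$-minimal pullback of $\Sigma_M$. Setting $\vV' = \vV_\om^{\vV\down 0}$, by Lemma \ref{lem:vV_om^N_is_it} the model $\vV'$ is itself a non-dropping $\Sigma_{\wW_\om}$-iterate, and Lemma \ref{lem:N_computes_Sigma_vV,lambda^vV} applies with $N = \vV\down 0$, yielding closure of $\vV\down 0$ and $(\vV\down 0)[g]$ under $\Sigma_{\vV',\lambda^{\vV'}}$, together with the required (parameter) definability.

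Next I would use the natural iteration map $j\colon\vV\to\vV'$, obtained as the length-$(\om+1)$ iteration of $\vV$ using $E_k = e_k^{M_k}$ for $k<\om$ (the analog for $\vV$ of the map $\ell_0^+$ associated with Lemma \ref{lem:k_n^+_def}); this is via $\Sigma_\vV$ by Lemma \ref{lem:Psi_M_infty,om,sn_conservative_ext}, and $j\rest(\vV\down 0)$ agrees with the $\Sigma_{\vV\down 0}$-iteration map to $\vV'\down 0$. Given a tame tree $\Tt\in\vV$ based on $\vV|\lambda^{\vV}$, its minimal $\vec{e}$-pullback is a tree $\Tt\down 0$ on $\vV\down 0$, and a correct branch for $\Tt\down 0$ (obtained from the definability of $\Sigma_{\vV',\lambda^{\vV'}}\rest(\vV\down 0)$) yields a correct branch for $\Tt$, by conservativity of $\Sigma_{\wW_\om}^{\sn}$ over $\Sigma_M$ and the commutativity of the normalization procedures. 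Since $\vec{e}^{\vV}$ and the maps $\ell_n^+$ are lightface definable over $\vV$ (Lemma \ref{lem:k_n^+_def}), this establishes part (1) uniformly.

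For part (2) I would combine part (1) with the fact that $\vV$ is definable over $\vV[g]$ from the parameter $x = \vV|\xi^{+\vV}$, the appropriate analog of Lemma \ref{lem:vV_def_over_vV[g]} (which generalizes to $\Sigma_{\wW_\om}$-iterates by the same argument as Lemma \ref{lem:wW_n+1_def_from_seg_in_univ}, using the equivalence of Lemma \ref{lem:vV_om_mM_infty,om_equivalence}). Absorption of $\vV[g]$-generic trees by ground-model trees proceeds by extender-algebra genericity much as in the proof of Lemma \ref{lem:*-suitable_P-con_correctness_no_short_overlaps_vV[g]}.

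The main obstacle is the handling of non-tame trees, whose Q-structures may have short extenders overlapping $\delta(\Tt)$. The identification of these Q-structures via P-construction on the $\vV\down 0$ side rests on earlier lemmas (in particular Lemma \ref{lem:sts}) whose full statements require the integration of $*$-translation with our P-constructions. As in the deferrals of Lemmas \ref{lem:wW_n+2_self-it} and \ref{lem:N_computes_Sigma_vV,lambda^vV}, this part of the argument must therefore be postponed to \cite{*-trans_add}; for tame trees, the proof is complete as sketched above.
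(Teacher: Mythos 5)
The proposal routes through Lemma \ref{lem:N_computes_Sigma_vV,lambda^vV}, but this does not play the role that the paper's argument actually needs. Lemma \ref{lem:N_computes_Sigma_vV,lambda^vV} asserts that $N$ (the host $\Sigma_M$-iterate, with $\vV'=\vV_\om^N$ a strictly smaller ground of $N$) can compute $\Sigma_{\vV',\lambda^{\vV'}}$ by lightface definitions over $N$; it does \emph{not} give self-iterability of $\vV_\om^N$, which is the crucial ``Case 1'' needed here. The paper handles this case by an internal P-construction simulation --- the $\om$-analogue of Claim \ref{clm:vV_n+2_self-it_tame} (compare the finite-stage Lemma \ref{lem:vV_n+2_self-it_tame}). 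The two statements really are distinct: $\vV_\om^N$ has a different extender sequence from $N$, and so it cannot simply run $N$'s P-constructions. In the finite case the logical dependency goes the opposite way from what you need: Lemma \ref{lem:N_can_iterate_vV_n+2^N_tame} (the $N$-closure statement) is explicitly \emph{deduced from} Lemma \ref{lem:vV_n+2_self-it_tame} (the $\vV_{n+2}^N$-self-iterability statement), using that $N$ is a set-generic extension of $\vV_{n+2}^N$. There is no general mechanism for inverting that implication, since being a ground of $N$ is weaker than being a generic extension of $N$ for purposes of transferring lightface definability downward. So by dropping the Case 1 argument, you have left a genuine gap where the hard content of this lemma lives.

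There is also a concrete category error in the reduction step. You write that ``a correct branch for $\Tt\down 0$ \dots obtained from the definability of $\Sigma_{\vV',\lambda^{\vV'}}\rest(\vV\down 0)$'' yields a correct branch for $\Tt$. But $\Tt\down 0$ is a tree on $\vV\down 0$, while $\Sigma_{\vV',\lambda^{\vV'}}$ is a strategy for trees on $\vV'=\vV_\om^{\vV\down 0}$, a strictly smaller model. That strategy simply does not apply to $\Tt\down 0$. (Relatedly, ``$\vec{e}$-minimal pullback'' names a strategy operation in Definition \ref{dfn:e-vec_minimal_pullback}, not the tree-translation $\Tt\mapsto\Tt\down 0$.) What the paper's Case 2 reduction does --- following Lemma \ref{lem:wW_n+2_self-it_tame} and the proof of Lemma \ref{lem:*-suitable_P-con_correctness_no_short_overlaps_vV} --- is lift $\Tt$ via the $\vec{e}$-ultrapower map $j$ to a tree on $\vV'$ where Case 1 gives self-iterability, and then verify the resulting branch's correctness for $\Tt$ via iterability of the phalanx of the $\vec{e}$-minimal inflation. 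Your sketch conflates $\vV\down 0$ and $\vV'$ at exactly the step where this matters, and the appeal to ``conservativity and commutativity'' does not substitute for that verification. The handling of part (2) (pushing definability through generic extensions via the analogue of Lemma \ref{lem:vV_def_over_vV[g]}) and the deferral of the non-tame case to $*$-translation do match the paper.
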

\begin{proof}[Proof (deferral)]
Suppose first that $\vV=\vV_\om^N$
for some non-dropping $\Sigma_M$-iterate $N$.
Then the conclusions of the lemma with respect to tame trees are established via a slight variant of the proof of Claim \ref{clm:vV_n+2_self-it_tame} of the proof of Lemma \ref{lem:vV_n+2_is_pVl}. For non-tame trees, one combines this with $*$-translation,
so the proof is deferred to \cite{*-trans_add}.

The general case can be reduced to the one of the previous paragraph like in the proof of Lemma \ref{lem:wW_n+2_self-it_tame}, using that $N=\vV\down 0$
is a non-dropping $\Sigma_M$-iterate and $\vV'=\Ult(\vV,\vec{e}^{\vV})=\vV_\om^N$.
\end{proof}

We now want to generalize the previous two lemmas to arbitrary trees, not just those based on $\vV|\lambda^{\vV}$. Somewhat modifying the approach used for the $\vV_n$s, we will first show that $M$ is closed under $\Sigma_{\vV_\om\down 0}$ and can define $\Sigma_{\vV_\om\down 0}\rest M$,
 and deduce the same for $\Sigma_{\vV_\om}$ from this. For this, we will consider P-constructions
 for trees on $\vV_\om\down 0$, instead of trees on $\vV_\om$.

\begin{dfn}\label{dfn:star-suitable_vV_om}
Let $N$ be a non-dropping $\Sigma_M$-iterate.
Let $\PP\in N$ and $g$ be $(N,\PP)$-generic. Working in $N[g]$, we say
 that a pair of trees $(\Tt,\Tt')$
is \emph{$*$-suitable} if there
  are $\Tt_0,\Tt_1,\Tt_0',\Tt_1',\eta,\delta$ such that:
 \begin{enumerate}
  \item $\Tt'=\Tt_0'\conc\Tt_1'$ is short normal on $\vV_\om^N$,
  \item $\Tt=\Tt'\down 0$ and $\Tt_0=\Tt_0'\down 0$
   (so $\Tt$ and $\Tt_0$ are on $K=\vV_\om\down 0$),
 \item $\Tt=\Tt_0\conc\Tt_1$,
  \item $\Tt_0'$ is based on $\vV_\om^N|\lambda^{\vV_\om^N}$,
  is via $\Sigma_{\vV_\om^N}$,
  has successor length, and $b^{\Tt_0'}$ does not drop (so $\Tt_0$ is based on $K|\lambda^{K}$, etc),
   \item $\Tt_1'$ is above $\lambda^{M^{\Tt'_0}_\infty}$   and has limit length
   (so $\Tt_1$ is above $\lambda^{M^{\Tt_0}_\infty}$ and has limit length),
   \item\label{item:eta,delta,lambda_vV_om} $\eta$ is a cardinal of $N$ with $\max(\lambda^N,\lh(\Tt_0))<\eta<\delta=\delta(\Tt_1)$, $\eta$ is the largest cardinal of $N|\delta$,
  and $\PP\in N|\eta$, 
  \item\label{item:U|delta_is_generic_vV_om}$(N|\delta,g)$ is extender algebra generic over $M(\Tt_1)$
  for the extender algebra at $\delta$, and $\Tt_1$ is definable from parameters over $(N|\delta)[g]$.
 \end{enumerate}

Working instead in $\vV_\om^N[g]$ where
 $\PP\in\vV_\om^N$
and $g$
is $(\vV_\om^N,\PP)$-generic,
we say that a pair of trees $(\Tt,\Tt')$ 
is \emph{$*$-suitable}
if there are $\Tt_0,\ldots,\delta$ as before, except that we   replace ``$N$'' throughout
with ``$\vV_\om^N$''.
\end{dfn}

\begin{dfn}\label{dfn:P-con_vV_om}
 Let $N$ be a non-dropping $\Sigma_M$-iterate.
 Let $\PP\in N$ and $g$ be $(N,\PP)$-generic.
 Work in $N[g]$.
 Let $(\Tt,\Tt')$ be $*$-suitable.
 Then the \emph{P-construction} $\mathscr{P}^{N}(M(\Tt))$ is the (putative) premouse $P$ such that (we will be hoping to build the Q-structure for $M(\Tt)$,
 not $M(\Tt')$, and recall that $\Tt$ is on the premouse $K$):
 \begin{enumerate}
  \item $M(\Tt)\ins P$,
  \item for each $\nu>\delta$, we have:
  \begin{enumerate}
   \item $P|\nu$ is active iff $N|\nu$ is active, and
   \item if $N|\nu$ is active then letting $F=F^{N|\nu}$ and $G=F^{P|\nu}$, either:
   \begin{enumerate}
   \item $\crit(F)>\lambda^N$ (it follows that $\crit(F)>\delta$: $\crit(F)>\eta$ since $\eta$ is an $N$-cardinal and by the minimality of $M$, but $\eta$ is the largest cardinal of $N|\delta$) and $G\rest\OR=F\rest\OR$, or
   \item\label{item:commuting_ext_P-con_for_vV_om_in_N} for some $n<\om$, we have $\crit(F)=\kappa_n^N$, and \[G\com E_n\rest(\vV_{n+1}^{N}|\delta_n^{\vV_{n+1}^{N}})=F\rest(\vV_{n+1}^N|\delta_n^{\vV_{n+1}^N}),\]
   where $E_n$ is the $(\delta_n^{\vV_{n+1}^N},\delta_n^{\vV_{n+1}^{M^{\Tt_0}_\infty}})$-extender
   derived from the $\Sigma_{\vV_{n+1}^N}$-iteration map $j:\vV_{n+1}^N\to\vV$
   where $\vV=M^\Uu_\infty$ where $\Uu$ is the short-normal $\Sigma_{\vV_{n+1}^N}$-tree
   which is based on $\vV_{n+1}^N|\delta_n^{\vV_{n+1}^N}$
   and which iterates $\vV_{n+1}^N|\delta_n^{\vV_{n+1}^N}$
   out to $\vV_{n+1}^{M^{\Tt_0}_\infty}|\delta_n^{\vV_{n+1}^{M^{\Tt_0}_\infty}}$;
   note that $E_n\in (N|\eta)[g]$.
   \end{enumerate}
  \end{enumerate}
 \end{enumerate}

 Now work instead in $\vV_\om^N[g]$,
 where $\PP\in\vV_\om^N$ and $g$ is $(\vV_\om^N,\PP)$-generic. Let $(\Tt,\Tt')$ be $*$-suitable. 
 Then the \emph{P-construction} $\mathscr{P}^{\vV_\om^N}(M(\Tt))$ is the (putative) premouse $P$ defined as above, except that:
 \begin{enumerate}[label=--]
  \item ``$N$'' is replaced by ``$\vV_\om^N$'' throughout,
  \item condition \ref{item:commuting_ext_P-con_for_vV_om_in_N} is replaced by:
    \begin{enumerate}[label=\arabic*.]
\setcounter{enumii}{1}
  \item 
  \begin{enumerate}[label=(\alph*)]
\setcounter{enumiii}{1}
  \item 
  \begin{enumerate}[label=\roman*'.]
  \setcounter{enumiv}{1}
  \item for some $n<\om$, $F$ is $n$-long, and
\[G\com E_n\rest(\vV_\om^{N}|\delta_n^{\vV_\om^N})=F\rest(\vV_\om^N|\delta_n^{\vV_\om^N}),\]
   where $E_n$ is the $(\delta_n^{\vV_\om^N},\delta_n^{\vV_{n+1}^{M^{\Tt_0}_\infty}})$-extender
   derived from the $\Sigma_{\vV_\om^N}$-iteration map $j:\vV_\om^N\to\vV$
   where $\vV=M^\Uu_\infty$ where $\Uu$ is the short-normal $\Sigma_{\vV_\om^N}$-tree
   which is based on $\vV_\om^N|\delta_n^{\vV_\om^N}$
   and which iterates $\vV_\om^N|\delta_n^{\vV_\om^N}$
   out to $\vV_{n+1}^{M^{\Tt_0}_\infty}|\delta_n^{\vV_{n+1}^{M^{\Tt_0}_\infty}}$;
   note that $E_n\in (\vV_\om^N|\eta)[g]$.\qedhere
     \end{enumerate}
  \end{enumerate}
 \end{enumerate}
 \end{enumerate}

\end{dfn}

\begin{lem}\label{lem:*-suitable_P-con_correctness_no_short_overlaps_vV_om}
Let $N$ be a non-dropping $\Sigma_M$-iterate
and $\vV=\vV_\om^N$.
Let $i=0$ or $i=\om$.
Let $\PP\in\vV_i^N$ and $g$ be $(\vV_i^N,\PP)$-generic.
Work in $\vV_i^N[g]$.
Let $(\Tt,\Tt')$ be $*$-suitable,
and adopt the notation of Definition \ref{dfn:P-con_vV_om}. Let $K=\vV_\om^N\down 0$.
Suppose that $\Tt$ is via $\Sigma_{K}$, and let $b=\Sigma_{K}(\Tt)$.
Let $Q=Q(\Tt,b)$.
Then:
\begin{enumerate}
\item\label{item:delta(Tt)_overlapped_iff_reach_corresp_stage_in_P-con_vV_om} The following are equivalent:
\begin{enumerate}
\item\label{item:short_overlapping_E_exists_vV_om}there is a short extender $E\in\es_+^Q$ with $\crit(E)\leq\delta\leq\lh(E)$,
\item\label{item:projecting_stage_of_P-con_exists_vV_om} 
$P=\mathscr{P}^U(M(\Tt))$ is unsound and there is $n<\om$
such that $P$ is $n$-sound and
 $\rho_{n+1}^P<\delta\leq\rho_n^P$
 and there is $\mu<\delta$
 such that for all $\gamma\in[\mu,\delta)$, $P$ does not have the $(n+1,\pvec_{n+1}^P)$-hull property at $\gamma$.
\end{enumerate}
\item\label{item:Q=P-con_if_delta_not_shortly_overlapped_vV_om}  Suppose there is no short extender $E\in\es_+^Q$ with $\crit(E)\leq\delta\leq\lh(E)$.
Then $Q$ is set-sized and $Q=\mathscr{P}^{\vV_i^N}(M(\Tt))$.
\end{enumerate}
\end{lem}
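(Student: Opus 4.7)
The proof will follow the comparison-based template of Lemmas~\ref{lem:*-suitable_P-con_correctness_no_short_overlaps}, \ref{lem:*-suitable_P-con_correctness_no_short_overlaps_vV}, and \ref{lem:*-suitable_P-con_correctness_no_short_overlaps_vV[g]}, adapted to the $\om$-stage setting. The essential new feature is that $\mathscr{P}^{\vV_i^N}(M(\Tt))$ now simultaneously shifts the critical points of the short $\kappa_n^N$-extenders of $N$, for every $n<\om$, to $\delta_n^{\vV_\om^N}$ via the extenders $E_n$ derived from the correct $\Sigma_{\vV_{n+1}^N}$- (resp.~$\Sigma_{\vV_\om^N}$-) iteration maps supplied by Definition~\ref{dfn:P-con_vV_om}. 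I will first handle part~2 in the case $i=0$, then deduce the $i=\om$ case, and finally derive part~1 from part~2.

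\textbf{Part 2, case $i=0$.}  Let $P=\mathscr{P}^{N}(M(\Tt))$. I compare the phalanx $\Phi(\Tt,Q)$ (on the ``iterable'' side) against the trivial one-model phalanx $\Phi(N)$, modulo the generic $g$ at $\delta$, building  trees $(\Vv,\Ww)$. At stage $\alpha$, the least disagreement is located at the least $\gamma$ such that, letting $G=F^{M^\Vv_\alpha|\gamma}$ and $H=F^{M^\Ww_\alpha|\gamma}$, neither
\begin{enumerate}[label=--]
\item $\crit(G)>\lambda^{M^\Vv_\alpha}$ (symmetrically for $H$) and $G\rest\OR=H\rest\OR$, nor
\item there is $n<\om$ for which $G$ is $n$-long, $\crit(H)=\kappa_n^{M^\Ww_\alpha}$, and $G\circ (E_n)^\Ww_\alpha \rest\OR=H\rest\OR$,
\end{enumerate}
holds, where $(E_n)^\Ww_\alpha$ denotes the appropriate image of $E_n$ along $\Ww$. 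In this situation one sets $E^\Vv_\alpha=G$ and $E^\Ww_\alpha=H$. Standard Zipper-Lemma-style arguments, together with the use of indiscernibles exactly as in the proofs of Lemmas~\ref{lem:*-suitable_P-con_correctness_no_short_overlaps} and \ref{lem:*-suitable_P-con_correctness_no_short_overlaps_vV}, will show that no extender with critical point $\leq\delta$ is ever used on either side, no drops occur along the main branches, and that if $Q$ is set-sized the comparison is in fact trivial. The hypothesis that no short $E\in\es_+^Q$ overlaps $\delta$ then rules out the proper-class alternative: if $Q$ were proper class, the P-construction would continue past the correct Q-structure without terminating at a sound stage, forcing some short overlapping extender to appear. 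Hence $Q=P$, as required.

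\textbf{Part 2, case $i=\om$, and part 1.}  For $i=\om$, I will combine the preceding argument with the observation, from the construction of $\vV_\om$ and Lemma~\ref{lem:vV_om_mM_infty,om_equivalence}, that $N|\delta$ is $(\vV_\om^N, \Vop^{\vV_\om^N}_{\mathscr{P}})$-generic (modulo the relevant cardinal arithmetic at $\delta$), so that everything relevant to the comparison of Part~2, $i=0$ occurs inside $\vV_\om^N[g]$. Alternatively, the comparison can be carried out directly with $\vV_\om^N$ on the right-hand side, using that the short extenders of $\vV_\om^N$ above $\lambda$ coincide with those of $N$, while the long extenders of $\vV_\om^N$ are precisely what encode the $\kappa_n^N$-critical short extenders of $N$ through the same commutativity rule $G\circ E_n=F$. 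For part~1, the direction (b)$\Rightarrow$(a) follows from part~2 (if no short $E$ overlaps $\delta$, then $Q=P$ is sound). For (a)$\Rightarrow$(b), extend $\Tt$ by the least overlapping $E$ to $\Tt^{+}=\Tt\conc\langle E\rangle$, apply part~2 to $\Tt^{+}$ to identify $M^{\Tt^{+}}_\infty$ with $\mathscr{P}^{\vV_i^N}(M(\Tt))$ (noting that $b^{\Tt^{+}}$ drops in model), and read off the projection/hull-property failure at the image of the drop.

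\textbf{Main obstacle.}  The key difficulty I anticipate is the simultaneous bookkeeping of the infinitely many commutativity constraints $G\circ E_n=F$, across all $n<\om$, within a single comparison. One has to verify that the least-disagreement rule produces a well-defined pair $(\Vv,\Ww)$ and that no incompatibility arises between different long-extender species. This should work because the $E_n$ for distinct $n$ act on disjoint layers of $\vV_\om^N$ (each on $\vV_{n+1}^{\vV_\om^N}|\delta_n$), and the definition of an $(n+1)$-pVl together with Lemmas~\ref{lem:conservativity_stable}, \ref{lem:conservativity_unstable}, and \ref{lem:e-vec_commutativity} ensure the requisite commutativity between iteration maps and the $E_n$'s, propagated correctly along any iteration; however, making the ``least disagreement'' well-defined in the presence of infinitely many species requires a small but careful argument that I expect to occupy most of the technical work.
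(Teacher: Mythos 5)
Your overall plan — a comparison of the $Q$-side against the P-construction side with the generic at $\delta$, commuting the overlapping extenders via the $E_n$'s, invoking Zipper-Lemma/indiscernibles arguments, and deriving Part~1 from Part~2 by appending the least overlapping $E$ — is in the right spirit and matches the template of the earlier $*$-suitability lemmas, which is also what the paper does. However, there are three concrete divergences worth flagging.

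First, the base of the $Q$-side phalanx: the paper takes the $Q$-side to be the phalanx of the full $0$-maximal $\Sigma_M$-tree from $M$ all the way to $M^\Tt_b$, not $\Phi(\Tt,Q)$ with $\Tt$ rooted at $K=\vV_\om^N\down 0$. Since $K$ is itself a nontrivial $\Sigma_M$-iterate of $M$, your phalanx discards the stretch from $M$ to $K$. Both $K$ and $N$ are $\Sigma_M$-iterates, so a comparison of $\Phi(\Tt,Q)$ against $N$ is not obviously wrong, but the hull-property arguments that show no critical point $\leq\delta$ is used are formulated relative to $M$ (where one has soundness and $(\delta,\kappa)$-stability of the full tree); going back to $M$ makes these arguments clean and is what the paper specifies. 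You should at least observe that your comparison factors through the full tree from $M$.

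Second, the proper-class dichotomy for $Q$: you write that ``the hypothesis that no short $E\in\es_+^Q$ overlaps $\delta$ rules out the proper-class alternative,'' but in this lemma $Q$ is \emph{always} set-sized, regardless of the overlap hypothesis. By $*$-suitability, $\Tt_1$ is above $\lambda^{M^{\Tt_0}_\infty}$, which dominates every Woodin of $M^\Tt_b$; so $\delta=\delta(\Tt_1)$ is never Woodin in $M^\Tt_b$, and $Q(\Tt,b)$ is set-sized. Your reasoning here pattern-matches Lemma~\ref{lem:*-suitable_P-con_correctness_no_short_overlaps}, where a proper-class case (2(b)) genuinely arises because there $\delta(\Tt)$ can be the image of $\delta_{n+1}$, but that case simply does not occur here.

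Third, the least-disagreement rule: you stipulate a clause ``$G$ is $n$-long'' where $G=F^{M^\Vv_\alpha|\gamma}$ is on the $Q$-side, but the $Q$-side phalanx consists of iterates of the \emph{premouse} $M$ (or in your version, $K$), so $G$ is always short. The $n$-longness, if it appears at all, would be on the P-construction side for $i=\om$ (iterates of $\vV_\om^N$), and for $i=0$ (iterates of $N$) both $G$ and $H$ are short and the commuting is via the $E_n$'s as shifters, with $\crit(H)=\kappa_n^{M^\Ww_\alpha}$ and $G\circ E_n\rest\OR=H\rest\OR$. So your disagreement rule has the labels on the wrong side and needs to be restated. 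The ``main obstacle'' you anticipate (simultaneously managing all $n<\om$) is real but not qualitatively new; it is absorbed into the same disagreement bookkeeping as in the finite-stage lemmas, which is essentially why the paper's written proof can be so terse.
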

\begin{proof}
 This is much like the proof of Lemma \ref{lem:*-suitable_P-con_correctness_no_short_overlaps}. The ($0$-maximal) $\Sigma_M$-tree leading from $M$ to $M^\Tt_b$
 is used as the phalanx on the $Q$ side,
 and $N$ is used on the P-construction side.
 Otherwise, the details are as usual.
\end{proof}

Modulo our accumulated debts to $*$-translation,
we can now establish that $M$ and its (correct) iterates can correctly iterate both $\vV_\om^N$ and $\vV_\om^N\down 0$, and in particular, that $M$ can correctly iterate $\vV_\om$ and $\vV_\om\down 0$,
and recall that the latter is a (correct) iterate of $M$:

\begin{tm}\label{tm:N_computes_Sigma_vV}
 Let $N$ be a non-dropping $\Sigma_M$-iterate.
 Let $\vV=\vV_\om^N$.
 Let $\xi\in\OR$  and $x=N|\xi^{+N}$. Let $\PP\in\pow(\xi)\cap N$
 and $g$ be $(N,\PP)$-generic. 
 Then:
 \begin{enumerate}
  \item\label{item:N_closed_under_Sigma_vV} $N$ is closed under $\Sigma_{\vV}$ and $\Sigma_{\vV}\rest N$ is lightface definable over $N$, uniformly in $N$.
    \item\label{item:N_closed_under_Sigma_vV_down_0} Like part \ref{item:N_closed_under_Sigma_vV},
    but with $\Sigma_{\vV\down 0}$ replacing $\Sigma_{\vV}$.
  \item\label{item:N[g]_closed_under_Sigma_vV} $N[g]$ is closed under $\Sigma_{\vV}$ and $\Sigma_{\vV}\rest N[g]$ is definable over $N[g]$ from the parameter $x$,
  uniformly in $N,g,x$.
    \item\label{item:N[g]_closed_under_Sigma_vV_down_0} Like part \ref{item:N[g]_closed_under_Sigma_vV}, but with $\Sigma_{\vV\down 0}$ replacing $\Sigma_{\vV}$.
  \end{enumerate}
\end{tm}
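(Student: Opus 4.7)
The plan is to reduce parts \ref{item:N_closed_under_Sigma_vV} and \ref{item:N[g]_closed_under_Sigma_vV} to parts \ref{item:N_closed_under_Sigma_vV_down_0} and \ref{item:N[g]_closed_under_Sigma_vV_down_0} respectively via conservativity, and then to prove the latter by splicing together the base-case Lemma \ref{lem:N_computes_Sigma_vV,lambda^vV} with the P-construction machinery of Definition \ref{dfn:P-con_vV_om} and its correctness given by Lemma \ref{lem:*-suitable_P-con_correctness_no_short_overlaps_vV_om}, applied after a minimal-genericity inflation to replace arbitrary trees by $*$-suitable ones.

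First I would observe that, by the $\om$-stage analogue of Lemmas \ref{lem:e-vec_minimal_pullback} and \ref{lem:mhc_inheritance} underlying the definition of $\Sigma^{\sn}_{\wW_\om}$ as the $\vec{e}$-minimal pullback of $\Sigma_M$ (cf.~\S\ref{sec:vV_1,wW_1}), a $0$-maximal tree $\Uu$ on $\vV$ is via $\Sigma_\vV$ iff its associated $\vec{e}$-translation $\Uu\down 0$ (on $K:=\vV\down 0$) is via $\Sigma_{\vV\down 0}=\Sigma_K$; and the map $\Uu \mapsto \Uu\down 0$, together with its (partial) inverse when restricted to the trees arising, is first-order over $N$ (respectively $N[g]$), uniformly. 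Note too that by Lemma \ref{lem:vV_om^N_is_it}, $K$ is itself a non-dropping $\Sigma_M$-iterate, so $\Sigma_K$ is the unique strategy determined from $\Sigma_M$ via normalization. Hence parts \ref{item:N_closed_under_Sigma_vV} and \ref{item:N[g]_closed_under_Sigma_vV} follow from parts \ref{item:N_closed_under_Sigma_vV_down_0} and \ref{item:N[g]_closed_under_Sigma_vV_down_0}.

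To prove part \ref{item:N_closed_under_Sigma_vV_down_0}, I would split a short-normal tree $\Tt$ on $K$ as $\Tt = \Tt_0 \conc \Tt_1$, where $\Tt_0$ is based on $K|\lambda^K$ and $\Tt_1$ (if nonempty) is above $\lambda^{M^{\Tt_0}_\infty}$. For $\Tt_0$, the $\vec{e}$-translation converts it into a tree on $\vV$ based on $\vV|\lambda^\vV$, and Lemma \ref{lem:N_computes_Sigma_vV,lambda^vV} applied to $\Sigma_{\vV,\lambda^\vV}$ shows that $N$ computes the correct branches, uniformly. For $\Tt_1$ I would follow the pattern at the finite stages: internally in $N$, form a minimal genericity inflation to produce a $*$-suitable pair $(\Uu,\Uu')$ on $(K,\vV)$ in the sense of Definition \ref{dfn:star-suitable_vV_om} such that $\Tt_1$ minimally tree-embeds into $\Uu$, then apply Lemma \ref{lem:*-suitable_P-con_correctness_no_short_overlaps_vV_om} to read off the branch of $\Uu$ from the P-construction $\mathscr{P}^N(M(\Uu))$ (in the tame case) or via inverse $*$-translation otherwise, and finally transfer the branch along the minimal tree embedding to obtain $\Sigma_K(\Tt_1)$. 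All ingredients—the decomposition, the inflation, the P-construction, and the $*$-translation—are first-order over $N$, uniformly in $N$, giving the required definability.

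Part \ref{item:N[g]_closed_under_Sigma_vV_down_0} is obtained from part \ref{item:N_closed_under_Sigma_vV_down_0} by the absorption technique used throughout the paper: given a name $\dot\Tt\in N$ for $\Tt\in N[g]$ via $\Sigma_K$, the forcing method of \cite[Thm.~9.1]{fullnorm} / \cite[\S7.1]{iter_for_stacks} produces inside $N$ a single $\Sigma_K$-tree $\Uu$ absorbing all $\PP$-forced possibilities for $\dot\Tt$, and one then applies the already-established definition over $N$ to $\Uu$; the definability in $N[g]$ from the parameter $x=N|\xi^{+N}$ comes out uniformly in $N,g,x$ as in the parallel arguments at the finite stages. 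The main (and only new) obstacle, exactly as in Lemma \ref{lem:sts}, is that when $\delta(\Uu)$ is overlapped by a short extender in the correct Q-structure, the naive P-construction does not produce it, and the branch model must be identified via inverse $*$-translation; the rigorous treatment is precisely what hypothesis $(\dagger)$ of the paper isolates, and is deferred to \cite{*-trans_add}.
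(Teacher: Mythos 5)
Your proposal is correct and relies on exactly the same ingredients as the paper's proof: the base case Lemma~\ref{lem:N_computes_Sigma_vV,lambda^vV} for trees based on $\vV|\lambda^{\vV}$, reduction of the upper component to a $*$-suitable pair via minimal genericity inflation, the P-construction correctness Lemma~\ref{lem:*-suitable_P-con_correctness_no_short_overlaps_vV_om} (for tame trees), the forcing-absorption argument of \cite[Thm.~9.1]{fullnorm} for $N[g]$, and deferral of the non-tame (short-extender-overlapping-$\delta(\Tt)$) case to the $*$-translation of \cite{*-trans_add}. The only organizational difference is the direction of the interderivation: you derive parts \ref{item:N_closed_under_Sigma_vV} and \ref{item:N[g]_closed_under_Sigma_vV} from parts \ref{item:N_closed_under_Sigma_vV_down_0} and \ref{item:N[g]_closed_under_Sigma_vV_down_0}, whereas the paper proves parts \ref{item:N_closed_under_Sigma_vV} and \ref{item:N[g]_closed_under_Sigma_vV} directly (via $*$-suitable pairs, which already live on both $\vV$ and $K=\vV\down 0$) and then deduces parts \ref{item:N_closed_under_Sigma_vV_down_0} and \ref{item:N[g]_closed_under_Sigma_vV_down_0} using $\Sigma_{\vV\down 0}=\Sigma_{\vV}\down 0$; both directions are valid since the translation and its inverse are uniformly first-order over $N$ (resp.\ $N[g]$), so this is purely presentational.
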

\begin{proof}[Proof (deferral)]
Parts \ref{item:N_closed_under_Sigma_vV}
and \ref{item:N[g]_closed_under_Sigma_vV}:
It suffices to prove the same things for  just $\Sigma^{\sn}_{\vV}$ instead of $\Sigma_{\vV}$. But short-normal (putative) trees
on $\vV$ are of the form $\Uu'=\Tt'_0\conc\Uu_1'$,
where $\Tt'_0$ is based on $\vV|\lambda^{\vV}$,
and if $\Tt'_0$ has successor length and $[0,\infty]^{\Tt'_0}$ does not drop, then $\Uu'_1$ is above $\lambda^{M^{\Tt'_0}_\infty}$.
Tame such trees  $\Uu'$ via $\Sigma_{\vV}$ can be reduced to trees $\Tt'$ such that there is some $*$-suitable $(\Tt,\Tt')$, via minimal genericity inflation of $\Uu_1'$ to some minimal inflation $\Tt_1'$, producing $\Tt'=\Tt_0'\conc\Tt_1'$,
in the usual manner. So Lemmas \ref{lem:N_computes_Sigma_vV,lambda^vV} and \ref{lem:*-suitable_P-con_correctness_no_short_overlaps_vV_om}
suffice to handle such tame trees $\Uu'$.
For non-tame trees, as usual, it is similar,
but the proof involves $*$-translation,
so is deferred to \cite{*-trans_add}.

Parts \ref{item:N_closed_under_Sigma_vV_down_0}
and \ref{item:N[g]_closed_under_Sigma_vV_down_0}:
By parts \ref{item:N_closed_under_Sigma_vV}
and \ref{item:N[g]_closed_under_Sigma_vV} respectively, since $\Sigma_{\vV\down 0}=\Sigma_{\vV}\down 0$. 
\end{proof}

And such $\vV$ similarly compute their own strategy and that for $\vV\down 0$:

\begin{tm}\label{tm:vV_computes_Sigma_vV}
Let $\vV$ be a non-dropping $\Sigma_{\wW_\om}$-iterate.
 Let $\xi\in\OR$ with $\lambda^{\vV}\leq\xi$ and $x=\vV|\xi^{+\vV}$. Let $\PP\in\pow(\xi)\cap \vV$
 and $g$ be $(\vV,\PP)$-generic. 
 Then:
 \begin{enumerate}
  \item\label{item:vV_closed_under_Sigma_vV} $\vV$ is closed under $\Sigma_{\vV}$ and $\Sigma_{\vV}\rest \vV$ is lightface definable over $\vV$, uniformly in $\vV$,
      \item\label{item:vV_closed_under_Sigma_vV_down_0} Like part \ref{item:vV_closed_under_Sigma_vV},
    but with $\Sigma_{\vV\down 0}$ replacing $\Sigma_{\vV}$.
 
  \item\label{item:vV[g]_closed_under_Sigma_vV} $\vV[g]$ is closed under $\Sigma_{\vV}$ and $\Sigma_{\vV}\rest \vV[g]$ is definable over $\vV[g]$ from the parameter $x$,
  uniformly in $\vV,g,x$.
      \item\label{item:vV[g]_closed_under_Sigma_vV_down_0} Like part \ref{item:vV[g]_closed_under_Sigma_vV}, but with $\Sigma_{\vV\down 0}$ replacing $\Sigma_{\vV}$.
  \end{enumerate}
\end{tm}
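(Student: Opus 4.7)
The plan is to mirror the proof of Lemma \ref{lem:vV_computes_Sigma_vV,lambda^vV} and Theorem \ref{tm:N_computes_Sigma_vV}, first establishing the theorem in the special case $\vV=\vV_\om^N$ for some non-dropping $\Sigma_M$-iterate $N$, and then reducing the general case to this one via the $\vec{e}^{\vV}$-ultrapower.

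First, I would treat the special case $\vV=\vV_\om^N$. Parts \ref{item:vV_closed_under_Sigma_vV_down_0} and \ref{item:vV[g]_closed_under_Sigma_vV_down_0} reduce to parts \ref{item:vV_closed_under_Sigma_vV} and \ref{item:vV[g]_closed_under_Sigma_vV} respectively via $\Sigma_{\vV\down 0}=\Sigma_{\vV}\down 0$, so it suffices to prove closure and definability for $\Sigma_{\vV}^{\sn}$. As in the proof of Theorem \ref{tm:N_computes_Sigma_vV}, a short-normal tree on $\vV$ has the form $\Uu'=\Tt_0'\conc\Uu_1'$ with $\Tt_0'$ based on $\vV|\lambda^{\vV}$, and tame such trees can be reduced, via minimal genericity inflation of $\Uu_1'$ over an appropriate cardinal $\eta$, to a tree $\Tt'=\Tt_0'\conc\Tt_1'$ for which some $(\Tt,\Tt')$ is $*$-suitable in the sense of Definition \ref{dfn:star-suitable_vV_om}. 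Here I need to do the genericity inflation \emph{inside $\vV$} (respectively $\vV[g]$) rather than inside $N$ (respectively $N[g]$); this works because $\vV$ is closed under $\Sigma_{\vV,\lambda^{\vV}}$ by Lemma \ref{lem:vV_computes_Sigma_vV,lambda^vV}, giving closure of the lower component. Then Lemma \ref{lem:*-suitable_P-con_correctness_no_short_overlaps_vV_om} identifies the Q-structure $Q(\Tt,b)$ with $\mathscr{P}^{\vV}(M(\Tt))$, giving the required definability uniformly (the key point is that the P-construction is formed inside $\vV$ itself, using the long-extender commutation in clause 2(b)ii' of \ref{dfn:P-con_vV_om}, which is definable over $\vV$ by virtue of Lemma \ref{lem:k_n^+_def} propagated to $\vV$).

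Second, I would handle the general case. Let $\vV$ be an arbitrary non-dropping $\Sigma_{\wW_\om}$-iterate. By conservativity of $\Sigma_{\wW_\om}$, $\vV\down 0$ is a non-dropping $\Sigma_M$-iterate, so the special case applies to $\vV'=\Ult(\vV,\vec{e}^{\vV})=\vV_\om^{\vV\down 0}$. Let $j:\vV\to\vV'$ be the ultrapower map. As in the proof of Lemma \ref{lem:wW_n+2_self-it_tame}, I lift a (putative) tree $\Uu$ on $\vV$ to $j(\Uu)$ on $\vV'$; for branches, I use Lemma \ref{lem:hull_conservativity_vV_om}--style arguments combined with correctness of $j$. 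The key observation, as in the proof of Lemma \ref{lem:*-suitable_P-con_correctness_no_short_overlaps_vV}, is that the $\vec{e}$-minimal inflation of $(\Uu,b)$ on $\vV$ agrees correctly with $j(\Uu,b)$ modulo $j$, so correctness of the latter (from the special case applied to $\vV'$) transfers back to $(\Uu,b)$. For part \ref{item:vV[g]_closed_under_Sigma_vV}, after establishing closure, I use Lemma \ref{lem:vV_def_over_vV[g]} (adapted to $\vV_\om$-iterates, which is routine since $\wW_\om|\kappa_n^{\wW_\om}$ definably recovers $\wW_\om$ just as in Lemma \ref{lem:wW_n+1_def_from_seg_in_univ}) to recover $\vV$ over $\vV[g]$ from the parameter $x$, and then apply the closure result inside $\vV$.

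The main obstacle is, as in the preceding theorems, the treatment of non-tame trees, i.e.~trees whose Q-structures involve short extenders overlapping the local Woodin $\delta(\Tt\rest\eta)$. This requires the $*$-translation machinery to simulate the P-construction across such overlapping extenders, which is not yet fully written up here. Accordingly, the non-tame portion of the proof is deferred to \cite{*-trans_add}, exactly as in Lemmas \ref{lem:sts}, \ref{lem:vV_n+2_self-it}, and Theorem \ref{tm:N_computes_Sigma_vV}. The tame portion, as described above, goes through with the arguments already available in the present paper.
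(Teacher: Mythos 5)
Your proposal is correct and takes essentially the same route as the paper: first handle the special case $\vV=\vV_\om^N$ by mirroring Theorem \ref{tm:N_computes_Sigma_vV} and Lemma \ref{lem:vV_computes_Sigma_vV,lambda^vV} (with genericity inflation and P-constructions done inside $\vV$ itself, using that Lemma \ref{lem:*-suitable_P-con_correctness_no_short_overlaps_vV_om} already covers the $i=\om$ case), then reduce the general case via the $\vec{e}^{\vV}$-ultrapower into $\vV_\om^{\vV\down 0}$ as in Lemma \ref{lem:wW_n+2_self-it_tame}, deduce the $\Sigma_{\vV\down 0}$ parts from $\Sigma_{\vV\down 0}=\Sigma_{\vV}\down 0$, and defer the non-tame trees to the $*$-translation write-up. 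The added detail you supply (e.g.\ how to recover $\vV$ over $\vV[g]$ from $x$ and where the long-extender commutation of Definition \ref{dfn:P-con_vV_om} enters) is consistent with, and merely fills in, the sketch the paper gives.
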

\begin{proof}[Proof (deferral)]
Parts \ref{item:vV_closed_under_Sigma_vV}
and \ref{item:vV[g]_closed_under_Sigma_vV}:
The proof follows the same structure as earlier analgoues: If $\vV=\vV_\om^N$
for some non-dropping $\Sigma_M$-iterate $N$,
then  it is much like the proof of Theorem \ref{tm:N_computes_Sigma_vV};
and the general case can be reduced to the just mentioned  one  like in the proof of Lemma \ref{lem:wW_n+2_self-it_tame},
or more recently in that of Lemma \ref{lem:vV_computes_Sigma_vV,lambda^vV}.
Again the full proof is deferred to \cite{*-trans_add}.

Parts \ref{item:N_closed_under_Sigma_vV_down_0}
and \ref{item:N[g]_closed_under_Sigma_vV_down_0}:
Since $\Sigma_{\vV\down 0}=\Sigma_{\vV}\down 0$
and by parts  \ref{item:vV_closed_under_Sigma_vV}
and \ref{item:vV[g]_closed_under_Sigma_vV}
(so the full proof is deferred to \cite{*-trans_add}).
\end{proof}

\section{The core model and proof of   main theorem}\label{sec:main_proof}

Before we prove the main theorem,
we need to make clear what is meant by the \emph{core model} of $M$ and discuss this somewhat. Similarly to in \cite{vm1} and adapting the definition in \cite{core_model_in_transcendent_mice},
we (want to) define the \emph{core model} $K^M$ of $M$ as follows, and the \emph{core model} $K^{\vV_\om}$ of $\vV_\om$, as follows. We first state the definitions quantifying over classes
(which we take to be definable from parameters over the respective model), and will later observe that this can be eliminated. So, working either in $M$ or in $\vV_\om$, a \emph{weasel} $W$
is a proper class premouse, \emph{iterability}
for such $W$ is $(0,\OR)$-iterability,
and \emph{universality} says that there is no weasel $X$ that out-iterates $W$.
A class of ordinals $A$ is \emph{thick}
if it is a proper class and is $\eta$-closed for all sufficiently large regular cardinals $\eta$
(this notion suffices in the present context, because $M$ only has boundedly many measurable cardinals).
Let $W$ be a universal iterable weasel.
A \emph{thick hull} of $W$ is a class of the form $\Hull_{\Sigma_1}^{W}(A)$ for some thick class $A$ of ordinals.
Let $\Def(W)$ be the intersection of all 
thick hulls of $W$, and let $K(W)$ be the transitive collapse of $\Def(W)$.
We will observe that $K(W)$ is independent of $W$,
and define the core model $K$ to be $K(W)$ for any such $W$.

Admitting the above quantifiers over classes, the following lemma is the key point, and identifies $K$ as $\vV_\om\down 0$:
\begin{lem}\label{lem:vV_om_down_0=K}
 $\vV_\om\down 0$ is a universal iterable weasel,
and
\[ K(\vV_\om\down 0)=\Def(\vV_\om\down 0)=\vV_\om\down 0.\]
\end{lem}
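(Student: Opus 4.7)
The plan is to verify three things in succession: (i) iterability and weasel-hood of $\vV_\om\down 0$; (ii) universality; and (iii) $\Def(\vV_\om\down 0)=\vV_\om\down 0$. Item (iii) immediately gives $K(\vV_\om\down 0)=\vV_\om\down 0$, since $\vV_\om\down 0$ is already transitive. Throughout I identify $\vV_\om\down 0$ with $\vV_{0\om}=M^{\Uu_0}_\infty$, the zeroth layer of $\vV_\om$, which by construction is a non-dropping $\Sigma_M$-iterate of $M$ (using Lemma \ref{lem:vV_om_mM_infty,om_equivalence} and the definition of $\Uu_0$).

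For (i), iterability of $\vV_{0\om}$ inside $\vV_\om$ follows immediately from Theorem \ref{tm:vV_computes_Sigma_vV}(\ref{item:vV_closed_under_Sigma_vV_down_0}), which gives that the set-sized restrictions of $\Sigma_{\vV_\om\down 0}$ are lightface definable over $\vV_\om$; properness is clear from the construction of $\vV_\om$. For (ii), I will coiterate $\vV_{0\om}$ against any iterable proper-class weasel $X\in\vV_\om$ using the $\vV_\om$-internal strategies, and invoke a standard Dodd--Jensen argument: because $\vV_{0\om}$ is a non-dropping $\Sigma_M$-iterate of the minimal $\Mswom$-like model $M$, there can be no drop or truncation on the $\vV_{0\om}$-side of the comparison, and no extender used on its side can be out-iterated, so $X$ does not out-iterate $\vV_{0\om}$.

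The heart of the lemma is (iii). Let $A$ be a thick class of $\vV_{0\om}$. Using that any thick class meets $\mathscr{I}^{\vV_{0\om}}$ (which agrees with $\mathscr{I}^M$ above $\lambda$) in a thick set, indiscernibility reduces the problem to showing $\vV_{0\om}=\Hull_{\Sigma_1}^{\vV_{0\om}}(\mathscr{I}^{\vV_{0\om}})$. I will internalise Definition \ref{dfn:covering_dl} to $\vV_{0\om}$, using its own $\Sigma_{\vV_{0\om}}$-iteration strategy available by Theorem \ref{tm:vV_computes_Sigma_vV}, to exhibit every $x\in\vV_{0\om}$ in the form $\pi_{0;ns,\infty}(\bar{x})$ for some $(n,s)\in\mathscr{D}$ with $\bar{x}$ first-order definable over $\vV_0|\max(s)$ from $s^-\cup\{\gamma\}$, where $\gamma<\delta_0^{\vV_0}$. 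Hence $x$ is $\Sigma_1$-definable over $\vV_{0\om}$ from $s$, which may be drawn from $\mathscr{I}^{\vV_{0\om}}$, together with the below-$\delta_0^{\vV_{0\om}}$ parameter $\pi_{0;ns,\infty}(\gamma)$.

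The main obstacle, and the technical heart of the argument, will be absorbing this residual below-$\delta_0^{\vV_{0\om}}$ parameter into the indiscernible hull: one needs the definability property for $M$ below $\delta_0^M$ (so $\gamma$ itself is $\Sigma_1$-definable in $M$ from $\mathscr{I}^M$), together with the observation that this property propagates through the covering direct-limit construction, yielding that $\pi_{0;ns,\infty}(\gamma)$ is $\Sigma_1$-definable in $\vV_{0\om}$ from $\mathscr{I}^{\vV_{0\om}}$. The former is a standard consequence of the minimality of $M$; the latter will require a careful re-examination of Definition \ref{dfn:covering_dl} inside $\vV_{0\om}$, using that the map $\alpha\mapsto\alpha^*$ and the systems $H_{0;ns}$, $\pi_{0;mr,ns}$ are lightface classes of $\vV_\om$ (Lemma \ref{lem:alpha^*_as_ev_it_map}) and hence appropriately accessible over $\vV_{0\om}$ as well. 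Once established, one concludes $\vV_{0\om}=\Hull_{\Sigma_1}^{\vV_{0\om}}(\mathscr{I}^{\vV_{0\om}})\subseteq\Hull_{\Sigma_1}^{\vV_{0\om}}(A)$, and hence $\Def(\vV_{0\om})=\vV_{0\om}$, completing the proof.
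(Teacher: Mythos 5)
Your treatment of iterability and universality is reasonable (though the paper settles universality more quickly, by observing that $\gamma^{+\vV_\om\down 0}=\gamma^{+M}$ on a thick class of $\gamma$, using that $\Ult(\vV_\om,\vec{e}^{\vV_\om})\sub\vV_\om\down 0$ and that $M$ is a $\lambda^{+M}$-sized generic extension of $\vV_\om$). However, there is a genuine error at the heart of your proof of $\Def(\vV_\om\down 0)=\vV_\om\down 0$.

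You reduce the claim to showing
\[ \vV_{0\om}=\Hull_{\Sigma_1}^{\vV_{0\om}}(\mathscr{I}^{\vV_{0\om}}). \]
This equality is \emph{false}. Indeed, $\vV_{0\om}=\vV_\om\down 0$ is a non-dropping $\Sigma_M$-iterate of $M$ via a nontrivial tree $\Uu_0$, whose iteration map $i_0:M\to\vV_{0\om}$ acts (nontrivially) below $\lambda$ and fixes $\mathscr{I}^M=\mathscr{I}^{\vV_{0\om}}$ pointwise. Since $M=\Hull_{\Sigma_1}^M(\mathscr{I}^M)$ (by minimality, $\rho_1^{M^\#}=\om$ and $p_1^{M^\#}=\emptyset$) and $i_0$ is fully elementary, one gets
\[ \Hull_{\Sigma_1}^{\vV_{0\om}}(\mathscr{I}^{\vV_{0\om}})=\rg(i_0)\subsetneq\vV_{0\om}, \]
with $\crit(i_0)$ witnessing the proper inclusion. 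This is precisely the content of Lemma \ref{lem:hull_conservativity_vV_om}(\ref{item:i_m_hull_conservativity}). So the ``main obstacle'' you flag -- absorbing the below-$\delta_0^{\vV_{0\om}}$ parameter $\pi_{0;ns,\infty}(\gamma)$ into the indiscernible hull -- is not a technical hurdle to be overcome by a ``careful re-examination of Definition \ref{dfn:covering_dl}''; it is an impossibility, because that parameter is a typical element of $\vV_{0\om}\setminus\rg(i_0)$. Your reduction is therefore a reduction to a false statement, and the proposed line of attack cannot be completed.

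The paper's proof sidesteps the problem entirely by working with $\wW_\om=\cHull^{\vV_\om}(\mathscr{I}^M)$ instead of $\vV_\om$. Two facts do the work: first, $\wW_\om\down 0=M$ itself, not a proper iterate of $M$, so the indiscernible hull of the base model really is everything; second, after minimizing the parameter one may take the offending thick class $A$ to be outright definable over $\vV_\om$, and since $\wW_\om\equiv\vV_\om$, the corresponding class $A'$ over $\wW_\om$ is outright definable and hence (by indiscernibility) satisfies $\mathscr{I}^{\wW_\om}\subseteq A'$. Then $\OR\subseteq\Hull_{\Sigma_1}^{M}(\mathscr{I}^M)\subseteq\Hull_{\Sigma_1}^{\wW_\om\down 0}(A')$, which contradicts the supposed failure. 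Notice that this contradiction argument makes no attempt to compute the indiscernible hull of $\vV_{0\om}$; indeed it cannot, for the reason above. Also note that $\mathscr{I}^M$ itself is not a class of $\vV_\om$, so the lemma's conclusion (which quantifies only over $\vV_\om$-definable thick classes) is perfectly consistent with $\Hull_{\Sigma_1}^{\vV_{0\om}}(\mathscr{I}^M)$ being proper -- the external class $\mathscr{I}^M$ is simply not one of the competitors.
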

\begin{proof}
 We have already verified the iterability (modulo \cite{*-trans_add}).

The universality of $\vV_\om\down 0$ in $\vV_\om$ and in $M$ is because in both of those models, for all singular strong limit cardinals $\gamma$
of cofinality $>\lambda^{+M}$,
we have $\gamma^{+\vV_\om\down 0}=\gamma^{+\vV_\om}=\gamma^{+M}$; this holds since $\Ult(\vV_\om,\vec{e}^{\vV_\om})=(\vV_\om)^{\vV_\om\down 0}\sub\vV_\om\down 0$ 
and $M$ is a generic extension of $\vV_\om$ via a forcing of size $\lambda^{+M}$.

For the last part of the lemma,
since $M$ is a set-generic extension of $\vV_\om$, it suffices to see  $\mathrm{Def}^{\vV_\om}(\vV_\om\down 0)=\vV_\om\down 0$. So suppose otherwise. Then working in  $\vV_\om\down 0$, there is a class $A$ definable from parameters over  $\vV_\om$, such that $\vV_\om\sats$ ``$A$ is a thick class of ordinals''
and $\OR\not\sub\Hull_{\Sigma_1}^{\vV_\om\down 0}(A)$. By fixing the complexity of the definition of $A$ and minimizing the parameter, we may assume that $A$ is outright definable over $\vV_\om$, without parameters. But then $\wW_\om$ satisfies the same statement. So fix such a class $A$ of $\wW_\om$. Note (by thickness) that $\mathscr{I}^M=\mathscr{I}^{\wW_\om}\sub A\sub \Hull_{\Sigma_1}^{\wW_\om\down 0}(A)$, but since $\wW_\om\down 0=M$,
therefore $\OR\sub\Hull^{\wW_\om\down 0}(A)$,
a contradiction.
\end{proof}

\begin{rem}
 We now eliminate various class quantifiers used earlier. Let $W$ be any proper class premouse. Note first that $W$ is iterable and universal iff there are set-sized iteration trees $\Tt,\Uu$
 on $\vV_\om\down 0$ and $W$ respectively,
 having common last model $Q$, with no drops on $[0,\infty]^\Tt$ and $[0,\infty]^\Uu$.
 Now fix such $W,\Tt,\Uu$. Let $A$ be any proper class containing eventually all inaccessible cardinals and with $A\cap\lambda^Q=\emptyset$. Then we claim that $W=\bar{W}$ where
 \[ \bar{W}=\cHull^{Q}_{\Sigma_1}((i^\Uu``\lambda^W)\cup A).\]
For note that $A$ is fixed pointwise by $i^\Uu$ (and by $i^\Tt$), and therefore $W|\lambda^W=\bar{W}|\lambda^{\bar{W}}$. But because $\bar{W}$ is embedded into $Q$, $\bar{W}$ is also iterable. So we can compare $\bar{W}$ with $W$.
This produces trees $\bar{\Vv}$ and $\Vv$ respectively, with common last model $R$,
and such that $[0,\infty]^{\bar{\Vv}}$ and $[0,\infty]^{\Vv}$ do not drop (because $W$ and $\bar{W}$ are both $M$-like). Now suppose $W\neq\bar{W}$. Since they have no measurables $\geq\lambda^W=\lambda^{\bar{W}}$ but $W|\lambda^W=\bar{W}|\lambda^{\bar{W}}$,
there is $n<\om$ such that $\crit(i^{\Vv})=\crit(i^{\bar{\Vv}})=\kappa_n^W=\kappa_n^{\bar{W}}$. But because $M$ has the hull property (defined in a reasonable manner)
at $\kappa_n$, so does $\vV_\om\down 0$
and so do $W$ and $\bar{W}$. This produces the usual contradiction. So $\bar{W}=W$, as desired.

Similar considerations now yield that
the universal iterable weasels are exactly
those structures $W$ of form $\cHull^{M^{\Tt}_\infty}(X\cup A)$
for some successor length $\Sigma_{\vV\down 0}$-tree
$\Tt$ such that $[0,\infty]^{\Tt}$ is non-dropping, some $X\sub\lambda^{Q}$,
and $A$ being the class of inaccessibles $>\lambda^Q$.  (Moreover, we get a canonical
$(0,\OR)$-strategy for such $W$,
given by the pullback of $\Sigma_Q\rest M$ or $\Sigma_Q\rest\vV_\om$ under the uncollapse map $\pi:W\to Q$.)

Finally, note that $\mathrm{Def}(W)$ can equivalently be defined as
$\bigcap_{\xi\in\OR\cut\lambda^W}\Hull_{\Sigma_1}^W(A_\xi)$
where  $A_\xi$ is the class of all strong limit cardinals $\eta$ of cofinality $>\xi$.
(Compare $W$ with $\vV_\om$,
producing common last model $Q$.
With $\xi$ large enough, $A_\xi$ is fixed pointwise
by both iteration maps.
By Lemma \ref{lem:vV_om_down_0=K}, $\Hull_{\Sigma_1}^Q(A_\xi)=\rg(j)$ where $j:\vV_\om\down 0\to Q$
is the iteration map. But also by that lemma,
$\Hull_{\Sigma_1}^Q(A_\xi)\sub\Hull_{\Sigma_1}^Q(A)$ for all thick $A\sub A_\xi$,
and this transfers to $W$.)

\end{rem}

\begin{proof}[Proof of Theorem \ref{tm:main} (modulo \cite{*-trans_add})]

Set $K=\vV_\om\down 0$.
Modulo \cite{*-trans_add}, we have already proven
most of the theorem (with this $K$). What remains are parts \ref{item:main_3b}, \ref{item:main_3c} and \ref{item:main_6}.

\begin{clm}
Part \ref{item:main_6} holds; that is, $\vV_\om\down 0$ is the core model of $M$
and of $\vV_\om$.
\end{clm}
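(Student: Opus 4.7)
The plan is to reduce the claim to Lemma \ref{lem:vV_om_down_0=K} and the standard comparison argument, with the only real work being to verify that $K(W) = K(\vV_\om\down 0)$ for every universal iterable weasel $W$ (internal to $M$, respectively to $\vV_\om$). Granting this independence-of-$W$ fact, Lemma \ref{lem:vV_om_down_0=K} already gives $K(\vV_\om\down 0) = \vV_\om\down 0$, and so $K = \vV_\om\down 0$ is the core model of both $M$ and $\vV_\om$, as desired.

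To establish independence, fix a universal iterable weasel $W$, working in $M$ (the $\vV_\om$ case is identical). I would compare $W$ with $\vV_\om\down 0$; by universality on both sides, this produces $(0,\OR)$-iteration trees $\Tt$ on $\vV_\om\down 0$ and $\Uu$ on $W$ with a common last model $Q$, and with neither $[0,\infty]^\Tt$ nor $[0,\infty]^\Uu$ dropping in model or degree. Let $i^\Tt : \vV_\om\down 0 \to Q$ and $i^\Uu : W \to Q$ be the iteration maps. Now for any sufficiently thick class $A$ of ordinals (for instance, the class $A_\xi$ of strong limits of cofinality $> \xi$ for $\xi$ large enough to dominate the supports of all extenders used in $\Tt$ and $\Uu$), both $i^\Tt$ and $i^\Uu$ fix $A$ pointwise. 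Hence
\[
\Hull_{\Sigma_1}^{\vV_\om\down 0}(A) \;\cong\; \Hull_{\Sigma_1}^Q(A) \;\cong\; \Hull_{\Sigma_1}^W(A)
\]
under the obvious maps induced by $i^\Tt$ and $i^\Uu$. Taking the intersection over all such $A$ gives a canonical isomorphism $\mathrm{Def}(W) \cong \mathrm{Def}(\vV_\om\down 0)$, hence $K(W) \cong K(\vV_\om\down 0)$, and by Lemma \ref{lem:vV_om_down_0=K} the right-hand side equals $\vV_\om\down 0$.

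I would expect the main obstacle to be not the comparison itself but the verification that in the present context (non-dropping) main branches really do not drop on the $W$ side: this uses universality of both sides together with the fact that $\vV_\om\down 0$ has no measurable cardinals above $\lambda$, so that the usual phalanx/Dodd–Jensen–style argument applies without the complications that overlapping extenders would introduce above $\lambda$. Universality of $\vV_\om\down 0$ (in both $M$ and $\vV_\om$) was already established in the proof of Lemma \ref{lem:vV_om_down_0=K} via the cardinal-preservation argument using $\Ult(\vV_\om, \vec e^{\vV_\om}) \subseteq \vV_\om\down 0$ and the $\lambda^{+M}$-size of the Vopenka forcing.

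Finally, one needs to check that this external comparison can be carried out internally in $M$ and in $\vV_\om$: both models are closed under $\Sigma_{\vV_\om\down 0}$ (by Theorem \ref{tm:N_computes_Sigma_vV} and Theorem \ref{tm:vV_computes_Sigma_vV}), and they can recognize $(0,\OR)$-iterability for set-sized trees on $W$ via the characterization, noted in the remark following Lemma \ref{lem:vV_om_down_0=K}, that a universal iterable weasel is precisely a hull of the form $\cHull^{Q}_{\Sigma_1}(X \cup A)$ arising from an iterate $Q$ of $\vV_\om\down 0$. This ensures that the notion of ``universal iterable weasel'' used in defining $K^M$ and $K^{\vV_\om}$ is absolute enough between the models in question for the conclusion $K^M = K^{\vV_\om} = \vV_\om\down 0$ to hold uniformly.
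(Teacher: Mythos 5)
Your proposal is correct and takes essentially the same route as the paper: the paper's proof of this claim is a one-line appeal to Lemma \ref{lem:vV_om_down_0=K}, with the independence of $K(W)$ from the choice of universal iterable weasel $W$ (which you carefully spell out via comparison and the thick-hull argument with $A_\xi$ fixed pointwise by both iteration maps) handled in the surrounding discussion and the remark following that lemma, rather than inside the proof of the claim itself. Your added detail — that the comparison maps fix a thick class of strong limits of high cofinality, hence give canonical isomorphisms between the relevant hulls, and that universality of $\vV_\om\down 0$ is already in hand from the proof of Lemma \ref{lem:vV_om_down_0=K} — matches what the paper records in that remark.
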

\begin{proof}
By Lemma \ref{lem:vV_om_down_0=K}.
\end{proof}

\begin{clm}
Part \ref{item:main_3b} holds; that is,
the universe of $\vV_\om$ is 
the eventual generic HOD of $M$ (i.e.~$\HOD^{M[G]}$ whenever $\eta$ is a large enough ordinal and $G$ is $(M,\Coll(\om,\eta))$-generic).\end{clm}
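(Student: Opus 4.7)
The plan is to derive this from Lemma \ref{lem:HOD_P} by a three-step argument.

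Step one eliminates the parameter $\{\mathscr{P}\}$. Inspecting Definition \ref{dfn:pP}, the class $\mathscr{P}$ is definable in $M[H]$ (where $H$ is a $\Coll(\om,{<\lambda})$-generic over $M$) using only the ordinal $\lambda$, the lightface-definable sequence $\langle \vV_n\rangle_{n<\om}$, and the set of symmetric reals $\dot{\mathbb{R}}^*_H$, which is itself OD in $M[H]$ by the weak homogeneity of $\Coll(\om,{<\lambda})$. Hence $\mathscr{P}$ is $\OD^{M[H]}$ and so $\HOD^{M[H]}_{\{\mathscr{P}\}} = \HOD^{M[H]}$; Lemma \ref{lem:HOD_P} then sharpens to the statement that the universe of $\vV_\om$ is $\HOD^{M[H]}$.

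Step two identifies this with $\HOD^M$. By the standard fact that for weakly homogeneous forcings every set of ordinals of the ground that is OD in the extension is already OD in the ground (via definability of the forcing relation), we have $\HOD^{M[H]}\cap M = \HOD^M$. Since the universe of $\vV_\om$ is contained in $M$ (by Lemma \ref{lem:vV_om_mM_infty,om_equivalence}) and, by step one, equals $\HOD^{M[H]}$, it follows that the universe of $\vV_\om$ equals $\HOD^M$.

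Step three transfers this to arbitrary $\eta\geq\lambda$. Let $G$ be $(M,\Coll(\om,\eta))$-generic. Weak homogeneity of $\Coll(\om,\eta)$ in $M$ gives, as in step two, that every $A\subseteq\OR$ in $M[G]$ which is $\OD^{M[G]}$ is already in $M$ and $\OD^M$, so $\HOD^{M[G]}\cap\pow(\OR) = \HOD^M\cap\pow(\OR)$; conversely every $\OD^M$ set of ordinals is $\OD^{M[G]}$ via its canonical $\1$-forces name. Hence $\HOD^{M[G]}$ and $\HOD^M$ agree as inner models and, combining with step two, the universe of $\vV_\om$ equals $\HOD^{M[G]}$.

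The principal point requiring care is step one: one must verify that writing out the definition of $\mathscr{P}$ in full uses only parameters that can be traced back to $\lambda$ and to OD-definable subclasses (notably the symmetric reals and the $\langle\vV_n\rangle_{n<\om}$ sequence), so that no irreducible parameter remains. The remaining ingredients---weak homogeneity of the Levy collapse, definability of forcing, and Lemma \ref{lem:vV_om_mM_infty,om_equivalence}---are either standard or already established in the paper.
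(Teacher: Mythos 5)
Your proposal has genuine gaps in all three steps, and the paper's own proof takes a substantially different route that avoids them.

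In Step one you claim $\mathscr{P}$ is $\OD^{M[H]}$. But inspect Definition \ref{dfn:pP}: membership in $\mathscr{P}$ requires ``$\vV_{n+1}^C=\vV_{n+1}$'' for some $n$, where $\vV_{n+1}=\vV_{n+1}^M$ is computed from the specific ground $M$. To express this ordinal-definably in $M[H]$ one must first recover $M$ itself, which is \emph{not} ordinal definable there --- indeed, if the claim under proof is correct, then $\HOD^{M[H]}$ has the same universe as $\vV_\om$, which is a proper subclass of $M$, so $M$ (equivalently $M|\lambda^{+M}$) lies outside $\HOD^{M[H]}$. Thus $\mathscr{P}$ is an essential parameter and Lemma \ref{lem:HOD_P} cannot simply be ``sharpened'' by dropping it. The paper recognizes exactly this obstacle: its proof introduces a different, genuinely parameter-free class $\mathscr{G}$ of ``good'' premice (those $P$ with $\OR^P=\om_1$ whose direct condensation stack $P^+$ is $M$-like and of which $V$ is a small-collapse extension), which does \emph{not} pin down the specific ground $M$ and so is lightface in $M[G]$.

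Steps two and three rely on the assertion $\HOD^{M[H]}\cap M=\HOD^M$ (and its analogue for $\Coll(\om,\eta)$). Weak homogeneity gives only the inclusion $\HOD^{M[H]}\cap\pow(\OR)\sub\HOD^M$: the ordinal-definable-in-the-extension sets of ordinals lie in $M$ and are $\OD^M$ via the forcing relation. The reverse inclusion would require that a set $\OD^M$ (say via $\varphi(\cdot,\alpha)$) remain $\OD^{M[H]}$; but to evaluate ``$M\sats\varphi(\beta,\alpha)$'' inside $M[H]$ one needs $M$ as a parameter, and as above $M$ is not $\OD^{M[H]}$. Generically the HOD strictly shrinks after collapsing, so $\HOD^M$ is strictly larger than $\HOD^{M[H]}$, and these equalities are false. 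The same defect appears in Step three, and in addition the implicit assumption that $\HOD^{M[G]}$ stabilizes across all large $\eta$ is itself part of what must be proved.

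The paper's actual proof is structurally different and fills these gaps. Working directly in $M[G]$, it defines the lightface class $\mathscr{G}$ of good premice, uses $M$-likeness and Theorem \ref{tm:N_computes_Sigma_vV} to simultaneously compare all $\vV_\om^{P^+}\down 0$ for $P\in\mathscr{G}$, obtaining a common last iterate $Q$ and canonical embeddings $j_P:\vV_\om^{P^+}\to\vV_\om^Q$, and then forms the hull $C$ of $\vV_\om^Q$ by the class $I$ of ordinals fixed by every $j_P$. These objects are $\OD^{M[G]}$ and hence (since $\vV_\om$ is a $\leq\lambda^{+M}$-sized ground and $\eta\geq\lambda^{+M}$) classes of $\vV_\om$. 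The key new idea is the minimality argument: defining $\sigma:C\to\vV_\om$ via $j_{M|\eta^{+M}}^{-1}\com\pi$ over $\vV_\om$, if $\sigma\neq\id$ for some large $\eta$ one could lightface-define the least such $\eta$ and hence $\crit(\sigma)$, putting $\crit(\sigma)\in\rg(\sigma)$, a contradiction. Thus $C=\vV_\om$ and $\vV_\om\sub\HOD^{M[G]}$. None of this passes through $\HOD^M$, and it works uniformly for all large $\eta$, giving the stabilization for free.
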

\begin{proof}
Since it doesn't take too long,
we give a direct proof which is independent of the considerations above on the core model.
But modulo the core model there is a simpler argument.

Let $G$ be $(M,\Coll(\om,\eta))$-generic where $\eta\geq\lambda^{+M}$. We show that
$\HOD^{M[G]}$ is the universe of $\vV_\om$.

Certainly $\HOD^{M[G]}\sub\vV_\om$,
since the Vopenka forcing in $\vV_\om$,
for which $M$ is generic over $\vV_\om$,
has size $\lambda^{+M}=(\lambda^{\vV_\om})^{+\vV_\om}$.

So it suffices to see that $\vV_\om$
is definable without parameters over $M[G]$.
So work in $M[G]$.
Say a premouse $P$ is \emph{good} iff
$\OR^P=\om_1=\eta^{+M}$
and the direct condensation stack $P^+$ (see \cite[Definition 4.2]{V=HODX_pub})
over $P$ exists and is proper class
and is $M$-like, and there is some $\eta'<\omega_1$ which is a $P^+$-cardinal and some $H$ which is $(P,\Coll(\om,\eta'))$-generic and such that $P[H]$ has universe $V$ (i.e.~universe $M[G]$).
Let $\mathscr{G}$ be the set of all good premice.
Still working in $M[G]$,
note that by $M$-likeness and Lemma \ref{tm:N_computes_Sigma_vV}, we can
successfully compare all  premice  of form $\vV_\om^{P^+}\down 0$ (with $P$ ranging over $\mathscr{G}$), using the  strategy for $\vV_\om^{P^+}\down 0$ defined in the manner
given by the proof of Lemma \ref{tm:N_computes_Sigma_vV}
part \ref{item:N[g]_closed_under_Sigma_vV_down_0}.
Letting $\Tt_P$ be the tree on $\vV_\om^{P^+}\down 0$, clearly $[0,\infty]^{\Tt_P}$ is non-dropping,
and $M^{\Tt_P}_\infty=M^{\Tt_{P'}}_\infty$
is the same (proper class) model $Q$ for all $P,P'\in\mathscr{G}$.
By properties of $\Sigma_{\wW_\om}$
and its relationship to $\Sigma_{\wW_\om}\down 0$,
for each $P\in\mathscr{G}$ we get a canonical embedding $j_P:\vV_\om^{P^+}\to\vV_\om^Q$,
and $\vV_\om^Q$ is also a $\Sigma_{\vV_\om^{P^+}}$-iterate, where $\Sigma_{\vV_\om^{P^+}}$ is the canonical strategy for $\vV_\om^{P^+}$ defined using $P^+$, and $j_P$ is the iteration map.
Let $I$ be the class of all ordinals
fixed by $j_P$ for all $P\in\mathscr{G}$. Then $I$ is proper class.
Let $H=\Hull_{\Sigma_1}^{\vV_\om^Q}(I)$ and $C$ its transitive collapse and $\pi:C\to H$ the uncollapse. Now $Q,H,C,\pi$ are classes of $\vV_\om$ (since they are ordinal definable in $M[G]$ and since $\vV_\om$ is a ground of $M$
via a forcing of size $(\lambda^{\vV_\om})^{+\vV_\om}$ and $\eta^{+M}\geq(\lambda^{\vV_\om})^{+\vV_\om}$).
But then, since $M|\eta^{+M}\in\mathscr{G}$ and $(M|\eta^{+M})^+=M$,  working in $\vV_\om$,
we can define $\sigma:C\to \vV_\om$ by setting $\sigma(x)=j_{P}^{-1}(\pi(x))$, where $P=M|\eta^{+M}$, and $\sigma$ is elementary.

Suppose for some such $\eta$, we have $\sigma\neq\id$. Then working in $\vV_\om$, we can define the least such $\eta$, and hence define $\crit(\sigma)$ outright. But then $\crit(\sigma)\in\rg(\sigma)$, contradiction.

So for all such $\eta$, continuing with notation as above, we have $\sigma=\id$ and $C=\vV_\om$.
It follows that $\vV_\om\sub\HOD^{M[G]}$.

(If one makes use of what we have proved regarding the core model of $M$ and of $\vV_\om$,
there is a shorter argument. It is straightforward to see that the core model is generically invariant. Therefore $M[G]\sats$``$K=\vV_\om\down 0$'', and in particular, $\vV_\om\down 0$
is outright definable. But there can be only one structure $\vV$ with properties like $\vV_\om$
and such that $\vV_\om\down 0=\vV\down 0$:
we have
\[ \Ult(\vV,\vec{e}^{\vV})=\vV_\om^{\vV\down 0}=\Ult(\vV_\om,\vec{e}^{\vV_\om}),\]
and we have the two ultrapower maps $i^{\vV}:\vV\to\vV_\om^{\vV\down 0}$ and $i^{\vV_\om}:\vV_\om\to\vV_\om^{\vV\down 0}$. 
But letting $A$ be the class of common fixed points of $i^{\vV}$ and $i^{\vV_\om}$,
then as before, we get $\vV_\om=\Hull^{\vV_\om}(A)$, but we also have 
\[\vV\down 0=\vV_\om\down 0=K=\Hull^{\vV_\om\down 0}(A)=\Hull^{\vV\down 0}(A), \]
and so $\Hull^{\vV}(A)=\vV$,
so $\vV=\vV_\om$.)
 \end{proof}
 
 \begin{clm}
  Part \ref{item:main_3c} holds; that is, the universe of $\vV_\om$ is the mantle of $M$ and of all set-generic extensions of $M$.\end{clm}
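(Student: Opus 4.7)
The plan is to prove two containments, and then invoke generic invariance of the mantle. One direction is immediate: the universe $U$ of $\vV_\om$ is a ground of $M$ via $\Vop^{\vV_\om}_{\mathscr{P}}$, so the mantle of $M$ is contained in $U$. What needs genuine work is the reverse containment, namely that $U\sub W$ for every ground $W$ of $M$.

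The route I would take is through the core model. We have already shown $K = \vV_\om\down 0$ is the core model of $M$ (and of $\vV_\om$), and it is natural to expect that $K$ is invariant under set forcing and under passing to set-grounds, so that $K^W = K^M = \vV_\om\down 0$ for every ground $W$ of $M$. The characterization of $K$ established above as the transitive collapse of the intersection of thick hulls of any universal iterable weasel, together with the reformulation showing this intersection can be taken over the classes $A_\xi$ of strong limit cardinals of cofinality $>\xi$, is tailored to this: thick classes of ordinals are stable under set forcing, and universality and iterability of the weasel $\vV_\om\down 0$ transfer between $M$ and $W$ because $M$ is a set-generic extension of $W$ and the relevant fine-structural comparisons can be lifted. (The iterability itself in $W$ reduces to the iterability in $V$, which we already have.) So, modulo this generic invariance of $K$, the structure $K$ is outright definable in every $W$ in the generic multiverse of $M$, and takes the same value $\vV_\om\down 0$.

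Next I would invoke the uniqueness of $\vV_\om$ given $K$. The shorter argument given at the end of the proof of part \ref{item:main_3c} (the generic HOD claim) shows that there can be at most one structure $\vV$ satisfying the first-order properties of an $\om$-pVl with $\vV\down 0 = K$ and related properties: the identification \[\Ult(\vV,\vec{e}^{\vV})=\vV_\om^{\vV\down 0}=\Ult(\vV_\om,\vec{e}^{\vV_\om}),\] combined with the hull argument using the class $A$ of common fixed points of the two ultrapower maps, forces $\vV=\vV_\om$. Because this argument is purely first-order over a universe containing $K$ and the relevant inaccessibles (which $W$ contains, being a ground of $M$), it can be carried out internally in $W$, and $\vV_\om$ is thereby recovered and definable inside $W$. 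Hence $U \sub W$, and since this holds for every ground $W$ of $M$, we conclude that $U$ is contained in, and therefore equal to, the mantle of $M$.

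Finally, to extend this to set-generic extensions of $M$, I would appeal to Usuba's theorem that the mantle is invariant under set forcing (equivalently, that the mantle is a ground of every set-generic extension and coincides for all such). Alternatively, since the entire construction of $\vV_\om$ is carried out from $M$ in a way that is invariant under forming a set-generic extension $M[G]$ (the relevant P-constructions, iteration strategies, and direct limits are all unchanged, and $\vV_\om$ remains a ground of $M[G]$ in the same canonical manner), the argument of the preceding paragraphs applies verbatim with $M[G]$ in place of $M$. The main obstacle in this outline is the generic invariance of $K$ in the present $M_{\sw\om}$-context: verifying that the characterization of $K$ given above really is preserved when passing between $M$ and an arbitrary ground $W$ of $M$ requires careful transfer of the thickness, universality, and iterability hypotheses, much as in the core model analyses of \cite{vm1} and \cite{core_model_in_transcendent_mice}, and is where the argument has to be made precise.
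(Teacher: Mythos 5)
The approach you take is genuinely different from the paper's, and it has a gap that goes beyond the one you acknowledge. The paper's proof does not go through generic absoluteness of the core model: it simply observes that the structure $C$ constructed in the proof of the preceding claim (the generic-HOD identification) satisfies $C\sub W$ for each ground $W$, and since $C=\vV_\om$ this gives $\vV_\om\sub W$. The set-generic-extension case is then handled by repeating the same argument, not by citing Usuba.

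Your route instead passes through $K^W = K^M = \vV_\om\down 0$. The first gap you flag yourself: generic absoluteness of $K$ is a nontrivial claim in the present context, and neither you nor the paper actually proves it (the paper only mentions it parenthetically, for the HOD claim, as a possible "shorter argument"). The second gap you do not address, and it is more serious: even granting $K\in W$, your argument needs $\vV_\om\in W$, and you assert that $\vV_\om$ is "recovered and definable inside $W$" from $K$ via the uniqueness argument. But the uniqueness argument quoted from the paper establishes only that \emph{at most one} structure $\vV$ with the right first-order properties and with $\vV\down 0 = K$ can exist; it does not produce such a $\vV$ inside $W$. In $M[G]$ this is no obstacle, since $\vV_\om$ is already a class there and uniqueness gives a definition. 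But a ground $W$ is not known in advance to contain $\vV_\om$: the long extenders of $\vV_\om$ encode direct-limit/strategy information that is not straightforwardly reconstructible from the premouse $K$ alone. (What one can build over $K$ is $\vV_\om^K=\Ult(\vV_\om,\vec{e}^{\vV_\om})$, which is a proper initial segment, not $\vV_\om$ itself.) The paper's $C$-construction sidesteps exactly this issue by exhibiting $\vV_\om$ as a definable transitive collapse over $M[G]$ and then arguing directly that this collapse lies inside each ground.
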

  
 \begin{proof}
Note that for each ground $W$ of $M$, for some $\eta$ as above, defining $C$ as there, we also get $C\sub W$, but we saw that $C=\vV_\om$, so $\vV_\om\sub W$, as desired.
We similarly get that the universe of $\vV_\om$
is the mantle of every set-generic extension of $M$. 
\end{proof}

This completes the proof.
\end{proof}

\bibliographystyle{plain}
\bibliography{../bibliography/bibliography}

\end{document}